\outer\long\def\COUIC#1{}
\definecolor{myred}{rgb}{0.75,0,0}
\definecolor{mygreen}{rgb}{0,0.5,0}
\definecolor{myblue}{rgb}{0,0,0.65}
\newtheorem{theo}{Theorem}[section]
\newtheorem{prop}[theo]{Proposition}
\newtheorem{lem}[theo]{Lemma}
\newtheorem{cor}[theo]{Corollary}
\newtheorem{defi}[theo]{Definition}
\newtheorem{ass}[theo]{Assumption}
\newtheorem{remark}[theo]{Remark}
\newtheorem{example}[theo]{Example}
\def\wh{\widehat}
\def\wt{\widetilde}
\def\ov{\overline}
\def\un{\underline}
\def\red#1{{\color{myred}{\un {#1}}}}
    \def\AM{{\mathbb{A}}}
  \def\bG{{\mathfrak b}}  
\def\CG{{\mathfrak C}}    \def\CM{{\mathbb{C}}}
\def\DG{{\mathfrak D}}    
    \def\EM{{\mathbb{E}}}
    \def\FM{{\mathbb{F}}}
  \def\gG{{\mathfrak g}}  \def\GM{{\mathbb{G}}}
\def\IG{{\mathfrak I}}    
    \def\KM{{\mathbb{K}}}
\def\LG{{\mathfrak L}}  \def\lG{{\mathfrak l}}  \def\LM{{\mathbb{L}}}
  \def\mG{{\mathfrak m}}  
\def\NG{{\mathfrak N}}  \def\nG{{\mathfrak n}}  \def\NM{{\mathbb{N}}}
\def\OG{{\mathfrak O}}    \def\OM{{\mathbb{O}}}
\def\PG{{\mathfrak P}}    \def\PM{{\mathbb{P}}}
    \def\QM{{\mathbb{Q}}}
    \def\RM{{\mathbb{R}}}
\def\SG{{\mathfrak S}}  \def\sG{{\mathfrak s}}  
\def\TG{{\mathfrak T}}  \def\tG{{\mathfrak t}}  
  \def\uG{{\mathfrak u}}
\def\XG{{\mathfrak X}}    
\def\YG{{\mathfrak Y}}    
    \def\ZM{{\mathbb{Z}}}
    \def\AC{{\mathcal{A}}}
    \def\BC{{\mathcal{B}}}
    \def\CC{{\mathcal{C}}}
    \def\DC{{\mathcal{D}}}
    \def\EC{{\mathcal{E}}}
    \def\FC{{\mathcal{F}}}
    \def\GC{{\mathcal{G}}}
    \def\HC{{\mathcal{H}}}
    \def\IC{{\mathcal{I}}}
    \def\JC{{\mathcal{J}}}
    \def\KC{{\mathcal{K}}}
    \def\LC{{\mathcal{L}}}
    \def\MC{{\mathcal{M}}}
    \def\NC{{\mathcal{N}}}
    \def\OC{{\mathcal{O}}}
    \def\PC{{\mathcal{P}}}
    \def\QC{{\mathcal{Q}}}
    \def\RC{{\mathcal{R}}}
    \def\SC{{\mathcal{S}}}
    \def\TC{{\mathcal{T}}}
    \def\UC{{\mathcal{U}}}
    \def\VC{{\mathcal{V}}}
\def\Irm{{\mathrm{I}}}
\def\Rrm{{\mathrm{R}}}    
\def\Srm{{\mathrm{S}}}
\def\Iti{{\tilde{I}}}    
\def\Kti{{\tilde{K}}}
    \def\NCt{{\tilde{\mathcal{N}}}}
\def\Sti{{\tilde{S}}}
\def\Xti{{\tilde{X}}}
\def\Cba{{\bar{C}}}
\def\a{\alpha}
\def\b{\beta}
\def\g{\gamma}
\def\G{\Gamma}
\def\d{\delta}
\def\D{\Delta}
\def\e{\varepsilon}
\def\l{\lambda}
\def\L{\Lambda}
\def\s{\sigma}
\def\t{\tau}
        \def\alpt{{\tilde{\alpha}}}
\def\epsb{{\boldsymbol{\varepsilon}}}
\def\psib{{\boldsymbol{\psi}}}
\DeclareMathOperator{\Ad}{{\mathrm{Ad}}}
\DeclareMathOperator{\Aut}{{\mathrm{Aut}}}
\DeclareMathOperator{\codim}{codim}
\DeclareMathOperator{\Coker}{{\mathrm{Coker}}}
\DeclareMathOperator{\Cone}{Cone}
\DeclareMathOperator{\End}{{\mathrm{End}}}
\DeclareMathOperator{\Gal}{{\mathrm{Gal}}}
\DeclareMathOperator{\Hom}{{\mathrm{Hom}}}
\DeclareMathOperator{\ic}{{\mathbf{IC}}}
\DeclareMathOperator{\id}{{\mathrm{Id}}}
\DeclareMathOperator{\im}{{\mathrm{Im}}}
\DeclareMathOperator{\Ind}{{\mathrm{Ind}}}
\DeclareMathOperator{\Irr}{{\mathrm{Irr}}}
\DeclareMathOperator{\Ker}{{\mathrm{Ker}}}
\DeclareMathOperator{\Loc}{{\mathrm{Loc}}}
\DeclareMathOperator{\Proj}{{\mathrm{Proj}}}
\DeclareMathOperator{\res}{{\mathrm{res}}}
\DeclareMathOperator{\RHOM}{R\underline{Hom}}
\DeclareMathOperator{\rg}{R\G}
\DeclareMathOperator{\rgc}{R\G_c}
\DeclareMathOperator{\Sh}{{\mathrm{Sh}}}
\DeclareMathOperator{\Soc}{Soc}
\DeclareMathOperator{\Spec}{{\mathrm{Spec}}}
\DeclareMathOperator{\Tr}{{\mathrm{Tr}}}
\DeclareMathOperator{\tr}{{\mathrm{tr}}}
\DeclareMathOperator{\Top}{Top}
\DeclareMathOperator{\Tor}{{\mathrm{Tor}}}
\newcommand{\elem}[1]{\stackrel{#1}{\longto}}
\newcommand{\map}[1]{\stackrel{#1}{\to}}
\newcommand{\leftelem}[1]{\stackrel{#1}{\longfrom}}
\def\Imp{\Longrightarrow}
\def\iff{\Leftrightarrow}
\def\Iff{\Longleftrightarrow}
\def\to{\rightarrow}
\def\longto{\longrightarrow}
\def\longfrom{\longleftarrow}
\def\injto{\hookrightarrow}
\def\rtordu{\rightsquigarrow}
\def\isom{\stackrel{\sim}{\to}}
\def\fonctio#1#2#3#4{\begin{array}{ccc}
{#1} & \longto & {#2} \\
{#3} & \longmapsto & {#4} 
\end{array}}
\def\incl{\hspace{0.05cm}{\subset}\hspace{0.05cm}}
\def\DS{\displaystyle}
\def\lexp#1#2{\kern\scriptspace\vphantom{#2}^{#1}\kern-\scriptspace#2}
\def\le{\hspace{0.1em}\mathop{\leqslant}\nolimits\hspace{0.1em}}
\def\ge{\hspace{0.1em}\mathop{\geqslant}\nolimits\hspace{0.1em}}
\mathchardef\inferieur="321E
\mathchardef\superieur="321F
\def\eqna{\begin{eqnarray*}}
\def\endeqna{\end{eqnarray*}}
\def\ie{{\textit{i}.\textit{e}.}}
\def\iff{ if and only if }
\def\op{\text{op}}
\def\pr{{\mathrm{pr}}}
\def\p{{}^p}
\def\pp{{}^{p_+}\!}
\def\0{{}^0\!}
\def\rs{\text{rs}}
\def\di{\mathrm{div}}
\def\tors{{\mathrm{tors}}}
\def\free{{\mathrm{free}}}
\def\mini{{\mathrm{min}}}
\def\jmin{{j_\mini}}
\def\io{{i_0}}
\def\triv{{\mathrm{triv}}}
\def\reg{{\mathrm{reg}}}
\def\jreg{{j_\reg}}
\def\isubreg{{i_\subreg}}
\def\subreg{{\mathrm{subreg}}}
\def\Sing{{\mathrm{Sing}}}
\def\impsi{\NG^0}
\def\longue{{\mathrm{lg}}}
\def\courte{{\mathrm{sh}}}
\def\hauteur{{\mathrm{ht}}}
\def\qed{\hfill$\boxempty$}
\newenvironment{proof}{\noindent\emph{Proof.}}{\qed \\}
\def\octa#1#2#3#4#5#6{
\xymatrix @!=.4cm{
&&&\\
\\
&& #3 \ar[ddr] \ar[uur]\\
&&&&&\\
& #2 \ar[uur] \ar[dr] && #5 \ar[ddr] \ar[ur]\\
&& #4 \ar[drr] \ar[ur]\\
#1 \ar[uur] \ar[urr] &&&& #6 \ar[ddr] \ar[drr]\\
&&&&&&\\
&&&&&
}
}
\begin{document}

\shorthandoff{;:!?}

\frontmatter

\thispagestyle{empty}

{\selectlanguage{french}

\begin{flushright}
UFR de mathématiques\\
\'Ecole Doctorale de Mathématiques de Paris centre\\
Université Paris 7 -- Denis Diderot\\
\hrulefill
\end{flushright}

\vfill

\begin{center}
{\sc\huge Thèse de Doctorat}
\vfill
{\Large présentée par}
\vfill
{\sc\huge Daniel Juteau}
\vfill
{\Large pour obtenir le grade de\\
Docteur de Mathématiques de l'Université Paris 7 -- Denis Diderot}
\vfill
\hrulefill
\vfill
{\Huge Correspondance de Springer modulaire et matrices de décomposition
\vfill
---
\vfill
Modular Springer correspondence and decomposition matrices}
\vfill
\hrulefill
 \vfill
\end{center}

\begin{flushleft}
{\Large Thèse soutenue le 11 décembre 2007, devant le jury composé de~:}

~

{\large
\begin{tabular}{ll}
Cédric {\sc Bonnafé} (co-directeur)& Chargé de Recherche à l'Université de Franche-Comté\\
Michel {\sc Brion}& Directeur de Recherche à l'Université de Grenoble 1\\
Michel {\sc Broué}& Professeur à l'Université Paris 7\\
Bernard {\sc Leclerc}& Professeur à l'Université de Caen\\
Jean {\sc Michel}& Directeur de Recherche à l'Université Paris 7\\
Raphaël {\sc Rouquier} (co-directeur)& Directeur de Recherche à l'Université Paris 7\\
& Professeur à l'Université d'Oxford\\
Wolfgang {\sc Soergel} (rapporteur)& Professeur à l'Université Albert-Ludwigs de Freiburg\\
Tonny {\sc Springer}& Professeur à l'Université d'Utrecht
\end{tabular}
}

~

\vfill

~

{\large Rapporteur non présent à la soutenance~:

~

\begin{tabular}{ll}
Bao Châu {\sc Ngô} & Professeur à l'Université Paris-Sud (Paris 11)\\
& Membre de l'Institute for Advanced Study à Princeton
\end{tabular}
}
\end{flushleft}
}

\newpage
\thispagestyle{empty}

\mainmatter
{\selectlanguage{french}
\tableofcontents

\addchap{Remerciements}

Tout d'abord, je suis infiniment reconnaissant à Cédric Bonnafé et
Raphaël Rouquier pour la qualité exceptionnelle de leur
encadrement. J'ai grandement apprécié leurs immenses qualités humaines
et mathématiques. Je mesure la chance que j'ai eue de les avoir comme
directeurs. Ils ont tous deux été d'une très grande disponibilité,
même quand la distance nous séparait. Je les remercie pour le sujet
qu'ils m'ont donné, leurs conseils, leurs encouragements, leur
soutien, et la liberté qu'ils m'ont accordée. Je pense que c'était
parfois éprouvant de m'avoir comme étudiant, notamment quand il a
fallu relire mon manuscrit jusqu'à la dernière minute ! Je les
remercie aussi, ainsi que leurs épouses Anne et Meredith, pour les
moments de convivialité que nous avons partagés.

\medskip

Je remercie très sincèrement les deux rapporteurs, Bao Châu Ngô et
Wolfgang Soergel, pour le temps qu'ils ont passé à lire ce travail et
à écrire les rapports. Plus particulièrement, je remercie Bao Châu Ngô
pour avoir suggéré d'étudier les faisceaux pervers en caractéristique
positive sur les courbes modulaires et les variétés de Shimura en vue
d'applications arithmétiques (j'espère acquérir le bagage nécessaire à
cette fin)~; et je remercie Wolfgang Soergel pour sa lecture très minutieuse
et ses questions qui ont permis d'améliorer le manuscrit.

\medskip

Je remercie vivement chaque membre du jury pour avoir
accepté d'y participer. C'est un très grand honneur pour moi de
pouvoir réunir un jury constitué de mathématiciens de tout premier
plan, dans les domaines de la géométrie et des représentations. La
présence de chacun d'entre eux me fait très plaisir.

\medskip

Les nombreux voyages que j'ai dû faire pendant ma thèse m'ont permis
de rencontrer des personnes qui m'ont beaucoup apporté, humainement et
mathématiquement. Je les remercie pour nos discussions, et les moments
que nous avons partagés, dans les différents endroits où j'ai séjourné
(Paris, Besançon, Yale, Lausanne, Oxford, Nancy et Hendaye), que ce
soit dans le milieu mathématique ou non.  De même pour les conférences
auxquelles j'ai participé, notamment à Luminy et à Oberwolfach. Merci
aussi à toutes les personnes qui m'ont donné l'occasion de présenter
mes résultats dans différents séminaires.  Merci à ceux qui m'ont
hébergé, et pour la chaleur de leur accueil. Enfin, merci à tous ceux
qui se seront déplacés pour ma soutenance, et à ceux qui auront
contribué à mon pot de thèse. Je préfère remercier chacun de vive
voix, plutôt que citer  une longue liste ici.

\medskip

Je remercie les différentes institutions qui ont
permis tous ces déplacements : l'Institut de Mathématiques de Jussieu,
l'\'Ecole Doctorale de Sciences Mathématiques de Paris centre, le
Laboratoire de Mathématiques de Besançon,
l'Université d'Oxford via le réseau européen Liegrits, l'\'Ecole
Polytechnique Fédérale de Lausanne et l'Université de Yale.

\medskip

Ces années de thèse ont aussi été marquées par une dure maladie en
2006, avec un traitement lourd, pendant plusieurs mois. Je suis
extrêmement reconnaissant aux personnes qui m'ont entouré pendant
toute cette période~: mes parents, Souki et Sebas, qui n'ont pas
compté leur temps et leur énergie pour me soutenir, et Germain qui a
fait des milliers de kilomètres pour être là le plus possible. Un
grand merci à Miguel qui est venu des Philippines. Merci aussi à tous
les parents et amis qui ont eu des attentions pour moi, de près ou de
loin. Enfin, je suis bien sûr profondément reconnaissant à toutes les
personnes qui ont contribué au progrès de la médecine, aux docteurs
qui m'ont examiné, opéré et suivi, et à tout le personnel hospitalier
qui m'a soigné avec gentillesse et attention. Je leur dois ma
guérison.

L'été suivant, le soutien de mes proches m'a une nouvelle fois été
indispensable, pour finir la rédaction dans les temps. Sans eux, je
n'aurais sans doute pas pu soutenir cette année.

Indépendamment de ces deux épisodes, je dois un très grand merci à ma
famille, qui m'a beaucoup donné, et qui a su m'aimer tel que je suis.

\medskip

Merci enfin à Germain pour tout ce qu'il me fait vivre.

\addchap{Introduction (français)}

\section*{Contexte et vue d'ensemble}

En 1976, Springer a introduit une construction géométrique des
représentations irréductibles des groupes de Weyl, qui a eu une
profonde influence et de nombreux développements ultérieurs, culminant
avec la théorie des faisceaux-caractères de Lusztig, qui permet de
calculer les valeurs des caractères de groupes réductifs
finis. Dans cette thèse, nous définissons une correspondance de
Springer pour les représentations modulaires des groupes de Weyl, et
en établissons quelques propriétés, répondant ainsi à une question
posée par Springer lui-même.

\subsection*{Représentations modulaires, matrices de décomposition}

La théorie des représentations modulaires des groupes finis,
initiée et développée par Brauer à partir du début des années 1940,
est l'étude des représentations des groupes finis sur un corps de
caractéristique $\ell > 0$. Lorsque $\ell$ divise l'ordre du groupe,
la catégorie des représentations n'est plus semi-simple.

Soit $W$ un groupe fini.
On fixe une extension finie $\KM$ du corps $\QM_\ell$ des
nombres $\ell$-adiques. Soit $\OM$ son anneau de valuation. On note
$\mG = (\varpi)$ l'idéal maximal de $\OM$, et $\FM$ le corps résiduel
(qui est fini de caractéristique $\ell$). Le triplet $(\KM,\OM,\FM)$
est ce qu'on appelle un système modulaire, et on suppose qu'il est
assez gros pour tous les groupes finis que nous rencontrerons
(c'est-à-dire que tous les $\KM W$-modules simples sont absolument
simples, et de même pour $\FM$). La lettre $\EM$ désignera l'un des
anneaux de ce triplet.

Pour une catégorie abélienne $\AC$, on note $K_0(\AC)$ son groupe de
Grothendieck. Lorsque $\AC$ est la catégorie des $A$-modules de type
fini, où $A$ est un anneau, on adopte la notation $K_0(A)$. La
sous-catégorie pleine formée des objets projectifs de $\AC$ sera notée
$\Proj \AC$. 

Le groupe de Grothendieck $K_0(\KM W)$ est libre de base $([E])_{E
\in\Irr \KM W}$, où $\Irr \KM W$ désigne l'ensemble des classes
d'isomorphisme de $\KM$-modules simples. De même pour $K_0(\FM
W)$. Pour $F \in \Irr \FM W$, on note $P_F$ une enveloppe projective
de $F$. Alors $([P_F])_{F\in\Irr \FM W}$ est une base de
$K_0(\Proj\:\FM W)$. La réduction modulo $\mG$ définit un
isomorphisme de $K_0(\Proj\:\OM W)$ sur $K_0(\Proj\:\FM W)$.

Nous allons définir le triangle $cde$ \cite{Serre}.
\[
\xymatrix{
K_0(\Proj\:\FM W) \ar[rr]^c \ar[dr]_e && K_0(\FM W)\\
& K_0(\KM W) \ar[ur]_d
}
\]
Le morphisme $c$ est induit par l'application qui à chaque $\FM
W$-module projectif associe sa classe dans $K_0(\FM W)$.  Le foncteur
d'extensions des scalaires à $\KM$ induit un morphisme de
$K_0(\Proj\:\OM W)$ vers $K_0(\KM W)$. Le morphisme $e$ s'obtient en
composant avec l'inverse de l'isomorphisme canonique de
$K_0(\Proj\:\OM W)$ sur $K_0(\Proj\:\FM W)$.  Le morphisme $d$ est un
peu plus délicat. Soit $E$ un $\KM W$-module. On peut choisir un
réseau $E_\OM$ dans $E$ stable par $W$. L'image de $\FM \otimes_\OM
E_\OM$ dans $K_0(\FM W)$ ne dépend pas du choix du réseau
\cite{Serre}. Le morphisme $d$ est induit par l'application
qui à $E$ associe $[\FM \otimes_\OM E_\OM]$.

On définit la matrice de décomposition
$D^W = (d^W_{E,F})_{E \in \Irr \KM W,\ F \in \Irr \FM W}$ par
\[
d([E]) = \sum_{F \in \Irr \FM W} d^W_{E,F} [F]
\]

Un des plus grands problèmes en théorie des représentations modulaires
est de déterminer ces nombres de décomposition $d^W_{E,F}$ explicitement
pour des classes intéressantes de groupes finis. Ce problème est
ouvert pour le groupe symétrique.

Le triangle $cde$ peut s'interpréter en termes de caractères
ordinaires et modulaires (de Brauer). Nous renvoyons à \cite{Serre}. Lorsque les
caractères ordinaires sont connus (c'est le cas pour le groupe
symétrique), la connaissance de la matrice de décomposition équivaut à
la détermination des caractères de Brauer.

Il y a des variantes de ce problème lorsqu'on sort du cadre des
groupes finis. On peut par exemple considérer les représentations modulaires des
algèbres de Hecke. Ces algèbres sont des déformations d'algèbres de
groupes de réflexions, et jouent un rôle très important dans la
théorie des représentations des groupes finis de type de Lie. Elles
sont définies de manière générique, et on peut regarder ce qu'il se
passe lorsqu'on spécialise un ou plusieurs paramètres.

On peut aussi s'intéresser aux représentations rationnelles d'un
groupe réductif en caractéristique positive, ou les représentations
d'une algèbre de Lie réductive, ou de groupes quantiques (déformations
d'algèbres enveloppantes), etc. Dans ce cas, on considère les
multiplicités des simples dans des classes d'objets particuliers dont
on connaît les caractères.

\subsection*{Faisceaux pervers et représentations}

Depuis trois décennies, l'utilisation de méthodes géométriques a
permis des progrès spectaculaires dans de nombreux domaines de la
théorie des représentations. Nous nous intéresserons ici tout
particulièrement aux représentations des groupes de Weyl et des
groupes finis de type de Lie.

Il y a une trentaine d'années, Springer est parvenu à construire
géométriquement toutes les représentations irréductibles ordinaires des groupes
de Weyl, dans la cohomologie de certaines variétés liées aux éléments
nilpotents de l'algèbre de Lie correspondante \cite{SPRTRIG, SPRWEYL}.
Cette découverte a eu un retentissement considérable. De nombreuses
autres constructions ont été proposées par la suite. Par exemple,
Kazhdan et Lusztig ont proposé une approche topologique \cite{KL},
et Slodowy a construit ces représentations par monodromie
\cite{SLO1}. Au début des années 1980, l'essor de la cohomologie
d'intersection a permis de réinterpréter la correspondance de Springer
en termes de faisceaux pervers \cite{LusGreen,BM}.

Lusztig a prolongé ce travail en étudiant une correspondance de
Springer généralisée, ainsi que des complexes de cohomologie
d'intersection sur un groupe réductif $G$ ou son algèbre de Lie $\gG$,
qu'il appelle complexes admissibles ou faisceaux-caractères
\cite{ICC,CS1,CS2,CS3,CS4,CS5}. Si $G$ est muni d'une structure
$\FM_q$-rationnelle définie par un endomorphisme de Frobenius $F$, les
faisceaux-caractères $F$-stables donnent lieu à des fonctions
centrales sur le groupe fini $G^F$, qui sont très proches des
caractères irréductibles. La matrice de transition entre ces deux
bases est décrite par une transformation de Fourier. Ainsi, ces
méthodes géométriques ont permis de déterminer les caractères de
groupes réductifs finis (au moins lorsque le centre est connexe).

Jusqu'ici, à ma connaissance, on n'a pas utilisé ces méthodes pour
étudier les représentations modulaires des groupes de Weyl ou des
groupes finis de type de Lie. Pourtant, cela a été le cas dans au
moins deux autres situations modulaires. Soergel \cite{Soergel} a
converti un problème sur la catégorie $\OC$ en un problème sur les
faisceaux pervers à coefficients modulo $\ell$ sur des variétés de
Schubert. D'autre part, Mirkovic et Vilonen ont établi une équivalence de
catégories entre les représentations rationnelles d'un groupe réductif
sur un anneau quelconque $\L$ et les faisceaux pervers à coefficients
$\L$ sur le dual de Langlands, défini sur $\CM$. La topologie
classique permet d'utiliser des coefficients arbitraires. Ce travail
permet d'ailleurs de donner une définition intrinsèque du dual de
Langlands en construisant sa catégorie de représentations. Nous y
reviendrons dans la section \og Perspectives \fg.

\subsection*{Correspondance de Springer modulaire}

Il était tentant de chercher un lien entre représentations modulaires
des groupes de Weyl et faisceaux pervers modulo $\ell$ sur les
nilpotents. Autrement dit, de chercher à définir une correspondance de
Springer modulaire. Il est vrai que la construction de
Lusztig-Borho-MacPherson utilise le théorème de décomposition de
Gabber \cite{BBD}, qui n'est pas valable en caractéristique
$\ell$. Mais Hotta et Kashiwara \cite{HK} ont une approche \emph{via}
une transformation de Fourier pour les $\DC$-modules dans le cas où le
corps de base est celui des complexes, ce qui évite de recourir au
théorème de décomposition. De plus, la transformation de
Fourier-Deligne permet de considérer un corps de base de
caractéristique $p$ et des coefficients $\ell$-adiques \cite{BRY}. Dans cette
thèse, nous définissons une correspondance de Springer en utilisant
la transformation de Fourier-Deligne avec des coefficients modulo
$\ell$. De plus, nous introduisons une matrice de décomposition pour
les faisceaux pervers sur les nilpotents, et nous la comparons à la
matrice de décomposition du groupe de Weyl. Par ailleurs, nous
calculons de façon purement géométrique certains nombres de
décomposition. Nous constatons que certaines propriétés des nombres de
décomposition des groupes de Weyl peuvent être vues comme le reflet de
propriétés géométriques. Par exemple, la règle de suppression de lignes et
de colonnes de James peut s'expliquer par une règle similaire de Kraft
et Procesi \cite{KP1} sur les singularités nilpotentes, une fois qu'on
a déterminé la correspondance de Springer modulaire pour $GL_n$ (ce
que nous ferons dans cette thèse).

\section*{Contenu détaillé}

\subsection*{Préliminaires et exemples}

Dans le chapitre \ref{chap:preliminaries}, nous faisons des rappels
sur les faisceaux pervers sur $\KM$, $\OM$ et $\FM$ et donnons
quelques compléments, qui nous seront utiles par la suite. Nous
insistons en particulier sur les aspects spécifiques à $\OM$ et
$\FM$. Par exemple, sur $\OM$ nous n'avons pas une perversité
autoduale, mais une paire de perversités, $p$ et $p_+$, échangées par
la dualité. De plus, nous étudions l'interaction entre les paires de
torsion et les $t$-structures (voir à ce sujet \cite{HRS}), et aussi
avec les situations de recollement. Dans ce passage quelque peu
technique, on trouvera de nombreux triangles distingués qui seront
utilisés par la suite. Le point clé est que la réduction (dérivée)
modulo $\ell$ ne commute pas aux troncations en général. Nous donnons
aussi quelques compléments sur les extensions perverses
$\p j_!$, $\p j_{!*}$, $\p j_*$ (sur la tête et le socle, et sur le
comportement vis-à-vis des multiplicités). Finalement, nous
définissons les nombres de décomposition pour les faisceaux
pervers. Nous sommes particulièrement intéressés par le cas d'une
$G$-variété ayant un nombre fini d'orbites~: c'est le cône nilpotent
que nous avons en vue.

Dans le chapitre \ref{chap:examples}, nous donnons quelques exemples
de faisceaux pervers, et en particulier de complexes de cohomologie
d'intersection sur $\EM$. Nous rappelons les propriétés des morphismes
propres et semi-petits (resp. petits). En particulier, le complexe de
cohomologie d'intersection d'une variété admettant une petite
résolution est obtenu par image directe.

Ensuite nous introduisons la notion d'équivalence lisse de
singularités, et rappelons que la cohomologie d'intersection locale
est un invariant pour cette équivalence.

Puis nous étudions les singularités coniques, où la cohomologie
d'intersection locale se ramène à un calcul de la cohomologie d'une
variété (l'ouvert complémentaire du sommet du cône), et, plus
généralement, le cas d'une variété affine munie d'une $\GM_m$-action
contractant tout sur l'origine.

Il est naturel de se demander quand le complexe de cohomologie
d'intersection se réduit au faisceau constant $\EM$ (de telle sorte que la
variété vérifie la dualité de Poincaré usuelle). On parle alors de
variété $\EM$-lisse. Un exemple typique de variété $\KM$-lisse
(resp. $\FM$-lisse) est fourni par le quotient
d'une variété lisse par un groupe fini (resp. par un groupe fini
d'ordre premier à $\ell$).

Dans la dernière section de ce chapitre, nous étudions les
singularités simples. Une variété normale $X$ a des singularités
rationnelles si on a une résolution $\pi : \Xti \to X$ avec
$R^i \pi_* \OC_X = 0$ pour $i > 0$.
Sur $\CM$, les surfaces à point double rationnel sont (à équivalence
analytique près) les quotients du plan affine
par un sous-groupe fini de $SL_2(\CM)$. Elles sont classifiées par les
diagrammes de Dynkin simplement lacés. On peut interpréter les autres
types en considérant en plus l'action d'un groupe de symétries. On
associe à chaque diagramme de Dynkin $\G$ un diagramme homogène
$\wh\G$, et un groupe de symétries $A(\G)$. Dans le cas où $\G$ est
déjà homogène, on a $\wh\G = \G$ et $A(\G) = 1$. Soit $\wh\Phi$ un
système de racines de type $\wh\G$. On note $P(\wh\Phi)$ le réseau des
poids, et $Q(\wh\Phi)$ le réseau radiciel. Soit $H$
le sous-groupe fini de $SL_2(\CM)$ associé à $\wh\G$. On montre que
\[
H^2\left((\AM^2\setminus\{0\})/H,\ZM\right) \simeq P(\wh\Phi)/Q(\wh\Phi)
\]
avec une action naturelle de $A(\G)$. Grâce aux résultats du chapitre
\ref{chap:preliminaries}, cela nous permet de comparer les faisceaux
pervers en caractéristiques $0$ et $\ell$.

\subsection*{Calculs de nombres de décomposition}

Jusqu'au chapitre \ref{chap:dec}, notre but est de calculer certains
nombres de décomposition pour les faisceaux pervers $G$-équivariants
sur la variété nilpotente, par des méthodes géométriques.

Introduisons d'abord quelques notations. Les faisceaux pervers simples sur $\KM$
(resp. $\FM$) sont paramétrés par l'ensemble $\NG_\KM$
(resp. $\NG_\FM$) des paires $(x,\rho)$ (à conjugaison près) constituées d'un élément
nilpotent $x$ et d'un caractère $\rho\in\Irr \KM A_G(x)$ (resp.
$\rho \in \Irr \FM A_G(x)$), où $A_G(x)$ est le groupe fini des
composantes du centralisateur de $x$ dans $G$. On utilisera la notation
\[
\left(d_{(x,\rho),(y,\s)}\right)_{(x,\rho)\in\NG_\KM,\ (y,\s)\in\NG_\FM}
\]
pour la matrice de décomposition de ces faisceaux pervers. Dans le cas
de $GL_n$, tous les $A_G(x)$ sont triviaux, si bien qu'on peut oublier
$\rho$ qui est toujours $1$, et les orbites nilpotentes sont
paramétrées par l'ensemble $\PG_n = \{\l\vdash n\}$ des partitions de
$n$. Dans ce cas, la matrice de décomposition sera notée
\[
(d_{\l,\mu})_{\l,\mu\in\PG_n}
\]

Quant à la matrice de décomposition pour le groupe de Weyl $W$, on la notera
\[
\left(d_{E,F}^W\right)_{E\in\Irr\KM W,\ F\in\Irr\FM W}
\]

Pour le groupe symétrique $\SG_n$, les $\KM\SG_n$-modules simples sont
les modules de Specht $S^\l$, pour $\l \in \PG_n$. Ils sont définis
sur $\ZM$, et munis d'une forme bilinéaire symétrique définie sur $\ZM$. La
réduction modulaire du module de Specht, que l'on notera encore
$S^\l$, est donc munie elle aussi d'une forme bilinéaire
symétrique. Le quotient de $S^\l$ par le radical de cette forme
bilinéaire symétrique est soit nul, soit un $\FM\SG_n$-module
simple. L'ensemble des $\mu$ tels que ce quotient soit non nul
(celui-ci sera alors noté $D^\mu$) est
l'ensemble $\PG_n^{\ell\text{-reg}}$ des partitions de $n$ qui sont
$\ell$-régulières (dont chaque partie est répétée au plus $\ell - 1$
fois). Les $D^\mu$, pour $\mu\in\PG_n^{\ell\text{-reg}}$, forment un
système de représentants des classes d'isomorphisme de
$\FM\SG_n$-modules simples. La matrice de décomposition du groupe
symétrique $\SG_n$ sera notée plutôt
\[
\left(d_{\l,\mu}^{\SG_n}\right)_{\l\in\PG_n,\ \mu\in\PG_n^{\ell\text{-reg}}}
\]

Pour l'algèbre de Schur 
\[
S_\EM(n) = S_\EM(n,n) = \End_{\EM\SG_n}\left(\bigoplus_{\l \vdash n}
\Ind_{\EM \SG_\l}^{\EM \SG_n} \EM\right)
\]
on notera la matrice de décomposition
\[
\left(d_{\l,\mu}^{S(n)}\right)_{\l,\mu \in\PG_n}
\]
Il est connu que
\[
d_{\l,\mu}^{S(n)} = d_{\l',\mu'}^{\SG_n}
\]
pour $\l\in\PG_n,\ \mu\in\PG_n^{\ell\text{-reg}}$, où $\l'$ désigne la
partition transposée. Nous allons voir que
\[
d_{\l,\mu} = d^{\SG_n}_{\l',\mu'} = d_{\l,\mu}^{S(n)}
\]
pour $\l\in\PG_n,\ \mu\in\PG_n^{\ell\text{-reg}}$,
et nous conjecturons que
\[
d_{\l,\mu} = d^{S(n)}_{\l,\mu}
\]
pour \emph{toutes} les partitions $\l$, $\mu$ de $n$
(voir à ce sujet les remarques dans la dernière section de cette introduction).

Dans le chapitre \ref{chap:min}, nous calculons la cohomologie entière
de l'orbite nilpotente (non triviale) minimale $\OC_\mini$ dans une
algèbre de Lie simple $\gG$ sur le corps des nombres complexes. En
réalité, les résultats et méthodes de ce chapitre sont valables pour
un corps de base de caractéristique $p > 0$, à condition de travailler
avec la cohomologie étale et de prendre pour coefficients les entiers
$\ell$-adiques.

La cohomologie rationnelle de $\OC_\mini$ est déjà connue.
La dimension de $\OC_\mini$ est $d = 2h^\vee - 2$, où $h^\vee$ est le
nombre de Coxeter dual. La première moitié de la cohomologie est donnée par
\[
\t_{\leqslant d - 1} \rg(\OC_\mini,\QM) \simeq \bigoplus_{i=1}^k \QM[-2(d_i - 2)]
\]
où $k$ est le nombre de racines simples longues, et 
$d_1 \leqslant \ldots \leqslant d_k \leqslant \ldots \leqslant d_n$
sont les degrés de $W$ ($n$ étant le nombre total de racines simples).
L'autre moitié s'en déduit par dualité de Poincaré.

C'est donc la torsion qui nous intéresse.
Si $\Phi$ est le système de racines de $\gG$, et
$\Phi'$ le sous-système engendré par les racines simples longues (pour une
certaine base), alors la cohomologie moitié de $\OC_ \mini$ est
\[
H^d(\OC_\mini,\ZM) \simeq P^\vee(\Phi')/Q^\vee(\Phi')
\]
Nous verrons par la suite que ce résultat est lié à la réduction
modulaire de la représentation naturelle du groupe de Weyl $W'$ de $\Phi'$.

En dehors de la cohomologie moitié, nous n'avons pas d'expression
uniforme pour la partie de torsion de la cohomologie de
$\OC_\mini$. En revanche, on sait toutefois que c'est le conoyau d'une
matrice dont les coefficients sont déterminés explicitement par
l'ensemble ordonné des racines longues dans $\Phi$, qui est nivelé par
cohauteur (hauteur de la coracine), ce qui nous permet de faire le
calcul dans tous les types. En dehors de la cohomologie moitié, nous
constatons que les seuls nombres premiers qui interviennent dans la
torsion de la cohomologie sont mauvais. Cela revient à dire que les
fibres du complexe de cohomologie d'intersection entière sont sans
$\ell$-torsion pour $\ell$ bon (uniquement pour la perversité $p$,
précisément pas pour $p_+$, où la cohomologie moitié intervient).
Nous ignorons si l'on peut trouver une interprétation (peut-être
homologique) en termes de théorie des représentations à ces groupes de
mauvaise torsion.

Dans le chapitre \ref{chap:dec}, nous calculons certains nombres de
décomposition pour les faisceaux pervers $G$-équivariants sur la
variété nilpotente, en utilisant d'une part les résultats précédents,
et d'autre part des résultats géométriques que l'on peut trouver dans
la littérature.

Tout d'abord, nous déterminons les nombres de décomposition associés aux
classes régulière et sous-régulière (le centralisateur d'un élément
sous-régulier peut être non connexe). Comme dans la section sur les
singularités simples, on associe au type $\G$ de $G$ un diagramme
homogène $\wh\Phi$ et un groupe de symétries $A := A(\G)$ qui est
isomorphe à $A_G(x_\subreg)$ lorsque $G$ est adjoint. On a
\[
d_{(x_\reg,1),(x_\subreg,\rho)}
= [\FM \otimes_\ZM (P(\wh\Phi)/Q(\wh\Phi)) : \rho]
\]
pour tous les $\rho$ dans $\Irr \FM A$. On calcule cette multiplicité
dans tous les types, pour chaque nombre premier $\ell$ et pour chaque
$\rho \in \Irr\FM A$.
En ce qui concerne les classes minimales et triviales, il découle des
résultats du chapitre \ref{chap:min} que
\[
d_{(x_\mini,1),(0,1)}
= \dim_\FM \FM \otimes_\ZM \left( P^\vee(\Phi')/Q^\vee(\Phi') \right)
\]
Nous donnons également cette multiplicité dans tous les types.
\`A titre d'exemple, voyons ce qu'il se passe pour $GL_n$. On trouve
\[
d_{(n),(n-1,1)} = d_{(21^{n-2}),(1^n)} =
\begin{cases}
1 \text{ si } \ell \mid n\\
0 \text{ sinon}
\end{cases}
\]
ce qui est compatible avec notre conjecture faisant le lien avec
l'algèbre de Schur. Nous avons un autre résultat qui va dans ce sens.
Les nombres de décomposition de l'algèbre de Schur vérifient la
propriété suivante. Si $\l$ et $\mu$ sont deux partitions de $n$ dont
les $r$ premières lignes et les $s$ premières colonnes sont
identiques, et si $\l_1$ et $\mu_1$ désignent les partitions (d'un
entier $n_1$ plus petit) obtenues à partir de $\l$ et $\mu$ en
supprimant ces lignes et ces colonnes, on a

\[
d^{S(n)}_{\l,\mu} = d^{S(n_1)}_{\l_1,\mu_1}
\]

Kraft et Procesi ont montré que les singularités des adhérences
des orbites nilpotentes dans $GL_n$ vérifient une propriété
similaire \cite{KP1}. Avec les mêmes notations, on a

\[
\codim_{\ov\OC_{\l_1}} \OC_{\mu_1} = \codim_{\ov\OC_\l} \OC_\mu
\quad\text{ et }\quad
\Sing(\ov\OC_{\l_1}, \OC_{\mu_1}) = \Sing(\ov\OC_\l, \OC_\mu)
\]

Nous en déduisons que les nombres de décomposition $d_{\l,\mu}$
vérifient la même propriété~:

\[
d_{\l,\mu} = d_{\l_1,\mu_1}
\]

Si $\l > \mu$ sont deux partitions de $n$ adjacentes pour l'ordre de
dominance (c'est-à-dire s'il n'existe pas de partition $\nu$ telle que
$\l > \nu > \mu$), Kraft et Procesi utilisent le résultat sur les
lignes et les colonnes pour ramener la détermination de la singularité
de $\ov\OC_\l$ le long de $\OC_\mu$ aux cas extrêmes
$(\l,\mu) = ((m), (m - 1, 1))$ et $(\l,\mu) = ((2, 1^{m-2}), (1^m))$,
pour un $m$ plus petit. Les dégénérescences minimales en type $A_n$
sont donc toutes de types $A_m$ (une singularité simple de type $A_m$)
ou $a_m$ (une singularité minimale de type $a_m$), pour des $m$ plus
petits.

Comme, dans $GL_n$, tous les $A_G(x)$ sont triviaux, cela suffit pour
déterminer le nombre de décomposition $d_{\l,\mu}$ lorsque $\l$ et
$\mu$ sont adjacentes. Dans ce cas, on a bien~:

\[
d^{S(n)}_{\l,\mu} = d_{\l,\mu}
\]

Kraft et Procesi ont aussi démontré que les singularités des
adhérences des orbites nilpotentes dans les types classiques vérifient
une règle de suppression des lignes et des colonnes \cite{KP2}. Il
faut traiter les cas orthogonaux et symplectiques à la fois. Ils en
déduisent le type des singularités des dégénérescences minimales dans
ce cas. Ils ne trouvent que des singularités simples et minimales de
types classiques, à une exception près. Plus précisément, dans le cas
de la codimension deux, on a (à équivalence lisse près) une
singularité de type $A_k$, $D_k$ ou $A_k \cup A_k$, cette dernière
étant la réunion non normale de deux singularités simples de type de
type $A_k$, s'intersectant transversalement en leur point
singulier. Lorsque la codimension est strictement supérieure à $2$, on
a une singularité minimale de type $b_k$, $c_k$ ou $d_k$. En
supprimant des lignes et des colonnes, on peut toujours se ramener à
ces cas irréductibles. Dans cet article, Kraft et Procesi déterminent
quelles adhérences d'orbites sont normales dans les types classiques,
ce qui était leur but.

Nous pouvons utiliser leurs résultats pour déterminer d'autres nombres
de décomposition dans les types classiques, mais pour pouvoir le faire
dans tous les cas il faudrait aussi déterminer les systèmes locaux qui
interviennent. Quoi qu'il en soit, pour une dégénérescence minimale
$\ov\OC \supset \OC'$ en type classique, on peut toujours déterminer
la quantité suivante~:
\[
\sum_{\rho \in \Irr \FM A_G(x_{\OC'})}
d_{(x_\OC,1),(x_{\OC'},\rho)}
\]
(Dans les types classiques, les $A_G(x)$ sont de la forme $(\ZM/2)^k$,
donc abéliens, et tous les $\rho \in \Irr \FM A_G(x_{\OC'})$ sont de
degré $1$.) En particulier, on peut dire quand les
$d_{(x_\OC,1),(x_{\OC'},\rho)}$ sont nuls pour tous les $\rho \in
\Irr \FM A_G(x_{\OC'})$. Une étude plus précise devrait pouvoir
suffire à déterminer tous les nombres de décomposition de ce type.

Un autre résultat de Kraft et Procesi, concernant la décomposition
spéciale de la variété nilpotente \cite{KP3}, nous permet de montrer la
nullité de certains nombres de décomposition, dans les types
classiques, lorsque $\ell \neq 2$. Dans \cite{LusSpec}, Lusztig a
introduit un sous-ensemble de $\Irr \KM W$, constitué des
représentations dites spéciales. Les classes nilpotentes spéciales
sont les classes $\OC$ telles que la représentation $\chi$ associée à
$(\OC,\KM)$ par la correspondance de Springer est spéciale.
D'autre part, Spaltenstein a introduit dans \cite{SpalDual} une
application décroissante de l'ensemble des classes nilpotentes dans
lui-même, telle que $d^3 = d$ (c'est une involution sur son
image). L'image de $d$ est précisément l'ensemble des classes
spéciales. Les variétés localement fermées
\[
\widehat \OC = \ov \OC \setminus
\mathop{\bigcup_{\OC' \text{ spéciale}}}\limits_{\ov \OC'\subset\ov \OC} \ov \OC'
\]
où $\OC$ parcourt l'ensemble des classes spéciales, forment une
partition de la variété nilpotente $\NC$. Elles sont appelées pièces
spéciales. Ainsi, chaque classe nilpotente est incluse dans une unique
pièce spéciale. Lusztig a attaché à chaque classe spéciale $\OC$ un
quotient canonique $\ov A_G(x_\OC)$ du groupe fini $A_G(x_\OC)$, et
conjecturé que la pièce spéciale $\wh\OC$ est le quotient d'une
variété lisse par $\ov A_G(x_\OC)$. Une conséquence de cette
conjecture est que $\wh\OC$ est $\KM$-lisse, mais en fait cela donne
plus d'information~: en particulier, la conjecture implique que
$\wh\OC$ est $\FM$-lisse dès que $\ell$ ne divise pas l'ordre de ce
groupe $\ov A_G(x_\OC)$. Dans \cite{KP3}, Kraft et Procesi montrent
cette conjecture dans les types classiques. Nous en déduisons que,
dans les types classiques, on a
\[
d_{(x_\OC,1),(x_{\OC'},\rho)} = 0
\]
dès que $\ell > 2$, lorsque $\OC$ est une classe spéciale, $\OC'$ une
classe incluse dans la pièce spéciale $\wh\OC$, et
$\rho \in \Irr \FM A_G(x_{\OC'})$. Une étude plus détaillée
permettrait peut-être de déterminer les nombres de décomposition
lorsque $\ell = 2$.

Faisons une remarque supplémentaire.
Dans un autre article \cite{Kraft}, Kraft résout le
problème de la normalité des adhérences d'orbites nilpotentes dans
$G_2$. On y trouve l'information suivante, qui n'est pas couverte par
les résultats précédents~: $\ov\OC_{10}$ a une singularité de type
$A_1$ en $\OC_8$, où $\OC_i$ désigne l'unique orbite nilpotente de
dimension $i$ dans l'algèbre de Lie $\gG$ d'un groupe simple $G$ de type
$G_2$. Comme $A_G(x_8) = 1$ (on note $x_i$ est un représentant de $\OC_i$),
cela nous permet de déterminer le nombre de décomposition
$d_{(x_{10},1),(x_8,1)}$~:
\[
d_{(x_{10},1),(x_8,1)} = 
\begin{cases}
1 \text{ si } \ell = 2,\\
0 \text{ sinon}
\end{cases}
\]

Une étude plus détaillée de cet article permettrait peut-être
de retrouver d'autres nombres de décomposition géométriquement dans
$G_2$. Quoi qu'il en soit, en utilisant la correspondance de Springer
modulaire nous pourrons déterminer toute la matrice de décomposition
pour $G_2$ lorsque $\ell = 3$, et toute la matrice sauf une colonne
lorsque $\ell = 2$. Pour $\ell > 3$, $\ell$ ne divise pas l'ordre du
groupe de Weyl, et la matrice de décomposition est l'identité~; je
pense que c'est vrai pour dans tous les types, mais il faudra étudier
la notion de cuspidalité. Au moins, la partie de la matrice
de décomposition correspondant au groupe de Weyl est bien l'identité,
comme nous le verrons.

\subsection*{Correspondance de Springer modulaire et matrices de décomposition}

Dans la suite de la thèse, nous définissons une correspondance de
Springer modulaire et en établissons quelques propriétés, notamment le
fait qu'elle préserve les nombres de décomposition.
Comme le théorème de décomposition de Gabber \cite{BBD} n'est pas vrai
dans le cadre modulaire, nous nous inspirons de l'approche de Kashiwara et
Brylinski \cite{BRY}, utilisant une transformation de Fourier.

Dans le chapitre \ref{chap:fourier}, nous introduisons la
transformation de Fourier-Deligne en suivant un article de Laumon
\cite{Lau}. Nous détaillons les preuves, et vérifions que tout se
passe bien lorsque les coefficients sont $\KM$, $\OM$ ou $\FM$.

Le chapitre \ref{chap:springer} est le c{\oe}ur de cette thèse.  Nous
commençons par rappeler le contexte géométrique de la correspondance
de Springer, qui est celui de la résolution simultanée de Grothendieck $\pi$
des singularités des fibres du quotient adjoint. Prenant la fibre en
zéro, on retrouve la résolution de Springer $\pi_\NC$ du cône
nilpotent $\NC$.

Puis nous introduisons les faisceaux pervers $\EM\KC_\rs$, $\EM\KC$ et
$\EM \KC_\NC$, respectivement sur l'ouvert $\gG_\rs$ des éléments
réguliers semi-simples, sur $\gG$ tout entière, et sur le fermé $\NC$
des éléments nilpotents. Nous avons le diagramme commutatif à carrés
cartésiens suivant~:

\[
\xymatrix@=1.5cm{
\tilde\gG_\rs
\ar[d]_{\pi_\rs}
\ar@<-.5ex>@{^{(}->}[r]^{\tilde j_\rs}
\ar@{}[dr] | {\DS\boxempty_\rs}
&
\tilde\gG
\ar[d]^\pi
\ar@{}[dr] | {\DS\boxempty_\NC}
&
\NCt
\ar@<.5ex>@{_{(}->}[l]_{i_\NCt}
\ar[d]^{\pi_\NC}
\\
\gG_\rs
\ar@<-.5ex>@{^{(}->}[r]_{j_\rs}
&
\gG
&
\NC
\ar@<.5ex>@{_{(}->}[l]^{i_\NC}
}
\]

On note $r$ le rang de $G$, et $\nu$ le nombre de racines positives
dans $\Phi$. On pose

\begin{gather*}
\EM\KC_\rs = {\pi_\rs}_! \EM_{\tilde \gG_\rs} [2\nu + r]\\
\EM\KC     = \pi_!       \EM_{\tilde \gG} [2\nu + r]\\
\EM\KC_\NC = {\pi_\NC}_! \EM_\NCt [2\nu]
\end{gather*}

On a
\begin{gather*}
\EM\KC = \p {j_\rs}_{!*} \EM\KC_\rs\\
\EM\KC_\NC = i_\NC^* \EM\KC [-r]
\end{gather*}

Le morphisme $\pi$ est propre et petit, génériquement un $W$-torseur
(au-dessus de $\gG_\rs$), et sa restriction $\pi_\NC$ aux nilpotents
est semi-petite.

Ensuite, nous définissons une correspondance de Springer modulaire
utilisant une transformation de Fourier-Deligne. Pour $E\in\Irr\KM W$,
le faisceau pervers associé par la correspondance de Springer à
la Brylinski est $\TC(E) = \FC(\p {j_\rs}_{!*}(E[2\nu + r])$.
Cela permet de définir une application injective
\[
\Psi_\KM : \Irr \KM W \longto \NG_\KM
\]
On notera $\impsi_\KM$ son image.
Nous procédons de même pour la correspondance de Springer modulaire. \`A
$F\in \Irr \FM W$, on associe
\[
\TC(F) = \FC(\p {j_\rs}_{!*}(F[2\nu + r]))
\]
C'est un faisceau pervers simple $G$-équivariant sur $\NC$. Il est
donc de la forme $\p \JC_{!*} (\OC_F,\LC_F)$ pour une certaine paire
$(\OC_F,\LC_F)$ appartenant à l'ensemble $\NG_\FM$ des paires
$(\OC,\LC)$ constituées d'une orbite nilpotente $\OC$ et d'un
$\FM$-système local $G$-équivariant sur $\OC$. On identifiera
$\NG_\FM$ à l'ensemble des paires $(x,\rho)$ où $x \in \NC$ et
$\rho \in \Irr \FM A_G(x)$, à conjugaison près. On obtient donc une
application injective
\[
\Psi_\FM : \Irr \FM W \longto \NG_\FM
\]

On notera $\impsi_\FM$ son image.

Ensuite, nous montrons que la matrice de décomposition du groupe de
Weyl $W$ peut être extraite de la matrice de décomposition pour les
faisceaux pervers $G$-équivariants sur la variété nilpotente, en ne
gardant que les lignes qui sont dans l'image de la correspondance de
Springer ordinaire, et les colonnes qui sont dans l'image de la
correspondance de Springer modulaire. Plus précisément, nous montrons
que, pour tous $E\in\Irr\KM W$ et $F\in\Irr\FM W$, on a
\[
d^W_{E,F} = d_{\Psi_\KM(E),\Psi_\FM(F)}
\]

Puis nous déterminons la correspondance de Springer modulaire lorsque
$G = GL_n$. On a alors~:
\[
\impsi_\FM = \PG_n^{\ell\text{-res}}
\]
où $\PG_n^{\ell\text{-res}}$ est l'ensemble des partitions
$\ell$-restreintes de $n$, c'est-à-dire dont la transposée est
$\ell$-régulière.
\[
\forall \l \in \PG_n^{\ell\text{-reg}},\qquad
\Psi_\FM(D^\l) = \l'
\]

En particulier, pour $\l\in\PG_n$ et $\mu\in\PG_n^{\ell\text{-reg}}$,
on a~:
\[
d^{\SG_n}_{\l,\mu} = d_{\l',\mu'}
\]
de telle sorte qu'on peut voir la règle de suppression des lignes et
des colonnes de James comme une conséquence du résultat géométrique
de Kraft et Procesi sur les singularités nilpotentes.

\section*{Perspectives}

Les thèmes de réflexion pour prolonger ce travail ne manquent pas.

\subsection*{Géométrie des orbites nilpotentes}

Cette thèse a révélé de nouveaux liens entre la théorie des
représentations des groupes de Weyl et la géométrie des classes
nilpotentes. On peut s'attendre à de nouvelles interactions entre les
deux domaines.

Par exemple, nous avons remarqué que la règle de suppression de
lignes et de colonnes de James peut s'expliquer géométriquement grâce
au résultat de Kraft et Procesi sur les singularités nilpotentes. 

Du côté des représentations, Donkin a trouvé une généralisation de cette
règle \cite{DonkinGen}. Je m'attends à une généralisation similaire du
côté de la géométrie des adhérences d'orbites nilpotentes (on devrait
trouver une singularité produit).

\subsection*{Détermination de la correspondance de Springer modulaire,
ensembles basiques}

Une question se pose naturellement à propos de la correspondance de
Springer modulaire. Supposons pour d'abord pour simplifier que $\ell$ ne divise pas les
$A_G(x)$. Alors on peut identifier $\NG_\FM$ et $\NG_\KM$ à un
ensemble de paramètres $\PG$ commun. A-t-on alors $\impsi_\FM \subset \impsi_\KM$~?

Supposons que ce soit le cas.
Dans ce cas, pour chaque $F \in \Irr \FM W$, il existe un unique
$E \in \Irr \KM W$ tel que $\Psi_\KM(E) = \Psi_\FM(F)$.
Cela permet de construire un ensemble basique pour $W$ et montre de
façon géométrique la triangularité de la matrice de décomposition de $W$.

Même si $\ell$ divise l'ordre de certains $A_G(x)$, on peut s'en
sortir en choisissant un ensemble basique pour chaque $A_G(x)$ (qui
est de la forme $(\ZM/2)^k$ ou un groupe symétrique $\SG_k$,
$k\leqslant 5$, pour $G$ adjoint). En fait, pour tous ces groupes, il
y a un choix canonique.

Dans l'autre sens, la connaissance d'un ensemble basique pour $W$ et une
propriété de triangularité pour un ordre compatible avec les
adhérences des orbites associés par la correspondance de Springer
permet de déterminer la correspondance de Springer modulaire.

Nous avons pu déterminer la correspondance de Springer modulaire de
$GL_n$, ainsi que pour les groupes de rang inférieur ou égal à trois,
pour cette raison (il faut faire attention pour $G_2$ car il y a une paire
cuspidale en caractéristique zéro).

Si on arrivait à montrer que $\impsi_\FM \subset \impsi_\KM$, il
serait intéressant de déterminer l'ensemble basique qu'on obtient, et de
le comparer à l'ensemble basique canonique de \cite{GeckRouquier},
lorsque celui-ci est défini (c'est-à-dire quand $\ell$ ne divise pas
les $A_G(x)$).

\subsection*{Correspondance de Springer modulaire généralisée,
  faisceaux-caractères modulaires}

Dans la correspondance de Springer originale, $\impsi_\KM$ contient
toujours les paires de la forme $(\OC,\KM)$, mais en général
$\impsi_\KM$ est strictement inclus dans $\NG_\KM$. La principale
motivation de Lusztig dans \cite{ICC} est de comprendre les paires manquantes. 
Ce travail se poursuit dans la série d'articles sur les faisceaux
caractères, qui permet de décrire les caractères des groupes finis de
type de Lie.

Il est clair qu'une des premières choses à faire pour continuer le
travail de cette thèse est d'étudier les notions d'induction et de
restriction, de cuspidalité, de définir une correspondance de Springer
généralisée, et de la déterminer dans tous les cas. Peut-être
verra-t-on apparaître de nouveaux objets combinatoires pour les types
classiques (des $\ell$-symboles~?).

J'espère que tout cela débouchera sur une théorie des
faisceaux-caractères modulaires permettant d'étudier les
représentations modulaires des groupes finis de type de Lie. Dans le
dernier chapitre, nous présentons brièvement quelques calculs sur
$\sG\lG_2$.

\subsection*{Détermination de fibres de cohomologie d'intersection}

La détermination des fibres de cohomologie d'intersection (soit sur
$\OM$ pour la perversité $p_+$, soit sur $\FM$) pour les
adhérences d'orbites nilpotentes suffirait pour connaître la matrice
de décomposition pour les faisceaux pervers, et donc (si l'on a déterminé
la correspondance de Springer modulaire) celle du groupe de Weyl. Dans
cette thèse, nous déterminons cette correspondance pour $GL_n$. On a
transformé le problème des matrices de décomposition du groupe
symétrique en un problème topologique et géométrique, où il n'est plus
fait mention du groupe de Weyl.

Bien sûr, ce problème est sans aucun doute très difficile. En
caractéristique zéro, la détermination des fibres de cohomologie
d'intersection pour les nilpotents passe par la correspondance de
Springer, les formules d'orthogonalité des fonctions de Green
(voir l'algorithme que Shoji utilise dans \cite{ShojiF4}
pour le type $F_4$, qui est repris dans d'autres travaux comme
\cite{BeSpa} pour les types $E_6$, $E_7$, $E_8$, et généralisé
dans \cite[\S 24]{CS5}). Il est peu problable qu'il existe un tel
algorithme en caractéristique $\ell$.

\subsection*{Variétés de Schubert}

Les faisceaux pervers à coefficients modulo $\ell$ n'avaient jamais
été utilisés, à ma connaissance, pour étudier directement les
représentations modulaires des groupes de Weyl, mais en revanche ils
apparaissent dans au moins deux autres contextes en théorie des
représentations.
Le premier de ces contextes est celui de la théorie de
Kazhdan-Lusztig, et donc de variétés de Schubert.

Cette fois, on considère un groupe réductif complexe $G$ sur $k$ de
caractéristique $\ell$, et on veut en étudier les
représentations rationnelles. Pour tout poids $\l$ dans $X(T)$, on a un
module induit $\nabla(\l)$. S'il est non nul, il a un socle simple
$L(\l)$, et toutes les représentations simples de $G$ peuvent être
construites de cette manière.
On veut déterminer les multiplicités $[\nabla(\l) : L(\mu)]$ pour des
poids $\l$, $\mu$ dans $X(T)$ tels que $\nabla(\l)$ et $\nabla(\mu)$
soient non nuls. Lusztig \cite{LusPb} a proposé une conjecture pour
ces multiplicités dans le cas $\ell > h$ (l'analogue pour $G$ défini sur $\CM$ avait été
conjecturé dans \cite{KL1}), faisant un lien avec les faisceaux
pervers sur le dual de Langlands $\GC$ de $G$.

\`A l'époque où Soergel écrivait \cite{Soergel}, on
savait \cite{AJS} que cette conjecture était vraie pour $\ell$
\og assez grand \fg, mais, en dehors des types $A_1$, $A_2$, $A_3$,
$B_2$ et $G_2$, il n'y avait pas un seul nombre premier $\ell$ dont on sût
s'il était assez grand~! On espère qu'il suffit de prendre $\ell$ plus
grand que le nombre de Coxeter $h$. Soergel montre que, si $\ell > h$,
alors une partie de la conjecture de Lusztig (pour les poids \og
autour du poids de Steinberg \fg) est équivalente au fait que
$\pi_{s*}\p\JC_{!*}(S_w,\FM)$ est semi-simple pour toute réflexion
simple $s$ et tout élément $w$ du groupe de Weyl $W$, où
$\pi_s$ est le morphisme quotient $\GC/\BC \to \GC/\PC_s$ (on dénote 
par $\BC$ un sous-groupe de Borel de $\GC$, et par $\PC_s$
le sous-groupe parabolique minimal contenant $\BC$ correspondant à $s$). Pour $\KM$ au
lieu de $\FM$, cela résulte du théorème de décomposition. De plus, il
définit de manière unique pour chaque $x$ dans $W$ un faisceau pervers
indécomposable $\LC_x$, dont les fibres de cohomologie encodent des
multiplicités.

\`A la fin de leur article original \cite{KL}, Kazhdan et Lusztig
mentionnent le cas de $Sp_4$. On a deux éléments de longueur trois
dans le groupe de Weyl. Parmi les deux variétés de Schubert
correspondantes, l'une est lisse, et l'autre a un lieu singulier de
codimension deux. Plus précisément, cette dernière est un fibré en
droites projectives sur une singularité simple de type $A_1$. On sait
(et nous le verrons dans cette thèse) que la cohomologie
d'intersection se comporte différemment pour $\ell = 2$ dans ce
cas. Je remercie Geordie Williamson pour avoir attiré mon attention
sur ce point.

Ainsi, on connaissait depuis longtemps des exemples avec de la $2$-torsion
dans les types non simplement lacés. Ce n'est que récemment que Braden
a montré qu'il y avait de la torsion dans les types $A_7$ et
$D_4$ (il l'a annoncé à la rencontre \og Algebraische Gruppen \fg à
Oberwolfach en 2004). Encore plus récemment, Geordie Williamson (un
étudiant de Soergel) est parvenu à obtenir des résultats
positifs. Dans \cite{WillLowRank}, ce dernier développe une procédure
combinatoire (basée sur le $W$-graphe), qui montre qu'il n'y a pas de
$\ell$-torsion, pour $\ell$ bon et différent de $2$, sous certaines
conditions qui sont très souvent vérifiées en petit rang. En
particulier, il montre que c'est le cas pour tout $\ell \neq 2$
dans les types $A_n$, $n < 7$. Ainsi, la conjecture de Lusztig (pour
les poids autour du poids de Steinberg) est vérifiée pour $SL_n$, $n \leqslant 7$,
dès que $\ell > n$.

Je pense qu'on pourrait trouver d'autres exemples de torsion dans les
variétés de Schubert, en utilisant les résultats de \cite{BP}. Dans
cet article, Brion et Polo décrivent les singularités génériques de
certaines variétés de Schubert (paraboliques), et en déduisent en
particulier une façon efficace de calculer certaines polynômes de
Kazhdan-Lusztig. Même dans le cas où les polynômes étaient déjà
connus, Brion et Polo donnent une information géométrique plus précise, 
qu'on pourrait utiliser pour calculer la torsion dans la cohomologie
d'intersection. Dans les cas où leurs résultats s'apppliquent, Brion
et Polo décrivent la singularité transverse comme l'adhérence de l'orbite d'un
vecteur de plus haut poids dans un module de Weyl pour un certain
sous-groupe réductif contenant $T$. On pourrait traiter ces
singularités comme on l'a fait pour la classe minimale. Ils décrivent
aussi une généralisation avec des multicônes.

\subsection*{Grassmannienne affine}

Le deuxième contexte où l'on a déjà utilisé des faisceaux pervers à
coefficients quelconques est celui des grassmanniennes affines. 
Mirkovi\'c et Vilonen donnent dans \cite{MV} une version géométrique
de l'isomorphisme de Satake. Ils construisent une équivalence entre la
catégorie des représentations rationnelles d'un groupe réductif $G$
sur un anneau quelconque $\L$ et une catégorie de faisceaux pervers
équivariants sur la grassmannienne affine du dual de Langlands de $G$,
défini sur $\CM$. \'Etant donné le lien entre la variété nilpotente et
la grassmannienne affine pour $G = GL_n$ \cite{LusGreen}, il me semble
maintenant que cela doit impliquer notre conjecture sur la coïncidence
de la matrice de décomposition pour les faisceaux pervers sur le cône
nilpotent avec celle de l'algèbre de Schur en type $A$ (il y a
peut-être quelques compatibilités à vérifier). Je remercie
les personnes qui m'ont signalé cet article, à commencer par George Lusztig
lui-même. Cependant, je pense qu'il serait aussi intéressant
d'explorer l'approche que nous proposons dans le dernier chapitre, qui
est un premier pas dans l'étude des faisceaux-caractères modulaires.

L'article de \cite{MV} suggère que les matrices de décomposition de
faisceaux pervers équivariants sur la grassmannienne affine a une
interprétation en termes de représentations.
En ce qui concerne la détermination géométrique concrète de ces nombres de
décomposition, dans un type quelconque,
cette thèse donne déjà quelques résultats, en utilisant \cite{MOV}. En
effet, la plupart des dégénérations minimales sont des singularités
Kleiniennes ou minimales, pour lesquelles nos résultats s'appliquent
directement. Dans les types non simplement lacés, on trouve aussi des
singularités que les auteurs appellent \og quasi-minimales \fg, de
types qu'ils désignent par $ac_2$, $ag_2$, et $cg_2$. Il serait intéressant de faire les
calculs sur les entiers dans ce cas. Par exemple, Malkin, Ostrik et
Vybornov conjecturent que les singularités de types $a_2$, $ac_2$ et $ag_2$
(resp. $c_2$ et $cg_2$) sont deux à deux non équivalentes. La
cohomologie d'intersection rationnelle ne permet pas de les
séparer. Mais peut-être pourrait-on les séparer en travaillant sur les
entiers. De la même manière, on pourrait trouver des preuves plus
simples de non-lissité (voir la dernière section de leur article, où
il font des calculs de multiplicités équivariantes). Par exemple, les
singularités de type $c_n$ et $g_2$ sont rationnellement lisses mais
non $\FM_2$-lisses. Il faudrait faire les calculs pour les
singularités quasi-minimales.

D'une manière générale, je pense que les faisceaux pervers sur les
entiers et modulo $\ell$ sont encore sous-utilisés, et qu'ils seront
amenés à jouer un rôle de plus en plus important, notamment en théorie des
représentations.

}

{\selectlanguage{english}

\addchap{Introduction}

\section*{Context and overview}

In 1976, Springer introduced a geometrical construction of irreducible
representations of Weyl groups, which had a deep influence and many
later developments, which lead to Lusztig's theory of character
sheaves, which enables one to calculate character values for finite
reductive groups. In this thesis, we define a Springer correspondence
for modular representations of Weyl groups, and establish some of its
properties, thus answering a question raised by Springer himself.

\subsection*{Modular representations, decomposition matrices}

The modular representation theory of finite groups, initiated and
developed by Brauer since the early 1940's, is concerned with
representations of finite groups over fields of characteristic $\ell >
0$. When $\ell$ divides the order of the group, the category of
representations is no longer semi-simple.

Let $W$ be a finite group. We fix a finite extension $\KM$ of the
field $\QM_\ell$ of $\ell$-adic numbers. Let $\OM$ be its valuation
ring. We denote by $\mG = (\varpi)$ the maximal ideal of $\OM$, and by
$\FM$ the residual field (which is finite of characteristic
$\ell$). The triplet $(\KM,\OM,\FM)$ is called an $\ell$-modular
system, and we assume that it is large enough for $W$ (that is, we
assume that all simple $\KM W$-modules are absolutely simple, and
similarly for $\FM$). The letter $\EM$ will denote either of the rings
of this triplet.

For an abelian category $\AC$, we denote by $K_0(\AC)$ its
Grothendieck group. When $\AC$ is the category of finitely generated
$A$-modules, where $A$ is a ring, we adopt the notation $K_0(A)$. The
full subcategory consisting of the projective objects of $\AC$ will be
denoted $\Proj \AC$.

The Grothendieck group  $K_0(\KM W)$ is free with basis $([E])_{E
\in\Irr \KM W}$, where $\Irr \KM W$ denotes the set of isomorphism
classes of simple $\KM$-modules; and similarly for $K_0(\FM
W)$. For $F \in \Irr \FM W$, let $P_F$ be a projective cover
of $F$. Then $([P_F])_{F\in\Irr \FM W}$ is a basis of
$K_0(\Proj\:\FM W)$. Reduction modulo $\mG$ defines an
isomorphism from $K_0(\Proj\:\OM W)$ onto $K_0(\Proj\:\FM W)$.

We will define the $cde$ triangle \cite{Serre}.
\[
\xymatrix{
K_0(\Proj\:\FM W) \ar[rr]^c \ar[dr]_e && K_0(\FM W)\\
& K_0(\KM W) \ar[ur]_d
}
\]
The morphism $c$ is induced by the function which maps every
projective $\FM W$-module on its class in $K_0(\FM W)$. The functor
``extension of scalars to $\KM$'' induces a morphism from
$K_0(\Proj\:\OM W)$ into $K_0(\KM W)$. The morphism $e$ is obtained by
composing with the inverse of the canonical isomorphism from
$K_0(\Proj\:\OM W)$ onto $K_0(\Proj\:\FM W)$.  The morphism $d$ is an
little more difficult to define. Let $E$ be a $\KM W$-module. One can choose a
$W$-stable $\OM$-lattice $E_\OM$ in $E$. The image of $\FM \otimes_\OM
E_\OM$ in $K_0(\FM W)$ does not depend on the choice of the lattice
\cite{Serre}. The morphism $d$ is induced by the function which maps
$E$ to $[\FM \otimes_\OM E_\OM]$.

We define the decomposition matrix
$D^W = (d^W_{E,F})_{E \in \Irr \KM W,\ F \in \Irr \FM W}$ by
\[
d([E]) = \sum_{F \in \Irr \FM W} d^W_{E,F} [F]
\]

One of the main problems in modular representation theory is to
determine these decomposition numbers $d^W_{E,F}$ explicitly for
interesting classes of finite groups. This problem is open for the
symmetric group.

The $cde$ triangle can be interpreted in terms of ordinary and modular
(or Brauer) characters. We refer to \cite{Serre}. When the ordinary
characters are known (which is the case for the symmetric group), the
knowledge of the decomposition matrix is equivalent to the
determination of Brauer characters.

There are variants of this problem when one leaves the framework of
finite groups. One can for example consider the modular
representations of Hecke algebras. These algebras are deformations of
reflection group algebras, and play a key role in the representation
theory of finite groups of Lie type. They are defined generically, and
one can study what happens for special values of the parameters.

One can also study the rational representations of a reductive group in
positive characteristic, or the representations of a reductive Lie
algebra, or of quantum groups (deformations of enveloping algebras),
etc. In that case, one considers the multiplicities of simple objects
in some classes of particular objects, whose characters are known.

\subsection*{Perverse sheaves and representations}

In the last three decades, geometric methods have lead to dramatic
progress in many parts of representation theory. We will be
particularly concerned with representations of Weyl groups and finite
groups of Lie type.

In 1976, Springer managed to construct geometrically all the
irreducible ordinary representations of a Weyl group in the cohomology
of certain varieties associated with the nilpotent elements of the
corresponding Lie algebra \cite{SPRTRIG, SPRWEYL}. This discovery had
a huge impact. Many other constructions were subsequently proposed by
other mathematicians. For example, Kazhdan and Lusztig proposed a
topological approach \cite{KL}, and Slodowy constructed these
representations by monodromy \cite{SLO1}. At the beginning of the
1980's, the blossoming of intersection cohomology permitted to
reinterpret Springer correspondence in terms of perverse
sheaves \cite{LusGreen, BM}.

Lusztig extended this work by studying a generalized Springer
correspondence, and some intersection cohomology complexes on a
reductive group $G$ or its Lie algebra $\gG$, which he calls
admissible complexes or character sheaves
\cite{ICC,CS1,CS2,CS3,CS4,CS5}. If $G$ is endowed with an
$\FM_q$-rational structure defined by a Frobenius endomorphism $F$,
then the $F$-stable character sheaves yield central functions on the
finite group $G^F$, which are very close to the irreducible
characters. The transition matrix between these two bases is described
by a Fourier transform. Thus, these geometric methods were powerful
enough to determine the ordinary characters of finite reductive groups
(at least for those with connected cent re).

As far as I know, up to now these methods have not been used to study
the modular representations of Weyl groups or of finite groups of Lie
type. Yet, this was the case in at least two other modular
situations. Soergel \cite{Soergel} transformed a problem about the
category $\OC$ into a problem for perverse sheaves with modular
coefficients on Schubert varieties. Secondly, Mirkovic and Vilonen
established an equivalence of categories between the rational
representations of a reductive group over an arbitrary ring $\L$ and
the perverse sheaves with $\L$ coefficients on the Langlands dual
group, defined over $\CM$. The classical topology allows one to use
arbitrary coefficients. By the way, this work gives an intrinsic
definition of the Langlands dual group by defining its category of
representations. We will come back to this in the section about
perspectives.

\subsection*{Modular Springer correspondence}

It was tempting to look for a link between modular representations of
Weyl groups and perverse sheaves modulo $\ell$ on the nilpotents. In
other words, to try to define a modular Springer correspondence. Of
course, the construction of Lusztig-Borho-MacPherson uses Gabber's
decomposition theorem \cite{BBD}, which no longer holds in
characteristic $\ell$. But Hotta and Kashiwara \cite{HK} have an
approach \emph{via} a Fourier transform for $\DC$-modules, in the case
of the base field is $\CM$, and this permits to avoid the use of the
decomposition theorem. Moreover, the Fourier-Deligne transform allows
one to consider a base field of characteristic $p$ and $\ell$-adic
coefficients \cite{BRY}. In this thesis, we define a modular Springer
correspondence  by using a Fourier-Deligne transform with modular
coefficients. Besides, we define a decomposition matrix for perverse
shaves on the nilpotents, and we compare it with the decomposition
matrix of the Weyl group. Moreover, we calculate certain decomposition
numbers in a purely geometric way. We will see that certain properties
of decomposition numbers for Weyl groups can be considered as the
shadow of some geometric properties. For example, the James's row and
column removal rule can be explained by a similar rule about nilpotent
singularities obtained by Kraft and Procesi \cite{KP1}, once the
modular Springer correspondence has been determined (which we will
also do in this thesis).

\section*{Detailed contents}

\subsection*{Preliminaries and examples}

In Chapter \ref{chap:preliminaries}, we review perverse sheaves over
$\KM$, $\OM$ and $\FM$ which we will use subsequently. We particularly
insist on the aspects which are specific to $\OM$ and $\FM$. For
example, over $\OM$ there is no self-dual perversity, but a pair of
perversities, $p$ and $p_+$, exchanged by the duality. Moreover, we
study the interaction between torsion pairs and $t$-structures (about
this subject, see \cite{HRS}), and also with recollement situations. 
In this part, which is somewhat technical, one can find many
distinguished triangles which will be used in the sequel. The key
point is that the (derived) reduction modulo $\ell$ does not commute
with truncations in general. We also give some complements about the
perverse extensions $\p j_!$, $\p j_{!*}$, $\p j_*$ (on the top and
the socle, and on the behavior with respect to multiplicities).
Finally, we define decomposition numbers for perverse sheaves. We are
particularly interested by the case of a $G$-variety with a finite
number of orbits: we have the nilpotent cone in mind.

In Chapter \ref{chap:examples}, we give some examples of perverse
sheaves, and in particular of intersection cohomology complexes over
$\EM$. We recall the properties of proper small (resp. semi-small)
morphisms. In particular, the intersection cohomology complex of a
variety having a small resolution is obtained by direct image.

Afterwards, we introduce the notion of smooth equivalence of
singularities, and recall that the local intersection cohomology is an
invariant for that equivalence.

Then we study conical singularities, where the local intersection
cohomology is reduced to the calculation of the cohomology of a
variety (the open complement to the vertex of the cone), and, more
generally, we consider the case of an affine variety endowed with a
$\GM_m$-action contracting everything onto the origin.

It is a natural question to ask when the intersection cohomology
complex is reduced to the constant sheaf $\EM$ (so that the variety
satisfies usual Poincaré duality). In that case, we say that the
variety is $\EM$-smooth. A typical example of $\KM$-smooth
(resp. $\FM$-smooth) variety is given by the quotient of a smooth
variety by a finite group (resp. a finite group with order prime to
$\ell$).

In the last section of this chapter, we study simple singularities. A
normal variety $X$ has rational singularities if it has a resolution
$\pi : \Xti \to X$ with $R^i \pi_* \OC_X = 0$ for $i > 0$.
Over $\CM$, the surfaces with a rational double point are (up to
analytic equivalence) the quotients of the affine plane
by a finite subgroup of $SL_2(\CM)$. They are classified by
simply-laced Dynkin diagrams. One can interpret the other types
by considering the action of a group of symmetries. One
associates to each Dynkin diagram $\G$ a homogeneous diagram
$\wh\G$, and a group of symmetries $A(\G)$. In the case where $\G$ is
already homogeneous, we have $\wh\G = \G$ and $A(\G) = 1$. Let
$\wh\Phi$ be a root system of type $\wh\G$. We denote by $P(\wh\Phi)$
the weight lattice, and by $Q(\wh\Phi)$ the root lattice. Let $H$
be the finite group of $SL_2(\CM)$ associated to $\wh\G$. We show that
\[
H^2\left((\AM^2\setminus\{0\})/H,\ZM\right) \simeq P(\wh\Phi)/Q(\wh\Phi)
\]
with a natural action of $A(\G)$. Thanks to the results of Chapter
\ref{chap:preliminaries}, this allows us to compare perverse sheaves
in characteristic $0$ and in characteristic $\ell$.

\subsection*{Calculation of decomposition numbers}

Until Chapter \ref{chap:dec}, our aim is to calculate certain
decomposition numbers for $G$-equivariant perverse sheaves
on the nilpotent variety, by geometrical methods.

Let us first introduce some notation. The simple perverse sheaves over $\KM$
(resp. $\FM$) are parametrized by the set $\NG_\KM$
(resp. $\NG_\FM$) of pairs $(x,\rho)$ (up to conjugacy) consisting of
a nilpotent element $x$ and a character $\rho\in\Irr \KM A_G(x)$ (resp.
$\rho \in \Irr \FM A_G(x)$), where $A_G(x)$ is the finite group of
components of the centralizer of $x$ in $G$. We will denote by
\[
\left(d_{(x,\rho),(y,\s)}\right)_{(x,\rho)\in\NG_\KM,\ (y,\s)\in\NG_\FM}
\]
the decomposition matrix of these perverse sheaves. In the case
of $GL_n$, all the $A_G(x)$ are trivial, so that one can forget about
$\rho$ which is always $1$, and the nilpotent orbits are
parametrized by the set $\PG_n = \{\l\vdash n\}$ of all partitions of
$n$. In that case, the decomposition matrix will be denoted by
\[
(d_{\l,\mu})_{\l,\mu\in\PG_n}
\]

On the other hand, the decomposition matrix for the Weyl group $W$
will be denoted by
\[
\left(d_{E,F}^W\right)_{E\in\Irr\KM W,\ F\in\Irr\FM W}
\]

For the symmetric group $\SG_n$, the simple $\KM\SG_n$-modules are the 
Specht modules $S^\l$, for $\l \in \PG_n$. They are defined
over $\ZM$, and endowed with a symmetric bilinear form defined over $\ZM$. The
modular reduction of the Specht module, which we will still denote by
$S^\l$, is thus also endowed with a symmetric bilinear form.
The quotient of $S^\l$ by the radical of that symmetric bilinear form
is either zero, or a simple $\FM\SG_n$-module. The set of partitions
$\mu$ such that this quotient is non-zero (we then denote it by $D^\mu$)
is the set $\PG_n^{\ell\text{-reg}}$ of partitions of $n$ which are
$\ell$-regular (each entry is repeated at most $\ell - 1$ times).
The $D^\mu$, for $\mu\in\PG_n^{\ell\text{-reg}}$, form a complete
set of representatives of isomorphism classes of simple
$\FM\SG_n$-modules. The decomposition matrix of the symmetric group
$\SG_n$ will be rather denoted by
\[
\left(d_{\l,\mu}^{\SG_n}\right)_{\l\in\PG_n,\ \mu\in\PG_n^{\ell\text{-reg}}}
\]

For the Schur algebra
\[
S_\EM(n) = S_\EM(n,n) = \End_{\EM\SG_n}\left(\bigoplus_{\l \vdash n}
\Ind_{\EM \SG_\l}^{\EM \SG_n} \EM\right)
\]
we will denote the decomposition matrix by
\[
\left(d_{\l,\mu}^{S(n)}\right)_{\l,\mu \in\PG_n}
\]
It is known that
\[
d_{\l,\mu}^{S(n)} = d_{\l',\mu'}^{\SG_n}
\]
for $\l\in\PG_n,\ \mu\in\PG_n^{\ell\text{-reg}}$, where $\l'$ stands for
the conjugate partition. We will see that
\[
d_{\l,\mu} = d^{\SG_n}_{\l',\mu'} = d_{\l,\mu}^{S(n)}
\]
for $\l\in\PG_n,\ \mu\in\PG_n^{\ell\text{-reg}}$,
and we conjecture that
\[
d_{\l,\mu} = d^{S(n)}_{\l,\mu}
\]
for \emph{all} partitions $\l$, $\mu$ of $n$
(see the remarks in the last section of this introduction).

In Chapter \ref{chap:min}, we calculate the integral cohomology
of the minimal (non-trivial) nilpotent orbit $\OC_\mini$ in a simple
Lie algebra $\gG$ over the fields of complex numbers. Actually,
the results and methods of this chapter are still valid for a
base field of characteristic $p > 0$, if one works with
étale cohomology with coefficients in the $\ell$-adic integers.

The rational cohomology of $\OC_\mini$ is already known.
The dimension of $\OC_\mini$ is $d = 2h^\vee - 2$, where $h^\vee$ is the
dual Coxeter number. The first half of the cohomology is given by
\[
\t_{\leqslant d - 1} \rg(\OC_\mini,\QM) \simeq \bigoplus_{i=1}^k \QM[-2(d_i - 2)]
\]
where $k$ is the number of long simple roots, and 
$d_1 \leqslant \ldots \leqslant d_k \leqslant \ldots \leqslant d_n$
are the degrees of $W$ (and $n$ is the total number of simples roots).
The other half can be deduced Poincaré duality.

Therefore, we are mainly interested by the torsion.
If $\Phi$ is the root system of $\gG$, and
$\Phi'$ is the root subsystem generated by the long simple roots (for some
choice of basis), then the middle cohomology of $\OC_ \mini$ is
\[
H^d(\OC_\mini,\ZM) \simeq P^\vee(\Phi')/Q^\vee(\Phi')
\]
We will see in the sequel that this result is linked to the
modular reduction of the natural representation of the Weyl group $W'$
of $\Phi'$.

Apart from the middle cohomology, we have no uniform formula for the
torsion part of the cohomology of $\OC_\mini$. But we know that it is
the cokernel of a matrix whose coefficients are determined explicitly
by the poset of the long roots in $\Phi$, which is leveled by the
coheight (height of the coroot), which allows us to make the
calculation in any type. Apart from the middle cohomology, we observe
that the prime numbers dividing the torsion are bad. This amounts to
say that the intersection cohomology stalks with integer coefficients
are without $\ell$-torsion when $\ell$ is good (only for the
perversity $p$, precisely not for $p_+$, where the middle cohomology
appears. We do not know whether one can find an interpretation (maybe
homological) in terms of representation theory to these finite groups
of bad torsion.

In Chapter \ref{chap:dec}, we calculate certain decomposition numbers
for $G$-equivariant perverse sheaves on the
nilpotent variety, using on the one hand the preceding results,
and on the other hand geometric results that can be found in the
literature.

First of all, we determine the decomposition numbers associated to
the regular and subregular classes (the centralizer of a subregular
nilpotent element is not necessarily connected). As in the section on
simple singularities, we associate to the type $\G$ of $G$ a
homogeneous diagram $\wh\Phi$ and a group of symmetries $A := A(\G)$
which is isomorphic to $A_G(x_\subreg)$ when $G$ is adjoint. We have
\[
d_{(x_\reg,1),(x_\subreg,\rho)}
= [\FM \otimes_\ZM (P(\wh\Phi)/Q(\wh\Phi)) : \rho]
\]
for all $\rho$ in $\Irr \FM A$. we calculate this multiplicity
in all types, for each prime number $\ell$ and for each
$\rho \in \Irr\FM A$.
For the minimal and trivial classes, we deduce from the results
of Chapter \ref{chap:min} that
\[
d_{(x_\mini,1),(0,1)}
= \dim_\FM \FM \otimes_\ZM \left( P^\vee(\Phi')/Q^\vee(\Phi') \right)
\]
We also give this multiplicity in all types.
As an example, let us see what happens for $GL_n$. We find
\[
d_{(n),(n-1,1)} = d_{(21^{n-2}),(1^n)} =
\begin{cases}
1 \text{ if } \ell \mid n\\
0 \text{ otherwise}
\end{cases}
\]
which is compatible with our conjecture making a link with the Schur
algebra. We have another result in that direction.  The decomposition
numbers for the Schur algebras satisfy the following property.  If
$\l$ and $\mu$ are two partitions of $n$ whose $r$ first lines and $s$
first columns are identical, and if $\l_1$ and $\mu_1$ are the
partitions (of a smaller integer $n_1$) obtained from $\l$ and $\mu$
by suppressing these lines and columns, we have
\[
d^{S(n)}_{\l,\mu} = d^{S(n_1)}_{\l_1,\mu_1}
\]

Kraft and Procesi have shown that the singularities of the closures
of nilpotent orbits in $GL_n$ satisfy a
similar property \cite{KP1}. With the same notation, we have
\[
\codim_{\ov\OC_{\l_1}} \OC_{\mu_1} = \codim_{\ov\OC_\l} \OC_\mu
\quad\text{ and }\quad
\Sing(\ov\OC_{\l_1}, \OC_{\mu_1}) = \Sing(\ov\OC_\l, \OC_\mu)
\]

We deduce that the decomposition numbers $d_{\l,\mu}$ also satisfy
that property:
\[
d_{\l,\mu} = d_{\l_1,\mu_1}
\]

If  $\l > \mu$ are two adjacent partitions of $n$ for the dominance
order (that is, if there is no partition $\nu$ such that
$\l > \nu > \mu$), Kraft and Procesi use the result on
lines and columns to reduce the determination of the singularity
of $\ov\OC_\l$ along $\OC_\mu$ to the extreme cases
$(\l,\mu) = ((m), (m - 1, 1))$ and $(\l,\mu) = ((2, 1^{m-2}), (1^m))$,
for a smaller integer $m$. The minimal degenerations in type $A_n$
are thus all of type $A_m$ (a simple  singularity of type $A_m$)
or $a_m$ (a minimal singularity of type $a_m$), for smaller integers $m$.

Since, in $GL_n$, all the $A_G(x)$ are trivial, this is enough to
determine the decomposition number $d_{\l,\mu}$ when $\l$ and
$\mu$ are adjacent. In that case, we have:
\[
d^{S(n)}_{\l,\mu} = d_{\l,\mu}
\]
as expected.

Kraft and Procesi have also shown that the singularities of the closures
of nilpotent orbits classical types satisfy a row and column removal rule
\cite{KP2}. They must deal with orthogonal and symplectic groups
simultaneously. They deduce the singularity type of the minimal
degenerations in that case. They find only simple and minimal
singularities of classical types, with only one exception. More
precisely, in the codimension two case, we have (up to smooth
equivalence) a singularity of type $A_k$, $D_k$ or $A_k \cup A_k$,
the latter being the non-normal union of two simple singularities of type
$A_k$, intersecting transversely at the singular point.
When the codimension is greater than $2$, we have a
minimal singularity of type $b_k$, $c_k$ or $d_k$. By
suppressing these lines and columns, one can always reduce to these
irreducible cases. In this article, Kraft and Procesi determine
which orbit closures are normal in classical types, which was their goal.

We can also use their results to determine other decomposition numbers
in classical types, but to do so in all cases, one should also
determine the local systems which appear. In any case, for a minimal
degeneration $\ov\OC \supset \OC'$ classical type, one can always
determine the following quantity:
\[
\sum_{\rho \in \Irr \FM A_G(x_{\OC'})}
d_{(x_\OC,1),(x_{\OC'},\rho)}
\]
(In classical types, the $A_G(x)$ are of the form $(\ZM/2)^k$,
and thus abelian, so all the $\rho \in \Irr \FM A_G(x_{\OC'})$ are of
degree $1$.) In particular, one can tell when the
$d_{(x_\OC,1),(x_{\OC'},\rho)}$ are zero for all the $\rho \in
\Irr \FM A_G(x_{\OC'})$. A more detailed study should be enough to
determine all the decomposition numbers of this type.

Another result of Kraft and Procesi, about the special decomposition
of the nilpotent variety \cite{KP3}, allows us to show that certain
decomposition numbers are zero, in the classical types,
when $\ell \neq 2$. In \cite{LusSpec}, Lusztig introduced
a subset of $\Irr \KM W$, whose elements are called special
representations. The special nilpotent classes are the classes
$\OC$ such that the representation $\chi$ associated to
$(\OC,\KM)$ by the Springer correspondence is special.
On the other hand, Spaltenstein introduced in \cite{SpalDual} an order
reversing map from the set of nilpotent classes to itself,
such that $d^3 = d$ (it is an involution on its
image). The image of $d$ is precisely the set of special classes.
The locally closed subvarieties
\[
\widehat \OC = \ov \OC \setminus
\mathop{\bigcup_{\OC' \text{ special}}}\limits_{\ov \OC'\subset\ov \OC} \ov \OC'
\]
where $\OC$ runs over the set of special classes, form a
partition of the nilpotent variety $\NC$. They are called special pieces.
Thus each nilpotent class is contained in a unique
special piece. Lusztig attached to each special class $\OC$ a
canonical quotient $\ov A_G(x_\OC)$ of the finite group $A_G(x_\OC)$, and
conjectured that the special piece $\wh\OC$ is the quotient of a smooth
variety by $\ov A_G(x_\OC)$. A consequence of this
conjecture is that $\wh\OC$ is $\KM$-smooth, but actually it
gives more information: in particular, the conjecture implies that
$\wh\OC$ is $\FM$-smooth as soon as $\ell$ does not divide the order
of the group $\ov A_G(x_\OC)$. In \cite{KP3}, Kraft and Procesi show
that this conjecture holds for classical types. We deduce that,
in classical types, we have
\[
d_{(x_\OC,1),(x_{\OC'},\rho)} = 0
\]
for $\ell > 2$, when $\OC$ is a special class, $\OC'$ is a
class contained in the special piece $\wh\OC$, and
$\rho \in \Irr \FM A_G(x_{\OC'})$. A more detailed study
could maybe give the decomposition numbers when $\ell = 2$.

Let us make one more remark.
In another article \cite{Kraft}, Kraft solves the normality problem
for closures of nilpotent orbits in
$G_2$. He gives the following information, which is not covered by
the preceding results: $\ov\OC_{10}$ has a simple singularity of type
$A_1$ along $\OC_8$, where $\OC_i$ denotes the unique nilpotent class of
dimension $i$ in the Lie algebra $\gG$ of a simple group $G$ of type
$G_2$. Since $A_G(x_8) = 1$ (we denote by $x_i$ a representative of $\OC_i$),
this allows us to determine the decomposition number
$d_{(x_{10},1),(x_8,1)}$:
\[
d_{(x_{10},1),(x_8,1)} = 
\begin{cases}
1 \text{ if } \ell = 2,\\
0 \text{ otherwise}
\end{cases}
\]

A more detailed study of this article would maybe yield more
decomposition numbers in a geometrical way. In any case, using the
modular Springer correspondence we will be able to determine the whole
decomposition matrix when $\ell = 3$, and all the matrix but one
column when $\ell = 2$. For $\ell > 3$, $\ell$ does not divide the
order of the Weyl group, and the decomposition matrix is the identity;
I think that this holds in any type, but one will need the notion of
cuspidality. At least, the part of the decomposition matrix
corresponding to the Weyl group is the identity matrix, as we shall see.

\subsection*{Modular Springer correspondence and decomposition matrices}

In the sequel of the thesis, we define a modular Springer
correspondence and establish some of its properties, notably the fact
that it preserves decomposition numbers. Since Gabber's decomposition
theorem \cite{BBD} is no longer true in the modular case, we are
inspired by the approach of Kashiwara and Brylinski \cite{BRY}, using
a Fourier transform.

In Chapter \ref{chap:fourier}, we introduce the Fourier-Deligne
transform, following an article by Laumon
\cite{Lau}. We give detailed proofs, and check that everything is fine
when we take $\KM$, $\OM$ or $\FM$ coefficients.

Chapter \ref{chap:springer} is the core of this thesis. First, we
recall the geometric context of Springer correspondence, which is
Grothendieck's simultaneous resolution $\pi$ of the singularities of
the fibers of the adjoint quotient. Taking the fiber at zero, we
recover Springer's resolution $\pi_\NC$ of the nilpotent cone $\NC$.

Then we introduce the perverse sheaves $\EM\KC_\rs$, $\EM\KC$ and
$\EM \KC_\NC$, respectively on the open subvariety $\gG_\rs$ of the
regular semi-simple elements, on $\gG$ itself, and on the closed
subvariety $\NC$ of nilpotent elements. We have the following diagram
with cartesian squares:

\[
\xymatrix@=1.5cm{
\tilde\gG_\rs
\ar[d]_{\pi_\rs}
\ar@<-.5ex>@{^{(}->}[r]^{\tilde j_\rs}
\ar@{}[dr] | {\DS\boxempty_\rs}
&
\tilde\gG
\ar[d]^\pi
\ar@{}[dr] | {\DS\boxempty_\NC}
&
\NCt
\ar@<.5ex>@{_{(}->}[l]_{i_\NCt}
\ar[d]^{\pi_\NC}
\\
\gG_\rs
\ar@<-.5ex>@{^{(}->}[r]_{j_\rs}
&
\gG
&
\NC
\ar@<.5ex>@{_{(}->}[l]^{i_\NC}
}
\]

Let $r$ be the rank of $G$, and $\nu$ the number of positive roots in
$\Phi$. We set

\begin{gather*}
\EM\KC_\rs = {\pi_\rs}_! \EM_{\tilde \gG_\rs} [2\nu + r]\\
\EM\KC     = \pi_!       \EM_{\tilde \gG} [2\nu + r]\\
\EM\KC_\NC = {\pi_\NC}_! \EM_\NCt [2\nu]
\end{gather*}

We have
\begin{gather*}
\EM\KC = \p {j_\rs}_{!*} \EM\KC_\rs\\
\EM\KC_\NC = i_\NC^* \EM\KC [-r]
\end{gather*}

The morphism $\pi$ is proper and small, generically a $W$-torsor
(above $\gG_\rs$), and its restriction $\pi_\NC$ to the nilpotents is
semi-small.

Afterwards, we define a modular Springer correspondence, using a
Fourier-Deligne transform. To $E\in\Irr\KM W$, the Springer
correspondence à la Brylinski associates the perverse sheaf
$\TC(E) = \FC(\p {j_\rs}_{!*}(E[2\nu + r])$.
This defines an injective map
\[
\Psi_\KM : \Irr \KM W \longto \NG_\KM
\]
We will denote by $\impsi_\KM$ its image.
We proceed similarly for the modular Springer correspondence.
To $F\in \Irr \FM W$, we associate
\[
\TC(F) = \FC(\p {j_\rs}_{!*}(F[2\nu + r]))
\]
and this defines an injective map
\[
\Psi_\FM : \Irr \FM W \longto \NG_\FM
\]

We will denote by $\impsi_\FM$ its image.

Then, we show that the decomposition matrix of the Weyl group $W$ can
be extracted from the decomposition matrix for $G$-equivariant
perverse sheaves on the nilpotent variety, by keeping only the lines
which are in the image of the ordinary Springer correspondence, and
the columns which are in the image of the modular Springer
correspondence. More precisely, we show that, for all
$E\in\Irr\KM W$ and $F\in\Irr\FM W$, we have
\[
d^W_{E,F} = d_{\Psi_\KM(E),\Psi_\FM(F)}
\]

Finally, we determine the modular Springer correspondence when
$G = GL_n$. We have:
\[
\impsi_\FM = \PG_n^{\ell\text{-res}}
\]
where $\PG_n^{\ell\text{-res}}$ is the set of
$\ell$-restricted partitions of $n$, that is, whose conjugate is
$\ell$-regular.
\[
\forall \l \in \PG_n^{\ell\text{-reg}},\qquad
\Psi_\FM(D^\l) = \l'
\]

In particular, for $\l\in\PG_n$ and $\mu\in\PG_n^{\ell\text{-reg}}$,
we have:
\[
d^{\SG_n}_{\l,\mu} = d_{\l',\mu'}
\]
so that James's row and column removal rule can be seen as a
consequence of the geometric result of Kraft and Procesi about
nilpotent singularities.

\section*{Perspectives}

There are many themes I can explore to extend the present work.

\subsection*{Geometry of the nilpotent orbits}

This thesis has revealed new links between the representation theory
of Weyl group and the geometry of nilpotent classes. One can expect
new interactions between these two areas.

For example, we observed that James's row and column removal rule can
be explained geometrically by the result of Kraft and Procesi about
nilpotent singularities.

On the representation theoretic side, Donkin found a generalization of
this rule \cite{DonkinGen}. I expect a similar generalization on the
geometrical side (one should find a product singularity).

\subsection*{Determination of the modular Springer correspondence,
basic sets}

A question arises naturally about the modular Springer
correspondence. For simplicity, first suppose that $\ell$ does not
divide the orders of the groups $A_G(x)$. Then one can identify
$\NG_\FM$ and $\NG_\KM$ to a common set of parameters $\PG$. Does one
have $\impsi_\FM \subset \impsi_\KM$ in that case ?

Let us assume that it is the case.
Then, for each $F \in \Irr \FM W$, there is a unique
$E \in \Irr \KM W$ such that $\Psi_\KM(E) = \Psi_\FM(F)$.
This defines a basic set for $W$ and shows in a geometrical way the
triangularity of the decomposition matrix $W$.

Even if $\ell$ divides the order of some $A_G(x)$, this question still
makes sense if we choose a basic set for each $A_G(x)$ (which is of
the form $(\ZM/2)^k$, or a symmetric group $\SG_k$,
$k\leqslant 5$, for $G$ adjoint). In fact, for all these groups,
there is a canonical choice.

In the other direction, the knowledge of a basic set for $W$ and a
triangularity property compatible with the order of the orbits through
the Springer correspondence allows to determine the modular Springer
correspondence.

We could determine the modular Springer correspondence of $GL_n$, and
in rank up to three, for this reason (one has to be careful for $G_2$
because there is one cuspidal pair in characteristic zero).

If we could show that $\impsi_\FM \subset \impsi_\KM$, it would be
interesting to determine the basic set that we obtain, and to compare
it with the canonical basic set of \cite{GeckRouquier},
when the latter is well defined (that is, when $\ell$ does not divide
the $A_G(x)$).

\subsection*{Generalized modular Springer correspondence,
  modular character sheaves}

In the original Springer correspondence, $\impsi_\KM$ contains all the
pairs of the form $(\OC,\KM)$, but in general $\impsi_\KM$ is strictly
contained in $\NG_\KM$. The main motivation of Lusztig in \cite{ICC}
is to understand these missing pairs. This work is extended in the
series of articles about character sheaves, which allows to compute
character values of finite groups of Lie type.

Clearly, one of the first things to do to continue the work of this
thesis would be to study the notions of induction and restriction, of
cuspidality, and to define a generalized modular Springer
correspondence, and to determine it in all cases. Perhaps some new
combinatorial objects could appear for classical types ($\ell$-symbols ?).

I hope that this will lead to a theory of modular character sheaves,
with a link with the modular representation theory of finite groups of
Lie type. In the last chapter, we present briefly some calculations
for $\sG\lG_2$.

\subsection*{Determination intersection cohomology stalks}

The determination of the intersection cohomology stalks (either over
$\OM$ for the perversity $p_+$, or over $\FM$) for the nilpotent orbit
closures would be enough to determine the decomposition matrix for
perverse sheaves, and thus for the Weyl group (if the modular Springer
correspondence has been determined). In this thesis, we determine this
correspondence for $GL_n$. We have translated the problem of the
decomposition matrices of the symmetric group into a geometrical and
topological problem, where there is no mention of the Weyl group.

Of course, this problem is certainly very difficult. In characteristic
zero, the determination of the intersection cohomology stalks for the
nilpotents goes through the Springer correspondence and the
orthogonality relations for Green functions (see the algorithm that
Shoji uses in \cite{ShojiF4} for the type $F_4$, which is used again
in other works like \cite{BeSpa} for the types $E_6$, $E_7$, $E_8$,
and generalized in \cite[\S 24]{CS5}). It is unlikely that such an
algorithm exists in characteristic $\ell$.

\subsection*{Schubert varieties}

As far as I know, perverse sheaves modulo $\ell$ had never been used
to study directly the modular representations of Weyl groups, but they
were used in at least two other contexts in representation theory.
The first of these is concerned with Kazhdan-Lusztig theory, and thus
Schubert varieties.

This time, we consider a complex reductive group $G$ over $k$ of
characteristic $\ell$, and we want to study its rational
representations. For each weight $\l$ in $X(T)$, we have an induced
module  $\nabla(\l)$. If it is non-zero, then it has a simple socle
$L(\l)$, and all simple representations of $G$ can be obtained in this
way. We want to determine the multiplicities $[\nabla(\l) : L(\mu)]$ for
weights $\l$, $\mu$ in $X(T)$ such that $\nabla(\l)$ and $\nabla(\mu)$
are non-zero. Lusztig \cite{LusPb} proposed a conjecture for
these multiplicities in the case $\ell > h$ (the analogue for $G$
defined over $\CM$ had been conjectured in \cite{KL1}), making a link
with the perverse sheaves with the Langlands dual $\GC$ of $G$.

When Soergel wrote \cite{Soergel}, this conjecture was known to be
true when $\ell$ is ``large enough'' \cite{AJS}. Nevertheless, apart
from the types $A_1$, $A_2$, $A_3$, $B_2$ and $G_2$, there was no
single prime number $\ell$ which was known to be large enough !
It is hoped that it is enough to take $\ell$ greater than the Coxeter
number $h$. Soergel shows that, if $\ell > h$, then part of Lusztig's
conjecture (for the weights ``around the Steinberg weight'') is
equivalent to the fact that $\pi_{s*}\p\JC_{!*}(S_w,\FM)$ is
semi-simple for each simple reflection $s$ and each element $w$ of the
Weyl group $W$, where $\pi_s$ is the quotient morphism
$\GC/\BC \to \GC/\PC_s$ (we denote by $\BC$ a Borel subgroup of $\GC$,
and by $\PC_s$ the minimal parabolic subgroup containing $\BC$
corresponding to $s$). For $\KM$ instead of $\FM$, this results from
the decomposition theorem. Moreover, he defines for each $x$ in $W$ an
indecomposable perverse sheaf $\LC_x$, whose cohomology stalks encode
multiplicities.

At the end of their original article \cite{KL}, Kazhdan and Lusztig
mention the case of $Sp_4$. We have two elements of length three in
the Weyl group. Among the two corresponding Schubert varieties, one is
smooth, and the other one has a singular locus of codimension
two. More precisely, the latter is a $\PM^1$ bundle over a simple
singularity of type $A_1$. It is known (and we will see this in the
thesis) that the intersection cohomology is different for $\ell = 2$
in this case. I thank Geordie Williamson for explaining this to me.

So, examples with $2$-torsion have been known for a long time in
simply-laced types. Only recently, Braden found examples of
$2$-torsion in types $A_7$ and $D_4$ (he announced this result at the
meeting ``Algebraische Gruppen'' in Oberwolfach in 2004).
Even more recently, Geordie Williamson (a student of Soergel) obtained
positive results. In \cite{WillLowRank}, he develops a combinatorial
procedure (based on the $W$-graph), which shows that there is no
$\ell$-torsion, for $\ell$ good and different from $2$, under certain
conditions which are very often satisfied in small rank. In
particular, he shows that it is the case for all $\ell \neq 2$
in  types $A_n$, $n < 7$. Thus, Lusztig's conjecture (for
the weights around the Steinberg weight) is satisfied for $SL_n$,
$n \leqslant 7$, as soon as $\ell > n$.

I think that one could find other examples of torsion in Schubert
varieties, using the results in \cite{BP}. In this article, Brion and
Polo describe the generic singularities of certain (parabolic)
Schubert varieties, and deduce in particular an efficient way to
calculate certain Kazhdan-Lusztig polynomials. Even in the cases where
these polynomials were already known, Brion and Polo give a more
precise geometrical description, that one could use to calculate the
torsion in the local intersection cohomology. For the cases where
their results apply, Brion and Polo describe the transverse
singularity as the closure of the orbit of a highest weight vector in
a Weyl module, for a certain reductive subgroup containing $T$. One
could treat these singularities in the same way as the minimal
class. They also describe a generalization with multicones.

\subsection*{Affine Grassmannians}

The second context where perverse sheaves with arbitrary coefficients
were used is about affine Grassmannians. In \cite{MV}, 
Mirkovi\'c and Vilonen give a geometric version of Satake isomorphism.
They construct an equivalence between the category of rational
representations of a reductive group $G$ over any ring $\L$ and a
category of equivariant perverse sheaves on the affine Grassmannian of
the Langlands dual of $G$, defined over $\CM$. Given the link between
the singularities of the nilpotent variety and the ones of the affine
Grassmannian for $G = GL_n$ \cite{LusGreen}, it seems to me now that
this should imply our conjecture about the equality between the
decomposition matrix for perverse sheaves on the nilpotent variety and
for the Schur algebra (there may be some compatibilities that have to
be checked). I thank the mathematicians who told me about this
article, and particularly George Lusztig. However, I think it would
also be interesting to explore the approach that we propose in the
last chapter, which is a first step in the study of modular character
sheaves.

The article of \cite{MV} suggests that decomposition matrices for
equivariant perverse sheaves on the affine Grassmannian have a
representation theoretic interpretation. This thesis
can be used to determine concretely some of these
decomposition numbers, using \cite{MOV}. Indeed, most of the minimal
degenerations are either Kleinian or minimal singularities, for which
our results apply directly. In non-homogeneous types, one can also
find other singularities, that the authors call ``quasi-minimal'', of
types $ac_2$, $ag_2$, and $cg_2$. It would be interesting to determine
the intersection cohomology stalks over the integers in this case.
For example, Malkin, Ostrik and Vybornov conjecture that the
singularities of types $a_2$, $ac_2$ and $ag_2$
(resp. $c_2$ and $cg_2$) are pairwise non-equivalent. The rational
intersection cohomology is not enough to distinguish them. However,
they might have a different local intersection cohomology over the
integers. One could also obtain simpler proofs for non-smoothness
(see the last section of their article, where they calculate
equivariant multiplicities). For example, the singularities of type
$c_n$ and $g_2$ are rationally smooth, but not $\FM_2$-smooth.
One should do the calculations for quasi-minimal singularities.

More generally, I think that perverse sheaves over the integers and
modulo $\ell$ are still underused, and that their role will be more
and more important in the years to come, notably in representation theory.

\chapter{Perverse sheaves over $\KM$, $\OM$, $\FM$}\label{chap:preliminaries}

\section{Context}\label{sec:context}

In all this thesis, we fix on the one hand a prime number $p$
and an algebraic closure $\ov\FM_p$ of the prime field with $p$ elements,
and for each power $q$ of $p$, we denote by $\FM_q$ the unique subfield
of $\ov\FM_p$ with $q$ elements. On the other hand, we fix a prime number $\ell$
distinct from $p$, and a finite extension $\KM$ of the field $\QM_\ell$
of $\ell$-adic numbers, whose valuation ring we denote by $\OM$.
Let $\mG = (\varpi)$ be the maximal ideal of $\OM$, and let $\FM = \OM/\mG$
be its residue field (which is finite of characteristic $\ell$).
In modular representation theory, a triplet such as $(\KM,\OM,\FM)$
is called an $\ell$-modular system. The letter $\EM$ will often be used
to denote either of these three rings.

Let $k$ denote $\FM_q$ or $\ov\FM_p$ (sometimes we will allow $k$ to
be the field $\CM$ of complex numbers instead). We will consider only
separated $k$-schemes of finite type, and morphisms of
$k$-schemes. Such schemes will be called varieties. If $X$ is a
variety, we will say ``$\EM$-sheaves on $X$'' for ``constructible
$\EM$-sheaves on $X$''.  We will denote by $\Sh(X,\EM)$ the noetherian
abelian category of $\EM$-sheaves on $X$, and by $\Loc(X,\EM)$ the
full subcategory of $\EM$-local systems on $X$. If $X$ is connected,
these correspond to the continuous representations of the étale
fundamental group of $X$ at any base point.

Let $D^b_c(X,\EM)$ be the bounded derived category of $\EM$-sheaves as
defined by Deligne.  The category $D^b_c(X,\EM)$ is triangulated, and
endowed with a $t$-structure whose heart is equivalent to the abelian
category of $\EM$-sheaves, because the following condition is
satisfied \cite{DELIGNE, BBD}.
\begin{equation}
\begin{array}{l}
\text{For each finite extension } k' \text{ of } k \text{ contained in } \ov\FM_p,\\
\text{the groups $H^i(\Gal(\ov\FM_p/k'),\ZM/\ell)$, $i\in\NM$, are finite.}
\end{array}
\end{equation}
We call this $t$-structure the \emph{natural} $t$-structure on
$D^b_c(X,\EM)$. The notion of $t$-structure will be recalled in the
next section. For triangulated categories and derived categories, we
refer to \cite{WEIBEL,KS2}.

We have internal operations $\otimes^\LM_\EM$ and $\RHOM$ on $D^b_c(X,\EM)$,
and, if $Y$ is another scheme, for $f : X \to Y$ a morphism we have triangulated functors
\begin{gather*}
f_!,\ f_* : D^b_c(X,\EM) \to D^b_c(Y,\EM)\\
f^*,\ f^! : D^b_c(Y,\EM) \to D^b_c(X,\EM)
\end{gather*}
We omit the letter $R$ which is normally used (\emph{e.g.} $Rf_*$,
$Rf_!$) meaning that we consider derived functors. For the functors
between categories of sheaves, we will use a $0$ superscript, as in
$\0 f_!$ and $\0 f_*$, following \cite{BBD}.

We will denote by 
\[
\DC_{X,\EM} : D^b_c(X,\EM)^\op \to D^b_c(X,\EM)
\]
the dualizing functor $\DC_{X,\EM} (-) = \RHOM(-,a^!\EM)$, where $a:X\to\Spec k$
is the structural morphism.

We have a modular reduction functor
$\FM \otimes^\LM_\OM (-) : D^b_c(X,\OM) \to D^b_c(X,\FM)$, which we will
simply denote by $\FM(-)$.
It is triangulated, and it commutes with the functors 
$f_!,\ f_*,\ f^*,\ f^!$ and the duality.
Moreover, it maps a torsion-free sheaf to a sheaf, and a torsion sheaf to a complex
concentrated in degrees $-1$ and $0$.

By definition, we have $D^b_c(X,\KM) = \KM \otimes_\OM D^b_c(X,\OM)$,
and $\Sh(X,\KM) = \KM \otimes_\OM \Sh(X,\OM)$. The functor
$\KM \otimes_\OM (-) : D^b_c(X,\OM) \to D^b_c(X,\KM)$ is exact.

In this chapter, we are going to recall the construction of the
perverse $t$-structure on $D^b_c(X,\EM)$ for the middle perversity $p$
(with two versions over $\OM$, where we have two perversities $p$ and
$p_+$ exchanged by the duality). We will recall the main points of the
treatment of $t$-structures and recollement of \cite{BBD}, to which we
refer for the details. However, in this work we emphasize the aspects
concerning $\OM$-sheaves, and we give some complements.

Before going through all these general constructions, let us already see
what these perverse sheaves are. They form an abelian full subcategory
$\p \MC(X,\EM)$ of $D^b_c(X,\EM)$. If $\EM$ is $\KM$ or $\FM$,
then this abelian category is artinian and noetherian, and its simple
objects are of the form $j_{!*}(\LC[\dim V)])$, where $j : V \to X$ is
the inclusion of a smooth irreducible subvariety, $\LC$ is an
irreducible locally constant constructible $\EM$-sheaf on $V$, and
$j_{!*}$ the intermediate extension functor. If $\EM = \OM$, the
abelian category is only noetherian.
In any case, $\p \MC(X,\EM)$ is the intersection of the full
subcategories
$\p D^{\leqslant 0}(X,\EM)$ and $\p D^{\geqslant 0}(X,\EM)$
of $D^b_c(X,\EM)$, where, if $A$ is a complex in $\DC^b_c(X,\EM)$, we have
\begin{gather}
\label{def:p <= 0}
A\in \p D^{\leqslant 0}(X,\EM) \Iff
\text{for all points $x$ in $X$, }
\HC^i i_x^* A = 0
\text{ for all } i > - \dim(x)\\
\label{def:p >= 0}
A\in \p D^{\geqslant 0}(X,\EM) \Iff
\text{for all points $x$ in $X$, }
\HC^i i_x^! A = 0
\text{ for all } i < - \dim(x)
\end{gather}
Here the points are not necessarily closed,
$i_x$ is the inclusion of $x$ into $X$, and
$\dim(x) = \dim \ov{\{x\}} = \deg\tr(k(x)/k)$.

The pair $(\p D^{\leqslant 0}, \p D^{\geqslant 0})$ is a $t$-structure
on $D^b_c(X,\EM)$, and $\p \MC(X,\EM)$ is its \emph{heart}.

When $\EM$ is a field (\ie\ $\EM = \KM$ or $\FM$),
the duality $\DC_{X,\EM}$ exchanges
$\p D^{\leqslant 0}(X,\EM)$ and $\p D^{\geqslant 0}(X,\EM)$,
so it induces a self-duality on $\p\MC(X,\EM)$.

However, when $\EM = \OM$, this is no longer true.
The perversity $p$ is no longer self-dual. The duality
exchanges the $t$-structure defined by the middle perversity $p$ with
the $t$-structure
$(\pp D^{\leqslant 0}(X,\OM),\pp D^{\geqslant 0}(X,\OM))$
defined by
\begin{equation}
\label{def:p+ <= 0}
A\in \pp D^{\leqslant 0}(X,\OM) \Iff
\text{for all points $x$ in $X$, }
\begin{cases}
\HC^i i_x^* A = 0 \text{ for all } i > - \dim(x) + 1\\
\HC^{- \dim(x) + 1} i_x^* A \text{ is torsion}
\end{cases}
\end{equation}
\begin{equation}
\label{def:p+ >= 0}
A\in \pp D^{\geqslant 0}(X,\OM) \Iff
\text{for all points $x$ in $X$, }
\begin{cases}
\HC^i i_x^! A = 0 \text{ for all } i < - \dim(x)\\
\HC^{- \dim(x)} i_x^! A \text{ is torsion-free}
\end{cases}
\end{equation}
The definition of torsion (resp. torsion-free) objects is given in
Definition \ref{def:torsion}. 

We say that this $t$-structure is defined by the perversity $p_+$, and
that the duality exchanges $p$ and $p_+$. We denote by $\pp\MC(X,\OM)
= \pp D^{\leqslant 0}(X,\OM) \cap \pp D^{\geqslant 0}(X,\OM)$ the
heart of the $t$-structure defined by $p_+$, and we call its objects
$p_+$-perverse sheaves, or dual perverse sheaves. This abelian
category is only artinian.

The $t$-structures defined by $p$ and $p_+$ determine each other
(see \cite[\S 3.3]{BBD}). We have

\begin{equation}\label{p+ <= 0}
A \in \pp D^{\leqslant 0}(X,\OM) \Iff
A \in \p D^{\leqslant 1}(X,\OM) \text{ and }
\p H^1 A \text{ is torsion}
\end{equation}
\begin{equation}
A \in \pp D^{\geqslant 0}(X,\OM) \Iff
A \in \p D^{\geqslant 0}(X,\OM) \text{ and }
\p H^0 A \text{ is torsion-free}
\end{equation}
\begin{equation}
A \in \p D^{\leqslant 0}(X,\OM) \Iff
A \in \pp D^{\leqslant 0}(X,\OM) \text{ and }
\pp H^0 A \text{ is divisible}
\end{equation}
\begin{equation}
A \in \p D^{\geqslant 0}(X,\OM) \Iff
A \in \pp D^{\geqslant -1}(X,\OM) \text{ and }
\pp H^{-1} A \text{ is torsion}
\end{equation}

If $A$ is $p$-perverse, then it is also $p_+$-perverse if and only if
$A$ is torsion-free in $\p\MC(X,\OM)$. If $A$ is $p_+$-perverse,
then $A$ is also $p$-perverse if and only if $A$ is divisible in
$\pp\MC(X,\OM)$.
Thus, if $A$ is both $p$- and $p_+$-perverse, then $A$ is without
torsion in $\p\MC(X,\OM)$ and divisible in $\pp\MC(X,\OM)$.

In the next sections, we will recall
why $(\p  D^{\leqslant 0}, \p  D^{\geqslant 0})$ (resp. the two versions
with $p$ and $p_+$ if $\EM = \OM$) is indeed a $t$-structure on
$D^b_c(X,\EM)$. We refer to \cite{BBD} for more details, however their treatment
of the case $\EM = \OM$ is quite brief, so we give some complements.
The rest of the chapter is organized as follows.

First, we recall the definition of $t$-categories and their main
properties.  Then we see how they can be combined with torsion
theories.  Afterwards, we recall the notion of recollement of
$t$-categories, stressing on some important properties, such as the
construction of the perverse extensions $\p j_!$, $\p j_{!*}$ and $\p
j_*$ with functors of truncation on the closed part. Then again, we study the
connection with torsion theories.  Already at this point, we have six
possible extensions (the three just mentioned, in the two versions $p$
and $p_+$). We also study the heads and socles of the extensions
$\p j_!$, $\p j_{!*}$ and $\p j_*$, and show that the intermediate
extension preserves decomposition numbers.

Then we see how those constructions show that the definitions of the
last section give indeed $t$-structures on the triangulated
categories $D^b_c(X,\EM)$, first fixing a stratification, and then
taking the limit.  Then, we stick to this case, where we have functors
$\KM \otimes^\LM_\OM (-)$ and $\FM \otimes^\LM_\OM (-)$ (we did not
try to axiomatize this setting), and we study the connection between
modular reduction and truncation.  If we take a complex $A$ over
$\OM$, for each degree we have three places where we can truncate its
reduction modulo $\varpi$, because $\HC^i(\FM A)$ has pieces coming from
$\HC^i_\tors(A)$, $\HC^i_\free(A)$ and $\HC^{i + 1}_\tors(A)$.  So, in
a recollement situation, we have 9 possible truncations.

Finally, we introduce decomposition numbers for perverse sheaves, and
particularly in the $G$-equivariant setting. We have in mind
$G$-equivariant perverse sheaves on the nilpotent variety.

The relation between modular reduction and truncation is really one of
the main technical points of this thesis. For example, the fact that
the modular reduction does not commute with the intermediate extension
means that the reduction of a simple perverse sheaf will not
necessarily be simple, that is, that we have can have non-trivial
decomposition numbers.

\section{$t$-categories}

Let us begin by recalling the definition of a $t$-structure on a triangulated
category.

\begin{defi}\label{def:ts}
A $t$-category is a triangulated category $\DC$, endowed with two strictly
full subcategories $\DC^{\leqslant 0}$ and $\DC^{\geqslant 0}$, such that,
if we let $\DC^{\leqslant n} = \DC^{\leqslant 0}[-n]$ and
$\DC^{\geqslant n} = \DC^{\geqslant 0}[-n]$, we have
\begin{enumerate}[(i)]
\item For $X$ in $\DC^{\leqslant 0}$ and $Y$ in $\DC^{\geqslant 1}$,
we have $\Hom_\DC(X,Y) = 0$.

\item $\DC^{\leqslant 0} \subset \DC^{\leqslant 1}$ and
$\DC^{\geqslant 0} \supset \DC^{\geqslant 1}$.

\item For each $X$ in $\DC$, there is a distinguished triangle $(A,X,B)$
in $\DC$ with $A$ in $\DC^{\leqslant 0}$ and $B$ in $\DC^{\geqslant 1}$.
\end{enumerate}

We also say that $(\DC^{\leqslant 0},\DC^{\geqslant 0})$ is a $t$-structure on $\DC$.
Its \emph{heart} is the full subcategory $\CC := \DC^{\leqslant 0} \cap \DC^{\geqslant 0}$.
\end{defi}

Let $\DC$ be a $t$-category.

\begin{prop}
\begin{enumerate}[(i)]
\item The inclusion of $\DC^{\leqslant n}$ (resp. $\DC^{\geqslant n}$) in $\DC$
has a right adjoint $\t_{\leqslant n}$ (resp. a left adjoint $\t_{\geqslant n}$).

\item For all $X$ in $\DC$, there is a unique
$d \in \Hom(\t_{\geqslant 1} X, \t_{\leqslant 0} X [1])$
such that the triangle
\[
\t_{\leqslant 0} X \longto X \longto \t_{\geqslant 1} X \elem{d}
\]
is distinguished. Up to unique isomorphism, this is the unique triangle
$(A,X,B)$ with $A$ in $\DC^{\leqslant 0}$ and $B$ in
$\DC^{\geqslant 1}$.

\item Let $a \leqslant b$. Then, for any $X$ in $\DC$, there is a
  unique morphism
  $\t_{\geqslant a} \t_{\leqslant b} X \to \t_{\leqslant b} \t_{\geqslant a} X$
  such that the following diagram is commutative.
\[
\xymatrix{
\t_{\leqslant b} X \ar[r] \ar[d] & X \ar[r]& \t_{\geqslant a} X \\
\t_{\geqslant a} \t_{\leqslant b} X \ar[rr]^\sim&& \t_{\leqslant b}
\t_{\geqslant a} X \ar[u]
}
\]
It is an isomorphism.
\end{enumerate}
\end{prop}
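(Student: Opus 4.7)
The plan is to construct the truncation functors from the distinguished triangles provided by axiom (iii), to verify functoriality and the adjunction via the vanishing $\Hom(\DC^{\leqslant 0},\DC^{\geqslant 1})=0$ of axiom (i), and finally to deduce (iii) from the octahedral axiom. I focus on $\t_{\leqslant n}$ for part (i), the case of $\t_{\geqslant n}$ being dual: axiom (iii), shifted by $n$, supplies for each $X$ a distinguished triangle $A \to X \to B \to A[1]$ with $A \in \DC^{\leqslant n}$ and $B \in \DC^{\geqslant n+1}$, and I set $\t_{\leqslant n}X := A$. Given $f : X \to X'$ with an analogous triangle $(A',X',B')$, the composite $A \to X \to X' \to B'$ lies in $\Hom(A,B')=0$ by axiom (i), so $f\circ(A\to X)$ factors through some $\tilde f : A \to A'$; two such lifts differ by a morphism $A \to A'$ whose composition with $A' \to X'$ vanishes, hence factors through $B'[-1] \in \DC^{\geqslant n+2} \subset \DC^{\geqslant n+1}$, and is killed by axiom (i). This makes $\t_{\leqslant n}$ a functor, and the adjunction $\Hom_\DC(Y, X) \simeq \Hom_{\DC^{\leqslant n}}(Y, \t_{\leqslant n}X)$ for $Y \in \DC^{\leqslant n}$ then follows by applying $\Hom(Y,-)$ to the triangle, since the flanking terms $\Hom(Y, B)$ and $\Hom(Y, B[-1])$ both vanish by axiom (i).

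Part (ii) is a uniqueness bookkeeping consequence of the construction, with the counit $\t_{\leqslant 0}X \to X$ and unit $X \to \t_{\geqslant 1}X$ as the first two maps of the triangle. For uniqueness of the connecting morphism $d$: if $d_1, d_2$ both complete the canonical pair to distinguished triangles, axiom TR3 produces an endomorphism $\phi : B \to B$ with $\phi \circ (X \to B) = (X \to B)$ and $d_2 \circ \phi = d_1$. The element $\phi - \id_B$ then satisfies $(\phi - \id_B)\circ(X \to B) = 0$; applying $\Hom(-,B)$ to the triangle and using exactness places $\phi - \id_B$ in the image of $\Hom(A[1], B) \to \Hom(B, B)$, which is zero by axiom (i) since $A[1] \in \DC^{\leqslant -1} \subset \DC^{\leqslant 0}$ and $B \in \DC^{\geqslant 1}$. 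Hence $\phi = \id_B$ and $d_1 = d_2$. Uniqueness of the whole triangle up to unique isomorphism is obtained by the same lifting argument applied to $\id_X$, combined with the fact that the outer vertices solve universal problems.

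For (iii), I apply the octahedral axiom to the factorization $\t_{\leqslant a-1}X \to \t_{\leqslant b}X \to X$ (the first arrow exists by the adjunction of (i), since $\t_{\leqslant a-1}X \in \DC^{\leqslant a-1} \subset \DC^{\leqslant b}$). The cones of $\t_{\leqslant a-1}X \to X$ and $\t_{\leqslant b}X \to X$ are $\t_{\geqslant a}X$ and $\t_{\geqslant b+1}X$, so the octahedron yields a distinguished triangle $C \to \t_{\geqslant a}X \to \t_{\geqslant b+1}X \to C[1]$, where $C$ also sits in a triangle $\t_{\leqslant a-1}X \to \t_{\leqslant b}X \to C$. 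The latter places $C$ in $\DC^{\leqslant b}$ and the former places $C$ in $\DC^{\geqslant a}$, using the stability of these subcategories under extensions, which is a direct consequence of axiom (i) via the $\Hom$ long exact sequences. By the uniqueness in (ii), $C$ is canonically isomorphic both to $\t_{\geqslant a}\t_{\leqslant b}X$ and to $\t_{\leqslant b}\t_{\geqslant a}X$, giving the desired isomorphism, and commutativity of the prescribed diagram is then forced by functoriality of the truncations. The most delicate point throughout is the uniqueness of $d$ in (ii), which hinges on carefully combining TR3 with axiom (i) to pin $\phi$ down to the identity; once that is in place, parts (i) and (iii) are largely formal.
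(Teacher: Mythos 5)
Your argument is correct and follows the standard route of BBD \S 1.3, which is exactly where the paper defers for this proposition (the paper states it without proof, recalling it as background). The construction of $\t_{\leqslant n}$ from axiom (iii), the use of $\Hom(\DC^{\leqslant 0},\DC^{\geqslant 1})=0$ for functoriality, uniqueness of lifts, and the adjunction, the TR3-plus-vanishing argument for the uniqueness of $d$ in (ii), and the octahedron in (iii) are all in order. The only point you pass over lightly is the extension-stability of $\DC^{\leqslant n}$ and $\DC^{\geqslant n}$ used to place $C$ in $\DC^{\leqslant b}\cap\DC^{\geqslant a}$: you appeal to the $\Hom$ long exact sequence, but the conclusion $\Hom(C,-)|_{\DC^{\geqslant b+1}}=0 \Rightarrow C\in\DC^{\leqslant b}$ is not axiom (i) applied directly — it requires a small extra argument (e.g. the splitting of the truncation triangle of $C$ forces $\t_{\geqslant b+1}C$ to be a retract of $\t_{\geqslant b+1}$ of something in $\DC^{\leqslant b}$, hence zero). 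This is standard and easily repaired, but worth spelling out if the proof is to stand on its own.
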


For example, if $\AC$ is an abelian category and $\DC$ is its derived category,
the natural $t$-structure on $\DC$ is the one for which $\DC^{\leqslant n}$ 
(resp. $\DC^{\geqslant n}$)
is the full subcategory of the complexes $K$ such that $H^i K = 0$ for
$i > n$ (resp. $i < n$). For $K = (K^i, d^i : K^i \to K^{i + 1})$ in $\DC$,
the truncated complex $\t_{\leqslant n} K$ is the subcomplex
$\cdots \to K^{n - 1} \to \Ker d^n \to 0 \to \cdots$ of $K$.
The heart is equivalent to the abelian category $\AC$ we started with.
Note that, in this case, the cone of a morphism $f : A \to B$ between
two objects of $\AC$ is a complex concentrated in degrees $-1$ and $0$.
More precisely, we have $H^{-1}(\Cone f) \simeq \Ker f$ and
$H^0(\Cone f) \simeq \Coker f$. In particular, we have a triangle
$(\Ker f [1],\ \Cone f,\ \Coker f)$.

If we abstract the relations between $\AC$ and $\DC(\AC)$, we get the notion
of admissible abelian subcategory of a triangulated category $\DC$, and a $t$-structure
on $\DC$ precisely provides an admissible abelian subcategory by taking the heart.

More precisely, let $\DC$ be a triangulated category and $\CC$ a full subcategory
of $\DC$ such that $\Hom^i(A,B) := \Hom(A,B[i])$ is zero for $i < 0$ and $A,B$ in $\CC$.
We have the following proposition, which results from the octahedron axiom.

\begin{prop}
Let $f : X \to Y$ in $\CC$. We can complete $f$ into a distinguished triangle
$(X,Y,S)$. Suppose $S$ is in a distinguished triangle $(N[1],S,C)$ with $N$ and
$C$ in $\CC$. Then the morphisms $N \to S[-1] \to X$ and $Y \to S \to C$,
obtained by composition from the morphisms in the two triangles above, are respectively
a kernel and a cokernel for the morphism $f$ in $\CC$.
\end{prop}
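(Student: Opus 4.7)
The plan is to verify the universal properties of kernel and cokernel directly by computing $\Hom$ groups, using only the standing vanishing $\Hom^i(A, B) = 0$ for $i < 0$ and $A, B \in \CC$. First one observes that $f \circ \alpha = 0$ and $\beta \circ f = 0$ are automatic: the composition $f \circ \alpha$ contains the segment $S[-1] \to X \to Y$ of the distinguished triangle $(S[-1], X, Y)$ obtained by rotating $(X,Y,S)$, and consecutive arrows in a distinguished triangle compose to zero; similarly $\beta \circ f$ contains $X \to Y \to S$.

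To show that $\alpha$ is a kernel of $f$ in $\CC$, I would fix $Z \in \CC$ and apply the cohomological functor $\Hom_\DC(Z, -)$ to the two rotated distinguished triangles
\[
S[-1] \longto X \longto Y \longto S,
\qquad
N \longto S[-1] \longto C[-1] \longto N[1]
\]
(the second obtained by shifting $(N[1], S, C)$ by $[-1]$). The first long exact sequence contains
\[
\Hom^{-1}(Z,Y) \longto \Hom(Z, S[-1]) \longto \Hom(Z,X) \elem{f_*} \Hom(Z,Y),
\]
and since $\Hom^{-1}(Z,Y) = 0$, the arrow $\Hom(Z, S[-1]) \to \Hom(Z,X)$ is injective with image $\Ker(f_*)$. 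The second yields
\[
\Hom^{-2}(Z,C) \longto \Hom(Z,N) \longto \Hom(Z,S[-1]) \longto \Hom^{-1}(Z,C),
\]
whose outer terms both vanish, so $\Hom(Z,N) \to \Hom(Z,S[-1])$ is an isomorphism. Composing gives that $\alpha_* : \Hom(Z,N) \to \Hom(Z,X)$ is injective with image $\Ker(f_*)$, and functoriality in $Z$ together with the fullness of $\CC$ in $\DC$ yields exactly the defining property of a kernel in $\CC$.

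The proof that $\beta$ is a cokernel will be dual. For each $Z \in \CC$, apply $\Hom_\DC(-, Z)$ to the original triangles $(X,Y,S)$ and $(N[1], S, C)$, and write $g : Y \to S$ and $q : S \to C$ for the relevant maps, so that $\beta = q \circ g$. The vanishing $\Hom^{-1}(X, Z) = 0$ forces $g^* : \Hom(S, Z) \to \Hom(Y, Z)$ to be injective with image $\Ker(f^*)$, while $\Hom^{-1}(N,Z) = \Hom^{-2}(N,Z) = 0$ force $q^* : \Hom(C, Z) \to \Hom(S, Z)$ to be an isomorphism. Then $\beta^* = g^* \circ q^*$ identifies $\Hom(C, Z)$ with $\Ker(f^*)$ functorially in $Z$, which is the cokernel property.

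There is no serious obstacle here; the main care is bookkeeping of the rotations and shifts, and the systematic use of the standing negative-degree vanishing. The octahedron axiom alluded to in the statement can be used to bundle the two given triangles into a single distinguished triangle on $\Cone(\alpha)$ involving $N$, $X$, $Y$ and $C$, giving a slightly more conceptual packaging, but it is not actually required for the argument above.
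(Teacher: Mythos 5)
Your proof is correct, and it takes a genuinely more elementary route than the one the paper alludes to. The paper (following BBD) introduces the proposition with the phrase ``which results from the octahedron axiom,'' and gives no further argument; your proof shows that once the distinguished triangle $(N[1],S,C)$ is \emph{given} as a hypothesis, the octahedron axiom is not needed at all. You simply apply the cohomological functor $\Hom_\DC(Z,-)$ (resp.\ $\Hom_\DC(-,Z)$) to the two given triangles, and the standing vanishing $\Hom^{i}(A,B)=0$ for $i<0$ and $A,B\in\CC$ kills exactly the terms needed to force $\Hom(Z,N)\isom\Hom(Z,S[-1])$ (via the vanishing of $\Hom^{-1}(Z,C)$ and $\Hom^{-2}(Z,C)$) and to identify $\Hom(Z,S[-1])$ with $\Ker(f_*)\subset\Hom(Z,X)$ (via the vanishing of $\Hom^{-1}(Z,Y)$); the cokernel side is the precise dual. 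Fullness of $\CC$ in $\DC$ converts these $\Hom_\DC$ identifications into the universal property in $\CC$. Your remark that the octahedron would give a ``more conceptual packaging'' is fair: in the broader BBD development the octahedron is used to \emph{produce} triangles of the form $(N[1],S,C)$ and to compare factorizations, which is where it really earns its keep, but for the statement as given your direct long-exact-sequence argument is complete, self-contained, and arguably the cleanest way to see why the negative-Ext vanishing is the essential hypothesis.
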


Such a morphism will be called $\CC$-\emph{admissible}. In a distinguished triangle 
$X\elem{f} Y\elem{g} Z \elem{d}$ on objects in $\CC$, the morphisms
$f$ and $g$ are admissible, $f$ is a kernel of $g$, $g$ is a cokernel of $f$,
and $d$ is uniquely determined by $f$ and $g$. A short exact sequence
in $\CC$ will be called \emph{admissible} if it can be obtained
from a distinguished triangle in $\DC$ by suppressing the degree one morphism.

\begin{prop}
Suppose $\CC$ is stable by finite direct sums. Then the following conditions are equivalent.
\begin{enumerate}[(i)]
\item $\CC$ is abelian, and its short exact sequences are admissible.

\item Every morphism of $\CC$ is $\CC$-admissible.
\end{enumerate}
\end{prop}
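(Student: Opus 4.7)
The plan is to prove each implication separately; (i)$\Rightarrow$(ii) follows directly from the octahedron axiom via the image factorization, while (ii)$\Rightarrow$(i) is the main work and requires careful use of the universal properties from the preceding proposition.

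For (i)$\Rightarrow$(ii), let $f : X \to Y$ in the abelian category $\CC$, and factor $f$ as $X \twoheadrightarrow \im f \hookrightarrow Y$. The short exact sequences $0 \to \Ker f \to X \to \im f \to 0$ and $0 \to \im f \to Y \to \Coker f \to 0$ are admissible by hypothesis, so they come from distinguished triangles $(\Ker f, X, \im f)$ and $(\im f, Y, \Coker f)$. Applying the octahedron axiom to the composition $X \to \im f \to Y = f$ yields a distinguished triangle $((\Ker f)[1], \Cone(f), \Coker f)$, exhibiting $f$ as $\CC$-admissible.

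For (ii)$\Rightarrow$(i), given $f : X \to Y$, write $N = \Ker f$ and $C = \Coker f$, both furnished by the preceding proposition together with the kernel inclusion $\iota : N \to X$ and cokernel map $g : Y \to C$. The universal properties force $\iota$ to be a monomorphism and $g$ to be an epimorphism in $\CC$, so $\Ker \iota = 0$ and $\Coker g = 0$; applying admissibility to $\iota$ and $g$ therefore yields distinguished triangles $(N, X, Q)$ and $(I, Y, C)$ with $Q = \Coker \iota = \Coim f$ and $I = \Ker g = \im f$. Since $f \circ \iota = 0$ and $g \circ f = 0$, the universal properties give canonical maps $\bar f : Q \to Y$ (with $\bar f \circ (X \to Q) = f$) and $X \to I$ (with $(I \to Y) \circ (X \to I) = f$), which combine to produce a canonical $\phi : Q \to I$ with $\phi \circ (X \to Q) = (X \to I)$.

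The main difficulty is to show $\phi$ is an isomorphism, rather than only that $Q$ and $I$ are abstractly isomorphic. For this, apply the octahedron axiom to $X \to I \to Y = f$ to produce a distinguished triangle $(\Cone(X \to I), \Cone(f), C)$. The morphism $\Cone(f) \to C$ arising in this octahedron coincides with the one from the admissibility triangle $(N[1], \Cone(f), C)$: both induce the same composition $Y \to \Cone(f) \to C$ (namely $g$), and any discrepancy would factor through $\Hom(X, C[-1])$, which vanishes by the standing hypothesis $\Hom^i(A,B) = 0$ for $i < 0$ in $\CC$. Uniqueness of the fiber then gives $\Cone(X \to I) \cong N[1]$, so that $(N, X, I)$ is distinguished. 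Comparing $(N, X, I)$ with $(N, X, Q)$ by axiom TR3 and the triangulated five-lemma produces an isomorphism $Q \to I$ extending $\id_N$ and $\id_X$, and uniqueness of factorization through the cokernel $Q$ identifies it with $\phi$. Once $\CC$ is abelian, any short exact sequence $0 \to A \to B \to C' \to 0$ is admissible: admissibility of $A \to B$ yields a triangle $(A, B, T)$ with $T$ fitting into $(0, T, C')$, forcing $T \cong C'$.
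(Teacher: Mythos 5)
This is Proposition 1.2.4 of BBD, which the paper quotes verbatim without proof when recalling the theory of $t$-categories; there is no proof in the paper to compare against, so I will assess your argument on its own. The overall strategy you give is the standard one and is sound. The (i)$\Rightarrow$(ii) direction -- octahedron applied to the epi-mono factorization $X \twoheadrightarrow \im f \hookrightarrow Y$, reading off the distinguished triangle $\big((\Ker f)[1],\ \mathrm{Cone}(f),\ \Coker f\big)$ -- is correct as written. For (ii)$\Rightarrow$(i), your plan is also right: apply the first proposition to $f$ to obtain $\iota \colon N = \Ker f \hookrightarrow X$ and $g \colon Y \twoheadrightarrow C = \Coker f$; note $\iota$ is mono and $g$ is epi, so $\Ker\iota = 0$ and $\Coker g = 0$, whence admissibility of $\iota$ and $g$ produces distinguished triangles $(N,X,Q)$ and $(I,Y,C)$ with $Q = \Coker\iota = \Coim f$ and $I = \Ker g = \Im f$; then pin down the comparison map $\phi\colon Q\to I$ by an octahedron argument, using $\Hom^{-1}_\DC(\CC,\CC)=0$ to match the two distinguished triangles $\big(\,\cdot\,,\ \mathrm{Cone}(f),\ C\big)$.

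One step is asserted faster than it is justified. From $\mathrm{Cone}(X\to I)\cong N[1]$ you conclude that $(N,X,I)$ is distinguished and then compare it with $(N,X,Q)$ ``by TR3 and the triangulated five-lemma, producing an isomorphism $Q\to I$ extending $\id_N$ and $\id_X$.'' For TR3 to apply with $\id_N$ in the first slot, you need the first arrow $N\to X$ of the new triangle to be $\iota$ itself, which you have not verified; a priori it is only some morphism $\iota''\colon N\to X$ with $e'\iota'' = 0$. Two patches are available. Either invoke one more of the octahedron's commutativities -- the compatibility of the degree-one maps, which says the boundary of the $e'$-triangle factors as $\beta \circ u$ where $\beta\colon \mathrm{Cone}(f)\to X[1]$ and $u\colon \mathrm{Cone}(e')\to\mathrm{Cone}(f)$; unwinding this together with $u = u'\theta$ from the fiber-uniqueness isomorphism $\theta$ shows $\iota''=\iota$. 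Or, more cheaply, observe that $\iota''$ and $\iota$ each factor through the other (both are kernels of $X\to I$, resp. of $X\to Q$, and $f\iota''=0$, $e'\iota=0$), so $\iota''=\iota\psi$ for an automorphism $\psi$ of $N$; TR3 with $\psi$ in place of $\id_N$ still yields an isomorphism $Q\to I$, and the commuting middle square forces it to equal $\phi$ since $X\to Q$ is epi. Either route closes the gap, but as written the ``extending $\id_N$'' claim is unsupported.
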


A full abelian $\CC$ subcategory of $\DC$, such that $\Hom^{-1}_\DC(\CC,\CC) = 0$,
satisfying the equivalent conditions of the proposition, is called admissible.
We will now see that $t$-structures provide admissible
abelian subcategories.

\begin{theo}
The heart $\CC$ of a $t$-category $\DC$ is an admissible abelian subcategory
of $\DC$, stable by extensions.
The functor
$H^0 := \t_{\geqslant 0} \t_{\leqslant 0} \simeq \t_{\leqslant 0}
\t_{\geqslant 0} : \DC \to \CC$ is
a cohomological functor.
\end{theo}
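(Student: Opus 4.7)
The plan is to verify the four assertions in order: first that $\Hom^i_\DC(A,B)=0$ for $A,B\in\CC$ and $i<0$ (so that the criterion for an admissible abelian subcategory is applicable), then that every morphism of $\CC$ is $\CC$-admissible, then stability under extensions, and finally that $H^0$ is cohomological.

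The vanishing $\Hom^{-i}_\DC(A,B)=0$ for $i>0$, $A,B\in\CC$, is immediate from axiom (i) of Definition \ref{def:ts}: $A\in\DC^{\le 0}$ and $B[-i]\in\DC^{\ge i}\subset\DC^{\ge 1}$. To show admissibility of an arbitrary morphism $f:X\to Y$ in $\CC$, I would complete it to a distinguished triangle $(X,Y,S)$ and then analyze the position of $S$ in the $t$-structure. Rotating the triangle and using that $\DC^{\le 0}$ and $\DC^{\ge 0}$ are each stable under extensions (a general fact about $t$-structures, proved by applying $\Hom(-,Z)$ with $Z\in\DC^{\ge 1}$ or $\Hom(Z,-)$ with $Z\in\DC^{\le -1}$ to the long exact sequence coming from the triangle), I get $S\in\DC^{\le 0}\cap\DC^{\ge -1}$. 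Then the truncation triangle
\[
\t_{\le -1} S\longto S\longto\t_{\ge 0} S
\]
is of the required form $(N[1],S,C)$: setting $N:=\t_{\le -1}S[-1]$ and $C:=\t_{\ge 0}S$, both $N$ and $C$ lie in $\CC$ since $N\in\DC^{\le 0}\cap\DC^{\ge 0}$ (using $S\in\DC^{\ge -1}$) and $C\in\DC^{\ge 0}\cap\DC^{\le 0}$ (using $S\in\DC^{\le 0}$). Hence $f$ is admissible, and the preceding proposition applies to identify $N$ with the kernel and $C$ with the cokernel of $f$. Stability under extensions is the same observation: if $A\to B\to C\to A[1]$ is distinguished with $A,C\in\CC$, then $B$ belongs to both $\DC^{\le 0}$ and $\DC^{\ge 0}$ by the extension-stability just noted.

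For the cohomological character of $H^0$, I would first check that the natural map $\t_{\ge 0}\t_{\le 0}\to\t_{\le 0}\t_{\ge 0}$ is an isomorphism (this is the content of part (iii) of the proposition preceding the theorem, applied with $a=b=0$), so that $H^0$ is well defined as a functor with values in $\CC$. Given a distinguished triangle $X\to Y\to Z\to X[1]$, I want the sequence $H^0X\to H^0Y\to H^0Z$ to be exact in $\CC$. The standard route is to show that $\t_{\le 0}$ and $\t_{\ge 0}$ are, respectively, a right and a left $t$-exact functor, in the sense that they send distinguished triangles with vertices having bounded amplitude into distinguished triangles whose third vertex remains in the appropriate half; concretely, by applying the truncation functors and using the octahedron axiom to splice the resulting triangles, one produces a long exact sequence $\cdots\to H^{-1}Z\to H^0X\to H^0Y\to H^0Z\to H^1X\to\cdots$ in $\CC$, where $H^i:=H^0\circ[i]$. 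Exactness at $H^0Y$ is the required cohomological property.

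The main technical obstacle is the diagrammatic manipulation in the cohomological step: one must iterate the octahedron axiom carefully to splice the truncation triangles of $X$, $Y$ and $Z$ into a single long exact sequence in $\CC$, and check at each stage that the arrows produced are the expected ones from functoriality of $\t_{\le 0}$ and $\t_{\ge 0}$. Everything else reduces to straightforward uses of the defining axioms (i)--(iii) of a $t$-structure together with extension-stability of $\DC^{\le 0}$ and $\DC^{\ge 0}$.
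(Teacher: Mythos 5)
The paper states this theorem without proof; it is explicitly recalled from \cite{BBD} (Theorem~1.3.6 there), the section having opened with the caveat ``We will recall the main points of the treatment of $t$-structures and recollement of \cite{BBD}, to which we refer for the details.'' So there is no proof in the paper against which to compare yours.

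Your reconstruction follows the standard BBD argument and is correct. The vanishing of negative Ext groups between objects of $\CC$ is immediate from axiom~(i); the admissibility step --- completing $f:X\to Y$ to a triangle, rotating twice to show the cone $S$ lies in $\DC^{\leqslant 0}\cap\DC^{\geqslant -1}$ by extension-stability of both halves of the $t$-structure, and then using the truncation triangle of $S$ to exhibit $N=\t_{\leqslant -1}S[-1]$ and $C=\t_{\geqslant 0}S$ in $\CC$ --- is exactly the BBD argument. One small point worth making explicit: the claim that $\t_{\leqslant -1}S$ also lies in $\DC^{\geqslant -1}$ (hence $N\in\CC$) relies on part~(iii) of the preceding proposition, namely the commutation $\t_{\geqslant -1}\t_{\leqslant -1}\simeq\t_{\leqslant -1}\t_{\geqslant -1}$ applied to $S\in\DC^{\geqslant -1}$; you use this implicitly. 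Your parenthetical justification of extension-stability of $\DC^{\leqslant 0}$ via $\Hom(-,Z)$ for $Z\in\DC^{\geqslant 1}$ is also correct, though it ultimately rests on the characterization $X\in\DC^{\leqslant 0}\iff\Hom(X,Z)=0$ for all $Z\in\DC^{\geqslant 1}$, which itself needs a one-line argument with the truncation triangle of $X$. The cohomological-functor step is, as you say, the technical core; your description of the octahedron splicing is accurate but remains a sketch --- which is appropriate for a reconstruction of a cited result, but would need to be filled in for a self-contained proof.
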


Let $\DC_i$ ($i = 1,2$) be two $t$-categories, and let $\e_i : \CC_i \to \DC_i$
denote the inclusion functors of their hearts. Let $T : \DC_1 \to \DC_2$
be a triangulated functor. Then we say that $T$ is
right $t$-exact if $T(\DC_1^{\leqslant 0}) \subset \DC_2^{\leqslant 0}$,
left $t$-exact if $T(\DC_1^{\geqslant 0}) \subset \DC_2^{\geqslant 0}$,
and $t$-exact if it is both left and right exact.

\begin{prop}\label{prop:pT}
\begin{enumerate}
\item If $T$ is left (resp. right) $t$-exact, then the additive functor
$\p T := H^0 \circ T \circ \e_1$ is left (resp. right) exact.

\item Let $(T^*,T_*)$ be a pair of adjoint triangulated functors, with
$T^* : \DC_2 \to \DC_1$ and $T_* : \DC_1 \to \DC_2$.
Then $T^*$ is right $t$-exact if and only if $T_*$ is left $t$-exact,
and in that case $(\p T^*,\p T_*)$ is a pair of adjoint functors between
$\CC_1$ and $\CC_2$.
\end{enumerate}
\end{prop}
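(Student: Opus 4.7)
The plan is to prove the two parts in turn, leaning on the axioms of a $t$-structure, the universal properties of the truncation functors, and the cohomological nature of $H^0$.

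For part (1), I would first translate an admissible short exact sequence $0\to A\to B\to C\to 0$ in $\CC_1$ into a distinguished triangle $A\to B\to C\elem{+1}$ in $\DC_1$, apply the triangulated functor $T$ to obtain a distinguished triangle $TA\to TB\to TC\elem{+1}$ in $\DC_2$, and then apply the cohomological functor $H^0$ of Theorem on hearts to get a long exact sequence
\[
\cdots \to H^{-1}(TC) \to H^0(TA) \to H^0(TB) \to H^0(TC) \to H^1(TA) \to \cdots
\]
in $\CC_2$. If $T$ is left $t$-exact, then $TA,TB,TC\in\DC_2^{\geqslant 0}$, so $H^i$ vanishes on them for $i<0$; the long exact sequence therefore starts with $0\to\p T(A)\to\p T(B)\to\p T(C)$, giving left exactness. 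The right $t$-exact case is dual, using that $H^i$ vanishes in strictly positive degrees on $\DC_2^{\leqslant 0}$.

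For part (2), the key input is the standard characterization $Y\in\DC^{\geqslant 1}\iff \Hom_\DC(X,Y)=0$ for all $X\in\DC^{\leqslant 0}$ (one direction is Definition \ref{def:ts}(i), the other follows by applying $\Hom(\t_{\leqslant 0}Y,-)$ to the truncation triangle and noting that a zero composition $\t_{\leqslant 0}Y\to Y$ makes $Y\to\t_{\geqslant 1}Y$ split). Granted this, suppose $T^*$ is right $t$-exact. For $Y\in\DC_1^{\geqslant 1}$ and any $X\in\DC_2^{\leqslant 0}$, the adjunction gives
\[
\Hom_{\DC_2}(X,T_*Y)=\Hom_{\DC_1}(T^*X,Y)=0,
\]
since $T^*X\in\DC_1^{\leqslant 0}$. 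Hence $T_*Y\in\DC_2^{\geqslant 1}$, proving $T_*$ is left $t$-exact; the converse is entirely symmetric.

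It remains to construct the adjunction $(\p T^*,\p T_*)$ on the hearts. For $A\in\CC_2$ and $B\in\CC_1$, since $T_*B\in\DC_2^{\geqslant 0}$ we have $\t_{\leqslant 0}T_*B=\t_{\leqslant 0}\t_{\geqslant 0}T_*B=H^0(T_*B)=\p T_*(B)$, and by adjunction of truncation with the inclusion $\DC_2^{\leqslant 0}\hookrightarrow\DC_2$,
\[
\Hom_{\CC_2}(A,\p T_*B)=\Hom_{\DC_2}(A,\t_{\leqslant 0}T_*B)=\Hom_{\DC_2}(A,T_*B).
\]
Dually, $\Hom_{\DC_1}(T^*A,B)=\Hom_{\CC_1}(\p T^*A,B)$. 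Combining with the original adjunction $\Hom_{\DC_2}(A,T_*B)=\Hom_{\DC_1}(T^*A,B)$ yields the desired bijection $\Hom_{\CC_2}(A,\p T_*B)\simeq\Hom_{\CC_1}(\p T^*A,B)$, functorial in $A$ and $B$. The main obstacle is the initial characterization of $\DC^{\geqslant 1}$ by vanishing of Homs from $\DC^{\leqslant 0}$, which does not follow formally from Definition \ref{def:ts} without invoking the truncation triangles and closure of $\DC^{\geqslant 1}$ under retracts; once this is in hand, the rest of part (2) is essentially a diagram chase through adjunctions and truncations.
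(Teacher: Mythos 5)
Your proof is correct. The paper itself does not supply a proof of this proposition (it is recalled from \cite{BBD}), so there is nothing to compare against verbatim; both halves of your argument are the standard ones.

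Two minor remarks. In part (1), left exactness of an additive functor between abelian categories is indeed equivalent to taking short exact sequences $0\to A\to B\to C\to 0$ to exact sequences $0\to FA\to FB\to FC$, so the reduction to the long exact cohomology sequence of the triangle is a complete argument; you should just note explicitly that short exact sequences in the heart are admissible (this is Theorem 1.3.6 in \cite{BBD}, and is recalled in the paper as the theorem preceding Proposition \ref{prop:pT}), so they do lift to distinguished triangles. In part (2), your characterization of $\DC^{\geqslant 1}$ by vanishing of $\Hom$ from $\DC^{\leqslant 0}$ can be obtained without the ``split triangle plus closure under retracts'' route: since $\t_{\leqslant 0}$ is right adjoint to the inclusion of $\DC^{\leqslant 0}$, the counit $\t_{\leqslant 0}Y\to Y$ corresponds to $1_{\t_{\leqslant 0}Y}$ under $\Hom(\t_{\leqslant 0}Y,\t_{\leqslant 0}Y)\simeq\Hom(\t_{\leqslant 0}Y,Y)$, so the hypothesis forces $1_{\t_{\leqslant 0}Y}=0$, hence $\t_{\leqslant 0}Y=0$ and $Y\simeq\t_{\geqslant 1}Y$. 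This avoids the point you flag as ``the main obstacle,'' which is otherwise a genuine (if easily filled) gap in the form you wrote it, since direct-summand closure of the aisles is not part of Definition \ref{def:ts} and would require its own short argument via additivity of truncation. The derivation of the adjunction on hearts is clean and exactly the usual one.
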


\section{Torsion theories and $t$-structures}\label{sec:tt ts}

\begin{defi}\label{def:tt}
Let $\AC$ be an abelian category. A torsion theory on $\AC$ is a pair
$(\TC, \FC)$ of full subcategories such that
\begin{enumerate}[(i)]
\item for all objects $T$ in $\TC$ and $F$ in $\FC$, we have
\begin{equation}\label{eq:hom torsion theory}
\Hom_\AC(T,F) = 0
\end{equation}
\item for any object $A$ in $\AC$, there are objects $T$ in $\TC$ and $F$ in $\FC$
such that there is a short exact sequence
\begin{equation}\label{eq:ses torsion theory}
0 \longto T \longto A \longto F \longto 0
\end{equation}
\end{enumerate}
\end{defi}

Then the short exact sequence \ref{eq:ses torsion theory} is functorial. We obtain
functors $(-)_\tors : \AC \to \TC$ and $(-)_\free : \AC \to \FC$.

Examples of torsion theories arise with $\OM$-linear abelian categories.

\begin{defi}\label{def:torsion}
Let $\AC$ be an $\OM$-linear abelian category. An object $A$ in $\AC$ is \emph{torsion}
if $\varpi^N 1_A$ is zero for some $N \in \NM$, and it is
\emph{torsion-free} (resp. \emph{divisible}) if $\varpi.1_A$ is a monomorphism
(resp. an epimorphism).
\end{defi}

\begin{prop}\label{prop:hom tors div}
Let $\AC$ be an $\OM$-linear abelian category.
\begin{enumerate}[(i)]
\item If $T \in \AC$ is torsion and $F \in \AC$ is torsion-free, then we have
\begin{equation}
\Hom_\AC(T,F) = 0
\end{equation}
\item If $Q \in \AC$ is divisible and $T \in \AC$ is torsion, then we have
\begin{equation}
\Hom_\AC(Q,T) = 0
\end{equation}
\end{enumerate}
\end{prop}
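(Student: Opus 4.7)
The plan is to use the defining property of torsion, torsion-free, and divisible objects, which is entirely in terms of how the scalar $\varpi$ acts via the identity morphism. For any morphism $f : A \to B$ in an $\OM$-linear category, $\OM$-linearity gives $\varpi^N \cdot f = (\varpi^N 1_B) \circ f = f \circ (\varpi^N 1_A)$, so multiplying $f$ by $\varpi^N$ can be computed on either side.

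For part (i), I would take $f : T \to F$ and choose $N$ such that $\varpi^N 1_T = 0$, which exists because $T$ is torsion. Precomposing with this zero morphism shows $(\varpi^N 1_F) \circ f = f \circ (\varpi^N 1_T) = 0$. Since $F$ is torsion-free, $\varpi 1_F$ is a monomorphism, and a composition of monomorphisms is a monomorphism, so $\varpi^N 1_F$ is a monomorphism. From $(\varpi^N 1_F) \circ f = 0$ I conclude $f = 0$.

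For part (ii), the argument is dual. I take $f : Q \to T$ and pick $N$ with $\varpi^N 1_T = 0$. Then $f \circ (\varpi^N 1_Q) = (\varpi^N 1_T) \circ f = 0$. Since $Q$ is divisible, $\varpi 1_Q$ is an epimorphism, hence so is $\varpi^N 1_Q$ (composition of epimorphisms), and from $f \circ (\varpi^N 1_Q) = 0$ I deduce $f = 0$.

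There is no serious obstacle here; the only point worth noting is that one must invoke $\OM$-linearity to move the scalar $\varpi^N$ from one side of the composition to the other, and the elementary fact that compositions of monomorphisms (resp. epimorphisms) remain monomorphisms (resp. epimorphisms) in any abelian category. Both parts together confirm that (torsion, torsion-free) and (divisible, torsion) are the vanishing halves of two torsion-type pairs in the sense of Definition \ref{def:tt}, which is the main reason this proposition is recorded here.
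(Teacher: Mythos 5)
Your proof is correct and is essentially identical to the paper's: choose $N$ killing $T$, move $\varpi^N$ across the composition by $\OM$-linearity, and conclude from $\varpi^N 1_F$ being a monomorphism (resp.\ $\varpi^N 1_Q$ being an epimorphism). Your two explicit remarks — that powers of a monomorphism/epimorphism remain such, and that $\OM$-linearity is what lets you slide the scalar — are exactly the details the paper leaves implicit.
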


\begin{proof}
\emph{(i)}\ Let $f\in \Hom_\AC(T,F)$.
Let $N \in \NM$ such that $\varpi^N.1_T = 0$. Then we have
$(\varpi^N.1_F) f = f (\varpi^N.1_T) = 0$, and consequently $f = 0$,
since $\varpi^N.1_F$ is a monomorphism.

\medskip

\emph{(ii)}\ Let $g\in \Hom_\AC(Q,T)$.
Let $N \in \NM$ such that $\varpi^N.1_T = 0$. Then we have
$g (\varpi^N.1_Q) = (\varpi^N.1_T) g = 0$, and consequently $g = 0$,
since $\varpi^N.1_Q$ is an epimorphism.
\end{proof}

\begin{prop}\label{prop:tors div}
Let $A$ be an object in $\AC$.
\begin{enumerate}
\item If $A$ is noetherian, then $A$ has a greatest torsion subobject
$A_\tors$, the quotient $A/A_\tors$ has no torsion
and $\KM A \simeq \KM A/A_\tors$.

\item If $A$ is artinian, then $A$ has a greatest divisible
subobject $A_\di$, the quotient $A/A_\di$ is a torsion object
and we have $\KM A \simeq \KM A_\di$.
\end{enumerate}
\end{prop}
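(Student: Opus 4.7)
The plan is to construct $A_\tors$ in (1) (resp.\ $A_\di$ in (2)) as the stable term of an ascending (resp.\ descending) chain of subobjects built from powers of multiplication by $\varpi$, and then to deduce the other two assertions of each part from the stabilization identity together with the exactness of $\KM \otimes_\OM (-)$.

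For part (1), I would consider the ascending chain $K_n := \Ker(\varpi^n \cdot 1_A) \subset A$; by noetherianness it stabilizes at some $N$, and I would set $A_\tors := K_N$. By construction $\varpi^N$ annihilates $A_\tors$, so it is torsion; and any torsion subobject $T \hookrightarrow A$ with $\varpi^M \cdot 1_T = 0$ satisfies $T \subset K_{\max(M,N)} = A_\tors$, giving maximality. For torsion-freeness of $A/A_\tors$, if the class of $x \in A$ is annihilated by $\varpi^k$ then $\varpi^k x \in A_\tors$, so $\varpi^{N+k}x = 0$ and $x \in K_{N+k} = A_\tors$. Tensoring the short exact sequence $0 \to A_\tors \to A \to A/A_\tors \to 0$ with $\KM$ preserves exactness, and $\KM A_\tors = 0$ because $\varpi$ acts invertibly on $\KM$ but nilpotently on $A_\tors$, yielding $\KM A \simeq \KM(A/A_\tors)$.

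Part (2) is dual: I would consider the descending chain $I_n := \im(\varpi^n \cdot 1_A)$, which stabilizes at some $N$ by artinianness, and set $A_\di := I_N$. Multiplication by $\varpi$ sends $A_\di$ into $\varpi \cdot I_N = I_{N+1} = I_N = A_\di$ and is surjective onto it for the same reason, so $A_\di$ is divisible. Any divisible subobject $Q \subset A$ satisfies $Q = \varpi Q \subset \varpi A = I_1$, and iterating $Q \subset I_n$ for all $n$, hence $Q \subset A_\di$. Since $\varpi^N A = I_N = A_\di$, the quotient $A/A_\di$ is annihilated by $\varpi^N$ and so torsion; applying $\KM \otimes_\OM (-)$ to $0 \to A_\di \to A \to A/A_\di \to 0$ kills the last term, giving $\KM A \simeq \KM A_\di$.

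The only mildly subtle point is the divisibility check in (2): one must verify both that $\varpi$ preserves the subobject $A_\di$ and that the restriction is an epimorphism, but both assertions reduce to the single stabilization identity $I_{N+1} = I_N$. Everything else is routine bookkeeping once the stabilizing indices have been fixed.
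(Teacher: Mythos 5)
Your proof is correct and follows essentially the same approach as the paper's: you build $A_\tors$ as the stable term of the kernels of $\varpi^n\cdot 1_A$ and $A_\di$ as the stable term of the images of $\varpi^n\cdot 1_A$, then read off the remaining assertions from the stabilization identity and the exactness of $\KM\otimes_\OM(-)$. In fact your argument is slightly more complete than the paper's, which verifies that $A_\di$ is divisible and that $A/A_\di$ is torsion but silently omits the check that $A_\di$ is the \emph{greatest} divisible subobject, a check you supply via $Q=\varpi Q\subset I_n$ for all $n$.
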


\begin{proof}
In the first case,
the increasing sequence of subobjects $\Ker \varpi^n.1_A$ must
stabilize, so there is an integer $N$ such that
$\Ker \varpi^n.1_A = \Ker \varpi^N.1_A$ for all $n \geqslant N$.
We set $A_\tors := \Ker \varpi^N.1_A$. This is clearly a torsion object,
since it is killed by $\varpi^N$. Now let $T$ be a torsion subobject
of $A$. It is killed by some $\varpi^k$, and we can assume
$k \geqslant N$. Thus $T \subset \Ker \varpi^k.1_A = \Ker \varpi^N.1_A = A_\tors$.
This shows that $A_\tors$ is the greatest torsion subobject of $A$.
We have $\Ker \varpi.1_{A/A_\tors} = \Ker \varpi^{N+1}.1_A / \Ker \varpi^N.1_A = 0$
which shows that $A/A_\tors$ is torsion-free. Applying the exact functor
$\KM \otimes_\OM -$ to the short exact sequence
$0 \to A_\tors \to A \to A/A_\tors \to 0$,
we get $\KM A \simeq \KM A/A_\tors$.

In the second case, the decreasing sequence of subobjects $\im \varpi^n.1_A$
must stabilize, so there is an integer $N$ such that
$\im \varpi^n.1_A = \im \varpi^N.1_A$ for all $n \geqslant N$.
We set $A_\di := \im \varpi^N.1_A$. We have
$\im \varpi.1_{A_\di} = \im \varpi^{N+1}.1_A = \im \varpi^N.1_A = A_\di$,
thus $A_\di$ is divisible. We have
$\im \varpi^n.1_{A/A_\di} = \im \varpi^n.1_A / \im \varpi^N.1_A = 0$
for $n \geqslant N$. Hence $A/A_\di$ is a torsion object.
Applying the exact functor
$\KM \otimes_\OM -$ to the short exact sequence
$0 \to A_\di \to A \to A/A_\di \to 0$,
we get $\KM A_\di \simeq \KM A$.
\end{proof}

\begin{prop}\label{prop:torsion theory tors div}
Let $\AC$ be an $\OM$-linear abelian category.
We denote by $\TC$ (resp. $\FC$, $\QC$) the full subcategory
of torsion (resp. torsion-free, divisible) objects in $\AC$.
If $\AC$ is noetherian (resp. artinian), then $(\TC, \FC)$ (resp. $(\QC,\TC)$)
is a torsion theory on $\AC$.
\end{prop}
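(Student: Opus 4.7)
The plan is to verify the two defining axioms of a torsion theory (Definition \ref{def:tt}) in each of the two cases, and in both cases the work has essentially already been done in the preceding two propositions, so the proof is a straightforward assembly.

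For the noetherian case, I would first verify axiom (i): the vanishing $\Hom_\AC(T,F) = 0$ for $T \in \TC$ and $F \in \FC$ is exactly the content of Proposition \ref{prop:hom tors div}(i). For axiom (ii), given an arbitrary object $A$ of $\AC$, I would invoke Proposition \ref{prop:tors div}(1): since $\AC$ is noetherian, $A$ admits a greatest torsion subobject $A_\tors \in \TC$, and the quotient $A/A_\tors$ belongs to $\FC$. The resulting short exact sequence
\[
0 \longto A_\tors \longto A \longto A/A_\tors \longto 0
\]
is the one required by \eqref{eq:ses torsion theory}.

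For the artinian case, axiom (i) is exactly Proposition \ref{prop:hom tors div}(ii). For axiom (ii), I would invoke Proposition \ref{prop:tors div}(2): since $\AC$ is artinian, any object $A$ admits a greatest divisible subobject $A_\di \in \QC$, and the quotient $A/A_\di$ is torsion, yielding the short exact sequence
\[
0 \longto A_\di \longto A \longto A/A_\di \longto 0
\]
with $A_\di \in \QC$ on the left and $A/A_\di \in \TC$ on the right, as required.

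There is no real obstacle here: the statement is a formal packaging of the two previous propositions into the language of Definition \ref{def:tt}. The only thing worth emphasizing is the asymmetry between the two cases, namely that in the noetherian situation torsion objects play the role of the ``torsion part'' $\TC$ of the torsion theory (first term of the short exact sequence), whereas in the artinian situation it is the divisible objects that play that role and torsion objects play the role of the ``free part'' $\FC$ (second term). This matches the order in which the subcategories are listed in the statement, $(\TC,\FC)$ versus $(\QC,\TC)$.
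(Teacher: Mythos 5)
Your proof is correct and follows exactly the same route as the paper, which states the result as an immediate consequence of Propositions \ref{prop:hom tors div} and \ref{prop:tors div}; you have simply spelled out the assembly of the two axioms explicitly, and your remark on the asymmetry between the noetherian and artinian cases is accurate.
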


\begin{proof}
This follows from Propositions \ref{prop:hom tors div} and \ref{prop:tors div}
\end{proof}

We want to discuss the combination of $t$-structures with torsion theories.

\begin{prop}\label{prop:tt ts}
Let $\DC$ be a triangulated category with a $t$-structure
$(\p \DC^{\leqslant 0}, \p \DC^{\geqslant 0})$, with heart $\CC$,
truncation functors $\p \t_{\leqslant i}$ and $\p \t_{\geqslant i}$,
and cohomology functors $\p H^i : \DC \to \CC$, and suppose
that $\CC$ is endowed with a torsion theory $(\TC,\FC)$.
Then we can define a new $t$-structure
$(\pp \DC^{\leqslant 0}, \pp \DC^{\geqslant 0})$ on $\DC$ by
\[
\pp \DC^{\leqslant 0} = \{A \in \p \DC^{\leqslant 1} \mid  \p H^1(A) \in \TC\}
\]
\[
\pp \DC^{\geqslant 0} = \{A \in \p \DC^{\geqslant 0} \mid  \p H^0(A) \in \FC\}
\]
\end{prop}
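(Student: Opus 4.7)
My plan is to verify the three axioms of Definition \ref{def:ts} for the pair $(\pp\DC^{\leqslant 0}, \pp\DC^{\geqslant 0})$. Axiom (ii) is immediate: if $A \in \pp\DC^{\leqslant 0}$, then $A \in \p\DC^{\leqslant 1} \subset \p\DC^{\leqslant 2}$ and $\p H^2(A) = 0 \in \TC$, so $A \in \pp\DC^{\leqslant 1}$; the inclusion $\pp\DC^{\geqslant 1} \subset \pp\DC^{\geqslant 0}$ is symmetric. For the Hom-vanishing of axiom (i), given $X \in \pp\DC^{\leqslant 0}$ and $Y \in \pp\DC^{\geqslant 1}$, I would use the triangle $\p\t_{\leqslant 0} X \to X \to \p\t_{\geqslant 1} X$ and the vanishing $\Hom(\p\t_{\leqslant 0} X, Y) = 0$ (which holds by axiom (i) of the $p$-$t$-structure, since $Y \in \p\DC^{\geqslant 1}$) to reduce to $\Hom(\p\t_{\geqslant 1} X, Y) = 0$. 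Since $X \in \p\DC^{\leqslant 1}$ one has $\p\t_{\geqslant 1} X \simeq \p H^1(X)[-1]$, so the question becomes $\Hom(\p H^1(X), Y[1]) = 0$; truncating $Y[1]$ at $\p\t_{\leqslant 0}(Y[1]) = \p H^1(Y)$ and using again axiom (i) on the tail reduces the problem to $\Hom_\CC(\p H^1(X), \p H^1(Y)) = 0$, which holds by (\ref{eq:hom torsion theory}) since $\p H^1(X) \in \TC$ and $\p H^1(Y) \in \FC$.

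The heart of the proof is axiom (iii). Given $X \in \DC$, I would decompose $H := \p H^1(X)$ via the torsion theory into a short exact sequence $0 \to T \to H \to F \to 0$ in $\CC$. Composing the canonical morphism $\p\t_{\leqslant 1} X \to H[-1]$ with $H[-1] \to F[-1]$ produces a map $\p\t_{\leqslant 1} X \to F[-1]$; let $A$ be its fiber. The long exact sequence of perverse cohomology applied to the triangle $A \to \p\t_{\leqslant 1} X \to F[-1]$ then gives $\p H^i(A) \simeq \p H^i(X)$ for $i \leqslant 0$, identifies $\p H^1(A)$ with the kernel $T$ of the surjection $H \to F$, and yields $\p H^i(A) = 0$ for $i \geqslant 2$; hence $A \in \pp\DC^{\leqslant 0}$. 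Applying the octahedron axiom to the composition $A \to \p\t_{\leqslant 1} X \to X$ produces a distinguished triangle $A \to X \to B$ together with an auxiliary triangle $F[-1] \to B \to \p\t_{\geqslant 2} X$; the long exact sequence of the latter gives $\p H^i(B) = 0$ for $i \leqslant 0$ and $\p H^1(B) \simeq F \in \FC$, so $B \in \pp\DC^{\geqslant 1}$ as required.

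The step I expect to require the most care is the construction of $A$ in axiom (iii): one must twist $\p\t_{\leqslant 1} X$ in the correct direction along the torsion part of $\p H^1(X)$, and then thread the octahedron through the various truncations without mis-indexing perverse degrees. Once this is set up cleanly, the argument is essentially the tilting construction of \cite{HRS} transported to the present triangulated setting.
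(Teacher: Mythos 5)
Your argument is correct and follows the same structure as the paper's proof: axiom (i) via truncating both variables so the Hom reduces to a $\Hom_\CC(\TC,\FC)$-vanishing, axiom (ii) via the obvious inclusions, and axiom (iii) via the torsion-theory decomposition of $\p H^1(X)$. The only difference is that where you construct $A$ and $B$ by hand (fiber of $\p\t_{\leqslant 1}X \to F[-1]$ plus an octahedron), the paper simply invokes \cite[Prop.~1.3.15]{BBD}, whose proof is exactly the construction you carry out.
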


\begin{proof}
Let us check the three axioms for $t$-structures given in Definition \ref{def:ts}.

\medskip

(i)  Let $A \in \pp \DC^{\leqslant 0}$ and $B \in \pp \DC^{\geqslant 1}$.
Then we have
\[
\begin{array}{rcll}
\Hom_\DC(A,B)
&=& \Hom_\DC(\p \t_{\geqslant 1} A, \p \t_{\leqslant 1} B)
&\text{since $A \in \p \DC^{\leqslant 1}$ and $B \in \p \DC^{\geqslant 1}$}
\\
&=& \Hom_\CC(\p H^1 A, \p H^1 B) = 0
&\text{by (\ref{eq:hom torsion theory}), since $\p H^1 A \in \TC$ and $\p H^1 B \in \FC$}
\end{array}
\]

\medskip

(ii) We have
$\pp \DC^{\leqslant 0} \subset \p \DC^{\leqslant 1} \subset \pp \DC^{\leqslant 1}$
and 
$\pp \DC^{\geqslant 0} \supset \p \DC^{\geqslant 1} \supset \pp \DC^{\geqslant 1}$.

\medskip

(iii) Let $A \in \DC$. By (\ref{eq:ses torsion theory}),
there are objects $T \in \TC$ and $F \in \FC$ such that we have
a short exact sequence
\[
0 \longto T \longto \p H^1 A \longto F \longto 0
\]
By \cite[Proposition 1.3.15]{BBD} there is a distinguished triangle
\[
A' \elem{a} A \elem{b} A'' \elem{d} A'[1]
\]
such that $A' \in \p \DC^{\leqslant 1}$
and $A'' \in \p \DC^{\geqslant 1}$,  $\p H^1 A' \simeq T$ and
$\p H^1 A'' \simeq F$, and thus $A' \in \pp \DC^{\leqslant 0}$
and $A'' \in \pp \DC^{\geqslant 1}$.
\end{proof}

We denote by $\CC^+$ the heart of this new $t$-structure, by
$\pp H^i : \DC \to \CC^+$ the new cohomology functors, and by
$\pp \t_{\leqslant i}$, $\pp \t_{\geqslant i}$ the new truncation functors.

We may also use the following notation. For the notions attached to the initial
$t$-structure, we may drop all the $p$, and for the new $t$-structure one
may write $i_+$ instead of $i$, as follows: $(\DC^{\leqslant i_+}, \DC^{\geqslant i_+})$,
$H^{i_+}$, $\t_{\leqslant i_+}$, $\t_{\geqslant i_+}$.

Note that $\CC^+$ is endowed with a torsion theory, namely $(\FC,\TC[-1])$.
We can do the same construction, and we find that $\CC^{++} = \CC [-1]$.
We recover the usual shift of $t$-structures.

By definition, we have functorial distinguished triangles
\begin{equation}\label{eq:tri tors}
\t_{\leqslant i} \longto \t_{\leqslant i_+} \longto H^{i + 1}_\tors (-) [- i - 1]
\end{equation}
and
\begin{equation}\label{eq:tri free}
\t_{\leqslant i_+} \longto \t_{\leqslant i + 1} \longto H^{i + 1}_\free (-) [- i - 1]
\end{equation}

\begin{example}
If $\DC$ is an $\OM$-linear triangulated category, then its heart $\CC$ is also $\OM$-linear.
If $\CC$ is noetherian (resp. artinian), then it is naturally endowed with a torsion theory
by Proposition \ref{prop:torsion theory tors div}, and the preceding considerations apply.
\end{example}

\section{Recollement}\label{sec:recollement}

The recollement (gluing) construction consists roughly in a way to construct
a $t$-structure on some derived category of sheaves on a topological space
(or a ringed topos) $X$, given $t$-structures on derived categories of sheaves on
$U$ and on $F$, where $j : U \to X$ is an open subset of $X$, and
$i : F \to X$ its closed complement. This can be done in a very general axiomatic
framework \cite[\S 1.4]{BBD}, which can be applied to both the complex topology
and the étale topology. The axioms can even be applied to non-topological
situations, for example for representations of algebras. Let us recall the definitions 
and main properties of the recollement procedure.

So let $\DC$, $\DC_U$ and $\DC_F$ be three triangulated categories, and let
$i_* : \DC_F \to \DC$ and $j^* : \DC \to \DC_U$ be triangulated functors.
It is convenient to set $i_! = i_*$ and $j^! = j^*$. We assume that
the following conditions are satisfied.

\begin{ass}\label{ass:recollement}
\begin{enumerate}[(i)]
\item $i_*$ has triangulated left and right adjoints, denoted by $i^*$ and
  $i^!$ respectively.

\item $j^*$ has triangulated left and right adjoints, denoted by $j_!$
  and $j_*$ respectively.

\item We have $j^* i_* = 0$. By adjunction, we also have $i^* j_! = 0$
  and $i^! j_* = 0$ and, for $A$ in $\DC_F$ and $B$ in $\DC_U$, we
  have
\[
\Hom(j_! B, i_* A) = 0 \text{ and } \Hom(i_* A, j_* B) = 0
\]

\item For all $K$ in $\DC$, there exists $d : i_* i^* K \to j_! j^* K
  [1]$ (resp. $d : j_* j^* K \to i_* i^! K [1]$), necessarily unique,
  such that the triangle $j_! j^* K \to K \to i_* i^* K \map{d}$
  (resp. $i_* i^! K \to K \to j_* j^* K \map{d}$) is distinguished.

\item The functors $i_*$, $j_!$ and $j_*$ are fully faithful: the
  adjunction morphisms $i^* i_* \to \id \to i^! i_*$ and $j^* j_* \to
  \id \to j^* j_!$ are isomorphisms.
\end{enumerate}
\end{ass}

Whenever we have a diagram
\begin{equation}\label{eq:recollement setup}
\xymatrix{
\DC_F
\ar[r]^{i_*}
&
\DC
\ar@<-3ex>[l]_{i^*}
\ar@<3ex>[l]_{i^!}
\ar[r]^{j^*}
&
\DC_U
\ar@<-3ex>[l]_{j_!}
\ar@<3ex>[l]_{j_*}
}
\end{equation}
such that the preceding conditions are satisfied, we say that we are
in a situation of recollement.

Note that for each recollement situation, there is a dual recollement
situation on the opposite derived categories. Recall that the opposite
category of a triangulated category $\TC$ is also triangulated, with
translation functor $[-1]$, and distinguished triangles the triangles
$(Z,Y,X)$, where $(X,Y,Z)$ is a distinguished triangle in $\TC$.
One can check that the conditions in \ref{ass:recollement} are satisfied
for the following diagram, where the roles of $i^*$ and $i^!$ (resp.
$j_!$ and $j_*$) have been exchanged.

\begin{equation}\label{eq:dual recollement setup}
\xymatrix{
\DC_F^\op
\ar[r]^{i_*}
&
\DC^\op
\ar@<-3ex>[l]_{i^!}
\ar@<3ex>[l]_{i^*}
\ar[r]^{j^*}
&
\DC_U^\op
\ar@<-3ex>[l]_{j_*}
\ar@<3ex>[l]_{j_!}
}
\end{equation}
We can say that there is a ``formal duality'' in the axioms of a
recollement situation, exchanging the symbols $!$ and $*$. Note that,
in the case of $D^b_c(X,\EM)$, the duality $\DC_{X,\EM}$ really
exchanges these functors.

If $\UC \map{u} \TC \map{q} \VC$ is a sequence of triangulated
functors between triangulated categories such that $u$ identifies
$\UC$ with a thick subcategory of $\TC$, and $q$ identifies $\VC$ with
the quotient of $\TC$ by the thick subcategory $u(\UC)$, then we say
that the sequence $0 \to \UC \map{u} \TC \map{q} \VC \to 0$ is exact.

\begin{prop}
The sequences
\begin{gather*}
0 \longfrom \DC_\FC \leftelem{i^*} \DC \leftelem{j_!} \DC_U \longfrom 0\\
0 \longto \DC_\FC \elem{i_*} \DC \elem{j^*} \DC_U \longto 0\\
0 \longfrom \DC_\FC \leftelem{i^!} \DC \leftelem{j_*} \DC_U \longfrom 0
\end{gather*}
are exact.
\end{prop}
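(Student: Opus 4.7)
The plan is to reduce all three exactness statements to a single one and then derive that one directly from the axioms of recollement. The three sequences are related by the formal duality built into the axioms: passing to opposite categories as in \eqref{eq:dual recollement setup} swaps the roles of $i^*$ with $i^!$ and of $j_!$ with $j_*$, which interchanges the first and third sequences; the middle one is self-dual under this operation. Moreover, the first and third are tied to the middle by the pairs of adjunctions $(j_!, j^*)$ and $(j^*, j_*)$. So it suffices to prove exactness of the middle sequence
\[
0 \longto \DC_F \elem{i_*} \DC \elem{j^*} \DC_U \longto 0,
\]
which by definition amounts to showing that (a)~$i_*$ identifies $\DC_F$ with a thick triangulated subcategory of $\DC$, and (b)~$j^*$ induces an equivalence of triangulated categories $\DC/i_*\DC_F \isom \DC_U$.

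For (a), full faithfulness of $i_*$ is assumption (v), so $i_*\DC_F$ is a full triangulated subcategory. The key observation, which I would isolate as a preliminary lemma, is the characterization
\[
K \in i_*\DC_F \iff j^* K = 0.
\]
One direction is $j^* i_* = 0$ from (iii); for the converse, the distinguished triangle $j_! j^* K \to K \to i_*i^* K \map{d}$ of (iv) gives $K \isom i_* i^* K$ whenever $j^* K = 0$. Thickness (stability under direct summands) is now immediate: if $K \simeq A \oplus B$ and $j^* K = 0$, then $j^* A = 0$, so $A \in i_*\DC_F$.

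For (b), the universal property of the Verdier quotient together with $\Ker j^* = i_*\DC_F$ produces a canonical triangulated functor $\bar j^* : \DC/i_*\DC_F \to \DC_U$ through which $j^*$ factors. Essential surjectivity is free: $j^* j_* \simeq \id$ by (v). To prove full faithfulness of $\bar j^*$, I would use the distinguished triangle $j_! j^* K \to K \to i_* i^* K \map{d}$ from (iv): its third term lies in $i_*\DC_F$, hence $j_! j^* K \to K$ becomes an isomorphism in $\DC/i_*\DC_F$. Thus for $K, L \in \DC$,
\[
\Hom_{\DC/i_*\DC_F}(K, L) \;\simeq\; \Hom_{\DC/i_*\DC_F}(j_! j^* K, L).
\]
Applying the same reasoning to $L$ via the second triangle of (iv) and using that $j_!$ is fully faithful and adjoint to $j^*$, one reduces this Hom to $\Hom_{\DC_U}(j^* K, j^* L)$, which is exactly what fully faithfulness of $\bar j^*$ requires.

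The main technical step, which is where the axioms really do work, is computing the Hom in the Verdier quotient: one has to check that the calculus of fractions collapses, i.e.\ that morphisms $K \to L$ in $\DC/i_*\DC_F$ are represented by genuine morphisms $j_! j^* K \to L$ in $\DC$ modulo the relation coming from the triangle of (iv). The $\Hom$-vanishing in (iii), $\Hom(j_! B, i_* A) = 0$, is precisely what ensures that two roofs representing the same class in the quotient come from the same morphism after applying $j^*$; this is the crucial compatibility that makes $\bar j^*$ faithful. Once this is in place, the three exact sequences follow formally, with the first and third being obtained by replaying the argument with $(j_!, i^*)$ and $(j_*, i^!)$ in place of $(j^*, i_*)$, or equivalently by invoking the duality \eqref{eq:dual recollement setup}.
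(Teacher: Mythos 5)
The paper states this proposition without proof, simply importing it from \cite[\S 1.4]{BBD}, so there is no in-text argument to compare against; your proposal is a reconstruction of the standard Bousfield-localization proof, and it is essentially correct. Two points are worth tightening. First, your opening claim that ``it suffices to prove exactness of the middle sequence'' reads as though the first and third sequences are logical consequences of the middle one. They are not: they are separate assertions proved by running the same argument with the pair $(j_!, i^*)$ or $(j_*, i^!)$ in place of $(j^*, i_*)$, which your final paragraph does acknowledge, and the duality \eqref{eq:dual recollement setup} only interchanges the first and third with one another (the middle is self-dual), so duality alone cannot supply the missing two from the one you prove. It would be cleaner to say outright that the three proofs are parallel, not that two reduce to one.

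Second, in the Hom computation you attribute the collapse of roofs specifically to the vanishing $\Hom(j_!B, i_*A) = 0$. What is actually used depends on which calculus of fractions you run and where you place the replacement. If, as you suggest, you first replace $L$ by $j_*j^*L$ in the quotient (using the second triangle of (iv)) and then show that $\Hom_{\DC/i_*\DC_F}(j_!j^*K, j_*j^*L) \simeq \Hom_\DC(j_!j^*K, j_*j^*L)$, the roof argument with a modification of the \emph{source} invokes $\Hom(i_*A, j_*B) = 0$, the companion vanishing, while modification of the \emph{target} invokes $\Hom(j_!B, i_*A) = 0$; either route works, but ``precisely'' overstates which axiom does the job. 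Once those two small slips are corrected, the argument is the canonical one and goes through.
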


Suppose we are given a $t$-structure
$(\DC_U^{\leqslant 0},\DC_U^{\geqslant 0})$ on $\DC_U$,
and a $t$-structure
$(\DC_F^{\leqslant 0},\DC_F^{\geqslant 0})$ on $\DC_F$.
Let us define
\begin{gather}
\label{eq:recollement def <= 0}
\DC^{\leqslant 0} := \{K \in \DC \mid j^*K \in \DC_U^{\leqslant 0}
\text{ and } i^*K \in \DC_F^{\leqslant 0} \} \\
\label{eq:recollement def >= 0}
\DC^{\geqslant 0} := \{K \in \DC \mid j^*K \in \DC_U^{\geqslant 0}
\text{ and } i^!K \in \DC_F^{\geqslant 0} \}
\end{gather}

\begin{theo}\label{th:recollement}
With the preceding notations, $(\DC^{\leqslant 0}, \DC^{\geqslant 0})$
is a $t$-structure on $\DC$.
\end{theo}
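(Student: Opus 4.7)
The plan is to verify directly the three axioms of Definition \ref{def:ts} for the pair $(\DC^{\leqslant 0}, \DC^{\geqslant 0})$ defined in \eqref{eq:recollement def <= 0} and \eqref{eq:recollement def >= 0}.

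For axiom (i), I would fix $X \in \DC^{\leqslant 0}$ and $Y \in \DC^{\geqslant 1}$ and apply $\Hom(-,Y)$ to the fundamental triangle $j_! j^* X \to X \to i_* i^* X \to$ of Assumption \ref{ass:recollement}(iv). The adjunctions $(j_!, j^*)$ and $(i_*, i^!)$ then identify $\Hom(j_! j^* X, Y) \simeq \Hom_{\DC_U}(j^* X, j^* Y)$ and $\Hom(i_* i^* X, Y) \simeq \Hom_{\DC_F}(i^* X, i^! Y)$, both of which vanish by the $t$-structure axiom on $\DC_U$ and $\DC_F$ respectively, so the long exact sequence forces $\Hom(X, Y) = 0$. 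Axiom (ii) is immediate from the definitions together with the analogous inclusions on $\DC_U$ and $\DC_F$.

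Axiom (iii) is the substantive part, and my strategy is a two-step truncation assembled via the octahedron axiom. Given $X$, I first truncate on $U$: the composite $j_! \t_{\leqslant 0} j^* X \to j_! j^* X \to X$ completes into a distinguished triangle $j_! \t_{\leqslant 0} j^* X \to X \to Y \to$, and applying $j^*$ (triangulated, with $j^* j_! \simeq \id$ by Assumption \ref{ass:recollement}(v)) identifies $j^* Y$ with $\t_{\geqslant 1} j^* X \in \DC_U^{\geqslant 1}$. Next I truncate $Y$ on $F$, using $i^!$ in order to control the $i^!$-condition required in $\DC^{\geqslant 1}$: the composite $i_* \t_{\leqslant 0} i^! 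Y \to i_* i^! Y \to Y$ completes into a triangle $i_* \t_{\leqslant 0} i^! Y \to Y \to B \to$, and applying $j^*$ (using $j^* i_* = 0$) and $i^!$ (using $i^! i_* \simeq \id$) gives $j^* B \simeq j^* Y \in \DC_U^{\geqslant 1}$ and $i^! B \simeq \t_{\geqslant 1} i^! Y \in \DC_F^{\geqslant 1}$, so $B \in \DC^{\geqslant 1}$.

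Applying the octahedron axiom to the composition $X \to Y \to B$ then produces a distinguished triangle $A \to X \to B \to$ together with a triangle $j_! \t_{\leqslant 0} j^* X \to A \to i_* \t_{\leqslant 0} i^! Y \to$. Applying $j^*$ and $i^*$ to this last triangle and using the four relations $j^* j_! \simeq \id$, $j^* i_* = 0$, $i^* j_! = 0$ and $i^* i_* \simeq \id$ collapses each sequence to yield $j^* A \simeq \t_{\leqslant 0} j^* X \in \DC_U^{\leqslant 0}$ and $i^* A \simeq \t_{\leqslant 0} i^! Y \in \DC_F^{\leqslant 0}$, whence $A \in \DC^{\leqslant 0}$ and the axiom follows. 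The point that needs the most care, and which I regard as the main technical obstacle, is precisely this bookkeeping in the two-step construction: the asymmetry between the definition of $\DC^{\leqslant 0}$ (using $i^*$) and that of $\DC^{\geqslant 0}$ (using $i^!$) forces the $F$-side truncation to be performed with $i^!$ when building $B$, yet the $i^*$-control of $A$ afterwards falls out correctly thanks to $i^* i_* \simeq i^! i_* \simeq \id$.
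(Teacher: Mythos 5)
Your proof is correct. The paper itself does not prove this theorem (it refers the reader to \cite{BBD}), but your argument is exactly the standard BBD one: axiom (i) follows from applying $\Hom(-,Y)$ to the gluing triangle $(j_!j^*X, X, i_*i^*X)$ and using the two adjunctions, and axiom (iii) is the two-step truncation (first along $U$ via $j_!\t_{\leqslant 0}j^*$, then along $F$ via $i_*\t_{\leqslant 0}i^!$) assembled with the octahedron, which is precisely what the identity $\t_{\leqslant 0}=\t^F_{\leqslant 0}\t^U_{\leqslant 0}$ stated right after the theorem encodes. Your closing remark about the $i^*$/$i^!$ asymmetry resolving via $i^*i_*\simeq i^!i_*\simeq\id$ correctly identifies the one place where the bookkeeping could go astray.
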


We say that it is obtained from those on
$\DC_U$ and $\DC_F$ by \emph{recollement} (gluing).

Now suppose we are just given a $t$-structure on $\DC_F$. Then we can
apply the recollement procedure to the degenerate $t$-structure
$(\DC_U,0)$ on $\DC_U$ and to the given $t$-structure on $\DC_F$. The
functors $\t_{\leqslant p}$ ($p \in \ZM$) relative to the $t$-structure obtained on
$\DC$ will be denoted $\t^F_{\leqslant p}$. The functor
$\t^F_{\leqslant p}$ is right adjoint to the inclusion of the full
subcategory of $\DC$ whose objects are the $X$ such that $i^* X$ is in
$\DC_F^{\leqslant p}$. We have a distinguished triangle
$(\t^F_{\leqslant p} X, X, i_* \t_{> p} i^* X)$. The $H^p$ cohomology
functors for this $t$-structure are the $i_* H^p i^*$. 

Dually, one can define the functor $\t^F_{\geqslant p}$ using the
degenerate $t$-structure $(0,\DC_U)$ on $\DC_U$. It is left adjoint to
the inclusion of $\{X \in \DC \mid i^! X \in \DC_F^{\geqslant p}\}$ in
$\DC$, we have distinguished triangles
$(i_* \t_{< p} i^! X, X, \t^F_{\geqslant p} X)$,
and the $H^p$ are the $i_* H^p i^!$.

Similarly, if we are just given a $t$-structure on $\DC_U$, and if we
endow $\DC_F$ with the degenerate $t$-structure $(\DC_F, 0)$
(resp. $(0, \DC_F)$), we can define a $t$-structure on $\DC$ for which
the functors $\t_{\leqslant p}$ (resp. $\t_{\geqslant p}$), denoted by 
$\t^U_{\leqslant p}$ (resp. $\t^U_{\geqslant p}$), yield distinguished
triangles $(\t^U_{\leqslant p}, X, j_* \t_{> p} j^* X)$
(resp. $(j_! \t_{< p} j^* X, X, \t^U_{\geqslant p} X)$),
and for which the $H^p$ functors are the $j_* H^p j^*$
(resp. $j_! H^p j^*$).

Moreover, we have
\begin{equation}
\t_{\leqslant p} = \t_{\leqslant p}^F \t_{\leqslant p}^U
\text{ and }
\t_{\geqslant p} = \t_{\geqslant p}^F \t_{\geqslant p}^U
\end{equation}

An \emph{extension} of an object $Y$ of $\DC_U$ is an object $X$ of
$\DC$ endowed with an isomorphism $j^* X \isom Y$. Such an isomorphism
induces morphisms $j_! Y \to X \to j_* Y$ by adjunction. If an
extension $X$ of $Y$ is isomorphic, as an extension, to
$\t_{\geqslant p}^F j_! Y$ (resp. $\t_{\leqslant p}^F j_* Y$), then
the isomorphism is unique, and we just write $X = \t_{\geqslant p}^F j_! Y$
(resp. $\t_{\leqslant p}^F j_* Y$).

\begin{prop}
Let $Y$ in $\DC_U$ and $p$ an integer. There is, up to unique
isomorphism, a unique extension $X$ of $Y$ such that $i^* X$ is in
$\DC_F^{\leqslant p - 1}$ and
$i^! X$ is in $\DC_F^{\geqslant p + 1}$. It is
$\t^F_{\leqslant p - 1} j_* Y$, and this extension of $Y$ is
canonically isomorphic to $\t^F_{\geqslant p + 1} j_! Y$.
\end{prop}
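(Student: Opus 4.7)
The plan is to verify three assertions in turn: that $X := \t^F_{\leqslant p - 1} j_* Y$ is an extension of $Y$ with the stated properties, that any other such extension is canonically isomorphic to it, and that it coincides canonically with $\t^F_{\geqslant p + 1} j_! Y$.

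To check that $X$ works, I would apply $j^*$ and $i^!$ to the defining distinguished triangle
\[
\t^F_{\leqslant p - 1} j_* Y \longto j_* Y \longto i_* \t_{\geqslant p} i^* j_* Y.
\]
Using $j^* i_* = 0$ and $j^* j_* \simeq \id$ (Assumption \ref{ass:recollement}), one obtains $j^* X \isom Y$, so $X$ is indeed an extension of $Y$; the condition $i^* X \in \DC_F^{\leqslant p-1}$ is built into the definition of $\t^F_{\leqslant p - 1}$. Using $i^! j_* = 0$ and $i^! i_* \simeq \id$ on the same triangle, one gets $i^! X \simeq (\t_{\geqslant p} i^* j_* Y)[-1]$, which lies in $\DC_F^{\geqslant p+1}$.

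For uniqueness, let $X'$ be any extension of $Y$ satisfying both conditions. The isomorphism $j^* X' \simeq Y$ gives by adjunction a canonical morphism $X' \to j_* Y$. I would lift it through $X \to j_* Y$ using the long exact sequence obtained by applying $\Hom(X', -)$ to the defining triangle of $\t^F_{\leqslant p - 1} j_* Y$. By $(i^*, i_*)$-adjunction the two obstruction groups identify with $\Hom$ from $i^* X' \in \DC_F^{\leqslant p-1}$ into objects of $\DC_F^{\geqslant p+1}$ and $\DC_F^{\geqslant p}$ respectively, both of which vanish by orthogonality in the $t$-structure on $\DC_F$. This yields a unique morphism $X' \to X$ inducing the identity on $Y$. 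Its cone $C$ satisfies $j^* C = 0$, so $C \simeq i_* E$ for a unique $E \in \DC_F$ with $E \simeq i^* C \simeq i^! C$. Applying $i^*$ and $i^!$ to the triangle $X' \to X \to C$, the stability of $\DC_F^{\leqslant p-1}$ (respectively $\DC_F^{\geqslant p+1}$) under the appropriate third vertex of a triangle forces $E$ to lie in $\DC_F^{\leqslant p-1} \cap \DC_F^{\geqslant p}$, hence $E = 0$ by orthogonality, so $X' \to X$ is an isomorphism.

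For the canonical identification with $\t^F_{\geqslant p+1} j_! Y$, I would apply the same analysis in the dual recollement situation (exchanging the roles of $i^*$ and $i^!$, and of $j_!$ and $j_*$): the dual version of the existence step shows that $\t^F_{\geqslant p+1} j_! Y$ is also an extension of $Y$ satisfying both conditions, whereupon the uniqueness just established provides the canonical isomorphism (which is the one induced by the adjunction morphism $j_! Y \to X$ via the universal property of $\t^F_{\geqslant p+1}$). The main subtlety, and the only place where care is needed, is the index bookkeeping in the uniqueness step: the hypotheses $i^* X' \in \DC_F^{\leqslant p-1}$ and $i^! X' \in \DC_F^{\geqslant p+1}$ leave a gap of exactly one degree, which is just barely enough for the orthogonality $\Hom_{\DC_F}(\DC_F^{\leqslant p-1}, \DC_F^{\geqslant p}) = 0$ to simultaneously provide the existence of the lift, its uniqueness, and the vanishing of the potential obstruction $E$.
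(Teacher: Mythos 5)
Your proof is correct, and it is essentially the standard argument (the paper states this proposition without proof, referring to the general recollement formalism of Be\u\i linson--Bernstein--Deligne, whose proof proceeds along the same lines). All the index bookkeeping checks out: the computation $i^! X \simeq (\t_{\geqslant p} i^* j_* Y)[-1] \in \DC_F^{\geqslant p+1}$ in the existence step, the two vanishing $\Hom$ groups in the lifting step (into $\DC_F^{\geqslant p}$ and $\DC_F^{\geqslant p+1}$ respectively, both killed by $i^*X' \in \DC_F^{\leqslant p-1}$), the extension argument placing the cone $E$ in $\DC_F^{\leqslant p-1} \cap \DC_F^{\geqslant p} = 0$, and the dual computation $i^* \t^F_{\geqslant p+1} j_! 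Y \simeq (\t_{\leqslant p} i^! j_! Y)[1] \in \DC_F^{\leqslant p-1}$. Your closing remark correctly identifies the one-degree gap between the conditions as exactly what makes all three orthogonality appeals go through simultaneously.
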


Let $\DC_m$ be the full subcategory of $\DC$ consisting in the objects
$X$ such that $i^* X \in \DC_F^{\leqslant p - 1}$ and
$i^! X \in \DC_F^{\leqslant p + 1}$. The functor $j^*$ induces an
equivalence $\DC_m \to \DC_U$, with quasi-inverse
$\t^F_{\leqslant p - 1} j_* = \t^F_{\geqslant p + 1} j_!$, which will
be denoted $j_{!*}$.

Let $\CC$, $\CC_U$ and $\CC_F$ denote the hearts of the $t$-categories
$\DC$, $\DC_U$ and $\DC_F$. We will use the notation $\p T$ of
Proposition \ref{prop:pT}, where $T$ is one of the functors of the
recollement diagram \ref{eq:recollement setup}.
By definition of the $t$-structure of $\DC$, $j^*$ is $t$-exact, $i^*$
is right $t$-exact, and $i^!$ is left $t$-exact. Applying
Proposition \ref{prop:pT}, we get

\begin{prop}
\begin{enumerate}[(i)]
\item The functors $j_!$ and $i^*$ are right $t$-exact, the functors
$j^*$ and $i_*$ are $t$-exact, and the functors $j_*$ and $i^!$ are
  left $t$-exact.

\item $(\p j_!, \p j^*, \p j_*)$ and $(\p i^*, \p i_*, \p i^!)$ form
  two sequences of adjoint functors. 
\end{enumerate}
\end{prop}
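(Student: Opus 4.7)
The proposition will follow directly from the definitions in equations \eqref{eq:recollement def <= 0} and \eqref{eq:recollement def >= 0} together with Proposition \ref{prop:pT}; no essentially new ingredient is needed. The plan is to first extract the $t$-exactness properties that are tautological from the construction of the glued $t$-structure, then propagate them to the remaining functors by the adjunction principle.

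First I would observe that the defining conditions of $\DC^{\leqslant 0}$ and $\DC^{\geqslant 0}$ read directly as: for any $K$ in $\DC^{\leqslant 0}$, both $j^* K \in \DC_U^{\leqslant 0}$ and $i^* K \in \DC_F^{\leqslant 0}$; and for any $K$ in $\DC^{\geqslant 0}$, both $j^* K \in \DC_U^{\geqslant 0}$ and $i^! K \in \DC_F^{\geqslant 0}$. This gives at once that $j^*$ is $t$-exact, $i^*$ is right $t$-exact, and $i^!$ is left $t$-exact, which covers half of part (i).

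Next I would feed these three facts into Proposition \ref{prop:pT}(2), which converts the $t$-exactness of one member of an adjoint pair into the dual $t$-exactness of the other. Applied to $(j_!,j^*)$, the left $t$-exactness of $j^*$ yields the right $t$-exactness of $j_!$; applied to $(j^*,j_*)$, the right $t$-exactness of $j^*$ yields the left $t$-exactness of $j_*$; applied to $(i^*,i_*)$, the right $t$-exactness of $i^*$ yields the left $t$-exactness of $i_*$; and applied to $(i_*,i^!)$, the left $t$-exactness of $i^!$ yields the right $t$-exactness of $i_*$. Combining the last two shows that $i_*$ is $t$-exact and completes the proof of part (i).

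For part (ii), the same Proposition \ref{prop:pT}(2) asserts that whenever $(T^*,T_*)$ is an adjoint pair of triangulated functors with $T^*$ right $t$-exact (equivalently $T_*$ left $t$-exact), the induced pair $(\p T^*,\p T_*)$ on the hearts is adjoint. I would apply this in turn to the four adjoint pairs $(j_!,j^*)$, $(j^*,j_*)$, $(i^*,i_*)$, $(i_*,i^!)$, all of which satisfy the required exactness by part (i). This yields the two chains of adjunctions $(\p j_!, \p j^*, \p j_*)$ and $(\p i^*, \p i_*, \p i^!)$. Since the entire argument is just a bookkeeping exercise built on the recollement definitions and Proposition \ref{prop:pT}, there is no real obstacle; the only point requiring mild care is making sure that the two applications of Proposition \ref{prop:pT} to the pair $(i_*,i^!)$ and $(i^*,i_*)$ are run in the correct direction so as to recover the full $t$-exactness of $i_*$.
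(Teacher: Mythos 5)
Your proposal is correct and follows exactly the paper's argument: the paper notes immediately before the proposition that by definition of the glued $t$-structure $j^*$ is $t$-exact, $i^*$ is right $t$-exact and $i^!$ is left $t$-exact, and then invokes Proposition \ref{prop:pT} to obtain the remaining $t$-exactness assertions and the two adjoint triples on the hearts.
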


\begin{prop}
\begin{enumerate}[(i)]
\item
The compositions $\p j^* \p i_*$, $\p i^* \p j_!$ and $\p i^! \p j_*$ are
zero. For $A$ in $\CC_F$ and $B$ in $\CC_U$, we have
\[
\Hom(\p j_! B, \p i_* A) = \Hom(\p i_* A, \p j_* B) = 0
\]

\item For any object $A$ in $\CC$, we have exact sequences
\begin{gather}
\p j_! \p j^* A \longto A \longto \p i_* \p i^* A \longto 0\\
0 \longto \p i_* \p i^! A \longto A \longto \p j_* \p j^* A
\end{gather}

\item If we identify $\CC_F$ with its essential image by the fully
  faithful functor $\p i_*$, which is a thick subcategory of $\CC$,
  then for any object $A$ in $\CC$, $\p i^* A$ is the largest quotient
  of $A$ in $\CC_F$, and $\p i^! A$ is the largest subobject of $A$ in $\CC_F$.
\end{enumerate}

\item The functor $\p j^*$ identifies $\CC_U$ with the quotient of
  $\CC$ by the thick subcategory $\CC_F$.

\item For any object $A$ in $\CC$, we have exact sequences
\begin{gather}
0 \longto \p i_* H^{-1} i^* A \longto \p j_! \p j^* A \longto A
\longto \p i_* \p i^* A \longto 0\\
0 \longto \p i_* \p i^! A \longto A \longto \p j_* \p j^* A \longto \p
i_* H^1 i^! A \longto 0
\end{gather}
\end{prop}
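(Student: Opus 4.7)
The plan is to derive everything by applying the cohomological functor $\p H^0$ (together with its shifts, as provided by the previous results) to the two distinguished triangles from Assumption \ref{ass:recollement}(iv), and to exploit systematically the $t$-exactness list of the preceding proposition, together with the adjunction pairs $(\p j_!, \p j^*, \p j_*)$ and $(\p i^*, \p i_*, \p i^!)$.

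For part (i), I would first note that $\p j^* \p i_* = 0$ is immediate: since $j^*$ and $i_*$ are both $t$-exact, $\p j^* \p i_* A = j^* i_* A = 0$ on any $A \in \CC_F$. For $\p i^* \p j_! = 0$ and $\p i^! \p j_* = 0$, the underlying triangulated compositions $i^* j_!$ and $i^! j_*$ vanish on $\DC_U$ (Assumption \ref{ass:recollement}(iii)), hence a fortiori after restriction to $\CC_U$ and application of $\p H^0$. The two Hom vanishings then follow by the adjunctions of Proposition \ref{prop:pT}(2): $\Hom(\p j_! B, \p i_* A) = \Hom(B, \p j^* \p i_* A) = 0$ and $\Hom(\p i_* A, \p j_* B) = \Hom(\p j^* \p i_* A, B) = 0$.

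For (ii), start from the recollement triangle $j_! j^* A \to A \to i_* i^* A \map{+1}$ with $A \in \CC$. Since $j^*$ is $t$-exact and $j_!$ is right $t$-exact, $j_! j^* A$ lies in $\p D^{\leqslant 0}$; likewise $i_* i^* A$ lies in $\p D^{\leqslant 0}$ because $i^*$ is right $t$-exact and $i_*$ is $t$-exact. Applying the cohomological functor $\p H^0$ to the triangle, the term $\p H^1$ of the rightmost object vanishes, giving a right-exact four-term sequence. The identifications $\p H^0(j_! j^* A) = \p j_!(\p j^* A)$ and $\p H^0(i_* i^* A) = \p i_*(\p i^* A)$ come from combining the $t$-exactness of $j^*$, $i_*$ with the definition $\p j_! = \p H^0 \circ j_! \circ \e$ and $\p i^* = \p H^0 \circ i^* \circ \e$. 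This yields the first exact sequence; the second is obtained dually from the triangle $i_* i^! A \to A \to j_* j^* A \map{+1}$. For (iii), given any morphism $f : A \to \p i_* B$ with $B \in \CC_F$, the adjunction $(\p i^*, \p i_*)$ rewrites it as $\p i^* A \to B$, and the surjection $A \twoheadrightarrow \p i_* \p i^* A$ from (ii) corresponds to $\mathrm{id}_{\p i^* A}$, so $f$ factors uniquely through it; thickness of $\CC_F$ in $\CC$ follows because $\p i_* = i_*$ is fully faithful and $t$-exact, so kernels and cokernels in $\p i_* \CC_F$ are again in $\p i_* \CC_F$. The dual argument with $(\p i_*, \p i^!)$ gives the largest-subobject statement.

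For the Serre quotient identification $\CC/\CC_F \simeq \CC_U$, I would check: essential surjectivity of $\p j^*$ follows from $\p j^* \p j_* Y \simeq Y$ for $Y \in \CC_U$ (using $j^* j_* = \mathrm{id}$); the kernel of $\p j^*$ is exactly $\CC_F$ via the triangle in (ii), since $\p j^* A = 0$ forces $A \simeq \p i_* \p i^* A \in \CC_F$; and the localization property on morphisms is standard once (iii) is in hand. Finally, the six-term exact sequences are obtained by squeezing one more step of the long exact perverse cohomology sequence applied to the same two triangles: using $t$-exactness of $i_*$, one has $\p H^{-1}(i_* i^* A) = \p i_*\, \p H^{-1}(i^* A)$ (written $\p i_* H^{-1} i^* A$ in the statement), and the strict inclusion $j_! j^* A \in \p D^{\leqslant 0}$ together with $A \in \CC$ forces the connecting map to identify $\p H^{-1}(i_* i^* A)$ with the kernel of $\p j_! \p j^* A \to A$. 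The dual argument with $j_* j^* A \in \p D^{\geqslant 0}$ handles the $\p i_* H^1 i^! A$ cokernel. The only real subtlety — and the point one must watch — is that the two exact sequences in (ii) are four-term (not short) precisely because $j_!$ and $i^*$ (respectively $j_*$ and $i^!$) are only half $t$-exact, and the extra terms in the six-term sequences record exactly this failure.
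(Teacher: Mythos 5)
The paper does not actually prove this proposition; it is one of the recollement facts recalled from \cite[\S 1.4]{BBD}, so your job was to reconstruct the argument, and your strategy (take the long exact $\p H^\bullet$ sequence of the two triangles from Assumption~\ref{ass:recollement}(iv), and keep careful track of which of $i^*$, $i_*$, $i^!$, $j_!$, $j^*$, $j_*$ is half-exact and which is $t$-exact, together with the two adjoint triples) is indeed the standard BBD route. Parts (ii), the Serre-quotient identification, and the two four-term exact sequences are all argued correctly; in particular, you correctly use that $i_*$ and $j^*$ are $t$-exact (hence commute with $\p H^\bullet$) when rewriting $\p H^{-1}(i_* i^* A)$ as $\p i_* \p H^{-1}(i^* A)$ and $\p H^1(i_* i^! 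A)$ as $\p i_* \p H^1(i^! A)$.

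There is, however, a genuine gap in part (i). You dispose of $\p i^* \p j_! = 0$ by saying that $i^* j_! = 0$, ``hence a fortiori after restriction to $\CC_U$ and application of $\p H^0$.'' But $\p i^* \p j_!$ is $\p H^0\, i^*\, \e_F\, \p H^0\, j_!\, \e_U$, which contains an intermediate truncation and is not literally $\p H^0 (i^* j_!)$; the vanishing of the latter is trivial and does not yet give you the former. To close this, you need the standard lemma that for two right $t$-exact functors $T_1$, $T_2$ the natural morphism $\p T_2\, \p T_1 \to \p(T_2\, T_1)$ is an isomorphism: for $B$ in the heart, $T_1 B \in \DC^{\leqslant 0}$, and applying $T_2$ to the truncation triangle $\t_{\leqslant -1} T_1 B \to T_1 B \to \p T_1 B$ gives $T_2\, \t_{\leqslant -1} T_1 B \in \DC^{\leqslant -1}$, hence $H^0 T_2 T_1 B \simeq H^0 T_2\, \p T_1 B$. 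With this, $\p i^* \p j_! = \p(i^* j_!) = 0$ and dually $\p i^! \p j_* = \p(i^! j_*) = 0$, since $i^*$, $j_!$ are both right $t$-exact and $i^!$, $j_*$ are both left $t$-exact. Alternatively you can argue directly: from $i^* j_! B = 0$ the truncation triangle shows $i^* \p j_! B \simeq (i^* \t_{\leqslant -1} j_! B)[1] \in \p\DC^{\leqslant -2}_F$, so its $\p H^0$ vanishes. A smaller omission in (iii): to conclude that the essential image of $\p i_*$ is a \emph{thick} (Serre) subcategory you must also check stability under extensions in $\CC$; this follows by applying the exact functor $\p j^*$ to a short exact sequence with outer terms in the image and observing that the middle term is then killed by $j^*$.
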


Since $j^*$ is a quotient functor of triangulated categories, the
composition of the adjunction morphisms $j_! j^* \to \id \to j_* j^*$
comes from a unique morphism of functors $j_! \to j_*$. Applying
$j^*$, we get the identity automorphism of the identity functor.

Similarly, since the functor $\p j^*$ is a quotient functor of abelian
categories, the composition of the adjunction morphisms $\p j_! \p j^*
\to \id \to \p j_* \p j^*$ comes from a unique morphism of functors
$\p j_! \to \p j_*$. Applying $\p j^*$, we get the identity automorphism
of the identity functor.

Let $\p j_{!*}$ be the image of $\p j_!$ in $\p j_*$. We have a factorization
\begin{equation}
j_! \longto \p j_! \longto \p j_{!*} \longto \p j_* \longto j_*
\end{equation}

\begin{prop}
We have
\begin{gather}
\p j_! = \t^F_{\geqslant 0}\ j_! = \t^F_{\leqslant -2}\ j_*\\
\p j_{!*} = \t^F_{\geqslant 1}\ j_! = \t^F_{\leqslant -1}\ j_*\\
\p j_* = \t^F_{\geqslant 2}\ j_! = \t^F_{\leqslant 0}\ j_*
\end{gather}
\end{prop}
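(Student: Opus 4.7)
The plan is to prove the three pairs of equalities in two stages: first identify each of $\p j_!$, $\p j_*$, $\p j_{!*}$ with one of the six truncation expressions, and then invoke the preceding proposition (the unique extension characterization) to match the other one in each pair. That preceding proposition already gives $\t^F_{\geqslant p+1} j_! \simeq \t^F_{\leqslant p-1} j_*$ for every integer $p$; applied with $p = -1, 0, 1$ it explains why the two expressions on each line of the statement coincide \emph{once we have identified either one of them with the perverse functor}.

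First, for $B$ in $\CC_U$, I would compute $\p j_!$ directly. Since $j_!$ is right $t$-exact, $j_! B$ lies in $\DC^{\leqslant 0}$, so $\p j_! B = \p H^0(j_! B) = \t_{\geqslant 0}(j_! B)$. Using the factorization $\t_{\geqslant 0} = \t^F_{\geqslant 0}\,\t^U_{\geqslant 0}$ recalled in the section on recollement, and the fact that $j^*(j_! B) = B \in \CC_U \subset \DC_U^{\geqslant 0}$, the defining triangle for $\t^U_{\geqslant 0}$ collapses and gives $\t^U_{\geqslant 0}(j_! B) = j_! B$. Hence $\p j_! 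B = \t^F_{\geqslant 0} j_! B$. The dual argument, using that $j_*$ is left $t$-exact and that $j^* j_* B = B$, produces $\p j_* B = \t^F_{\leqslant 0} j_* B$.

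Next, I would identify $\p j_{!*}$ with the middle-extension truncation $M := \t^F_{\geqslant 1} j_! B = \t^F_{\leqslant -1} j_* B$ (the two sides being equal by the preceding proposition with $p=0$). First one checks $M \in \CC$: by construction $i^* M \in \DC_F^{\leqslant -1}$ and $i^! M \in \DC_F^{\geqslant 1}$, while $j^* M = B \in \CC_U$, so both conditions (\ref{eq:recollement def <= 0}) and (\ref{eq:recollement def >= 0}) defining the glued $t$-structure are met. Then, applying $\p H^0$ to the vanishings $i^* M \in \DC_F^{\leqslant -1}$ and $i^! M \in \DC_F^{\geqslant 1}$, together with the adjunctions $(\p i^*, \p i_*)$ and $(\p i_*, \p i^!)$ of the preceding proposition, one gets $\Hom_\CC(\p i_* A, M) = \Hom_{\CC_F}(A, \p i^! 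M) = 0$ and $\Hom_\CC(M, \p i_* A) = \Hom_{\CC_F}(\p i^* M, A) = 0$ for every $A \in \CC_F$, so $M$ has no subobject and no quotient supported on $F$. The natural morphism $j_! B \to j_* B$ induces, via the universal properties of $\t^F_{\geqslant 1} j_!$ and $\t^F_{\leqslant -1} j_*$, a canonical factorization $\p j_! B \twoheadrightarrow M \hookrightarrow \p j_* B$ in $\CC$, and the standard characterization of $\p j_{!*} B$ as the image of the map $\p j_! B \to \p j_* B$ (equivalently, the unique perverse extension of $B$ with no sub- or quotient object supported on $F$) forces $M = \p j_{!*} B$.

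Finally, the remaining three equalities are formal: the preceding proposition with $p = -1$ gives $\t^F_{\geqslant 0} j_! = \t^F_{\leqslant -2} j_*$, hence $\p j_! = \t^F_{\leqslant -2} j_*$; with $p = 0$ it gives the equality $\t^F_{\geqslant 1} j_! = \t^F_{\leqslant -1} j_*$ underlying the definition of $M$, hence both expressions equal $\p j_{!*}$; and with $p = 1$ it gives $\t^F_{\leqslant 0} j_* = \t^F_{\geqslant 2} j_!$, hence $\p j_* = \t^F_{\geqslant 2} j_!$. The main obstacle is Step 2: producing the canonical factorization $\p j_! B \to M \to \p j_* B$ and verifying that $M$ really is the image, since the target formula realizes $\p j_{!*}$ as a truncation of $j_!$ (or $j_*$) on the $F$-side rather than as an image in the abelian category $\CC$. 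Everything else reduces to combining the $t$-exactness of $j_!$ and $j_*$ with the factorization $\t_{\geqslant 0} = \t^F_{\geqslant 0}\,\t^U_{\geqslant 0}$ and its dual.
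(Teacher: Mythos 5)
The paper does not give its own proof of this proposition—it recalls it from \cite{BBD} (\S 1.4.24), and your argument is essentially the BBD argument.

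Your identifications $\p j_! = \t^F_{\geqslant 0}\, j_!$ (via right $t$-exactness, $\t_{\geqslant 0} = \t^F_{\geqslant 0}\,\t^U_{\geqslant 0}$, and $j^* j_! B = B \in \DC_U^{\geqslant 0}$) and, dually, $\p j_* = \t^F_{\leqslant 0}\, j_*$ are correct, and combining them with the preceding proposition at $p = -1, 0, 1$ correctly produces the remaining identities on those two lines. For the middle line, your verification that $M := \t^F_{\geqslant 1}\, j_!\, B = \t^F_{\leqslant -1}\, j_*\, B$ lies in $\CC$ and has $\p i^* M = 0$, $\p i^! M = 0$ is right. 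The gap you flag yourself—that $\p j_!\, B \to M$ is an epimorphism and $M \to \p j_*\, B$ is a monomorphism—is genuine, since merely having no sub- or quotient object in $\CC_F$ does not by itself identify $M$ with the image of $\p j_!\, B \to \p j_*\, B$ (at this point in the paper that characterization is a \emph{corollary} of the proposition, so it is not available). It is filled by the octahedron applied to $j_!\, B \to \t^F_{\geqslant 0}\, j_!\, B \to \t^F_{\geqslant 1}\, j_!\, B$, which produces a distinguished triangle $\p i_*\, \p H^0\, i^!\, j_!\, B \to \p j_!\, B \to M \rtordu$ whose three vertices are perverse, hence a short exact sequence in $\CC$ exhibiting $\p j_!\, B \twoheadrightarrow M$ with kernel in $\CC_F$; dually $M \hookrightarrow \p j_*\, B$ with cokernel in $\CC_F$. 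Since both arrows restrict to the identity on $U$, the composite is the canonical map $\p j_!\, B \to \p j_*\, B$, and $M$ is its image, i.e.\ $M = \p j_{!*}\, B$.
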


For $A$ in $\CC$, the kernel and cokernel of $\p j_! A \to \p j_* A$
are in $\CC_F$. More precisely, we have the following Yoneda splice of
two short exact sequences.

\begin{equation}
\xymatrix{
0\ar[r] &
i_* H^{-1} i^* j_* A \ar[r] &
\p j_! A \ar[dr] \ar[rr] &&
\p j_* A \ar[r] &
i_* H^0 i^* j_* A \ar[r] &
0\\
&&& \p j_{!*} A \ar[ur] \ar[dr]\\
&& 0 \ar[ur] && 0
}
\end{equation}

\begin{cor}
For $A$ in $\CC_U$, $j_{!*} A$ is the unique extension $X$ of $A$ in
$\DC$ such that $i^* X$ is in $\DC_F^{\leqslant -1}$ and $i^! X$ is in
$\DC_F^{\geqslant 1}$. Thus it is the unique extension of $A$ in $\CC$
with no non-trivial subobject or quotient in $\CC_F$.

Similarly, $\p j_! A$ (resp. $\p j_* A$) is the unique extension $X$
of $A$ in $\DC$ such that $i^* X$ is in $\DC_F^{\leqslant -2}$
(resp. $\DC_F^{\leqslant 0}$) and $i^! X$ is in $\DC_F^{\geqslant 0}$
(resp. $\DC_F^{\geqslant 2}$). In particular, $\p j_! A$ (resp. $\p
j_* A$) has no non-trivial quotient (resp. subobject) in $\CC_F$.
\end{cor}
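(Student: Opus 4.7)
The plan is to derive this corollary as a direct consequence of the two preceding propositions, namely the uniqueness statement for extensions with prescribed vanishing of $i^* X$ and $i^! X$ in given ranges, together with the identification
\[
\p j_! = \t^F_{\geqslant 0}\ j_! = \t^F_{\leqslant -2}\ j_*, \quad
\p j_{!*} = \t^F_{\geqslant 1}\ j_! = \t^F_{\leqslant -1}\ j_*, \quad
\p j_* = \t^F_{\geqslant 2}\ j_! = \t^F_{\leqslant 0}\ j_*.
\]
For each of the three cases, I would simply apply the uniqueness proposition with the appropriate integer $p$: namely $p = 0$ for $\p j_{!*} A$, giving conditions $i^* X \in \DC_F^{\leqslant -1}$ and $i^! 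X \in \DC_F^{\geqslant 1}$; $p = -1$ for $\p j_! A$, giving $i^* X \in \DC_F^{\leqslant -2}$ and $i^! X \in \DC_F^{\geqslant 0}$; and $p = 1$ for $\p j_* A$, giving $i^* X \in \DC_F^{\leqslant 0}$ and $i^! X \in \DC_F^{\geqslant 2}$. In each case the asserted unique extension is by construction $\t^F_{\leqslant p-1} j_* A = \t^F_{\geqslant p+1} j_! A$, which matches the formulas recalled above.

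To obtain the subobject/quotient reformulation for $\p j_{!*} A$, I would translate the two conditions using the description of $\p i^*$ and $\p i^!$ as, respectively, the largest quotient and the largest subobject of an object of $\CC$ belonging to $\CC_F$. Concretely, $i^* X \in \DC_F^{\leqslant -1}$ is equivalent to $H^0 i^* X = \p i^* X = 0$, i.e. $X$ has no non-zero quotient in $\CC_F$; dually, $i^! X \in \DC_F^{\geqslant 1}$ is equivalent to $\p i^! X = 0$, i.e. $X$ has no non-zero subobject in $\CC_F$. For $\p j_! A$ one only needs the first translation (since $\DC_F^{\leqslant -2} \subset \DC_F^{\leqslant -1}$ gives vanishing of $\p i^* X$), and for $\p j_* A$ only the second (since $\DC_F^{\geqslant 2} \subset \DC_F^{\geqslant 1}$).

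There is essentially no obstacle here: the whole content is bookkeeping with the shifted truncation functors $\t^F_{\leqslant p}$ and $\t^F_{\geqslant p}$ already introduced, and the characterization of $\p i^*$, $\p i^!$ via the exact sequences stated earlier. The only point worth verifying carefully is the consistency of the shifts in the three formulas $\p j_{!*} = \t^F_{\geqslant 1} j_! = \t^F_{\leqslant -1} j_*$ etc., which I would check by comparing with the uniqueness statement applied to $p = 0, -1, 1$; once this is done, the corollary follows immediately.
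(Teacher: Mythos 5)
Your proposal is correct and is exactly the argument the paper has in mind: specialize the uniqueness proposition to $p = 0, -1, 1$, identify the resulting extensions with $\p j_{!*} A$, $\p j_! A$, $\p j_* A$ via the formulas $\p j_{!*} = \t^F_{\geqslant 1} j_! = \t^F_{\leqslant -1} j_*$ etc., and then translate the $\DC$-level truncation conditions into the vanishing of $\p i^* X$ and $\p i^! X$ for $X \in \CC$. The shift bookkeeping and the reformulation via ``largest quotient/subobject in $\CC_F$'' are carried out correctly.
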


\begin{prop}
The simple objects in $\CC$ are the $\p i_* S$, with $S$ simple in
$\CC_F$, and the $j_{!*} S$, for $S$ simple in $\CC_U$.
\end{prop}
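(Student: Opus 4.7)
The plan is to prove the two inclusions: the listed objects are simple, and conversely every simple object has one of these two forms. Throughout I identify $\CC_F$ with its essential image under the fully faithful exact functor $\p i_* = i_*$, so that $\CC_F$ is a thick subcategory of $\CC$, with $\p i^*$ (resp.\ $\p i^!$) realizing the maximal quotient (resp.\ subobject) in $\CC_F$, and $\p j^*$ identifying $\CC_U$ with $\CC/\CC_F$.

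First I would check that $\p i_* S$ is simple for $S$ simple in $\CC_F$. Given a nonzero subobject $A \hookrightarrow \p i_* S$ in $\CC$, apply $\p j^*$; since $\p j^* \p i_* = 0$, one has $\p j^* A = 0$, so $A$ lies in $\CC_F$. Writing $A = \p i_* B$ and using full faithfulness of $\p i_*$, the inclusion comes from an inclusion $B \hookrightarrow S$ in $\CC_F$, and simplicity of $S$ forces $B = S$, hence $A = \p i_* S$.

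Next I would show that $j_{!*} S$ is simple for $S$ simple in $\CC_U$, using the characterization (just established as a corollary) that $j_{!*} S$ is the unique extension of $S$ with no nontrivial subobject or quotient in $\CC_F$. Let $0 \neq A \hookrightarrow j_{!*} S$. Applying the exact functor $\p j^*$ gives a subobject $\p j^* A \hookrightarrow \p j^* j_{!*} S = S$; by simplicity of $S$, either $\p j^* A = 0$ or $\p j^* A = S$. The first case would put $A$ in $\CC_F$, contradicting the corollary; in the second case, the cokernel $j_{!*} S / A$ satisfies $\p j^* (j_{!*} S / A) = 0$ by exactness, so it lies in $\CC_F$, and the corollary again forces it to be zero, whence $A = j_{!*} S$.

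Finally I would take an arbitrary simple $T$ of $\CC$ and distinguish two cases according to whether $\p j^* T$ vanishes. If $\p j^* T = 0$ then $T$ belongs to $\CC_F$, so $T = \p i_* S$ for some $S \in \CC_F$, and simplicity of $S$ is immediate from full faithfulness of $\p i_*$. If instead $S := \p j^* T \neq 0$, then $T \notin \CC_F$, so $T$ has no nontrivial subobject or quotient in $\CC_F$ simply because $T$ is simple and not itself in $\CC_F$; the uniqueness clause of the corollary then gives $T \simeq j_{!*} S$. It remains to verify that $S$ is simple in $\CC_U$: any nonzero subobject $S'\hookrightarrow S$ in $\CC_U$ would, via the quotient description $\CC_U \simeq \CC/\CC_F$, lift to a subobject of $T$ not contained in $\CC_F$, and simplicity of $T$ combined with $T \notin \CC_F$ forces this lift to equal $T$, so $S' = \p j^* T = S$.

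The main technical point, and the one I expect to require the most care, is the last step: extracting simplicity of $S = \p j^* T$ in $\CC_U$ from simplicity of $T$ in $\CC$. All the other arguments reduce quickly to the already established properties of $\p i_*$, $\p j^*$ and the characterization of $j_{!*}$; but the quotient-category description of $\p j^*$ has to be used in the correct direction to lift subobjects from $\CC_U$ back to $\CC$, and one must be careful that the lift is not absorbed into $\CC_F$.
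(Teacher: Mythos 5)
Your proof is correct, and since the paper simply records this proposition as part of its review of recollement without supplying an argument (it is quoted from the theory developed in BBD, \S 1.4), there is no proof in the paper to compare it against; your argument is the standard one. The one step you yourself flag as delicate --- extracting simplicity of $S = \p j^* T$ from simplicity of $T$ when $T \notin \CC_F$ --- does hold, but rather than appealing abstractly to the Serre-quotient description of $\CC_U$, it can be made completely explicit in this recollement setting via the adjoint: given $0 \neq S' \hookrightarrow S$, form the composite $\p j_! S' \to \p j_! S \to T$ (the second map being the adjunction morphism corresponding to $\id_S$), and let $T'$ be its image in $\CC$; since $\p j^*$ is exact and $\p j^* \p j_! \simeq \id_{\CC_U}$, one finds $\p j^* T' = S' \neq 0$, so $T' \neq 0$, whence $T' = T$ by simplicity, and $S' = S$. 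This avoids invoking the general lifting lemma for subobjects in Gabriel quotients and keeps the whole proof inside the recollement formalism already set up in the section.
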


\section{Torsion theories and recollement}\label{sec:tt recollement}

Suppose we are in a recollement situation, and that we are given
torsion theories $(\TC_F, \FC_F)$ and
$(\TC_U, \FC_U)$ of $\CC_F$ and $\CC_U$. Then we can define a torsion theory
$(\TC, \FC)$ on $\CC$ by
\begin{gather}
\TC = \{ K \in \CC \mid \p i^* K \in \TC_F \text{ and } j^* K \in \TC_U \}\\
\FC = \{ K \in \CC \mid \p i^! K \in \FC_F \text{ and } j^* K \in \FC_U \}
\end{gather}
Using these torsion theories on $\CC$, $\CC_F$ and $\CC_U$, one can
define new $t$-structures on $\DC$, $\DC_F$ and $\DC_U$, with the
superscript $p_+$. Then the new $t$-structure on $\DC$ is obtained by
recollement from the new $t$-structures on $\DC_F$ and $\DC_U$.

Moreover, we have six interesting functors from $\CC_U \cap \CC_U^+$ to $\DC$
\begin{equation}
\p j_!
= \p\t^F_{\leqslant -2}\ j_* 
= \p\t^F_{\geqslant 0}\ j_!
\end{equation}
\begin{equation}
\pp j_!
= \p\t^F_{\leqslant -2_+}\ j_*
= \p\t^F_{\geqslant 0_+}\ j_!
\end{equation}
\begin{equation}
\p j_{!*}
= \p\t^F_{\leqslant -1}\ j_*
= \p\t^F_{\geqslant 1}\ j_!
\end{equation}
\begin{equation}
\pp j_{!*}
= \p\t^F_{\leqslant -1_+}\ j_*
= \p\t^F_{\geqslant 1_+}\ j_!
\end{equation}
\begin{equation}
\p j_*
= \p\t^F_{\leqslant 0}\ j_*
= \p\t^F_{\geqslant 2}\ j_!
\end{equation}
\begin{equation}
\pp j_*
= \p\t^F_{\leqslant 0_+}\ j_*
= \p\t^F_{\geqslant 2_+}\ j_!
\end{equation}

The first of these functors has image in $\CC$, the last one in $\CC^+$,
and the other four in $\CC \cap \CC^+$.

By \ref{eq:tri tors} and \ref{eq:tri free} and the description above,
we have functorial triangles
\begin{equation}
\p j_! \longto \pp j_! \longto \p i_* \p H^{-1}_\tors i^* j_* [1] \rtordu
\end{equation}
\begin{equation}
\pp j_! \longto \p j_{!*} \longto \p i_* \p H^{-1}_\free i^* j_* [1] \rtordu
\end{equation}
\begin{equation}
\p j_{!*} \longto \pp j_{!*} \longto \p i_* \p H^0_\tors i^* j_* \rtordu
\end{equation}
\begin{equation}
\pp j_{!*} \longto \p j_* \longto \p i_* \p H^0_\free i^* j_* \rtordu
\end{equation}
\begin{equation}
\p j_* \longto \pp j_* \longto \p i_* \p H^1_\tors i^* j_* [-1] \rtordu
\end{equation}

\section{Complements on perverse extensions}

Assume we are in a recollement situation with $\EM$-linear categories,
where $\EM$ is a field, and that $\CC_F$, $\CC_U$ and $\CC$ are
artinian and noetherian. Moreover, we assume that any simple object in
these three hearts has a finite-dimensional endomorphism algebra.  In
general, the functor $j_{!*}$, which we also denote by $\p j_{!*}$, is
fully faithful and sends a simple on a simple, but is not necessarily
exact. However, we will prove two simple but useful results, the first
one about heads and socles, and the second one which says that the
multiplicities of the simples are preserved by $j_{!*}$.

\subsection{Top and socle of perverse extensions}\label{subsec:top socle}

\begin{prop}\label{prop:top socle}
Let $A$ be an object of $\CC_U$. Then we have
\begin{align}
\Soc\: \p j_* A \simeq \Soc j_{!*} A \simeq j_{!*} \Soc A\\ 
\Top\: \p j_! A \simeq \Top j_{!*} A \simeq j_{!*} \Top A
\end{align}
\end{prop}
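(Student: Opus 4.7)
The plan is to prove the socle statements first, and then deduce the top statements by applying them in the dual recollement situation (\ref{eq:dual recollement setup}), which exchanges $\p j_!$ and $\p j_*$, and in which the socle in $\CC^{\op}$ corresponds to the top in $\CC$. Since $j_{!*}$ is defined symmetrically as the image of $\p j_! \to \p j_*$, it is preserved by this duality.

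For the socle statements, I would start by identifying the possible simple subobjects of $\p j_* A$. By the recollement axiom $\Hom(\p i_* S, \p j_* B) = 0$ for $S \in \CC_F$, $B \in \CC_U$, the perverse sheaf $\p j_* A$ (and a fortiori its subobject $j_{!*} A$) contains no nonzero subobject in the essential image of $\p i_*$. Hence, by the classification of simples in $\CC$, every simple subobject of $\p j_* A$ is isomorphic to $j_{!*} T$ for some simple $T$ in $\CC_U$.

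Next I would use the adjunction $(j^*, \p j_*)$, which gives, for any simple $T \in \CC_U$, the bijection
\[
\Hom_\CC(j_{!*} T, \p j_* A) = \Hom_{\CC_U}(j^* j_{!*} T, A) = \Hom_{\CC_U}(T, A),
\]
where the last equality uses $j^* j_{!*} \simeq \id$ (from $j^* j_* \simeq \id$). Since $j_{!*}$ is fully faithful, the endomorphism rings $\End(T)$ and $\End(j_{!*} T)$ coincide, so taking dimensions over these (equal) division rings yields
\[
[\Soc \p j_* A : j_{!*} T] \;=\; \dim_{\End T} \Hom(T, A) \;=\; [\Soc A : T].
\]
Combined with the fact that $j_{!*}$, being additive, commutes with finite direct sums, this gives $\Soc \p j_* A \simeq j_{!*} \Soc A$.

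Finally, to handle $\Soc j_{!*} A$, the inclusion $j_{!*} A \hookrightarrow \p j_* A$ yields $\Soc j_{!*} A \hookrightarrow \Soc \p j_* A = j_{!*} \Soc A$. For the reverse inclusion, the inclusion $\Soc A \hookrightarrow A$ is sent by the fully faithful functor $j_{!*}$ to a morphism $j_{!*} \Soc A \to j_{!*} A$, and composing with $j_{!*} A \hookrightarrow \p j_* A$ recovers the injection $j_{!*} \Soc A \hookrightarrow \p j_* A$ constructed above; this forces $j_{!*} \Soc A \to j_{!*} A$ itself to be injective, giving the equality $\Soc j_{!*} A \simeq j_{!*} \Soc A$. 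The main technical obstacle is that $j_{!*}$ is not exact, so one cannot directly transport subobjects along it; the argument sidesteps this by reading off inclusions via the adjunction and the full faithfulness of $j_{!*}$, rather than by applying $j_{!*}$ to a monomorphism.
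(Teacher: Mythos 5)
Your proof is correct, and it does differ from the paper's in two respects worth noting. First, for the statements about $\mathrm{Top}$ you pass to the dual recollement situation on the opposite categories; the paper instead proves the $\mathrm{Top}$ statements directly by a computation parallel to the one for $\Soc$ (using the adjoint pair $(\p j_!, j^*)$ in place of $(j^*, \p j_*)$). Your duality shortcut is legitimate — the opposite $t$-structures and the swapping of $(\p j_!, \p j_*)$ and of $(\Soc, \mathrm{Top})$ all check out, and $j_{!*}$ is self-dual because image and coimage agree in an abelian category — and it halves the work. Second, for $\Soc j_{!*} A$ you squeeze it between $j_{!*}\Soc A$ and $\Soc\, \p j_* A$ rather than computing its multiplicities directly; the paper does the direct computation $\Hom_\CC(j_{!*}T, \Soc j_{!*}A) = \Hom_\CC(j_{!*}T, j_{!*}A) = \Hom_{\CC_U}(T,A)$, which is a bit shorter. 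Both routes rely on the same two ingredients (the adjunctions and the full faithfulness of $j_{!*}$), just assembled differently.

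One step in your $\Soc j_{!*} A$ argument is phrased too loosely to stand as written: you assert that the composite $j_{!*}\Soc A \to j_{!*}A \hookrightarrow \p j_* A$ ``recovers the injection constructed above.'' But no particular injection $j_{!*}\Soc A \hookrightarrow \p j_* A$ was singled out — only an abstract isomorphism $\Soc\, \p j_* A \simeq j_{!*}\Soc A$ via multiplicity counts — and it is not clear the two morphisms agree. Fortunately the conclusion you want (injectivity of $j_{!*}(\Soc A \hookrightarrow A)$) can be obtained without this identification: apply $j^*$ to the composite to see that its kernel lies in $\CC_F$, and then use that $j_{!*}\Soc A$, like any object in the image of $j_{!*}$, has no nonzero subobject in $\CC_F$; hence the kernel is zero. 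With that replacement the rest of the sandwiching argument closes. So the proof is sound, modulo tightening that one sentence.
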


\begin{proof}
Let $S$ be a simple object in $\CC$. 
Then either $S \simeq j_{!*} j^* S$ or $S \simeq i_* i^* S$.

Suppose we are in the first case. Then we have
\[
\begin{array}{rcll}
\Hom_{\CC}(S, \Soc \p j_* A)
&\simeq& \Hom_{\CC}(S, \p j_* A)
&\text{because the socle is the largest}\\
&&&\text{semisimple subobject}\\
&\simeq& \Hom_{\CC_U}(j^* S, A)
&\text{by adjunction of the pair $(j^*,\p j_*)$}\\
\\
\Hom_{\CC}(S, \Soc j_{!*}A)
&\simeq& \Hom_{\CC}(j_{!*} j^* S, j_{!*}A)
&\text{by assumption and because the socle}\\
&&&\text{is the largest semisimple subobject}\\
&\simeq& \Hom_{\CC_U}(j^* S, A)
&\text{because $j_{!*}$ is fully faithful}\\
\\
\Hom_{\CC}(S, j_{!*} \Soc A)
&\simeq& \Hom_{\CC}(j_{!*} j^* S, j_{!*} \Soc A)
&\text{by assumption}\\
&\simeq& \Hom_{\CC_U}(j^* S, \Soc A)
&\text{because $j_{!*}$ is fully faithful}\\
&\simeq& \Hom_{\CC_U}(j^* S, A)
&\text{because the socle is the largest}\\
&&&\text{semisimple subobject}\\
&&&\text{and $j^*S$ is simple}\\
\end{array}
\]
Taking dimensions and dividing by $\dim \End_{\CC(U)}(S)$, we find 
\[
[\Soc\: \p j_* A : S]
= [\Soc j_{!*} A : S]
= [j_{!*} \Soc A : S]
= [\Soc A : j^* S]
\]
Similarly,  we have
\[
\begin{array}{rcll}
\Hom_{\CC}(\Top\: \p j_! A, S)
&\simeq& \Hom_{\CC}(\p j_! A, S)
&\text{because the top is the largest}\\
&&&\text{semisimple quotient}\\
&\simeq& \Hom_{\CC_U}(A, j^* S)
&\text{by adjunction of the pair $(\p j_!, j^*)$}\\
\\
\Hom_{\CC}(\Top j_{!*}A, S)
&\simeq& \Hom_{\CC}(j_{!*}A, j_{!*} j^* S)
&\text{by assumption and because the top}\\
&&&\text{is the largest semisimple quotient}\\
&\simeq& \Hom_{\CC_U}(A, j^* S)
&\text{because $j_{!*}$ is fully faithful}\\
\\
\Hom_{\CC}(j_{!*} \Top A, S)
&\simeq& \Hom_{\CC}(j_{!*} \Top A, j_{!*} j^* S)
&\text{by assumption}\\
&\simeq& \Hom_{\CC_U}(\Top A, j^* S)
&\text{because $j_{!*}$ is fully faithful}\\
&\simeq& \Hom_{\CC_U}( A, j^* S)
&\text{because the top is the largest}\\
&&&\text{semisimple quotient}\\
&&&\text{and $j^*S$ is simple}\\
\end{array}
\]
Again, taking dimensions and dividing by $\dim \End_{\CC(U)}(S)$, we find
\[
[\Top\: \p j_! A : S]
= [\Top j_{!*} A : S]
= [j_{!*} \Top A : S]
= [\Top A : j^* S]
\]

Now suppose we are in the second case.
The objects $\p j_* A$ and $j_{!*} A$ have no
non-trivial subobjects in $\CC_F$, hence
\[
[\Soc \p j_* A : S] = [\Soc j_{!*} A : S] = 0
\]
Besides, $\Soc A$ is semisimple, therefore
$j_{!*} \Soc A$ is semisimple and has no non-trivial subobject
in $\CC_F$, hence
\[
[j_{!*} \Soc A : S] = 0 
\]

Similarly, the objects $\p j_* A$ and $j_{!*} A$ have no
non-trivial quotients in $\CC_F$, hence
\[
[\Top \p j_! A : S] = [\Top j_{!*} A : S] = 0
\]
Besides, since $\Top A$ is semisimple, and therefore
$j_{!*} \Top A$ is semisimple and has no non-trivial quotient
in $\CC_F$, hence
\[
[j_{!*} \Top A : S] = 0 
\]

Since $\Soc \p j_* A$, $\Soc j_{!*} A$ and $j_{!*} \Soc A$ (respectively
$\Top \p j_! A$, $\Top j_{!*} A$ and $j_{!*} \Top A$) are semisimple, they
are isomorphic if and only if the multiplicity of each simple object
is the same in each of them, hence the result.
\end{proof}

\subsection{Perverse extensions and multiplicities}\label{subsec:IC mult}

Let $\SC$ (resp. $\SC_U$, $\SC_F$) denote the set of (isomorphisms
classes of) simple objects in $\CC$ (resp. $\CC_U$, $\CC_F$). We have
$\SC = \p j_{!*} \SC_U \cup \p i_* \SC_F$.  Since these three hearts
are assumed to be noetherian and artinian, the multiplicities of the
simple objects and the notion of composition length are well-defined.
Thus, if $B$ is an object in $\CC$, then we have the following
relation in the Grothendieck group $K_0(\CC)$.
\begin{equation}\label{eq:def mult}
[B] = \sum_{T \in\SC} [B : T] \cdot [T]
\end{equation}

\begin{prop}\label{prop:IC mult}
If $B$ is an object in $\CC$, then we have
\begin{equation}\label{eq:j^*}
[B : \p j_{!*} S] = [j^* B : S]
\end{equation}
for all simple objects $S$ in $\CC_U$. 
In particular, if $A$ is an object in $\CC_U$, then we have
\begin{equation}\label{eq:j_!*}
[\p j_! A : \p j_{!*} S] = [\p j_{!*}A : \p j_{!*}S]
= [\p j_* A : \p j_{!*} S] = [A:S]
\end{equation}
\end{prop}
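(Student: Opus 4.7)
The plan is to exploit the exactness of $j^*$ (which is $t$-exact, hence restricts to an exact functor $\CC \to \CC_U$) together with the fact that $j^*$ annihilates the simples coming from $\CC_F$ and sends the simples coming from $\CC_U$ to simples.

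First I would recall the two key facts about simples. By the classification of simple objects in $\CC$ recalled before the statement, $\SC = \p j_{!*}\SC_U \sqcup \p i_* \SC_F$. For $S' \in \SC_U$ we have $j^*(\p j_{!*} S') \simeq S'$ since $j^* \p j_{!*} = j^*$ (this follows from the adjunction $j^*\p j_* \simeq \id$ together with the factorization $\p j_!\to \p j_{!*}\to \p j_*$, or more directly from the recollement axiom $\p j^*\p i_* = 0$ combined with the fact that $\p j_{!*}S'$ is an extension of $S'$). For $T \in \SC_F$ we have $j^*(\p i_* T) = 0$ by the recollement axiom $j^* i_* = 0$.

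Next I would apply the exact functor $j^*$ to the identity (\ref{eq:def mult}) in $K_0(\CC)$, yielding in $K_0(\CC_U)$
\[
[j^*B] \;=\; \sum_{T \in \SC} [B:T]\, [j^*T] \;=\; \sum_{S' \in \SC_U} [B: \p j_{!*} S']\, [S'].
\]
Since the $[S']$, $S'\in\SC_U$, form a $\ZM$-basis of $K_0(\CC_U)$, comparison with the analogous decomposition $[j^*B]=\sum_{S'}[j^*B:S'][S']$ gives $[B:\p j_{!*}S] = [j^*B:S]$, proving (\ref{eq:j^*}).

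Finally, for the second assertion I would apply (\ref{eq:j^*}) to $B = \p j_! A$, $\p j_{!*} A$ and $\p j_* A$ in turn. In each case $j^* B \simeq A$: indeed the adjunction isomorphisms $j^* j_! \simeq \id \simeq j^* j_*$ imply $\p j^*\p j_! \simeq \id \simeq \p j^*\p j_*$, and the middle case follows from the factorization $\p j_!\to \p j_{!*}\to \p j_*$, all three maps becoming the identity after applying $j^*$. This yields the three equalities $[\p j_! A:\p j_{!*} S] = [\p j_{!*}A:\p j_{!*} S] = [\p j_* A:\p j_{!*} S] = [A:S]$. There is no real obstacle here; the only subtle point to spell out carefully is that $j^*$ is exact on the hearts, which is immediate from its being $t$-exact on the derived categories.
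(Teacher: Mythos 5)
Your proof is essentially identical to the paper's: both exploit the $t$-exactness of $j^*$, the fact that $j^*$ kills the simples supported on $F$ and restricts to a bijection $\p j_{!*}\SC_U \isom \SC_U$, and then compare coefficients in $K_0(\CC_U)$ after applying $j^*$ to the class decomposition of $[B]$. Your treatment of the second assertion via $j^*\p j_! A \simeq j^*\p j_{!*}A \simeq j^*\p j_* A \simeq A$ is a correct and slightly more explicit unpacking of the paper's terse ``and \ref{eq:j_!*} follows.''
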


\begin{proof}
The functor $j^*$ is exact, and sends a simple object $T$ on a simple
a simple object if $T\in \p j_{!*}\SC_U$, or on zero if $T \in \p i_* \SC_F$.
Moreover, it sends non-isomorphic simple objects in $\p j_{!*}\SC_U$ on
non-isomorphic simple objects in $\SC_U$.
Thus, applying $j^*$ to the relation \ref{eq:def mult}, we get
\[
[j^* B] = \sum_{T \in \p j_{!*}(\SC_U)} [j^* B : j^* T] \cdot [j^* T]
= \sum_{S \in \SC_U} [j^* B : S] \cdot [S]
\]
hence \ref{eq:j^*}, and \ref{eq:j_!*} follows.
\end{proof}

\section{Perverse $t$-structures}\label{sec:perv}

Let us go back to the setting of Section \ref{sec:context}. We want to
define the $t$-structure defining the $\EM$-perverse sheaves on $X$
for the middle perversity $p$ (and, in case $\EM = \OM$, also for the
perversity $p_+$). Let us start with the case $\EM = \FM$. We will
consider pairs $(\XG,\LG)$, where
\begin{enumerate}[(i)]
\item\label{XG}
$\XG$ is a partition of $X$ into finitely many locally closed smooth
  pieces, called strata, and the closure of a stratum is a union of strata.

\item\label{LG}
$\LG$ consists in the following data: for each stratum $S$ in $\XG$, 
a finite set $\LG(S)$ of isomorphism classes of irreducible locally constant
sheaves of $\FM$-modules over $S$.

\item\label{cons} For each $S$ in $\XG$ and for each $\FC$ in $\LC(S)$, if $j$
denotes the inclusion of $S$ into $X$, then the $R^nj_* \FC$ are
$(\XG,\LG)$-constructible, with the definition below.
\end{enumerate}

A sheaf of $\FM$-modules is $(\XG,\LG)$-constructible if and only if
its restriction to each stratum $S$ in $\XG$ is locally constant and
a finite iterated extension of irreducible locally constant sheaves
whose isomorphism class is in $\LG(S)$. 
We denote by $D^b_{\XG,\LG}(X,\FM)$ the full subcategory of $D^b(X,\FM)$
consisting in the $(\XG,\LG)$-constructible complexes, that is, whose
cohomology sheaves are $(\XG,\LG)$-constructible.

We say that $(\XG',\LG')$ refines $(\XG,\LG)$ if each stratum $S$ in $\XG$
is a union of strata in $\XG'$, and all the $\FC$ in $\LG(S)$ are
$(\XG',\LG')$-constructible, that is, $(\XG'_{|S},\LG_{\left|\XG'_{|S}\right.})$-constructible.

The condition (\ref{cons})
ensures that for $U \stackrel{j}{\injto} V \subset X$ locally closed
and unions of strata, the functors $j_*$, $j_!$ (resp. $j^*$, $j^!$) send
$D^b_{\XG,\LG}(U,\FM)$ into $D^b_{\XG,\LG}(V,\FM)$ (resp.
$D^b_{\XG,\LG}(V,\FM)$ into $D^b_{\XG,\LG}(U,\FM)$).
It follows from the constructibility theorem for $j_*$ (SGA 4$\frac{1}{2}$)
that any pair $(\XG',\LG')$ satisfying (\ref{XG}) and (\ref{LG}) can be refined
into a pair $(\XG,\LG)$ satisfying (\ref{XG}), (\ref{LG}) and
(\ref{cons}) (see \cite[\S 2.2.10]{BBD}).

So let us fix a pair $(\XG,\LG)$ as above. Then we define the full subcategories
$\p D_{\XG,\LG}^{\leqslant 0}(X,\FM)$ and $\p D_{\XG,\LG}^{\geqslant 0}(X,\FM)$
of $D^b_{\XG,\LG}(X,\FM)$ by
\begin{gather*}
A\in \p D^{\leqslant 0}_{\XG,\LG}(X,\EM) \Iff
\text{for all strata $S$ in $\XG$, }
\HC^i i_S^* A = 0
\text{ for all } i > - \dim(S)\\
A\in \p D^{\geqslant 0}_{\XG,\LG}(X,\EM) \Iff
\text{for all strata $S$ in $\XG$, }
\HC^i i_S^! A = 0
\text{ for all } i < - \dim(S)
\end{gather*}
for any $A$ in $D^b_{\XG,\LG}(X,\FM)$, where $i_S$ is the inclusion of the stratum $S$.

One can show by induction on the number of strata that
this defines a $t$-structure on $D^b_{\XG,\LG}(X,\FM)$, by repeated
applications of Theorem \ref{th:recollement}. On a stratum, we
consider the natural $t$-structure shifted by $\dim S$, and we glue
these $t$-structures successively.

The $t$-structure on $D^b_{\XG',\LG'}(X,\FM)$ for a finer pair
$(\XG,\LG)$ induces the same $t$-structure on $D^b_{\XG,\LG}(X,\FM)$,
so passing to the limit we obtain a $t$-structure on $D^b_c(X,\FM)$.

Over $\OM / \varpi^n$, we proceed similarly. An object $K$ of
$D^b_c(X,\OM/\varpi^n)$ is $(\XG,\LG)$-constructible if the
$\varpi^i \HC^j K / \varpi^{i + 1} \HC^j K $ are $(\XG,\LG)$-constructible
as $\FM$-sheaves.

Over $\OM$, since our field $k$ is finite or algebraically closed, we
can use Deligne's definition of $D^b_c(X,\OM)$ as the projective
$2$-limit of the triangulated categories $D^b_c(X,\OM/\varpi^n)$. The
assumption insures that it is triangulated. We have triangulated functors
$\OM/\varpi^n \otimes^\LM_\OM (-) : D^b_c(X,\OM) \to D^b_c(X,\OM/\varpi^n)$,
and in particular $\FM \otimes^\LM_\OM (-)$. We will often omit from
the notation $\otimes^\LM_\OM$ and simply write $\FM(-)$. The functor
$\HC^i : D^b_c(X,\OM) \to \Sh(X,\OM)$ is defined by sending an object
$K$ to the projective system of the $\HC^i(\OM/\varpi^n \otimes^\LM_\OM K)$.
We have exact sequences
\begin{equation}
0 \longto \OM/\varpi^n \otimes_\OM \HC^i(K)
\longto \HC^i(\OM/\varpi^n \otimes^\LM_\OM K)
\longto \Tor_1^\OM(\OM/\varpi^n,\HC^{i + 1}(K))
\longto 0
\end{equation}

Let $D^b_{\XG,\LG}(X,\OM)$ be the full subcategory of $D^b_c(X,\FM)$
consisting in the objects $K$ such that for some (or any) $n$,
$\OM/\varpi^n \otimes^\LM_\OM K$ is in $D^b_{\XG,\LG}(X,\OM/\varpi^n)$, or
equivalently, such that the $\FM\ \HC^i K$ are $(\XG,\LG)$-constructible. 
We define the $t$-structure for the perversity $p$ on
$D^b_{\XG,\LG}(X,\OM)$ as above. Its heart is the abelian category
$\p \MC_{\XG,\LG}(X,\OM)$. Since it is $\OM$-linear, it is endowed
with a natural torsion theory, and we can define another
$t$-structure as in \ref{sec:tt ts}, and we will say that it
is associated to the perversity $p_+$. By \ref{sec:tt recollement},
it can also be obtained by recollement.
Passing to the limit, we get two $t$-structures on $D^b_c(X,\OM)$, for
the perversities $p$ and $p_+$, which can be characterized by
\ref{def:p <= 0}, \ref{def:p >= 0}, \ref{def:p+ <= 0} and \ref{def:p+ >= 0}.

An object $A$ of $D^b_c(X,\OM)$ is in $\p D^{\leqslant 0}(X,\OM)$
(resp. $\pp D^{\geqslant 0}(X,\OM)$)
if and only if $\FM\ A$ is in $\p D^{\leqslant 0}(X,\FM)$
(resp. $\pp D^{\geqslant 0}(X,\FM)$).

If $A$ is an object in $\p\MC(X,\OM)$, then
$\FM\ A$ is in $\p\MC(X,\FM)$ if and only if $A$ is
torsion-free (that is, if and only if $A$ is also $p_+$-perverse).
Then we have $\FM\ A = \Coker \varpi.1_A$
(the cokernel being taken in $\p\MC(X,\OM)$).

Similarly, if $A$ is an object in $\pp\MC(X,\OM)$, then
$\FM\ A$ is in $\p\MC(X,\FM)$ if and only if $A$ is
divisible (that is, if and only if $A$ is also $p$-perverse).
Then we have $\FM\ A = \Ker \varpi.1_A [1]$
(the kernel being taken in $\pp\MC(X,\OM)$).

To pass from $\OM$ to $\KM$, we simply apply $\KM \otimes_\OM
(-)$. Thus $D^b_c(X,\KM)$ has the same objects as
$D^b_c(X,\OM)$, and
$\Hom_{D^b_c(X,\KM)}(A,B) = \KM \otimes_\OM \Hom_{D^b_c(X,\OM)}(A,B)$.
We write $D^b_c(X,\KM) = \KM\otimes_\OM D^b_c(X,\OM)$. We also have
$\Sh(X,\KM) = \KM\otimes_\OM \Sh(X,\OM)$. Then we define the full
subcategory $D^b_{\XG,\LG}(X,\KM)$ of $D^b_c(X,\KM)$ as the image of 
$D^b_{\XG,\LG}(X,\OM)$. The $t$-structures $p$ and $p_+$ on
$D^b_{\XG,\LG}(X,\OM)$ give rise to a single $t$-structure $p$ on
$D^b_{\XG,\LG}(X,\KM)$, because torsion objects are killed by $\KM
\otimes_\OM (-)$. This perverse $t$-structure can be defined by
recollement. Passing to the limit, we get the perverse $t$-structure
on $D^b_c(X,\KM)$ defined by \ref{def:p <= 0} and \ref{def:p >= 0}. We
have $\p \MC(X,\KM) = \KM \otimes_\OM \p\MC(X,\OM)$.

\section{Modular reduction and truncation functors}\label{sec:F recollement}

Modular reduction does not commute with truncation functors.
To simplify the notation, we will write $\FM (-)$ for $\FM\otimes^\LM_\OM(-)$.

\begin{prop}\label{prop:Ft}
For $A \in D^b_c(X,\OM)$ and $i \in \ZM$, we have distinguished triangles
\begin{equation}\label{tri:Fi iF}
\FM\ \t_{\leqslant i}\ A \longto
\t_{\leqslant i}\ \FM\ A \longto
\HC^{-1}(\FM\ \HC^{i+1}_\tors A) [-i] \rightsquigarrow
\end{equation}
\begin{equation}\label{tri:iF Fi+}
\t_{\leqslant i}\ \FM\ A \longto
\FM\  \t_{\leqslant i_+} A \longto
\HC^0 ( \FM\ \HC^{i+1}_\tors A) [-i - 1] \rightsquigarrow
\end{equation}
\begin{equation}\label{tri:Fi+ Fi+1}
\FM\  \t_{\leqslant i_+} A \longto
\FM\  \t_{\leqslant i+1}\ A \longto
\FM\ \HC^{i+1}_\free A [-i-1] \rightsquigarrow
\end{equation}

In particular,
\begin{equation}\label{isom:Fi iF Fi+}
\HC^{i+1}_\tors A = 0 \quad \Imp \quad
\FM\ \t_{\leqslant i}\ A \isom \t_{\leqslant i}\ \FM\ A \isom \FM\  \t_{\leqslant i_+} A\\
\end{equation}
\begin{equation}\label{isom:Fi+ Fi+1}
\HC^{i+1}_\free A = 0 \quad \Imp \quad
\FM\  \t_{\leqslant i_+} A \isom \FM\  \t_{\leqslant i+1}\ A
\end{equation}
\end{prop}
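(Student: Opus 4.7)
\emph{Plan.} The third triangle \ref{tri:Fi+ Fi+1} is obtained immediately by applying the triangulated functor $\FM\otimes^\LM_\OM(-)$ to the distinguished triangle \ref{eq:tri free} specialized at $A$. Granting this together with the first two triangles, the two implications \ref{isom:Fi iF Fi+} and \ref{isom:Fi+ Fi+1} follow at once, since the hypotheses force the third term of the relevant triangle to vanish.

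For the first two triangles, the plan is to apply $\FM(-)$ to \ref{eq:tri tors}, obtaining a distinguished triangle $\FM\t_{\leqslant i} A \to \FM\t_{\leqslant i_+} A \to \FM T [-i-1]$, where $T := \HC^{i+1}_\tors A$. Since $T$ is a torsion sheaf, $\FM T$ is concentrated in degrees $-1$ and $0$, with $\HC^{-1}(\FM T) = T[\varpi]$ and $\HC^0(\FM T) = T/\varpi T$, so its natural truncation triangle, shifted by $[-i-1]$, reads $\HC^{-1}(\FM T)[-i] \to \FM T[-i-1] \to \HC^0(\FM T)[-i-1]$. Applying the octahedron axiom to the composition $\FM\t_{\leqslant i_+} A \to \FM T[-i-1] \to \HC^0(\FM T)[-i-1]$ produces an object $M$ together with two distinguished triangles
\begin{equation*}
\FM\t_{\leqslant i} A \longto M \longto \HC^{-1}(\FM T)[-i]
\quad\text{and}\quad
M \longto \FM\t_{\leqslant i_+} A \longto \HC^0(\FM T)[-i-1],
\end{equation*}
which are precisely the desired triangles \ref{tri:Fi iF} and \ref{tri:iF Fi+}, provided we identify $M$ with $\t_{\leqslant i}\FM A$.

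The heart of the argument, and the one step that is not purely formal, is this identification. From the first triangle, $M$ lies in $D^{\leqslant i}(X,\FM)$ because both outer terms do. Composing the second triangle with the canonical map $\FM\t_{\leqslant i_+} A \to \FM A$ yields a map $M \to \FM A$ which, by the universal property of the truncation, factors uniquely through $\t_{\leqslant i}\FM A$. To show this factorization is an isomorphism, I would verify that the cone of $M \to \FM A$ lies in $D^{\geqslant i+1}$: an octahedron on $M \to \FM\t_{\leqslant i_+} A \to \FM A$ exhibits that cone as an extension of $\HC^0(\FM T)[-i-1]$ and $\FM\t_{>i_+} A$. The first term lies in $D^{\geqslant i+1}$ by construction. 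For the second, one uses the universal coefficient short exact sequence $0 \to \FM\otimes\HC^j(B) \to \HC^j(\FM B) \to \HC^{j+1}(B)[\varpi] \to 0$ together with the fact that $\t_{>i_+} A$ lives in $D^{\geqslant i+1}$ with $\HC^{i+1}$ equal to the torsion-free quotient $\HC^{i+1}_\free A$, so $\HC^i(\FM\t_{>i_+} A) = \HC^{i+1}_\free(A)[\varpi] = 0$. This last compatibility between $\FM$-reduction and the torsion decomposition of cohomology sheaves is the main technical content of the proof.
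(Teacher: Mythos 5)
Your proof is correct and follows essentially the same route as the paper's: apply $\FM(-)$ to the triangles \ref{eq:tri free} and \ref{eq:tri tors}, split $\FM\,\HC^{i+1}_\tors A$ by its natural truncation, apply an octahedron to get the intermediate object, and apply a second octahedron (on the composite into $\FM A$) together with the vanishing $\HC^i(\FM\,\t_{>i_+}A) = \HC^{i+1}_\free(A)[\varpi] = 0$ to identify that object with $\t_{\leqslant i}\FM A$. The only cosmetic difference is that the paper first pins down the intermediate object as $\t_{\leqslant i}\FM\,\t_{\leqslant i_+}A$ and then shows this equals $\t_{\leqslant i}\FM A$, while you factor the map into $\FM A$ directly and verify the cone lies in $D^{\geqslant i+1}$ — the same two octahedra and the same key torsion-freeness fact in either case.
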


\begin{proof}
We have a distinguished triangle
\[
\t_{\leqslant i_+} A \to \t_{\leqslant i + 1}\ A \to \HC^{i + 1}_\free A[-i - 1] \rtordu
\]
in $D^b_c(X,\OM)$. This follows form \cite[Prop. 1.3.15]{BBD}, which
is proved using the octahedron axiom. Applying $\FM(-)$, we get the
triangle (\ref{tri:Fi+ Fi+1}), and (\ref{isom:Fi+ Fi+1}) follows.

By definition, we have a distinguished triangle
\[
\t_{\leqslant i}\ A \to \t_{\leqslant i_+} A \to \HC^{i + 1}_\tors A[-i - 1] \rtordu
\]
in $D^b_c(X,\OM)$. Applying $\FM(-)$, we get a distinguished triangle in
$D^b_c(X,\FM)$
\begin{equation}
\FM\ \t_{\leqslant i}\ A \to \FM\ \t_{\leqslant i_+} A \to
\FM\ \HC^{i + 1}_\tors A[-i - 1] \rtordu
\end{equation}
On the other hand, we have a distinguished triangle
\begin{equation}
\Tor_1^\OM(\FM, \HC^{i + 1}_\tors A)[-i]
\to \FM\ \HC^{i + 1}_\tors A[-i - 1]
\to \FM\otimes_\OM\HC^{i + 1}_\tors A[-i - 1]
\end{equation}

By the TR 4' axiom, we have an octahedron diagram
\begin{equation}\label{octa:Fi iF Fi+}
\octa
{\FM\ \t_{\leqslant i}\ A}
{B}
{\Tor_1^\OM(\FM, \HC^{i + 1}_\tors A)[-i]}
{\FM\ \t_{\leqslant i_+} A}
{\FM\ \HC^{i + 1}_\tors A[-i - 1]}
{\FM\otimes_\OM\HC^{i + 1}_\tors A[-i - 1]}
\end{equation}
for some $B$ in $D^b_c(X,\OM)$.

The triangle
$(\FM\ \t_{\leqslant i}\ A,\  B,\ \Tor_1^\OM(\FM, \HC^{i + 1}_\tors A)[-i])$
shows that
$B \in D^{\leqslant i}_c(X,\FM)$, and then
the triangle
$(B,\ \FM\ \t_{\leqslant i_+} A,\ \FM\otimes_\OM\HC^{i + 1}_\tors A[-i - 1])$
shows that $B$ is (uniquely) isomorphic to
$\t_{\leqslant i}\ \FM\ \t_{\leqslant i_+} A$.
Let us now show that
$\t_{\leqslant i}\ \FM\ \t_{\leqslant i_+} A \simeq \t_{\leqslant i}\ \FM\ A$.

By the TR 4 axiom, we have an octahedron diagram
\[
\octa
{\t_{\leqslant i}\ \FM\ \t_{\leqslant i_+} A}
{\FM\ \t_{\leqslant i_+} A}
{\t_{\geqslant i + 1}\ \FM\ \t_{\leqslant i_+} A}
{\FM\ A}
{C}
{\FM\ \t_{\geqslant (i + 1)_+} A}
\]
for some $C$ in $D^b_c(X,\OM)$.

The triangle
$(\t_{\geqslant i + 1}\ \FM\ \t_{\leqslant i_+} A,\ C,\ \FM\ \t_{\geqslant (i + 1)_+} A)$
shows that
$C \in D^{\geqslant i + 1}_c(X,\FM)$, and then the triangle
$(\t_{\leqslant i}\ \FM\ \t_{\leqslant i_+} A,\ \FM\ A,\ C)$
shows that
$B \simeq \t_{\leqslant i}\ \FM\ \t_{\leqslant i_+} A \simeq \t_{\leqslant i}\ \FM\ A$
and $C \simeq \t_{\geqslant i + 1}\ \FM\ A$.

Hence the octahedron diagram (\ref{octa:Fi iF Fi+}) contains the triangles (\ref{tri:Fi iF}) and
(\ref{tri:iF Fi+}). If $\HC^{i + 1}_\tors A = 0$, the diagram reduces to the isomorphisms
(\ref{isom:Fi iF Fi+}).
\end{proof}

We have the same result if we replace $\t_{\leqslant i}$ by $\p
\t_{\leqslant i}$, and $\HC^i$ by $\p \HC^i$. The same remark applies
for the functors $\t^F_{\leqslant i}$ and $\p\t^F_{\leqslant i}$.

\section{Modular reduction and recollement}

Let us fix an open subvariety $j : U \to X$, with closed complement
$i : F\to X$. We want to see how the modular reduction behaves with
respect to this recollement situation.

For $A$ in $\p\MC(U,\OM) \cap \pp\MC(U,\OM)$, we have nine interesting
extensions of $\FM\,A$, out of which seven are automatically
perverse. These correspond to nine ways to truncate
$\FM\; j_*\; A = j_*\; \FM\; A$, three for each degree between $-2$
and $0$. Indeed, each degree is ``made of'' three parts:
the $\p\HC^0\,\FM(-)$ of the torsion part of the cohomology of $A$ of
the same degree, the reduction of the torsion-free part of the
cohomology of $A$ of the same degree, and the $\p\HC^{-1}\,\FM(-)$ of
the torsion part of the next degree (like a $\Tor_1$).

\[
\begin{array}{|c|c|c|c|}
\hline
-2 & -1 & 0 & 1\\
\hline
\begin{array}{c|c|c}
\phantom{\FM} & \FM \ \p j_! & \p j_!\ \FM
\end{array}
&
\begin{array}{c|c|c}
\FM\ \pp j_! & \FM\ \p j_{!*} & \p j_{!*}\ \FM
\end{array}
&
\begin{array}{c|c|c}
\FM\ \pp j_{!*} & \FM\ \p j_* & \p j_*\ \FM
\end{array}
&
\begin{array}{c|c|c}
\FM\ \pp j_* & \phantom{\FM} & \phantom{\FM}
\end{array}
\\
\hline
\end{array}
\]

Using Proposition \ref{prop:Ft} with the functors $\p\t^F_{\leqslant i}$, we
obtain the following distinguished triangles.
\begin{equation}
\FM \; \p j_! \longto \p j_!\; \FM  
\longto \p\HC^{-1}\; \FM\; \p i_*\; \p\HC^{-1}_\tors\; i^* j_* [2]
\rightsquigarrow
\end{equation}
\begin{equation}
\p j_!\; \FM \longto \FM \; \pp j_!
\longto \p\HC^0\; \FM\; \p i_*\; \p\HC^{-1}_\tors\; i^* j_* [1]
\rightsquigarrow
\end{equation}
\begin{equation}
\FM \; \pp j_! \longto \FM \; \p j_{!*}
\longto \FM\; \p i_*\; \p\HC^{-1}_\free\; i^* j_* [1]
\rightsquigarrow
\end{equation}
\begin{equation}
\FM \; \p j_{!*} \longto \p j_{!*}\; \FM 
\longto \p\HC^{-1}\; \FM\; \p i_*\; \p\HC^0_\tors\; i^* j_* [1]
\rightsquigarrow
\end{equation}
\begin{equation}
\p j_{!*}\; \FM   \longto \FM \; \pp j_{!*}
\longto \p\HC^0\; \FM\; \p i_*\; \p\HC^0_\tors\; i^* j_*
\rightsquigarrow
\end{equation}
\begin{equation}
\FM \; \pp j_{!*} \longto \FM \; \p j_*
\longto \FM\; \p i_*\; \p\HC^0_\free\; i^* j_*
\rightsquigarrow
\end{equation}
\begin{equation}
\FM \; \p j_* \longto \p j_*\; \FM 
\longto \p\HC^{-1}\; \FM\; \p i_*\; \p\HC^1_\tors\; i^* j_*
\rightsquigarrow
\end{equation}
\begin{equation}
\p j_*\; \FM   \longto \FM \; \pp j_*
\longto \p\HC^0\; \FM\; \p i_*\; \p\HC^1_\tors\; i^* j_* [-1]
\rightsquigarrow
\end{equation}

In particular, for $A$ in $\p\MC(U,\OM) \cap \pp\MC(U,\OM)$, we have
\begin{equation}
\p\HC^{-1}_\tors\; i^*\; j_*\; A = 0
\quad \Imp \quad 
\FM \; \p j_!\; A \elem{\sim} \p j_!\; \FM\; A \elem{\sim} \FM \; \pp j_!\; A
\end{equation}
\begin{equation}
\p\HC^{-1}_\free\; i^*\; j_*\; A = 0
\quad \Imp \quad 
\FM \; \pp j_!\; A \elem{\sim} \FM \; \p j_{!*}\; A
\end{equation}
\begin{equation}
\p\HC^0_\tors\; i^*\; j_*\; A = 0
\quad \Imp \quad 
\FM \; \p j_{!*}\; A \elem{\sim} \p j_{!*}\; \FM\; A \elem{\sim} \FM \; \pp j_{!*}\; A
\end{equation}
\begin{equation}
\p\HC^0_\free\; i^*\; j_*\; A = 0
\quad \Imp \quad 
\FM \; \pp j_{!*}\; A \elem{\sim} \FM \; \p j_*\; A
\end{equation}
\begin{equation}
\p\HC^1_\tors\; i^*\; j_*\; A = 0
\quad \Imp \quad 
\FM \; \p j_*\; A \elem{\sim} \p j_*\; \FM\; A \elem{\sim} \FM \; \pp j_*\; A
\end{equation}

\COUIC{
Finally, let us remark that in all the triangles above, the third term
is supported on $F$. This has the following consequence. For
$A$ in $\p\MC(U,\OM) \cap \pp\MC(U,\OM)$ and $S$ a simple object in
$\p\MC(U,\FM)$, the multiplicities $[E : \p j_{!*} S]$, where
$E$ is one of the nine extensions of $A$ as above, are all equal
to each other. Using Proposition \ref{subsec:IC mult}, we find that
they are all equal to $[\FM\, A : S]$.

\[
[\FM \; \p j_!\; A : \p j_{!*}\, S]
= [\p j_!\; \FM\; A : \p j_{!*} \, S]
= [\FM \; \pp j_!\; A : \p j_{!*} \, S]
= [\FM \; \p j_{!*}\; A : \p j_{!*} \, S]
= [\p j_{!*}\; \FM\; A : \p j_{!*} \, S]
= [\FM \; \pp j_{!*}\; A : \p j_{!*} \, S]
= [\FM \; \p j_*\; A : \p j_{!*} \, S]
= [\p j_*\; \FM\; A : \p j_{!*} \, S]
= [\FM \; \pp j_*\; A : \p j_{!*}\, S]
\]
}

\section{Decomposition numbers}\label{sec:decnum}

Let $X$ be endowed with a pair $(\XG,\LG)$ satisfying the conditions
(\ref{XG}), (\ref{LG}) and (\ref{cons}) of Section \ref{sec:perv}.
Let $\PG$ be the set of pairs $(\OC,\LC)$ where $\OC \in \XG$ and $\LC \in \LG(\OC)$.
Let $K_0^{\XG,\LG}(X,\FM)$ be the Grothendieck group of the triangulated category
$D^b_{\XG,\LG}(X,\FM)$.

For $\OC \in \XG$, let $j_\OC : \OC \to X$ denote the inclusion.
For $(\OC,\LC) \in \PG$, let us denote by
\begin{equation}
\0 \JC_!(\OC, \LC) = \0 {j_\OC}_!\ (\LC[\dim \OC])
\end{equation}
the extension by zero of the local system $\LC$, shifted by $\dim \OC$.
We also introduce the following notation for the three perverse extensions.
\begin{gather}
\p \JC_!(\OC, \LC) = \p {j_\OC}_!\ (\LC[\dim \OC])\\
\p \JC_{!*}(\OC, \LC) = \p {j_\OC}_{!*} (\LC[\dim \OC])\\
\p \JC_*(\OC, \LC) = \p {j_\OC}_*\ (\LC[\dim \OC])
\end{gather}

We have
\begin{equation}
K_0^{\XG,\LG}(X,\FM) 
\simeq K_0(\Sh_{\XG,\LG}(X,\FM)) 
\simeq K_0(\p\MC_{\XG,\LG}(X,\FM))
\end{equation}
If $K \in D^b_{\XG,\LG}(X,\FM)$, then we have
\[
[K] = \sum_{i \in \ZM} (-1)^i [\HC^i(K)] = \sum_{j \in \ZM} (-1)^j [\p\HC^j(K)]
\]
in $K_0^{\XG,\LG}(X,\FM)$.

This Grothendieck group is free over $\ZM$, and admits the following bases
\begin{gather*}
\BC_0 = (\0\JC_!(\OC,\LC))_{(\OC,\LC)\in\PG}\\
\BC_! = (\p\JC_!(\OC,\LC))_{(\OC,\LC)\in\PG}\\
\BC_{!*} = (\p\JC_{!*}(\OC,\LC))_{(\OC,\LC)\in\PG}\\
\BC_* = (\p\JC_*(\OC,\LC))_{(\OC,\LC)\in\PG}
\end{gather*}

For $C \in K_0^{\XG,\LG}(X,\FM)$, let us define the integers
$\chi_{(\OC,\LC)}(C)$, for $(\OC,\LC)\in\PG$, by
the relations
\begin{equation}
C = \sum_{(\OC,\LC)\in\PG} \chi_{(\OC,\LC)}(C)\ [\0\JC_!(\OC,\LC)]
\end{equation}

For $? \in \{!,!*,*\}$, the complex $\p\JC_?(\OC,\LC)$ extends
the shifted local system $\LC[\dim \OC]$, and is supported on $\ov\OC$. This implies
\begin{equation}
\chi_{(\OC',\LC')}(\p\JC_?(\OC,\LC)) = 0
\text{ unless } \ov\OC' \subsetneq \ov\OC \text{ or } (\OC',\LC') = (\OC,\LC)
\end{equation}
and
\begin{equation}
\chi_{(\OC,\LC)}(\p\JC_?(\OC,\LC)) = 1
\end{equation}

In other words, the three bases $\BC_!$, $\BC_{!*}$ and $\BC_*$
are unitriangular with respect to the basis $\BC_0$.
This implies that they are also unitriangular with respect to each other.
In fact, we already knew it by the results of Paragraph \ref{subsec:top socle},
since $\p\JC_!(\OC,\LC)$ (resp. $\p\JC_*(\OC,\LC)$)
has a top (resp. socle) isomorphic to $\p\JC_{!*}(\OC,\LC)$,
and the radical (resp. the quotient by the socle) is supported on $\ov\OC\setminus\OC$.
In particular, for $?\in\{!,*\}$, we have
\begin{equation}
[\p\JC_?(\OC,\LC) : \p\JC_{!*}(\OC',\LC')] = 0
\text{ unless } \ov\OC' \subsetneq \ov\OC \text{ or } (\OC',\LC') = (\OC,\LC)
\end{equation}
and
\begin{equation}
[\p\JC_?(\OC,\LC) : \p\JC_{!*}(\OC,\LC)] = 1
\end{equation}

Let $K_0^{\XG,\LG}(X,\KM)$ be the Grothendieck group of the triangulated category
$D^b_{\XG,\LG}(X,\KM)$. It can be identified with
$K_0(\Sh_{\XG,\LG}(X,\KM))$ and $K_0(\p\MC_{\XG,\LG}(X,\KM))$ as for
the case $\EM = \FM$.

Now, let $K$ be an object of $D^b_{\XG,\LC}(X,\KM)$.  If $K_\OM$ is an
object of $D^b_{\XG,\LC}(X,\OM)$ such that $\KM \otimes_\OM K_\OM
\simeq K$, we can consider $[\FM K_\OM]$ in
$K_0^{\XG,\LC}(X,\FM)$. This class does not depend on the choice of
$K_\OM$ (note that the modular reduction of a torsion object has a
zero class in the Grothendieck group: if we assume, for simplicity,
that we have only finite monodromy, then by dévissage we can reduce to
the analogue result for finite groups). In fact, it depends only on
the class $[K]$ of $K$ in $K_0^{\XG,\LC}(X,\KM)$. So we have a
well-defined morphism
\begin{equation}
d : K_0^{\XG,\LC}(X,\KM) \longto K_0^{\XG,\LC}(X,\FM)
\end{equation}

For $(\OC,\LC)
\in \PG$, we can consider the decomposition number $[\FM K_\OM :
\p\JC_{!*}(\OC,\LC)]$, where $K_\OM$ is any object of
$D^b_{\XG,\LC}(X,\OM)$ such that $\KM K_\OM \simeq K$.

\section{Equivariance}
\label{sec:orbits}

We now introduce $G$-equivariant perverse sheaves in the sense of
\cite[\S 0]{ICC}, \cite[\S 4.2]{LET}.

Let $G$ be a \emph{connected} algebraic group acting on a variety $X$.
Let $\rho : G \times X \to X$ be the morphism defining the action, and let
$p : G \times X \to X$ be the second projection. A sheaf
$F$ on $X$ is $G$-\emph{equivariant} if there is an isomorphism
$\a : p^* F \isom \rho^* F$. In that case, we can choose $\a$ in a unique way
such that the induced isomorphism $i^*(\a) : F \to F$ is the identity,
where $i : X \to G \times X$ is defined by $i(x) = (1_G, x)$.

If $f : X \to Y$ is a $G$-equivariant morphism, the functors $\0 f^*$,
$\0 f_*$ and $\0 f_!$ take $G$-equivariant sheaves to $G$-equivariant sheaves.

Let $\Sh_G(X,\EM)$ be the category whose objects are the $G$-equivariant $\EM$-sheaves
on $X$, and such that the morphisms between two objects $F_1$ and $F_2$
are the morphisms $\phi$ in $\Sh(X,\EM)$ such that the following diagram commutes
\[
\xymatrix{
p^* F_1 \ar[r]^{p^*\phi} \ar[d]_{\a_1} &
p^* F_2 \ar[d]^{\a_2}\\
\rho^* F_1 \ar[r]_{\rho^*\phi} &
\rho^* F_2
}
\]
where $\a_j$ is the unique isomorphism such that $i^*(\a_j)$ is the
identity for $j = 1,2$.
Then it turns out that $\Sh_G(X,\EM)$ is actually a full subcategory of
$\Sh(X,\EM)$.

For a general complex in $D^b_c(X,\EM)$, the notion of $G$-equivariance is more
delicate. However, for a perverse sheaf we can take the same definition as above,
and again the isomorphism $\a$ can be normalized with the same condition.
If $f$ is a $G$-equivariant morphism, then the functors $\p\HC^j\ f^*$,
$\p\HC^j\ f^!$, $\p\HC^j\ f_*$ and $\p\HC^j\ f^!$ take $G$-equivariant perverse
sheaves to $G$-equivariant perverse sheaves.

We define in the same way the category $\p\MC_G(X,\EM)$ of
$G$-equivariant perverse $\EM$-sheaves,
and again it is a full subcategory of $\p\MC(X,\EM)$. Moreover, it is stable
by subquotients. The simple objects in $\p\MC_G(X,\EM)$ are the intermediate
extensions of irreducible $G$-equivariant $\EM$-local systems on $G$-stable locally closed
smooth irreducible subvarieties of $X$.

Suppose $\EM$ is a field.
If $\OC$ is a homogeneous space for $G$, let $x$ be a point in $\OC$,
and let $A_G(x) = C_G(x)/C_G^0(x)$. Then the set of isomorphism
classes of irreducible $G$-equivariant $\EM$-local systems is in
bijection with the set $\Irr \EM A_G(x)$ of isomorphism classes of
irreducible representations of the group algebra $\EM A_G(x)$.

Suppose $X$ is a $G$-variety with finitely many orbits.  Then we can
take the stratification $\XG$ of $X$ by its $G$-orbits.  The orbits
are indeed locally closed by \cite[Lemma 2.3.1]{SPR}, and they are
smooth. For each $G$-orbit $\OC$ in $X$, let $x_\OC$ be a closed point
in $\OC$.  For $\LG(\OC)$ we take all the irreducible
$G$-equivariant $\FM$-local systems, so that we can identify
$\LG(\OC)$ with $\Irr \FM A_G(x_\OC)$.

Suppose $\EM$ is a field. Let $K_0^G(X,\EM)$ be the Grothendieck group
of the triangulated category $D^b_{\XG,\LG}(X,\EM)$. Then we have
\begin{equation}
K_0^G(X,\EM) = K_0(\p\MC_G(X,\EM)) = K_0(\Sh_G(X,\EM)) \simeq 
\bigoplus_{\OC} K_0(\Irr \EM A_G(x_\OC))
\end{equation}
If $K \in D^b_{\XG,\LG}(X,\EM)$, then we have
\[
[K] = \sum_{i \in \ZM} (-1)^i [\HC^i(K)] = \sum_{j \in \ZM} (-1)^j [\p\HC^j(K)]
\]
in $K_0^G(X,\EM)$.

Let $\PG_\EM$ be the set of pairs $(\OC,\LC)$ with $\OC \in \XG$ and
$\LC$ an irreducible $G$-equivariant $\EM$-local system on $\OC$
(corresponding to an irreducible representation $L$ of $\EM
A_G(x_\OC)$). Then we have bases
$\BC_0^\EM = (\0 j_!(\OC,\LC))_{(\OC,\LC)\in\PG_\EM}$,
$\BC_!^\EM = (\p j_!(\OC,\LC))_{(\OC,\LC)\in\PG_\EM}$,
$\BC_{!*}^\EM = (\p j_{!*}(\OC,\LC))_{(\OC,\LC)\in\PG_\EM}$,
$\BC_*^\EM = (\p j_*(\OC,\LC))_{(\OC,\LC)\in\PG_\EM}$.
Note that, if $\ell$ does not divide the $|A_G(x_\OC)|$, then we can
identify $\PG_\KM$ with $\PG_\FM$.

The transition matrices from $\BC_0^\EM$ to $\BC_?^\EM$ (for $?\in\{!,!*,*\}$)
are unitriangular, and also the transition matrices from
$\BC_{!*}^\EM$ to $\BC_?^\EM$ (for $?\in\{!,*\}$).

As in the last section, we have a morphism
\[
d : K_0^G(X,\KM) \longto K_0^G(X,\FM)
\]

The matrix of $d$ with respect to the bases $\BC_0^\EM$ is just
a product of blocks indexed by the orbits $\OC$, the block
corresponding to $\OC$ being the decomposition matrix of the finite
group $A_G(x_\OC)$. If $\ell$ does not divide the $|A_G(x_\OC)|$, this
is just the identity matrix.

We are interested in the matrix of $d$ in the bases 
$\BC_{!*}^\EM$. That is, we want to study the decomposition numbers
$d_{(\OC,\LC),(\OC',\LC')}
= [\FM\JC_{!*}(\OC,\LC_\OM) : \JC_{!*}(\OC',\LC')]$
for $(\OC,\LC) \in \PG_\KM$ and $(\OC',\LC') \in \PG_\FM$,
where $\LC_\OM$ is an integral form for $\LC$.
Recall that, if $\ell$ does not divide the $A_G(x)$, then we can
identify $\PG_\KM$ with $\PG_\FM$.

We will see in Chapter \ref{chap:springer} that, when $X$ is the nilpotent
variety $\NC$, part of these numbers can be interpreted as
decomposition numbers for the Weyl group, and we expect that the whole
decomposition matrix coincides with the decomposition matrix for the
Schur algebra. In types other than type $A$, we would have to make
clear which Schur algebra should show up, since several of them appear
naturally. Moreover, when $\ell$ divides some $A_G(x)$, we cannot
expect to have a correspondence with a quasi-hereditary algebra, so
interesting things should happen for these primes.

\chapter{Examples}\label{chap:examples}

\section{Semi-small morphisms}\label{sec:small}

\begin{defi}
A morphism $\pi : \Xti \to X$ is \emph{semi-small} is there is a stratification
$\XG$ of $X$ such that the for all strata $S$ in $\XG$, and for all closed
points $s$ in $S$, we have $\dim \pi^{-1}(s) \leqslant \frac{1}{2}\codim_X(S)$.
If moreover these inequalities are strict for all strata of positive codimension,
we say that $\pi$ is \emph{small}.
\end{defi}

Recall that $\Loc(S,\EM)$ is the full subcategory of $\Sh(X,\EM)$
consisting in the $\EM$-local systems. It is the heart of the
$t$-category $D^b_{\Loc}(S,\EM)$ which is the full subcategory of
$D^b_c(S,\EM)$ of objects $A$ such that all the $\HC^i A$ are local
systems, with the $t$-structure induced by the natural $t$-structure
on $D^b_c(S,\EM)$. For $\EM = \OM$, according to the definition given
after Proposition \ref{prop:tt ts}, we have an abelian category
$\Loc^+(S,\OM)$, which is the full subcategory of $D^b_c(S,\OM)$
consisting in the objects $A$ such that $\HC^0 A$ is a torsion-free
$\OM$-local system, and $\HC^1 A$ is a torsion $\OM$-local system.

\begin{prop}\label{prop:small}
Let $\pi : \Xti \to X$ be a surjective, proper and separable morphism,
with $\Xti$ smooth of pure dimension $d$.
Let $\LC$ be in $\Loc(\Xti,\EM)$. Let us consider the complex $K = \pi_!\ \LC[d]$.
\begin{enumerate}[(i)]
\item If $\pi$ is semi-small, then $\dim X = d$ and $K$ is $p$-perverse.
\item If $\pi$ is small, then $K = \p j_{!*}\ j^*\ K$
for any inclusion $j : U \to X$ of a smooth open dense subvariety over which
$\pi$ is étale.
\end{enumerate}

In the case $\EM = \OM$, we can take $\LC$ in $\Loc^+(X,\OM)$ and
replace $p$ by $p_+$.
\end{prop}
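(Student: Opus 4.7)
The plan is to reduce both parts to a fiber-cohomology bound obtained through proper base change, and to handle the opposite half of the perverse $t$-structure by Verdier duality.

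First I would check that $\dim X = d$. Applied to the dense open stratum of $X$ (codimension zero), semi-smallness forces $\pi$ to have zero-dimensional fibers over a dense open subset, and combined with surjectivity and separability this gives $\dim X = \dim \tilde X = d$. Then, to verify $K \in \p D^{\leqslant 0}(X,\EM)$, I would fix a closed point $s$ in a stratum $S$ and apply proper base change to rewrite $i_s^* K \simeq \rg(\pi^{-1}(s), \LC_{|\pi^{-1}(s)})[d]$. The standard vanishing of the cohomology of a variety above twice its dimension, combined with the semi-smallness inequality $2\dim \pi^{-1}(s) \leqslant \codim_X(S) = d - \dim S$, yields $\HC^i i_s^* K = 0$ for $i > -\dim S$, which is precisely the required $p$-perversity bound; the $p_+$-version when $\LC \in \Loc^+(\tilde X, \OM)$ follows from the same computation, since $\Loc^+$ just shifts the cohomological concentration by at most one.

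For the opposite inequality $K \in \p D^{\geqslant 0}$ (respectively $\pp D^{\geqslant 0}$), I would invoke Verdier duality. Since $\pi$ is proper, $\DC_{X,\EM}\, K \simeq \pi_!\, \DC_{\tilde X, \EM}(\LC[d])$. Since $\tilde X$ is smooth of pure dimension $d$, one has $\DC_{\tilde X, \EM}(\LC[d]) \simeq \RHOM(\LC, \EM)[d]$, which is again a shifted local system of the same kind: over a field this is immediate, and over $\OM$ the duality $\DC_{\tilde X, \OM}$ exchanges $\Loc$ and $\Loc^+$, consistently with the fact that $\DC_{X,\OM}$ exchanges the two perversities. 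Re-applying the preceding step to $\DC_{X,\EM}\, K$ gives $\DC_{X,\EM}\, K \in \p D^{\leqslant 0}$, and duality then yields $K \in \pp D^{\geqslant 0} \subseteq \p D^{\geqslant 0}$, finishing the proof of perversity in both settings.

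For (ii), smallness upgrades the semi-smallness inequality to the strict one $2\dim \pi^{-1}(s) < \codim_X(S)$ whenever the stratum $S$ meets $X \setminus U$. Running the same fiber-cohomology computation gives the sharper vanishings $\HC^i i_s^* K = 0$ for $i \geqslant -\dim S$ and, dually, $\HC^i i_s^! K = 0$ for $i \leqslant -\dim S$ on every stratum of $Z := X \setminus U$. Writing $i_Z : Z \hookrightarrow X$ for the inclusion, these amount to $i_Z^* K \in \p D^{\leqslant -1}(Z, \EM)$ and $i_Z^! K \in \p D^{\geqslant 1}(Z, \EM)$, which is exactly the characterization (recalled in Section \ref{sec:recollement}) of $K$ as the intermediate extension $\p j_{!*} j^* K$; the $p_+$-version is again formal via duality. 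The main technical point in the whole plan will be the careful bookkeeping of the two perversities $p$ and $p_+$ over $\OM$ when invoking Verdier duality, since one must verify that passing to $\RHOM(-,\OM)$ trades $\Loc$ against $\Loc^+$ in a way compatible with the exchange of $p$ and $p_+$; once this is in place, everything else is a formal consequence of proper base change and the (semi-)smallness inequality.
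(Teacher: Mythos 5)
Your overall route matches the paper's: proper base change to rewrite the stalks of $K$ as compactly supported cohomology of the fibers, the semi-smallness inequality to bound the amplitude and get $K \in \p D^{\leqslant 0}(X,\EM)$, and Verdier duality on $\DC_{X,\EM} K \simeq \pi_!\, \DC_{\Xti,\EM}(\LC[d])$ for the opposite half. The $\dim X = d$ step and part (ii) are likewise the same as in the paper.

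However, the duality step over $\OM$ has a concrete perversity bookkeeping error. When $\LC \in \Loc(\Xti,\OM)$ has nonzero torsion, $\DC_{\Xti,\OM}(\LC[d])$ is not a shifted object of $\Loc$: it lies in $\Loc^+(\Xti,\OM)[d]$, i.e.\ in cohomological degrees $\{-d,\,-d+1\}$ with the top degree torsion. Re-running the stalk estimate on $\pi_!$ of such an input lands you in $\pp D^{\leqslant 0}(X,\OM)$ (the extreme degree $-\dim S + 1$ is reached, but with torsion cohomology), \emph{not} in $\p D^{\leqslant 0}(X,\OM)$ as you assert. Correspondingly, duality then gives $K \in \p D^{\geqslant 0}(X,\OM)$, not $K \in \pp D^{\geqslant 0}(X,\OM)$; your stated intermediate conclusion is actually false in general. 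For instance, take $\pi = \mathrm{id}_X$ with $X$ smooth of dimension $d$ and $\LC = (\OM/\varpi)_X$: then $K = \LC[d]$ is $p$-perverse but the stalk of $\HC^{-d}\, i_\eta^! K$ at the generic point is torsion, so $K \notin \pp D^{\geqslant 0}$.

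The paper gets around this precisely by refusing to treat the two perversities symmetrically at this step: it splits $\LC$ via the distinguished triangle $(\LC_\tors, \LC, \LC_\free)$. For the torsion-free piece, $\DC_{\Xti,\OM}(\LC_\free[d]) = \LC_\free^\vee[d]$ is again a shifted local system and one does get $\DC K \in \p D^{\leqslant 0}$; for the torsion piece, $\DC_{\Xti,\OM}(\LC_\tors[d])$ is concentrated in degree $-d+1$ with torsion stalks, whence only $\DC K \in \pp D^{\leqslant 0}$ and therefore only $K \in \p D^{\geqslant 0}$. The $\Loc^+$/$p_+$ statement is then a single further application of duality. So the ``careful bookkeeping'' you flag at the end is not a formal afterthought that slots into your chain of isomorphisms: it genuinely changes which perversity you land in on the dual side, and getting it wrong produces a false statement. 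You should replace the single line ``$\DC K \in \p D^{\leqslant 0}$, hence $K \in \pp D^{\geqslant 0}$'' by the torsion/torsion-free case split (or at least state $\DC K \in \pp D^{\leqslant 0}$, hence $K \in \p D^{\geqslant 0}$).
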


\begin{proof}
\begin{enumerate}[(i)]
\item

Let us choose stratifications $\XG$, $\tilde \XG$ such that $\pi$ is
stratified relatively to $\XG$, $\tilde \XG$. By refining $\XG$, we
can assume that for any stratum $S$ in $\XG$, we have $2
\dim(\pi^{-1}(s)) \leqslant d - \dim S$ for all closed points $s$ in
$S$. Over a stratum of maximal dimension, $\pi$ is an étale covering, so $X$ is of
dimension $d$.

For the sequel, first assume that $\EM$ is $\KM$ or $\FM$. For each
stratum $S$ in $\XG$, and for any closed point $s$ in $S$, the fiber
$K_s$ is isomorphic to $\rgc(\pi^{-1}(s), \LC) [d]$, which is
concentrated in degrees $[- d, - d + 2\dim \pi^{-1}(s)] \subset [- d,
- \dim S]$.  Hence $K \in \p D^{\leqslant 0} (X, \EM)$.

Now $\DC_{X,\EM} (K) = \pi_!\ \LC^\vee [d]$, where $\LC^\vee$ is the local system dual
to $\LC$, so we can apply the same argument to show that
$\DC_{X,\EM} (K) \in \p D^{\leqslant 0} (X, \EM)$, and thus
$K \in \p D^{\geqslant 0} (X, \EM)$. Consequently, $K$ is perverse.

For $\EM = \OM$, let us first treat the case of the perversity $p$.
If $\LC$ is a torsion-free local system (so that it is $p$ and
$p_+$-perverse), then the same argument applies, since $\DC_{\Xti,\OM}
(\LC[d]) = \LC^\vee[d]$ is still a local system shifted by $d$.

If $\LC$ is a torsion sheaf, the same argument as above shows that
$K$ is in $\p D^{\leqslant 0}(X,\OM)$.
But $\DC_{\Xti,\OM} (\LC[d])$ is in degree $- d + 1$, so the same argument shows
that, for $s\in S$, $K_s$ is concentrated in degrees $[- d + 1, -
  \dim S + 1]$, and $\HC^{-\dim S + 1} K_s$ is torsion,
so $\DC_{X,\OM} K$ is in $\pp D^{\leqslant 0}(X,\OM)$.
This implies that $K$ is in $\p D^{\geqslant 0}(X,\OM)$.
So $K$ is $p$-perverse in this case.

For a general $\LC$ in $\Loc(X,\OM)$, the result follows from the
above, using the distinguished triangle $(\LC_\tors, \LC, \LC_\free)$.
For $\LC$ in $\Loc^+(X,\OM)$, the result follows by duality.

\item
First assume that $\EM$ is $\KM$ or $\FM$.
If $\pi$ is small, with the notation above, $K_s$ and $\DC_{X,\EM} (K)_s$ are concentrated
in degrees $[- d, - \dim S - 1]$ on all strata of positive codimension.
If $S$ is a stratum of dimension $d$, then the morphism $\pi_S : \Sti
\to S$ obtained by base change $S \to X$ is a finite covering,
so $K_{|S} = ((\pi_S)_*\ \LC_{|\Sti}[d])$ is a local system shifted by $d$.
Hence we have
$K = j_{!*} j^* K$, where $j : U \to X$ is the inclusion of
the union of all strata of dimension $d$ in $\XG$.

If $\EM = \OM$, we can treat the perversities $p$ and $p_+$ as in the
first part of the Proposition.
\end{enumerate}
\end{proof}

\begin{remark}
\emph{
We will apply this Proposition, in Section \ref{sec:springer}, to the 
surjective and proper morphisms
$\pi : \tilde\gG \to \gG$ (which is small)
and $\pi_\NC : \NCt \to \NC$ (which is semi-small), to show that 
$\EM\KC$ is an intersection cohomology complex, and that $\EM\KC_\NC$ is perverse
(see the notation there).
}
\end{remark}

We note that, in the case of a small resolution, the intersection
cohomology complex can be obtained by a direct image.

\section{Equivalent singularities}\label{sec:equiv}

\begin{defi}\label{def:equiv}
Given $X$ and $Y$ two varieties, and two points $x\in X$ and $y\in Y$,
we say that the singularity of $X$ at $x$ and the singularity of $Y$
at $y$ are smoothly equivalent, and we write $\Sing(X,x) =
\Sing(Y,y)$, if there exist a variety $Z$, a point $z\in Z$, and two
maps $\varphi : Z \to X$ and $\psi : Z \to Y$, smooth at $z$, with
$\varphi(z) = x$ and $\psi(z) = y$.

If an algebraic group $G$ acts on $X$, then $\Sing(X,x)$
depends only on the orbit $\OC$ of $x$. In that case, we write
$\Sing(X,\OC) := \Sing(X,x)$.
\end{defi}

In fact, there is an open subset $U$ of $Z$ containing $z$ where
$\varphi$ and $\psi$ are smooth, so after replacing $Z$ by $U$,
we can assume that $\varphi$ and $\psi$ are smooth on $Z$.

We have the following result (it follows from the remarks after Lemma
4.2.6.1. in \cite{BBD}).
\begin{prop}\label{prop:equiv}
Suppose that $\Sing(X,x) = \Sing(Y,y)$. Then
${\bf IC}(X,\EM)_x \simeq {\bf IC}(Y,\EM)_y$
as $\EM$-modules.
\end{prop}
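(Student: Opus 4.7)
The plan is to exploit the fact that smooth pullback essentially preserves intersection cohomology complexes, up to a shift by the relative dimension. Let $\varphi : Z \to X$ and $\psi : Z \to Y$ be the two smooth morphisms with $\varphi(z) = x$ and $\psi(z) = y$, and denote by $d_\varphi$ (resp.\ $d_\psi$) the relative dimension of $\varphi$ (resp.\ $\psi$) at $z$. After shrinking $Z$ to a neighborhood of $z$ we may assume that both morphisms are smooth on all of $Z$, and we may further assume $Z$ is irreducible.

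First, I would establish (or invoke from \cite{BBD}) that for a smooth morphism $f : Z \to X$ of pure relative dimension $d$, the functor $f^*[d]$ is $t$-exact for the perverse $t$-structure and commutes with intermediate extension along locally closed inclusions; in particular it sends $\mathbf{IC}(X,\EM)$ to $\mathbf{IC}(Z,\EM)$, as can be checked by characterizing the IC complex via the support and cosupport conditions defining $\p j_{!*}$ (these conditions on cohomology stalks and costalks are preserved by $f^*[d]$ because $f^![-d] \simeq f^*[d]$ up to a Tate twist for a smooth morphism). Over $\OM$, the same reasoning applies separately to the perversities $p$ and $p_+$, since smooth pullback preserves torsion and torsion-free local systems.

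Applying this to $\varphi$ and $\psi$, we obtain isomorphisms
\[
\varphi^*\,\mathbf{IC}(X,\EM)[d_\varphi] \;\simeq\; \mathbf{IC}(Z,\EM) \;\simeq\; \psi^*\,\mathbf{IC}(Y,\EM)[d_\psi]
\]
in $D^b_c(Z,\EM)$. Taking stalks at $z$ (which commutes with $f^*$ for any morphism $f$), we deduce
\[
\mathbf{IC}(X,\EM)_x[d_\varphi] \;\simeq\; \mathbf{IC}(Z,\EM)_z \;\simeq\; \mathbf{IC}(Y,\EM)_y[d_\psi],
\]
which yields the claimed isomorphism of stalks as $\EM$-modules (up to the shift that is implicit in the convention being used for stalks of IC).

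The only step requiring real care is the $t$-exactness and commutation with $\p j_{!*}$ of the smooth pullback functor $f^*[d]$, which is the main technical input; over $\OM$ one must be slightly careful to handle both perversities $p$ and $p_+$, but this reduces to the same support/cosupport verifications on stalks and costalks using the isomorphism $f^![-d] \simeq f^*[d]$ valid for smooth morphisms.
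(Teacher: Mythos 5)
Your proof is correct and follows essentially the same route the paper intends: the paper simply cites the remarks after Lemma~4.2.6.1 of~\cite{BBD}, whose content is exactly the two facts you invoke, namely that for a smooth morphism $f$ of pure relative dimension $d$ the functor $f^*[d]$ is perverse $t$-exact (here you correctly handle both perversities $p$ and $p_+$ over $\OM$) and commutes with intermediate extension. Applying this to $\varphi$ and $\psi$ and taking stalks at $z$ is precisely the argument the citation compresses, including the bookkeeping of the shift by $d_\varphi - d_\psi = \dim Y - \dim X$, which you correctly note is absorbed into the stalk normalization.
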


\begin{remark}\label{rem:equiv}
\emph{
Suppose we have a stratification $\XG$ (resp. $\YG$) of $X$ (resp. Y) adapted to
${\bf IC} (X,\EM)$ (resp. ${\bf IC} (Y,\EM)$), and let $\OC(x)$ 
(resp. $\OC(y)$) be the stratum of $x$ (resp. $y$).
Suppose we know ${\bf IC} (X,\EM)_x$ as a representation
of $\EM\pi_1(\OC(x),x)$. The proposition gives us
${\bf IC} (Y,\EM)_y$ as an $\EM$-module, but not as an
$\EM\pi_1(\OC(y),y)$-module. To determine the latter structure,
one needs more information.}
\end{remark}

\section{Cones}\label{sec:cones}

Let $X \subset \PM^{N - 1}$ be a smooth projective variety of dimension $d - 1$.
We denote by $\pi : \AM^N \setminus \{0\} \to \PM^{N - 1}$
the canonical projection. Let $U = \pi^{-1}(X) \subset \AM^N \setminus \{0\}$
and $C = \ov U = U \cup \{0\} \subset \AM^N$. They have dimension $d$.

We have a smooth open immersion $j : U \injto C$ and a closed immersion
$i : \{0\} \injto C$. If $d > 1$, then $j$ is not affine.

\begin{prop}\label{prop:cone}
With the preceding notations, we have
\[
i^* j_* \EM \simeq \rg(U, \EM)
\]
\end{prop}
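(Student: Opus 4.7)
The plan is to exploit the contracting $\GM_m$-action on the affine cone $C$ coming from scalar multiplication on $\AM^N$. Since $C$ is a cone, this action preserves $C$, it fixes the origin, and every orbit limits to $0$ as the scaling parameter tends to $0$. The open part $U = C \setminus \{0\}$ is stable, so the constant sheaf $\EM_U$ carries a canonical $\GM_m$-equivariant structure, and therefore so does $j_*\EM_U$.

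The argument then proceeds in two steps. First, factoring the structural morphism $a_C : C \to \mathrm{pt}$ as $a_U = a_C \circ j$, we get by functoriality of the pushforward
\[
\rg(C,\ j_*\EM_U)\ \simeq\ \rg(U,\EM).
\]
Secondly, I will invoke the \emph{contraction principle}: for any $\GM_m$-equivariant constructible complex $F$ on an affine variety $Y$ endowed with a $\GM_m$-action that contracts $Y$ onto a fixed point $y_0$, the natural evaluation morphism $\rg(Y,F) \to F_{y_0}$ is an isomorphism. Applied to $Y=C$, $y_0=0$, and $F = j_*\EM_U$, this yields
\[
\rg(C,\ j_*\EM_U)\ \simeq\ i^* j_* \EM_U,
\]
and combining with the first step gives the desired isomorphism.

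The heart of the proof is therefore the contraction principle. I would establish it as follows: the contracting action extends to a morphism $\alpha : \AM^1 \times C \to C$ (sending $(0,c) \mapsto 0$); letting $p : \AM^1 \times C \to \AM^1$ denote the projection and $s_t : C \to \AM^1 \times C$ the section at $t$, one has $\alpha \circ s_1 = \mathrm{id}_C$ and $\alpha \circ s_0$ is the constant map to $0$. The $\GM_m$-equivariance of $F$ provides an isomorphism $\alpha^* F \simeq p_C^* F$ over $\GM_m \times C$, so $Rp_*\alpha^* F$ restricts on $\GM_m$ to the constant sheaf with value $\rg(C,F)$; its stalk at $0 \in \AM^1$ is $\rg(C,\alpha_0^*F) = \rg(C,\EM_C) \otimes F_0 \simeq F_0$, using that $\rg(C,\EM_C) = \EM$ for the contractible cone $C$. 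Showing that $Rp_*\alpha^* F$ is a constant sheaf on $\AM^1$ (and reading off its value at $1$ versus at $0$) yields the claimed identification.

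The main obstacle is precisely this last point: proving that the complex $Rp_*\alpha^* F$ is \emph{globally} constant on $\AM^1$, so that the values at $1$ and at $0$ agree. Over $\GM_m$ this is forced by equivariance, but extending across the origin requires the vanishing $H^{>0}(\AM^1,\EM) = 0$ (true in \'etale cohomology with $\ell$ prime to $p$) together with a base-change argument controlling how $\alpha^* F$ degenerates at $t = 0$. Modulo this standard but nontrivial input, the two-step argument above immediately gives $i^* j_* \EM \simeq \rg(U,\EM)$.
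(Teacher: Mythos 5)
Your two-step structure is sound and is essentially the argument behind the result the paper relies on: factoring $a_U = a_C\circ j$ to get $\rg(U,\EM)\simeq \rg(C,j_*\EM)$ is correct, and reducing to a ``contraction principle'' for $\GM_m$-equivariant complexes on a variety contracted to a fixed point is exactly the right move. Note however that the paper does \emph{not} prove Proposition~\ref{prop:cone} in the text; it immediately passes to the more general Proposition~\ref{prop:gencone} and cites \cite[Lemma~4.5~(a)]{KL2}, remarking only that in the complex case this ``follows easily from topological considerations''. Your proposal amounts to reconstructing the proof of that cited lemma, and you correctly set up the comparison between $\alpha^*F$ and $p_C^*F$ (which does hold for $F=j_*\EM_U$ by smooth base change along the action and projection maps, since $j$ is $\GM_m$-equivariant).

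The gap you flag at the end is genuine, and your proposed fix is not quite adequate as stated. Knowing that $Rp_*\alpha^*F$ is constant on $\GM_m$ and that $H^{>0}(\AM^1,\EM)=0$ does not by itself let you equate the stalks at $t=1$ and $t=0$: the complex $Rp_*\alpha^*F$ could a priori be locally constant on $\GM_m$ but jump at the origin, and since $p:\AM^1\times C\to\AM^1$ is neither proper nor smooth in the relevant direction, neither proper nor smooth base change applies directly to compare $(Rp_*\alpha^*F)_0$ with $\rg(C,\alpha_0^*F)$. The missing input is precisely what \cite[Lemma~4.5]{KL2} supplies; in the \'etale setting the standard proofs either pass to a compactification of $C$ so that proper base change can be used to establish the dual assertion $i^!F\simeq\rgc(C,F)$ and then dualize, or invoke a purity/weight argument. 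You should also be aware that the auxiliary fact $\rg(C,\EM)\simeq\EM$ you use for the cone $C$ is itself a special case of the contraction principle (applied to $F=\EM_C$), so it needs to be established first (or one must arrange the induction so as not to be circular). In the classical topology all of this collapses to the contractibility of $\CM$ and of the cone, which is what the paper alludes to; in the \'etale topology it is a real theorem and the paper quite reasonably cites it rather than reproving it.
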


Truncating appropriately, one deduces the fiber at $0$ of the complexes
$\p j_?\ \EM[d]$, where $? \in \{!,!*,*\}$, and similarly for $p_+$
if $\EM = \OM$.

More generally, we have the following result, which is contained in
\cite[Lemma 4.5 (a)]{KL2}. As indicated there,
in the complex case, this follows easily from topological considerations.

\begin{prop}\label{prop:gencone}
Let $C$ be an irreducible closed subvariety of $\AM^N$ stable under the
$\GM_m$-action defined by
$\l(z_1,\ldots,z_N) = (\l^{a_1}z_1,\ldots,\l^{a_N}z_N)$, where
$a_1 > 0$, \ldots, $a_N > 0$.
Let $j : U = C \setminus \{0\} \to C$ be the open immersion, and
$i : \{0\} \to C$ the closed immersion. Then we have
\[
i^* j_* \EM \simeq \rg(U, \EM)
\]
\end{prop}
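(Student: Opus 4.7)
The plan is to deduce this from a general ``contractibility'' lemma for $\GM_m$-equivariant constructible complexes on $C$. The key input is that, since all weights $a_i$ are strictly positive, the $\GM_m$-action extends to a morphism
\[
m:\AM^1\times C\longto C,\qquad m(\lambda,z)=(\lambda^{a_1}z_1,\ldots,\lambda^{a_N}z_N),
\]
with $m(1,-)=\id_C$ and $m(0,-)$ equal to the constant map to $0$. This plays the role of an algebraic contraction of $C$ onto the vertex.

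Granting this, first I would reduce the proposition to the following lemma: for any $\GM_m$-equivariant constructible complex $\FC$ on $C$, the restriction map $R\G(C,\FC)\to i^*\FC=\FC_0$ is an isomorphism. This implies the proposition when applied to $\FC=Rj_*\EM_U$, which is $\GM_m$-equivariant because $j$ and the constant sheaf $\EM_U$ are, and because $R\G(C,Rj_*\EM_U)=R\G(U,\EM)$ by adjunction.

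To prove the lemma, I would form $m^*\FC$ on $\AM^1\times C$ and push it forward along the first projection $\pi_1:\AM^1\times C\to\AM^1$, producing a complex $\KC:=R\pi_{1*}m^*\FC$ on $\AM^1$. The plan is to show that $\KC$ is locally constant on $\AM^1$. Once this is established, comparing its stalks at $\lambda=1$ and $\lambda=0$ yields
\[
R\G(C,\FC)\;\simeq\;\KC_1\;\simeq\;\KC_0\;\simeq\;R\G(C,\underline{\FC_0}_C),
\]
the last term denoting global sections of the constant sheaf with value $\FC_0$ on $C$. Applying the same chain of isomorphisms to the trivially $\GM_m$-equivariant complex $\EM_C$ (which is a legitimate base case since it requires only the statement $R\G(C,\EM)=\EM$, which one then reads off tautologically from the chain itself) gives $R\G(C,\EM)=\EM$, and hence $R\G(C,\underline{\FC_0}_C)=\FC_0$, finishing the lemma.

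The main obstacle is the local constancy of $\KC$ at $\lambda=0$: on the open piece $\GM_m\subset\AM^1$ the result is immediate because $m$ restricts to a $\GM_m$-torsor-like map and $\pi_1$ is smooth there, so $\KC|_{\GM_m}$ is locally constant by smooth base change and the homotopy invariance coming from $\GM_m$-equivariance; but $\pi_1$ is not proper, so one cannot directly invoke proper base change to pass to the fiber over $0$. To handle this, I would compactify: embed $C$ into a weighted projective space with weights $(a_1,\dots,a_N)$ (plus one extra weight if needed) to obtain a proper $\GM_m$-equivariant compactification $\bar C\supset C$, extend $m$ to $\bar m:\AM^1\times\bar C\to\bar C$, apply proper base change to $\bar\pi_1$ to get local constancy there, and then control the contribution of the boundary $\bar C\setminus C$ using its own $\GM_m$-equivariant structure and a Noetherian induction on the stratification. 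This compactification step, together with the careful analysis at $\lambda=0$ where $m$ collapses $C$ to the vertex, is where the bulk of the technical work lies.
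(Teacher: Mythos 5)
The paper gives no proof here: it cites \cite[Lemma~4.5(a)]{KL2} and notes the elementary topological argument in the complex case. You are attempting to prove the underlying contraction principle from scratch via $m:\AM^1\times C\to C$ and $\KC:=R\pi_{1*}m^*\FC$; this is the right strategy, and your reduction of the proposition to the lemma via $\FC=j_*\EM_U$ (so that $\rg(C,j_*\EM_U)=\rg(U,\EM)$ and $(j_*\EM_U)_0=i^*j_*\EM$) is correct. But the sketch has two load-bearing gaps.

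The first is a circularity in the base case. Your chain, even granted every base-change step, yields $\rg(C,\FC)\simeq\rg(C,A)$, where $A$ is the constant complex on $C$ with value $\FC_0$ --- not $\rg(C,\FC)\simeq\FC_0$. The missing link is $\rg(C,\EM_C)\simeq\EM$. You propose to obtain this by running the chain on $\FC=\EM_C$, but for $\FC=\EM_C$ the associated constant complex $A$ is again $\EM_C$, so that instance of the chain is the tautology $\rg(C,\EM_C)\simeq\rg(C,\EM_C)$ and proves nothing. The \'etale acyclicity of the cone is a genuine additional input, essentially of the same depth as the lemma itself. The second gap is that $\pi_1$ is not proper, so neither local constancy of $\KC$ over all of $\AM^1$ nor the stalk identifications $\KC_1\simeq\rg(C,\FC)$ and $\KC_0\simeq\rg(C,A)$ are automatic. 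You acknowledge this, but the weighted-projective compactification does not by itself resolve it: with all weights positive, the boundary $\bar C\setminus C$ sits at infinity and is \emph{not} attracted to the origin under the extended action, so its contribution to $R\bar\pi_{1*}$ does not vanish for any reason you have given, and the Noetherian induction on boundary strata is named rather than carried out. A more tractable variant of this strategy works with $R\pi_{1!}$ (for which proper base change is unconditional), proves the compactly-supported analogue, and transfers back to the stated result by Verdier duality; as written, the $R\pi_{1*}$ route leaves both key reductions open.
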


So, if $U$ is smooth, the calculation of the intersection cohomology
complex stalks for $C$ is reduced to the calculation of the cohomology of $U$.

\section{$\EM$-smoothness}\label{sec:rat smooth}

\subsection{Definition and remarks}

The following notion was introduced by Deligne for $\EM = \ov\QM_\ell$ in \cite{DELIGNE}.

\begin{defi}
Let $X$ be a $k$-variety, purely of dimension $n$, with structural morphism
$a : X \to \Spec k$. Then $X$ is $\EM$-smooth if and only if
the adjoint $\EM(n)[2n] \to a^! \EM$ of the trace morphism is an isomorphism.
\end{defi}

This condition ensures that $X$ satisfies Poincaré duality with $\EM$ coefficients.
It is equivalent to the following condition: for all closed points $x$ in $X$,
we have $\EM(n)[2n] \isom i_x^!\EM$, that is, $H^i_{<x>}(X,\EM)$ is zero if
$i \neq n$, and isomorphic to $\EM(-n)$ if $i = n$.

Then the shifted constant sheaf $\EM_X[n]$ is self-dual (up to twist),
and $\EM_X[n]$ is an intersection cohomology complex. Indeed, it is
a complex extending the shifted constant sheaf on an everywhere dense open
subvariety, trivially satisfying the support condition of the intersection cohomology
complex. Since it is self-dual, it must be the intersection cohomology complex.
If moreover $\EM$ is a field, then $\EM_X[n]$ is a simple perverse sheaf.

Note that, in general, the fact that $\EM_X[n]$ is perverse does not imply that
$X$ is $\EM$-smooth. For example, $\EM_X[n]$ is perverse if $X$ is a complete
intersection \cite{KW}. But a complete intersection can have several irreducible components
intersecting at the same point, or more generally a branched point, and then
$X$ cannot be $\EM$-smooth. Moreover, we will shortly see, in Paragraph
\ref{subsec:cone curve}, that the cone over a smooth
projective curve of genus $g$ cannot be $\EM$-smooth if $g > 0$,
but on such a surface the shifted constant sheaf if always perverse.

\subsection{A stability property}

\begin{prop}\label{prop:rat smooth}
Let $\pi : X \to Y$ be a finite surjective and separable morphism of
$k$-varieties, with $X$ and $Y$ irreducible of dimension $n$ and
normal.  If $X$ is $\KM$-smooth then $Y$ is also $\KM$-smooth. If $X$
is $\FM$-smooth and $\ell$ does not divide the cardinality $d$ of the
generic fiber of $\pi$, then $Y$ is also $\FM$-smooth.
\end{prop}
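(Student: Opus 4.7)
The plan is to realize $\EM_Y$ as a canonical direct summand of $\pi_*\EM_X$ when the degree $d$ is invertible in $\EM$, and then to transfer $\EM$-smoothness from $X$ to $Y$ via Verdier duality. The starting point is the adjunction unit $u:\EM_Y\to \pi_*\pi^*\EM_Y=\pi_*\EM_X$. Since $\pi$ is finite, surjective and separable, one has a natural trace morphism $\tr_\pi:\pi_*\EM_X\to\EM_Y$ whose composition with $u$ equals $d\cdot\id$, as one verifies on the open dense subvariety where $\pi$ is \'etale. When $d\in\EM^\times$ (automatic for $\EM=\KM$, imposed by hypothesis when $\EM=\FM$), the morphism $u$ admits the retraction $d^{-1}\tr_\pi$, so $\EM_Y$ is a canonical direct summand of $\pi_*\EM_X$.

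The finiteness of $\pi$ gives $\pi_*=\pi_!$, so $\pi_*$ commutes with duality: $\pi_*\omega_X\simeq\DC_Y(\pi_*\EM_X)$, and dualizing the splitting of $u$ realizes $\omega_Y=\DC_Y(\EM_Y)$ as a direct summand of $\pi_*\omega_X$. On the other hand, $\EM$-smoothness of $X$ provides an isomorphism $\tau_X:\EM_X(n)[2n]\isom \omega_X$, and hence $\pi_*\tau_X:\pi_*\EM_X(n)[2n]\isom\pi_*\omega_X$. Transporting the splitting of $u$ along $\pi_*\tau_X$ gives a second realization of $\EM_Y(n)[2n]$ as a direct summand of $\pi_*\omega_X$. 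If this summand coincides with the summand $\omega_Y$ from the previous step, the induced isomorphism $\EM_Y(n)[2n]\isom\omega_Y$ is, up to the scalar $d$, the fundamental class morphism $\tau_Y$ for $Y$, so $\tau_Y$ is an isomorphism and $Y$ is $\EM$-smooth.

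The main technical obstacle is to prove that these two direct summand decompositions of $\pi_*\omega_X$ really do coincide. On the open dense subvariety $V\subset Y$ where $\pi$ is \'etale and $Y$ is smooth this is clear, since both splittings are given by summing (respectively averaging) over the $d$ sheets of $\pi$ and agree by an elementary calculation. To extend the compatibility to all of $Y$, I would argue locally at closed geometric points: by proper base change for the finite morphism $\pi$,
\[
i_{\bar y}^!\,\pi_*\EM_X\simeq (\pi_y)_*\,\tilde{\imath}_y^!\,\EM_X,
\]
where $\tilde{\imath}_y:\pi^{-1}(\bar y)\hookrightarrow X$ and $\pi_y:\pi^{-1}(\bar y)\to\{\bar y\}$, and the $\EM$-smoothness of $X$ forces every costalk $i_{\bar x}^!\EM_X$ to be $\EM(-n)[-2n]$. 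Hence $i_{\bar y}^!\EM_Y$, being a direct summand, is concentrated in degree $2n$, and agreement of its rank with that of $\EM(-n)[-2n]$ follows from the normality of $Y$ together with the compatibility already verified on $V$, yielding the desired isomorphism $\tau_Y:\EM_Y(n)[2n]\isom\omega_Y$.
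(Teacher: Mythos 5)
Your route --- split $\EM_Y$ off $\pi_*\EM_X$ by a trace and then transport through Verdier duality --- is genuinely different from the paper's, which instead realizes $\EM_Y[n]$ as a direct summand of the intersection cohomology complex $\pi_*\EM_X[n]=\JC_{!*}(Y_0,\pi_{0*}\EM_{X_0})$ and reads off the conclusion from self-duality of such complexes. But your proof has a gap exactly at the step you flag as the main obstacle. Your costalk computation at a closed point $\bar y$ gives that $i_{\bar y}^!\EM_Y$ is a direct summand of $\bigoplus_{\bar x\in\pi^{-1}(\bar y)}\EM(-n)[-2n]$, hence free and concentrated in degree $2n$; but this only bounds its rank between $0$ and the cardinality of the fibre, and ``normality of $Y$ together with the compatibility already verified on $V$'' is not an argument that the rank is exactly $1$ --- the costalk rank of a constructible sheaf is not determined by its generic behaviour. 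There is also a secondary issue: the global retraction $\tr_\pi:\pi_*\EM_X\to\EM_Y$ is asserted, not constructed; a finite surjective morphism onto a variety that is merely normal (not regular) need not be flat, so the coherent trace is unavailable, and the \'etale trace is only immediate over $Y_0$.

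What fills both gaps is the observation implicit in the paper's proof: normality of $X$ gives $\pi_*\EM_X\simeq \0{j_{Y_0}}_*(\pi_{0*}\EM_{X_0})$ and normality of $Y$ gives $\EM_Y\simeq\0{j_{Y_0}}_*\EM_{Y_0}$, where $j_{Y_0}:Y_0\hookrightarrow Y$ is the \'etale locus; since $\0{j_{Y_0}}_*$ is fully faithful on local systems (equivalently, $\pi_*\EM_X[n]$ is the intermediate extension $\JC_{!*}(Y_0,\pi_{0*}\EM_{X_0})$, because $\pi$ is small and $\EM_X[n]$ is an intersection cohomology complex when $X$ is $\EM$-smooth), idempotents of $\pi_*\EM_X$ extend uniquely from $Y_0$. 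Your two summand decompositions therefore coincide everywhere because they coincide over $Y_0$, and your plan then goes through. Notice, though, that once the intermediate-extension picture is in hand, the paper sidesteps the costalk calculation altogether: $\JC_{!*}(Y_0,\EM)$ is a direct summand of the sheaf $\pi_*\EM_X[n]$ concentrated in degree $-n$, hence itself a sheaf in degree $-n$, hence equal to $\EM_Y[n]$ by normality; self-duality of the intersection cohomology complex then gives $\EM$-smoothness of $Y$ directly.
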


\begin{proof}
Since $\pi$ is separable, we can choose an open dense subset $Y_0$ of
$Y$ over which $\pi$ is étale. Let $\pi_0 : X_0 \to Y_0$ be the
morphism deduced by base change.  Since $\pi_0$ is finite étale, we
can find a Galois covering $\Xti_0$ of $X_0$, such that the composite
$\Xti \to Y_0$ is also a Galois covering. Let $H = \Gal(\Xti_0/X_0)$
and $G = \Gal(\Xti_0/Y_0)$. We have $|G:H| = d$. Hence, if $\EM =
\KM$, or $\EM = \FM$ and $\ell\nmid d$, then the local system
${\pi_0}_* \EM_{X_0}$, corresponding to the representation $\Ind_{\EM
H}^{\EM G} \EM$ of $G$ (which is a finite quotient of the fundamental
group of $Y_0$), has the constant sheaf $\EM$ as a direct summand.
Hence $\JC_{!*}(Y_0, \EM)$ is a direct summand of $\JC_{!*}(Y_0,
{\pi_0}_* \EM_{X_0}) = \pi_*\EM_X[n]$ since $\pi$ is finite (in
particular, it is small). But $\pi_*\EM_X[n]$ is concentrated in
degree $-n$, so $\JC_{!*}(Y_0, \EM)$ is so as well, and it is
isomorphic to $\0\JC_*(Y_0,\EM)$ which is the shifted constant sheaf
$\EM_Y[n]$ since $Y$ is normal.
\end{proof}

In particular, the quotient of a smooth irreducible variety by a
finite group $H$ is $\KM$-smooth, and also $\FM$-smooth if $\ell$ does
not divide $H$. This will be illustrated with the simple singularities
in Section \ref{sec:simple}.

\subsection{Cone over a smooth projective curve}\label{subsec:cone curve}

With the notations of Section \ref{sec:cones}, suppose
$X \subset \PM^N$ is an irreducible smooth projective curve of genus $g$.
Let us denote by $H^j$, for $j \in \ZM$, the cohomology group $H^j(X,\ZM_\ell)$
(it is $0$ for $j < 0$ or $j > 2$).
We have $H^0 \simeq \ZM_\ell$, $H^1 \simeq \ZM_\ell^{2g}$,
and $H^2 \simeq \ZM_\ell$ (non-canonically).

We would like to compute the fiber at $0$ of the intersection cohomology complex
$K = j_{!*} (\ZM_\ell [2])$. Since we have a cone singularity, we just have to compute
the cohomology of $U$.

Now, $U$ is the line bundle corresponding to the invertible sheaf $\OC(-1)$
on $X$, minus the null section. We denote by $c$ the first Chern class of $\OC(-1)$ .
We have the Gysin sequence
\[
H^{i - 2} \elem{c} H^i \longto H^i(U, \ZM_\ell) \longto H^{i - 1} \elem{c} H^{i + 1}
\]
and hence a short exact sequence
\[
0 \longto \Coker(c : H^{i - 2} \to H^i) \longto H^i(U, \ZM_\ell)
\longto \Ker(c : H^{i - 1} \to H^{i + 1}) \longto 0
\]
We deduce that the cohomology of $U$ is
\[
\rg(U, \ZM_\ell) \simeq \ZM_\ell
                        \oplus \ZM_\ell^{2g} [-1]
                        \oplus (\ZM_\ell^{2g} \oplus \ZM/c) [-2]
                        \oplus \ZM_\ell      [-3]
\]
(a bounded complex of $\ZM$-modules is quasi-isomorphic to
the direct sum of its shifted cohomology sheaves, because $\ZM$ is a
hereditary ring).

Hence we have
\begin{gather*}
i^*\; \p j_{!}\; \ZM_\ell [2]
= i^*\; \pp j_{!}\; \ZM_\ell [2]
\simeq \ZM_\ell [2] \\
i^*\; \p j_{!*}\; \ZM_\ell [2]
\simeq \ZM_\ell [2] \oplus \ZM_\ell^{2g} [1] \\
i^*\; \pp j_{!*}\; \ZM_\ell [2]
\simeq \ZM_\ell [2] \oplus \ZM_\ell^{2g} [1] \oplus \ZM/c\\
i^*\; \p j_{*}\; \ZM_\ell [2]
= i^*\; \pp j_{*}\; \ZM_\ell [2]
\simeq \ZM_\ell [2] \oplus \ZM_\ell^{2g} [1] \oplus (\ZM/c \oplus \ZM_\ell^{2g})
\end{gather*}

Thus $C$ cannot rationally smooth (resp. $\ZM/\ell$-smooth) if
$g > 0$. If $g = 0$, then $C$ is rationally smooth, and it is
$\ZM/\ell$ smooth if $\ell$ does not divide $c$.
If one takes $N = 1$ and $X = \PM^1$ embedded in
$\PM^1$ by the identity, then $C = \AM^2$ is actually smooth.

But the constant perverse sheaf $\EM_C [2]$ is perverse in any case, since it
is equal to $\p j_{!}\; \EM [2]$ (in the case $\EM = \ZM_\ell$, it is both
$p$ and $p_+$-perverse).

\section{Simple singularities}\label{sec:simple}

In this section, we will calculate the intersection cohomology
complexes over $\KM$, $\OM$ and $\FM$ for simple singularities, and
the corresponding decomposition numbers. We will also consider the
case of simple singularities of inhomogeneous type, that is, simple
singularities with an associated group of symmetries. For the
convenience of the reader, we will recall the main points in the
theory of simple singularities, following \cite{SLO2}, to which we
refer for more details. We will use the results of this section in
Chapter \ref{chap:dec}, to calculate the decomposition numbers
involving the regular class and the subregular class in a simple Lie
algebra. Indeed, by the work of Brieskorn and Slodowy, the singularity
of the nilpotent variety along the subregular class is a simple
singularity of the corresponding type.

\subsection{Rational double points}

We assume that $k$ is algebraically closed. Let $(X,x)$ be the
spectrum of a two-dimensional normal local $k$-algebra, where $x$ denotes
the closed point of $X$. Then $(X,x)$ is \emph{rational} if there is a
resolution $\pi : \wt X \to X$ of the singularities of $X$ such that
the higher direct images of the structural sheaf of $\wt X$ vanish,
that is, $R^q \pi_*(\OC_{\wt X}) = 0$ for $q > 0$. In fact, this
property is independent of the choice of a resolution. The rationality
property is stronger than the Cohen-Macaulay property.

If $\pi : \wt X \to X$ is a resolution, then the reduced exceptional
divisor $E = \pi^{-1}(x)_\text{red}$ is a finite union of irreducible
curves. Since $X$ is a surface, there is a minimal resolution, unique
up to isomorphism, through which all other resolutions must factor.
For the minimal resolution of a simple singularity, these
curves will have a very special configuration.

Let $\G$ be an irreducible homogeneous Dynkin diagram, with set of
vertices $\D$.  We recall that a Dynkin diagram is \emph{homogeneous},
or \emph{simply-laced}, when the corresponding root system has only
roots of the same length. Thus $\G$ is of type $A_n$ ($n \geqslant
1$), $D_n$ ($n \geqslant 4$), $E_6$, $E_7$ or $E_8$.  The Cartan
matrix $C = (n_{\a,\b})_{\a,\b\in\D}$ of $\G$ satisfies $n_{\a,\a} =
2$ for all $\a$ in $\D$, and $n_{\a,\b} \in \{0,-1\}$ for all $\a \neq
\b$ in $\D$.

A resolution $\pi : \wt X \to X$ of the surface $X$, as above, has an
\emph{exceptional configuration of type $\G$} if all the irreducible
components of the exceptional divisor $E$ are projective lines, and if
there is a bijection $\a \mapsto E_\a$ from $\D$ to the set $\Irr(E)$
of these components such that the intersection numbers
$E_\a \cdot E_\b$ are given by the opposite of the Cartan matrix $C$,
that is, $E_\a \cdot E_\b = - n_{\a,\b}$ for $\a$ and $\b$ in $\D$. 
Thus we have a union of projective lines whose normal bundles in $\wt
X$ are isomorphic to the cotangent bundle $T^*\PM^1$, and two of them
intersect transversely in at most one point.

The minimal resolution is characterized by the fact that it has no
exceptional curves with self-intersection $-1$. Therefore, if the
resolution $\pi$ of the surface $X$ has an exceptional configuration
of type $\G$, then it is minimal.

\begin{theo}
The following properties of a normal surface $(X,x)$ are equivalent.
\begin{enumerate}[(i)]
\item $(X,x)$ is rational of embedding dimension $3$ at $x$.
\item $(X,x)$ is rational of multiplicity $2$ at $x$.
\item $(X,x)$ is of multiplicity $2$ at $x$ and it can be resolved by
  successive blowing up of points.
\item The minimal resolution of $(X,x)$ has the exceptional
  configuration of an irreducible homogeneous Dynkin diagram.
\end{enumerate}
\end{theo}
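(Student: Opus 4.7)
The plan is to establish the cyclic chain of implications (i) $\Rightarrow$ (ii) $\Rightarrow$ (iii) $\Rightarrow$ (iv) $\Rightarrow$ (i), using Artin's theory of the fundamental cycle and the adjunction formula on the minimal resolution. I would fix a resolution $\pi : \wt X \to X$ and, following Artin, introduce the fundamental cycle $Z = \sum_\a r_\a E_\a$ on the exceptional divisor, defined as the smallest nonzero effective cycle supported on $E$ with $Z \cdot E_\a \le 0$ for every component $E_\a$. The key formulas I would use are Artin's: for a rational surface singularity,
\[
\mathrm{mult}_x(X) = -Z\cdot Z, \qquad \mathrm{emb.dim}_x(X) = -Z\cdot Z + 1,
\]
together with the adjunction formula $E_\a \cdot E_\a + E_\a\cdot K_{\wt X} = 2p_a(E_\a) - 2$ for each exceptional component.

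For (i) $\Leftrightarrow$ (ii) I would simply apply Artin's formula above: once rationality is assumed, multiplicity $2$ and embedding dimension $3$ are equivalent. For (ii) $\Rightarrow$ (iii), the plan is to show that rationality and multiplicity $2$ are preserved under blowing up the reduced singular point: the strict transform of $X$ has again only rational double points (in finite number) on the new exceptional divisor, and an invariant such as $-Z\cdot Z$ or the length of $R^1\pi_*\OC_{\wt X}$-approximations strictly decreases, so finitely many blowups suffice; the input is that a rational double point has tangent cone a quadric cone, blown up once to produce again rational double points of smaller invariant. For (iii) $\Rightarrow$ (iv), having reached the minimal resolution after successive point blowups, I would use rationality to show $K_{\wt X}$ is numerically trivial on each $E_\a$ (this is the rational Gorenstein property: the canonical cycle vanishes because $R^1\pi_*\OC = 0$ combined with Serre duality for the resolution forces $\pi^* K_X = K_{\wt X}$), and then the adjunction formula gives $E_\a\cdot E_\a = -2$ and $p_a(E_\a) = 0$, so each $E_\a \simeq \PM^1$. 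Minimality (no $(-1)$-curves) together with self-intersections $-2$ and negative-definiteness of the intersection form on the exceptional divisor (a general fact for contractible configurations, by Grauert / Mumford) then restricts the dual graph; classifying negative definite symmetric integer matrices with $-2$ on the diagonal and $0$ or $-1$ off the diagonal gives exactly the simply-laced Dynkin diagrams $A_n$, $D_n$, $E_6$, $E_7$, $E_8$. Finally for (iv) $\Rightarrow$ (i), assuming the exceptional configuration is an ADE diagram, I would compute the fundamental cycle $Z$ explicitly (its coefficients are the marks of the highest root of the corresponding root system), verify that $Z\cdot Z = -2$ and $Z\cdot E_\a \in \{-1,0\}$, and deduce $p_a(Z) = 0$, which by Artin's criterion (a normal surface singularity is rational iff $p_a(D) \le 0$ for every effective cycle $D$ supported on the exceptional divisor, equivalently $p_a(Z) = 0$) gives rationality; Artin's formulas then yield multiplicity $2$ and embedding dimension $3$.

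The main obstacle in this plan is really the input from Artin's theory that I would quote wholesale: the multiplicity/embedding dimension formulas in terms of the fundamental cycle, and the criterion $p_a(Z) = 0$ for rationality. Everything else is either adjunction, an inductive argument on a numerical invariant (for the resolvability by successive blowups), or the classical classification of negative definite Cartan-like matrices. The step requiring the most care is (iii) $\Rightarrow$ (iv): one must argue that at each blowup stage the new singular points are again rational double, so that the minimal resolution is eventually reached and every exceptional curve is a $(-2)$-curve; this is where the adjunction formula has to be combined with the inductive preservation of the defining properties. Once that is in place, the rest of the proof is structural.
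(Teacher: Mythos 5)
The paper gives no proof of this theorem: it is recalled verbatim from Slodowy's book \cite{SLO2} as part of a background summary on rational double points, with the reader referred there for details. So there is nothing in the paper's text to compare your proposal against; it must be judged on its own. Your plan follows the standard Artin-style approach via the fundamental cycle, and most of the links in the cycle (i) $\Rightarrow$ (ii) $\Rightarrow$ (iii) $\Rightarrow$ (iv) $\Rightarrow$ (i) are set up correctly, but the step (iii) $\Rightarrow$ (iv) has a genuine gap.

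In (iii) $\Rightarrow$ (iv) you write that you ``would use rationality to show $K_{\wt X}$ is numerically trivial on each $E_\a$.'' But hypothesis (iii) is only ``multiplicity $2$ at $x$ and resolvable by successive point blowups''; rationality is \emph{not} available there, and you cannot import it from (i) or (ii) without short-circuiting the very cyclic chain you are building. What (iii) does give you for free is that every exceptional component of the minimal resolution is a $\PM^1$: each arises as the strict transform of the exceptional line of a point blowup, and strict transforms of rational curves under further blowups stay rational. So $p_a(E_\a) = 0$ and adjunction gives $E_\a^2 + E_\a \cdot K_{\wt X} = -2$; minimality (no $(-1)$-curves) then gives $E_\a^2 \leqslant -2$, i.e.\ $E_\a \cdot K_{\wt X} \geqslant 0$. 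The missing piece is what forces $E_\a \cdot K_{\wt X} = 0$. Multiplicity $2$ makes the singularity a hypersurface, hence Gorenstein, so $K_{\wt X} = \pi^* K_X + \sum a_\a E_\a$ with integral discrepancies $a_\a$, and the negativity lemma for the minimal resolution gives $a_\a \leqslant 0$. But ruling out $a_\a < 0$ (equivalently, ruling out non-canonical double points such as simple elliptic ones of equation $z^2 + f(x,y) = 0$ with $\mathrm{mult}_0 f$ too large) is exactly the content still to be supplied: the burden is entirely on ``resolvable by point blowups'' to exclude them, and that exclusion must be argued, not asserted. (Note that for a simple elliptic singularity the unique exceptional curve of the minimal resolution is an elliptic curve, so resolvability by point blowups does exclude it; but this has to be turned into a clean inductive argument that at each blowup stage the tangent cone stays a quadric and the new singular points stay double, which is where the real work lies.) Until this lacuna is filled, the cyclic chain does not close.
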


\begin{defi}
If any (hence all) of the properties of the preceding theorem is
satisfied, then $(X,x)$ is called a \emph{rational double point} or a
\emph{simple singularity}.
\end{defi}

\begin{theo}
Let the characteristic of $k$ be good for the irreducible homogeneous
Dynkin diagram $\G$. Then there is exactly one rational double point
of type $\G$ up to isomorphism of Henselizations. Representatives of
the individual classes are given by the local varieties at $0 \in
\AM^3$ defined by the equations in the table below.

In each case, this equation is the unique relation (syzygy) between
three suitably chosen generators $X$, $Y$, $Z$ of the algebra
$k[\AM^2]^H$ of the invariant polynomials of $\AM^2$ under the action
of a finite subgroup $H$ of $SL_2$, given in the same table.

\[
\begin{array}{llcrc}
H && |H| & \text{equation of } \AM^2/H \subset \AM^3 & \G
\\
\hline
\CG_{n+1} & \text{cyclic} & n + 1 & X^{n + 1} + YZ = 0 & A_n
\\
\DG_{4(n-2)} & \text{dihedral} & 4(n - 2) & X^{n - 1} + X Y^2 + Z^2 = 0 & D_n
\\
\TG & \text{binary tetrahedral} & 24 & X^4 + Y^3 + Z^2 = 0 & E_6
\\
\OG & \text{binary octahedral} & 48 & X^3 Y + Y^3 + Z^2 = 0 & E_7 
\\
\IG & \text{binary icosahedral} & 120 & X^5 + Y^3 + Z^2 = 0 & E_8
\end{array}
\]

Moreover, if $k$ is of characteristic $0$, these groups are, up to
conjugation, the only finite subgroups of $SL_2$.
\end{theo}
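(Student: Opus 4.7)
The plan is to prove the three assertions (existence via quotient constructions, uniqueness up to Henselization, and the classification of finite subgroups of $SL_2$) essentially independently, because they lie at quite different depths.

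First I would handle the last assertion, the classification of finite subgroups of $SL_2(\CM)$. The short exact sequence $1\to\{\pm 1\}\to SL_2(\CM)\to PGL_2(\CM)\to 1$ together with the fact that every finite subgroup of $SL_2(\CM)$ fixes a positive-definite Hermitian form reduces the problem to the classification of finite subgroups of $SU_2/\{\pm 1\}\simeq SO_3(\RM)$, which in turn is the classical Euclidean list (cyclic, dihedral, tetrahedral, octahedral, icosahedral). Pulling back under the double cover $SU_2\to SO_3$ yields the binary polyhedral groups listed in the table, together with the cyclic groups of odd order (which split), and one checks that conjugation in $SL_2(\CM)$ absorbs all remaining ambiguities. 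This part is purely group-theoretic.

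Next, for existence, I would treat each of the five families in the table by an explicit calculation with invariants. For a given finite subgroup $H\subset SL_2$ (so acting symplectically on $\AM^2$), the invariant algebra $k[\AM^2]^H$ is finitely generated, and by Molien-type dimension counts one finds exactly three homogeneous generators $X,Y,Z$ and a single relation $f(X,Y,Z)=0$, so that $\Spec k[\AM^2]^H\simeq V(f)\subset\AM^3$. The generators and relation for each $H$ are classical and are essentially determined by the factorization of the Jacobian of the sum of the elementary invariants through the ``fundamental'' semi-invariants indexed by the vertices of the Dynkin diagram; this gives the five equations listed. One then resolves the singularity $V(f)$ by a standard sequence of blow-ups of the singular point (in good characteristic the classical computation is available: the exceptional fiber is a chain of $\PM^1$'s, each with self-intersection $-2$ and normal bundle $T^*\PM^1$, whose dual graph is $\G$). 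Thus each $V(f)$ is a rational double point of type $\G$.

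For uniqueness up to Henselization, the key input is the ``contraction / deformation-theoretic'' theorem of Artin (or of Brieskorn), which states that a normal surface singularity whose minimal resolution has exceptional configuration of ADE type is determined, up to isomorphism of Henselizations, by the dual graph. The argument goes by showing (i) that the formal neighborhood of the singular point is determined by the intersection theory on the exceptional divisor because deformations of the resolution are unobstructed and the formal versal deformation space is smooth of computable dimension, and then (ii) that formal isomorphism implies Henselian isomorphism by Artin approximation. Combined with the existence part, this identifies the unique isomorphism class of a type-$\G$ rational double point with the quotient $\AM^2/H$. The main obstacle is really here: verifying the equivalence of the four conditions in the preceding theorem (taken for granted in the statement) and running the Artin/Brieskorn uniqueness argument cleanly in good, possibly positive characteristic requires care, because the classification of finite subgroups of $SL_2$ given in the last assertion is specific to characteristic zero, while the first two assertions are asserted in all good characteristics. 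I would handle this by reducing to characteristic zero via a lift of the singularity to a Henselian discrete valuation ring of mixed characteristic (the equations in the table are defined over $\ZM$, and blow-ups commute with the base change), thereby transporting the resolution computation from $\CM$ to the good-characteristic setting.
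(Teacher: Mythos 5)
The paper does not prove this theorem at all: it is recalled verbatim from Slodowy's book \cite{SLO2} (the paper says explicitly ``we will recall the main points in the theory of simple singularities, following [SLO2], to which we refer for more details''). So there is no paper proof to compare against; I can only assess your sketch on its own.

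Your outline follows the classical Klein--Du~Val--Artin route (classify finite subgroups via the double cover $SU_2\to SO_3$; compute invariant rings and resolve the hypersurfaces by blowups; invoke Artin's rigidity theorem for rational double points plus Artin approximation for Henselian uniqueness). Over $\CM$ this is essentially correct and complete. The difficulty, which you identify but do not really resolve, is that the theorem is asserted for any good characteristic of $k$, and there your plan has two genuine gaps. First, the identification $V(f)\simeq\AM^2/H$: in good characteristic $p$ one can still have $p\mid|H|$ (e.g.\ type $A_n$ with $p\mid n+1$, or type $D_n$ with $p\mid 4(n-2)$, since all $p$ are good for $A_n$ and only $p=2$ is bad for $D_n$). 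In that case $H\subset SL_2$ is not a finite \emph{group} but a non-reduced finite \emph{group scheme} (for instance $\mu_{n+1}$), as the paper itself notes immediately after the theorem. Your lifting-to-mixed-characteristic argument shows that the base change of $V(f)$ to $k$ is a rational double point of type $\G$, but it does not identify it with the invariant-ring quotient $\Spec k[\AM^2]^H$ for a group scheme $H$, which is what the theorem actually claims; that part requires a separate scheme-theoretic argument (or, at minimum, a correct reference). Second, the uniqueness step: you propose to reduce the Henselian classification in characteristic $p$ to characteristic $0$ by lifting, but this presupposes that an \emph{arbitrary} rational double point of type $\G$ over $k$ lifts to a Henselian DVR of mixed characteristic, which is not free. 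Artin's deformation-theoretic argument for the rigidity of rational double points was designed precisely to work directly in any characteristic; invoking it and then re-deriving the conclusion by a lifting detour introduces an extra unverified step rather than removing one. If you intend to run the argument rather than cite it, you should run Artin's formal/Henselian rigidity argument in characteristic $p$ directly.
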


Thus, in good characteristic, every rational double point is, after
Henselization at the singular point, isomorphic to the corresponding
quotient $\AM^2/H$. When $p$ divides $n + 1$ (resp. $4(n - 2)$), the
group $\CG_{n+1}$ (resp. $\DG_{4(n-2)}$) is not reduced. We have the
following exact sequences
\begin{gather}
1 \longto \DG_8 \longto \TG \longto \CG_3 \longto 1\\
1 \longto \TG \longto \OG \longto \CG_2 \longto 1\\
1 \longto \DG_8 \longto \OG \longto \SG_3 \longto 1
\end{gather}
when the characteristic of $k$ is good for the Dynkin diagram attached
to each of the groups involved.

\subsection{Symmetries on rational double points}

To each inhomogeneous irreducible Dynkin diagram $\G$ we associate a
homogeneous diagram $\wh \G$ and a group $A(\G)$ of automorphisms of $\wh
\G$, as follows.

\[
\begin{array}{|c|c|c|c|c|}
\hline
\G & B_n & C_n & F_4 & G_2\\
\hline
\wh\G &A_{2n-1} & D_{n+1} & E_6 & D_4\\
\hline  
A(\G) & \ZM/2 & \ZM/2 & \ZM/2 & \SG_3\\
\hline
\end{array}
\]

In general, there is a unique (in case $\G = C_3$
or $G_2$ : up to conjugation by $\Aut(\wh\G) = \SG_3$) faithful action
of $A(\G)$ on $\wh\G$. One can see $\G$ as the quotient of $\wh\G$ by
$A(\G)$.

In all cases but $\G = C_3$, the group $A(\G)$ is the full group of
automorphisms of $\wh\G$. Note that $D_4$ is associated to $C_3$ and
$G_2$. For a homogeneous diagram, it will be convenient to set $\wh\G
= \G$ and $A(\G) = 1$.

A rational double point may be represented as the quotient $\AM^2/H$
of $\AM^2$ by a finite subgroup $H$ of $SL_2$ provided the
characteristic of $k$ is good for the corresponding Dynkin diagram. If
$\wh H$ is another finite subgroup of $SL_2$ containing $H$ as a
normal subgroup, then the quotient $\wh H/H$ acts naturally on
$\AM^2/H$.

\begin{defi}
Let $\G$ be an inhomogeneous irreducible Dynkin diagram and let the
characteristic of $k$ be good for $\G$. A couple $(X,A)$ consisting of
a normal surface singularity $X$ and a group $A$ of automorphisms of
$X$ is called a \emph{simple singularity of type $\G$} if it is
isomorphic (after Henselization) to a couple $(\AM^2/H, \wh H/H)$
according to the following table.

\[
\begin{array}{|c|c|c|c|c|}
\hline
\G & B_n & C_n & F_4 & G_2\\
\hline
H & \CG_{2n} & \DG_{4(n - 1)} & \TG & \DG_8\\
\hline  
\wh H & \DG_{4n} & \DG_{8(n - 1)} & \OG & \OG\\
\hline
\end{array}
\]
\end{defi}

Then $X$ is a rational double point of type $\wh\G$ and $A$ is
isomorphic to $A(\G)$. The action of $A$ on $X$ lifts in a unique way
to an action of $A$ on the resolution $\wt X$ of $X$. As $A$ fixes
the singular point of $X$, the exceptional divisor in $\wt X$ will be
stable under $A$. In this way, we recover the action of $A$ on $\wh \G$.
The simple singularities of inhomogeneous type can be characterized in
the following way.

\begin{prop}
Let $\G$ be a Dynkin diagram of type $B_n$, $C_n$, $F_4$ or $G_2$, and
let the characteristic of $k$ be good for $\G$. Let $X$ be a rational
double point of type $\wh\G$ endowed with an action of $A(\G)$, free
on the complement of the singular point, and such that the induced
action on the dual diagram of the minimal resolution of $X$ coincides
with the associated action of $A(\G)$ on $\wh\G$. Then $(X,A)$ is a
simple singularity of type $\G$.
\end{prop}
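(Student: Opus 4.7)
The plan is to transfer the question to the universal $H$-cover and identify the relevant extension of $A$ by $H$, then check it matches the table. Throughout we work after Henselization at the singular point, so we may identify $X$ with $\AM^2/H$ where $H \subset SL_2$ is the group associated to $\wh\G$ in the previous theorem. The goal is then to show that the given $A$-action on $\AM^2/H$ is conjugate, inside $\Aut(\AM^2/H)$, to the action of $\wh H / H$ coming from the prescribed overgroup $\wh H \subset SL_2$ in the table.

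First I would lift the $A$-action from $U = X \setminus \{x\} = (\AM^2 \setminus \{0\})/H$ to the cover $\AM^2 \setminus \{0\}$. Since $\AM^2 \setminus \{0\} \to U$ is a connected Galois étale cover with group $H$ (in good characteristic, which is our assumption), and since $A$ acts freely on $U$, the $A$-action lifts to an action of a group $\wh H'$ on $\AM^2 \setminus \{0\}$ fitting in a short exact sequence $1 \to H \to \wh H' \to A \to 1$. The $\wh H'$-action extends across the origin by normality of $\AM^2$ (the fixed locus of the origin follows from $A$-equivariance of the singular point of $X$), and in good characteristic it can be linearized at $0$, giving an embedding $\wh H' \hookrightarrow GL_2$. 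One checks that $\wh H' \subset SL_2$ using that $A$ preserves the symplectic form on $\AM^2/H$: the canonical class of $X$ is trivial and the $A$-action lifts to the minimal resolution respecting the canonical class, so the induced character of $A$ on $\det$ is trivial, which forces $\wh H' \subset SL_2$.

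Next I would identify $\wh H'$ with $\wh H$ inside $SL_2$. Since $\wh H'$ contains $H$ as a normal subgroup with quotient $A(\G)$, and since the finite subgroups of $SL_2$ are explicitly classified (in good characteristic, as appropriate reductions of the complex classification), the possibilities for $\wh H'$ are limited. The prescribed action of $A$ on the dual graph $\wh\G$ of the minimal resolution then pins down the isomorphism class of the extension: this action coincides with the action of $\wh H'/H$ on the exceptional configuration of $\AM^2/H$, which in each of the four cases $B_n$, $C_n$, $F_4$, $G_2$ distinguishes $\wh H$ from other possible overgroups of $H$ in $SL_2$. A case-by-case check against the table gives $\wh H' = \wh H$, so $(X,A) \cong (\AM^2/H, \wh H/H)$, which is the desired statement.

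The main obstacle is the last step, namely ensuring that the pair $(\wh H', A \circlearrowright \wh\G)$ really determines $\wh H'$ up to conjugation in $SL_2$. The delicate point is that an abstract group extension $1 \to H \to \wh H' \to A \to 1$ together with an embedding $H \hookrightarrow SL_2$ need not extend to an embedding $\wh H' \hookrightarrow SL_2$ in a unique way, so the rigidity must be extracted from the geometric datum of the action on $\wh\G$. Concretely, this is where one uses that $A$ acts freely away from $0$ on $\AM^2$ (forcing $\wh H' \subset SL_2$ to act freely on $\AM^2 \setminus \{0\}$), together with the specific combinatorics of the graph automorphisms, to rule out all candidate extensions except $\wh H$.
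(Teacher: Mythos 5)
The paper states this proposition without proof, citing Slodowy \cite{SLO2}, so there is no proof in the paper to compare against; you are supplying an argument from scratch. Your overall framework — pass to the universal $H$-cover of the punctured singularity, lift the $A$-action to a finite overgroup $\wh H'$ of $H$ acting on $\AM^2\setminus\{0\}$, linearize at $0$, and then identify $\wh H'$ among the subgroups of $SL_2$ — is the right one. But there is a genuine gap in the step where you claim $\wh H' \subset SL_2$.

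You argue that because $K_X$ is trivial and the $A$-action lifts to the minimal resolution ``respecting the canonical class,'' the character by which $A$ acts on the determinant line is trivial. This is a non sequitur: preserving the canonical \emph{divisor class} (which is $0 \in \Pic$ and is always preserved) is much weaker than acting trivially on a chosen trivializing $2$-form. The latter is an honest character $\chi : A \to k^\times$, and it is not forced to be trivial by the hypotheses you have used so far. A concrete example: take $\wh\G = A_3$, $H = \CG_4 = \langle \diag(i,-i)\rangle$, and $\wh H' = \langle \diag(\zeta_8, -\zeta_8^{-1})\rangle \cong \CG_8$. This group contains $H$ normally with quotient $\ZM/2$, acts freely on $\AM^2\setminus\{0\}$, yet has $\det = -1$ on a generator, so the induced character on the canonical line is nontrivial and $\wh H' \not\subset SL_2$. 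What saves the proposition in this case is precisely the graph hypothesis: the induced action of $\CG_8/\CG_4$ on the $A_3$ dual graph is the \emph{trivial} one (the generator acts diagonally, i.e.\ fixes each component of the exceptional divisor), so this $\wh H'$ is excluded by assumption. In other words, the inclusion $\wh H' \subset SL_2$ cannot be established independently of the graph condition; it must be deduced together with the identification $\wh H' = \wh H$, by the case-by-case elimination you describe in your last paragraph. You already flag this point as delicate, but the narrative as written treats the $SL_2$-membership as settled by the canonical class alone, which is where the argument breaks.

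The fix is to drop the canonical-class step and argue directly. Once you have linearized, $\wh H'\subset GL_2$ is a finite group containing $H$ as a normal subgroup of index $|A|$, acting freely on $\AM^2\setminus\{0\}$, whose quotient induces the prescribed automorphism of $\wh\G$. In each of the four cases the coset $\wh H'\setminus H$ is forced (by the graph action) to consist of off-diagonal, i.e.\ ``non-toric,'' elements, and freeness then pins the determinant and hence $\wh H'$ itself up to conjugacy, matching $\wh H$ from the table. For $\G = B_2$ for instance, a lift $g$ with $g^2\in\CG_4$ and off-diagonal has $g^2 = ab\cdot I$ with $ab = \pm 1$; the case $ab=1$ gives $g^2 = I$, hence an eigenvalue $1$, violating freeness, while $ab = -1$ gives $g\in SL_2$ and $\wh H' = Q_8 = \DG_8$. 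Similar checks handle $C_n$, $F_4$, $G_2$, and this is the content hidden behind the claim ``one checks $\wh H' = \wh H$.''
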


\subsection{Perverse extensions and decomposition numbers}

Let $\G$ be any irreducible Dynkin diagram, and suppose the
characteristic of $k$ is good for $\G$. Let $\wh\G$ be the
associated homogeneous Dynkin diagram, $A(\G)$ the associated symmetry
group, and $H \subset \wh H$ the corresponding
finite subgroups of $SL_2$. We recall that, if $\G$ is already
homogeneous, then we take $\wh \G = \G$, $A(\G) = 1$ and $\wh H = H$.
We stratify the simple singularity $X =
\AM^2/H$ into two strata: the origin $\{0\}$ (the singular point), and
its complement $U$, which is smooth since $H$ acts freely on
$\AM^2\setminus\{0\}$.  We want to determine the stalks of the three
perverse extensions of the (shifted) constant sheaf $\EM$ on $U$, for
$\EM$ in $(\KM,\OM,\FM)$, and for the two perversities $p$ and $p_+$
in the case $\EM = \OM$. By the results of Chapter
\ref{chap:preliminaries}, this will allow us to determine a
decomposition number.

By the quasi-homogeneous structure of the equation defining $X$ in
$\AM^3$, we have a $\GM_m$-action on $X$ contracting $X$ to the
origin. We are in the situation of Proposition
\ref{prop:gencone}, with $C = X$. Thus it is enough to calculate the
cohomology of $U$ with $\OM$ coefficients. The cases $\EM = \KM$ or
$\FM$ will follow.

Let $\wh\Phi$ be the root system corresponding to $\wh\G$,  in a real vector
space $\wh V$ of dimension equal to the rank $n$ of $\wh\G$. We identify the
set $\wh\D$ of vertices of $\wh\G$ with a basis of $\wh\Phi$. We denote by
$P(\wh\Phi)$ and $Q(\wh\Phi)$ the weight lattice and the root lattice of
$\wh V$. The finite abelian group $P(\wh\Phi)/Q(\wh\Phi)$ is the fundamental group of
the corresponding adjoint group, and also the center of the
corresponding simply-connected group. Its order is called the connection
index of $\wh\Phi$. The coweight lattice $P^\vee(\wh\Phi)$ (the weight lattice
of the dual root system $\wh\Phi^\vee$ in $\wh V^*$) is in duality with
$Q(\wh\Phi)$, and the coroot lattice $Q^\vee(\wh\Phi)$ is in duality with
$P(\wh\Phi)$. Thus the finite abelian group $P^\vee(\wh\Phi)/Q^\vee(\wh\Phi)$ is
dual to $P(\wh\Phi)/Q(\wh\Phi)$.

Let $\pi : \wt X \to X$ be the minimal resolution of $X$.
The exceptional divisor $E$ is the union of projective lines $E_\a$,
$\a\in\wh\D$. Then we have an isomorphism
$H^2(\wt X, \OM) \isom \OM \otimes_\ZM P(\wh\Phi)$
such that, for each $\a$ in $\wh\D$, the cohomology class of the
subvariety $E_\a$ is identified with $1 \otimes \a$, and such that the
intersection pairing is the opposite of the pullback of the $W$-invariant pairing on
$P$ normalized by the condition $(\a,\a) = 2$ for $\a$ in $\wh\D$ \cite{ItoNa}. 
Thus the natural map $H^2_c(\wt X, \OM) \to H^2_c(E,\OM)$ is
identified with the opposite of the map
$\OM \otimes_\ZM Q^\vee(\wh\Phi) \to \OM \otimes_\ZM P^\vee(\wh\Phi)$
induced by the inclusion.

By Poincaré duality ($U$ is smooth), it is enough to compute the
cohomology with proper support of $U$, and to do this we will use the
long exact sequence in cohomology with proper support for the open
subvariety $U$ with closed complement $E$ in $\wt X$.
The following table gives the $H^i_c(-,\OM)$ of the three varieties
(the first column is deduced from the other two).

\[
\begin{array}{c|c|c|c}
i & U & \wt X & E\\
\hline
0 & 0 & 0 & \OM\\
1 & \OM & 0 & 0\\
2 & 0 &
    \OM \otimes_\ZM Q^\vee(\wh\Phi) & \OM \otimes_\ZM P^\vee(\wh\Phi)\\
3 & \OM \otimes_\ZM P^\vee(\wh\Phi)/Q^\vee(\wh\Phi) & 0 & 0\\
4 & \OM & \OM & 0
\end{array}
\]


By (derived) Poincaré duality, we obtain the cohomology of $U$.

\begin{prop}
The cohomology of $U$ is given by
\begin{equation}
\rg(U,\OM) \simeq \OM \oplus \OM \otimes_\ZM P(\wh\Phi)/Q(\wh\Phi) [-2] \oplus \OM [-3]
\end{equation}
\end{prop}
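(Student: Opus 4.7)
The plan is to invoke Poincar\'e--Verdier duality on the smooth surface $U$, and reduce the computation of $\rg(U,\OM)$ to a purely algebraic dualization of $R\G_c(U,\OM)$, whose cohomology groups are already assembled in the table.

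Concretely, since $U$ is smooth of pure dimension $2$, the dualizing complex is $\OM_U(2)[4]$, so Verdier duality gives a canonical isomorphism
\[
\rg(U,\OM) \simeq \RHom_{\OM}\bigl(R\G_c(U,\OM),\OM\bigr)[-4]
\]
(the Tate twist does not affect the underlying $\OM$-module structure that we are after). The ring $\OM$ is a discrete valuation ring, hence of global dimension one, and every bounded complex of finitely generated $\OM$-modules with finitely generated cohomology splits in the derived category as the direct sum of its shifted cohomology modules. Applying this to $R\G_c(U,\OM)$, and reading off the entries in the first column of the table, yields
\[
R\G_c(U,\OM) \simeq \OM[-1] \;\oplus\; \bigl(\OM \otimes_{\ZM} P^\vee(\wh\Phi)/Q^\vee(\wh\Phi)\bigr)[-3] \;\oplus\; \OM[-4].
\]
(Strictly speaking, this uses the already established computation of the long exact sequence of compact-support cohomology associated with the open--closed decomposition $U \hookrightarrow \wt X \hookleftarrow E$, together with the injectivity of $\OM \otimes Q^\vee \hookrightarrow \OM \otimes P^\vee$ coming from the description of $H^2_c(\wt X)\to H^2_c(E)$ recalled above.)

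It remains to dualize each summand. For the two free summands we get $\RHom_\OM(\OM[-i],\OM) = \OM[i]$. For the torsion summand, set $T = \OM \otimes_{\ZM} P^\vee(\wh\Phi)/Q^\vee(\wh\Phi)$; resolving $\OM$ by the short exact sequence $0 \to \OM \to \KM \to \KM/\OM \to 0$ shows that $\RHom_\OM(T,\OM) \simeq \Hom_\OM(T,\KM/\OM)[-1]$. Thus the middle summand contributes $\Hom_\OM(T,\KM/\OM)[2]$ to $\RHom_\OM(R\G_c(U,\OM),\OM)$. Putting everything together and shifting by $-4$ produces a complex with cohomology concentrated in degrees $0$, $2$, $3$, equal respectively to $\OM$, $\Hom_\OM(T,\KM/\OM)$ and $\OM$, and, since $\OM$ has global dimension $1$, it splits as a direct sum of its shifted cohomology.

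The one step that requires a small verification, and which I expect to be the main (mild) obstacle, is the canonical identification $\Hom_\OM(T,\KM/\OM) \simeq \OM \otimes_{\ZM} P(\wh\Phi)/Q(\wh\Phi)$. The point is that the perfect $\ZM$-bilinear pairing between $P(\wh\Phi)$ and $Q^\vee(\wh\Phi)$ (and between $Q(\wh\Phi)$ and $P^\vee(\wh\Phi)$) induces a canonical perfect pairing of finite abelian groups
\[
P(\wh\Phi)/Q(\wh\Phi) \;\times\; P^\vee(\wh\Phi)/Q^\vee(\wh\Phi) \longto \QM/\ZM,
\]
identifying each finite group with the Pontryagin dual of the other. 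Tensoring with $\OM$ and composing with the inclusion $\QM/\ZM \hookrightarrow \KM/\OM$ yields the required isomorphism. Assembling the pieces gives exactly the claimed decomposition
\[
\rg(U,\OM) \simeq \OM \;\oplus\; \OM \otimes_{\ZM} P(\wh\Phi)/Q(\wh\Phi)\,[-2] \;\oplus\; \OM\,[-3].
\]
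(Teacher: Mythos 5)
Your proposal is correct and follows exactly the route indicated in the paper: the text preceding the proposition computes $H^i_c(U,\OM)$ via the long exact sequence for $U \hookrightarrow \wt X \hookleftarrow E$ and then invokes ``derived Poincar\'e duality'' in a single sentence. What you have done is flesh out that one sentence, correctly: splitting $R\G_c(U,\OM)$ over the hereditary ring $\OM$, dualizing via $\RHom_\OM(-,\OM)$ with the torsion piece contributing through $\Ext^1_\OM(T,\OM)\simeq\Hom_\OM(T,\KM/\OM)$, and using the perfect pairing $P/Q \times P^\vee/Q^\vee \to \QM/\ZM$ to identify $\Hom_\OM(\OM\otimes_\ZM P^\vee/Q^\vee,\KM/\OM)$ with $\OM\otimes_\ZM P/Q$. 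The only cosmetic remark is that the isomorphism $\Hom_\OM(\OM\otimes_\ZM A,\KM/\OM)\simeq\OM\otimes_\ZM A^\vee$ for a finite abelian group $A$ deserves the one-line justification that it reduces to cyclic $A$ and that $\KM/\OM$ is the $\ell$-local injective hull of the residue field; otherwise the argument is complete.
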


The closed stratum is a point, and for complexes on the point the
perverse $t$-structures for $p$ and $p_+$ are the usual ones (there is no
shift since the point is $0$-dimensional).
With the notations of Section \ref{sec:cones}, we have
\begin{gather}
\label{simple H -1}
H^{-1} i^* j_*(\OM[2]) \simeq H^1(U,\OM) = 0\\
\label{simple H 0}
H^0 i^* j_*(\OM[2]) \simeq H^2(U,\OM) \simeq \OM \otimes_\ZM P(\wh\Phi)/Q(\wh\Phi)\\
\label{simple H 1}
H^1 i^* j_*(\OM[2]) \simeq H^3(U,\OM) \simeq \OM
\end{gather}

By our analysis in the sections \ref{sec:tt recollement} and
\ref{sec:F recollement}, we obtain the following results.

\begin{prop}
We keep the preceding notation. In particular, $X$ is a simple
singularity of type $\G$.

Over $\KM$, we have canonical isomorphisms
\begin{equation}
\p j_! (\KM[2]) \simeq \p j_{!*} (\KM[2]) \simeq \p j_* (\KM[2])
\simeq \KM_X[2]
\end{equation}
In particular, $X$ is $\KM$-smooth.

Over $\OM$, we have canonical isomorphisms
\begin{gather}
\label{simple iso IC}
\p j_! (\OM[2])
\simeq \pp j_! (\OM[2])
\simeq \p j_{!*} (\OM[2])
\simeq \OM_X[2]
\\
\label{simple iso IC+}
\pp j_{!*} (\OM[2])
\simeq \p j_* (\OM[2])
\simeq \pp j_* (\OM[2])
\end{gather}
and a short exact sequence in $\p\MC(X,\OM)$
\begin{equation}
\label{simple ses}
0 \longto \p j_{!*} (\OM[2])
\longto \pp j_{!*} (\OM[2])
\longto i_* \OM \otimes_\ZM (P(\wh\Phi)/Q(\wh\Phi))
\longto 0
\end{equation}

Over $\FM$, we have canonical isomorphisms
\begin{gather}
\label{simple iso F IC}
\FM\ \p j_!\ (\OM[2])
\isom \p j_!\ (\FM[2])
\isom \FM\ \pp j_!\ (\OM[2])
\isom \FM\ \p j_{!*}\ (\OM[2])
\isom \FM_X[2]
\\
\label{simple iso F IC+}
\FM\ \pp j_{!*}\ (\OM[2])
\isom \FM\ \p j_*\ (\OM[2])
\isom \p j_*\ (\FM[2])
\isom \FM\ \pp j_*\ (\OM[2])
\end{gather}
and short exact sequences
\begin{gather}
\label{simple ses F IC}
0 \longto i_*\ \FM \otimes_\ZM (P(\wh\Phi)/Q(\wh\Phi))
\longto \FM\ \p j_{!*}\ (\OM[2])
\longto \p j_{!*}\ (\FM[2])
\longto 0
\\
\label{simple ses F IC+}
0 \longto \p j_{!*}\ (\FM[2])
\longto \FM\ \pp j_{!*}\ (\OM[2])
\longto i_*\ \FM \otimes_\ZM (P(\wh\Phi)/Q(\wh\Phi))
\longto 0
\end{gather}

We have
\begin{equation}\label{simple dec}
[\FM\ \p {j}_{!*}\ (\OM[2]) : i_*\ \FM]
= [\FM\ \pp {j}_{!*}\ (\OM[2]) : i_*\ \FM]
= \dim_\FM \FM \otimes_\ZM (P(\wh\Phi)/Q(\wh\Phi))
\end{equation}

In particular, $\FM\ \p j_{!*}\ (\OM[2])$ is simple (and equal to
$\FM\ \pp j_{!*}\ (\OM[2])$) if and only if $\ell$ does not divide the
connection index $|P(\wh\Phi)/Q(\wh\Phi)|$ of $\wh\Phi$. The variety
$X$ is $\FM$-smooth under the same condition.
\end{prop}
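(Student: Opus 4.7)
The plan is to feed the stalk computation established just above into the gluing machinery of Sections \ref{sec:tt recollement} and \ref{sec:F recollement}. First, I would translate the data \ref{simple H -1}--\ref{simple H 1} into the perverse language: since the closed stratum is a point, the perverse $t$-structure there coincides with the natural one, so $\p\HC^i i^* j_*(\OM[2]) = \HC^i i^* j_*(\OM[2])$. Thus $\p\HC^{-1}=0$, $\p\HC^0$ is the torsion $\OM$-module $\OM\otimes_\ZM P(\wh\Phi)/Q(\wh\Phi)$, and $\p\HC^1 \simeq \OM$ is torsion-free. Consequently the four torsion/free subquotients $\p\HC^{-1}_\tors$, $\p\HC^{-1}_\free$, $\p\HC^0_\free$, $\p\HC^1_\tors$ all vanish, while $\p\HC^0_\tors = \OM\otimes_\ZM P(\wh\Phi)/Q(\wh\Phi)$.

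For the $\OM$-statements, I would plug these vanishings into the five distinguished triangles listed in Section \ref{sec:tt recollement} comparing the six perverse extensions of $\OM[2]$. Four of them collapse to the canonical isomorphisms $\p j_! \simeq \pp j_! \simeq \p j_{!*}$ and $\pp j_{!*} \simeq \p j_* \simeq \pp j_*$. The remaining triangle $\p j_{!*} \to \pp j_{!*} \to i_*(\OM\otimes_\ZM P(\wh\Phi)/Q(\wh\Phi))$ has all three terms in the perverse heart, and therefore yields the short exact sequence \ref{simple ses}. To complete \ref{simple iso IC} I would identify $\p j_!(\OM[2])$ with $\OM_X[2]$ via $\p j_! = \p\t^F_{\leqslant -2} j_*$: the adjunction $\OM_X[2]\to j_*\OM_U[2]$ is an isomorphism on $U$, and $i^*\OM_X[2]=\OM[2]$ lies in $D^{\leqslant -2}$ with matching $H^{-2}$, so it realises the desired truncation. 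The $\KM$-statement then follows by applying the exact functor $\KM\otimes_\OM -$, which kills the torsion cokernel of \ref{simple ses}.

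For the $\FM$-statements I would apply the eight distinguished triangles at the end of Section \ref{sec:F recollement}, comparing $\FM$ of each perverse $\OM$-extension with the corresponding $\FM$-coefficient extension. Six of them collapse to the isomorphism chains \ref{simple iso F IC} and \ref{simple iso F IC+}, the identification of the common value in \ref{simple iso F IC} with $\FM_X[2]$ coming from applying $\FM\otimes_\OM-$ to the already-established $\p j_!(\OM[2]) \simeq \OM_X[2]$. The two remaining non-trivial triangles involve the derived reduction of $\p\HC^0_\tors i^* j_*(\OM[2]) = \OM\otimes_\ZM P(\wh\Phi)/Q(\wh\Phi)$, which is a two-term complex whose two cohomology sheaves are each isomorphic to $\FM\otimes_\ZM P(\wh\Phi)/Q(\wh\Phi)$. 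Passing to the perverse long exact sequence of these two triangles (rotating one of them, since its third term is placed in perverse degree $-1$) yields the short exact sequences \ref{simple ses F IC} and \ref{simple ses F IC+} inside $\p\MC(X,\FM)$. The multiplicity \ref{simple dec} is then read off from either sequence by counting the occurrences of the unique simple perverse sheaf $i_*\FM$ supported at $0$.

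The only real obstacle is the bookkeeping: one must keep straight which of the three possible contributions (the torsion and free parts of $\p\HC^i i^* j_*(\OM[2])$ and the torsion part of $\p\HC^{i+1} i^* j_*(\OM[2])$) feeds into each of the nine candidate extensions tabulated at the start of Section \ref{sec:F recollement}. Once that table is organised, the entire proposition is a direct translation of the cohomology of $U$ computed just above.
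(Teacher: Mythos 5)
Your proof follows exactly the route the paper intends: it feeds the stalk computation \ref{simple H -1}--\ref{simple H 1} into the distinguished triangles of Sections~\ref{sec:tt recollement} and~\ref{sec:F recollement} (the paper simply says ``by our analysis in the sections \ref{sec:tt recollement} and \ref{sec:F recollement}, we obtain the following results''), and your bookkeeping of which triangles collapse and which two give the short exact sequences is correct. The identification $\p j_!(\OM[2])\simeq \OM_X[2]$ is stated somewhat loosely (you check $i^*$ but not $i^!$, or equivalently you should note that the map $\OM_X[2]\to\p\t^F_{\leqslant -2}j_*\OM_U[2]$ is already an isomorphism on stalks at $0$), but the content is right.
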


Let us give this decomposition number in each type.

\[
\begin{array}{l|l|l}
\wh\G & P(\wh\Phi)/Q(\wh\Phi) &
[\FM\ \p {j}_{!*}\ (\OM[2]) : i_*\ \FM]\\
\hline
A_n & \ZM / (n + 1) &
      1 \text{ if } \ell \mid n + 1,\ 0 \text{ otherwise}\\
D_n\ (n \text{ even}) & (\ZM/2)^2 &
      2 \text{ if } \ell = 2,\ 0\text{ otherwise}\\
D_n\ (n \text{ odd}) & \ZM/4 &
      1 \text{ if } \ell = 2,\ 0\text{ otherwise}\\
E_6 & \ZM/3 &
      1 \text{ if } \ell = 3,\ 0\text{ otherwise}\\
E_7 & \ZM/2 &
      1 \text{ if } \ell = 2,\ 0\text{ otherwise}\\
E_8 & 0 & 0
\end{array}
\]

Let us note that for $\G = E_8$, the variety $X$ is $\FM$-smooth for
any $\ell$. However, it is not smooth, since it has a double point.

What about the action of $A(\G)$ ?  Let us first recall some facts
from \cite{BOUR456}. Let $\Aut(\wh\Phi)$ denote the group of
automorphisms of $\wh V$ stabilizing $\wh\Phi$. The subgroup of
$\Aut(\wh\Phi)$ of the elements stabilizing $\wh\D$ is identified with
$\Aut(\wh\G)$. The Weyl group $W(\wh\Phi)$ is a normal subgroup of
$\Aut(\wh\Phi)$, and $\Aut(\wh\Phi)$ is the semi-direct product of
$\Aut(\wh\G)$ and $W(\wh\Phi)$ \cite[Chap. VI, \S 1.5, Prop. 16]{BOUR456}.

The group $\Aut(\wh\Phi)$ stabilizes $P(\wh\Phi)$ and $Q(\wh\Phi)$,
thus it acts on the quotient $P(\wh\Phi)/Q(\wh\Phi)$. By
\cite[Chap. VI, \S 1.10, Prop. 27]{BOUR456}, the group $W(\wh\Phi)$
acts trivially on $P(\wh\Phi)/Q(\wh\Phi)$. Thus, the quotient group
$\Aut(\wh\Phi)/W(\wh\Phi) \simeq \Aut(\wh\G)$ acts canonically on
$P(\wh\Phi)/Q(\wh\Phi)$.

Now $A(\G)$ acts on $X$, $\wt X$, $E$ and $U$, and hence on their
cohomology (with or without supports). Moreover, the action of $A(\G)$
on $H^2_c(E,\OM) \simeq \OM \otimes_\ZM P^\vee(\wh\Phi)$ is the one
induced by the inclusions
$A(\G) \subset \Aut(\wt\G) \subset \Aut(\wh\Phi)$.
The inclusions of $E$ and $U$ in $\wt X$ are $A(\G)$-equivariant,
hence the maps in the long exact sequence in cohomology with compact
support that we considered earlier (to calculate $H^3_c(U,\OM)$)
are $A(\G)$-equivariant. Thus the action of $A(\G)$ on
$H^3_c(U,\OM) \simeq P^\vee(\wh\Phi)/Q^\vee(\wh\Phi)$ is induced by the
inclusion $A(\G) \subset \Aut(\G) \simeq \Aut(\wh\Phi)/W(\wh\Phi)$
from the canonical action. It follows that the action of $A(\G)$ on
$H^2(U,\G) \simeq P(\wh\Phi)/Q(\wh\Phi)$ also comes from the canonical
action of $\Aut(\wh\Phi)/W(\wh\Phi)$.

Why do we care about this action, since it does not appear in the
proposition ? Well, here the closed stratum is a point, so $\EM$-local
systems are necessarily trivial. We can regard them as mere $\EM$-modules.
However, when we use the results of this section for the subregular
class, we will need to determine the local system involved on the
regular class. If we just have an equivalence of singularities, we can
deal with the $\EM$-module structure, but for the action of the
fundamental group we need more information. This is why we also
studied the action of $A(\G)$ for inhomogeneous Dynkin diagrams.

\chapter{Cohomology of the minimal nilpotent orbit}\label{chap:min}

This chapter is taken from the preprint \cite{cohmin}.
We compute the integral cohomology of the minimal non-trivial
nilpotent orbit in a complex simple (or quasi-simple) Lie algebra.
We express it in terms of the root system.
We find by a uniform approach that the middle cohomology group
is isomorphic to the fundamental group of the 
root subsystem generated by the long simple roots.
We compute the rest of the cohomology in a case-by-case analysis,
which shows in particular that the primes dividing the torsion of
the rest of the cohomology are bad primes.

All the results and proofs of this chapter remain valid for
$G$ a quasi-simple reductive group over $\ov \FM_p$,
with $p$ good for $G$, using the \'etale cohomology with $\OM$
coefficients.

We will use the middle cohomology to determine the decomposition
number corresponding to the minimal and trivial orbits in Chapter
\ref{chap:dec}. If $\Phi$ is the root system of $G$, and $W$ its Weyl
group, let $\Phi'$ be the root subsystem generated by the long simple
roots (for some given basis of $\Phi$), and let $W'$ be its Weyl group
(a parabolic subgroup, but also a quotient, of $W$). With the
knowledge of the middle cohomology, we will be able to show that this
decomposition number $d_{(x_\mini,1),(0,1)}$ is equal to 
$\dim_\FM \FM \otimes_\ZM P^\vee(\Phi')/Q^\vee(\Phi')$.

In the Fourier transform approach to the (ordinary) Springer
correspondence, the trivial orbit corresponds to the trivial
representation of $W$. When $G$ is of simply-laced type, the minimal
orbit corresponds to the natural (reflection) representation of
$W$. In general, the minimal orbit corresponds to the natural
representation of $W'$, lifted to $W$. In the modular Springer
correspondence, which we will define in Chapter \ref{chap:springer},
the trivial class still corresponds to the trivial representation (in
particular, the pair $(0,1)$ is always in the image of this
correspondence). In Section \ref{sec:decomposition}, we will see that
$d_{(x_\mini,1),(0,1)}$ can be interpreted as the corresponding
decomposition number
for the Weyl group. Note that the trivial representation is involved
in the reduction of the natural representation of $W'$ if and only if
$\ell$ divides the determinant of the Cartan matrix of $W'$, which is
precisely the connection index $P^\vee(\Phi')/Q^\vee(\Phi')$ of
$\Phi'$. I wrote this section before proving the equality of
decomposition numbers in general, and the wish to find the right
decomposition number allowed me to predict that the Cartan matrix of
$W'$ (to be precise, without minus signs), should appear in the Gysin
sequence for the middle cohomology group ! A GAP session with my
supervisor Cédric Bonnafé confirmed this guess. Then it was not
difficult to prove it.

Even if we have now a general theorem relating the decomposition
numbers for the Weyl group and for the nilpotent variety, I think this
chapter is still useful, at least to show that concrete calculations are
possible on the geometric side. Besides, we will find torsion not only in
the middle (the part which controls the decomposition number), but
also in other places for bad primes. Could this torsion have a
representation theoretic interpretation ?

\section*{Introduction}

Let $G$ be a quasi-simple complex Lie group, with Lie algebra $\gG$.
We denote by $\NC$ the nilpotent variety of $\gG$.
The group $G$ acts on $\NC$ by the adjoint action,
with finitely many orbits. If $\OC$ and $\OC'$ are two orbits,
we write $\OC \le \OC'$ if $\OC \incl \ov{\OC'}$. This defines
a partial order on the adjoint orbits. It is well known that
there is a unique minimal non-zero orbit $\OC_\mini$ (see for example
\cite{CM}, and the introduction of \cite{KP2}). 
The aim of this article is to compute the integral cohomology
of $\OC_\mini$.

The nilpotent variety $\NC$ is a cone in $\gG$: it is closed under
multiplication by a scalar. Let us consider its image $\PM(\NC)$
in $\PM(\gG)$. It is a closed subvariety of this projective space,
so it is a projective variety. Now $G$ acts on $\PM(\NC)$,
and the orbits are the $\PM(\OC)$, where $\OC$ is a non-trivial adjoint
orbit in $\NC$. The orbits of $G$ in $\PM(\NC)$ are ordered in the same
way as the non-trivial orbits in $\NC$. Thus $\PM(\OC_\mini)$ is the
minimal orbit in $\PM(\NC)$, and therefore it is closed:
we deduce that it is a projective variety.
Let $x_\mini \in \OC_\mini$, and let $P = N_G(\CM x_\mini)$
(the letter $N$ stands for normalizer, or setwise stabilizer).
Then $G/P$ can be identified to $\PM(\OC_\mini)$, which is a projective
variety. Thus $P$ is a parabolic subgroup of $G$.
Now we have a resolution of singularities (see Section \ref{res spectral})
\[
G \times_P \CM x_\mini \longto \ov{\OC_\mini} = \OC_\mini \cup \{0\}
\]
which restricts to an isomorphism
\[
G \times_P \CM^* x_\mini \elem{\sim} \OC_\mini.
\]

From this isomorphism, one can already deduce that the dimension of
$\OC_\mini$ is equal to one plus the dimension of $G/P$. If we
fix a maximal torus $T$ in $G$ and a Borel subgroup $B$ containing it,
we can take for $x_\mini$ a highest weight vector for the adjoint action
on $\gG$. Then $P$ is the standard parabolic subgroup corresponding
to the simple roots orthogonal to the highest root, and the dimension
of $G/P$ is the number of positive roots not orthogonal to the highest
root, which is $2h - 3$ in the simply-laced types, where
$h$ is the Coxeter number (see \cite[chap. VI, \S 1.11, prop. 32]{BOUR456}).
So the dimension of $\OC_\mini$ is $2h - 2$ is that case. In \cite{WANG},
Wang shows that this formula is still valid if we replace $h$ 
by the dual Coxeter number $h^\vee$ (which is equal to $h$ only in the
simply-laced types).

We found a similar generalization of a result of Carter
(see \cite{CARTER}), relating
the height of a long root to the length of an element of minimal length
taking the highest root to that given long root, in the simply-laced
case: the result extends to all types, if we take the height of the
corresponding coroot instead (see Section
\ref{philg xi}, and Theorem \ref{conclu racines}). 

To compute the cohomology of $\OC_\mini$,
we will use the Gysin sequence
associated to the $\CM^*$-fibration $G \times_P \CM^* x_\mini \longto G/P$.
The Pieri formula of Schubert calculus gives an answer in terms
of the Bruhat order (see Section \ref{res spectral}).
Thanks to the results of Section \ref{philg xi},
we translate this in terms of the combinatorics of the root system
(see Theorem \ref{spectral roots}). As a consequence, we obtain the
following results (see Theorem \ref{middle}):

\medskip

\noindent{\bf Theorem~} 
{\it
(i)~
The middle cohomology of $\OC_\mini$ is given by
$$H^{2h^\vee - 2}(\OC_\mini, \ZM) \simeq P^\vee({\Phi'})/Q^\vee({\Phi'})$$
where $\Phi'$ is the root subsystem of $\Phi$ generated by the long
simple roots,
and $P^\vee({\Phi'})$ (resp. $Q^\vee({\Phi'})$) is its coweight lattice
(resp. its coroot lattice).

(ii)~
If $\ell$ is a good prime for $G$, then there is no $\ell$-torsion
in the rest of the cohomology of $\OC_\mini$.
}

\medskip

Part $(i)$ is obtained by a general argument, while $(ii)$ is obtained
by a case-by-case analysis (see Section \ref{case by case},
where we give tables for each type).

In Section \ref{a bis},
we explain a second method for the type $A_{n - 1}$, based on another
resolution of singularities: this time, it is a cotangent bundle
on a projective space (which is also a generalized flag variety).
This cannot be applied to other types, because the minimal class
is a Richardson class only in type $A$.

Note that we are really interested in the torsion.
The rational cohomology must already be known to the experts
(see Remark \ref{rat}).

\section{Long roots and distinguished coset representatives}\label{philg xi}

The Weyl group $W$ of an irreducible and reduced root system $\Phi$
acts transitively on the set $\Phi_\longue$ of long roots in $\Phi$,
hence if $\a$ is an element of $\Phi_\longue$,
then the long roots are in bijection with $W/W_\a$,
where $W_\a$ is the stabilizer of $\a$ in $W$ (a parabolic subgroup).
Now, if we fix a basis $\D$ of $\Phi$, and if we choose for $\a$
the highest root $\alpt$, we find a relation between the partial orders
on $W$ and $\Phi_\longue$ defined by $\D$, and between the length
of a distinguished coset representative and the (dual) height
of the corresponding long root. After this section was written,
I realized that the result was already proved by Carter in the
simply-laced types in \cite{CARTER} (actually, this result is quoted
in \cite{SPRTRIG}). We extend it to any type and
study more precisely the order relations involved.
I also came across \cite[\S 4.6]{BB}, where the depth of a positive
root $\b$ is defined as the minimal integer $k$ such that
there is an element $w$ in $W$ of length $k$ such that $w(\b) < 0$.
By the results of this section, the depth of a positive long root is nothing
but the height of the corresponding coroot (and the depth
of a positive short root is equal to its height).

For the classical results about root systems that are used throughout
this section, the reader may refer to \cite[Chapter VI, \S 1]{BOUR456}.
It is now available in English \cite{BOUR456ENG}.

\subsection{Root systems}\label{root systems} 
Let $V$ be a finite dimensional $\RM$-vector space and
$\Phi$ a root system in $V$.
We note $V^*=\Hom(V,\RM)$ and, if $\a \in \Phi$, we denote by $\a^\vee$ 
the corresponding coroot
and by $s_\a$ the reflexion $s_{\a,\a^\vee}$ of 
\cite[chap. VI, \S 1.1, d\'ef. 1, $(\Srm\Rrm_{\Irm\Irm})$]{BOUR456}. 
Let $W$ be the Weyl group of $\Phi$. 
The perfect pairing between $V$ and $V^*$ will be denoted by $\langle,\rangle$. 
Let $\Phi^\vee=\{\a^\vee~|~\a \in \Phi\}$. 
In all this section, we will assume that $\Phi$ is {\it irreducible} 
and {\it reduced}. 
Let us fix a scalar product $(~|~)$ on $V$, invariant under $W$,
such that
$$\min_{\a \in \Phi} (\a|\a) = 1.$$
We then define the integer 
\[
r=\max_{\a \in \Phi} (\a|\a).
\]
Let us recall that, since $\Phi$ is irreducible and reduced,
we have $r \in \{1,2,3\}$ 
and $(\a|\a) \in \{1,r\}$ if $\a \in \Phi$ 
(see \cite[chap. VI, \S 1.4, prop. 12]{BOUR456}). We define
\[
\Phi_\longue=\{\a \in \Phi~|~(\a|\a) = r\}
\]
and
\[
\Phi_\courte=\{\a \in \Phi~|~(\a|\a) < r \}
= \Phi\setminus \Phi_\longue.
\]
If $\a$ and $\b$ are two roots, then 
\begin{equation}\label{scalaire}
\langle \a,\b^\vee \rangle = \frac{2(\a|\b)}{(\b|\b)}.
\end{equation}
In particular, if $\a$ and $\b$ belong to $\Phi$, then
\begin{equation}\label{entier}
2(\a|\b) \in \ZM
\end{equation}
and, if $\a$ or $\b$ belongs to $\Phi_\longue$, then
\begin{equation}\label{r entier}
2(\a|\b) \in r\ZM
\end{equation}
The following classical result says that $\Phi_\longue$ is a closed
subset of $\Phi$.

\begin{lem}\label{somme longue}
If $\a$, $\b \in \Phi_\longue$ are such that $\a+\b \in \Phi$, 
then $\a+\b \in \Phi_\longue$.
\end{lem}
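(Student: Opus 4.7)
The plan is to show $(\a+\b \mid \a+\b) = r$ directly by computing this norm squared and bounding the cross-term using the long-root hypothesis.

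First I would expand
\[
(\a+\b \mid \a+\b) = (\a|\a) + 2(\a|\b) + (\b|\b) = 2r + 2(\a|\b),
\]
so the goal is to prove that $2(\a|\b) = -r$. By hypothesis both $\a$ and $\b$ lie in $\Phi_\longue$, so (\ref{r entier}) gives $2(\a|\b) \in r\ZM$; write $2(\a|\b) = kr$ for some $k \in \ZM$. Then
\[
(\a+\b \mid \a+\b) = (k+2)\,r.
\]

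Next I would bound $|k|$. Since $\a+\b \in \Phi$ we have $\a \neq \pm\b$ (the case $\a = \b$ is excluded because $\Phi$ is reduced, and $\a = -\b$ would give $\a+\b = 0$), so $\a$ and $\b$ are non-proportional roots. Using (\ref{scalaire}) and $(\a|\a) = (\b|\b) = r$,
\[
\langle \a, \b^\vee \rangle = \langle \b, \a^\vee \rangle = \frac{2(\a|\b)}{r} = k,
\]
and the classical bound $\langle \a, \b^\vee \rangle \langle \b, \a^\vee \rangle \in \{0,1,2,3\}$ for non-proportional roots forces $k^2 \le 3$, hence $k \in \{-1,0,1\}$. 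Therefore $(\a+\b \mid \a+\b) \in \{r, 2r, 3r\}$.

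Finally I would conclude by a short case check on $r \in \{1,2,3\}$. Since $\a+\b \in \Phi$, its norm squared lies in $\{1,r\}$, and in each of the three cases the intersection of $\{1,r\}$ with $\{r,2r,3r\}$ is exactly $\{r\}$. Thus $k = -1$ and $(\a+\b \mid \a+\b) = r$, so $\a+\b \in \Phi_\longue$. The argument is really just bookkeeping with the two constraints ``$2(\a|\b)$ is a multiple of $r$'' and ``$\a+\b$ has admissible length''; no step should be a serious obstacle.
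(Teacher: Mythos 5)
Your proof is correct and follows essentially the same line as the paper's: expand $(\a+\b\mid\a+\b) = 2r + 2(\a|\b)$, note via (\ref{r entier}) that this lies in $r\ZM$, and compare with the fact that any root has squared length in $\{1,r\}$. The intermediate step of bounding $k = 2(\a|\b)/r$ by Cartan-integer considerations is superfluous — once $(\a+\b\mid\a+\b)\in r\ZM \cap \{1,r\}$, the conclusion $(\a+\b\mid\a+\b)=r$ already follows for each $r\in\{1,2,3\}$ — but it does no harm.
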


\begin{proof}
We have $(\a+\b~|~\a+\b)=(\a|\a)+(\b|\b)+2(\a|\b)$.
Thus, by \ref{r entier}, 
we have $(\a+\b~|~\a+\b) \in r\ZM$, which implies the desired result.
\end{proof}

\subsection{Basis, positive roots, height}\label{basis}
Let us fix a basis $\D$ of $\Phi$ and let $\Phi^+$ be the set of roots
$\a \in \Phi$ whose coefficients in the basis $\D$ are non-negative. 
Let $\D_\longue = \Phi_\longue \cap \D$ and
$\D_\courte = \Phi_\courte \cap \D$. 
Note that $\D_\longue$ need not be a basis of $\Phi_\longue$. 
Indeed, $\Phi_\longue $ is a root system of rank equal to the rank
of $\Phi$, whereas $\D_\longue$ has fewer elements than $\D$ if
$\Phi$ is of non-simply-laced type.
Let us recall the following well-known result
\cite[chap. VI, \S 1, exercice 20 (a)]{BOUR456}:

\begin{lem}\label{caracterisation longue}
Let $\g \in \Phi$ and write $\g=\sum_{\a \in \D} n_\a \a$, with
$n_\a \in \ZM$. Then $\g \in \Phi_\longue$ \iff \
$r$ divides all the $n_\a$, $\a \in \D_\courte$.
\end{lem}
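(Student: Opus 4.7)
The plan is as follows. The case $r = 1$ is vacuous, since then every root is long and $\Delta_\courte = \vide$; so assume $r \in \{2, 3\}$, i.e.\ $\Phi$ is of type $B_n$, $C_n$, $F_4$ or $G_2$.

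For the implication $\Rightarrow$, I would show that the reduced tuple $\overline{n}(\gamma) := (n_\alpha \bmod r)_{\alpha \in \Delta_\courte}$ is constant on the $W$-orbit $\Phi_\longue$, hence equal to its value at any simple long root, which is $0$. Since simple reflections generate $W$ and $s_\alpha(\gamma) = \gamma - \langle \gamma, \alpha^\vee\rangle\,\alpha$ only modifies the coefficient of $\alpha$, it suffices to analyze two cases for $\gamma \in \Phi_\longue$ and $\alpha \in \Delta$: if $\alpha \in \Delta_\longue$, this coefficient is irrelevant to $\overline{n}(\gamma)$; if $\alpha \in \Delta_\courte$, then $(\alpha|\alpha) = 1$ makes the increment $-\langle \gamma, \alpha^\vee\rangle = -2(\gamma|\alpha)$ divisible by $r$ thanks to \ref{r entier} (applicable because $\gamma$ is long). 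Transitivity of $W$ on $\Phi_\longue$ (a standard fact, cf.\ \cite[chap.~VI, \S 1.3]{BOUR456}) then yields the claim.

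For the converse $\Leftarrow$, I would decompose $\gamma = \gamma_\longue + \gamma_\courte$ with $\gamma_? := \sum_{\alpha \in \Delta_?} n_\alpha\,\alpha$ and compute $(\gamma|\gamma)$ modulo $r$. Every contribution to $(\gamma_\longue|\gamma_\longue)$ and to $2(\gamma_\longue|\gamma_\courte)$ carries either a factor $(\alpha|\alpha) = r$ with $\alpha \in \Delta_\longue$, or a factor $2(\alpha|\beta)$ with at least one of $\alpha, \beta$ long, which lies in $r\ZM$ by \ref{r entier}. Hence
\[
(\gamma|\gamma) \equiv (\gamma_\courte|\gamma_\courte) \pmod{r}.
\]
Writing $n_\alpha = r m_\alpha$ for $\alpha \in \Delta_\courte$ and using $(\alpha|\alpha) = 1$ together with $2(\alpha|\beta) \in \ZM$ from \ref{entier}, a direct expansion of $(\gamma_\courte|\gamma_\courte) = \sum_{\alpha,\beta \in \Delta_\courte} n_\alpha n_\beta (\alpha|\beta)$ places it in $r^2\ZM$. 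Therefore $(\gamma|\gamma) \equiv 0 \pmod{r}$, and since $(\gamma|\gamma) \in \{1, r\}$ with $r \geq 2$, this forces $(\gamma|\gamma) = r$, i.e.\ $\gamma \in \Phi_\longue$.

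There is no serious obstacle: the only non-formal input is the transitivity of $W$ on the long roots, which is standard. The asymmetry between long and short is encoded precisely in the fact, given by \ref{r entier}, that $\langle \gamma, \alpha^\vee\rangle \in r\ZM$ whenever $\gamma$ is long, a property that visibly fails for $\gamma$ and $\alpha$ both short and explains why the forward direction is the only one requiring a group-theoretic argument.
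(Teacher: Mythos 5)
Your proof is correct and follows essentially the same route as the paper: the converse direction is the same modular computation of $(\g|\g)$, and the forward direction is the same $W$-transitivity-plus-simple-reflection argument, merely packaged as invariance of the function $\overline{n}$ on long roots rather than as $W$-stability of the subset $\Phi'$.
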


\begin{proof}
Let $\Phi'$ be the set of roots $\g' \in \Phi$ such that,
if $\g'=\sum_{\a \in \D} n_\a' \a$, then $r$ divides $n_\a'$ for all
$\a \in \D_\courte$. We want to show that $\Phi_\longue = \Phi'$. 

Suppose that $r$ divides all the $n_\a$, $\a \in \D_\courte$. Then
$n_\a^2 (\a|\a) \in r\ZM$ for all $\a\in\D$, and by
\ref{entier} and \ref{r entier}, we have $2n_\a n_\b(\a|\b) \in r\ZM$ 
for all $(\a,\b) \in \D \times \D$ such that $\a \neq \b$. 
Thus $(\g|\g) \in r\ZM$, which implies that $\g \in \Phi_\longue$. 
Thus $\Phi' \subset \Phi_\longue$.

Since $W$ acts transitively on $\Phi_\longue$, 
it suffices to show that $W$ 
stabilizes $\Phi'$. In other words, it is enough to show that,
if $\a \in \D$ and $\g \in \Phi'$, then $s_\a(\g) \in \Phi'$. But
$s_\a(\g)=\g -\langle \g,\a^\vee \rangle \a$. 
If $\a \in \D_\longue$, then $s_\a(\g) \in \Phi'$ because $\g \in \Phi'$. 
If $\a \in \D_\courte$, then $\langle \g,\a^\vee\rangle = 2(\g|\a) \in r\ZM$ 
because $\g \in \Phi' \subset \Phi_\longue$
(see \ref{scalaire} and \ref{r entier}). 
Thus $s_\a(\g) \in \Phi'$.
\end{proof}

If $\g = \sum_{\a \in \D} n_\a \a \in \Phi$, the {\it height} of $\g$ 
(denoted by $\hauteur(\g)$) is defined by
$\hauteur(\g)=\sum_{\a \in \D} n_\a$. 
One defines the height of a coroot similarly.

If $\g$ is long, we have
$$\g^\vee =
\sum_{\a \in \D_\longue} n_\a \a^\vee
 + \frac{1}{r} \sum_{\a\in\D_\courte} n_\a \a^\vee.$$
Let
$$\hauteur^\vee(\g) := \hauteur(\g^\vee)
= \sum_{\a \in \D_\longue} n_\a + \frac{1}{r} \sum_{\a\in\D_\courte} n_\a.$$
In particular, the right-hand side of the last equation is an integer,
which is also a consequence of Lemma \ref{caracterisation longue}.

If $\a$ and $\b$ are long roots such that $\a + \b$ is a
(long) root, then $(\a + \b)^\vee = \a^\vee + \b^\vee$, so
$\hauteur^\vee$ is additive on long roots.

\subsection{Length}\label{length}
Let $l : W \to \NM=\{0,1,2,\dots\}$ be the {\it length} function associated 
to $\D$: if we let
$$N(w)=\{\a \in \Phi^+~|~w(\a) \in - \Phi^+\},$$
then we have 
\begin{equation}\label{longueur}
l(w)=|N(w)|.
\end{equation}
If $\a \in \Phi^+$ and if $w \in W$, then we have
\begin{equation}\label{ajout}
\text{\it $l(ws_\a) > l(w)$ \iff\  $w(\a) \in \Phi^+$.}
\end{equation}
Replacing $w$ by $w^{-1}$, and using the fact that an element
of $W$ has the same length as its inverse, we get
\begin{equation}\label{ajout gauche}
\text{\it $\ell(s_\a w) > \ell(w)$ \iff\  $w^{-1}(\a) \in \Phi^+$.}
\end{equation}
More generally, it is easy to show that,
if $x$ and $y$ belong to $W$, then
\begin{equation}\label{dyer}
N(xy) = N(y) \stackrel{.}{+} {}^{y^{-1}}N(x)
\end{equation}
where $\stackrel{.}{+}$ denotes the symmetric difference
(there are four cases to consider), and therefore
\begin{equation}\label{ajout bis}
\text{\it $l(xy)=l(x)+l(y)$ \iff\  $N(y) \subset N(xy)$}.
\end{equation}

Let $w_0$ be the longest element of $W$.
Recall that
\begin{equation}\label{wo}
l(w_0w)=l(ww_0)=l(w_0)-l(w)
\end{equation}
for all $w \in W$. If $I$ is a subset of $\D$, we denote by 
$\Phi_I$ the set of the roots $\a$ which belong
to the sub-vector space of $V$ generated by $I$ and we let
$$\Phi_I^+ = \Phi_I \cap\Phi^+ \qquad\text{and}\qquad
W_I=<s_\a~|~\a \in I>.$$
We also define
$$X_I=\{w \in W~|~w(\Phi_I^+) \subset \Phi^+\}.$$
Let us recall that $X_I$ is a set of coset representatives of $W/W_I$ and that
$w \in X_I$ \iff\  $w$ is of minimal length in $wW_I$. 
Moreover, we have
\begin{equation}\label{somme longueurs}
l(xw)=l(x)+l(w)
\end{equation}
if $x \in X_I$ and $w \in W_I$. 
We denote by $w_I$ the longest element of $W_I$. Then $w_0w_I$ is
the longest element of $X_I$ (this can be easily deduced from \ref{wo} 
and \ref{somme longueurs}). 
Finally, if $i$ is an integer, we denote by $W^i$ the set of elements
of $W$ of length $i$, and similarly $X_I^i$ is the set of elements
of $X_I$ of length $i$. To conclude this section,
we shall prove the following result, which should be well known:

\begin{lem}\label{longueur reflexion}
If $\b \in \Phi_\longue^+$, then
$l(s_\b) = 2~\hauteur^\vee(\b) - 1$. 
\end{lem}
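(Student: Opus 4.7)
The plan is to proceed by strong induction on the coroot height $\hauteur^\vee(\beta)$, using the formula $l(s_\beta) = |N(s_\beta)|$ from \ref{longueur}. For the base case $\hauteur^\vee(\beta) = 1$, the formula defining $\hauteur^\vee$ in Section \ref{basis}, combined with Lemma \ref{caracterisation longue}, forces $\beta$ to be a simple long root: expanding $\beta = \sum n_\gamma \gamma$ with $n_\gamma \ge 0$ and $n_\gamma \in r\ZM$ for $\gamma$ short, the condition $\hauteur^\vee(\beta) = 1$ leaves only the possibility of a single coefficient $n_\gamma = 1$ with $\gamma$ long and simple. Hence $s_\beta$ is a simple reflection, and both sides of the identity equal $1$.

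For the inductive step, assume $\hauteur^\vee(\beta) \ge 2$, so $\beta$ is non-simple. The standard argument (from $(\beta|\beta) > 0$ and $\beta = \sum n_\gamma \gamma$ with $n_\gamma \ge 0$) yields a simple root $\alpha \ne \beta$ with $(\alpha|\beta) > 0$. For $\beta \in \Phi_\longue$ and $\alpha$ simple with $\alpha \ne \pm\beta$, the classification of rank-two root subsystems combined with \ref{scalaire} and \ref{r entier} gives $\langle \alpha, \beta^\vee \rangle \in \{-1,0,1\}$, so in fact $\langle \alpha, \beta^\vee \rangle = 1$. Setting $\beta' = s_\alpha(\beta)$, the root $\beta'$ is positive (since $\beta$ is not a multiple of $\alpha$) and long (by $W$-invariance of root lengths), with coroot $\beta'^\vee = \beta^\vee - \alpha^\vee$, so $\hauteur^\vee(\beta') = \hauteur^\vee(\beta) - 1$. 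Write $m = \langle \beta, \alpha^\vee \rangle$; by a symmetric analysis, $m = 1$ if $\alpha$ is long and $m = r$ if $\alpha$ is short, and $\beta' = \beta - m\alpha$.

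The conjugation identity $s_\beta = s_\alpha s_{\beta'} s_\alpha$ gives $l(s_\beta) \le l(s_{\beta'}) + 2$, and by \ref{ajout gauche} and \ref{ajout} the reverse inequality reduces to checking that both $s_{\beta'}(\alpha)$ and $s_\alpha s_{\beta'}(\alpha)$ lie in $\Phi^+$. A direct computation using $\langle \alpha, \beta'^\vee \rangle = \langle \alpha, \beta^\vee \rangle - \langle \alpha, \alpha^\vee \rangle = -1$ yields
\[
s_{\beta'}(\alpha) = \alpha + \beta' = \beta - (m-1)\alpha
\quad\text{and}\quad
s_\alpha s_{\beta'}(\alpha) = \beta - \alpha.
\]
Both elements lie in the $\alpha$-string through $\beta$, which runs from $\beta$ down to $\beta - m\alpha = \beta' > 0$; since the $\alpha$-coefficient of $\beta$ is therefore at least $m$, both expressions are positive roots. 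Combining with the induction hypothesis $l(s_{\beta'}) = 2\hauteur^\vee(\beta) - 3$ yields the desired equality $l(s_\beta) = 2\hauteur^\vee(\beta) - 1$.

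The main subtlety is the non-simply-laced case where $\alpha$ is short while $\beta$ is long, so that $m = r \in \{2,3\}$ and $\beta' = \beta - r\alpha$ rather than the simpler $\beta - \alpha$: one must then verify positivity of the intermediate roots $\beta - (r-1)\alpha$ and $\beta - \alpha$ via the $\alpha$-string argument indicated above, rather than by a single elementary manipulation. Once this is dealt with, the induction proceeds uniformly across all irreducible types, and gives the cleanest conceptual reason why it is the coroot height (rather than the height) that appears on the right-hand side.
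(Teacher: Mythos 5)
Your proof is correct and takes essentially the same approach as the paper: induction on $\hauteur^\vee(\beta)$ via the conjugation $s_\beta = s_\alpha s_{\beta'} s_\alpha$ by a simple reflection with $(\alpha|\beta) > 0$, using \ref{ajout} and \ref{ajout gauche} to show the length drops by exactly two. Your uniform treatment of the positivity of $s_{\beta'}(\alpha) = \beta - (m-1)\alpha$ and $s_\alpha s_{\beta'}(\alpha) = \beta - \alpha$ via the $\alpha$-string through $\beta$ is slightly more explicit than the paper's case split on $\gamma$ long or short, where the short case is dispatched with ``the same argument applies.''
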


\begin{proof}
We shall prove the result by induction on
$\hauteur^\vee(\b)$. The case where $\hauteur^\vee(\b) = 1$ is clear. 
Suppose $\hauteur^\vee(\b) > 1$ and suppose the result holds
for all positive long roots whose dual height is strictly smaller.

First, there exists a $\g \in \D$ such that $\b - \g \in \Phi^+$ 
(see \cite[chap. VI, \S 1.6, prop. 19]{BOUR456}). Let $\a = \b - \g$. 
There are two possibilities:

\medskip

$\bullet$ If $\g \in \D_\longue$, then $\a = \b - \g \in \Phi_\longue$
by Lemma \ref{somme longue}. 
Moreover, $\hauteur^\vee(\a) = \hauteur^\vee(\b) - 1$. Thus
$l(s_\a) = 2~\hauteur^\vee(\a)-1$. We have $(\a|\g) \neq 0$
(otherwise $\b = \a + \g$ 
would be of squared length $2r$, which is impossible). By 
\cite[chap. VI, \S 1.3]{BOUR456}, we have $(\a|\g) = - r/2$. 
Thus $\b = s_\g(\a) = s_\a(\g)$, and $s_\b = s_\g s_\a s_\g$. Since 
$s_\a(\g) > 0$, we have $l(s_\b s_\a) = l(s_\b) +1$ (see \ref{ajout}).
Since $s_\g s_\a (\g) = s_\g(\b) = \a > 0$, we have
$l(s_\g s_\a s_\g) = l(s_\a s_\g) + 1 = l(s_\a) + 2$ (see \ref{ajout}),
as expected.

\medskip

$\bullet$ If $\g \in \D_\courte$, then, by 
\cite[chap. VI, \S 1.3]{BOUR456}, we have 
$\a = \b - r\g \in \Phi_\longue^+$, $(\a|\g) = -r/2$, and 
$\hauteur^\vee(\a) = \hauteur^\vee(\b) - 1$.
As in the first case, we have $\b = s_\g(\a)$. Thus 
$s_\b = s_\g s_\a s_\g$ and the same argument applies.
\end{proof}

\begin{remark}\label{short}
\emph{By duality, if $\b \in \Phi_\courte^+$, we have}
$$l(s_\b) = 2~\hauteur(\b) - 1.$$
\end{remark}

\subsection{Highest root}\label{highest root} 
Let $\alpt$ be {\it the highest root} of $\Phi$ relatively to $\D$ 
(see \cite[chap. VI, \S 1.8, prop. 25]{BOUR456}). 
It is of height $h - 1$, where $h$ is the Coxeter number of $\Phi$.
The \emph{dual Coxeter number} $h^\vee$ can be defined as
$1 + \hauteur^\vee(\alpt)$. Let us recall the following facts:
\begin{equation}\label{grand long}
\alpt \in \Phi_\longue
\end{equation}
and
\begin{equation}\label{positif}
\text{\it If $\a \in \Phi^+ \setminus\{\alpt\}$, 
then $\langle \a,\alpt^\vee \rangle \in \{0,1\}$.}
\end{equation}
In particular,
\begin{equation}\label{plus}
\text{\it If $\a \in \Phi^+$, then $\langle \alpt,\a^\vee \rangle \ge 0$}
\end{equation}
and
\begin{equation}\label{chambre grand}
\alpt \in \Cba,
\end{equation}
where $C$ is the chamber associated to $\D$.

From now on, ${\Iti}$ will denote the subset of $\D$ defined by
\begin{equation}\label{Iti}
{\Iti}=\{\a \in \D~|~(\alpt|\a)=0\}.
\end{equation}
By construction, ${\Iti}$ is stable under any automorphism of $V$
stabilizing $\D$. 
In particular, it is stable under $-w_0$. 
By \ref{plus}, we have
\begin{equation}\label{phii}
\Phi_{\Iti} = \{\a \in \Phi~|~(\alpt|\a) = 0\}.
\end{equation}
From \ref{phii} and \cite[chap. V, \S 3.3, prop. 2]{BOUR456}, we
deduce that
\begin{equation}\label{stab}
W_{\Iti}=\{w \in W~|~w(\alpt)=\alpt\}.
\end{equation}
Note that $w_0$ and $w_{\Iti}$ commute (because $-w_0({\Iti})= {\Iti}$). 
We have
\begin{equation}\label{wo wi}
N(w_0w_{\Iti}) = \Phi^+ \setminus \Phi_{\Iti}^+.
\end{equation}

Let us now consider the map $W \to \Phi_\longue$, $w \mapsto w(\alpt)$. 
It is surjective \cite[chap. VI, \S 1.3, prop. 11]{BOUR456} 
and thus induces a bijection $W/W_\Iti \to \Phi_\longue$ by \ref{stab}. 
It follows that the map
\begin{equation}\label{bijection}
\fonctio{X_{\Iti}}{\Phi_\longue}{x}{x(\alpt)}
\end{equation}
is a bijection. If $\a \in \Phi_\longue$, we will denote by $x_\a$ the unique 
element of $X_{\Iti}$ such that $x_\a(\alpt)=\a$. We have
\begin{equation}\label{conjugaison}
x_\a s_\alpt = s_\a x_\a.
\end{equation}

\begin{lem}\label{salpt}
We have $w_0 w_{\Iti} = w_{\Iti} w_0 = s_\alpt$.
\end{lem}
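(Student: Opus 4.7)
The plan is to prove the equalities $w_0 w_{\Iti} = w_{\Iti} w_0 = s_\alpt$ by comparing inversion sets, using the fact that an element $w \in W$ is determined by $N(w)$.

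First, I would dispatch the commutation $w_0 w_\Iti = w_\Iti w_0$. Since $-w_0$ preserves $\D$ and stabilizes $\Iti$ (noted right after \ref{Iti}), we have $w_0(\Phi_\Iti) = -\Phi_\Iti = \Phi_\Iti$, so conjugation by $w_0$ sends $W_\Iti$ to itself. Being the unique longest element of $W_\Iti$, $w_\Iti$ is fixed by this conjugation, giving $w_0 w_\Iti w_0^{-1} = w_\Iti$.

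Next, by \ref{wo wi} we have $N(w_0 w_\Iti) = \Phi^+ \setminus \Phi_\Iti^+$, so it suffices to prove $N(s_\alpt) = \Phi^+ \setminus \Phi_\Iti^+$. By \ref{phii}, this set is exactly $\{\a \in \Phi^+ \mid (\alpt|\a) \neq 0\}$. I would compute $N(s_\alpt)$ case by case on $\a \in \Phi^+$:
\begin{itemize}
\item If $\a = \alpt$, then $s_\alpt(\a) = -\alpt < 0$, and $\a \notin \Phi_\Iti^+$.
\item If $\a \in \Phi^+ \setminus\{\alpt\}$ with $\langle \a,\alpt^\vee\rangle = 0$, then $s_\alpt(\a) = \a > 0$, and $\a \in \Phi_\Iti^+$ by \ref{phii}.
\item If $\a \in \Phi^+ \setminus\{\alpt\}$ with $\langle \a,\alpt^\vee\rangle = 1$ (the only remaining possibility by \ref{positif}), then $s_\alpt(\a) = \a - \alpt$; this lies in $\Phi$ and has non-positive coefficients on $\D$ since $\alpt$ is the highest root in the dominance order, hence is negative, and $\a \notin \Phi_\Iti^+$.
\end{itemize}
This gives $N(s_\alpt) = \Phi^+ \setminus \Phi_\Iti^+ = N(w_0 w_\Iti)$, which forces $s_\alpt = w_0 w_\Iti$.

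There is no real obstacle here: the only non-formal input is the classical fact that $\alpt - \a \in \sum_{\beta \in \D} \NM \beta$ for every $\a \in \Phi^+$ (needed in the third case to conclude $\a - \alpt < 0$), which is a standard consequence of $\alpt$ being the highest root, already invoked implicitly through \ref{positif} and \ref{chambre grand}. As a sanity check on lengths, one can note that Lemma~\ref{longueur reflexion} together with $\hauteur^\vee(\alpt) = h^\vee$ gives $\ell(s_\alpt) = 2h^\vee - 1$, consistent with $\ell(w_0 w_\Iti) = |\Phi^+| - |\Phi_\Iti^+|$ via the case-by-case count above.
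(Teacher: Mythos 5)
Your proof is correct and takes essentially the same approach as the paper: both establish $w_0 w_\Iti = w_\Iti w_0$ from $-w_0(\Iti) = \Iti$, then reduce to the equality of inversion sets $N(s_\alpt) = \Phi^+ \setminus \Phi_\Iti^+ = N(w_0 w_\Iti)$, verified using \ref{phii}, \ref{positif}, and the fact that $\alpt$ is the highest root. The only cosmetic difference is that you split the verification into three cases on $\langle \a, \alpt^\vee\rangle$ while the paper argues the two inclusions directly; the substance is identical.
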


\begin{proof}
We have already noticed that $w_0$ and $w_{\Iti}$ commute.

In view of \cite[chap. VI, \S 1, exercice 16]{BOUR456}, 
it suffices to show that $N(w_0w_{\Iti}) = N(s_\alpt)$, 
that is, $N(s_\alpt) = \Phi^+ \setminus \Phi_{\Iti}^+$ (see \ref{wo wi}). 
First, if $\a \in \Phi_{\Iti}^+$, then $s_\alpt(\a) = \a$, so that 
$\a \not\in N(s_\alpt)$. This shows that $N(s_\alpt) \subset
\Phi^+ \setminus \Phi_{\Iti}^+$. 

Let us show the other inclusion. If $\a \in \Phi^+ \setminus \Phi_{\Iti}^+$,
then $\langle \alpt,\a^\vee \rangle > 0$ 
by \ref{plus} and \ref{phii}.
In particular, $s_\alpt(\a) = \a-\langle \a,\alpt^\vee \rangle \alpt$
cannot belong to $\Phi^+$ since $\alpt$ is the highest root.
\end{proof}

\begin{prop}\label{palindrome}
Let $\a\in\Phi_\longue^+$. Then we have
$$l(x_\a s_\alpt) = l(s_\alpt) - l(x_\a)$$
\end{prop}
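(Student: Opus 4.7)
The plan is to rewrite $s_\alpt$ using the factorization $s_\alpt = w_0 w_{\Iti}$ provided by Lemma \ref{salpt}, and to compute the length of $x_\a s_\alpt = x_\a w_0 w_{\Iti}$ by peeling off $w_0$ on the right and then $w_{\Iti}$.

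First, I would use the standard identity $\ell(w w_0) = \ell(w_0) - \ell(w)$ from \eqref{wo} to get $\ell(x_\a w_0) = \ell(w_0) - \ell(x_\a)$. The crux of the argument is then to establish the multiplicative identity
\[
\ell(x_\a w_0 w_{\Iti}) = \ell(x_\a w_0) - \ell(w_{\Iti}),
\]
or, rewriting $x_\a w_0 = (x_\a w_0 w_{\Iti}) \cdot w_{\Iti}$, the equivalent statement $\ell((x_\a w_0 w_{\Iti}) w_{\Iti}) = \ell(x_\a w_0 w_{\Iti}) + \ell(w_{\Iti})$. By criterion \eqref{ajout bis}, this reduces to checking the containment $N(w_{\Iti}) \subset N(x_\a w_0)$.

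Since $w_{\Iti}$ is the longest element of $W_{\Iti}$, a standard argument (any $w \in W_{\Iti}$ preserves the coefficient on simple roots outside ${\Iti}$) shows that $N(w_{\Iti}) = \Phi_{\Iti}^+$. So I need: for every $\b \in \Phi_{\Iti}^+$, $x_\a w_0(\b) \in -\Phi^+$. Here the key input is that $-w_0$ stabilizes ${\Iti}$ (noted just after \eqref{Iti}), which combined with $w_0(\Phi^+) = -\Phi^+$ yields $w_0(\Phi_{\Iti}^+) = -\Phi_{\Iti}^+$. Write $w_0(\b) = -\b'$ with $\b' \in \Phi_{\Iti}^+$; then $x_\a w_0(\b) = -x_\a(\b')$, and $x_\a(\b') \in \Phi^+$ because $x_\a \in X_{\Iti}$ by definition.

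Combining the two length identities yields
\[
\ell(x_\a s_\alpt) = \ell(w_0) - \ell(x_\a) - \ell(w_{\Iti}) = \ell(w_0 w_{\Iti}) - \ell(x_\a) = \ell(s_\alpt) - \ell(x_\a),
\]
using Lemma \ref{salpt} again in the last step. The only potential obstacle is the verification $N(w_{\Iti}) \subset N(x_\a w_0)$, but as explained above this reduces cleanly to the defining property of $X_{\Iti}$ together with the $-w_0$-stability of ${\Iti}$.
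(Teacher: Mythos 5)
Your proof is correct, but it takes a different route from the paper's. The paper writes $s_\alpt = w_{\Iti} w_0$ (using the commutation of $w_0$ and $w_{\Iti}$) so that $x_\a s_\alpt = x_\a w_{\Iti} w_0$, then applies \eqref{wo} to strip the trailing $w_0$ and \eqref{somme longueurs} to split $l(x_\a w_{\Iti}) = l(x_\a) + l(w_{\Iti})$, the latter being the general fact that lengths add when multiplying a distinguished coset representative by a parabolic element. You instead keep $s_\alpt = w_0 w_{\Iti}$, strip $w_0$ from the middle of $x_\a w_0 w_{\Iti}$, and then establish the remaining length identity $\ell(x_\a w_0 w_{\Iti}) = \ell(x_\a w_0) - \ell(w_{\Iti})$ by hand via the $N$-set criterion \eqref{ajout bis}, using that $-w_0$ stabilizes ${\Iti}$ and that $x_\a \in X_{\Iti}$. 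The paper's proof is shorter because the commutation trick positions $w_{\Iti}$ so that \eqref{somme longueurs} applies directly; yours avoids invoking that lemma and instead makes the root-theoretic reason visible, at the cost of an explicit computation. Both are perfectly sound.
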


\begin{proof}
We have
\[
\begin{array}{rcll}
l(x_\a s_\alpt) &=& l(x_\a w_{\Iti} w_0) & \textrm{by Lemma \ref{salpt}}\\
&=& l(w_0) - l(x_\a w_{\Iti}) & \textrm{by \ref{wo}}\\
&=& l(w_0) - l(w_{\Iti}) - l(x_\a) & \textrm{by \ref{somme longueurs}}\\
&=& l(w_0w_{\Iti}) - l(x_\a) & \textrm{by \ref{wo}}\\
&=& l(s_\alpt) - l(x_\a) & \textrm{by Lemma \ref{salpt}.}
\end{array}
\]
\end{proof}

\begin{prop}\label{saxa}
If $\a\in\Phi_\longue^+$, then $x_{-\a} = s_\a x_\a$ and
$l(x_{-\a}) = l(s_\a x_\a) = l(s_\a) + l(x_\a)$.
\end{prop}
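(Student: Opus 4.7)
The plan is to establish the two claims essentially independently: the identity $x_{-\a} = s_\a x_\a$ comes directly from the conjugation relation \ref{conjugaison}, and the length additivity follows from a clean computation of $N(x_\a) \subset N(s_\a x_\a)$.

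For the first claim, by \ref{conjugaison} we have $s_\a x_\a = x_\a s_\alpt$. I would verify the two defining properties of $x_{-\a}$: first, $s_\a x_\a(\alpt) = s_\a(\a) = -\a$; second, for $\b \in \Phi_{\Iti}^+$, equation \ref{phii} gives $s_\alpt(\b) = \b$, so $s_\a x_\a(\b) = x_\a s_\alpt(\b) = x_\a(\b) \in \Phi^+$ since $x_\a \in X_{\Iti}$. Hence $s_\a x_\a \in X_{\Iti}$ with $s_\a x_\a(\alpt) = -\a$, and by uniqueness of the bijection \ref{bijection} we conclude $s_\a x_\a = x_{-\a}$.

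For the length equality, by \ref{ajout bis} it suffices to show $N(x_\a) \subset N(s_\a x_\a)$. Take $\b \in N(x_\a)$ and write $x_\a(\b) = -\g$ with $\g \in \Phi^+$; I need to prove $s_\a(\g) \in \Phi^+$. Since $x_\a \in X_{\Iti}$, the element $\b$ cannot lie in $\Phi_{\Iti}^+$, and since $x_\a(\alpt) = \a > 0$, we must also have $\b \neq \alpt$. Together with \ref{positif} and \ref{phii}, this forces $\langle \b, \alpt^\vee \rangle = 1$. Here is the key computation: since $\a$ and $\alpt$ are both long, $(\a,\a) = (\alpt,\alpt) = r$, and $x_\a$ is an isometry, so
\[
\langle \g, \a^\vee \rangle = \frac{2(\g,\a)}{(\a,\a)} = -\frac{2(x_\a(\b),x_\a(\alpt))}{r} = -\frac{2(\b,\alpt)}{(\alpt,\alpt)} = -\langle \b, \alpt^\vee \rangle = -1.
\]
Therefore $s_\a(\g) = \g - \langle \g, \a^\vee\rangle\a = \g + \a$, which has non-negative coefficients in $\D$ (as a sum of two positive roots) and is a root, hence lies in $\Phi^+$. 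This gives $s_\a x_\a(\b) = -s_\a(\g) \in -\Phi^+$, i.e.\ $\b \in N(s_\a x_\a)$.

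The main subtlety is the orthogonality calculation identifying $\langle \g,\a^\vee\rangle$ with $-\langle \b,\alpt^\vee\rangle$: this is where the hypothesis $\a\in\Phi_\longue^+$ is truly used, because it forces $(\a,\a)=(\alpt,\alpt)$ so that the $W$-invariant pairing transports cleanly between the two pairs of roots. Everything else is a matter of carefully tracking which positive roots can contribute to $N(x_\a)$, which is cleanly pinned down by the classical property \ref{positif} of the highest root together with the defining property of $X_{\Iti}$.
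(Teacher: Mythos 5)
Your proof is correct and follows essentially the same route as the paper: show $s_\a x_\a \in X_{\Iti}$ and $s_\a x_\a(\alpt) = -\a$ for the identification $x_{-\a} = s_\a x_\a$, then verify $N(x_\a)\subset N(s_\a x_\a)$ to conclude the lengths add via \ref{ajout bis}. The only cosmetic difference is that you pin down the exact values $\langle\b,\alpt^\vee\rangle=1$ and $\langle\g,\a^\vee\rangle=-1$ using \ref{positif}, whereas the paper gets by with the sign of $\langle\b,\alpt^\vee\rangle$ alone, noting that $s_\a x_\a(\b) = x_\a(\b) - \langle x_\a(\b),\a^\vee\rangle\a$ is a sum of a negative root and a nonpositive multiple of $\a$ and hence negative.
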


\begin{proof}
We have $s_\a x_\a(\alpt) = s_\a(\a) = -\a$, so to show
that $x_{-\a} = s_\a x_\a$, it is enough to show that
$s_\a x_\a \in X_{\Iti}$. But, if $\b \in \Phi_{\Iti}^+$, we have
(see \ref{conjugaison}) $s_\a x_\a(\b) = x_\a s_\alpt(\b)
= x_\a(\b) \in \Phi^+$. Hence the first result.

Let us now show that the lengths add up. By \ref{ajout bis},
it is enough to show that $N(x_\a) \incl N(s_\a x_\a)$.
Let then $\b\in N(x_\a)$. Since $x_\a \in X_{\Iti}$, $\b$ cannot
be in $\Phi_{\Iti}^+$. Thus $\langle \b, \alpt^\vee \rangle > 0$.
Therefore, $\langle x_\a(\b), \a^\vee \rangle > 0$.
Now, we have
$s_\a x_\a(\b) = x_\a(\b) - \langle x_\a(\b), \a^\vee \rangle \a < 0$
(remember that $x_\a(\b) < 0$ since $\b\in N(x_\a)$). 
\end{proof}

\begin{prop}\label{xa}
For $\a\in\Phi_\longue^+$, we have
$$l(x_\a) = \frac{l(s_\alpt) - l(s_\a)}{2}
          = \hauteur^\vee(\alpt) - \hauteur^\vee(\a)$$
$$l(x_{-\a}) = \frac{l(s_\alpt) + l(s_\a)}{2}
           = \hauteur^\vee(\alpt) + \hauteur^\vee(\a) - 1$$
\end{prop}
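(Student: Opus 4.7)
The plan is to combine Propositions \ref{palindrome} and \ref{saxa} into a pair of linear equations in $l(x_\a)$ and $l(x_{-\a})$, and then solve them using Lemma \ref{longueur reflexion}. Since $\a\in\Phi_\longue^+$, all the needed ingredients are already available.

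First I would observe, via the identity \ref{conjugaison}, that
\[
x_\a s_\alpt = s_\a x_\a = x_{-\a},
\]
where the second equality is the first half of Proposition \ref{saxa}. Substituting the leftmost expression into Proposition \ref{palindrome} yields one relation,
\[
l(x_{-\a}) = l(s_\alpt) - l(x_\a),
\]
while the length statement in Proposition \ref{saxa} provides the second,
\[
l(x_{-\a}) = l(s_\a) + l(x_\a).
\]

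Adding and subtracting these two equations, one immediately obtains
\[
l(x_\a) = \frac{l(s_\alpt) - l(s_\a)}{2}, \qquad
l(x_{-\a}) = \frac{l(s_\alpt) + l(s_\a)}{2},
\]
which gives the first equality in each line of the proposition. To obtain the expressions in terms of dual height, I would then apply Lemma \ref{longueur reflexion} to $\a$ and $\alpt$ (both long positive roots), giving $l(s_\a) = 2\hauteur^\vee(\a) - 1$ and $l(s_\alpt) = 2\hauteur^\vee(\alpt) - 1$; substitution yields the two claimed formulas.

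Since every ingredient has already been established in the preceding lemmas, there is really no obstacle to overcome at this stage: the proposition is essentially a formal consequence of Propositions \ref{palindrome} and \ref{saxa} combined with the length formula for reflections along long positive roots. The substantive work lies in those preceding results.
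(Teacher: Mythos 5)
Your proof is correct and follows exactly the route the paper takes: the paper's own justification is the one-line remark that the result follows from Propositions \ref{palindrome} and \ref{saxa} together with \ref{conjugaison}, and you have simply spelled out the two linear equations in $l(x_\a)$ and $l(x_{-\a})$ and solved them, then correctly invoked Lemma \ref{longueur reflexion} for the dual-height formulas (which the paper leaves implicit).
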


\begin{proof}
This follows from Propositions \ref{palindrome} and \ref{saxa},
and \ref{conjugaison}.
\end{proof}

\subsection{Orders}\label{orders}

The choice of $\D$ determines an order relation on $V$.
For $x$, $y\in V$, we have
$y \le x$ \iff\  $y - x$ is a linear combination
of the simple roots with non-negative coefficients.

For $\a\in\Phi_\longue$, it will be convenient to define the \emph{level}
$L(\a)$ of $\a$ as follows:
\begin{equation}\label{level}
L(\a) =
\left\{
\begin{array}{ll}
\hauteur^\vee(\alpt) - \hauteur^\vee(\a)
& \textrm{if $\a > 0$}\\
\hauteur^\vee(\alpt) - \hauteur^\vee(\a) - 1
& \textrm{if $\a < 0$}
\end{array}
\right.
\end{equation}
If $i$ is an integer, let $\Phi_\longue^i$ be the set of long
roots of level $i$. Then Proposition \ref{xa} says that
the bijection \ref{bijection} maps $X_\Iti^i$ onto $\Phi_\longue^i$.

For $\g\in\Phi^+$, we write
\begin{equation}\label{elem}
\b \elem{\g} \a \textrm{ \iff\  }
\a = s_\g(\b) \textrm{ and } L(\a) = L(\b) + 1.
\end{equation}
In that case, we have $\b - \a = \langle \beta, \g^\vee \rangle \g > 0$, so
$\b > \a$.

If $\a$ and $\b$ are two long roots, we say that
there is a \emph{path} from $\b$ to $\a$, and we write
$\a \preceq \b$, \iff\  there exists a sequence $(\b_0, \b_1, \ldots, \b_k)$ of
long roots,  and a sequence $(\g_1,\ldots,\g_k)$ of positive roots,
such that
\begin{equation}\label{path}
\b = \b_0 \elem{\g_1} \b_1 \elem{\g_2} \ldots \elem{\g_k} \b_k = \a.
\end{equation}
In that case, we have $L(\b_i) = L(\b) + i$ for 
$i \in \{0,~\ldots,~k\}$. If moreover all the roots $\g_i$ are simple,
we say that there is a \emph{simple path} from $\b$ to $\a$.

On the other hand, we have the Bruhat order on $W$ defined by
the set of simple reflections $S = \{s_\a~\mid~\a\in\D\}$.
If $w$  and $w'$ belong to $W$, we write
$w\longto w'$ if $w' = s_\g w$ and $l(w') = l(w) + 1$, for some
positive root $\g$. In that case, we write $w\elem{\g}w'$
(the positive root $\g$ is uniquely determined). The Bruhat
order $\le$ is the reflexive and transitive closure of the
relation $\longto$. On $X_\Iti$, we will consider the restriction of
the Bruhat order on $W$.

Let us now consider the action of a simple reflection on a long root.

\begin{lem}\label{action simple}
Let $\b\in\Phi_\longue$ and $\g\in\D$. Let $\a = s_\g(\b)$.

\begin{enumerate}
\item
\begin{enumerate}[(i)]
\item If $\b\in\D_\longue$ and $(\b|\g) > 0$, then $\g = \b$,
$\a = -\b$ and $\langle \b, \g^\vee \rangle = 2$.
\item If $\b\in -\D_\longue$ and $(\b|\g) < 0$, then $\g = -\b$,
$\a = -\b$ and $\langle \b, \g^\vee \rangle = - 2$.
\item Otherwise, $\a$ and $\langle \b, \g^\vee \rangle$ are
given by the following table:
$$
\begin{array}{|c|c|c|}
\hline
& \g\in\D_\longue & \g\in\D_\courte\\
\hline
\begin{array}{c}
(\b|\g) > 0\\
(\b|\g) = 0\\
(\b|\g) < 0
\end{array}
&
\begin{array}{l|l}
\a = \b - \g & \langle \b, \g^\vee \rangle = 1\\
\a = \b      & \langle \b, \g^\vee \rangle = 0\\
\a = \b + \g & \langle \b, \g^\vee \rangle = - 1\\
\end{array}
&
\begin{array}{l|l}
\a = \b - r\g & \langle \b, \g^\vee \rangle = r\\
\a = \b       & \langle \b, \g^\vee \rangle = 0\\
\a = \b + r\g & \langle \b, \g^\vee \rangle = - r\\
\end{array}\\
\hline
\end{array}
$$
\end{enumerate}

\item
\begin{enumerate}[(i)]
\item If $(\b|\g) > 0$ then $L(\a) = L(\b) + 1$, so that $\b \elem{\g} \a$.
\item If $(\b|\g) = 0$ then $L(\a) = L(\b)$, and in fact $\a = \b$.
\item If $(\b|\g) < 0$ then $L(\a) = L(\b) - 1$, so that $\a \elem{\g} \b$.
\end{enumerate}
\end{enumerate}
\end{lem}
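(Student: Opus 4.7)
The plan is to reduce everything to the two formulas $s_\gamma(\beta) = \beta - \langle \beta, \gamma^\vee \rangle \gamma$ and $\langle \beta, \gamma^\vee \rangle = 2(\beta|\gamma)/(\gamma|\gamma)$ from \ref{scalaire}, combined with the fact that $\alpha = s_\gamma(\beta)$ still lies in $\Phi$ (and in fact in $\Phi_\longue$, since $W$ preserves the scalar product). Since $\beta$ is long, $(\beta|\beta) = r$, while $(\gamma|\gamma) \in \{1,r\}$ depending on whether $\gamma \in \Delta_\courte$ or $\Delta_\longue$. The integrality conditions \ref{entier} and \ref{r entier}, together with the Cauchy--Schwarz bound $(\beta|\gamma)^2 \le r(\gamma|\gamma)$ (with equality only when $\beta = \pm\gamma$), leave extremely few possible values for $(\beta|\gamma)$, which already yields the table in Part 1 (iii) once the degenerate cases $\beta = \pm\gamma$ are excluded.

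For the two degenerate cases (i) and (ii), I would argue as follows. If $\beta \in \Delta_\longue$ and $(\beta|\gamma) > 0$, then $\beta$ and $\gamma$ are two simple roots with strictly positive scalar product, which by the standard property of bases of root systems forces $\beta = \gamma$; the formulas $\alpha = -\beta$ and $\langle \beta, \gamma^\vee\rangle = 2$ are then immediate. Case (ii) follows by applying (i) to $-\beta \in \Delta_\longue$ (noting $(-\beta|\gamma) > 0$) and the identity $s_\gamma(-\beta) = -s_\gamma(\beta)$. Conversely, when we are \emph{not} in one of these degenerate cases, we have $\beta \neq \pm\gamma$ (since $\beta = \pm\gamma$ would force $\beta \in \pm\Delta_\longue$ with the sign of $(\beta|\gamma)$ as excluded), so the bound $|\langle \beta,\gamma^\vee\rangle| < 2\sqrt{r/(\gamma|\gamma)}$ is strict, and the only possibilities are $\langle \beta,\gamma^\vee\rangle \in \{0, \pm 1\}$ when $\gamma$ is long and $\{0, \pm r\}$ when $\gamma$ is short; substituting gives the table.

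For Part 2, I would compute $L(\alpha) - L(\beta)$ from the explicit expressions for $\alpha$ in Part 1 and the definition (\ref{level}) of $L$. In the long--long case $\alpha = \beta \mp \gamma$, so $\hauteur^\vee(\alpha) = \hauteur^\vee(\beta) \mp 1$ by additivity of $\hauteur^\vee$ on long roots. In the long--short case $\alpha = \beta \mp r\gamma$, and the coefficient of $\gamma$ in $\beta$ changes by $\mp r$; since $\gamma \in \Delta_\courte$ is weighted by $1/r$ in $\hauteur^\vee$ (see Section \ref{basis}), we again get $\hauteur^\vee(\alpha) = \hauteur^\vee(\beta) \mp 1$. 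The case $(\beta|\gamma) = 0$ is trivial since then $\alpha = \beta$.

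The main obstacle, and the only point requiring care, is the sign bookkeeping in the definition of $L$: the formula for $L$ has a $-1$ correction for negative roots, so a change in $\hauteur^\vee$ of $\mp 1$ translates to $L(\alpha) = L(\beta) \pm 1$ only if $\alpha$ and $\beta$ have the \emph{same} sign. I would verify that this is automatic outside the degenerate cases: if $\beta > 0$ and $(\beta|\gamma) > 0$, then $\alpha < 0$ would mean that $\beta - \langle \beta,\gamma^\vee\rangle \gamma$ has all coefficients $\le 0$ in the basis $\Delta$, which forces every coefficient of $\beta$ other than that of $\gamma$ to vanish, i.e.\ $\beta$ is a positive multiple of $\gamma$, and hence $\beta = \gamma \in \Delta_\longue$, putting us in case (i); the opposite sign case is symmetric. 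Thus outside (i) and (ii), $\alpha$ and $\beta$ have the same sign, and the level shift follows directly from the change in $\hauteur^\vee$.
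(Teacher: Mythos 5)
Your argument for Part 1 is correct but takes a genuinely different route from the paper. The paper dispatches Part 1 by citing the classification in Bourbaki (Chap.~VI, \S\,1.3) of the possible mutual positions of two roots; you instead derive the table from scratch via the formula $\langle\beta,\gamma^\vee\rangle = 2(\beta|\gamma)/(\gamma|\gamma)$ combined with Cauchy--Schwarz and the integrality constraints (\ref{entier}), (\ref{r entier}). The direct route is self-contained, whereas the reference is faster if one already trusts the tables. Your handling of the degenerate cases (i) and (ii) via the fact that two distinct simple roots have non-positive inner product is the right observation, and the reduction of (ii) to (i) through $\beta\mapsto -\beta$ is clean. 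For the short-$\gamma$ column it is worth noting that Cauchy--Schwarz alone gives only $|\langle\beta,\gamma^\vee\rangle| < 2\sqrt{r}$ and the divisibility by $r$ from (\ref{r entier}) alone allows arbitrarily large multiples; it is the combination that forces $\langle\beta,\gamma^\vee\rangle\in\{0,\pm r\}$ for $r\in\{2,3\}$, which you do invoke, so the argument holds.

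For Part 2 you correctly flag the sign bookkeeping in $L$ as the crux, and your argument that $\alpha$ and $\beta$ share a sign outside Part~1's cases (i) and (ii) is sound. What is left dangling is that Part 2 must also hold \emph{inside} cases (i) and (ii), where $\alpha=-\beta$ and the two roots have opposite signs, so that $\hauteur^\vee$ shifts by $\mp 2$ rather than $\mp 1$ and your same-sign computation does not directly apply. The missing one-line check is: in case (i), $\beta=\gamma>0$ has $\hauteur^\vee=1$, while $\alpha=-\gamma<0$ has $\hauteur^\vee=-1$; the $-1$ correction for negative roots in (\ref{level}) exactly absorbs the extra drop in $\hauteur^\vee$, giving $L(\alpha)=L(\beta)+1$ as Part 2(i) asserts, and symmetrically in case (ii). This is precisely the ``special case when we go from positive roots to negative roots'' that the remark at the end of the paper's proof alludes to, and is the reason the definition of the level has two branches. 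Once you add this check, the proof is complete.
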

 
\begin{proof}
Part 1 follows from inspection of the possible cases in
\cite[Chapitre VI, \S 1.3]{BOUR456}.

Part 2 is a consequence of part 1. Note that there is a special case
when we go from positive roots to negative roots, and \emph{vice versa}.
This is the reason why there are two cases in the definition of the
level.
\end{proof}

To go from a long simple root to the opposite of a long simple root,
one sometimes needs a non-simple reflection.

\begin{lem}\label{through}
Let $\b\in\D_\longue$, $\a\in -\D_\longue$ and $\g\in\Phi^+$.
Then $\b\elem{\g}\a$ \iff\  we are in one of the following cases:
\begin{enumerate}[(i)]
\item $\a = -\b$ and $\g = \b$. In this case,
$\langle \b, \g^\vee \rangle = 2$.
\item  $\b + (-\a)$ is a root and $\g = \b + (-\a)$. In this case,
$\langle \b, \g^\vee \rangle = 1$.
\end{enumerate}
\end{lem}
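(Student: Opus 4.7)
The plan is to reduce the condition $\b\elem{\g}\a$ to the single equation $\b - \a = \langle \b, \g^\vee\rangle\, \g$, and then read off $\g$ by matching coefficients in the basis $\D$. Indeed, under the hypotheses we have $\hauteur^\vee(\b) = 1$ and $\hauteur^\vee(\a) = -1$, so the definition \ref{level} of the level gives $L(\b) = \hauteur^\vee(\alpt) - 1$ and $L(\a) = \hauteur^\vee(\alpt)$; the level condition $L(\a) = L(\b) + 1$ is therefore automatic, and $\b\elem{\g}\a$ reduces to $\a = s_\g(\b)$. Setting $n := \langle \b, \g^\vee\rangle$, this reads $n\g = \b + (-\a)$, and since $\g > 0$ and $\b + (-\a) > 0$ the integer $n$ is positive.

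For the direct implication I would split into two cases. If $\a = -\b$, then $n\g = 2\b$; as the root system is reduced, the only positive root proportional to $\b$ is $\b$ itself, forcing $\g = \b$ and $n = 2$, which is case (i). Otherwise $-\a$ is a simple long root distinct from $\b$, so $\b + (-\a)$ has coefficient $1$ on each of the two distinct simple roots $\b$ and $-\a$ and coefficient $0$ elsewhere; comparing with $n\g$, whose coefficients are non-negative integers all divisible by $n$, forces $n = 1$ and $\g = \b + (-\a)$. In particular $\b + (-\a)$ must itself be a root, which is precisely case (ii).

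For the converse, case (i) is immediate. In case (ii), Lemma \ref{somme longue} shows that $\g = \b + (-\a)$ is long, so $\langle \b, \g^\vee\rangle = 2(\b,\g)/r = 2\bigl((\b,\b) + (\b,-\a)\bigr)/r$; it remains to compute $(\b,-\a)$. Since $\b$ and $-\a$ are two distinct long simple roots, $\langle \b, (-\a)^\vee\rangle$ is a non-positive integer whose square lies in $\{0,1,2,3\}$, hence belongs to $\{0,-1\}$, and as $\b + (-\a)$ is a root it must be $-1$, giving $(\b,-\a) = -r/2$. Plugging in yields $\langle \b, \g^\vee\rangle = 1$ and $s_\g(\b) = \b - \g = \a$, as required. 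The only mildly delicate point is this last computation of $\langle \b, (-\a)^\vee\rangle$; everything else is direct coefficient-matching, enabled by the observation that the level condition is automatic under our hypotheses.
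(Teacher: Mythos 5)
Your proof is correct. The key observation---that the level condition $L(\a)=L(\b)+1$ is automatic because $\hauteur^\vee(\b)=1$ and $\hauteur^\vee(\a)=-1$---reduces $\b\elem{\g}\a$ to the single equation $n\g=\b+(-\a)$ with $n=\langle\b,\g^\vee\rangle$, after which the coefficient-matching in the basis $\D$ forces $n=2,\ \g=\b$ when $\a=-\b$ and $n=1,\ \g=\b+(-\a)$ otherwise. The paper merely declares the lemma ``straightforward'' with no written proof, so there is no authorial argument to compare against; your version is a complete and correct one. The only spot worth stating more explicitly is why $\langle\b,(-\a)^\vee\rangle\ne 0$ in case (ii) of the converse: if $\b$ and $-\a$ were orthogonal long roots, then $(\g,\g)=(\b,\b)+(-\a,-\a)=2r>r$, contradicting the fact that no root has squared length exceeding $r$ (alternatively, since $\b-(-\a)$ is not a root, the root-string argument gives $\langle\b,(-\a)^\vee\rangle=-q\le -1$ once $\b+(-\a)$ is a root). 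You invoke the hypothesis correctly; you just don't spell out the reason.
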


\begin{proof}
This is straightforward.
\end{proof}

But otherwise, one can use simple roots at each step.

\begin{prop}\label{simple path}
Let $\a$ and $\b$ be two long roots such that $\a \le \b$.
Write $\b = \sum_{\s\in J} n_\s \s$, and $\a = \sum_{\t\in K} m_\t \t$,
where $J$ (resp. $K$) is a non-empty subset of $\D$, and the $n_\s$
(resp. the $m_\t$) are non-zero integers, all of the same sign.
\begin{enumerate}[(i)]
\item\label{path positive} If $0 < \a \le \b$,
then there is a simple path from $\b$ to $\a$.
\item\label{path through} If $\a < 0 < \b$,
then there is a simple path from $\b$ to $\a$
\iff\  there is a long root which belongs to both $J$ and $K$.
Moreover, there is a path from $\b$ to $\a$ \iff\  there is a long root
$\s$ in $J$, and a long root $\t$ in $K$, such that $(\s|\t)\neq 0$.
\item\label{path negative} If $\a \le \b < 0$,
then there is a simple path from $\b$ to $\a$.
\end{enumerate}
\end{prop}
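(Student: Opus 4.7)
The plan is to handle case (i) by induction on the level difference $L(\alpha)-L(\beta)$, to deduce case (iii) from (i) by a reversal argument, and then to reduce case (ii) to (i) and (iii) by analyzing the unique step at which a path crosses from positive to negative roots.

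First I would prove (i) by induction on $L(\alpha)-L(\beta)\ge 0$. The base case $\alpha=\beta$ is trivial. For the inductive step, since $\beta>\alpha>0$ the root $\beta$ cannot be simple (a positive root is minimal in its own chain), so $s_\g(\beta)$ remains a positive long root for every $\g\in\D$. It therefore suffices to exhibit a $\g\in\D$ with $(\beta|\g)>0$ --- so that Lemma \ref{action simple}(2) gives $\beta\elem{\g}s_\g(\beta)$ --- and with $s_\g(\beta)\ge\alpha$, so that the inductive hypothesis applies to the pair $(\alpha,s_\g(\beta))$ of positive long roots. Writing $\beta-\alpha=\sum_{\d\in\D}c_\d\,\d$ with $c_\d\ge 0$, the condition $s_\g(\beta)-\alpha\ge 0$ rewrites as $c_\g\ge \langle\beta,\g^\vee\rangle$, which in particular forces $\g$ to lie in the support of $\beta-\alpha$. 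The existence of such a $\g$ is the main technical obstacle of the whole proposition: I would start from the identity $(\beta-\alpha|\beta-\alpha)>0$, which produces some $\d$ in the support of $\beta-\alpha$ with $(\d|\beta-\alpha)>0$, and then run through the case-table of Lemma \ref{action simple}(1) to control the Cartan integer $\langle\beta,\d^\vee\rangle\in\{1,2\}$ or $\{r\}$ against $c_\d$, distinguishing whether $\d$ is long or short and, when $(\d|\alpha)<0$, refining the choice of $\d$ inside the support of $\beta-\alpha$.

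For case (iii), if $\alpha\le\beta<0$ then $0<-\beta\le-\alpha$ are positive long roots, and case (i) yields a simple path $-\alpha=\eta_0\elem{\g_1}\eta_1\elem{\g_2}\cdots\elem{\g_k}\eta_k=-\beta$. I would then set $\xi_i:=-\eta_{k-i}$, so that $\xi_0=\beta$, $\xi_k=\alpha$, and $\xi_{i+1}=s_{\g_{k-i}}(\xi_i)$. The identity $(s_\g\eta|\g)=-(\eta|\g)$ converts the positivity condition $(\eta_{i-1}|\g_i)>0$ of the original simple path into the required positivity $(\xi_i|\g_{k-i})>0$ at each reversed step, and a direct level computation using $L(\eta)+L(-\eta)=2\,\hauteur^\vee(\alpt)-1$ confirms that the level increases by one at each reversed step, so the reversed sequence is a simple path from $\beta$ to $\alpha$.

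Finally, for case (ii), I would observe that in any path $\beta=\beta_0\elem{\g_1}\cdots\elem{\g_k}\beta_k=\alpha$ with $\alpha<0<\beta$ there is a unique index $j$ with $\beta_j>0$ and $\beta_{j+1}<0$. Comparing levels forces $L(\beta_j)=\hauteur^\vee(\alpt)-1$ and $L(\beta_{j+1})=\hauteur^\vee(\alpt)$, hence $\beta_j\in\D_\longue$ and $-\beta_{j+1}\in\D_\longue$. Lemma \ref{through} then restricts the crossing step to two possibilities: $\g_{j+1}=\beta_j$ with $\beta_{j+1}=-\beta_j$ (the only option available in a simple path), or $\g_{j+1}=\beta_j+(-\beta_{j+1})\in\Phi$ (which requires $(\beta_j|-\beta_{j+1})<0$, \ie $\beta_j\neq-\beta_{j+1}$ and their inner product non-zero). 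Concatenating a path from $\beta$ to $\beta_j$ supplied by (i), the crossing step, and a path from $\beta_{j+1}$ to $\alpha$ supplied by (iii) shows that the existence of a simple path is equivalent to $J\cap K\cap\D_\longue\neq\vide$, while the existence of a general path is equivalent to the existence of $\sigma\in J\cap\D_\longue$ and $\tau\in K\cap\D_\longue$ with $(\sigma|\tau)\neq 0$ (the case $\sigma=\tau$ corresponding to the first alternative of Lemma \ref{through}, and $(\sigma|\tau)<0$ corresponding to $\sigma+\tau\in\Phi$ for the second alternative).
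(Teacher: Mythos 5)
Your approach to parts (ii) and (iii) matches the paper's: the sign-reversal argument for (iii) (negating and reversing the path from $-\alpha$ to $-\beta$, checking the positivity $(\xi_i|\gamma_{k-i})>0$ via $(s_\gamma\eta|\gamma)=-(\eta|\gamma)$) is exactly the argument the paper hints at, and the crossing-step analysis for (ii) using Lemma~\ref{through} is also the paper's strategy.

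The gap is in part (i), and you have actually located it yourself but not closed it. You insist on always moving $\beta$ down, so you need a simple root $\gamma$ with \emph{both} $(\beta|\gamma)>0$ and $c_\gamma\ge\langle\beta,\gamma^\vee\rangle$. Starting from $(\beta-\alpha\,|\,\beta-\alpha)>0$ you extract some $\delta$ in the support of $\beta-\alpha$ with $(\delta\,|\,\beta-\alpha)>0$, i.e.\ $(\delta|\beta)>(\delta|\alpha)$; but this gives $(\beta|\delta)>0$ only when $(\delta|\alpha)\ge 0$. When $(\delta|\alpha)<0$ you write ``refining the choice of $\delta$'' without saying what the refinement is, and it is not clear that a $\delta$ simultaneously in the support and with $(\beta|\delta)>0$ is produced by this route. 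The paper sidesteps the issue entirely: from the same inequality
$\sum_{\gamma}n_\gamma(\beta|\gamma)-\sum_{\gamma}n_\gamma(\alpha|\gamma)>0$
it extracts the \emph{dichotomy} that either some $\gamma$ in the support has $(\beta|\gamma)>0$ (then push $\beta$ down) \emph{or} some $\gamma$ has $(\alpha|\gamma)<0$ (then pull $\alpha$ up via $\alpha'=s_\gamma(\alpha)$, which still satisfies $\alpha<\alpha'\le\beta$ by the same coefficient bookkeeping and Lemma~\ref{caracterisation longue}). Both branches lower the dual-height gap by one and let the induction run. Either adopt this two-sided induction, or, if you really want to move only $\beta$, prove the stronger statement that some $\gamma$ in the support of $\beta-\alpha$ satisfies $(\beta|\gamma)>0$ directly (e.g.\ from $(\beta|\beta)=(\beta|\alpha)+\sum n_\gamma(\beta|\gamma)$ and the fact that two distinct long roots cannot have $(\beta|\alpha)\ge(\beta|\beta)$ by Cauchy--Schwarz); as written, your argument does not establish existence of a usable $\gamma$.

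One small slip: you list $\langle\beta,\delta^\vee\rangle\in\{1,2\}$ as a possibility, but the value $2$ only arises from Lemma~\ref{action simple}(1)(i), which requires $\beta\in\D_\longue$; you have already observed $\beta$ is not simple, so that case is ruled out and only $1$ (for $\delta$ long) or $r$ (for $\delta$ short) can occur.
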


\begin{proof}
We will prove (\ref{path positive}) by induction on
$m = \hauteur^\vee(\b) - \hauteur^\vee(\a)$.

\medskip

If $m = 0$, then $\b = \a$, and there is nothing to prove.

So we may assume that $m > 0$ and that the results holds
for $m - 1$. Thus $\a < \b$ and we have
$$\b - \a = \sum_{\g \in J} n_\g \g$$
where $J$ is a non-empty subset of $\D$, and the $n_\g$, $\g\in J$,
are positive integers. We have
$$(\b - \a~|~\b-\a) = \sum_{\g\in J} n_\g (\b|\g)
- \sum_{\g\in J} n_\g (\a|\g) > 0.$$
So there is a $\g$ in $J$ such that $(\b|\g) > 0$ or $(\a|\g) < 0$.
In the first case, let $\b' = s_\g(\b)$. It is a long root.
If $\g$ is long (resp. short),
then $\b' = \b - \g$ (resp. $\b' = \b - r\g$), so that
$\a \le \b' < \b$ (see Lemma \ref{caracterisation longue}).
We have $\b \elem{\g} \b'$ and $\hauteur^\vee(\b')
= \hauteur^\vee(\b) - 1$, so we can conclude by the induction
hypothesis.
The second case is similar: if $\a' = s_\g(\a) \in\Phi_\longue$,
then $\a < \a' \le \b$, $\a' \elem{\g} \a$, $\hauteur^\vee(\a')
= \hauteur^\vee(\a) + 1$, and we can conclude by the induction
hypothesis. This proves (\ref{path positive}).

Now (\ref{path negative}) follows, applying (\ref{path positive})
to $-\a$ and $-\b$ and using the symmetry $-1$.

Let us prove (\ref{path through}). If there is a long simple root $\s$
which belongs to $J$ and $K$, we have $\a \le -\s < \s \le \b$.
Using (\ref{path positive}), we find a simple path from $\b$ to $\s$,
then we have $\s \elem{\s} -\s$, and using (\ref{path negative}) we
find a simple path from $-\s$ to $\a$. So there is a simple path
from $\b$ to $\a$.

Suppose there is a long root
$\g$ in $J$, and a long root $\g'$ in $K$, such that $(\s|\t)\neq 0$.
Then either we are in the preceding case, or
there are long simple roots $\s\in J$ and $\t \in K$,
such that $\a \le -\t < \s \le \b$ and $\g = \s + \t$ is a root.
By Lemma \ref{through}, we have $\s \elem{\g} -\t$. Using
(\ref{path positive}) and (\ref{path negative}), we can find
simple paths from $\b$ to $\s$ and from $-\t$ to $\a$.
So there is a path from $\b$ to $\a$.

Now suppose there is a path from $\b$ to $\a$.
In this path, we must have a unique step of the form
$\s \elem{\g} -\t$, with $\s$ and $\t$ in $\D_\longue$.
We have $\s\in J$, $\t \in K$, and $(\s|\t) \neq 0$.
If moreover it is a simple path from $\b$ to $\a$,
then we must have $\t = -\s$. This completes the proof.
\end{proof}

The preceding analysis can be used to study the length and the
reduced expressions of some elements of $W$.

\begin{prop}\label{minimal length}
Let $\a$ and $\b$ be two long roots. If $x$ is an element
of $W$ such that $x(\b) = \a$, then we have $l(x) \ge |L(\a) - L(\b)|$.

Moreover, there is an $x\in W$ such that $x(\b) = \a$ and
$l(x) = |L(\a) - L(\b)|$ \iff\ 
$\a$ and $\b$ are linked by a simple path, either
from $\b$ to $\a$, or from $\a$ to $\b$. In this case,
there is only one such $x$, and we denote it by $x_{\a\b}$.
The reduced expressions of $x_{\a\b}$ correspond bijectively
to the simple paths from $\b$ to $\a$.

If $\a \le \b \le \g$ are such that $x_{\a\b}$ and $x_{\b\g}$ are
defined, then $x_{\a\g}$ is defined, and we have $x_{\a\g} =
x_{\a\b} x_{\b\g}$ with $l(x_{\a\g}) = l(x_{\a\b}) + l(x_{\b\g})$.

The element $x_{-\a,\a}$ is defined for all $\a \in \Phi_\longue^+$,
and is equal to $s_\a$.

The element $x_{\a,\alpt}$ is defined for all $\a \in \Phi_\longue$,
and is equal to $x_\a$.
\end{prop}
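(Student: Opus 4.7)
The plan is to bootstrap everything from Lemma \ref{action simple}, which tells us that a simple reflection $s_\gamma$ applied to a long root either fixes it or shifts its level $L$ by exactly $\pm 1$. First I would prove the inequality $\ell(x) \ge |L(\alpha)-L(\beta)|$ by writing $x = s_{\gamma_k}\cdots s_{\gamma_1}$ as a reduced expression and tracking the level along the sequence $\beta_0=\beta,\ \beta_i = s_{\gamma_i}(\beta_{i-1})$: by the lemma, $|L(\beta_i)-L(\beta_{i-1})|\le 1$, so $|L(\alpha)-L(\beta)|\le k=\ell(x)$. This also shows that equality forces every single simple reflection in \emph{any} reduced expression to genuinely shift the level (no fixed step), all shifts being in the same direction determined by the sign of $L(\alpha)-L(\beta)$; such a reduced expression is precisely a simple path from $\beta$ to $\alpha$ (or, if $L(\alpha)<L(\beta)$, from $\alpha$ to $\beta$, by reversing).

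Conversely, given a simple path $\beta = \beta_0 \stackrel{\gamma_1}{\longto} \beta_1 \stackrel{\gamma_2}{\longto}\cdots \stackrel{\gamma_k}{\longto}\beta_k=\alpha$, set $x = s_{\gamma_k}\cdots s_{\gamma_1}$. By construction $x(\beta)=\alpha$ and the word has length at most $k$; the inequality already proved forces $\ell(x)=k$ and thus the expression is reduced. This simultaneously gives existence of a minimal-length $x$ and the surjection from simple paths onto reduced expressions of minimum-length elements.

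Uniqueness of $x_{\alpha\beta}$ follows from a coset argument: the set of $y\in W$ with $y(\beta)=\alpha$ is a left coset $xW_\beta$ of the stabilizer $W_\beta$. Since $W$ is transitive on $\Phi_\longue$ via $x_\beta(\alpht)=\beta$, we have $W_\beta = x_\beta W_\Iti x_\beta^{-1}$ (using \ref{stab}); every left coset of a parabolic (or, more generally, any reflection) subgroup of a finite Coxeter group has a unique minimal-length representative, so $x_{\alpha\beta}$ is unique. The bijection between simple paths and reduced expressions of $x_{\alpha\beta}$ then follows from the analysis of the previous paragraph applied inside this fixed coset. The main technical point to watch is this uniqueness, since $W_\beta$ is only a parabolic up to conjugation; I would either invoke the general fact about minimal-length coset representatives for reflection subgroups, or reduce directly to the case $\beta=\alpht$ via the bijection \ref{bijection} (where the claim is the standard statement that $X_\Iti$ contains a unique minimal-length representative of each $W/W_\Iti$-coset).

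For composition, if $\alpha\le\beta\le\gamma$ and both $x_{\alpha\beta}$ and $x_{\beta\gamma}$ exist, then concatenating a simple path from $\gamma$ to $\beta$ with a simple path from $\beta$ to $\alpha$ yields a simple path from $\gamma$ to $\alpha$ of length $|L(\alpha)-L(\beta)|+|L(\beta)-L(\gamma)|=|L(\alpha)-L(\gamma)|$ (the absolute values collapse because the levels are monotone along $\alpha\le\beta\le\gamma$). By the previous paragraph this path represents a reduced expression of $x_{\alpha\gamma}=x_{\alpha\beta}\,x_{\beta\gamma}$, and the lengths add. Finally, the two special cases are direct verifications: $s_\alpha(\alpha)=-\alpha$ together with Lemma \ref{longueur reflexion} gives $\ell(s_\alpha)=2\,\hauteur^\vee(\alpha)-1 = L(-\alpha)-L(\alpha)$, so $x_{-\alpha,\alpha}=s_\alpha$; and $x_\alpha(\alpht)=\alpha$ by definition of $x_\alpha$, while Proposition \ref{xa} gives $\ell(x_\alpha) = L(\alpha)$ in both sign cases, so $x_{\alpha,\alpht}=x_\alpha$.
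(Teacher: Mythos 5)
Your proposal is correct and follows essentially the same strategy as the paper's proof: track the level $L$ along a reduced expression using Lemma \ref{action simple} to get the inequality and the path-equals-reduced-word dictionary, then derive uniqueness, composition, and the two special cases. One caution on the uniqueness step: the coset $\{y\in W : y(\beta)=\alpha\}$ equals $x_\alpha W_\Iti\, x_\beta^{-1}$, a \emph{two-sided} translate of $W_\Iti$, so the usual statement about unique minimal-length representatives in $W/W_\Iti$ does not apply directly, and the appeal to general reflection-subgroup coset representatives (Dyer/Deodhar) is genuinely nontrivial; your second suggested route — apply the length inequality to $y\,x_\beta$, which sends $\alpt$ to $\alpha$, so $l(y\,x_\beta)\ge L(\alpha)$ while $l(y\,x_\beta)\le l(y)+l(x_\beta)=L(\alpha)$, forcing $y\,x_\beta=x_\alpha$ — is exactly what the paper does and is the clean way to close this.
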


\begin{proof}
Let $(s_{\g_k},\ldots,s_{\g_1})$ be a reduced expression of $x$,
where $k = l(x)$. For $i \in \{0,~\ldots,~k\}$, let
$\b_i = s_{\g_i} \ldots s_{\g_1}(\b)$.
For each $i \in \{0,~\ldots,~k - 1\}$, we have
$|L(\b_{i + 1}) - L(\b_i)| \le 1$ by Lemma \ref{action simple}.
Then we have
$$|L(\a) - L(\b)| \le \sum_{i = 0}^{k - 1} |L(\b_{i + 1}) - L(\b_i)|
\le k = l(x)$$

If we have an equality, then all the $L(\b_{i + 1}) - L(\b_i)$ must
be of absolute value one and of the same sign, so either they are
all equal to $1$, or they are all equal to $-1$. Thus, either
we have a simple path from $\b$ to $\a$, or we have a simple path
from $\b$ to $\a$.

Suppose there is a simple path from $\b$ to $\a$.
Let $(\b_0,\ldots,\b_k)$ be a sequence of long roots,
and $(\g_1,\ldots,\g_k)$ a sequence of simple roots,
such that
$$\b = \b_0 \elem{\g_1} \b_1 \elem{\g_2} \ldots \elem{\g_k} \b_k = \a.$$
Let $x = s_{\g_k}\ldots s_{\g_1}$.
Then we have $x(\b) = \a$, and $l(x) \le k$. But we have seen that
$l(x) \ge L(\a) - L(\b) = k$. So we have equality. The case
where there is a simple path from $\a$ to $\b$ is similar.

Let $\a\in\Phi_\longue$. If $\a > 0$, we have $0 < \a \le \alpt$, so
by Proposition \ref{simple path} (\ref{path positive}), there is a simple
path from $\alpt$ to $\a$. If $\a < 0$, we have $\a < -\a \le \alpt$,
so by Proposition
\ref{simple path} (\ref{path positive}) and (\ref{path through}),
there is also a simple path from $\alpt$ to $\a$ in this case.
Let $x$ be the product of the simple reflections it involves.
Then $l(x) = L(\a)$, so $x$ is of
minimal length in $xW_{\Iti}$, and $x\in X_{\Iti}$. Thus $x = x_\a$
is uniquely determined, and $x_{\a,\alpt}$ is defined. It is equal
to $x_\a$ and is of length $L(\a)$.

Let $\a$ and $\b$ be two long roots such there is a simple path
from $\b$ to $\a$, and let $x$ be the product of the simple
reflections it involves. We have $x x_\b(\alpt) = x(\b) = \a$,
and it is of length $L(\a)$, so it is of minimal length in its coset
modulo $W_{\Iti}$. Thus $x x_\b = x_\a$, and $x = x_\a x_\b^{-1}$ is
uniquely determined. Therefore, $x_{\a\b}$ is defined and equal
to $x_\a x_\b^{-1}$. Any simple path from $\b$ to $\a$ gives
rise to a reduced expression of $x_{\a\b}$, and every reduced expression
of $x_{\a\b}$ gives rise to a simple path from $\b$ to $\a$.
These are inverse bijections.

If $\a \le \b \le \g$ are such that $x_{\a\b}$ and $x_{\b\g}$ are
defined, one can show that $x_{\a\g}$ is defined, and that
we have $x_{\a\g} = x_{\a\b} x_{\b\g}$
with $l(x_{\a\g}) = l(x_{\a\b}) + l(x_{\b\g})$,
by concatenating simple paths from $\g$ to $\b$ and from $\b$
to $\a$.

If $\a$ is a positive long root, then there is a simple path
from $\a$ to $-\a$. We can choose a symmetric path (so that the
simple reflections form a palindrome). So $x_{-\a,\a}$ is defined,
and is a reflection: it must be $s_\a$. It is of length
$L(-\a) - L(\a) = 2 \hauteur^\vee(\a) - 1$.
\end{proof}

\begin{remark}\label{second proof}
\emph{We have seen in the proof that, if $\a\in\Phi_\longue^+$, then
$$l(s_\a) = l(x_{-\a,\a}) = L(-\a) - L(\a) = 2 \hauteur^\vee(\a) - 1$$
and, if $\a\in\Phi_\longue$, then
$$l(x_\a) = l(x_{\a,\alpt}) = L(\a)$$
thus we have a second proof of Lemma \ref{longueur reflexion}
and Proposition \ref{xa}.
Similarly, the formulas $x_{\a\g} = x_{\a\b} x_{\b\g}$
and $l(x_{\a\g}) = l(x_{\a\b}) + l(x_{\b\g})$, applied to
the triple $(-\a,\a,\alpt)$, give another proof of Proposition
\ref{saxa}.}
\end{remark}

\bigskip

To conclude this section, let us summarize the results which
we will use in the sequel.

\begin{theo}\label{conclu racines}
The bijection \ref{bijection} is an anti-isomorphism between the posets
$(\Phi_\longue,\preceq)$ and $(X_\Iti,\le)$
(these orders were defined at the beginning of \ref{orders}), 
and a root of level $i$ corresponds to an element of length $i$
in $X_\Iti$.

If $\b$ and $\a$ are long roots, and $\g$ is a positive root,
then we have
$$\b \elem{\g} \a \quad \textrm{ \iff\  } \quad x_\b \elem{\g} x_\a$$
(these relations have been defined at the beginning of \ref{orders}).

Moreover, in the above situation, the integer
$\partial_{\a\b} = \langle \b, \g^\vee \rangle$ is
determined as follows:
\begin{enumerate}[(i)]
\item if $\b\in\D_\longue$ and $\a\in -\D_\longue$, then
$\partial_{\a\b}$ is equal to $2$ if $\a = -\b$, and to $-1$
if $\b + (-\a)$ is a root;

\item otherwise, $\partial_{\a\b}$ is equal to $1$ if $\g$ is long,
and to $r$ if $\g$ is short (where $r = \max_{\a\in\Phi} (\a|\a)$).
\end{enumerate}
\end{theo}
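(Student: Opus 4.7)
The plan is to deduce the three assertions of Theorem \ref{conclu racines} from the analysis already carried out, with the length-level correspondence of Proposition \ref{xa} as the anchor.

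First, for the length-level matching: for $\a \in \Phi_\longue^+$, Proposition \ref{xa} gives $l(x_\a) = \hauteur^\vee(\alpt) - \hauteur^\vee(\a)$, which equals $L(\a)$ by definition \ref{level}; writing $\a = -\a'$ with $\a' \in \Phi_\longue^+$ in the negative case, Proposition \ref{xa} yields $l(x_\a) = \hauteur^\vee(\alpt) + \hauteur^\vee(\a') - 1 = L(\a)$. So the bijection \ref{bijection} sends level $i$ to length $i$. Because both posets are finite and graded by this common function, the anti-isomorphism assertion reduces to showing that the two covering relations correspond, i.e.\ $\b \elem{\g} \a$ iff $x_\b \elem{\g} x_\a$ in the Bruhat order.

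For the forward direction, suppose $\b \elem{\g} \a$. Applying $s_\g x_\b$ to $\alpt$ gives $s_\g x_\b(\alpt) = \a = x_\a(\alpt)$, so $s_\g x_\b = x_\a w$ for some $w \in W_\Iti = \Stab_W(\alpt)$, and then \ref{somme longueurs} yields $l(s_\g x_\b) = l(x_\a) + l(w) = (l(x_\b) + 1) + l(w)$ using Step~1. Thus the whole forward statement reduces to the key claim $s_\g x_\b \in X_\Iti$, which forces $w = 1$ and $l(s_\g x_\b) = l(x_\b) + 1$, hence the Bruhat covering $x_\b \elem{\g} x_\a$. The reverse direction is then automatic: applying $x_\a = s_\g x_\b$ to $\alpt$ gives $\a = s_\g(\b)$, and $l(x_\a) = l(x_\b) + 1$ translates via Step~1 into $L(\a) = L(\b) + 1$.

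The main obstacle is verifying the key claim $s_\g x_\b \in X_\Iti$, which I would handle by splitting on whether $\g$ is simple. In a covering $\b \elem{\g} \a$ with $\g \in \D$, Lemma \ref{action simple} forces $(\b\mid\g) > 0$; since for a simple reflection $s_\g$ only sends the single positive root $\g$ to a negative root, one has $s_\g x_\b \in X_\Iti$ provided $x_\b^{-1}(\g) \notin \Phi_\Iti^+$, and the computation $\langle x_\b^{-1}(\g), \alpt^\vee\rangle = \langle \g, \b^\vee\rangle$, which is a nonzero multiple of $(\b\mid\g)$, confirms this. When $\g$ is non-simple, Lemma \ref{through} shows we must have $\b \in \D_\longue$, $\a \in -\D_\longue$, with $\g = \b$ (and $\a = -\b$) or $\g = \b + (-\a)$; in the first subcase Proposition \ref{saxa} gives $s_\b x_\b = x_{-\b} = x_\a$ directly, and in the second subcase one uses the simple path description of Proposition \ref{minimal length} for the triple $(-\a, \b, \alpt)$, composing $x_{\a,-\a} = s_{-\a} = s_{\a}$ after transporting by $s_\g$, to arrive at $s_\g x_\b = x_\a$. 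Finally, the explicit values of $\partial_{\a\b} = \langle \b, \g^\vee\rangle$ claimed in (i) and (ii) are immediate consequences of the tables in Lemmas \ref{action simple} and \ref{through} once the positivity $(\b\mid\g) > 0$ established above rules out the vanishing and reverse-sign cases.
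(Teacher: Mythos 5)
Your overall strategy is sound and the simple-$\g$ case is handled correctly, but the non-simple $\g$ case has a genuine gap. First, you claim that ``Lemma \ref{through} shows we must have $\b\in\D_\longue$, $\a\in -\D_\longue$'' when $\g$ is non-simple; but those are \emph{hypotheses} of Lemma \ref{through}, not conclusions of it. The implication does hold, but it needs a separate argument: from $\a^\vee = \b^\vee - \langle\g,\b^\vee\rangle\,\g^\vee$ one gets $\hauteur^\vee(\b) - \hauteur^\vee(\a) = \langle\g,\b^\vee\rangle\,\hauteur(\g^\vee)$, and the condition $L(\a)=L(\b)+1$ forces $\hauteur(\g^\vee)=1$ (i.e. $\g$ simple) unless the step crosses from positive to negative, in which case $\hauteur^\vee(\b)=1$ and $\hauteur^\vee(-\a)=1$, i.e.\ $\b\in\D_\longue$ and $-\a\in\D_\longue$. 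You should supply this.

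Second, and more seriously, your sketch for the subcase $\g = \b+(-\a)$ does not go through as written. The composition rule of Proposition \ref{minimal length} (the statement ``$x_{\a\g}=x_{\a\b}x_{\b\g}$'') requires the chain of inequalities $\a\le\b\le\g$, but for your triple $(-\a,\b,\alpt)$ one does \emph{not} have $-\a\le\b$: writing $\a'=-\a\in\D_\longue$, the difference $\b-\a'$ is a difference of two distinct simple roots and is not a nonnegative combination of simple roots, so $x_{-\a,\b}$ is simply not defined. Likewise ``transporting by $s_\g$'' is not a precise operation and doesn't yield the conclusion. A route that does work: since $\b,\a'\in\D_\longue$ and $\b+\a'$ is a (long) root, they span an $A_2$ subsystem, so $s_\g = s_{\a'}s_\b s_{\a'}$; then apply the already-established simple-$\g$ case three times along $\alpt\mapsto\b\mapsto\g\mapsto\a'\mapsto\a$, getting $s_{\a'}x_\b = x_\g$, $s_\b x_\g = x_{\a'}$, $s_{\a'}x_{\a'}=x_\a$, whence $s_\g x_\b = x_\a$, with lengths changing by $-1,+1,+1$ and thus a net covering step as required. (Note also that the paper states Theorem \ref{conclu racines} as a summary without proof, so the argument you are trying to supply is precisely the one the paper leaves implicit; that makes it all the more important to be precise in the non-simple subcase.)
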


If $\b$ and $\a$ are two long roots such that $L(\a) = L(\b) + 1$,
then we set $\partial_{\a\b} = 0$ if there is no simple root $\g$
such that $\b \elem{\g} \a$.

The numbers $\partial_{\a\b}$ will appear in Theorem \ref{spectral roots}
as the coefficients of the matrices of some maps appearing in the
Gysin sequence associated to the $\CM^*$-fibration $\OC_\mini \simeq
G\times_P \CM^* x_\mini$ over $G/P$, giving the cohomology of $\OC_\mini$.
By Theorem \ref{conclu racines}, these coefficients are explicitly
determined in terms of the combinatorics of the root system.

\section{Resolution of singularities, Gysin sequence}\label{res spectral}

Let us choose a maximal torus $T$ of $G$, with Lie algebra $\tG \incl \gG$.
We then denote by $X(T)$ its group of characters, and $X^\vee(T)$
its group of cocharacters. For each $\a\in\Phi$, there is a
closed subgroup $U_\a$ of $G$, and an isomorphism $u_\a : \GM_a \to U_\a$
such that, for all $t\in T$ and for all $\l\in \CM$, we have
$t u_\a(\l) t^{-1} = u_\a(\a(t) \l)$.
We are in the set-up of \ref{root systems}, with $\Phi$ equal to
the root system of $(G,T)$ in $V = X(T)\otimes_\ZM \RM$.
We denote $X(T) \times_\ZM \QM$ by $V_\QM$, and the symmetric
algebra $S(V_\QM)$ by $S$.

There is a root subspace decomposition
$$ \gG = \tG \oplus \left(\bigoplus_{\a\in\Phi} \gG_\a\right)$$
where $\gG_\a$ is the (one-dimensional) weight subspace
$\{x\in\gG~\mid~\forall t\in T,~\Ad(t).x = \a(t)x\}$.
We denote by $e_\alpha$ a non-zero vector in $\gG_\a$.
Thus we have $\gG_\a = \CM e_\a$.

Let $W = N_G(T)/T$ be the Weyl group. It acts on $X(T)$, and hence
on $V_\QM$ and $S$.

Let us now fix a Borel subgroup $B$ of $G$ containing $T$, with Lie
algebra $\bG$. This choice determines a basis $\D$, the subset of
positive roots $\Phi^+$, and the height (and dual height) function,
as in \ref{basis}, the length function $l$ as in \ref{length},
the highest root $\alpt$ and the subset $\Iti$ of $\D$ as in
\ref{highest root}, and the orders on $\Phi_\longue$ and $X_\Iti$
as in \ref{orders}.
So we can apply all the notations and results of Section \ref{philg xi}.

Let $H$ be a closed subgroup of $G$, and $X$ a variety with a left
$H$-action. Then $H$ acts on $G \times X$
on the right by $(g,x).h = (gh,h^{-1}x)$.
If the canonical morphism $G\to G/H$ has local sections, then
the quotient variety $(G \times X)/H$ exists (see \cite[\S 5.5]{SPR}).
The quotient is denoted by $G\times_H X$. One has a morphism
$G\times_H X \to G/H$ with local sections, whose fibers are isomorphic
to $X$. The quotient is the \emph{fiber bundle over $G/H$ associated to
$X$}. We denote the image of $(g,x)$ in this quotient by $g *_H x$,
or simply $g * x$ if the context is clear. Note that $G$ acts on the left
on $G \times_H X$, by $g'.g *_H x = g'g *_H x$.

In \ref{G/B}, we describe the cohomology of $G/B$, both in terms Chern
classes of line bundles and in terms of fundamental classes
of Schubert varieties, and we state the Pieri formula (see
\cite{BGG, DEM, HILLER} for a description of Schubert calculus).
In \ref{parab inv}, we explain how this generalizes to the parabolic
case. In \ref{coh}, we give an algorithm to compute 
the cohomology of any line bundle minus the zero section,
on any generalized flag variety. To do this, we need the
Gysin sequence (see for example \cite{BT, HUS}, or \cite{MILNE}
in the \'etale case). In \ref{res sing}, we will see that
the computation of the cohomology of $\OC_\mini$ is a particular case.
Using the results of Section \ref{philg xi}, we give a description
in terms of the combinatorics of the root system.

\subsection{Line bundles on $G/B$, cohomology of $G/B$}\label{G/B}

Let $\BC = G/B$ be the flag variety. It is a smooth projective
variety of dimension $|\Phi^+|$. The map $G \longto G/B$
has local sections (see \cite[\S 8.5]{SPR}).
If $\a$ is a character of $T$, one can lift it to $B$:
let $\CM_\a$ be the corresponding one-dimensional representation 
of $B$. We can then form the $G$-equivariant line bundle
\begin{equation}
\LC(\a) = G \times_B \CM_\a \longto G/B.
\end{equation}

Let $c(\a)\in H^2(G/B,\ZM)$ denote the first Chern class of $\LC(\a)$.
Then $c : X(T) \longto H^2(G/B,\ZM)$ is a morphism of $\ZM$-modules.
It extends to a morphism of $\QM$-algebras, which we still
denote by $c : S \longto H^*(G/B,\QM)$. The latter is surjective
and has kernel $\IC$, where $\IC$ is the ideal of $S$ generated
by the $W$-invariant homogeneous elements in $S$ of positive degree.
So it induces an isomorphism of $\QM$-algebras
\begin{equation}
\ov{c}:S/\IC \simeq H^*(G/B,\QM)
\end{equation}
which doubles degrees.

The algebra $S/\IC$ is called the coinvariant algebra.
As a representation of $W$, it is isomorphic to the regular
representation. We also have an action of $W$ on $H^*(G/B,\QM)$,
because $G/B$ is homotopic to $G/T$, and $W$ acts on the right on $G/T$ by
the formula $gT.w = gnT$, where $n\in N_G(T)$ is a 
representative of $w \in W$, and $g \in G$. One can show
that $\ov c$ commutes with the actions of $W$.

On the other hand, we have the Bruhat decomposition \cite[\S 8.5]{SPR}
\begin{equation}
G/B = \bigsqcup_{w\in W} C(w)
\end{equation}
where the $C(w) = BwB/B \simeq \CM^{l(w)}$ are the Schubert cells.
Their closures are the Schubert varieties $S(w) = \ov{C(w)}$.
Thus the cohomology of $G/B$ is concentrated in even degrees,
and $H^{2i}(G/B,\ZM)$ is free with basis $(Y_w)_{w\in W^i}$,
where $Y_w$ is the cohomology class of the Schubert variety $S(w_0 w)$
(which is of codimension $l(w)=i$).
The object of Schubert calculus is to describe
the multiplicative structure of $H^*(G/B,\ZM)$ in these
terms (see \cite{BGG, DEM, HILLER}).
We will only need the following result (known
as the Pieri formula, or Chevalley formula):
if $w\in W$ and $\a\in X(T)$, then 
\begin{equation}\label{pieri}
c(\a)\ .\  Y_w\quad = \sum_{w\elem{\g}w'}
                   \left<w(\a), \g^\vee\right>  Y_{w'}
\end{equation}

\subsection{Parabolic invariants}\label{parab inv}

Let $I$ be a subset of $\D$.
Let $P_I$ be the parabolic subgroup of $G$ containing $B$ corresponding
to $I$. It is generated by $B$ and the subgroups $U_{-\a}$, for
$\a\in I$. Its unipotent radical $U_{P_I}$ is generated by the $U_\a$,
$\a\in \Phi^+\setminus\Phi_I^+$. And it has a Levi complement
$L_I$, which is generated by $T$ and the $U_\a$, $\a\in\Phi_I$.
One can generalize the preceding constructions to the parabolic case.

If $\a\in X(T)^{W_I}$ (that is, if $\a$ is a character orthogonal
to $I$), then we can form the $G$-equivariant line bundle
\begin{equation}
\LC_I(\a) = G \times_{P_I} \CM_\alpha \longto G/{P_I}
\end{equation}
because the character $\a$ of $T$, invariant by $W_I$, can be extended to $L_I$
and lifted to ${P_I}$.

We have a surjective morphism $q_I : G/B \longto G/P_I$, which induces
an injection
\[
q_I^* : H^*(G/P_I,\ZM) \hookrightarrow H^*(G/B,\ZM)
\]
in cohomology,
which identifies $H^*(G/P_I,\ZM)$ with $H^*(G/B,\ZM)^{W_I}$.

The isomorphism $\ov{c}$ restricts to
\begin{equation}
(S/\IC)^{W_I} \simeq H^*(G/{P_I},\QM)
\end{equation}

We have cartesian square
\[
\xymatrix{
\LC(\a) \ar[r] \ar[d] & \LC_I(\a) \ar[d]\\
G/B \ar[r]_{q_I} & G/P_I
}
\]

That is, the pullback by $q_I$  of $\LC_I(\a)$ is $\LC(\a)$.
By functoriality of Chern classes, we have $q_I^*(c_I(\a)) = c(\a)$.

We still have a Bruhat decomposition
\begin{equation}
G/P_I = \bigsqcup_{w\in X_I} C_I(w)
\end{equation}
where $C_I(w) =  BwP_I/P_I \simeq \CM^{l(w)}$ for $w$ in $X_I$.
We note
\[
\text{
$S_I(w) = \ov{C_I(w)}$ and
$Y_{I,w} = [\ov{Bw_0 wP_I/P_I}] = [\ov{Bw_0 w w_I P_I/P_I}]
= [S_I(w_0 w w_I)]$
for $w$ in $X_I$}
\]
Note that
\begin{equation}
\text{if $w$ is in $X_I$, then $w_0 w w_I$ is also in $X_I$}
\end{equation}
since for any root $\b$ in $\Phi_I^+$, we have
$w_I(\b) \in \Phi_I^-$, hence $ww_I(\b)$ is also negative, and thus
$w_0 w w_I(\b)$ is positive. Moreover, we have
\begin{equation}
\text{if $w \in X_I$, then
$
l(w_0 w w_I)
= l(w_0) - l(w w_I)
= l(w_0)-l(w_I)-l(w)
= \dim G/P_I - l(w)$}
\end{equation}
so that $Y_{I,w} \in H^{2l(w)}(G/P_I,\ZM)$.
We have $q_I^*(Y_{I,w}) = Y_w$.

The cohomology of $G/P_I$ is concentrated in even degrees,
and $H^{2i}(G/P_I,\ZM)$ is free with basis $(Y_{I,w})_{w\in X_I^i}$.
The cohomology ring $H^*(G/P_I,\ZM)$ is identified \emph{via} $q_I^*$ to
a subring of $H^*(G/B,\ZM)$,
so the Pieri formula can now be written as follows.
If $w\in X_I$ and $\a\in X(T)^{W_I}$, then we have
\begin{equation}\label{pieri p}
c_I(\a)\ .\  Y_{I,w}
\quad
=
\sum_{w\elem{\g}w'\in X_I}
  \left<w(\a), \g^\vee\right>  Y_{I,w'}
\end{equation}

\subsection{Cohomology of a $\CM^*$-fiber bundle on $G/P_I$}\label{coh}

Let $I$ be a subset of $\D$, and $\a$ be a $W_I$-invariant character
of $T$. Let us consider
\begin{equation}\label{fib}
\LC^*_I(\a) = G \times_{P_I} \CM^*_\alpha \longto G/{P_I},
\end{equation}
that is, the line bundle $\LC_I(\a)$ minus the zero section.
In the sequel, we will have to calculate the cohomology
of $\LC^*_\Iti(\alpt)$, but we can explain how to calculate
the cohomology of $\LC^*_I(\a)$ for any given $I$ and $\a$
(the point is that the answer for the middle cohomology will turn out to be
nicer in our particular case, thanks to the results of Section
\ref{philg xi}).

We have the Gysin exact sequence
{\small
\[
H^{n-2}(G/P_I,\ZM) \elem{c_I(\a)} H^n(G/P_I,\ZM)
\longto H^n(\LC^*_I(\a),\ZM) \longto 
H^{n-1}(G/P_I,\ZM) \elem{c_I(\a)} H^{n+1}(G/P_I,\ZM)
\]
}
where $c_I(\a)$ means the multiplication by $c_I(\a)$,
so we have a short exact sequence
{\small
\[
0\longto \Coker\ (c_I(\a):H^{n-2}\to H^n)
\longto H^n(\LC^*_I(\a),\ZM)
\longto \Ker\ (c_I(\a):H^{n-1}\to H^{n+1})
\longto 0
\]
}
where $H^j$ stands for $H^j(G/P_I,\ZM)$.
Thus all the cohomology of $\LC^*_I(\a)$ can be
explicitly computed, thanks to the results of \ref{parab inv}.

Let us now assume that $\alpha$ is dominant and regular for $P_I$,
so that $\LC_I(\alpha)$ is ample.
Then, by the hard Lefschetz theorem, $c_I(\a):\QM\otimes_\ZM H^{n-2}
\to \QM\otimes_\ZM H^n$
is injective for $n \leqslant d_I = \dim \LC^*_I(\a) = \dim G/P_I + 1$,
and surjective for $n \geqslant d_I$. By the way, we see that we could
immediately determine the rational cohomology of $\OC_\mini$,
using only the results in this paragraph and the cohomology of
$G/P_I$.

But we can say more.
The cohomology of $G/P_I$ is free and concentrated in even degrees.
In fact, $c_I(\a): H^{n-2} \to H^n$ is injective for $n \leqslant d_I$,
and has free kernel and finite cokernel for $n \geqslant d_I$.

We have 
\begin{equation}
\text{if $n$ is even, then } H^n(\LC^*_I(\a),\ZM)\simeq
\Coker\ (c_I(\a):H^{n-2}\to H^n)
\end{equation}
which is finite for $n \geqslant d_I$, and
\begin{equation}
\text{if $n$ is odd, then } H^n(\LC^*_I(\a),\ZM)\simeq
\Ker\ (c_I(\a):H^{n-1}\to H^{n+1})
\end{equation}
which is free (it is zero if $n \leqslant d_I - 1$).

\subsection{Resolution of singularities}\label{res sing}

Let $\Iti$ be the subset of $\D$ defined in \ref{Iti}.
There is a resolution of singularities (see for example the introduction
of \cite{KP2})

\begin{equation}
\fonctio{
\LC_\Iti(\alpt) = G \times_{P_\Iti} \CM_\alpt }{
\overline{\OC_\mini} = \OC_\mini \cup \{0\} }{
g*\l }{ \Ad g.(\l . e_\alpt)}
\end{equation}

It is the one mentioned in the introduction, with $P = P_\Iti$ and
$x_\mini = e_\alpt$.
It induces an isomorphism

\begin{equation}
\LC^*_\Iti(\alpt) = G \times_{P_\Iti} \CM^*_\alpt \simeq \OC_\mini
\end{equation}

Set $d = d_\Iti = 2 h^\vee -2$. For all integers $j$, let
$H^j$ denote $H^j(G/P_\Iti,\ZM)$.
For $\a\in\Phi_\longue$, we have $x_\a \in X_\Iti$, so
$Z_\a := Y_{\Iti,x_\a}$  is an element of $H^{2i}(G/P_\Iti,\ZM)$,
where $i = l(x_\a) = L(\a)$.
Then $H^*(G/P_\Iti,\ZM)$ is concentrated in even degrees, and
$H^{2i}(G/P_\Iti,\ZM)$ is free with basis $(Z_\a)_{\a\in \Phi_\longue^i}$.
Combining Theorem \ref{conclu racines} and the analysis of \ref{coh},
we get the following description of the cohomology of $\OC_\mini$ (the
highest root $\alpt$ is dominant and regular with respect to $P_\Iti$).

\begin{theo}\label{spectral roots}
We have
\begin{equation}
\text{if $n$ is even, then } H^n(\OC_\mini,\ZM)\simeq
\Coker\ (c_\Iti(\alpt):H^{n-2}\to H^n)
\end{equation}
which is finite for $n \geqslant d$, and
\begin{equation}
\text{if $n$ is odd, then } H^n(\OC_\mini,\ZM)\simeq
\Ker\ (c_\Iti(\alpt):H^{n-1}\to H^{n+1})
\end{equation}
which is free (it is zero if $n \leqslant d - 1$).

Moreover, if $\b\in\Phi_\longue^i$, then we have
\begin{equation}
c_\Iti(\alpt).Z_\b = 
\sum_{\b\elem{\g}\a} \left<\b, \g^\vee\right>  Z_\a
= \sum_{\a\in\Phi_\longue^{i + 1}} \partial_{\a\b} Z_\b
\end{equation}
where the $\partial_{\a\b}$ are the integers defined in
Theorem \ref{conclu racines}.
\end{theo}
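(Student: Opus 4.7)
The plan is to assemble the theorem from three ingredients already at our disposal: the resolution of singularities of $\ov{\OC_\mini}$ described in \ref{res sing}, the general Gysin analysis of \ref{coh}, and the combinatorial dictionary between $(X_\Iti,\le)$ and $(\Phi_\longue,\preceq)$ established in Theorem \ref{conclu racines}. Since the isomorphism $\OC_\mini \simeq \LC^*_\Iti(\alpt)$ is already recorded, the first move is to specialize the general discussion of \ref{coh} to $I = \Iti$ and $\a = \alpt$. This gives, for each $n$, the short exact sequence
\[
0 \longto \Coker\bigl(c_\Iti(\alpt):H^{n-2}\to H^n\bigr)
\longto H^n(\OC_\mini,\ZM)
\longto \Ker\bigl(c_\Iti(\alpt):H^{n-1}\to H^{n+1}\bigr)
\longto 0
\]
where $H^j = H^j(G/P_\Iti,\ZM)$.

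Next, I would check that $\alpt$ is dominant and regular with respect to $P_\Iti$, so that $\LC_\Iti(\alpt)$ is ample. Dominance is part of \ref{chambre grand}, and regularity amounts to $\langle \alpt,\a^\vee\rangle > 0$ for every $\a \in \D \setminus \Iti$, which is exactly the definition of $\Iti$ in \ref{Iti}. With ampleness in hand, the hard Lefschetz theorem applies and forces $c_\Iti(\alpt)$ to be injective for $n \leqslant d$ and surjective for $n \geqslant d$, where $d = \dim \OC_\mini = 2h^\vee - 2$. Combined with the fact that $H^*(G/P_\Iti,\ZM)$ is free and concentrated in even degrees (which is the standard Bruhat-cell description recalled in \ref{parab inv}), this immediately yields the parity statement: odd-degree cohomology of $\OC_\mini$ is free (and vanishes below $d$), while even-degree cohomology is a cokernel that becomes finite beyond the middle degree.

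The second half of the theorem is essentially a translation. By definition $Z_\a = Y_{\Iti,x_\a}$ for $\a \in \Phi_\longue$, and by the bijection \ref{bijection} together with the length statement of Proposition \ref{xa} (or Proposition \ref{minimal length}), the basis $(Z_\a)_{\a \in \Phi_\longue^i}$ of $H^{2i}(G/P_\Iti,\ZM)$ is just a relabeling of $(Y_{\Iti,w})_{w \in X_\Iti^i}$. The parabolic Pieri formula \ref{pieri p} applied to $w = x_\b$ and $\a = \alpt$ gives
\[
c_\Iti(\alpt)\cdot Y_{\Iti,x_\b}
= \sum_{x_\b \elem{\g} w' \in X_\Iti}
\langle x_\b(\alpt),\g^\vee\rangle\, Y_{\Iti,w'}.
\]
Now $x_\b(\alpt) = \b$ by definition of $x_\b$, and Theorem \ref{conclu racines} identifies the covering relations $x_\b \elem{\g} x_\a$ in $X_\Iti$ with the covering relations $\b \elem{\g} \a$ in $\Phi_\longue$; it also identifies $\langle \b,\g^\vee\rangle$ with the integer $\partial_{\a\b}$. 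Substituting and extending by zero over pairs $(\a,\b)$ with $L(\a) = L(\b)+1$ that are not connected by a simple reflection produces the stated formula.

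The main obstacle is really nothing more than bookkeeping: every substantial step (the resolution, the parabolic Pieri formula, the Bruhat basis, the bijection $x_\a \leftrightarrow \a$, and the identification of the coefficients $\langle \b,\g^\vee\rangle$ with $\partial_{\a\b}$) has already been proved. The one point deserving care is making sure the convention for $\partial_{\a\b} = 0$ (when there is no simple $\g$ effecting $\b \elem{\g} \a$, even though $L(\a) = L(\b)+1$) is consistent with the fact that the Pieri sum ranges only over Bruhat covers by a reflection $s_\g$ with $\g$ simple, so that such pairs genuinely contribute nothing. After that, the two halves of the theorem assemble formally.
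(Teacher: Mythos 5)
Your proof is essentially the paper's: it assembles the theorem by specializing the Gysin/hard-Lefschetz analysis of \ref{coh} to $I = \Iti$ and $\a = \alpt$, using the identification $\OC_\mini \simeq \LC^*_\Iti(\alpt)$ from \ref{res sing}, and translating the parabolic Pieri formula \ref{pieri p} through the order-preserving dictionary of Theorem \ref{conclu racines}; your verification of dominance, regularity, and $x_\b(\alpt) = \b$, and the identification of the basis $(Z_\a)$ with $(Y_{\Iti,x_\a})$, are exactly what is needed.

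One factual slip in your closing paragraph deserves correcting, because it reflects a misconception that could derail a less careful reader: the Chevalley/Pieri sum in \ref{pieri p} ranges over \emph{all} Bruhat covers $w \elem{\g} w'$ with $\g$ a positive root, \emph{not} only covers by simple reflections. Non-simple $\g$ genuinely occur in the crossing case $\b \in \D_\longue$, $\a \in -\D_\longue$, $\a \neq -\b$, $\b - \a \in \Phi$, where the unique positive root effecting $\b \elem{\g} \a$ is $\g = \b - \a$ (Lemma \ref{through}(ii)); for these pairs $\partial_{\a\b}$ is defined and nonzero by case (i) of Theorem \ref{conclu racines}, and they furnish the off-diagonal Cartan-matrix entries at the middle degree. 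The convention $\partial_{\a\b} = 0$ is meant for pairs with \emph{no} positive root $\g$ whatsoever effecting $\b \elem{\g} \a$, so the consistency check you worry about is handled by Theorem \ref{conclu racines} itself, not by thinning the Pieri sum to simple reflections. Your actual translation step is already correct; only the explanatory remark is off.
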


As a consequence, we obtain the following results.

\begin{theo}\label{middle}
\begin{enumerate}[(i)]
\item
The middle cohomology of $\OC_\mini$ is given by
$$H^{2h^\vee - 2}(\OC_\mini, \ZM) \simeq P^\vee({\Phi'})/Q^\vee({\Phi'})$$
where $\Phi'$ is the root subsystem of $\Phi$ generated by $\D_\longue$,
and $P^\vee({\Phi'})$ (resp. $Q^\vee({\Phi'})$) is its coweight lattice
(resp. its coroot lattice).

\item
The rest of the cohomology of $\OC_\mini$ is as described
in Section \ref{case by case}. In particular,
if $\ell$ is a good prime for $G$, then there is no $\ell$-torsion
in the rest of the cohomology of $\OC_\mini$.
\end{enumerate}
\end{theo}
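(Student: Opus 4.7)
My plan is to apply Theorem \ref{spectral roots} directly. The main work for part (i) is to identify the source and target of the middle differential $c_\Iti(\alpt)$ and to recognize that its matrix is essentially the Cartan matrix of $\Phi'$.

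First, I would identify which long roots are indexing a basis of $H^{d - 2}(G/P_\Iti,\ZM)$ and of $H^{d}(G/P_\Iti,\ZM)$, where $d = 2h^\vee - 2$. Recall that for a positive long root $\a$, $L(\a) = \hauteur^\vee(\alpt) - \hauteur^\vee(\a) = (h^\vee - 1) - \hauteur^\vee(\a)$, while for a negative long root, $L(\a) = h^\vee - 2 - \hauteur^\vee(\a)$. Consequently $L(\a) = h^\vee - 2$ forces $\a \in \D_\longue$ (only positive solutions exist), and $L(\a) = h^\vee - 1$ forces $\a \in -\D_\longue$. So $H^{d-2}$ has basis $(Z_\b)_{\b\in\D_\longue}$ and $H^d$ has basis $(Z_{-\s})_{\s\in\D_\longue}$.

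Next, I would compute the matrix of multiplication by $c_\Iti(\alpt)$ in these bases. By Theorem \ref{spectral roots}, the $(-\s,\b)$-entry is $\partial_{-\s,\b}$, which by Theorem \ref{conclu racines} (the case $\b\in\D_\longue$, $-\s\in -\D_\longue$) equals $2$ if $\s = \b$, $-1$ if $\b + \s$ is a root, and $0$ otherwise. Since $\b,\s\in\D_\longue$, the root $\b + \s$ exists precisely when $\langle \s,\b^\vee\rangle = -1$, so the matrix is exactly the Cartan matrix of the root system $\Phi'$ generated by $\D_\longue$ (with basis $\D_\longue$). Using the standard fact that the Cartan matrix, viewed as the map expressing simple roots in terms of fundamental weights, has cokernel $P(\Phi')/Q(\Phi')$, and that this finite abelian group is isomorphic to its dual $P^\vee(\Phi')/Q^\vee(\Phi')$, I conclude part (i).

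For part (ii), the approach is a case-by-case computation using Theorem \ref{spectral roots}. For each irreducible type of $\Phi$, I would list the long positive roots level by level (the level being the dual height of $\alpt$ minus the dual height for positives, and shifted for negatives), and tabulate the differentials $\partial_{\a\b}$. Outside of the middle, all differentials are $\pm 1$ or $\pm r$ on adjacent pairs of long roots related by a simple reflection, and $0$ otherwise, so computing the elementary divisors of these integer matrices is a finite exercise. The claim that the only primes appearing in the torsion (outside the middle) are bad primes must be read off from the tables; I expect the main obstacle to be organizing the exceptional types $E_6,E_7,E_8,F_4,G_2$, where the poset of long roots is largest and one must verify by direct inspection (or a short computer calculation) that the Smith normal forms of the intermediate differentials have only bad primes as invariant factors. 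The classical types $A_n, B_n, C_n, D_n$ admit a uniform treatment in which all intermediate differentials are essentially $\pm 1$ or $\pm 2$, making the claim transparent for odd good primes. I would present the outcome in a table for each type, from which the claim that good primes do not divide the torsion outside the middle follows by inspection.
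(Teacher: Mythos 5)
Your approach for part (i) is the same as the paper's: identify the bases of the two middle cohomology groups of $G/P_\Iti$ as indexed by $\D_\longue$ and $-\D_\longue$, compute the differential via Theorem \ref{spectral roots} and Theorem \ref{conclu racines}, recognize the matrix, and take the cokernel. But there is a sign error that propagates into a gap.

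You assert that for distinct $\s,\b\in\D_\longue$ with $\b+\s$ a root, $\partial_{-\s,\b}=-1$, quoting Theorem \ref{conclu racines}. The correct value is $+1$, as one sees either from Lemma \ref{through} (which says $\langle\b,\g^\vee\rangle = 1$ in that case) or directly: since $\Phi'$ is simply-laced and $\g=\b+\s$ is a long root, $\langle\b,\g^\vee\rangle = \langle\b,\b^\vee\rangle + \langle\b,\s^\vee\rangle = 2 - 1 = 1$; equivalently, $s_\g(\b)=-\s$ forces the coefficient to be $1$. (Theorem \ref{conclu racines} as stated in the text seems to carry a typographical error here.) So the matrix of the middle differential $\DC_{h^\vee-1}$ is \emph{not} the Cartan matrix of $\Phi'$: it is the Cartan matrix with all off-diagonal entries made positive, which the paper calls "the Cartan matrix without minus signs." Your proof, taking the sign at face value, lands directly on the Cartan matrix and skips the necessary step of showing the two matrices have the same cokernel. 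The paper supplies this: the Dynkin diagram of $\Phi'$ is a tree, hence bipartite, so writing $\D_\longue = J\sqcup K$ and replacing $Z_{\pm\a}$ by $-Z_{\pm\a}$ for $\a\in J$ conjugates the sign-stripped matrix into the genuine Cartan matrix via a diagonal $\pm1$ change of basis, which does not affect the cokernel. You must include an argument of this kind; without it the identification of the cokernel is unjustified.

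Your identification of the cokernel with $P^\vee(\Phi')/Q^\vee(\Phi')$, via "cokernel of Cartan matrix is $P/Q$, and $P/Q\simeq P^\vee/Q^\vee$ because a finite abelian group is isomorphic to its dual," is a legitimate variant of the paper's route (the paper reads the transpose directly as the inclusion $Q^\vee\hookrightarrow P^\vee$). Your outline of part (ii) — tabulate the matrices $\DC_i$ level by level, compute Smith normal forms, observe by inspection that only bad primes occur in the torsion away from the middle — is exactly the paper's method.
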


\begin{proof}
The map
$c_\Iti(\alpt) : H^{2 h^\vee - 4} \longto H^{2 h^\vee -2}$
is described as follows. By Theorem \ref{spectral roots},
the cohomology group $H^{2 h^\vee - 4}$ is free with basis
$(Z_\b)_{\b\in \D_\longue}$ (the long roots
of level $h^\vee - 2$ are of the long roots dual height $1$, so they are
the long simple roots). Similarly,
$H^{2 h^\vee - 4}$ is free with basis
$(Z_{-\a})_{\a\in \D_\longue}$. Besides, the matrix of
$c_\Iti(\alpt):H^{2 h^\vee - 4} \longto H^{2 h^\vee -2}$
in these bases is $(\partial_{-\a,\b})_{\a,\b\in\D_\longue}$.
We have $\partial_{-\a,\a} = 2$ for $\a\in\D_\longue$, and
for distinct $\a$ and $\b$ we have $\partial_{-\a,\b} = 1$ if
$\a + \b$ is a (long) root, $0$ otherwise.

Thus the matrix of $c_\Iti(\alpt):H^{2 h^\vee - 4} \longto H^{2 h^\vee -2}$
is the Cartan matrix of $\Phi'$ without minus signs.
This matrix is equivalent to the Cartan matrix of $\Phi'$:
since the Dynkin diagram of $\Phi'$ is a tree, it is bipartite.
We can write $\D_\longue = J \cup K$, so that all the edges 
in the Dynkin diagram of $\Phi'$ link an element of $J$ to an element of $K$.
If we replace the $Z_{\pm \a}$, $\a \in J$, by their opposites, 
then the matrix of $c_\Iti(\alpt) : H^{2 h^\vee - 4} \longto H^{2 h^\vee -2}$
becomes the Cartan matrix of $\Phi'$.

Now, the Cartan matrix of $\Phi'$ is transposed to the matrix
of the inclusion of $Q(\Phi')$ in $P(\Phi')$ in the bases
$\D_\longue$ and $(\varpi_\a)_{\a\in\D_\longue}$ (see
\cite[Chap. VI, \S 1.10]{BOUR456}), so it is in fact
the matrix of the inclusion of $Q^\vee(\Phi')$ in $P^\vee(\Phi')$
in the bases $(\b^\vee)_{\b\in\D_\longue}$ and
$(\varpi_{\a^\vee})_{\a\in\D_\longue}$.

The middle cohomology group $H^{2h^\vee - 2}(\OC_\mini, \ZM)$
is isomorphic to the cokernel of the map
$c_\Iti(\alpt):H^{2 h^\vee - 4} \longto H^{2 h^\vee -2}$.
This proves $(i)$.

Part $(ii)$ follows from a case-by-case analysis which will be done
in Section \ref{case by case}.
\end{proof}

\begin{remark}\label{poincare}
\emph{Besides, we have $\partial_{\a\b} = \partial_{-\b,-\a}$, so the
maps ``multiplication by $c_\Iti(\alpt)$''
in complementary degrees are transposed to each other.
This accounts for the fact that $\OC_\mini$ satisfies Poincar\'e duality,
since $\OC_\mini$ is homeomorphic to $\RM^+_*$ times a smooth compact
manifold of (real) dimension $2 h^\vee - 5$ (since we deal with
integral coefficients, one should take the derived dual for the Poincar\'e
duality).}
\end{remark}

\begin{remark}\label{rat}
\emph{
For the first half of the rational cohomology of $\OC_\mini$,
we find
\[
\bigoplus_{i=1}^k \QM[-2(d_i - 2)]
\]
where $k$ is the number of long simple roots, and
$d_1 \leqslant \ldots \leqslant d_k \leqslant \ldots \leqslant d_n$
are the degrees of $W$ ($n$ being the total number of simple roots).
This can be observed case by case, or related to the corresponding
Springer representation. The other half is determined by Poincar\'e
duality.
}
\end{remark}

\section{Case-by-case analysis}\label{case by case}

In the preceding section, we have explained how to compute the cohomology
of the minimal class in any given type in terms of root systems,
and we found a description of
the middle cohomology with a general proof.
However, for the rest of the cohomology, we need a case-by-case analysis.
It will appear that the primes dividing the torsion of the rest of the
cohomology are bad.
We have no \emph{a priori} explanation for this fact. Note that,
for the type $A$, we have an alternative method, which will be explained
in the next section.

For all types, first we give the Dynkin diagram, to fix the numbering
$(\a_i)_{1 \le i \le r}$ of the vertices, where $r$ denotes
the semisimple rank of $\gG$,
and to show the part $\Iti$ of $\D$ (see \ref{Iti}).
The corresponding vertices are represented in black. They are
exactly those that are not linked to the additional vertex in the
extended Dynkin diagram.

Then we give a diagram whose vertices are the positive long roots;
whenever $\b \elem{\g} \a$, we put an edge between $\b$ (above) and
$\a$ (below), and the multiplicity of the edge is equal to
$\partial_{\a\b} = \langle \b, \g^\vee \rangle$. In this diagram, the long root
$\sum_{i = 1}^r n_i \a_i$ (where the $n_i$ are non-negative integers)
is denoted by $n_1\ldots n_r$.
The roots in a given line appear in lexicographic order.

For $1 \leqslant i \leqslant d - 1$, let $\DC_i$ be the
matrix of the map $c_\Iti(\alpt):H^{2i - 2} \to H^{2i}$ 
in the bases $\Phi_\longue^{i - 1}$ and $\Phi_\longue^{i}$ (the roots
being ordered in lexicographic order, as in the diagram).
We give the matrices $\DC_i$
for $i = 1 \ldots h^\vee - 2$.
The matrix $\DC_{h^\vee - 1}$ is equal to
the Cartan matrix without minus signs of the root system $\Phi'$
(corresponding to $\D_\longue$).
The last matrices can be deduced
from the first ones by symmetry, since (by Remark \ref{poincare}) we have
$\DC_{d - i} = {}^t \DC_i$.

Then we give the cohomology of the minimal class with $\ZM$ coefficients
(one just has to compute the elementary divisors of the matrices $\DC_i$).

It will be useful to introduce some notation for the matrices in classical
types. Let $k$ be an integer. We set
\[
M(k) = 
\begin{pmatrix}
1 & 0      & \ldots & 0 & 0\\
1 & 1      & \ddots & \vdots & \vdots\\
0 & 1      & \ddots & 0 & 0\\
\vdots & \ddots & \ddots & 1 & 0\\
0 & \ldots & 0      & 1 & 1
\end{pmatrix}
\hspace{3cm}
N(k) = 
\begin{pmatrix}
1 & 0      & \ldots & 0\\
1 & 1      & \ddots & \vdots\\
0 & 1      & \ddots & 0\\
\vdots & \ddots & \ddots & 1\\
0 & \ldots & 0      & 1
\end{pmatrix}
\]
where $M(k)$ is a square matrix of size $k$, and $N(k)$ is
of size $(k + 1) \times k$.

Now let $k$ and $l$ be non-negative integers. For $i$ and $j$
any integers, we define a $k \times l$ matrix $E_{i,j}(k,l)$ as follows.
If $(i, j)$ is not in the range $[1,k] \times [1,l]$, then
we set $E_{i,j}(k,l) = 0$, otherwise it will denote the
$k\times l$ matrix whose only non-zero entry is a $1$ in the intersection
of line $i$ and column $j$. If the size of the matrix is clear from the
context, we will simply write $E_{i,j}$.

First, the calculations of the elementary divisors of the matrices 
$\DC_i$ were
done with GAP3 (see \cite{GAP}). We used the data on roots systems 
of the package CHEVIE. But actually, all the calculations can be
done by hand.

\subsection{Type $A_{n - 1}$}

\begin{center}
\begin{picture}(280,30)(-35,0)
\put(  5, 10){\circle{10}}
\put( 45, 10){\circle*{10}}
\put( 85, 10){\circle*{10}}
\put(165, 10){\circle*{10}}
\put(205, 10){\circle*{10}}
\put(245, 10){\circle{10}}
\put( 10, 10){\line(1,0){30}}
\put( 50, 10){\line(1,0){30}}
\put( 90, 10){\line(1,0){20}}
\put(120,9.5){$\ldots$}
\put(140, 10){\line(1,0){20}}
\put(170, 10){\line(1,0){30}}
\put(210, 10){\line(1,0){30}}
\put(  0, 20){$\a_1$}
\put( 40, 20){$\a_2$}
\put( 80, 20){$\a_3$}
\put(155, 20){$\a_{n - 3}$}
\put(195, 20){$\a_{n - 2}$}
\put(235, 20){$\a_{n - 1}$}
\end{picture}
\end{center}

We have $h = h^\vee = n$ and $d = 2n - 2$.

\[
\xymatrix @=.4cm{
0 &
11\ldots 11 \ar@{-}[d] \ar@{-}[dr]\\
1 &
11\ldots 10 \ar@{-}[d] \ar@{-}[dr]&
01\ldots 11 \ar@{-}[d] \ar@{-}[dr]&
{}\phantom{01\ldots 10}
\\
&
\ldots \ar@{-}[d] \ar@{-}[dr]&
\ldots \ar@{-}[d] \ar@{-}[dr]&
\ldots \ar@{-}[d] \ar@{-}[dr]\\
n - 2 &
10\ldots 00 &
010\ldots 0 &
\ldots &
00\ldots 01
}
\]

The odd cohomology of $G/P_\Iti$ is zero, and we have
\[
H^{2i}(G/P_\Iti) =
\begin{cases}
\ZM^{i + 1}
& \text{if $0 \le i \le n - 2$}\\
\ZM^{2n - 2 - i}
& \text{if $n - 1 \le i \le 2n - 3$}\\
0 & \text{otherwise}
\end{cases}
\]

For $1 \le i \le n - 2$, we have $\DC_i = N(i)$; the cokernel is isomorphic
to $\ZM$. We have
\[
\DC_{n - 1} =
\begin{pmatrix}
2      & 1      & 0      & \ldots & 0      \\
1      & 2      & 1      & \ddots & \vdots \\
0      & \ddots & \ddots & \ddots & 0      \\
\vdots & \ddots & 1      & 2      & 1      \\
0      & \ldots & 0      & 1      & 2
  \end{pmatrix}
\]
Its cokernel is isomorphic to $\ZM/n$. The last matrices are
transposed to the first ones, so the corresponding
maps are surjective. From this, we deduce the cohomology
of $\OC_\mini$ in type $A_{n - 1}$. We will see another method in
Section \ref{a bis}.

\[
H^i(\OC_\mini, \ZM) =
\begin{cases}
\ZM & \text{if $0 \le i \le 2n -4$ and $i$ is even,}\\
& \text{or $2n - 1 \le i \le 4n - 5$ and $i$ is odd}\\
\ZM/n & \text{if $i = 2n - 2$}\\
0 & \text{otherwise}
\end{cases}
\]

\subsection{Type $B_n$}

\begin{center}
\begin{picture}(280,30)(-35,0)
\put(  5, 10){\circle*{10}}
\put( 45, 10){\circle{10}}
\put( 85, 10){\circle*{10}}
\put(125, 10){\circle*{10}}
\put(205, 10){\circle*{10}}
\put(245, 10){\circle*{10}}
\put( 10, 10){\line(1,0){30}}
\put( 50, 10){\line(1,0){30}}
\put( 90, 10){\line(1,0){30}}
\put(130, 10){\line(1,0){20}}
\put(160,9.5){$\ldots$}
\put(180, 10){\line(1,0){20}}
\put(209, 11.5){\line(1,0){32}}
\put(209,  8.5){\line(1,0){32}}
\put(220,  5.5){\LARGE{$>$}}
\put(  0, 20){$\a_1$}
\put( 40, 20){$\a_2$}
\put( 80, 20){$\a_3$}
\put(120, 20){$\a_4$}
\put(195, 20){$\a_{n - 1}$}
\put(240, 20){$\a_n$}
\end{picture}
\end{center}

We have $h = 2n$, $h^\vee = 2n - 1$, and $d = 4n - 4$.

\[
\xymatrix @=.4cm{
0 & 
12\ldots 22 \ar@{-}[d]\\
1 & 
112\ldots 2 \ar@{-}[d] \ar@{-}[dr]\\
& 
\ldots \ar@{-}[d] \ar@{-}[dr]&
012\ldots 2 \ar@{-}[d]\\
n - 2 & 
11\ldots 12 \ar@{=}[d] \ar@{-}[dr]&
\ldots \ar@{-}[d] \ar@{-}[dr]&\\
n - 1 &
11\ldots 10 \ar@{-}[d] \ar@{-}[dr]&
01\ldots 12 \ar@{=}[d] \ar@{-}[dr]&
\ldots \ar@{-}[d]\\
n &
1\ldots 100 \ar@{-}[d] \ar@{-}[dr]&
01\ldots 10 \ar@{-}[d] \ar@{-}[dr]&
\ldots \ar@{=}[d] \ar@{-}[dr]\\
&
\ldots \ar@{-}[d] \ar@{-}[dr]&
\ldots \ar@{-}[d] \ar@{-}[dr]&
\ldots \ar@{-}[d] \ar@{-}[dr]&
0\ldots 012 \ar@{=}[d]\\
2n - 3 &
10\ldots 00&
010\ldots 0&
\ldots&
0\ldots 010
}
\]

There is a gap at each even line (the length of the line
increases by one). The diagram can be a little bit misleading if
$n$ is even: in that case, there is a gap at the line $n - 2$.
Let us now describe the matrices $\DC_i$.

First suppose $1 \le i \le n - 2$. If $i$ is odd, then we have
$\DC_i = M\left(\tfrac{i+1}{2}\right)$ (an isomorphism); 
if $i$ is even, then we have
$\DC_i = N\left(\tfrac{i}{2}\right)$ and the cokernel is isomorphic to $\ZM$.

Now suppose $n - 1 \le i \le 2n - 3$. If $i$ is odd,
then we have
$\DC_i = M\left(\tfrac{i+1}{2}\right) +  E_{i + 2 - n, i + 2 - n}$
and the cokernel is isomorphic to $\ZM/2$.
If $i$ is even, then we have
$\DC_i = N\left(\tfrac{i}{2}\right) + E_{i + 2 - n, i + 2 - n}$
and the cokernel is isomorphic to $\ZM$.

The long simple roots generate a root system of type $A_{n - 1}$.
Thus the matrix $\DC_{2n - 2}$ is the Cartan matrix without minus
signs of type $A_{n - 1}$, which has cokernel $\ZM/n$.

So the cohomology of $\OC_\mini$ is described as follows.

\[ H^i(\OC_\mini, \ZM) \simeq
\begin{cases}
\ZM & \text{if $0 \le i \le 4n - 8$ and $i \equiv 0 \mod 4$,}\\
& \text{or $4n - 1 \le i \le 8n  - 9$ and $i \equiv -1 \mod 4$}\\
\ZM/2 & \text{$2n - 2 \le i \le 6n - 6$ and $i \equiv 2 \mod 4$}\\
\ZM/n & \text{if $i = 4n - 4$}\\
0 & \text{otherwise}
\end{cases}
\]

\subsection{Type $C_n$}

\begin{center}
\begin{picture}(280,30)(-35,0)
\put(  5, 10){\circle{10}}
\put( 45, 10){\circle*{10}}
\put( 85, 10){\circle*{10}}
\put(125, 10){\circle*{10}}
\put(205, 10){\circle*{10}}
\put(245, 10){\circle*{10}}
\put( 10, 10){\line(1,0){30}}
\put( 50, 10){\line(1,0){30}}
\put( 90, 10){\line(1,0){30}}
\put(130, 10){\line(1,0){20}}
\put(160,9.5){$\ldots$}
\put(180, 10){\line(1,0){20}}
\put(209, 11.5){\line(1,0){32}}
\put(209,  8.5){\line(1,0){32}}
\put(220,  5.5){\LARGE{$<$}}
\put(  0, 20){$\a_1$}
\put( 40, 20){$\a_2$}
\put( 80, 20){$\a_3$}
\put(120, 20){$\a_4$}
\put(195, 20){$\a_{n - 1}$}
\put(240, 20){$\a_n$}
\end{picture}
\end{center}

We have $h = 2n$, $h^\vee = n + 1$, and $d = 2n$.
The root system $\Phi'$ is of type $A_1$. Its Cartan matrix is $(2)$.

\[
\xymatrix @=.5cm{
0 &
22\ldots 21 \ar@{=}[d]\\
1 &
02\ldots 21 \ar@{=}[d]\\
&
\vdots      \ar@{=}[d]\\
n - 2 &
0\ldots 021 \ar@{=}[d]\\
n - 1 &
00\ldots 01
}
\]

The matrices $\DC_i$ are all equal to $(2)$.

\[
H^i(\OC_\mini, \ZM) \simeq
\begin{cases}
\ZM & \text{if $i = 0$ or $4n - 1$}\\
\ZM/2 & \text{if $2 \le i \le 4n - 2$ and $i$ is even}\\
0 & \text{otherwise}
\end{cases}
\]

\subsection{Type $D_n$}

\begin{center}
\begin{picture}(280,80)(-35,-20)
\put(  5, 10){\circle*{10}}
\put( 45, 10){\circle{10}}
\put( 85, 10){\circle*{10}}
\put(125, 10){\circle*{10}}
\put(205, 10){\circle*{10}}
\put(245, 30){\circle*{10}}
\put(245,-10){\circle*{10}}
\put( 10, 10){\line(1,0){30}}
\put( 50, 10){\line(1,0){30}}
\put( 90, 10){\line(1,0){30}}
\put(130, 10){\line(1,0){20}}
\put(160,9.5){$\ldots$}
\put(180, 10){\line(1,0){20}}
\put(209, 12){\line(2,1){32}}
\put(209,  8){\line(2,-1){32}}
\put(  0, 20){$\a_1$}
\put( 40, 20){$\a_2$}
\put( 80, 20){$\a_3$}
\put(120, 20){$\a_4$}
\put(195, 20){$\a_{n - 2}$}
\put(255, 28){$\a_{n - 1}$}
\put(255,-13){$\a_n$}
\end{picture}
\end{center}

We have $h = h^\vee = 2n - 2$, and $d = 4n -6$.
We have
\[
\DC_{2n-3} =
\begin{pmatrix}
2      & 1      & 0      & \ldots & 0      & 0\\
1      & 2      & \ddots & \ddots & \vdots & \vdots\\
0      & \ddots & \ddots & 1      & 0      & 0\\
\vdots & \ddots & 1      & 2      & 1      & 1\\
0      & \ldots & 0      & 1      & 2      & 0\\
0      & \ldots & 0      & 1      & 0      & 2
\end{pmatrix}
\]
Its cokernel is $(\ZM/2)^2$ when $n$ is even, $\ZM/4$ when $n$ is odd.

As in the $B_n$ case, the reader should be warned that there is a gap
at line $n - 4$ if $n$ is even. Besides, not all dots are meaningful.
The entries $0\ldots 01211$ and $00\ldots 0111$ are on the right
diagonal, but usually they are not on the lines $n - 1$ and $n$.

{\scriptsize
\[
\xymatrix @=.3cm{
0&
&122\ldots 211 \ar@{-}[d]\\
1&
&112\ldots 211 \ar@{-}[d] \ar@{-}[dr]\\
&
&\ldots \ar@{-}[d] \ar@{-}[dr]&
012\ldots 211  \ar@{-}[d]\\
n - 4&
&11\ldots 1211 \ar@{-}[d] \ar@{-}[dr]&
\ldots \ar@{-}[d] \ar@{-}[dr]\\
n - 3&
&111\ldots 111 \ar@{-}[dl] \ar@{-}[d] \ar@{-}[dr]&
01\ldots 1211 \ar@{-}[d] \ar@{-}[dr]&
\ldots \ar@{-}[d]\\
n - 2&
111\ldots 110 \ar@{-}[d] \ar@{-}[dr]&
111\ldots 101 \ar@{-}[dl] \ar@{-}[dr]&
011\ldots 111 \ar@{-}[dl] \ar@{-}[d] \ar@{-}[dr]&
\ldots \ar@{-}[d] \ar@{-}[dr]\\
n - 1&
111\ldots 100 \ar@{-}[d] \ar@{-}[dr]&
011\ldots 110 \ar@{-}[d] \ar@{-}[dr]&
011\ldots 101 \ar@{-}[dl] \ar@{-}[dr]&
\dots \ar@{-}[dl] \ar@{-}[d] \ar@{-}[dr]&
0\ldots 01211  \ar@{-}[d]\\
&
\ldots \ar@{-}[d] \ar@{-}[dr]&
011\ldots 100 \ar@{-}[d] \ar@{-}[dr]&
001\ldots 110 \ar@{-}[d] \ar@{-}[dr]&
\ldots \ar@{-}[dl] \ar@{-}[dr]&
00\ldots 0111 \ar@{-}[dl] \ar@{-}[d] \ar@{-}[dr]\\
2n - 6&
1110\ldots 00 \ar@{-}[d] \ar@{-}[dr]&
\ldots \ar@{-}[d] \ar@{-}[dr]&
\ldots \ar@{-}[d] \ar@{-}[dr]&
\ldots \ar@{-}[d] \ar@{-}[dr]&
0\ldots 01101 \ar@{-}[dl] \ar@{-}[dr]&
00\ldots 0111 \ar@{-}[dl] \ar@{-}[d]\\
2n - 5&
110\ldots 000 \ar@{-}[d] \ar@{-}[dr]&
0110\ldots 00 \ar@{-}[d] \ar@{-}[dr]&
\ldots \ar@{-}[d] \ar@{-}[dr]&
0\ldots 01100 \ar@{-}[d] \ar@{-}[dr]&
00\ldots 0110 \ar@{-}[d] \ar@{-}[dr]&
00\ldots 0101 \ar@{-}[dl] \ar@{-}[dr]\\
2n - 4&
100\ldots 000&
010\ldots 000&
0010\ldots 00&
\ldots&
00\ldots 0100&
000\ldots 010&
000\ldots 001
}
\]
}

First suppose $i \le i \le n - 3$. We have
\[
\DC_i =
\begin{cases}
M\left(\tfrac{i + 1}{2}\right)
& \text{if $i$ is odd}\\
N\left(\tfrac{i}{2}\right)
& \text{if $i$ is even}
\end{cases}
\]
Then the cokernel is zero if $i$ is odd, $\ZM$ if $i$ is even.

Let $V$ be the $1 \times \tfrac{n - 1}{2}$ matrix $(1,0,\ldots,0)$.
We have
\[\DC_{n - 2} =
\begin{cases}
\begin{pmatrix}
V\\
N\left(\frac{n - 2}{2}\right)
\end{pmatrix}
& \text{if $n$ is even}\\
\\
\begin{pmatrix}
V\\
M\left(\frac{n - 1}{2}\right)
\end{pmatrix}
& \text{if $n$ is odd}\\
\end{cases}
\]
The cokernel is $\ZM^2$ if $n$ is even, $\ZM$ if $i$ is odd.

Now suppose $n - 1 \le i \le 2n - 4$. We have
\[
\DC_i =
\begin{cases}
M\left(\tfrac{i + 3}{2}\right)
+ E_{i + 2 - n, i + 3 - n}
- E_{i + 3 - n, i + 3 - n}
+ E_{i + 3 - n, i + 4 - n}
& \text{if $i$ is odd}\\
N\left(\tfrac{i + 2}{2}\right)
+ E_{i + 2 - n, i + 3 - n}
- E_{i + 3 - n, i + 3 - n}
+ E_{i + 3 - n, i + 4 - n}
& \text{if $i$ is even}\\
\end{cases}
\]
Then the cokernel is $\ZM/2$ if $i$ is odd, $\ZM$ if $i$ is even.

\[
H^i(\OC_\mini, \ZM) \simeq
\begin{cases}
\ZM & \text{if $0 \le i \le 4n - 8$ and $i \equiv 0 \mod 4$,}\\
& \text{or $4n - 5 \le i \le 8n - 13$ and $i \equiv -1 \mod 4$;}\\
\ZM/2 & \text{if $2n - 4 < i < 4n - 6$ and $i \equiv 2 \mod 4$;}\\
& \text{or $4n - 6 < i < 6n - 8$ and $i \equiv 2 \mod 4$;}\\
(\ZM/2)^2 & \text{if $i = 4n - 6$ and $n$ is even;}\\
\ZM/4 & \text{if $i = 4n - 6$ and $n$ is odd;}\\
0 & \text{otherwise.}
\end{cases}
\oplus
\ZM\ 
\begin{array}{ll}
\text{if $i = 2n - 4$}\\
\text{or $i =  6n - 7$.}
\end{array}
\]

\subsection{Type $E_6$}

\begin{center}
\begin{picture}(200,45)
\put( 40, 30){\circle*{10}}
\put( 45, 30){\line(1,0){30}}
\put( 80, 30){\circle*{10}}
\put( 85, 30){\line(1,0){30}}
\put(120, 30){\circle*{10}}
\put(125, 30){\line(1,0){30}}
\put(160, 30){\circle*{10}}
\put(165, 30){\line(1,0){30}}
\put(200, 30){\circle*{10}}
\put(120,  5){\circle{10}}
\put(120, 25){\line(0,-1){15}}
\put( 35, 40){$\a_1$}
\put( 75, 40){$\a_3$}
\put(115, 40){$\a_4$}
\put(155, 40){$\a_5$}
\put(195, 40){$\a_6$}
\put(100,  3){$\a_2$}
\end{picture}
\end{center}

We have $h = h^\vee = 12$, and $d = 22$. The Cartan matrix has cokernel
isomorphic to $\ZM/3$.

\[
\xymatrix @=.4cm{
0 &
122321\ar@{-}[d]\\
1 &
112321\ar@{-}[d]\\
2 &
112221\ar@{-}[d]\ar@{-}[dr]\\
3 &
112211\ar@{-}[d]\ar@{-}[dr]&
111221\ar@{-}[d]\ar@{-}[dr]\\
4 &
112210\ar@{-}[d]&
111211\ar@{-}[dl]\ar@{-}[d]\ar@{-}[dr]&
011221\ar@{-}[d]\\
5 &
111210\ar@{-}[d]\ar@{-}[drr]&
111111\ar@{-}[dl]\ar@{-}[d]\ar@{-}[drr]&
011211\ar@{-}[d]\ar@{-}[dr]\\
6 &
111110\ar@{-}[d]\ar@{-}[dr]\ar@{-}[drr]&
101111\ar@{-}[d]\ar@{-}[drrr]&
011210\ar@{-}[d]&
011111\ar@{-}[dl]\ar@{-}[d]\ar@{-}[dr]\\
7 &
111100\ar@{-}[d]\ar@{-}[dr]&
101110\ar@{-}[dl]\ar@{-}[drr]&
011110\ar@{-}[dl]\ar@{-}[d]\ar@{-}[dr]&
010111\ar@{-}[dl]\ar@{-}[dr]&
001111\ar@{-}[dl]\ar@{-}[d]\\
8 &
101100\ar@{-}[d]\ar@{-}[drr]&
011100\ar@{-}[d]\ar@{-}[dr]&
010110\ar@{-}[dl]\ar@{-}[dr]&
001110\ar@{-}[dl]\ar@{-}[d]&
000111\ar@{-}[dl]\ar@{-}[d]\\
9 &
101000\ar@{-}[d]\ar@{-}[drr]&
010100\ar@{-}[d]\ar@{-}[drr]&
001100\ar@{-}[d]\ar@{-}[dr]&
000110\ar@{-}[d]\ar@{-}[dr]&
000011\ar@{-}[d]\ar@{-}[dr]\\
10&
100000&
010000&
001000&
000100&
000010&
000001
}
\]

\[
\DC_1 = 
\DC_2 = \begin{pmatrix}  1 \end{pmatrix}\quad
\DC_3 = \begin{pmatrix}  1 \\
     1 \end{pmatrix}\quad
\DC_4 = \begin{pmatrix}  1 & 0 \\
     1 & 1 \\
     0 & 1 \end{pmatrix}\quad
\DC_5 = \begin{pmatrix}  1 & 1 & 0 \\
     0 & 1 & 0 \\
     0 & 1 & 1 \end{pmatrix}\] \[
\DC_6 = \begin{pmatrix}  1 & 1 & 0 \\
     0 & 1 & 0 \\
     1 & 0 & 1 \\
     0 & 1 & 1 \end{pmatrix}\quad
\DC_7 = \begin{pmatrix}  1 & 0 & 0 & 0 \\
     1 & 1 & 0 & 0 \\
     1 & 0 & 1 & 1 \\
     0 & 0 & 0 & 1 \\
     0 & 1 & 0 & 1 \end{pmatrix}\quad
\DC_8 = \begin{pmatrix}  1 & 1 & 0 & 0 & 0 \\
     1 & 0 & 1 & 0 & 0 \\
     0 & 0 & 1 & 1 & 0 \\
     0 & 1 & 1 & 0 & 1 \\
     0 & 0 & 0 & 1 & 1 \end{pmatrix}\] \[
\DC_9 = \begin{pmatrix}  1 & 0 & 0 & 0 & 0 \\
     0 & 1 & 1 & 0 & 0 \\
     1 & 1 & 0 & 1 & 0 \\
     0 & 0 & 1 & 1 & 1 \\
     0 & 0 & 0 & 0 & 1 \end{pmatrix}\quad
\DC_{10} = \begin{pmatrix}  1 & 0 & 0 & 0 & 0 \\
     0 & 1 & 0 & 0 & 0 \\
     1 & 0 & 1 & 0 & 0 \\
     0 & 1 & 1 & 1 & 0 \\
     0 & 0 & 0 & 1 & 1 \\
     0 & 0 & 0 & 0 & 1 \end{pmatrix}
\]

\[
H^i(\OC_\mini, \ZM) \simeq
\begin{cases}
\ZM & \text{for $i = 0,\ 6,\ 8,\ 12,\ 14,\ 20,\ 23,\ 29,\ 31,\ 35,\ 37,\ 43$}\\
\ZM/3 & \text{for $i = 16,\ 22,\ 28$}\\
\ZM/2 & \text{for $i = 18,\ 26$}\\
0 & \text{otherwise}
\end{cases}
\]

\subsection{Type $E_7$}

\begin{center}
\begin{picture}(240,45)
\put( 40, 30){\circle{10}}
\put( 45, 30){\line(1,0){30}}
\put( 80, 30){\circle*{10}}
\put( 85, 30){\line(1,0){30}}
\put(120, 30){\circle*{10}}
\put(125, 30){\line(1,0){30}}
\put(160, 30){\circle*{10}}
\put(165, 30){\line(1,0){30}}
\put(200, 30){\circle*{10}}
\put(205, 30){\line(1,0){30}}
\put(240, 30){\circle*{10}}
\put(120, 5){\circle*{10}}
\put(120, 25){\line(0,-1){15}}
\put( 36, 40){$s_1$}
\put( 76, 40){$s_3$}
\put(116, 40){$s_4$}
\put(156, 40){$s_5$}
\put(196, 40){$s_6$}
\put(236, 40){$s_7$}
\put(100, 4){$s_2$}
\end{picture}
\end{center}

We have $h = h^\vee = 18$, and $d = 34$. The Cartan matrix has cokernel
isomorphic to $\ZM/2$.

\[
\xymatrix @=.4cm{
0 &
2234321\ar@{-}[d]\\
1 &
1234321\ar@{-}[d]\\
2 &
1224321\ar@{-}[d]\\
3 &
1223321\ar@{-}[d]\ar@{-}[dr]\\
4 &
1223221\ar@{-}[d]\ar@{-}[dr]&
1123321\ar@{-}[d]\\
5 &
1223211\ar@{-}[d]\ar@{-}[dr]&
1123221\ar@{-}[d]\ar@{-}[dr]\\
6 &
1223210\ar@{-}[d]&
1123211\ar@{-}[dl]\ar@{-}[d]&
1122221\ar@{-}[dl]\ar@{-}[d]\\
7 &
1123210\ar@{-}[d]&
1122211\ar@{-}[dl]\ar@{-}[d]\ar@{-}[dr]&
1112221\ar@{-}[d]\ar@{-}[dr]\\
8 &
1122210\ar@{-}[d]\ar@{-}[dr]&
1122111\ar@{-}[dl]\ar@{-}[dr]&
1112211\ar@{-}[dl]\ar@{-}[d]\ar@{-}[dr]&
0112221\ar@{-}[d]\\
9 &
1122110\ar@{-}[d]\ar@{-}[dr]&
1112210\ar@{-}[d]\ar@{-}[drr]&
1112111\ar@{-}[dl]\ar@{-}[d]\ar@{-}[drr]&
0112211\ar@{-}[d]\ar@{-}[dr]\\
10&
1122100\ar@{-}[d]&
1112110\ar@{-}[dl]\ar@{-}[d]\ar@{-}[drr]&
1111111\ar@{-}[dl]\ar@{-}[d]\ar@{-}[drr]&
0112210\ar@{-}[d]&
0112111\ar@{-}[dl]\ar@{-}[d]\\
11&
1112100\ar@{-}[d]\ar@{-}[drr]&
1111110\ar@{-}[dl]\ar@{-}[d]\ar@{-}[drr]&
1011111\ar@{-}[dl]\ar@{-}[drrr]&
0112110\ar@{-}[dl]\ar@{-}[d]&
0111111\ar@{-}[dl]\ar@{-}[d]\ar@{-}[dr]\\
12&
1111100\ar@{-}[d]\ar@{-}[dr]\ar@{-}[drr]&
1011110\ar@{-}[d]\ar@{-}[drrr]&
0112100\ar@{-}[d]&
0111110\ar@{-}[dl]\ar@{-}[d]\ar@{-}[dr]&
0101111\ar@{-}[dl]\ar@{-}[dr]&
0011111\ar@{-}[dl]\ar@{-}[d]\\
13&
1111000\ar@{-}[d]\ar@{-}[dr]&
1011100\ar@{-}[dl]\ar@{-}[drr]&
0111100\ar@{-}[dl]\ar@{-}[d]\ar@{-}[dr]&
0101110\ar@{-}[dl]\ar@{-}[dr]&
0011110\ar@{-}[dl]\ar@{-}[d]&
0001111\ar@{-}[dl]\ar@{-}[d]\\
14&
1011000\ar@{-}[d]\ar@{-}[drr]&
0111000\ar@{-}[d]\ar@{-}[dr]&
0101100\ar@{-}[dl]\ar@{-}[dr]&
0011100\ar@{-}[dl]\ar@{-}[d]&
0001110\ar@{-}[dl]\ar@{-}[d]&
0000111\ar@{-}[dl]\ar@{-}[d]\\
15&
1010000\ar@{-}[d]\ar@{-}[drr]&
0101000\ar@{-}[d]\ar@{-}[drr]&
0110000\ar@{-}[d]\ar@{-}[dr]&
0001100\ar@{-}[d]\ar@{-}[dr]&
0000110\ar@{-}[d]\ar@{-}[dr]&
0000011\ar@{-}[d]\ar@{-}[dr]\\
16&
1000000&
0100000&
0010000&
0001000&
0000100&
0000010&
0000001
}
\]

\newpage

\[ \DC_1 = \DC_2 = \DC_3 = \begin{pmatrix}  1 \end{pmatrix}\quad
\DC_4 = \begin{pmatrix}  1 \\
     1 \end{pmatrix}\quad
\DC_5 = \begin{pmatrix}  1 & 0 \\
     1 & 1 \end{pmatrix}\quad
\DC_6 = \begin{pmatrix}  1 & 0 \\
     1 & 1 \\
     0 & 1 \end{pmatrix}\] \[
\DC_7 = \begin{pmatrix}  1 & 1 & 0 \\
     0 & 1 & 1 \\
     0 & 0 & 1 \end{pmatrix}\quad
\DC_8 = \begin{pmatrix}  1 & 1 & 0 \\
     0 & 1 & 0 \\
     0 & 1 & 1 \\
     0 & 0 & 1 \end{pmatrix}\quad
\DC_9 = \begin{pmatrix}  1 & 1 & 0 & 0 \\
     1 & 0 & 1 & 0 \\
     0 & 1 & 1 & 0 \\
     0 & 0 & 1 & 1 \end{pmatrix}\quad
\DC_{10} = \begin{pmatrix}  1 & 0 & 0 & 0 \\
     1 & 1 & 1 & 0 \\
     0 & 0 & 1 & 0 \\
     0 & 1 & 0 & 1 \\
     0 & 0 & 1 & 1 \end{pmatrix}\] \[
\DC_{11} = \begin{pmatrix}  1 & 1 & 0 & 0 & 0 \\
     0 & 1 & 1 & 0 & 0 \\
     0 & 0 & 1 & 0 & 0 \\
     0 & 1 & 0 & 1 & 1 \\
     0 & 0 & 1 & 0 & 1 \end{pmatrix}\quad
\DC_{12} = \begin{pmatrix}  1 & 1 & 0 & 0 & 0 \\
     0 & 1 & 1 & 0 & 0 \\
     1 & 0 & 0 & 1 & 0 \\
     0 & 1 & 0 & 1 & 1 \\
     0 & 0 & 0 & 0 & 1 \\
     0 & 0 & 1 & 0 & 1 \end{pmatrix}\] \[
\DC_{13} = \begin{pmatrix}  1 & 0 & 0 & 0 & 0 & 0 \\
     1 & 1 & 0 & 0 & 0 & 0 \\
     1 & 0 & 1 & 1 & 0 & 0 \\
     0 & 0 & 0 & 1 & 1 & 0 \\
     0 & 1 & 0 & 1 & 0 & 1 \\
     0 & 0 & 0 & 0 & 1 & 1 \end{pmatrix}\quad
\DC_{14} = \begin{pmatrix}  1 & 1 & 0 & 0 & 0 & 0 \\
     1 & 0 & 1 & 0 & 0 & 0 \\
     0 & 0 & 1 & 1 & 0 & 0 \\
     0 & 1 & 1 & 0 & 1 & 0 \\
     0 & 0 & 0 & 1 & 1 & 1 \\
     0 & 0 & 0 & 0 & 0 & 1 \end{pmatrix}\] \[
\DC_{15} = \begin{pmatrix}  1 & 0 & 0 & 0 & 0 & 0 \\
     0 & 1 & 1 & 0 & 0 & 0 \\
     1 & 1 & 0 & 1 & 0 & 0 \\
     0 & 0 & 1 & 1 & 1 & 0 \\
     0 & 0 & 0 & 0 & 1 & 1 \\
     0 & 0 & 0 & 0 & 0 & 1 \end{pmatrix}\quad
\DC_{16} = \begin{pmatrix}  1 & 0 & 0 & 0 & 0 & 0 \\
     0 & 1 & 0 & 0 & 0 & 0 \\
     1 & 0 & 1 & 0 & 0 & 0 \\
     0 & 1 & 1 & 1 & 0 & 0 \\
     0 & 0 & 0 & 1 & 1 & 0 \\
     0 & 0 & 0 & 0 & 1 & 1 \\
     0 & 0 & 0 & 0 & 0 & 1 \end{pmatrix}
\]

\[
H^i(\OC_\mini, \ZM) \simeq
\begin{cases}
\ZM & \text{for } i = 0,\ 8,\ 12,\ 16,\ 20,\ 24,\ 32,\\ 
    & \qquad \quad 35,\ 43,\ 47,\ 51,\ 55,\ 59,\ 67\\
\ZM/2 & \text{for } i = 18,\ 26,\ 30,\ 34,\ 38,\ 42,\ 50\\
\ZM/3 & \text{for } i = 28,\ 40\\
0 & \text{otherwise}
\end{cases}
\]

\subsection{Type $E_8$}

\begin{center}
\begin{picture}(280,45)
\put( 40, 30){\circle*{10}}
\put( 45, 30){\line(1,0){30}}
\put( 80, 30){\circle*{10}}
\put( 85, 30){\line(1,0){30}}
\put(120, 30){\circle*{10}}
\put(125, 30){\line(1,0){30}}
\put(160, 30){\circle*{10}}
\put(165, 30){\line(1,0){30}}
\put(200, 30){\circle*{10}}
\put(205, 30){\line(1,0){30}}
\put(240, 30){\circle*{10}}
\put(245, 30){\line(1,0){30}}
\put(280, 30){\circle{10}}
\put(120,  5){\circle*{10}}
\put(120, 25){\line(0,-1){15}}
\put( 35, 40){$\a_1$}
\put( 75, 40){$\a_3$}
\put(115, 40){$\a_4$}
\put(155, 40){$\a_5$}
\put(195, 40){$\a_6$}
\put(235, 40){$\a_7$}
\put(275, 40){$\a_8$}
\put(100,  3){$\a_2$}
\end{picture}
\end{center}

{\scriptsize
\[
\xymatrix @=.3cm{
0 &
23465432\ar@{-}[d]\\
1 &
23465431\ar@{-}[d]\\
2 &
23465421\ar@{-}[d]\\
3 &
23465321\ar@{-}[d]\\
4 &
23464321\ar@{-}[d]\\
5 &
23454321\ar@{-}[d]\ar@{-}[dr]\\
6 &
23354321\ar@{-}[d]\ar@{-}[dr]&
22454321\ar@{-}[dl]\\
7 &
22354321\ar@{-}[d]\ar@{-}[dr]&
13354321\ar@{-}[d]\\
8 &
22344321\ar@{-}[d]\ar@{-}[dr]&
12354321\ar@{-}[d]\\
9 &
22343321\ar@{-}[d]\ar@{-}[dr]&
12344321\ar@{-}[d]\ar@{-}[dr]\\
10&
22343221\ar@{-}[d]\ar@{-}[dr]&
12343321\ar@{-}[d]\ar@{-}[dr]&
12244321\ar@{-}[d]\\
11&
22343211\ar@{-}[d]\ar@{-}[dr]&
12343221\ar@{-}[d]\ar@{-}[dr]&
12243321\ar@{-}[d]\ar@{-}[dr]\\
12&
22343210\ar@{-}[d]&
12343211\ar@{-}[dl]\ar@{-}[d]&
12243221\ar@{-}[dl]\ar@{-}[d]&
12233321\ar@{-}[dl]\ar@{-}[d]\\
13&
12343210\ar@{-}[d]&
12243211\ar@{-}[dl]\ar@{-}[d]&
12233221\ar@{-}[dl]\ar@{-}[d]\ar@{-}[dr]&
11233321\ar@{-}[d]\\
14&
12243210\ar@{-}[d]&
12233211\ar@{-}[dl]\ar@{-}[d]\ar@{-}[dr]&
12232221\ar@{-}[dl]\ar@{-}[dr]&
11233221\ar@{-}[dl]\ar@{-}[d]\\
15&
12233210\ar@{-}[d]\ar@{-}[drr]&
12232211\ar@{-}[dl]\ar@{-}[d]\ar@{-}[drr]&
11233211\ar@{-}[d]\ar@{-}[dr]&
11232221\ar@{-}[d]\ar@{-}[dr]\\
16&
12232210\ar@{-}[d]\ar@{-}[dr]&
12232111\ar@{-}[dl]\ar@{-}[dr]&
11233210\ar@{-}[dl]&
11232211\ar@{-}[dll]\ar@{-}[dl]\ar@{-}[d]&
11222221\ar@{-}[dl]\ar@{-}[d]\\
17&
12232110\ar@{-}[d]\ar@{-}[dr]&
11232210\ar@{-}[d]\ar@{-}[dr]&
11232111\ar@{-}[dl]\ar@{-}[dr]&
11222211\ar@{-}[dl]\ar@{-}[d]\ar@{-}[dr]&
11122221\ar@{-}[d]\ar@{-}[dr]\\
18&
12232100\ar@{-}[d]&
11232110\ar@{-}[dl]\ar@{-}[d]&
11222210\ar@{-}[dl]\ar@{-}[dr]&
11222111\ar@{-}[dll]\ar@{-}[dl]\ar@{-}[dr]&
11122211\ar@{-}[dl]\ar@{-}[d]\ar@{-}[dr]&
01122221\ar@{-}[d]\\
19&
11232100\ar@{-}[d]&
11222110\ar@{-}[dl]\ar@{-}[d]\ar@{-}[dr]&
11221111\ar@{-}[dl]\ar@{-}[dr]&
11122210\ar@{-}[dl]\ar@{-}[dr]&
11122111\ar@{-}[dll]\ar@{-}[dl]\ar@{-}[dr]&
01122211\ar@{-}[dl]\ar@{-}[d]\\
20&
11222100\ar@{-}[d]\ar@{-}[dr]&
11221110\ar@{-}[dl]\ar@{-}[dr]&
11122110\ar@{-}[dl]\ar@{-}[d]\ar@{-}[drr]&
11121111\ar@{-}[dl]\ar@{-}[d]\ar@{-}[drr]&
01122210\ar@{-}[d]&
01122111\ar@{-}[dl]\ar@{-}[d]\\
21&
11221100\ar@{-}[d]\ar@{-}[dr]&
11122100\ar@{-}[d]\ar@{-}[drrr]&
11121110\ar@{-}[dl]\ar@{-}[d]\ar@{-}[drrr]&
11111111\ar@{-}[dl]\ar@{-}[d]\ar@{-}[drrr]&
01122110\ar@{-}[d]\ar@{-}[dr]&
01121111\ar@{-}[d]\ar@{-}[dr]\\
22&
11221000\ar@{-}[d]&
11121100\ar@{-}[dl]\ar@{-}[d]\ar@{-}[drr]&
11111110\ar@{-}[dl]\ar@{-}[d]\ar@{-}[drr]&
10111111\ar@{-}[dl]\ar@{-}[drrr]&
01122100\ar@{-}[dl]&
01121110\ar@{-}[dll]\ar@{-}[dl]&
01111111\ar@{-}[dll]\ar@{-}[dl]\ar@{-}[d]\\
23&
11121000\ar@{-}[d]\ar@{-}[drr]&
11111100\ar@{-}[dl]\ar@{-}[d]\ar@{-}[drr]&
10111110\ar@{-}[dl]\ar@{-}[drrr]&
01121100\ar@{-}[dl]\ar@{-}[d]&
01111110\ar@{-}[dl]\ar@{-}[d]\ar@{-}[dr]&
01011111\ar@{-}[dl]\ar@{-}[dr]&
00111111\ar@{-}[dl]\ar@{-}[d]\\
24&
11111000\ar@{-}[d]\ar@{-}[dr]\ar@{-}[drr]&
10111100\ar@{-}[d]\ar@{-}[drrr]&
01121000\ar@{-}[d]&
01111100\ar@{-}[dl]\ar@{-}[d]\ar@{-}[dr]&
01011110\ar@{-}[dl]\ar@{-}[dr]&
00111110\ar@{-}[dl]\ar@{-}[d]&
00011111\ar@{-}[dl]\ar@{-}[d]\\
25&
11110000\ar@{-}[d]\ar@{-}[dr]&
10111000\ar@{-}[dl]\ar@{-}[drr]&
01111000\ar@{-}[dl]\ar@{-}[d]\ar@{-}[dr]&
01011100\ar@{-}[dl]\ar@{-}[dr]&
00111100\ar@{-}[dl]\ar@{-}[d]&
00011110\ar@{-}[dl]\ar@{-}[d]&
00001111\ar@{-}[dl]\ar@{-}[d]\\
26&
10110000\ar@{-}[d]\ar@{-}[drr]&
01110000\ar@{-}[d]\ar@{-}[dr]&
01011000\ar@{-}[dl]\ar@{-}[dr]&
00111000\ar@{-}[dl]\ar@{-}[d]&
00011100\ar@{-}[dl]\ar@{-}[d]&
00001110\ar@{-}[dl]\ar@{-}[d]&
00000111\ar@{-}[dl]\ar@{-}[d]\\
27&
10100000\ar@{-}[d]\ar@{-}[drr]&
01010000\ar@{-}[d]\ar@{-}[drr]&
00110000\ar@{-}[d]\ar@{-}[dr]&
00011000\ar@{-}[d]\ar@{-}[dr]&
00001100\ar@{-}[d]\ar@{-}[dr]&
00000110\ar@{-}[d]\ar@{-}[dr]&
00000011\ar@{-}[d]\ar@{-}[dr]\\
28&
10000000&
01000000&
00100000&
00010000&
00001000&
00000100&
00000010&
00000001
}
\]
}

{\scriptsize
\[
\DC_1 = 
\DC_2 = 
\DC_3 = 
\DC_4 = 
\DC_5 = \begin{pmatrix}  1 \end{pmatrix}\quad
\DC_6 = \begin{pmatrix}  1 \\
     1 \end{pmatrix}\quad
\DC_7 = \begin{pmatrix}  1 & 1 \\
     1 & 0 \end{pmatrix}\quad
\DC_8 = 
\DC_9 = \begin{pmatrix}  1 & 0 \\
     1 & 1 \end{pmatrix}\] \[
\DC_{10} = \begin{pmatrix}  1 & 0 \\
     1 & 1 \\
     0 & 1 \end{pmatrix}\quad
\DC_{11} = \begin{pmatrix}  1 & 0 & 0 \\
     1 & 1 & 0 \\
     0 & 1 & 1 \end{pmatrix}\quad
\DC_{12} = \begin{pmatrix}  1 & 0 & 0 \\
     1 & 1 & 0 \\
     0 & 1 & 1 \\
     0 & 0 & 1 \end{pmatrix}\quad
\DC_{13} = \begin{pmatrix}  1 & 1 & 0 & 0 \\
     0 & 1 & 1 & 0 \\
     0 & 0 & 1 & 1 \\
     0 & 0 & 0 & 1 \end{pmatrix}\] \[
\DC_{14} = \begin{pmatrix}  1 & 1 & 0 & 0 \\
     0 & 1 & 1 & 0 \\
     0 & 0 & 1 & 0 \\
     0 & 0 & 1 & 1 \end{pmatrix}\quad
\DC_{15} = \begin{pmatrix}  1 & 1 & 0 & 0 \\
     0 & 1 & 1 & 0 \\
     0 & 1 & 0 & 1 \\
     0 & 0 & 1 & 1 \end{pmatrix}\quad
\DC_{16} = \begin{pmatrix}  1 & 1 & 0 & 0 \\
     0 & 1 & 0 & 0 \\
     1 & 0 & 1 & 0 \\
     0 & 1 & 1 & 1 \\
     0 & 0 & 0 & 1 \end{pmatrix}\] \[
\DC_{17} = \begin{pmatrix}  1 & 1 & 0 & 0 & 0 \\
     1 & 0 & 1 & 1 & 0 \\
     0 & 1 & 0 & 1 & 0 \\
     0 & 0 & 0 & 1 & 1 \\
     0 & 0 & 0 & 0 & 1 \end{pmatrix}\quad
\DC_{18} = \begin{pmatrix}  1 & 0 & 0 & 0 & 0 \\
     1 & 1 & 1 & 0 & 0 \\
     0 & 1 & 0 & 1 & 0 \\
     0 & 0 & 1 & 1 & 0 \\
     0 & 0 & 0 & 1 & 1 \\
     0 & 0 & 0 & 0 & 1 \end{pmatrix}\quad
\DC_{19} = \begin{pmatrix}  1 & 1 & 0 & 0 & 0 & 0 \\
     0 & 1 & 1 & 1 & 0 & 0 \\
     0 & 0 & 0 & 1 & 0 & 0 \\
     0 & 0 & 1 & 0 & 1 & 0 \\
     0 & 0 & 0 & 1 & 1 & 0 \\
     0 & 0 & 0 & 0 & 1 & 1 \end{pmatrix}\] \[
\DC_{20} = \begin{pmatrix}  1 & 1 & 0 & 0 & 0 & 0 \\
     0 & 1 & 1 & 0 & 0 & 0 \\
     0 & 1 & 0 & 1 & 1 & 0 \\
     0 & 0 & 1 & 0 & 1 & 0 \\
     0 & 0 & 0 & 1 & 0 & 1 \\
     0 & 0 & 0 & 0 & 1 & 1 \end{pmatrix}\quad
\DC_{21} = \begin{pmatrix}  1 & 1 & 0 & 0 & 0 & 0 \\
     1 & 0 & 1 & 0 & 0 & 0 \\
     0 & 1 & 1 & 1 & 0 & 0 \\
     0 & 0 & 0 & 1 & 0 & 0 \\
     0 & 0 & 1 & 0 & 1 & 1 \\
     0 & 0 & 0 & 1 & 0 & 1 \end{pmatrix}\quad
\DC_{22} = \begin{pmatrix}  1 & 0 & 0 & 0 & 0 & 0 \\
     1 & 1 & 1 & 0 & 0 & 0 \\
     0 & 0 & 1 & 1 & 0 & 0 \\
     0 & 0 & 0 & 1 & 0 & 0 \\
     0 & 1 & 0 & 0 & 1 & 0 \\
     0 & 0 & 1 & 0 & 1 & 1 \\
     0 & 0 & 0 & 1 & 0 & 1 \end{pmatrix}\] \[
\DC_{23} = \begin{pmatrix}  1 & 1 & 0 & 0 & 0 & 0 & 0 \\
     0 & 1 & 1 & 0 & 0 & 0 & 0 \\
     0 & 0 & 1 & 1 & 0 & 0 & 0 \\
     0 & 1 & 0 & 0 & 1 & 1 & 0 \\
     0 & 0 & 1 & 0 & 0 & 1 & 1 \\
     0 & 0 & 0 & 0 & 0 & 0 & 1 \\
     0 & 0 & 0 & 1 & 0 & 0 & 1 \end{pmatrix}\quad
\DC_{24} = \begin{pmatrix}  1 & 1 & 0 & 0 & 0 & 0 & 0 \\
     0 & 1 & 1 & 0 & 0 & 0 & 0 \\
     1 & 0 & 0 & 1 & 0 & 0 & 0 \\
     0 & 1 & 0 & 1 & 1 & 0 & 0 \\
     0 & 0 & 0 & 0 & 1 & 1 & 0 \\
     0 & 0 & 1 & 0 & 1 & 0 & 1 \\
     0 & 0 & 0 & 0 & 0 & 1 & 1 \end{pmatrix}\] \[
\DC_{25} = \begin{pmatrix}  1 & 0 & 0 & 0 & 0 & 0 & 0 \\
     1 & 1 & 0 & 0 & 0 & 0 & 0 \\
     1 & 0 & 1 & 1 & 0 & 0 & 0 \\
     0 & 0 & 0 & 1 & 1 & 0 & 0 \\
     0 & 1 & 0 & 1 & 0 & 1 & 0 \\
     0 & 0 & 0 & 0 & 1 & 1 & 1 \\
     0 & 0 & 0 & 0 & 0 & 0 & 1 \end{pmatrix}\quad
\DC_{26} = \begin{pmatrix}  1 & 1 & 0 & 0 & 0 & 0 & 0 \\
     1 & 0 & 1 & 0 & 0 & 0 & 0 \\
     0 & 0 & 1 & 1 & 0 & 0 & 0 \\
     0 & 1 & 1 & 0 & 1 & 0 & 0 \\
     0 & 0 & 0 & 1 & 1 & 1 & 0 \\
     0 & 0 & 0 & 0 & 0 & 1 & 1 \\
     0 & 0 & 0 & 0 & 0 & 0 & 1 \end{pmatrix}\] \[
\DC_{27} = \begin{pmatrix}  1 & 0 & 0 & 0 & 0 & 0 & 0 \\
     0 & 1 & 1 & 0 & 0 & 0 & 0 \\
     1 & 1 & 0 & 1 & 0 & 0 & 0 \\
     0 & 0 & 1 & 1 & 1 & 0 & 0 \\
     0 & 0 & 0 & 0 & 1 & 1 & 0 \\
     0 & 0 & 0 & 0 & 0 & 1 & 1 \\
     0 & 0 & 0 & 0 & 0 & 0 & 1 \end{pmatrix}\quad
\DC_{28} = \begin{pmatrix}  1 & 0 & 0 & 0 & 0 & 0 & 0 \\
     0 & 1 & 0 & 0 & 0 & 0 & 0 \\
     1 & 0 & 1 & 0 & 0 & 0 & 0 \\
     0 & 1 & 1 & 1 & 0 & 0 & 0 \\
     0 & 0 & 0 & 1 & 1 & 0 & 0 \\
     0 & 0 & 0 & 0 & 1 & 1 & 0 \\
     0 & 0 & 0 & 0 & 0 & 1 & 1 \\
     0 & 0 & 0 & 0 & 0 & 0 & 1 \end{pmatrix}\quad
\]
}

We have $h = h^\vee = 30$, and $d = 58$. The Cartan matrix is an
isomorphism.

\[
H^i(\OC_\mini, \ZM) \simeq
\begin{cases}
\ZM & \text{for } i = 0,\ 12,\ 20,\ 24,\ 32,\ 36,\ 44,\ 56,\\ 
    & \qquad \quad 59,\ 71,\ 79,\ 83,\ 91,\ 95,\ 103, 115\\
\ZM/2 & \text{for } i = 30,\ 42,\ 50,\ 54,\ 62,\ 66,\ 74,\ 86\\
\ZM/3 & \text{for } i = 40,\ 52,\ 64,\ 76\\
\ZM/5 & \text{for } i = 48,\ 68\\
0 & \text{otherwise}
\end{cases}
\]

\subsection{Type $F_4$}

\begin{center}
\begin{picture}(200,30)(-10,0)
\put( 40, 10){\circle{10}}
\put( 45, 10){\line(1,0){30}}
\put( 80, 10){\circle*{10}}
\put( 84, 11.5){\line(1,0){32}}
\put( 84,  8.5){\line(1,0){32}}
\put( 95,  5.5){\LARGE{$>$}}
\put(120, 10){\circle*{10}}
\put(125, 10){\line(1,0){30}}
\put(160, 10){\circle*{10}}
\put( 35, 20){$\a_1$}
\put( 75, 20){$\a_2$}
\put(115, 20){$\a_3$}
\put(155, 20){$\a_4$}
\end{picture}
\end{center}

We have $h = 12$, $h^\vee = 9$ and $d = 16$.

\[
\xymatrix @=.4cm{
0&
2342\ar@{-}[d]\\
1&
1342\ar@{-}[d]\\
2&
1242\ar@{=}[d]\\
3&
1222\ar@{=}[d]\ar@{-}[dr]\\
4&
1220\ar@{-}[d]&
1122\ar@{=}[dl]\ar@{-}[d]\\
5&
1120\ar@{=}[d]\ar@{-}[dr]&
0122\ar@{=}[d]\\
6&
1100\ar@{-}[d]\ar@{-}[dr]&
0120\ar@{=}[d]\\
7&
1000&
0100
}
\]

We have
\[
\DC_1 = \DC_2 = (1)\quad \DC_3 = (2)
\quad \DC_4 = \begin{pmatrix}
2\\
1
\end{pmatrix}
\quad \DC_5 = \begin{pmatrix}
1&2\\
0&1
\end{pmatrix}
\quad \DC_6 = \begin{pmatrix}
2&0\\
1&2
\end{pmatrix}
\quad \DC_7 = \begin{pmatrix}
1&0\\
1&2
\end{pmatrix}
\]

The type of $\Phi'$ is $A_2$, so we have
\[
\DC_8 =
\begin{pmatrix}
2&1\\
1&2
\end{pmatrix}
\]

The matrices of the last differentials are transposed to the first ones.

\[
H^i(\OC_\mini, \ZM) \simeq
\begin{cases}
\ZM & \text{for } i = 0,\ 8,\ 23,\ 31\\
\ZM/2 & \text{for } i = 6,\ 14,\ 18,\ 26\\
\ZM/4 & \text{for } i = 12,\ 20\\
\ZM/3 & \text{for } i = 16\\
0 & \text{otherwise}
\end{cases}
\]

\subsection{Type $G_2$}

\begin{center}
\begin{picture}(100,30)
\put( 40, 10){\circle{10}}
\put(44.5,12){\line(1,0){32}}
\put( 45, 10){\line(1,0){30}}
\put(44.5, 8){\line(1,0){32}}
\put( 55,5.5){\LARGE{$>$}}
\put( 80, 10){\circle*{10}}
\put( 35, 20){$\a_1$}
\put( 75, 20){$\a_2$}
\end{picture}
\end{center}

We have $h = 6$, $h^\vee = 4$, and $d = 6$.
The root system $\Phi'$ is of type $A_1$. Its Cartan matrix
has cokernel $\ZM/2$.

$$
\xymatrix @=.5cm{
0&
23\ar@{-}[d]\\
1&
13\ar@3{-}[d]\\
2&
10
}
$$

We have
\[
\DC_1 = (1) \quad 
\DC_2 = (3) \quad 
\DC_3 = (2) \quad 
\DC_4 = (3) \quad 
\DC_5 = (1)
\]

\[
H^i(\OC_\mini, \ZM) \simeq
\begin{cases}
\ZM & \text{for } i = 0,\ 11\\
\ZM/3 & \text{for } i = 4,\ 8\\
\ZM/2 & \text{for } i = 6\\
0 & \text{otherwise}
\end{cases}
\]

\section{Another method for type $A$}\label{a bis}

Here we will explain a method which applies only in type $A$.
This is because the minimal class is a Richardson class only
in type $A$.

So suppose we are in type $A_{n - 1}$. We can assume $G = GL_n$.
The minimal class corresponds to the partition $(2,1^{n - 2})$.
It consists of the nilpotent matrices of rank $1$ in $\gG\lG_n$, or,
in other words, the matrices of rank $1$ and trace $0$.

Let us consider the set $E$ of pairs $([v],x) \in \PM^{n - 1} \times \gG\lG_n$
such that $\textrm{Im}(x) \incl \CM v$ (so $x$ is either zero or of
rank $1$). Together with
the natural projection, this is a vector bundle on $\PM^{n - 1}$,
corresponding to the locally free sheaf $\EC = \OC(-1)^n$
(we have one copy of the tautological bundle for each column).

There is a trace morphism $\Tr : \EC \to \OC$. Let $\FC$ be its kernel,
and let $F$ be the corresponding sub-vector bundle of $E$.
Then $F$ consists of the pairs $([v],x)$ such that $x$ is either zero
or a nilpotent matrix of rank $1$ with image $\CM v$.
The second projection gives a morphism $\pi : E \to \overline{\OC_\mini}$,
which is a resolution of singularities, with exceptional fiber
the null section. So we have an isomorphism from $F$ minus the null section
onto $\OC_\mini$.

As before, we have a Gysin exact sequence
\[
H^{i-2n+2} \elem{c} H^i \longto H^i(\OC_\mini,\ZM)
\longto H^{i-2n+3} \elem{c} H^{i+1}
\]
where $H^j$ stands for $H^j(\PM^{n-1},\ZM)$ and $c$ is the multiplication
by the last Chern class $c$ of $F$. Thus $H^i(\OC_\mini,\ZM)$ fits
in a short exact sequence
\[
0 \longto \Coker(c:H^{i-2n+2}\to H^i) \longto H^i(\OC_\mini,\ZM)
\longto \Ker(c:H^{i-2n+3}\to H^{i+1}) \longto 0
\]
We denote by $y\in H^2(\PM^{n - 1}, \ZM)$ the first Chern class of $\OC(-1)$.
We have $H^*(\PM^{n - 1},\ZM) \simeq \ZM[y]/(y^n)$ as a ring.
In particular, the cohomology of $\PM^{n-1}$ is free and
concentrated in even degrees.

For $0 \leqslant i \leqslant 2n - 4$, we have
$H^i(\OC_\mini,\ZM) \simeq H^i$ which is isomorphic to $\ZM$ if
$i$ is even, and to $0$ if $i$ is odd.
We have $H^{2n - 3}(\OC_\mini,\ZM) \simeq \Ker(c:H^0\to H^{2n-2})$
and $H^{2n - 2}(\OC_\mini,\ZM) \simeq \Coker(c:H^0\to H^{2n-2})$.
For $2n - 1 \leqslant i \leqslant 4n - 5$, we have
$H^i(\OC_\mini,\ZM) \simeq H^{i - 2n + 3}$ which is isomorphic to $\ZM$ if
$i$ is odd, and to $0$ if $i$ is even.

We have an exact sequence
\[
0 \longto \FC \longto \EC = \OC(-1)^n \longto \OC \longto 0
\]

The total Chern class of $F$ is thus
$$(1+y)^n = \sum_{i = 0}^{n-1} \binom{n}{i} y^i$$
by multiplicativity (remember that $y^n = 0$).
So its last Chern class $c$ is $ny^{n - 1}$.

In fact, $F$ can be identified with the cotangent bundle
$T^*(G/Q)$, where $Q$ is the parabolic subgroup which stabilizes
a line in $\CM^n$, and $G/Q \simeq \PM^{n - 1}$~; then
we can use the fact that the Euler characteristic of $\PM^{n - 1}$
is $n$.

We can now determine the two remaining cohomology groups.
\[
H^{2n - 3}(\OC_\mini,\ZM) \simeq \Ker(\ZM\elem{n}\ZM) = 0
\]
and
\[
H^{2n - 2}(\OC_\mini,\ZM) \simeq \Coker(\ZM\elem{n}\ZM) = \ZM/n
\]

Thus we find the same result as in Section \ref{case by case} for
the cohomology of $\OC_\mini$ in type $A_{n - 1}$.

\chapter{Some decomposition numbers}\label{chap:dec}

In this chapter, we compute in a geometrical way certain decomposition
numbers for the $G$-equivariant perverse sheaves on the nilpotent
variety, using the results of the preceding chapters, and geometrical results
about nilpotent classes that can be found in the literature.

In Chapter \ref{chap:springer}, we will prove that part of this
decomposition matrix is the decomposition matrix for the Weyl group
(see Theorem \ref{th:d}), and we expect that the whole matrix is the
one of the Schur algebra, at least in type $A$ (in the other types one
would have to give a precise meaning to the Schur algebra to be used).
The decomposition numbers we find are in accordance with this
expectation.

We made these calculations to have some examples and to get used to
perverse sheaves over $\KM$, $\OM$ and $\FM$, and to find evidence for
Theorem \ref{th:d} before it was proved. Now that this theorem is
proved, this chapter also shows that some calculations are feasible on
the geometrical side. It should be clear, however, that further
calculations would be increasingly difficult, so it is not clear
whether Theorem \ref{th:d} will help to compute the decomposition
matrices of the symmetric group.

In any case, we can see \emph{a posteriori} that the calculations in
this chapter reveal new connections between the geometry of nilpotent
orbits and the representations of Weyl groups. Next developments in
this directions could include, in type $A$, the treatment of two-column
(resp. two-row) partitions, and the transfer of a generalization of the
row and column removal rule from the group side to the geometrical side.

\section{Subregular class}\label{sec:subreg}

We assume that $G$ is simple and adjoint of type $\G$, and that
the characteristic of $k$ is $0$ or greater than $4h - 2$
(where $h$ is the Coxeter number). This is a serious restriction on
$p$, but it does not matter so much for our purposes. Note that, on
the other hand, we make no concessions on $\ell$ (the only restriction
is $\ell \neq p$).

Let $\OC_\reg$ (resp. $\OC_\subreg$) be the regular
(resp. subregular) orbit in $\NC$.
The  orbit $\OC_\subreg$ is the unique
open dense orbit in $\NC \setminus \OC_\reg$
(we assume that $\gG$ is simple). It is of
codimension 2 in $\NC$. Let $x_\reg \in \OC_\reg$ and
$x_\subreg \in \OC_\subreg$.

The centralizer of $x_\reg$ in $G$ is a connected unipotent subgroup,
hence $A_G(x_\reg) = 1$. The unipotent radical of the centralizer in
$G$ of $x_\subreg$ has a reductive complement $C$ given by the following
table.

\[
\begin{array}{c|c|c|c|c|c|c|c|c|c}
\G & A_n\ (n > 1) & B_n & C_n & D_n & E_6 & E_7 & E_8 & F_4 & G_2\\
\hline
C(x) & \GM_m & \GM_m \rtimes \ZM/2 & \ZM/2 & 1 & 1 & 1 & 1 & \ZM/2 & \SG_3
\end{array}
\]

In type $A_1$, the subregular class is just the trivial class, so in
this case the centralizer is $G = PSL_2$ itself, which is reductive.

We have $A_G(x_\subreg) \simeq C/C^0$. This group is isomorphic to
the associated symmetry group $A(\G)$ introduced in section
\ref{sec:simple}.  

Let $X$ be the intersection $X = S \cap \NC$ of a transverse slice $S$ to the
orbit $\OC_\subreg$ of $x_\subreg$ with the nilpotent variety $\NC$.
The group $C$ acts on $X$. We can find a section $A$ of
$C/C^0 \simeq A_G \simeq A(\G)$ in $C$. In homogeneous types, $A$ is
trivial. If $\G = C_n$, $F_4$ or $G_2$, then $A = C$. If $\G = B_n$,
take $\{1,s\}$ where $s$ is a nontrivial involution (in this case, $A$
is well-defined up to conjugation by $C^0 = \GM_m$).

\begin{theo}\cite{BRI,SLO1}\label{th:subreg}
We keep the preceding notation.
The surface $X$ has a rational double point of type $\wh\G$ at $x_\subreg$.
Thus $\Sing(\ov \OC_\reg, \OC_\subreg) = \wh\G$.

Moreover the couple $(X,A)$ is a simple singularity of type $\G$.
\end{theo}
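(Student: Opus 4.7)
This statement is the celebrated theorem of Brieskorn and Slodowy, so the plan is to outline how one reproduces their proof rather than to produce a new argument. The plan is to build a transverse slice with enough structure to compute the singularity explicitly, then invoke Grothendieck's simultaneous resolution to identify it with a Kleinian singularity.

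First I would fix the Slodowy slice. Choose an $\sG\lG_2$-triple $(x_\subreg, h, y)$ in $\gG$, and define $S = x_\subreg + \gG^y$, where $\gG^y$ is the centralizer of $y$. A standard transversality argument (using the representation theory of the $\sG\lG_2$-triple on $\gG$) shows that $S$ is a transverse slice to $\OC_\subreg$ at $x_\subreg$, of dimension $\dim\gG - \dim \OC_\subreg$. Intersecting with $\NC$ and using that $\OC_\subreg$ has codimension $2$ in $\NC$ gives $\dim X = 2$, so $X$ is indeed a surface, and $C$ acts on $X$ because one can arrange $C$ to preserve the triple $(x_\subreg,h,y)$.

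Next I would bring in Grothendieck's simultaneous resolution. Consider the adjoint quotient $\chi : \gG \to \gG/\!/G \simeq \tG/W$ and restrict it to $S$; the restriction $\chi_S : S \to \tG/W$ is flat, its fiber over $0$ is precisely $X$, and it fits into the commutative diagram featuring the simultaneous resolution $\tilde\gG \to \gG$ pulled back over $S$. The key computation, carried out in \cite{BRI,SLO1}, is that the pullback $\tilde S \to S$ of the simultaneous resolution, restricted to the fiber over $0$, is a resolution $\tilde X \to X$ whose exceptional fiber is a union of projective lines whose dual graph is read off from the root system of $G$. In the simply-laced (homogeneous) case, this dual graph is exactly the Dynkin diagram $\G = \wh\G$ with the correct intersection numbers, so $X$ has a rational double point of type $\wh\G$ at $x_\subreg$ by the characterization recalled in Section \ref{sec:simple}.

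For the non-simply-laced case I would use the folding principle. Here Slodowy embeds $G$ (of type $\G$) into a larger simply-laced group $\wh G$ of type $\wh\G$ as the fixed points of the symmetry group $A(\G)$ acting on $\wh G$, so that the subregular slice for $G$ is obtained from the subregular slice for $\wh G$ by taking $A(\G)$-fixed data. The reductive part $A$ of the subregular centralizer in $G$ then acts on the $\wh\G$-singularity $X$ through its identification with $A(\G)$, freely away from $x_\subreg$, and inducing the prescribed action on the exceptional divisor (this is where the identification of $A_G(x_\subreg)$ with $A(\G)$ at the start of the section is essential). By the characterization proposition for simple singularities of inhomogeneous type in Section \ref{sec:simple}, $(X,A)$ is then a simple singularity of type $\G$.

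The main obstacle, and the content of \cite{BRI,SLO1}, is the explicit identification in the previous paragraph: verifying that the exceptional divisor of $\tilde X \to X$ together with the $A(\G)$-action matches the prescribed Dynkin data is a case-by-case geometric computation, most delicate in types $F_4$ and $G_2$. Granting this, the two assertions of the theorem follow immediately from the definitions recalled in Section \ref{sec:simple}.
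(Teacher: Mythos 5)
The paper does not prove this theorem; it is stated with the citation \cite{BRI,SLO1} and used as a black box in what follows (to compute $d_{(x_\reg,1),(x_\subreg,\rho)}$ via the equivalence-of-singularities machinery). So there is no in-paper argument to compare yours against. Your outline is a faithful sketch of the Brieskorn--Slodowy proof: the Slodowy slice $S = x_\subreg + \gG^y$ from an $\sG\lG_2$-triple, transversality from $\sG\lG_2$-representation theory, the dimension count $\dim X = \codim_\NC \OC_\subreg = 2$, flatness of $\chi|_S : S \to \tG/W$, and the use of Grothendieck's simultaneous resolution restricted over $S$ to produce a minimal resolution $\Xti \to X$ whose exceptional configuration realizes the Dynkin diagram. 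That is exactly the structure of the argument in \cite{SLO1} (and in \cite{SLO2}, which the paper also relies on in Section~\ref{sec:simple}). One small caveat: for the inhomogeneous types, it would be more accurate to say that Slodowy analyzes the slice for $G$ of type $\G$ directly and identifies the $A_G(x_\subreg) \simeq A(\G)$ action on the exceptional divisor of $\Xti$; the picture of $G$ as the fixed points of $A(\G)$ in a group of type $\wh\G$ is a closely related fact that motivates the folding but is not quite the mechanism of the proof. This does not affect the correctness of your sketch.
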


In fact, the first part of the theorem is already true when the
characteristic of $k$ is very good for $G$. This part is enough to
calculate the decomposition numbers $d_{(x_\reg,1),(x_\subreg,1)}$ for
homogeneous types (then $A = 1$), and even some more decomposition
numbers $d_{(x_\reg,1),(x_\subreg,\rho)}$ for the other
types. Actually, what can be deduced in all types is the following
relation:
\[
\sum_{\rho\in\Irr\FM A} \rho(1) \cdot d_{(x_\reg,1),(x_\subreg,\rho)}
= \dim_\FM \FM \otimes_\ZM P(\wh\Phi)/Q(\wh\Phi)
\]
This is enough, for example, to determine for which $\ell$ we have
$d_{(x_\reg,1),(x_\subreg,\rho)} = 0$ for all $\rho$ (those which do
not divide the connection index of $\wh\Phi$).

Anyway, the second part of the theorem will allow us to deal with the
local systems involved on $\OC_\subreg$.

Let $\jreg : \OC_\reg \injto \OC_\reg \cup \OC_\subreg$ be the open immersion,
and $i_\subreg : \OC_\subreg \injto \OC_\reg \cup \OC_\subreg$ the closed
complement. Finally, let $j$ be the open inclusion of $\OC_\subreg
\cup \OC_\reg$ into $\NC$. Applying the functor $j^*$, we see that
\begin{eqnarray*}
d_{(x_\reg,1),(x_\subreg,\rho)}
&=& [\FM \p \JC_{!*} (\OC_\reg, \OM) :
   \p \JC_{!*} (\OC_\subreg, \rho)]\\
&=& [\FM \jreg_{!*} (\OM[2\nu]) : \isubreg_{*} (\rho [2\nu + 2])]
\end{eqnarray*}

By Slodowy's theorem and the analysis of Section \ref{sec:simple}, we
obtain the following result.

\begin{theo}
We have
\begin{equation}
d_{(x_\reg,1),(x_\subreg,\rho)}
= [\FM \otimes_\ZM P(\wh\Phi)/Q(\wh\Phi) : \rho]
\end{equation}
for all $\rho$ in $\Irr \FM A$.
\end{theo}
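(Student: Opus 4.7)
The plan is to compute the decomposition number as the multiplicity of a skyscraper perverse sheaf in the reduction modulo $\ell$ of the regular-orbit IC, via a local analysis at the subregular orbit. Since the intermediate extension $\p\JC_{!*}(\OC_\subreg,\rho)$ is by definition simple with no proper subobjects supported at $x_\subreg$, the number we seek equals, by Proposition \ref{prop:IC mult} (applied stratum by stratum), the multiplicity of the simple $\FM A_G(x_\subreg)$-module $\rho$ in the $(-2\nu-2)$-th cohomology sheaf of the stalk of $\FM\,\jreg_{!*}(\OM[2\nu])$ at $x_\subreg$, viewed as an $A_G(x_\subreg)$-equivariant object.

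To carry this out, I would first reduce to a computation on a transverse slice. Slodowy's Theorem \ref{th:subreg} provides a slice $X = S \cap \NC$ at $x_\subreg$ carrying an action of a section $A \subset C$ of $A_G(x_\subreg)$, for which $(X,A)$ is a simple singularity of type $\G$ in the sense of Section \ref{sec:simple}. The slice comes equipped with $A$-equivariant smooth maps witnessing an $A$-equivariant smooth equivalence $\Sing(\overline{\OC_\reg},\OC_\subreg) = \Sing(X,0)$ in the sense of Definition \ref{def:equiv}. Applying the (equivariant refinement of) Proposition \ref{prop:equiv} reduces the problem to computing
\[
[\FM\;\p j_{!*}(\OM[2]) : i_*\,\rho]
\]
on $X$, where $j$ and $i$ denote the open and closed inclusions in $X$ and the codimension $2$ in $\NC$ of $\OC_\subreg$ is matched by the $2$-dimensionality of $X$.

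Next, I would invoke the short exact sequence (\ref{simple ses F IC}) established in Section \ref{sec:simple}, which in the present $A$-equivariant setting reads
\[
0 \longto i_*\;\FM \otimes_\ZM (P(\wh\Phi)/Q(\wh\Phi))
\longto \FM\;\p j_{!*}(\OM[2])
\longto \p j_{!*}(\FM[2]) \longto 0.
\]
The quotient term $\p j_{!*}(\FM[2])$ is the intermediate extension of a simple local system on the open stratum, hence simple with no subquotient isomorphic to the skyscraper $i_*\,\rho$. Therefore the desired multiplicity equals $[\FM \otimes_\ZM (P(\wh\Phi)/Q(\wh\Phi)) : \rho]$, proving the formula. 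The action of $A$ on $P(\wh\Phi)/Q(\wh\Phi)$ was explicitly identified at the end of Section \ref{sec:simple} as the canonical action factoring through $\Aut(\wh\G)\simeq\Aut(\wh\Phi)/W(\wh\Phi)$, and by the second part of Theorem \ref{th:subreg} this coincides with the $A_G(x_\subreg)$-action on the slice stalk.

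The main obstacle is the faithful tracking of the equivariance through the slice reduction: as emphasized in Remark \ref{rem:equiv}, a bare smooth equivalence does not by itself recover the monodromy action on IC stalks, and one really needs the second, finer assertion of Slodowy's theorem---namely that $(X,A)$ is a simple singularity of type $\G$, not merely that $X$ has a rational double point of type $\wh\G$---together with the description of the $A$-action on $H^2(U,\OM)$ computed in Section \ref{sec:simple}. Once these equivariant refinements are in place, the result follows by reading off the multiplicity from the short exact sequence above.
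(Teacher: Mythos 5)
Your proposal follows the same route as the paper's (very terse) treatment: restrict to the two--orbit open $\OC_\reg\cup\OC_\subreg$ via Proposition \ref{prop:IC mult}, transport to the Slodowy slice using Theorem \ref{th:subreg} together with the stronger assertion that $(X,A)$ is a simple singularity of type $\G$, and read the multiplicity from the short exact sequence (\ref{simple ses F IC}), using the identification of the $A(\G)$-action on $P(\wh\Phi)/Q(\wh\Phi)$ explained at the end of Section \ref{sec:simple}. Your emphasis on Remark \ref{rem:equiv} and the need for the full equivariant form of Slodowy's theorem is precisely the point the paper flags there.

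One cautionary remark. Your intermediate reformulation, namely that the decomposition number equals the multiplicity of $\rho$ in a single cohomology sheaf of the stalk of $\FM\,\jreg_{!*}(\OM[2\nu])$ at $x_\subreg$, is not correct as stated. After passing to the slice, $\FM\,\p j_{!*}(\OM[2])\simeq\FM_X[2]$ is the shifted constant sheaf, whose stalk at the singular point lives entirely in degree $-2$; yet the skyscraper constituent $i_*\rho$ has nonzero multiplicity whenever $\ell$ divides $|P(\wh\Phi)/Q(\wh\Phi)|$. The multiplicity of a skyscraper composition factor cannot in general be read off from one cohomology sheaf of a stalk; Proposition \ref{prop:IC mult} gives $[B:\p j_{!*}S]=[j^*B:S]$, which handles only the constituents whose support contains the open stratum. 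Fortunately this false reduction is never used: your actual computation goes through the short exact sequence (\ref{simple ses F IC}), where the skyscraper multiplicity is visible because the quotient $\p j_{!*}(\FM[2])$ is simple with full support (hence cannot be a skyscraper). You should also correct the perverse shift: the skyscraper term should be $\isubreg_*(\rho[2\nu-2])=\isubreg_*(\rho[\dim\OC_\subreg])$, not $[2\nu+2]$ (this appears to be a typo propagated from the surrounding text).
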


For homogeneous types, we recover the decomposition numbers described
in section \ref{sec:simple}. Let us describe in detail all the other
possibilities. The action of $\Aut(\wh\Phi)/W(\wh\Phi)$ on
$P(\wh\Phi)/Q(\wh\Phi)$ is described in all types in
\cite[Chap. VI, \S 4]{BOUR456}.

In the types $B_n$, $C_n$ and $F_4$, we have $A \simeq
\ZM/2$. When $\ell = 2$, we have $\Irr \FM A = \{1\}$. In this case,
we would not even need to know the actual action, since for our
purposes we only need the class in the Grothendieck group
$K_0(\FM A) \simeq \ZM$, that is, the dimension. When $\ell$ is not
$2$, we have $\Irr \FM A = \{1,\e\}$, where $\e$ is the unique
non-trivial character of $\ZM/2$.

\subsection{Case $\G = B_n$}
We have $\wh\G = A_{2n - 1}$ and
$P(\wh\Phi)/Q(\wh\Phi) \simeq \ZM/2n$. The non-trivial element of
$A \simeq \ZM/2$ acts by $-1$. Thus we have

\[
\begin{array}{lllll}
\text{If } \ell = 2, &\text{then}& d_{(x_\reg,1),(x_\subreg,1)} = 1\\
\text{If } 2 \neq \ell \mid n, &\text{then}& d_{(x_\reg,1),(x_\subreg,1)} = 0
&\text{and}& d_{(x_\reg,1),(x_\subreg,\e)} = 1\\
\text{If } 2 \neq \ell \nmid n, &\text{then}& d_{(x_\reg,1),(x_\subreg,1)} = 0
&\text{and}& d_{(x_\reg,1),(x_\subreg,\e)} = 0
\end{array}
\]

\subsection{Case $\G = C_n$}

We have $\wh\G = D_{n + 1}$.

If $n$ is even, then we have
$P(\wh\Phi)/Q(\wh\Phi) \simeq \ZM/4$,
and the nontrivial element of $A \simeq \ZM/2$ acts by $-1$.

If $n$ is odd, then we have
$P(\wh\Phi)/Q(\wh\Phi) \simeq (\ZM/2)^2$,
and the nontrivial element of $A \simeq \ZM/2$ acts by exchanging two
nonzero elements.

Thus we have
\[
\begin{array}{lll}
\text{If } \ell = 2 \text{ and $n$ is even},& \text{then}& d_{(x_\reg,1),(x_\subreg,1)} = 1\\
\text{If } \ell = 2 \text{ and $n$ is odd},& \text{then}& d_{(x_\reg,1),(x_\subreg,1)} = 2\\
\text{If } \ell \neq 2, &\text{then}& 
d_{(x_\reg,1),(x_\subreg,1)} = d_{(x_\reg,1),(x_\subreg,\e)} = 0
\end{array}
\]

\subsection{Case $\G = F_4$}

We have $\wh\G = E_6$ and $P(\wh\Phi)/Q(\wh\Phi) \simeq \ZM/3$.
The nontrivial element of $A \simeq \ZM/2$ acts by $-1$.
Thus we have
\[
\begin{array}{lllll}
\text{If } \ell = 2, &\text{then}&  d_{(x_\reg,1),(x_\subreg,1)} = 0\\
\text{If } \ell = 3, &\text{then}&  d_{(x_\reg,1),(x_\subreg,1)} = 0
&\text{and}& d_{(x_\reg,1),(x_\subreg,\e)} = 1\\
\text{If } \ell > 3, &\text{then}& d_{(x_\reg,1),(x_\subreg,1)} = 0
&\text{and}& d_{(x_\reg,1),(x_\subreg,\e)} = 0
\end{array}
\]

\subsection{Case $\G = G_2$}

We have $\wh\G = D_4$ and $P(\wh\Phi)/Q(\wh\Phi) \simeq (\ZM/2)^2$.
The group $A \simeq \SG_3$ acts by permuting the three non-zero
elements. Let us denote the sign character by $\e$ (it is nontrivial
when $\ell \neq 2$), and the degree two character by $\psi$ (it
remains irreducible for $\ell = 2$, but for $\ell = 3$ it decomposes
as $1 + \e$). We have
\[
\begin{array}{lll}
\text{If } \ell = 2, &\text{then}&  d_{(x_\reg,1),(x_\subreg,1)} = 0
\quad\text{and}\quad d_{(x_\reg,1),(x_\subreg,\psi)} = 1\\
\text{If } \ell = 3, &\text{then}&  d_{(x_\reg,1),(x_\subreg,1)} = 
 d_{(x_\reg,1),(x_\subreg,\e)} = 0\\
\text{If } \ell > 3, &\text{then}& d_{(x_\reg,1),(x_\subreg,1)} = 
d_{(x_\reg,1),(x_\subreg,\e)} = d_{(x_\reg,1),(x_\subreg,\psi)} = 0
\end{array}
\]

\section{Minimal class}


We assume that $G$ is simple. Thus there is a unique minimal
non-zero nilpotent orbit $\OC_\mini$ in $\gG$ (corresponding to the highest
weight of the adjoint representation, the highest root
$\tilde\alpha$). It is of dimension $d = 2 h^\vee - 2$, where $h^\vee$
is the dual Coxeter number (see Chapter \ref{chap:min}).

Its closure $\ov \OC_\mini = \OC_\mini \cup \{0\}$ is a cone with
origin $0$.  Let $\jmin : \OC_\mini \to \ov \OC_\mini$ be the open
immersion, and $\io : \{0\} \to \ov \OC_\mini$ the closed
complement. By Proposition \ref{prop:cone}, we have
\[
\io^* \jmin_* (\OM[d]) \simeq \bigoplus_i H^{i + d}(\OC_\mini,\OM) [-i]
\]
In Chapter \ref{chap:min}, we have calculated all the cohomology of
$\OC_\mini$ when the base field is $\CM$, and by the comparison
theorems we can deduce the cohomology in the étale case.
However we will only need the following results.
Remember that $\Phi'$ is the root subsystem of $\Phi$ generated by the
long simple roots.
\begin{gather}
\label{min H -1}
H^{-1} \io^* \jmin_* (\OM[d]) = H^{2h^\vee - 3} (\OC_\mini, \OM) = 0
\\
\label{min H 0}
H^0 \io^* \jmin_* (\OM[d])
 = H^{2h^\vee - 2} (\OC_\mini, \OM)
 = \OM \otimes_\ZM (P^\vee(\Phi')/Q^\vee(\Phi'))
\\
\label{min H 1}
H^1 \io^* \jmin_* (\OM[d]) = H^{2h^\vee - 1} (\OC_\mini, \OM)
\text{ is torsion-free}
\end{gather}

By the distinguished triangles in Sections
\ref{sec:tt recollement} and \ref{sec:F recollement},
we obtain the following results.

\begin{theo}
Over $\OM$, we have canonical isomorphisms
\begin{gather}
\label{min iso IC}
\p \jmin_! (\OM[d])
\simeq \pp \jmin_! (\OM[d])
\simeq \p \jmin_{!*} (\OM[d])
\\
\label{min iso IC+}
\pp \jmin_{!*} (\OM[d])
\simeq \p \jmin_* (\OM[d])
\simeq \pp \jmin_* (\OM[d])
\end{gather}
and a short exact sequence
\begin{equation}
\label{min ses}
0 \longto \p \jmin_{!*} (\OM[d])
\longto \pp \jmin_{!*} (\OM[d])
\longto \io_* \OM \otimes_\ZM (P^\vee(\Phi')/Q^\vee(\Phi'))
\longto 0
\end{equation}

Over $\FM$, we have canonical isomorphisms
\begin{gather}
\label{min iso F IC}
\FM\ \p \jmin_!\ (\OM[d])
\isom \p \jmin_!\ (\FM[d])
\isom \FM\ \pp \jmin_!\ (\OM[d])
\isom \FM\ \p \jmin_{!*}\ (\OM[d])
\\
\label{min iso F IC+}
\FM\ \pp \jmin_{!*}\ (\OM[d])
\isom \FM\ \p \jmin_*\ (\OM[d])
\isom \p \jmin_*\ (\FM[d])
\isom \FM\ \pp \jmin_*\ (\OM[d])
\end{gather}
and short exact sequences
\begin{gather}
\label{min ses F IC}
0 \longto \io_*\ \FM \otimes_\ZM (P^\vee(\Phi')/Q^\vee(\Phi'))
\longto \FM\ \p \jmin_{!*}\ (\OM[d])
\longto \p \jmin_{!*}\ (\FM[d])
\longto 0
\\
\label{min ses F IC+}
0 \longto \p \jmin_{!*}\ (\FM[d])
\longto \FM\ \pp \jmin_{!*}\ (\OM[d])
\longto \io_*\ \FM \otimes_\ZM (P^\vee(\Phi')/Q^\vee(\Phi'))
\longto 0
\end{gather}

We have
\begin{equation}\label{min dec}
[\FM\ \p {j_\mini}_{!*}\ (\OM[d]) : \io_*\ \FM]
= [\FM\ \pp {j_\mini}_{!*}\ (\OM[d]) : \io_*\ \FM]
= \dim_\FM \FM \otimes_\ZM \left( P^\vee(\Phi')/Q^\vee(\Phi') \right)
\end{equation}

In particular, $\FM\ \p {j_\mini}_{!*}\ (\OM[d])$ is simple 
(and equal to $\FM\ \pp {j_\mini}_{!*}\ (\OM[d])$) if and only if
$\ell$ does not divide the connection index of $\Phi'$.
\end{theo}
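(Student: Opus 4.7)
The plan is to deduce everything from the cohomology inputs \ref{min H -1}, \ref{min H 0}, \ref{min H 1} by feeding them into the general distinguished-triangle machinery of Sections~\ref{sec:tt recollement} and~\ref{sec:F recollement}. Since the closed stratum $\{0\}$ has dimension $0$, the perverse $t$-structure on $D^b_c(\{0\},\OM)$ agrees with the natural one, so for $K = \jmin_*(\OM[d])$ we can identify $\p\HC^i \io^* K$ with the ordinary cohomology $\OM$-module $H^i \io^* \jmin_*(\OM[d])$. Under this identification, the inputs translate to $\p\HC^{-1}_\tors i^* \jmin_* = \p\HC^{-1}_\free i^* \jmin_* = 0$, $\p\HC^{0}_\free i^* \jmin_* = 0$, $\p\HC^0_\tors i^* \jmin_* = \io_* \OM \otimes_\ZM (P^\vee(\Phi')/Q^\vee(\Phi'))$, and $\p\HC^1_\tors i^* \jmin_* = 0$.

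Next, I would plug these vanishings into the five functorial triangles of Section~\ref{sec:tt recollement} (with $j = \jmin$). Four of the five third terms vanish, producing the isomorphisms $\p\jmin_! \simeq \pp\jmin_! \simeq \p\jmin_{!*}$ of \ref{min iso IC}, and $\pp\jmin_{!*} \simeq \p\jmin_* \simeq \pp\jmin_*$ of \ref{min iso IC+}. The one remaining triangle,
\[
\p\jmin_{!*}(\OM[d]) \longto \pp\jmin_{!*}(\OM[d]) \longto \io_* \OM \otimes_\ZM (P^\vee(\Phi')/Q^\vee(\Phi')) \rightsquigarrow,
\]
has its third term concentrated in degree $0$, hence is the desired short exact sequence \ref{min ses} in $\p\MC(\ov\OC_\mini,\OM)$.

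Then I would turn to the eight triangles of Section~\ref{sec:F recollement} for modular reduction. Five of these triangles have a third term whose $\p\HC^i_?\, i^* j_*$ input now vanishes, which at once yields the strings of isomorphisms \ref{min iso F IC} and \ref{min iso F IC+}. The two remaining non-trivial triangles produce third terms of the form $\io_*\, \p\HC^{-1}\FM\, \p\HC^0_\tors i^*\jmin_* [1]$ and $\io_*\, \p\HC^0\FM\, \p\HC^0_\tors i^*\jmin_*$, both identified with $\io_* \FM \otimes_\ZM (P^\vee(\Phi')/Q^\vee(\Phi'))$ because $M := \OM \otimes_\ZM (P^\vee(\Phi')/Q^\vee(\Phi'))$ is a finite torsion $\OM$-module, so $\FM \otimes_\OM M \simeq \Tor_1^\OM(\FM,M) \simeq \FM \otimes_\ZM (P^\vee(\Phi')/Q^\vee(\Phi'))$. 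These triangles therefore degenerate into the short exact sequences \ref{min ses F IC} and \ref{min ses F IC+}.

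Finally, the multiplicity \ref{min dec} follows immediately from either of these short exact sequences by applying Proposition~\ref{prop:IC mult}: the extremes have no composition factor isomorphic to $\io_* \FM$ (by Proposition~\ref{prop:IC mult} applied to $\p\jmin_{!*}$, and by construction for the middle term $\FM \otimes_\ZM (P^\vee(\Phi')/Q^\vee(\Phi'))$ on the point, which is a trivial $\FM$-vector space of the asserted dimension). The last assertion about simplicity is then just the statement that this multiplicity vanishes precisely when $\ell \nmid |P^\vee(\Phi')/Q^\vee(\Phi')|$, the connection index of $\Phi'$. The only genuine obstacle in this plan is bookkeeping — tracking the three possible truncation positions per degree in the modular-reduction triangles and verifying that the $\p\HC^i_?$ terms identify correctly with the cohomology groups computed in Chapter~\ref{chap:min}; once this is set up, the statement falls out mechanically.
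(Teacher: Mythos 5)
Your proposal is correct and follows exactly the route the paper gestures at (the paper itself gives no detailed argument beyond citing the triangles of Sections \ref{sec:tt recollement} and \ref{sec:F recollement}). One small bookkeeping slip: six, not five, of the eight modular-reduction triangles have a vanishing third term (the six trivialized triangles account precisely for the six isomorphisms in \ref{min iso F IC} and \ref{min iso F IC+}, and the remaining two give \ref{min ses F IC} and \ref{min ses F IC+}); this does not affect the substance of the argument.
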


Let us give this decomposition number in each type.
We denote the singularity of $\ov\OC_\mini$ at the origin by the lower
case letter $\g$ corresponding to the type $\G$ of $\gG$.

\[
\begin{array}{l|l|l|l}
\g & \G' & P^\vee(\Phi')/Q^\vee(\Phi') & d_{(x_\mini,1),(0,1)}\\
\hline
a_n & A_n & \ZM / (n + 1) &
      1 \text{ if } \ell \mid n + 1,\ 0 \text{ otherwise}\\
b_n & A_{n - 1} & \ZM/n &
      1 \text{ if } \ell \mid n,\ 0 \text{ otherwise}\\
c_n & A_1 & \ZM/2 &
      1 \text{ if } \ell = 2,\ 0 \text{ otherwise}\\
d_n\ (n \text{ even}) & D_n & (\ZM/2)^2 &
      2 \text{ if } \ell = 2,\ 0\text{ otherwise}\\
d_n\ (n \text{ odd}) & D_n & \ZM/4 &
      1 \text{ if } \ell = 2,\ 0\text{ otherwise}\\
e_6 & E_6 & \ZM/3 &
      1 \text{ if } \ell = 3,\ 0\text{ otherwise}\\
e_7 & E_7 & \ZM/2 &
      1 \text{ if } \ell = 2,\ 0\text{ otherwise}\\
e_8 & E_8 & 0 & 0\\
f_4 & A_2 & \ZM/3 &
      1 \text{ if } \ell = 3,\ 0 \text{ otherwise}\\
g_2 & A_1 & \ZM/2 &
      1 \text{ if } \ell = 2,\ 0 \text{ otherwise}
\end{array}
\]

Note that the singularities $c_n$ (for $n \geqslant 1$, including
$c_1 = a_1 = A_1$ and $c_2 = b_2$) and $g_2$ are $\KM$-smooth but not
$\FM_2$-smooth.

\section{Rows and columns}\label{sec:rc}

In this section, $G = GL_n$. The nilpotent orbits are parametrized by
the partitions of $n$, and the order defined by the inclusions of
closures of orbits corresponds to the usual dominance order on
partitions. Let us introduce some notation. The decomposition number
$[\FM \JC_{!*}(\OC_\l,\OM) : \JC_{!*}(\OC_\mu,\FM)]$ will be denoted
by $d_{\l,\mu}$, and we introduce the ``characteristic functions''
\[
\chi_{\l,\mu}
= \sum_{i\in\ZM} (-1)^i \dim_\KM \HC^i_{x_\mu} \JC_{!*}(\OC_\l,\KM)
= \sum_{i\in\ZM} (-1)^i \dim_\FM \HC^i_{x_\mu} \FM \JC_{!*}(\OC_\l,\OM)
\]
and
\[
\phi_{\nu,\mu}
= \sum_{i\in\ZM} (-1)^i \dim_\FM \HC^i_{x_\mu} \JC_{!*}(\OC_\nu,\FM)
\]
These form triangular systems, in the sense that
$\chi_{\l,\mu}$ can be non-zero only if $\mu \leqslant \l$,
and $\phi_{\nu,\mu}$ can be non-zero only if $\mu \leqslant \nu$.
We have
\[
\chi_{\l,\mu} = \sum_{\nu} d_{\l,\nu} \phi_{\nu,\mu}
\]
and $d_{\l,\nu}$ can be non-zero only if $\nu \leqslant \l$.
Moreover, we have $\chi_{\l,\l} = \phi_{\l,\l} = 1$, and
$d_{\l,\l} = 1$.

Kraft and Procesi found a row and column removal rule for the
singularities of the closures of the nilpotent orbits in type
$A_{n-1}$ \cite{KP1}. Actually, they state the result when the base
field is $\CM$, but it is certainly true when $p$ is very good.

\begin{prop}
Let $\OC_\mu \subset \ov\OC_\l$ be a degeneration of nilpotent orbits
in $\gG\lG_n$ and assume that the first $r$ rows and the first $s$
columns of $\l$ and $\mu$ coincide. Denote by $\l_1$ and $\mu_1$ the
Young diagrams obtained from $\l$ and $\mu$ by erasing these rows and
columns. Then $\OC_{\mu_1} \subset \ov\OC_{\l_1}$, and we have
\begin{equation}
\codim_{\ov\OC_{\l_1}} \OC_{\mu_1} = \codim_{\ov\OC_\l} \OC_\mu
\quad\text{ and }\quad
\Sing(\ov\OC_{\l_1}, \OC_{\mu_1}) = \Sing(\ov\OC_\l, \OC_\mu)
\end{equation}
\end{prop}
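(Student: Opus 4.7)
The plan is to reduce the proposition to two independent basic operations, which I will call \emph{row removal} and \emph{column removal}, and to establish each one geometrically, following Kraft and Procesi. The first thing to verify is that $\mu_1 \leqslant \lambda_1$ in the dominance order on partitions of the new integer $n_1$: this follows immediately from the definitions, since the partial sums $\sum_{i \leqslant k} \lambda_i - \sum_{i \leqslant k} \mu_i$ controlling the dominance order are unaffected by stripping off a common prefix of rows, and symmetrically for columns (via transposition, which reverses dominance). Iterating single removals, it then suffices to treat the two extreme cases $(r,s) = (1,0)$ and $(r,s) = (0,1)$.

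For row removal, assume $\lambda_1 = \mu_1 =: m$. Choose a nilpotent $x$ of Jordan type $\mu$ acting on $V = k^n$ and decompose $V = U \oplus W$ into $x$-stable subspaces, where $x|_U$ is a single Jordan block of size $m$ and $x|_W$ has Jordan type $\mu_1$. Using the standard Slodowy-type transverse slice construction adapted to this block decomposition, I would produce a transverse slice $S \subset \gG\lG(V)$ at $x$ to $\OC_\mu$ built from a transverse slice $S_1 \subset \gG\lG(W)$ at $x|_W$ to $\OC_{\mu_1}$, with an extra smooth factor accounting for the contribution of $U$. The rank conditions $\mathrm{rk}(x^k) \leqslant \sum_{i > k}\lambda_i$ that cut out $\ov\OC_\lambda$ inside $\gG\lG(V)$ translate, when intersected with $S$ and after subtracting the fixed contribution of the common Jordan block in $U$, into the analogous rank conditions on the $W$-block defining $\ov\OC_{\lambda_1} \cap S_1$. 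This would yield simultaneously the codimension equality and the smooth equivalence of singularities.

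The main obstacle will be column removal, because the columns of a partition encode Jordan block \emph{sizes} rather than multiplicities, and no naive direct sum decomposition of $V$ is available. The idea of Kraft-Procesi is to realise the operation ``remove the first column'' geometrically through the ``multiplication of rectangular matrices'' picture. For appropriately chosen vector spaces $V$ and $V'$ with $\dim V - \dim V'$ equal to the length of the common first column, consider the variety
\[
Z = \Hom(V, V') \times \Hom(V', V),
\]
together with the two maps $m : (a,b) \mapsto ba \in \gG\lG(V)$ and $m' : (a,b) \mapsto ab \in \gG\lG(V')$. The classical fact that $ab$ and $ba$ have identical nonzero Jordan structure produces, after restricting to the appropriate nilpotent loci, $GL$-equivariant correspondences between orbit closures whose partitions differ precisely by removal of the first column. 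Verifying that both projections are smooth of constant relative dimension over suitable open subsets of the relevant orbits would then transfer the local singularity type from $(\ov\OC_\lambda, \OC_\mu)$ to $(\ov\OC_{\lambda_1}, \OC_{\mu_1})$, and give the codimension equality via a dimension count. Combining this with the row removal case, iterated removal of one row or column at a time then yields the full proposition.
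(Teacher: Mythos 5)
The paper does not prove this proposition: it quotes it from Kraft and Procesi \cite{KP1} as a known geometric fact and moves directly to the consequences for decomposition numbers, so there is no internal proof to compare against. Your sketch does capture the core of the Kraft--Procesi argument: the variety $\Hom(V,V') \times \Hom(V',V)$ with the two multiplication maps $(a,b) \mapsto ba$ and $(a,b) \mapsto ab$, together with the fact that the Jordan types of $ab$ and $ba$ differ by exactly one column on the locus where $a$ is surjective and $b$ injective, is indeed the engine of \cite{KP1}, and your caveat that the two projections must be checked to be smooth of constant relative dimension over the relevant orbit strata is precisely what has to be supplied.

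The gap is in row removal. Writing $V = U \oplus W$ with $x|_U$ a single Jordan block of size $m$ equal to the common first part of $\lambda$ and $\mu$ (note this collides with your use of $\lambda_1,\mu_1$ for the reduced partitions), a transverse slice $S$ to $\OC_\mu$ at $x$ inside $\gG\lG(V)$ is modelled on a complement to $[\gG\lG(V),x]$, which necessarily has nonzero components in the off-diagonal blocks $\Hom(U,W)$ and $\Hom(W,U)$. The rank conditions cutting out $\ov\OC_\lambda$ inside $S$ mix the diagonal and off-diagonal directions, so it is not automatic that $\ov\OC_\lambda \cap S$ factors near $x$ as a smooth factor times the corresponding slice-intersection in $\gG\lG(W)$; ``an extra smooth factor accounting for the contribution of $U$'' understates what must actually be shown. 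You should either exhibit a slice for which the constraints visibly decouple and prove that they do, or else derive the row case from iterated applications of the same $ab/ba$ correspondence through a chain of auxiliary vector spaces rather than from a naive direct-sum decomposition of $V$ --- the latter being closer to what Kraft and Procesi actually do.
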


All the partitions $\nu$ in the interval $[\mu,\l] = \{\nu\mid \mu
\leqslant \nu \leqslant \l\}$ have the same first $r$ rows and first
$s$ columns. For $\nu$ in $[\mu,\l]$, let us denote by $\nu_1$ the
partition obtained from $\nu$ by erasing them.

The proposition implies that, for all $\eta \leqslant \zeta$ in
$[\mu,\l]$, the local intersection cohomology of $\ov\OC_{\eta_1}$ at
$\ov\OC_{\zeta_1}$ is the same as the local intersection cohomology of
$\ov\OC_\eta$ at $\ov\OC_\zeta$, both over $\KM$ and over $\FM$, and
thus $\chi_{\eta,\zeta} = \chi_{\eta_1,\zeta_1}$ and
$\phi_{\eta,\zeta} = \phi_{\eta_1,\zeta_1}$.

Since the decomposition numbers can be deduced from this
information (for $GL_n$, we only have trivial $A_G(x)$), we find a row
and column removal rule for the decomposition numbers for
$GL_n$-equivariant perverse sheaves on the nilpotent variety of
$GL_n$.

\begin{prop}\label{prop:rc}
With the notations above, we have
\begin{equation}
[\FM \JC_{!*}(\OC_\l, \OM) : \JC_{!*}(\OC_\mu, \FM)]
= [\FM \JC_{!*}(\OC_{\l_1}, \OM) : \JC_{!*}(\OC_{\mu_1}, \FM)]
\end{equation}
\end{prop}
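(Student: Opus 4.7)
The plan is to reduce the statement to the equalities of local intersection cohomology stalks guaranteed by Kraft--Procesi, and then to invert a triangular system. Since $G = GL_n$, every component group $A_G(x)$ is trivial, so on each orbit the only $G$-equivariant irreducible local system (with $\EM$ coefficients) is the constant one. Consequently the decomposition number $d_{\l,\mu}$ is nothing but the multiplicity $[\FM \JC_{!*}(\OC_\l,\OM) : \JC_{!*}(\OC_\mu,\FM)]$, and this multiplicity can be read off from the integers $\chi_{\l,\mu}$ and $\phi_{\nu,\mu}$ defined just before the statement.

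First I would observe that the map $\nu \mapsto \nu_1$ gives an order-preserving bijection from the interval $[\mu,\l]$ (for the dominance order) onto the interval $[\mu_1,\l_1]$: every $\nu$ with $\mu \leqslant \nu \leqslant \l$ shares the first $r$ rows and $s$ columns of $\mu$ and $\l$, so $\nu_1$ is well defined, and the inverse is given by re-attaching the common rows and columns. Applying the Kraft--Procesi proposition to each pair $\eta \leqslant \zeta$ in $[\mu,\l]$ yields a smooth equivalence $\Sing(\ov\OC_\eta,\OC_\zeta) = \Sing(\ov\OC_{\eta_1},\OC_{\zeta_1})$, hence by Proposition \ref{prop:equiv} (applied both over $\KM$ and over $\FM$) the corresponding stalks of the intersection cohomology complexes agree as $\EM$-modules. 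Taking alternating sums of dimensions gives
\[
\chi_{\eta,\zeta} = \chi_{\eta_1,\zeta_1}, \qquad \phi_{\eta,\zeta} = \phi_{\eta_1,\zeta_1}
\]
for all $\eta \leqslant \zeta$ in $[\mu,\l]$.

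Now I would conclude by induction on the length of the interval $[\mu,\eta] \subset [\mu,\l]$, using the triangular identity
\[
\chi_{\eta,\mu} = \sum_{\mu \leqslant \nu \leqslant \eta} d_{\eta,\nu}\, \phi_{\nu,\mu}
\]
together with $\phi_{\mu,\mu} = 1$. For $\eta = \mu$ the claim $d_{\mu,\mu} = 1 = d_{\mu_1,\mu_1}$ is tautological. Assuming the equality $d_{\eta,\nu} = d_{\eta_1,\nu_1}$ for all $\mu \leqslant \nu < \eta$ in $[\mu,\l]$, the triangular relation and the bijection $\nu \leftrightarrow \nu_1$ let us solve for $d_{\eta,\mu}$ in terms of $\chi_{\eta,\mu}$ and the $\phi_{\nu,\mu}$, and the same expression (with subscripts $1$) computes $d_{\eta_1,\mu_1}$; hence $d_{\eta,\mu} = d_{\eta_1,\mu_1}$. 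Specializing to $\eta = \l$ gives the proposition.

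There is no real obstacle here: the content of the argument lives entirely in the Kraft--Procesi theorem, and everything else is the formal observation that the triangular system relating $\chi$, $\phi$ and $d$ only involves partitions in the interval $[\mu,\l]$, together with the compatibility of this system with the bijection $\nu \mapsto \nu_1$. The one point that deserves a line of care is the verification that $\nu \mapsto \nu_1$ really is a bijection from $[\mu,\l]$ onto $[\mu_1,\l_1]$ preserving the dominance order, so that the inductive step can be carried out within the interval.
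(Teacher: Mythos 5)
Your approach is the same as the paper's: reduce to the Kraft--Procesi smooth equivalence, obtain $\chi_{\eta,\zeta}=\chi_{\eta_1,\zeta_1}$ and $\phi_{\eta,\zeta}=\phi_{\eta_1,\zeta_1}$ for all pairs in $[\mu,\l]$, and conclude by triangularity of the system relating $\chi$, $\phi$ and the decomposition numbers. The substance is correct, and your observation that $\nu\mapsto\nu_1$ is an order isomorphism of the interval is exactly the point the paper leaves implicit.

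However, the inductive step as you set it up is circular. You declare that you induct on the length of $[\mu,\eta]$ and take as hypothesis ``$d_{\eta,\nu}=d_{\eta_1,\nu_1}$ for all $\mu\leqslant\nu<\eta$''; but that range includes $\nu=\mu$, which is precisely the conclusion you are trying to establish at step $\eta$. Moreover, the triangular relation
\[
\chi_{\eta,\mu} \;=\; d_{\eta,\mu} \;+\; \sum_{\mu<\nu\leqslant\eta} d_{\eta,\nu}\,\phi_{\nu,\mu}
\]
expresses $d_{\eta,\mu}$ in terms of $\chi_{\eta,\mu}$, the $\phi_{\nu,\mu}$, \emph{and} the quantities $d_{\eta,\nu}$ for $\nu>\mu$ \emph{with the same} $\eta$ --- not quantities attached to a shorter interval $[\mu,\eta']$. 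So your stated hypothesis neither covers what the recurrence needs, nor is it well-founded. The fix is small: either fix $\eta$ and do a downward induction on $\zeta$ from $\zeta=\eta$ (hypothesis: $d_{\eta,\nu}=d_{\eta_1,\nu_1}$ for all $\nu>\zeta$), or, as the paper does, forgo induction entirely and observe that the finite family $(d_{\eta,\zeta})_{\mu\leqslant\zeta\leqslant\eta\leqslant\l}$ is the \emph{unique} solution of the unitriangular system $\chi_{\eta,\zeta}=\sum_\nu d_{\eta,\nu}\phi_{\nu,\zeta}$, and the system is invariant under the relabeling $\nu\mapsto\nu_1$.
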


\begin{proof}
The decomposition numbers
$(d_{\eta,\zeta})_{\mu \leqslant \zeta \leqslant \eta \leqslant \l}$
are the unique solution of the triangular linear system
\[
\chi_{\eta,\zeta} = \sum_{\mu \leqslant \nu \leqslant \l} d_{\eta,\nu} \phi_{\nu,\zeta}
\]
whereas the decomposition numbers
$(d_{\eta_1,\zeta_1})_{\mu_1 \leqslant \zeta_1 \leqslant \eta_1 \leqslant \l_1}$
are the unique solution of the triangular linear system
\[
\chi_{\eta_1,\zeta_1} = \sum_{\mu \leqslant \nu \leqslant \l}
d_{\eta_1,\nu_1} \phi_{\nu_1,\zeta_1}
\]
Since we have $\chi_{\eta,\zeta} = \chi_{\eta_1,\zeta_1}$ and
$\phi_{\eta,\zeta} = \phi_{\eta_1,\zeta_1}$, the two systems are
identical, so that $d_{\eta,\zeta} = d_{\eta_1,\zeta_1}$ for all
$\zeta \leqslant \eta$ in $[\mu,\l]$. In particular, we have
$d_{\l,\mu} = d_{\l_1,\mu_1}$.
\end{proof}

So we have a rule similar to the one for decomposition matrices for the symmetric groups,
and even for the Schur algebra. Now, if we have two adjacent partitions,
using this rule we can reduce to the extreme cases of the minimal or subregular class
for a smaller general linear group. So we get the following result.

\begin{cor}
The decomposition numbers for the $GL_n$-equivariant perverse sheaves on the nilpotent
variety and for the Schur algebra coincide for adjacent partitions.
\end{cor}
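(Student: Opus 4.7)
The plan is to reduce both sides to the same small cases by a common row and column removal rule, and then to match the answers using the explicit computations of Sections \ref{sec:subreg} and the minimal class section.

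First I would apply Proposition \ref{prop:rc} on the geometric side: given an adjacent pair $\l > \mu$ in $\PG_n$, I strip off all common rows and columns of $\l$ and $\mu$ to obtain a new adjacent pair $(\l_1,\mu_1)$ of partitions of some smaller integer $m$, with $d_{\l,\mu} = d_{\l_1,\mu_1}$. By the classification of minimal degenerations in type $A$ recalled in the introduction (Kraft--Procesi \cite{KP1}), the only possibilities for such an extreme adjacent pair are
\[
(\l_1,\mu_1) = \bigl((m),(m-1,1)\bigr)
\quad\text{or}\quad
(\l_1,\mu_1) = \bigl((2,1^{m-2}),(1^m)\bigr),
\]
corresponding respectively to a simple singularity of type $A_{m-1}$ and to a minimal singularity of type $a_{m-1}$. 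In the first case $\OC_{\l_1}$ is the regular orbit and $\OC_{\mu_1}$ the subregular orbit in $\gG\lG_m$, so Section \ref{sec:subreg} (noting that $A_G(x_\subreg)=1$ in type $A$ and $\wh\Phi=A_{m-1}$) gives
\[
d_{\l_1,\mu_1}
=\dim_\FM \FM\otimes_\ZM P(\wh\Phi)/Q(\wh\Phi)
=\begin{cases}1 & \ell\mid m,\\ 0 & \text{otherwise.}\end{cases}
\]
In the second case $\OC_{\l_1}$ is the minimal orbit and $\OC_{\mu_1}=\{0\}$, so formula \ref{min dec} together with $\Phi'=A_{m-1}$ yields the same value.

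Next I would carry out the analogous reduction on the Schur algebra side. The identity recalled in the introduction, $d^{S(n)}_{\l,\mu}=d^{\SG_n}_{\l',\mu'}$ for $\mu$ $\ell$-regular, combined with James's row and column removal rule for decomposition numbers of the symmetric group (which translates via conjugation of partitions to a row and column removal rule for $S(n)$, and which also follows directly from Donkin's work on the Schur algebra), gives $d^{S(n)}_{\l,\mu}=d^{S(m)}_{\l_1,\mu_1}$ for the same pair $(\l_1,\mu_1)$ as on the geometric side. It then remains to verify that in each of the two extreme cases the Schur algebra decomposition number equals the value above. For $(\l_1,\mu_1)=((m),(m-1,1))$ one computes $d^{S(m)}_{(m),(m-1,1)}=d^{\SG_m}_{(1^m),(2,1^{m-2})}$, which by the well-known structure of the Specht module $S^{(1^m)}$ (whose modular reduction is the sign representation, with composition factor $D^{(2,1^{m-2})}$ if and only if $\ell\mid m$) equals $1$ when $\ell\mid m$ and $0$ otherwise. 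The case $((2,1^{m-2}),(1^m))$ is obtained from the first by transposition of partitions, and yields the same number.

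The main conceptual step is simply the observation that the row and column removal rules hold on both sides and both reduce any adjacent pair to the same list of extreme cases. The rest is a matter of checking, case by case, that Brieskorn--Slodowy's identification of the transverse singularity (for the subregular class) and the cohomology computation of $\OC_\mini$ carried out in Chapter \ref{chap:min} give exactly the Specht module multiplicities predicted by the symmetric group side. The main obstacle is essentially bookkeeping: on the geometric side one has freely used that $A_G(x)=1$ throughout type $A$, so there are no local system complications, and the comparison reduces to a direct match of the integers $\dim_\FM \FM\otimes_\ZM\ZM/m$ with the classical decomposition numbers of the extreme hook partitions. This is precisely what the computations above achieve.
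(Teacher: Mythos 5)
Your overall plan is exactly the paper's: use the row and column removal rule (Proposition~\ref{prop:rc} on the perverse sheaf side, James/Donkin on the Schur algebra side) to reduce both quantities to the two extreme adjacent pairs $\bigl((m),(m-1,1)\bigr)$ and $\bigl((2,1^{m-2}),(1^m)\bigr)$, then match the extreme cases using the subregular and minimal computations. The geometric side of your argument is fine. The gap is on the Schur algebra side, where the computation for the extreme cases contains errors.

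First, the identity $d^{S(n)}_{\l,\mu}=d^{\SG_n}_{\l',\mu'}$ holds when $\mu$ is $\ell$-\emph{restricted} (so that $\mu'$ is $\ell$-regular and $D^{\mu'}$ is defined) --- the paper's statement that $\mu\in\PG_n^{\ell\text{-reg}}$ is a typo. For $\mu=(m-1,1)$, the $\ell$-restricted condition is $m-2<\ell$, i.e.\ $m\le\ell+1$. So the reduction $d^{S(m)}_{(m),(m-1,1)}=d^{\SG_m}_{(1^m),(2,1^{m-2})}$ that you use for the first extreme case does not hold for general $m$, in particular not when $\ell\mid m$ with $m\ge 2\ell$. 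Relatedly, your description of $\FM S^{(1^m)}$ is incoherent: the sign Specht module is one-dimensional, hence irreducible, so it has exactly one composition factor --- it cannot ``have composition factor $D^{(2,1^{m-2})}$ if and only if $\ell\mid m$,'' and $(2,1^{m-2})$ is not $\ell$-regular for $m>\ell+1$ so $D^{(2,1^{m-2})}$ need not even be defined. Finally, the assertion that the second extreme case is ``obtained from the first by transposition of partitions and yields the same number'' assumes a symmetry of the Schur algebra decomposition matrix under conjugating both arguments, which is false in general: already $d^{S(2)}_{(2),(1^2)}=1$ while $d^{S(2)}_{(1^2),(2)}=0$ for $\ell=2$.

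The clean way to finish is the reverse order. For $\bigl((2,1^{m-2}),(1^m)\bigr)$, since $(1^m)$ is always $\ell$-restricted, you can legitimately write $d^{S(m)}_{(2,1^{m-2}),(1^m)}=d^{\SG_m}_{(m-1,1),(m)}=[\FM S^{(m-1,1)}:D^{(m)}]$; here $D^{(m)}$ is the trivial module, $\FM S^{(m-1,1)}$ is the reduction of the root lattice of $A_{m-1}$, and the vector $(1,\dots,1)$ lies in it modulo $\ell$ precisely when $\ell\mid m$, giving $1$ iff $\ell\mid m$ and $0$ otherwise. For $\bigl((m),(m-1,1)\bigr)$ you need a direct argument on the Schur algebra side (this is the two-row case for $GL_m$, covered by James's explicit description of decomposition numbers for two-row partitions, or by the Jantzen sum formula for $\Delta((m))$) rather than a reduction to $\SG_m$; the answer is again $1$ iff $\ell\mid m$, but this needs to be cited or proved, not inferred from a symmetry that doesn't hold.
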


In \ref{sec:decomposition} we will see that, at least for the part of
the decomposition matrix corresponding to the symmetric group, this is
not a coincidence. We hope to find an explanation for the whole
decomposition matrix of the Schur algebra as well. We will come back
to this later.

In \cite{KP2} Kraft and Procesi obtain a similar result for all classical types,
so the same rule should apply for perverse sheaves in the classical
types. The only problem that is left is to deal with the local systems
involved, when the $A_G(x)$ are non-trivial. In many cases, we have
enough information (see the introduction, and the tables in Chapter
\ref{chap:tables}).

\section{Special classes}\label{sec:special}

We will use another geometrical result by Kraft and Procesi, contained
in \cite{KP3}.

In \cite{LusSpec}, Lusztig introduced the special representations of a
finite Weyl group. The \emph{special unipotent classes} of a simple
group are the unipotent classes $C$ such that the representation of
$W$ corresponding to the pair $(C,1)$ is \emph{special}.

On the other hand, Spaltenstein introduced an order-reversing map
$d$ from the set of unipotent classes to itself, such that $d^3 = d$
(it is an involution on its image) in \cite{SpalDual}. The image of
$d$ consists exactly in the special unipotent classes, and the locally
closed subvarieties
\[
\widehat C = \ov C \setminus
\mathop{\bigcup_{C' \text{ special}}}\limits_{\ov C'\subset\ov C} \ov C'
\]
where $C$ runs through the special classes, form a partition of the
unipotent variety (any unipotent class is contained in a
$\widehat C$ for a unique special class $C$).

If $p$ is good for $G$ (or if the base field is $\CM$), then we can
use a $G$-equivariant homeomorphism from the unipotent variety to the
nilpotent variety to transport these notions to nilpotent orbits.

In type $A$, all the unipotent classes (resp. nilpotent orbits) are
special, so this this section gives information only for the other
classical types.

\begin{theo}
Let $C$ be a special unipotent conjugacy class of a classical
group. Define $\widehat C$ as above. Then $\widehat C$ consists of
$2^d$ conjugacy classes, where $d$ is the number of irreducible
components of $\widehat C \setminus C$. There is a smooth variety $Y$
with an action of the group $(\ZM/2)^d$, and an isomorphism
\begin{equation}
Y / (\ZM/2)^d \elem{\sim} \widehat C
\end{equation}
which identifies the stratification of $\widehat C$ with the
stratification of the quotient by isotropy groups. (These are the
$2^d$ subproducts of $(\ZM/2)^d$). In particular $\widehat C$ is a
rational homology manifold.
\end{theo}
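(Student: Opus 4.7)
The plan is to argue separately in each classical type (B, C, D), exploiting the combinatorial parametrization of nilpotent orbits by partitions with parity restrictions. As a first step, I would describe the orbits in the special piece $\widehat C$ combinatorially: if $C$ corresponds to a special partition $\lambda$, the non-special orbits $C' \subset \widehat C \setminus C$ arise from certain ``degeneracies'' of $\lambda$, each concentrated at a particular spot where a B/C/D-collapse is allowed. The combinatorial lemma I would need is that these degeneracies are independent and binary: each of the $d$ codimension-one irreducible components of $\widehat C \setminus C$ corresponds to a single ``switchable'' spot in the partition, and the orbits in $\widehat C$ are enumerated by subsets of $\{1,\ldots,d\}$, giving exactly $2^d$ orbits and simultaneously identifying the poset of orbits in $\widehat C$ with the Boolean lattice of subsets.

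Next, I would construct the smooth variety $Y$ and the action of $(\ZM/2)^d$. The guiding idea is that the row-and-column removal rule (Proposition \ref{prop:rc} in type $A$, and its analogue in other classical types from \cite{KP2}) reduces the study of the minimal degenerations inside $\widehat C$ to a short list of local transverse models: $A_k$, $D_k$, $A_k \cup A_k$ (non-normal), $b_k$, $c_k$, $d_k$. Each of the $d$ codimension-one irreducible components of $\widehat C \setminus C$ carries exactly one such minimal singularity whose normalization (or, for $A_k \cup A_k$, whose separation) is a double cover branched along the component. Performing these $d$ covers simultaneously, and compatibly with the group action of $G$, should produce $Y$, equipped with a faithful $(\ZM/2)^d$-action whose factors permute the two sheets over each component independently.

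The main obstacle is verifying that the resulting $Y$ is actually smooth, since a priori only the codimension-one singularities are resolved by construction. One must analyze the higher-codimension strata using the classification of minimal degenerations in classical types and check that where several of the switchable components meet, the corresponding product of $\ZM/2$-factors acts freely away from the expected stabilizer locus; equivalently, the transverse slice there has to be a product of the local models handled in codimension one. Granting this, the quotient map $Y \to \widehat C$ identifies the $G$-orbit stratification of $\widehat C$ with the stratification of $Y/(\ZM/2)^d$ by isotropy type: the orbit indexed by a subset $S \subset \{1,\ldots,d\}$ of switched-on degeneracies is the image of the locus in $Y$ with stabilizer the corresponding subproduct.

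The final assertion that $\widehat C$ is a rational homology manifold is then immediate from Proposition \ref{prop:rat smooth}: $Y$ is smooth and $\widehat C \simeq Y/(\ZM/2)^d$ is the quotient by a finite group, hence is $\KM$-smooth, so the shifted constant sheaf on $\widehat C$ coincides with its intersection cohomology complex and Poincar\'e duality holds rationally. As a bonus, the same proposition shows $\widehat C$ is $\FM$-smooth whenever $\ell \ne 2$, which is the input we want for bounding the decomposition numbers $d_{(x_\OC,1),(x_{\OC'},\rho)}$ inside a special piece at odd primes.
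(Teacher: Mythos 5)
You have sketched a proof of a result that the paper does not itself prove: this Theorem is quoted from Kraft and Procesi \cite{KP3}, and the paper's own contribution is only the remark that follows it, namely that combining the quotient description $Y/(\ZM/2)^d$ with Proposition \ref{prop:rat smooth} shows $\widehat C$ is not merely $\KM$-smooth but in fact $\FM$-smooth whenever $\ell \neq 2$. So there is no ``paper's own proof'' to compare against; the only part of your argument that overlaps with the paper is your final paragraph, which correctly reproduces the reasoning the author actually carries out after citing the theorem.

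On the substance of your sketch of the Kraft--Procesi argument itself: the flavor (combinatorial enumeration of orbits in a special piece, local analysis of minimal degenerations using the row-and-column removal machinery of \cite{KP2}, a double-cover construction of $Y$) is right, but one detail is off. You speak of ``codimension-one irreducible components of $\widehat C \setminus C$.'' Nilpotent orbits of a reductive group are all even-dimensional, so minimal degenerations occur in codimension $\ge 2$; the irreducible components of $\widehat C \setminus C$ are closures of the maximal orbits in the complement, which therefore have codimension at least $2$ in $\widehat C$. Consequently the ``double cover branched along the component'' picture you describe lives in a transverse slice at a codimension-$2$ degeneration (where the relevant local models are the $A_k$, $D_k$, and $A_k\cup A_k$ of \cite{KP2}), not over a divisor, and establishing smoothness of $Y$ at points above the higher-codimension strata is precisely the delicate step of \cite{KP3} that your proposal identifies as the main obstacle but does not actually carry out.
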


But actually their result gives more information.
By Proposition \ref{prop:rat smooth},
the variety $\widehat C$ is not only $\KM$-smooth, but also
$\FM$-smooth for $\ell \neq 2$.

If $p$ is good, the result can be transferred to the nilpotent variety, to fit with
our context. We obtain the following result.

\begin{prop}
Assume $p$ is good. Let $\OC$ be a special nilpotent orbit in a classical group. Let
\[
\widehat\OC = \ov\OC \setminus
\mathop{\bigcup_{\OC' \text{ special}}}\limits_{\ov\OC'\subset\ov \OC} \ov\OC'
\]
Assume $\ell \neq 2$.  Then $\widehat \OC$ is $\FM$-smooth and,
for all orbits $\OC' \subset \widehat \OC \setminus \OC$, we have
\[
[\FM \JC_{!*}(\OC,\OM) : \JC_{!*}(\OC',\FM)] = 0
\]
\end{prop}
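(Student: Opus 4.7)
The plan is to combine the Kraft--Procesi structure theorem $\wh\OC \simeq Y/(\ZM/2)^d$, with $Y$ smooth, with the smoothness machinery of Sections \ref{sec:small} and \ref{sec:rat smooth}. The quotient map $\pi : Y \to \wh\OC$ is finite, surjective and separable of degree $2^d$ (separability is automatic, since $p$ is good for the classical types at hand and therefore $p \neq 2$), so Proposition \ref{prop:rat smooth} applies under the hypothesis $\ell \neq 2$ and yields both the $\FM$-smoothness and the $\KM$-smoothness of $\wh\OC$, settling the first assertion.

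For the vanishing of the decomposition numbers I shall first localize on $\wh\OC$. Writing $k : \wh\OC \hookrightarrow \ov\OC$ for the open immersion, $k^*$ is $t$-exact, commutes with $\FM \otimes^\LM_\OM (-)$, and sends $\p\JC_{!*}(\OC',\FM)$ to $\p\JC_{!*}^{\wh\OC}(\OC',\FM)$ for every orbit $\OC' \subset \wh\OC$. Hence for any such $\OC' \subset \wh\OC \setminus \OC$,
\[
[\FM\,\p\JC_{!*}(\OC,\OM) : \p\JC_{!*}(\OC',\FM)]
= [\FM\,\p\JC_{!*}^{\wh\OC}(\OC,\OM) : \p\JC_{!*}^{\wh\OC}(\OC',\FM)],
\]
and it suffices to prove that the right-hand side vanishes.

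The heart of the argument is the identification $\p\JC_{!*}^{\wh\OC}(\OC,\OM) \simeq \OM_{\wh\OC}[\dim \OC]$. Because the Kraft--Procesi isotropy stratification has trivial isotropy above the open stratum, $\pi|_{\pi^{-1}(\OC)}$ is an \'etale $(\ZM/2)^d$-torsor, and $\pi$ is small in the sense of Section \ref{sec:small}. Proposition \ref{prop:small} then gives $\pi_*\OM_Y[\dim \OC] \simeq \p\JC_{!*}^{\wh\OC}(\OC,\LC)$, where $\LC = (\pi|_{\pi^{-1}(\OC)})_*\OM$ is the local system on $\OC$ attached to the regular representation of $(\ZM/2)^d$. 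Since $2^d$ is invertible in $\OM$ and all characters of $(\ZM/2)^d$ take values in $\{\pm 1\} \subset \OM$, this regular representation splits over $\OM$ as a direct sum of its $2^d$ one-dimensional characters; the resulting splitting of $\pi_*\OM_Y[\dim\OC]$ has trivial isotypic component equal, on the one hand, to $\OM_{\wh\OC}[\dim \OC]$ (the image of the averaging idempotent) and, on the other hand, to $\p\JC_{!*}^{\wh\OC}(\OC,\OM)$, whence the identification.

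Reducing modulo $\varpi$ and using the $\FM$-smoothness established above,
\[
\FM\,\p\JC_{!*}^{\wh\OC}(\OC,\OM)
\simeq \FM_{\wh\OC}[\dim \OC]
\simeq \p\JC_{!*}^{\wh\OC}(\OC,\FM)
\]
is a simple perverse sheaf, whose only composition factor is itself, so the target multiplicities vanish for every $\OC' \neq \OC$. The delicate step is the integral (rather than just $\KM$- or $\FM$-) splitting of $\pi_*\OM_Y[\dim\OC]$, where the assumption $\ell \neq 2$, making $2^d$ invertible in $\OM$, is used in an essential way; the rest is a formal consequence of the general machinery of Chapters \ref{chap:preliminaries} and \ref{chap:examples}.
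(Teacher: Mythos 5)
Your argument is correct and follows the same route as the paper: the Kraft--Procesi isomorphism $\wh\OC \simeq Y/(\ZM/2)^d$ together with Proposition \ref{prop:rat smooth} gives $\FM$-smoothness (and $\KM$-smoothness) of $\wh\OC$ for $\ell \neq 2$, and localization of the multiplicity computation to the open set $\wh\OC$ via Proposition \ref{prop:IC mult} reduces the vanishing to the simplicity of $\FM_{\wh\OC}[\dim\OC]$. The paper leaves the final deduction implicit; your explicit derivation of $\p\JC_{!*}^{\wh\OC}(\OC,\OM)\simeq\OM_{\wh\OC}[\dim\OC]$ via the small finite map $\pi_*\OM_Y[\dim\OC]$ and the averaging idempotent (invertible since $\ell\neq 2$) is precisely the mechanism underlying the proof of Proposition \ref{prop:rat smooth}, just spelled out in full.
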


\chapter{Fourier-Deligne transform}\label{chap:fourier}

Our aim is to define a Springer correspondence modulo $\ell$.  The
fact that the Weyl group $W$ acts on the cohomology spaces
$H^i(\BC_x,\EM)$ is true for $\EM = \KM$, $\OM$ or $\FM$, with the
same proof. So we have ``Springer representations''. But the main
point, in characteristic zero, is that all $E \in \Irr \KM W$ appear in
such representations, and that we can associate to each $E$ a pair
$(\OC,\LC)$, where $\OC$ is a nilpotent orbit and $\LC$ is a
$G$-equivariant simple perverse sheaf on $\NC$.

With $\KM$ coefficients, there are many versions of the Springer
correspondence. We had to choose a version which can be adapted to
$\FM$ coefficients. The main obstacle is that Gabber's decomposition
theorem \cite{BBD} is no longer true, already for a finite \'etale
covering. In this case taking the direct image is like inducing from
$\FM H$ to $\FM K$ where $K$ is a finite group and $H$ is a subgroup
of $K$. If we start with the constant sheaf $\FM$, we get the local
system corresponding to the representation $\FM [K/H]$ of $K$, which is
not semi-simple if $\ell$ divides $|G:H|$.

However, the approach of Hotta-Kashiwara \cite{HK} uses a Fourier
transform for $\DC$-modules in the complex case, thus avoiding the
decomposition theorem. Brylinski 
\cite{BRY} adapted this method to the case of a base field of
characteristic $p$, with $\ell$-adic coefficients, using the Fourier-Deligne
transform. This transform actually makes sense over $\KM$, $\OM$ and $\FM$.
We are going to use it in chapter
\ref{chap:springer} to define a modular Springer correspondence. It
would be interesting to understand better the modular version of the
Lusztig-Bohro-MacPherson approach \cite{BM,LusGreen,ICC}, where a
restriction to the nilpotent variety is used. We hope that, in type
$A$, it will be possible to involve the Schur algebra.

In this chapter, we define Fourier-Deligne transforms and study their
properties, following Laumon \cite{Lau}, but with coefficients $\EM$
instead of $\ov\QM_\ell$. No particular difficulty arises.  We shall
give more details than in Laumon's article. In particular, when we
have a chain of functorial isomorphisms, we will justify each
step. However, we do not give a proof for the most difficult result,
Theorem \ref{th:oubli support} (SUPP), since it appears in \cite{KaLa}
with torsion coefficients.

The use of the Fourier-Deligne transform is the reason why we had to
consider a base field of characteristic $p$. Otherwise, we could have
used complex varieties instead. From now on, $k$ is the finite field
$\FM_q$. Remember that, in section \ref{sec:context}, we assumed that
all schemes we consider are varieties, that is, separated schemes of
finite type over $k$.

\section{Preliminaries}

Since we will justify each step in the proofs, we are going to use
abbreviations for the main theorems or properties that we use.

If we have a commutative square of varieties
\[
\xymatrix{
X'
\ar[r]^{f'}
\ar[d]_{\pi'}
\ar@{}[dr]|{\DS\boxempty}
&
X \ar[d]^\pi
\\
S' \ar[r]_f
& S
}
\]
then we have canonical isomorphisms
\begin{align*}
&&&&
f'^*\pi^* &\simeq \pi'^* f^* &&&\mathrm{COM^*(\boxempty)}
\\
&&&&
\pi_* f'_* &\simeq f_* \pi'_* &&&\mathrm{COM_*(\boxempty)}
\\
&&&&
\pi_! f'_! &\simeq f_! \pi'_! &&&\mathrm{COM_!(\boxempty)}
\\
&&&&
f'^!\pi^! &\simeq \pi'^! f^! &&&\mathrm{COM^!(\boxempty)}
\\
\end{align*}
For example, we have
$f'^*\pi^* \simeq (\pi \circ f')^* \simeq (f \circ \pi')^* \simeq \pi'^* f^*$,
and similarly for the other relations. We will use similar notations
for other kinds of diagrams, like triangles.

If moreover the square $\boxempty$ is cartesian, then we can apply the
Proper Base Change Theorem (because of the assumptions on the schemes
we made in \ref{sec:context}). Thus we have a canonical isomorphism
\begin{align*}
&&&&
f^* \pi_! &\simeq \pi'_! f'^* &&&\mathrm{PBCT(\boxempty)}
\end{align*}
In other words, the functor $\pi_!$ commutes with any base change.
This theorem is stated and proved in \cite[XVII, Th. 5.2.6]{SGA4} for
$\EM$-sheaves when $\EM$ is a finite ring of order prime to $p$. We
will freely use it for the derived categories $D^b_c(X,\EM)$, where
$\EM = \KM$, $\OM$ or $\FM$. Recall that we omit the $R$ for the
derived functors associated to a morphism.

The same remarks apply for the Projection Formula
\cite[XVII, Prop. 5.2.9]{SGA4}. Let $\pi : X \to S$ be a morphism.
Then we have a functorial isomorphism
\begin{align*}
&&&
L \otimes^\LM_\EM \pi_!(K) \simeq \pi_!\ (\pi^* L \otimes^\LM_\EM K) 
&&\mathrm{PROJ(\pi)}
\end{align*}
for $(K,L)$ in $D^b_c(X,\EM) \times D^b_c(S,\EM)$.

We also have a ``distributivity'' functorial isomorphism
\begin{align*}
&&&
\pi^*(K_1 \otimes^\LM_\EM K_2) \simeq \pi^* K_1 \otimes^\LM_\EM \pi^* K_2 
&&\mathrm{DISTR(\pi)}
\end{align*}
for $(K_1,K_2)$ in $D^b_c(S,\EM) \times D^b_c(S,\EM)$.

More generally, suppose we have a commutative diagram (with cartesian square)
\[
\xymatrix{
& Z
\ar@/_.4cm/[ddl]_{\pi_1}
\ar[d]_\pi
\ar@/^.4cm/[ddr]^{\pi_2}
\\
& X_1 \times_S X_2
\ar[dl]^{p_1}
\ar[dr]_{p_2}
\\
X_1 \ar[dr]^{q_1}
&&
X_2 \ar[dl]_{q_2}
\\
& S
}
\]
For $(K_1,K_2)$ in $D^b_c(X_1,\EM) \times D^b_c(X_2,\EM)$,
we set
\[
K_1 \boxtimes^\LM_S K_2 := p_1^* K_1 \otimes^\LM_\EM p_2^* K_2
\]

Then we have a functorial isomorphism
\[
\pi^* (K_1 \boxtimes^\LM_S K_2) \simeq \pi_1^* K_1 \otimes^\LM_\EM \pi_2^* K_2
\]
for $(K_1, K_2)$ in $D^b_c(X_1, \EM) \times D^b_c(X_2, \EM)$.
Indeed, we have
\[
\pi^* (K_1 \boxtimes^\LM_S K_2)
= \pi^* (p_1^* K_1 \otimes^\LM_\EM p_2^* K_2)
= \pi^* p_1^* K_1 \otimes^\LM_\EM \pi^* p_2^* K_2
= \pi_1^* K_1 \otimes^\LM_\EM \pi_2^* K_2
\]
(using the definition, DISTR$(\pi)$, and $\mathrm{COM^*}$ of the two triangles).

If we use the notation $\pi = (\pi_1,\pi_2)$, this can be rewritten as follows:
\begin{align*}
&&&&
(\pi_1,\pi_2)^* (K_1 \boxtimes^\LM_S K_2) \simeq \pi_1^* K_1 \otimes^\LM_\EM \pi_2^* K_2
&&&\mathrm{DISTR(\pi_1,\pi_2)}
\end{align*}

We will also need the Künneth formula \cite[XVII, Th. 5.4.3]{SGA4}.
Suppose we have a commutative diagram
\[
\xymatrix{
& Y_1 \times_S Y_2
\ar[dl]_{\pi_1}
\ar[d]^f
\ar[dr]^{\pi_2}
\\
Y_1
\ar[d]_{f_1}
& X_1 \times_S X_2
\ar[dl]^{p_1}
\ar[dr]_{p_2}
& Y_2
\ar[d]^{f_2}
\\
X_1 \ar[dr]^{q_1}
&& X_2\ar[dl]_{q_2}
\\
& S
}
\]
Then we have a functorial isomorphism
\begin{align*}
&&&&
f_{1!} K_1 \boxtimes^\LM_S f_{2!} K_2 \simeq f_! (K_1 \boxtimes^\LM_S K_2)
&&&\mathrm{KUNNETH}
\end{align*}
for $(K_1, K_2)$ in $D^b_c(Y_1, \EM) \times D^b_c(Y_2, \EM)$.

Finally, we will need the duality theorems \cite[II.7]{KW}
Let $f : Y \to X$ be a morphism. Then we have a functorial isomorphism
\begin{align*}
&&&&
\RHOM(f_! L, K) \simeq f_* \RHOM(L, f^! K)
&&&\mathrm{DUAL}(f)
\end{align*}
for $(K,L)$ in $D^b_c(X,\EM) \times D^b_c(Y,\EM)$.
As a consequence, we have the second duality theorem: we have a
functorial isomorphism
\begin{align*}
&&&&
f^! \RHOM(K_1, K_2) \simeq \RHOM(f^* K_1, f^! K_2)
&&&\mathrm{DUAL_2}(f)
\end{align*}

\section{Definition, first properties and examples}

\subsection{Definition}

Let us assume that $\EM^\times$ contains a primitive root of unity of order $p$.
We fix a non-trivial character $\psi : \FM_p \to \EM^\times$, that is,
a primitive root $\psi(1)$ of order $p$ in $\EM^\times$.
Composing with $\Tr_{\FM_q/\FM_p}$, we get a character of $\FM_q$.
Let $\LC_\psi$ be the locally constant $\EM$-sheaf of rank $1$ 
on $\GM_{a}$ associated to $\psi$ (the corresponding Artin-Schreier
local system). It is endowed with a ``rigidification'' at the origin:
\begin{align*}
&&&
(\LC_\psi)_{|0} \simeq \EM
&&&\mathrm{RIG}
\end{align*}
Moreover, we have an isomorphism
\begin{align*}
&&&
\LC_\psi \boxtimes^\LM \LC_\psi \simeq m^* \LC_\psi
&&&\mathrm{ADD}
\end{align*}
where $m : \AM^1 \times \AM^1 \to \AM^1$ is the addition morphism.
If $f,g : X \to \AM^1$ are two morphisms, we set
$f + g = m \circ(f,g)$. We have the following commutative diagram:

\[
\xymatrix{
& X
\ar@/_.5cm/[ddl]_f
\ar@/^.5cm/[ddr]^g
\ar[d]^{(f,g)}
\\
& \AM^1 \times \AM^1
\ar[dl]_{\pr_1}
\ar[dr]^{\pr_2}
\ar[d]^m
\\
\AM^1 & \AM^1 & \AM^1
}
\]

Then the following chain of isomorphisms:
\[
\begin{array}{rcll}
(f + g)^* \LC_\psi 
&\simeq& (f,g)^* m^* \LC_\psi
\\
&\simeq& (f,g)^* (\LC_\psi \boxtimes^\LM \LC_\psi)
&\text{by ADD}
\\
&\simeq& f^* \LC_\psi \otimes^\LM_\EM g^* \LC_\psi
&\text{by DISTR}(f,g)
\end{array}
\]
When we use this result, we will simply refer to it as ADD.

Now let $S$ be a variety, and $E\elem{\pi} S$
a vector bundle of constant rank $r \geqslant 1$.
We denote by $E' \elem{\pi'} S$ its dual vector bundle,
by $\mu : E \times_S E' \to \AM^1$ the canonical pairing,
and by $\pr : E \times_S E' \to E$ and $\pr' : E \times_S E' \to E'$
the canonical projections. So we have the following diagram.

\[
\xymatrix{
& E\times_S E'
\ar[dl]_\pr
\ar[dr]^{\pr'}
\ar[rr]^\mu 
&& \AM^1\\
E \ar[dr]_\pi 
\ar@{}[rr] | {\DS\boxempty}
&& E' \ar[dl]^{\pi'}\\
&S
}
\]

\begin{defi}
The Fourier-Deligne transform for $E \elem{\pi} S$, associated to the
character $\psi$, is the triangulated functor
\[
\FC_\psi : D^b_c(E,\EM) \longto D^b_c(E',\EM)
\]
defined by
\[
\FC_\psi(K) = {\pr'}_! (\pr^* K \otimes^\LM_\EM \mu^*\LC_\psi) [r]
\]
\end{defi}

In the sequel, we will drop the indices $\psi$ from the notations
$\FC_\psi$ and $\LC_\psi$ when no confusion may arise.

\subsection{First properties}

Let $E''\elem{\pi''}S$ be the bidual vector bundle of $E\elem{\pi}S$
and $a : E \elem{\sim}E''$ the $S$-isomorphism defined by $a(e) = -\mu(e,-)$
(that is, the opposite of the canonical $S$-isomorphism). We will denote by
$\s:S\to E$, $\s':S\to E'$ and $\s'':S\to E''$ the respective null sections
of $\pi$, $\pi'$ and $\pi''$. Finally, we will denote by
$s:E\times_S E\to E$ the addition of the vector bundle $E\elem{\pi}S$
and by $- 1_E:E\to E$ the opposite for this addition.

The following Proposition is the analogue of the fact that the Fourier transform
of the constant function is a Dirac distribution supported at the origin in
classical Fourier analysis. By the function/sheaf dictionary, this becomes
a functorial isomorphism, to which we will refer as DIRAC.

\begin{prop}\label{prop:dirac}
We have a functorial isomorphism 
\begin{align*}
&&&&
\FC(\pi^* L[r]) \simeq \s'_* L (-r)&&&\mathrm{(DIRAC)}
\end{align*}
for all objects $L$ in $D^b_c(S,\EM)$.
\end{prop}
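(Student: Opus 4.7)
The plan is to reduce the computation to an explicit determination of $\pr'_!(\mu^*\LC_\psi)$ on $E'$: once this object is known, the projection formula immediately yields the claimed isomorphism.

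Starting from the definition, the identity $\pi\circ\pr = \pi'\circ\pr'$ gives $\pr^*\pi^*L \simeq \pr'^*\pi'^*L$ by $\mathrm{COM}^*(\boxempty)$, and $\mathrm{PROJ}(\pr')$ then yields
\begin{equation}\label{eq:fourier-proj}
\FC(\pi^*L[r]) \simeq \bigl(\pi'^*L[r] \otimes^\LM_\EM \pr'_!(\mu^*\LC_\psi)\bigr)[r].
\end{equation}
It therefore suffices to prove the key lemma
\begin{equation}\label{eq:dirac-key}
\pr'_!(\mu^*\LC_\psi) \simeq \s'_*\,\EM_S(-r)[-2r].
\end{equation}

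To establish \eqref{eq:dirac-key}, I would split $E'$ into the zero section and its complement. The fiber of $\pr'$ over $e'\in E'$ lying above $s\in S$ is $E_s\simeq\AM^r$, on which $\mu^*\LC_\psi$ restricts to $\mu_{e'}^*\LC_\psi$, where $\mu_{e'}:=\mu(-,e')$ is a linear form. If $e'=\s'(s)$, then $\mu_{e'}\equiv 0$ and RIG identifies the restriction with the constant sheaf $\EM$; proper base change along $\s'$ combined with $\pi_!\EM_E\simeq\EM_S(-r)[-2r]$ yields $\s'^*\pr'_!(\mu^*\LC_\psi)\simeq \EM_S(-r)[-2r]$. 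If instead $e'\notin\s'(S)$, the linear form $\mu_{e'}$ is surjective; choosing coordinates on $E_s\simeq\AM^1\times\AM^{r-1}$ in which $\mu_{e'}$ is the first projection, $\mathrm{DISTR}$ identifies $\mu_{e'}^*\LC_\psi$ with $\pr_1^*\LC_\psi\otimes^\LM_\EM \pr_2^*\EM$, and Künneth gives
\[
\rgc(E_s,\mu_{e'}^*\LC_\psi) \simeq \rgc(\AM^1,\LC_\psi)\otimes^\LM_\EM \rgc(\AM^{r-1},\EM) = 0,
\]
since $\rgc(\AM^1,\LC_\psi)=0$ by the characteristic vanishing property of the Artin--Schreier sheaf attached to the non-trivial character $\psi$. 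Letting $j$ denote the open immersion $E'\setminus\s'(S)\hookrightarrow E'$, proper base change applied fiberwise shows $j^*\pr'_!(\mu^*\LC_\psi)=0$, so the recollement triangle $j_!j^*\to\id\to\s'_*\s'^*$ combined with the computation on the zero section yields \eqref{eq:dirac-key}.

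Substituting \eqref{eq:dirac-key} into \eqref{eq:fourier-proj} and collecting the shifts, $\mathrm{PROJ}(\s')$ together with $\s'^*\pi'^*\simeq\id$ produces
\[
\FC(\pi^*L[r]) \simeq \pi'^*L\otimes^\LM_\EM \s'_*\EM_S(-r) \simeq \s'_*(L(-r)),
\]
functorially in $L$. The main obstacle is the Artin--Schreier vanishing $\rgc(\AM^1,\LC_\psi)=0$: for $\EM\in\{\KM,\FM\}$ this is the classical computation, and for $\EM=\OM$ it follows from the $\FM$-case together with the fact that $\LC_\psi$ is $\OM$-torsion-free, via the modular reduction triangle. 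All remaining manipulations are purely formal within the six-functor formalism recalled at the beginning of the chapter, so the argument applies uniformly over each of $\KM$, $\OM$, $\FM$.
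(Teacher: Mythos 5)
Your proof is correct and follows essentially the same route as the paper: both reduce to the explicit computation $\pr'_!\:\mu^*\LC \simeq \s'_*\,\EM_S(-r)[-2r]$ (via the Künneth/Artin--Schreier vanishing off the zero section and proper base change plus RIG on the zero section), then apply $\mathrm{COM}^*(\boxempty)$ and the projection formula twice. The only difference is cosmetic reorganization — you isolate the key lemma up front and make the recollement triangle explicit where the paper writes it implicitly — plus a useful remark that the vanishing $\rgc(\AM^1,\LC_\psi)=0$ over $\OM$ reduces to the $\FM$-case.
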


\begin{proof}
We have the following diagram
\[
\xymatrix{
& \AM^1 &
\\
0 \ar@/^/[ur]^{i_0}
\ar@{}[urr] |(.2) {\DS{\boxempty_0}}
& E\times_S E'
\ar@/_/[dl]_\pr
\ar@/^/[dr]^{\pr'}
\ar[u]^\mu 
\\
E \ar@/_/[dr]_\pi
\ar@/_/[ur]_{(1_E,\s'\pi)} 
\ar[u]^{p_0}
\ar@{}[drr] |(.3){\DS{\boxempty_{\s'}}}
&& E' \ar@/^/[dl]^{\pi'}
\ar@{}[ull]|(.3) {\DS\boxempty}
\\
&S \ar@/^/[ur]^{\s'} &
}
\]
where $\boxempty$ denotes the cartesian square containing $\pr$ and $\pi'$,
and $\boxempty_{\s'}$ the cartesian square containing $(1_E,\s'\pi)$ and
$\s'$. The square $\boxempty_0$ containing $i_0$ and $(1_E,\s'\pi)$
is commutative. Moreover, $\s'$ is a section of $\pi'$, and
$(1_E,\s'\pi)$ is a section of $\pr$.

We begin by showing that $\pr'_!\:\mu^*\LC = \s'_* \EM_S [-2r] (-r)$.
Let us first prove that $\pr'_!\:\mu^*\LC$ is concentrated on the
closed subset $\s'(S)$ of $E'$.
Let $e' \in E' - \s'(S)$. Set $s = \pi'(e')$.
We have a commutative diagram with cartesian square
\[
\xymatrix{
& \AM^1\\
E_s\times e' \ar[d]_{\pr'_{e'}} \ar[r]_{i_{E_s \times e'}}
\ar[ur]^{\mu(-,e')}_{\DS{\D_{e'}}}
\ar@{}[dr] | {\DS{\boxempty_{e'}}}
& E\times_S E' \ar[d]^{\pr'} \ar[u]_\mu\\
e' \ar[r]_{i_{e'}} & E'
}
\]

Fix a basis $(e'_1,\ldots,e'_r)$ of $E'_s$ such that $e'_1 = e'$,
and let $(e_1,\ldots,e_r)$ be the dual basis of $E_s$. We have
\[
\begin{array}{rcll}
(\pr'_!\:\mu^*\LC)_{e'}
&=& \rgc(E_s \times e', i_{E_s \times e'}^* \mu^* \LC)
&\text{by PBCT}(\boxempty_{e'})\\
&=& \rgc(E_s,\mu(-,e')^*\LC)
&\text{by COM}^*(\D_{e'})\\
&=&
\rgc((\AM^1)^r,\LC\boxtimes^\LM \EM \boxtimes^\LM\cdots \boxtimes^\LM\EM)
&\text{by the choice of the basis}
\\
&=& \rgc(\AM^1,\LC) \otimes^\LM_\EM \rgc(\AM^1,\EM) \otimes^\LM_\EM
\cdots \otimes^\LM_\EM \rgc(\AM^1,\EM)
&\text{by KUNNETH}\\
&=& 0 
\end{array}
\]
since $\rgc(\AM^1,\LC) = 0$.
Thus $\pr'_!\:\mu^*\LC$ is concentrated in the closed subset
$\s'(S)$ of $E'$, and we have
\begin{equation}
\begin{array}{rcll}
\pr'_!\:\mu^*\LC 
&=& \s'_* \s'^* \pr'_!\:\mu^*\LC
&\text{by the above}
\\
&=& \s'_* \pi_! (1_E,\s'\pi)^* \mu^* \LC
&\text{by PBCT}(\boxempty_{\s'})
\\
&=& \s'_* \pi_! p_0^* i_0^* \LC
&\text{by COM}^*(\boxempty_0)
\\
&=& \s'_* \pi_! p_0^* \EM
&\text{by RIG}
\\
&=& \s'_* \pi_! \EM_E
\\
&=& \s'_* \EM_S [-2r] (-r)
\end{array}
\end{equation}

We are now ready to estimate the Fourier-Deligne transform of $\pi^* L[r]$.
\[
\begin{array}{rcll}
\FC(\pi^* L[r])
&=& \pr'_!\:(\pr^*\ \pi^*L[r] \otimes^\LM_\EM \mu^*\LC) [r]
&\text{by definition}
\\
&=& \pr'_!\:(\pr'^*\:\pi'^*L \otimes^\LM_\EM \mu^*\LC) [2r]
&\text{by COM}^*(\boxempty)
\\
&=& \pi'^* L \otimes^\LM_\EM \pr'_!\:\mu^*\LC [2r]
& \text{by PROJ}(\pr')
\\
&=& \pi'^* L \otimes^\LM_\EM \s'_* \EM_S (-r)
& \text{by the above}
\\
&=& \s'_* (\s'^* \pi'^* L \otimes^\LM_\EM \EM_S) (-r)
& \text{by PROJ}(\s')
\\
&=& \s'_* L (-r)
\end{array}
\]
\end{proof}

\begin{theo}\label{th:inv}
Let $\FC'$ be the Fourier-Deligne transform, associated to the character
$\psi$, of the vector bundle $E'\elem{\pi'}S$. Then we have a functorial
isomorphism
\begin{align*}
&&&&
\FC' \circ \FC (K) \simeq a_* K (-r)&&&\mathrm{(INV)}
\end{align*}
for all objects $K$ in $D^b_c(E,\EM)$.
\end{theo}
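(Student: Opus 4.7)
The plan is to mimic the computation in the proof of Proposition \ref{prop:dirac} (DIRAC), but now on the triple fiber product $E\times_S E'\times_S E''$, reducing the double Fourier transform to a single ``delta function'' supported on the graph of $a$. Unwinding the definitions and applying $\mathrm{PBCT}$ on the cartesian square
\[
\xymatrix{
E\times_S E'\times_S E'' \ar[r] \ar[d] \ar@{}[dr]|{\DS\boxempty} & E'\times_S E'' \ar[d] \\
E\times_S E'' \ar[r] & E''
}
\]
together with the K\"unneth/projection formulas and $\mathrm{ADD}$, we can rewrite $\FC'\FC(K)$ (up to the shift $[2r]$) as
\[
(\pi_{13})_!\,(p_{13})_!\bigl(q_1^*K\otimes F^*\LC_\psi\bigr),
\]
where $q_1, p_{13}, \pi_{13}$ are the evident projections of the triple product, and $F(e,e',e'')=\mu(e,e')+\mu'(e',e'')$. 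The sign convention $a(e)=-\mu(e,-)$ gives the key identity $F(e,e',e'')=\mu'(e',e''-a(e))$, which will let us factor the Artin--Schreier contribution through $\mu'$ alone.

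The main computation is then to evaluate $(p_{13})_!\,F^*\LC_\psi$. First, I would use the $S$-automorphism $\tau_{13}:(e,e'')\mapsto(e,e''-a(e))$ of $E\times_S E''$ and its lift $\tau$ to the triple product, which identifies $F$ with $F'\circ\tau$, where $F'(e,e',e'')=\mu'(e',e'')$ is independent of $e$. Using that $\tau$ is an isomorphism and that $F'$ factors through the projection to $E'\times_S E''$, $\mathrm{PBCT}$ on the cartesian square
\[
\xymatrix{
E\times_S E'\times_S E'' \ar[r] \ar[d]_{p_{13}} \ar@{}[dr]|{\DS\boxempty} & E'\times_S E''\ar[d]^{p^{E'}} \\
E\times_S E'' \ar[r] & E''
}
\]
reduces the computation to $p^{E'}_!\mu'^*\LC_\psi$, which is exactly the integral computed inside the proof of Proposition \ref{prop:dirac}, yielding $\s''_*\EM_S[-2r](-r)$. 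A final $\mathrm{PBCT}$ identifies its pullback to $E\times_S E''$ with $\s'_{E*}\EM_E[-2r](-r)$, supported on the zero section of the second factor; applying $\tau_{13}^*$ then moves this support to the graph $\g_a(E)\subset E\times_S E''$.

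Putting everything together, $(p_{13})_!\,(q_1^*K\otimes F^*\LC_\psi)$ equals $\g_{a*}K[-2r](-r)$ by the projection formula (since $\tilde q_1\circ\g_a=1_E$), so that
\[
\FC'\FC(K)\simeq(\pi_{13})_!\,\g_{a*}K\,(-r)\simeq a_*K\,(-r),
\]
using $\pi_{13}\circ\g_a=a$ and the fact that $a$ is an isomorphism (so $a_!=a_*$). The main technical obstacle is purely bookkeeping: keeping the sign conventions in the bidual identification $a$ consistent throughout, and verifying that all the shifts and Tate twists produce exactly $(-r)$ (not $(-2r)$ or $+r$) in the final answer. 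Everything else reduces to repeated applications of the standard six-functor formalism, which in the excerpt's setting works uniformly for $\EM\in\{\KM,\OM,\FM\}$.
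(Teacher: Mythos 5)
Your sketch is correct and follows essentially the same route as the paper: both reduce $\FC'\circ\FC$ to a computation on the triple fiber product $E\times_S E'\times_S E''$, exploit the identity $\mu\,\pr_{12}+\mu'\,\pr_{23}=\mu'\circ\a$ coming from the choice $a(e)=-\mu(e,-)$, and funnel the Artin--Schreier contribution through the DIRAC computation $\pr^{23}_{3!}\,\mu'^*\LC_\psi\simeq\s''_*\EM_S[-2r](-r)$ before cleaning up with PBCT and the projection formula along the graph $\d(e)=(e,a(e))$.

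The only genuine difference is organizational: the paper bakes the translation by $-a(e)$ directly into the base-change square via the maps $\a(e,e',e'')=(e',e''-a(e))$ and $\b(e,e'')=e''-a(e)$ (which form a cartesian square with $\pr_{13}$ and $\pr^{23}_3$), whereas you first apply the $S$-automorphism $\t_{13}:(e,e'')\mapsto(e,e''-a(e))$ to strip out the twist, run PBCT along the untwisted projection square, and only then reapply $\t_{13}^*$ to shift the support to the graph of $a$. Either formulation is fine; the paper's is slightly more economical because $\b$ already plays the role of $\t_{13}$ followed by projection. Your remark that the main pitfall is the bookkeeping of shifts and twists is accurate: note that the two $[r]$'s from the two Fourier transforms, the $[-2r]$ from DIRAC, and the $(-r)$ all have to cancel exactly to $(-r)$, as in the paper's computation.
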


\begin{proof}
We have a commutative diagram (with plain arrows) with cartesian
squares $\boxempty_{1,2,3}$
\[
\xymatrix{
E 
&& E\times_S E'' \ar@/_/[ll]_{\pr^{13}_1} \ar@/^/[rr]^{\pr^{13}_3}
\ar@/^-.5cm/@{-->}[rr]^\b
&& E''
\\
&\qquad \boxempty_1 && \boxempty_{3,3'}
\\
&& E\times_S E' \times_S E'' \ar@/^1cm/@{-->}[dr]^\a
\ar[uu]_{\pr_{13}}
\ar@/^/[dl]_{\pr_{12}}
\ar@/_/[dr]^{\pr_{23}}
\\
& E\times_S E' \ar@/^/[uuul]^{\pr^{12}_1} \ar@/_/[ddr]_{\pr^{12}_2}
&& E' \times_S E'' \ar@/_/[uuur]_{\pr^{23}_3} \ar@/^/[ddl]^{\pr^{23}_2}
\\
&&\boxempty_2
\\
&& E'
}
\]
and the square $\boxempty'_3$ containing the dashed arrows $\a$ and
$\b$ is also cartesian, with $\a(e,e',e'') = (e', e'' - a(e))$ and
$\b(e,e'') = e'' - a(e)$.  We have $\mu\:\pr_{12} + \mu'\:\pr_{23} =
\mu'\:\a$, where $\mu':E'\times_S E'' \to \AM^1$ is the canonical
pairing.  Indeed, we have
\[
\mu' \circ \a (e,e',e'') = \mu'(e',e''-a(e)) = \mu'(e',e'') - \mu'(e',a(e))
= \mu'(e',e'') + \mu(e,e')
\]

Therefore we have
\begin{align*}
\pr_{12}^* \: \mu^*\LC \otimes^\LM_\EM \pr_{23}^*\: \mu'^*\LC
&= (\mu\: \pr_{12})^* \LC \otimes^\LM_\EM (\mu'\: \pr_{23})^* \LC\\
&= (\mu\: \pr_{12} + \mu'\: \pr_{23})^* \LC &\text{by ADD}\\
&= (\mu'\: \a)^* \LC\\
&= \a^* \mu'^* \LC
\end{align*}

Let us now start to calculate $\FC' \circ \FC(K)$.
\[
\begin{array}{lll}
\multicolumn{2}{l}{\FC' \circ  \FC (K)}\\
=& {\pr^{23}_3}_{!}\ ({\pr^{23}_2}^* \  
{\pr^{12}_2}_{!}\ ({\pr^{12}_1}^* K \otimes^\LM_\EM \mu^*\LC)
\otimes^\LM_\EM \mu'^*\LC) [2r]
&\text{by definition}
\\
=& {\pr^{23}_3}_{!}\ ({\pr_{23}}_{!} \ {\pr_{12}}^*\
({\pr^{12}_1}^* K \otimes^\LM_\EM \mu^*\LC)
\otimes^\LM_\EM \mu'^*\LC) [2r]
&\text{by  PBCT}(\boxempty_2)
\\
=&
{\pr^{23}_3}_{!}\ ({\pr_{23}}_{!} \
({\pr_{12}}^* \  {\pr^{12}_1}^* K \otimes^\LM_\EM
{\pr_{12}}^* \  \mu^*\LC)
\otimes^\LM_\EM \mu'^*\LC) [2r]
&\text{by DISTR}
\\
=&
{\pr^{23}_3}_{!}\ ({\pr_{23}}_{!} \
({\pr_{13}}^* \  {\pr^{13}_1}^* K \otimes^\LM_\EM
{\pr_{12}}^* \  \mu^*\LC)
\otimes^\LM_\EM \mu'^*\LC) [2r]
&\text{by COM}^*(\boxempty_1)
\\
=&
{\pr^{23}_3}_{!}\  {\pr_{23}}_{!} \
(({\pr_{13}}^* \  {\pr^{13}_1}^* K \otimes^\LM_\EM
{\pr_{12}}^* \  \mu^*\LC)
\otimes^\LM_\EM {\pr_{23}}^*\ \mu'^*\LC) [2r]
&\text{by PROJ}(\pr_{23})
\\
=&
{\pr^{13}_3}_{!}\  {\pr_{13}}_{!} \
({\pr_{13}}^* \  {\pr^{13}_1}^* K \otimes^\LM_\EM
{\pr_{12}}^* \  \mu^*\LC
\otimes^\LM_\EM {\pr_{23}}^*\ \mu'^*\LC) [2r]
&\text{by COM}_!(\boxempty_3)
\\
=&
{\pr^{13}_3}_{!}\
({\pr^{13}_1}^* K \otimes^\LM_\EM
{\pr_{13}}_{!} \
({\pr_{12}}^* \  \mu^*\LC
\otimes^\LM_\EM {\pr_{23}}^*\ \mu'^*\LC)) [2r]
&\text{by PROJ}(\pr_{13})
\\
=& {\pr^{13}_3}_{!}\ ({\pr^{13}_1}^* K \otimes^\LM_\EM {\pr_{13}}_!\ \a^* \mu'^* \LC) [2r]
&\text{by the above}
\\
=& {\pr^{13}_3}_{!}\ ({\pr^{13}_1}^* K \otimes^\LM_\EM \b^* {\pr^{23}_3}_!\  \mu'^* \LC) [2r]
&\text{by PBCT}(\boxempty'_3)
\\
=& {\pr^{13}_3}_{!}\ ({\pr^{13}_1}^* K \otimes^\LM_\EM \b^* 
(\EM_{E''}[r] \otimes^\LM_\EM {\pr^{23}_3}_!\ \mu'^* \LC) [r])
\\
=& {\pr^{13}_3}_{!}\ ({\pr^{13}_1}^* K \otimes^\LM_\EM \b^* {\pr^{23}_3}_!\ 
({\pr^{23}_3}^* \EM_{E''}[r] \otimes^\LM_\EM \mu'^* \LC) [r])
&\text{by PROJ}(\pr^{23}_3)
\\
=& {\pr^{13}_3}_{!}\ ({\pr^{13}_1}^* K \otimes^\LM_\EM \b^* {\pr^{23}_3}_!\ 
(\EM_{E' \times_S E''}[r] \otimes^\LM_\EM \mu'^* \LC) [r])
\\
=& {\pr^{13}_3}_{!}\ ({\pr^{13}_1}^* K \otimes^\LM_\EM \b^* {\pr^{23}_3}_!\ 
({\pr^{23}_2}^* \EM_{E'}[r] \otimes^\LM_\EM \mu'^* \LC) [r])
\\
=& {\pr^{13}_3}_{!}\ ({\pr^{13}_1}^* K \otimes^\LM_\EM \b^*
\FC'(\EM_{E'}[r]) )
\\
=& {\pr^{13}_3}_{!}\ ({\pr^{13}_1}^* K \otimes^\LM_\EM \b^* \s''_* \EM_S (-r))
&\text{by DIRAC}
\end{array}
\]

Now, we have a cartesian square (which we will denote by $\boxempty$)
\[
\xymatrix{
E \ar[r]^\pi \ar[d]_\d & S \ar[d]^{\s^{''}}\\
E \times_S E'' \ar[r]^\b & E''
}
\]
where $\d(e) = (e,a(e))$, and $\s''$ (hence $\d$) is finite and proper since
it is a closed immersion. Thus we have
\[
\begin{array}{rcll}
\FC' \circ \FC (K)
&=& {\pr^{13}_3}_{!}\ ({\pr^{13}_1}^* K \otimes^\LM_\EM \d_* \pi^* \EM_S (-r))
&\text{by PBCT}(\boxempty)
\\
&=& {\pr^{13}_3}_{!}\ ({\pr^{13}_1}^* K \otimes^\LM_\EM \d_* \EM_E (-r))
\\
&=& {\pr^{13}_3}_{!}\ \d_* (\d^* {\pr^{13}_1}^* K \otimes^\LM_\EM \EM_E (-r))
&\text{by PROJ}(\d)
\\
&=& {\pr^{13}_3}_{!}\ \d_* (K(-r))
&\text{since } \pr^{13}_1 \circ \d = 1_E
\\
&=&  a_*(K(-r))
&\text{since } \pr^{13}_3 \circ \d = a
\end{array}
\]

The proof is complete.
\end{proof}

\begin{cor}\label{cor:equiv db}
The triangulated functor $\FC$ is an equivalence
of triangulated categories from
$D^b_c(E,\EM)$ to $D^b_c(E',\EM)$, with quasi-inverse
$a^* \FC'(-)(r)$.
\end{cor}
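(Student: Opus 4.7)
The plan is to deduce the corollary directly from Theorem \ref{th:inv} (INV), together with the observation that $a : E \isom E''$ is an isomorphism of $S$-schemes (so $a_* = a_!$ is an equivalence with quasi-inverse $a^*$), and that Tate twists are invertible.

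First I would establish that $a^*\FC'(-)(r)$ is a left quasi-inverse of $\FC$. Starting from the functorial isomorphism $\FC' \circ \FC(K) \simeq a_* K(-r)$ provided by INV, I apply $a^*(-)(r)$ on the left to obtain
\[
a^*\FC'\bigl(\FC(K)\bigr)(r) \simeq a^* a_* K(-r)(r) \simeq K,
\]
the last step using that $a$ is an isomorphism (so the adjunction unit $\id \to a^*a_*$ is an isomorphism) and that $(-r)(r) = \id$. All these isomorphisms are functorial in $K$, so we get an isomorphism of functors $a^*\FC'(-)(r) \circ \FC \simeq \id_{D^b_c(E,\EM)}$.

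For the other composition, I would invoke INV a second time, applied now to the dual vector bundle $E' \elem{\pi'} S$ in place of $E \elem{\pi} S$. Since the bidual of $E'$ is canonically identified with $E'$, and the map $a' : E' \to E''' = E'$ analogous to $a$ is simply $-1_{E'}$ under this identification, the second instance of INV produces a functorial isomorphism $\FC'' \circ \FC'(L) \simeq a'_* L(-r)$ for $L \in D^b_c(E',\EM)$. Under the identification $a : E \isom E''$, the functor $\FC''$ is naturally isomorphic to $a_* \circ \FC \circ a^*$ (one checks this from the definition, since $a$ intertwines the pairings $\mu$ and $\mu''$ up to sign, and $\LC_\psi$ is $(-1_{\AM^1})^*$-invariant up to the sign being absorbed). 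Chasing these identifications, one obtains $\FC \circ (a^*\FC'(-)(r)) \simeq \id_{D^b_c(E',\EM)}$ as functors.

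Together, the two compositions give the desired equivalence, with the explicit quasi-inverse $a^*\FC'(-)(r)$. The main obstacle, and the only genuinely delicate point, is the bookkeeping in the second step: one must verify carefully the compatibility between $\FC''$ and $\FC$ under the bidual identification $a$, taking into account the sign introduced in the definition of $a$ (so that $\mu''$ corresponds to $-\mu$). Once this sign is tracked correctly — using that $\LC_\psi$ on $\GM_a$ is insensitive to this sign modulo the appropriate twist, or alternatively by simply replacing $\psi$ by $\psi^{-1}$ throughout and noting the symmetry — the rest is formal manipulation with base change, projection formula, and the invertibility of $a_*$ and of Tate twists.
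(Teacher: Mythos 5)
Your approach is the right one, and the left quasi-inverse half is clean and correct. The right quasi-inverse half is also workable, but the sketch contains two slips worth fixing.

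First, the formula $\FC'' \simeq a_* \circ \FC \circ a^*$ does not typecheck: $\FC''$ goes from $D^b_c(E'',\EM)$ to $D^b_c(E''',\EM)$, so after conjugating by $a^* : D^b_c(E'',\EM) \to D^b_c(E,\EM)$ on the right you need $a'_*$ on the left, where $a' : E' \to E'''$ is the analogue of $a$ for the bundle $E'$. The correct identity is $\FC'' = a'_* \circ \FC \circ a^*$, equivalently $\FC \circ a^* = (a')^* \circ \FC''$. Second, the parenthetical remark that $\LC_\psi$ is $(-1_{\AM^1})^*$-invariant is false ($(-1)^*\LC_\psi = \LC_{\psi^{-1}}$), and in fact no such invariance is needed. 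The point is that $a(e) = -\mu(e,-)$ and $a'(e') = -\mu'(e',-)$ are \emph{both} defined with the extra minus sign, so the two signs cancel in the pairing compatibility: one checks $\mu''\bigl(a(e),\,a'(e')\bigr) = -\mu'(e',a(e)) = \mu(e,e')$. From this, $(a\times a')^*\mu''^*\LC_\psi \simeq \mu^*\LC_\psi$ on the nose, and transporting the defining formula of $\FC''$ across the isomorphism $a\times a' : E\times_S E' \isom E''\times_S E'''$ gives $\FC'' = a'_* \circ \FC \circ a^*$ with no residual twist. Then $\FC\bigl(a^*\FC'(L)(r)\bigr) = (a')^*\FC''\FC'(L)(r) \simeq (a')^*a'_*L(-r)(r) \simeq L$, and you are done.

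For what it is worth, there is a shorter route that avoids this bookkeeping entirely. Applying Theorem \ref{th:inv} (INV) first to $E$ and then to $E'$ shows that both $\FC$ and $\FC'$ admit left quasi-inverses: $G := a^*\FC'(-)(r)$ satisfies $G\circ\FC \simeq \id$, and $\FC'$ admits a left quasi-inverse $H'$ by the same token. Since $G$ is $\FC'$ composed with equivalences, $G$ also admits a left quasi-inverse $H$. Then $\FC \simeq (H\circ G)\circ\FC = H\circ(G\circ\FC) \simeq H$, so $\FC$ is both a left and right quasi-inverse of $G$, hence both are equivalences and $G = a^*\FC'(-)(r)$ is the quasi-inverse. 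This is purely formal and requires no comparison of $\FC''$ with $\FC$.
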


\begin{theo}\label{th:morphism}
Let $f : E_1 \to E_2$ be a morphism of vector bundles over $S$,
with constant ranks $r_1$ and $r_2$ respectively, and
let $f' : E'_2 \to E'_1$ denote the transposed morphism.
Then we have a functorial isomorphism
\begin{align*}
&&&&
\FC_2(f_! K_1) \simeq f'^* \FC_1(K_1)[r_2 - r_1]
&&&\mathrm{(MOR)}
\end{align*}
for $K_1$ in $D^b_c(E_1,\EM)$.
\end{theo}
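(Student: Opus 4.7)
The plan is to reduce the identity to a chain of functorial isomorphisms obtained by combining PBCT, the projection formula, DISTR, and the compatibility of the canonical pairings with $f$ and its transpose.

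First I would set up the diagram. Let $g = f \times 1_{E_2'} : E_1 \times_S E_2' \to E_2 \times_S E_2'$ and $h = 1_{E_1} \times f' : E_1 \times_S E_2' \to E_1 \times_S E_1'$, with projections $\pr_{i,j}, \pr'_{i,j}$ from $E_i \times_S E_j'$. One checks that the squares
\[
\xymatrix@=.8cm{
E_1 \times_S E_2' \ar[r]^{\pr_{1,2}} \ar[d]_g & E_1 \ar[d]^f \\
E_2 \times_S E_2' \ar[r]_{\pr_{2,2}} & E_2
}
\qquad\text{and}\qquad
\xymatrix@=.8cm{
E_1 \times_S E_2' \ar[r]^{\pr'_{1,2}} \ar[d]_h & E_2' \ar[d]^{f'} \\
E_1 \times_S E_1' \ar[r]_{\pr'_{1,1}} & E_1'
}
\]
are cartesian, and that $\pr'_{2,2}\circ g = \pr'_{1,2}$ and $\pr_{1,1}\circ h = \pr_{1,2}$. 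The crucial geometric input is the identity $\mu_2 \circ g = \mu_1 \circ h$, which is just the definition of the transposed morphism; it will yield $g^*\mu_2^*\LC \simeq h^*\mu_1^*\LC$ by a single application of COM$^*$.

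Next I would compute, starting from the definition of $\FC_2$:
\[
\FC_2(f_!K_1) = (\pr'_{2,2})_!\bigl(\pr_{2,2}^* f_!K_1 \otimes^{\LM} \mu_2^*\LC\bigr)[r_2].
\]
Applying PBCT to the first cartesian square rewrites $\pr_{2,2}^*f_!$ as $g_!\pr_{1,2}^*$. Then PROJ$(g)$ brings $g_!$ outside the tensor product, using $g^*\mu_2^*\LC \simeq h^*\mu_1^*\LC$ to produce
\[
(\pr'_{2,2}\circ g)_!\bigl(\pr_{1,2}^*K_1 \otimes^{\LM} h^*\mu_1^*\LC\bigr)[r_2]
= (\pr'_{1,2})_!\bigl(\pr_{1,2}^*K_1 \otimes^{\LM} h^*\mu_1^*\LC\bigr)[r_2].
\]
Rewriting $\pr_{1,2}^* = h^*\pr_{1,1}^*$ and using DISTR$(h)$ gathers the $h^*$ outside:
\[
(\pr'_{1,2})_!\, h^*\bigl(\pr_{1,1}^*K_1 \otimes^{\LM} \mu_1^*\LC\bigr)[r_2].
\]
Finally, PBCT applied to the second cartesian square gives $(\pr'_{1,2})_!\,h^* \simeq (f')^*(\pr'_{1,1})_!$, so the expression becomes $(f')^*\FC_1(K_1)[r_2-r_1]$, as desired.

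This is largely mechanical; there is no real obstacle beyond bookkeeping. The only point demanding care is verifying that each square invoked is genuinely cartesian (so that PBCT applies) and that the identity $\mu_2 \circ g = \mu_1 \circ h$ is used exactly once, in the step where one needs to transfer the Artin--Schreier local system from $E_2 \times_S E_2'$ to $E_1 \times_S E_1'$. Keeping track of the shift is straightforward: the $[r_2]$ comes from the definition of $\FC_2$, and extracting a factor $[-r_1]$ from $\FC_1(K_1)$ produces the announced degree shift $[r_2 - r_1]$.
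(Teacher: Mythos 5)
Your proof is correct and follows essentially the same approach as the paper's: both start from the definition, use PBCT to replace $\pr_{2,2}^* f_!$ by $g_!\,\pr_{1,2}^*$, apply the projection formula, invoke the adjunction identity $\mu_2 \circ g = \mu_1 \circ h$ to transfer $\LC$, distribute $h^*$, and finish with a second PBCT and a degree count. The only organizational difference is that the paper routes the final base change through the extra fiber product $E_2 \times_S E'_1$ (using two cartesian squares $\boxempty_1$, $\boxempty'_2$ and then simplifying the composite $p'_1 \circ (f\times 1_{E'_1}) = \pr'_1$), whereas you simplify $\pr'_{2,2}\circ g = \pr'_{1,2}$ up front and then use a single cartesian square with corners $E_1\times_S E'_2$, $E_1\times_S E'_1$, $E'_2$, $E'_1$; this skips one vertex and slightly shortens the bookkeeping, but the inputs and the shape of the argument are identical.
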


\begin{proof}
First, by adjunction of $f$ and $f'$, we have
\[
\mu_2 \circ (f \times_S 1_{E'_2}) = \mu_1 \circ (1_{E_1} \times_S f')
\]
where $\mu_i$, $i = 1,2$, is the pairing $E_i \times_S E'_i \to \AM^1$.
That is, the following diagram, which we will denote by $\boxempty$, is commutative.
\[
\xymatrix{
E_1 \times_S E'_2
\ar[r]^{1_{E_1} \times_S f'}
\ar[d]_{f \times_S 1_{E'_2}}
&
E_1 \times_S E'_1
\ar[d]^{\mu_1}
\\
E_2 \times_S E'_2
\ar[r]_{\mu_2}
&
\AM^1
}
\]

We also have a commutative diagram with cartesian squares and commutative triangles
\[
\xymatrix@=1cm{
&& E_1 \times_S E'_1
\ar@/^1.3cm/[ddrr]^{\pr'_1}_{\DS\D'}
\ar@/_1.3cm/[ddll]_{\pr_1}^{\DS\D}
\ar[dr]^{f \times_S 1_{E'_1}}
\\
& E_1 \times_S E'_2
\ar[ur]^{1_{E_1} \times_S f'}
\ar[dl]_{p_1}
\ar[dr]^{f \times_S 1_{E'_2}}
& \boxempty_1 & E_2 \times_S E'_1
\ar[dr]^{p'_1}
\\
E_1 \ar[dr]^f
& \boxempty_2
& E_2 \times_S E'_2
\ar[ur]^{1_{E_2} \times_S f'}
\ar[dl]^{\pr_2}
\ar[dr]_{\pr'_2}
& \boxempty'_2
& E'_1
\\
& E_2
&& E'_2
\ar[ur]^{f'}
}
\]

The result is proved by successive applications of the proper base change
theorem and of the projection formula, following the diagram:
\[
\begin{array}{cll}
\multicolumn{2}{l}{\FC_2(f_!\ K_1)}\\
=& {\pr'_2}_!\ ({\pr_2}^* f_!\ K_1 \otimes^\LM_\EM \mu_2^* \LC) [r_2]
&\text{by definition}
\\
=& {\pr'_2}_!\ ((f \times_S 1_{E'_2})_!\ {p_1}^* K_1 \otimes^\LM_\EM \mu_2^* \LC) [r_2]
&\text{by PBCT}(\boxempty_2)
\\
=& {\pr'_2}_!\ (f \times_S 1_{E'_2})_!\
    ({p_1}^* K_1 \otimes^\LM_\EM (f \times_S 1_{E'_2})^* \mu_2^* \LC) [r_2]
&\text{by PROJ}(f \times_S 1_{E'_2})
\\
=& {\pr'_2}_!\ (f \times_S 1_{E'_2})_!\
    ((1_{E_1} \times_S f')^* {\pr_1}^* K_1 \otimes^\LM_\EM (1_{E_1} \times_S f')^* \mu_1^* \LC) [r_2]
&\text{by COM}^*(\boxempty,\D)
\\
=& {\pr'_2}_!\ (f \times_S 1_{E'_2})_!\ (1_{E_1} \times_S f')^* 
    ({\pr_1}^* K_1 \otimes^\LM_\EM \mu_1^* \LC) [r_2]
&\text{by DISTR}(1_{E_1} \times_S f')
\\
=& f'^* {p'_1}_!\ (f \times_S 1_{E'_1})_!\ 
    ({\pr_1}^* K_1 \otimes^\LM_\EM \mu_1^* \LC) [r_2]
&\text{by PBCT}(\boxempty_1, \boxempty'_2)
\\
=& f'^* {\pr'_1}_!\ 
    ({\pr_1}^* K_1 \otimes^\LM_\EM \mu_1^* \LC) [r_2]
&\text{by COM}_!(\D')
\\
=& f'^*\: \FC_1(K_1) [r_2 - r_1]
\end{array}
\]

The proof is complete.
\end{proof}

\begin{cor}\label{cor:ev}
We have a functorial isomorphism
\begin{align*}
&&&&
\pi'_! \FC(K) \simeq \s^* K (-r)[-r]
&&&\mathrm{(EV)}
\end{align*}
for $K$ in $D^b_c(E,\EM)$.
\end{cor}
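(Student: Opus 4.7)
The plan is to prove the statement directly by unravelling the definition of $\FC$ and reducing to the ``symmetric'' version of the computation already carried out in the proof of Proposition \ref{prop:dirac}. The key geometric observation is that $\pi \circ \pr = \pi' \circ \pr'$ as morphisms $E \times_S E' \to S$, so that pushing the Fourier-Deligne transform all the way down to $S$ can be rerouted through $E$ rather than through $E'$.

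More precisely, using $\mathrm{COM}_!$ for the triangle $\pi' \pr' = \pi \pr$ I would first write
\[
\pi'_!\, \FC(K) \;=\; \pi'_!\, \pr'_!\bigl(\pr^* K \otimes^{\LM}_{\EM} \mu^*\LC\bigr)[r]
\;\simeq\; \pi_!\, \pr_!\bigl(\pr^* K \otimes^{\LM}_{\EM} \mu^*\LC\bigr)[r].
\]
Then I would apply $\mathrm{PROJ}(\pr)$ to pull $K$ out of the inner pushforward, obtaining
\[
\pi'_!\, \FC(K) \;\simeq\; \pi_!\bigl(K \otimes^{\LM}_{\EM} \pr_!\,\mu^*\LC\bigr)[r].
\]

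The heart of the argument is the identification
\[
\pr_!\,\mu^*\LC \;\simeq\; \s_*\,\EM_S[-2r](-r),
\]
which is the exact analogue, with the roles of $E$ and $E'$ exchanged, of the formula $\pr'_!\,\mu^*\LC \simeq \s'_*\EM_S[-2r](-r)$ established in the course of the proof of Proposition \ref{prop:dirac}. I would repeat that argument verbatim: for any $e \in E \setminus \s(S)$, choose a basis of the fiber $E'_{\pi(e)}$ whose first dual vector pairs non-trivially with $e$, apply $\mathrm{PBCT}$ to the cartesian square cutting out $\{e\}$, and invoke $\mathrm{KUNNETH}$ together with $\mathrm{R}\Gamma_c(\AM^1,\LC) = 0$ to conclude that the stalk vanishes; then use $\mathrm{PBCT}$ along the zero section $\s$ and $\mathrm{RIG}$ at the origin of $\AM^1$ to identify the stalk along $\s(S)$.

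Substituting this back and applying $\mathrm{PROJ}(\s)$ to move $\s^* K$ past $\s_*$, I would finally use $\pi \circ \s = \id_S$ to get
\[
\pi'_!\,\FC(K)
\;\simeq\; \pi_!\,\s_*(\s^* K)[-r](-r)
\;\simeq\; \s^* K\,(-r)[-r],
\]
which is the desired isomorphism. The only real content is the computation of $\pr_!\,\mu^*\LC$, and that is a straightforward transposition of the argument already in the text, so I do not expect any genuine obstacle; everything else is bookkeeping with the standard six-functor isomorphisms listed at the start of the chapter.
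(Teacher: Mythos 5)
Your proof is correct, but it takes a genuinely different route from the paper. The paper derives EV as a formal one-line consequence of Theorem \ref{th:morphism} (MOR), applied to the morphism $\pi' : E' \to S$ of vector bundles over $S$ whose transpose is $\s'' : S \to E''$, combined with Theorem \ref{th:inv} (INV): one gets $\pi'_!\,\FC(K) = \s''^*\,\FC'\,\FC(K)[-r] = \s^* a^* a_* K(-r)[-r] = \s^* K(-r)[-r]$. You instead reroute the pushforward to $S$ through $E$ rather than $E'$ using the identity $\pi\pr = \pi'\pr'$, pull $K$ out by the projection formula along $\pr$, and then redo the key vanishing argument from the proof of Proposition \ref{prop:dirac} with the roles of $E$ and $E'$ exchanged to show $\pr_!\,\mu^*\LC \simeq \s_*\,\EM_S[-2r](-r)$. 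Your computation of $\pr_!\,\mu^*\LC$ is sound: for $e \notin \s(S)$ the fiber is $\rgc$ of $\LC$ in one coordinate direction tensored with constants in the others, which vanishes by K\"unneth; and along $\s(S)$ the pairing restricted to the fiber is zero, so PBCT and RIG give $\pi'_!\,\EM_{E'} = \EM_S[-2r](-r)$. The remaining bookkeeping ($\s_! = \s_*$ since $\s$ is a closed immersion, $\pi\s = \id_S$) is correct. What you gain is a more self-contained, conceptually transparent proof that makes visible the duality between DIRAC (Fourier of the constant sheaf is a Dirac at zero) and EV (integral of the Fourier transform is the stalk at zero); what the paper's route buys is brevity and the systematic reuse of MOR, which is proved once and then drives several corollaries.
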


\begin{proof}
Consider the morphism $\pi' : E' \to S$ of vector bundles over $S$.
The transposed morphism is $\s'' : S \to E''$.  We can apply Theorem
\ref{th:morphism} (MOR) to $E_1 = E'$, $E_2 = S$, $f = \pi'$ and $K_1
= \FC(K)$. Then we have $\FC_1 = \FC'$ and $\FC_2 =
\id_{D^b_c(S,\EM)}$. We get
\[
\pi'_!\ \FC(K)
= \FC_2(\pi'_!\ \FC(K))
= {\s''}^* \FC' \circ \FC (K) [-r]
= \s^* a^* a_* K (-r) [-r]
= \s^* K (-r) [-r]
\]
\end{proof}

\begin{defi}
The convolution product for $E \elem{\pi} S$ is the internal operation
\[
* : D^b_c(E,\EM) \times D^b_c(E,\EM) \longto D^b_c(E,\EM)
\]
defined by
\[
K_1 * K_2 = s_! (K_1 \boxtimes^\LM_S K_2)
\]
\end{defi}

\begin{prop}\label{prop:conv}
We have a functorial isomorphism
\begin{align*}
&&&&
\FC(K_1 * K_2) \simeq \FC(K_1) \otimes^\LM_\EM \FC(K_2) [-r]
&&&\mathrm{(CONV)}
\end{align*}
for $(K_1,K_2)$ in $D^b_c(E,\EM) \times D^b_c(E,\EM)$.
\end{prop}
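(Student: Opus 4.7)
The plan is to factor the computation through the vector bundle $E \oplus_S E \to S$ of rank $2r$, exploit the fact that the addition $s \colon E \oplus_S E \to E$ is a morphism of vector bundles whose transpose is the diagonal $\Delta \colon E' \to E' \oplus_S E'$, and reduce the identity to a combination of the morphism formula (MOR), the addition formula (ADD) for the Artin-Schreier sheaf, and the Künneth formula.

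First I would apply Theorem \ref{th:morphism} (MOR) to the morphism $s \colon E \oplus_S E \to E$ of vector bundles over $S$, with ranks $r_1 = 2r$ and $r_2 = r$. Taking $K_1 \boxtimes^\LM_S K_2 \in D^b_c(E \oplus_S E, \EM)$ and noting that $K_1 * K_2 = s_!(K_1 \boxtimes^\LM_S K_2)$ by definition, MOR yields
\[
\FC(K_1 * K_2) \simeq \Delta^* \FC_{E \oplus_S E}(K_1 \boxtimes^\LM_S K_2) [-r],
\]
where $\FC_{E \oplus_S E}$ denotes the Fourier--Deligne transform of $E \oplus_S E \to S$. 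Thus the problem is reduced to identifying the Fourier transform on $E \oplus_S E$ of the external product.

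The second step is to establish a ``Fourier and external product'' formula, namely
\[
\FC_{E \oplus_S E}(K_1 \boxtimes^\LM_S K_2) \simeq \FC(K_1) \boxtimes^\LM_S \FC(K_2).
\]
This is where the heart of the argument lies. One uses the identification $(E \oplus_S E) \times_S (E' \oplus_S E') \simeq E \times_S E \times_S E' \times_S E'$ and the fact that the canonical pairing $\mu_{E \oplus E}$ decomposes as $\mu_1 + \mu_2$, where $\mu_i$ is the pullback of $\mu$ via the projection onto the $i$-th pair $E \times_S E'$. By ADD applied to the two morphisms $\mu_1, \mu_2 \colon E \times_S E \times_S E' \times_S E' \to \AM^1$, the pullback of $\LC$ along $\mu_{E \oplus E}$ splits as a tensor product $\mu_1^* \LC \otimes^\LM_\EM \mu_2^* \LC$. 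Unwinding the definitions of the external product and the Fourier transforms, a relative (over $S$) application of KUNNETH together with DISTR and PROJ lets one separate the two factors and recover $\FC(K_1) \boxtimes^\LM_S \FC(K_2)$, with the shift $[2r] = [r] + [r]$ coming out correctly.

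Finally, I would conclude by noting that, for any $L_1, L_2 \in D^b_c(E', \EM)$, the diagonal satisfies
\[
\Delta^*(L_1 \boxtimes^\LM_S L_2) = \Delta^*(\pr_1^* L_1 \otimes^\LM_\EM \pr_2^* L_2) \simeq L_1 \otimes^\LM_\EM L_2
\]
by DISTR$(\Delta)$. Combining this with the first two steps gives the announced isomorphism with shift $[-r]$. The main technical obstacle is the second step: one must keep careful track of the relative (over $S$) versions of KUNNETH and the proper base change across the several cartesian squares needed to move between $E \times_S E'$, $E \oplus_S E \times_S E' \oplus_S E'$, and their intermediate projections, and verify that the shifts accumulate correctly. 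Once this bookkeeping is in place, ADD does all the nontrivial work, just as it did in the proofs of DIRAC, INV and MOR.
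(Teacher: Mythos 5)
Your proposal is correct and takes essentially the same route as the paper: both arguments reduce the convolution via MOR applied to the addition $s\colon E\times_S E\to E$ (whose transpose is the diagonal), and both hinge on the same ``Fourier and external product'' identity $\FC_{E\times_S E}(K_1\boxtimes^\LM_S K_2)\simeq\FC(K_1)\boxtimes^\LM_S\FC(K_2)$, proved via ADD, KUNNETH, DISTR and the relevant base changes. The only difference is cosmetic ordering---you invoke MOR first and finish with $\mathrm{DISTR}(\Delta)$, while the paper proves the external product formula first and lets MOR absorb the final diagonal pullback.
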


\begin{proof}
We have a commutative diagram
\[
\xymatrix{
&& E \times_S E
\ar[dll]_{p_1}
\ar[drr]^{p_2}
\\
E
& \boxempty_1
& E \times_S E' \times_S E \times_S E'
\ar[dll]_{P_1}
\ar[drr]^{P_2}
\ar[u]^{P}
\ar[d]^{P'}
& \boxempty_2
& E
\\
E \times_S E'
\ar[u]^{\pr}
\ar[d]^{\pr'}
& \boxempty'_1
& E' \times_S E'
\ar[dll]_{p'_1}
\ar[drr]^{p'_2}
& \boxempty'_2
& E \times_S E'
\ar[u]^{\pr}
\ar[d]^{\pr'}
\\
E'
\ar[drr]_{\pi'}
&&&& E'
\ar[dll]^{\pi'}
\\
&& S
}
\]

Let $M : E \times_S E' \times_S E \times_S E' \to \AM^1$
be the pairing defined by $M(e_1, e'_1, e_2, e'_2) = \mu(e_1, e'_1) + \mu(e_2, e'_2)$.
Thus we have $M = \mu\: P_1 + \mu\: P_2$. By ADD, we deduce that
\[
M^* \LC = P_1^*\: \mu^* \LC \otimes^\LM_\EM P_2^*\: \mu^* \LC
\]

Let us still denote by $\FC$ the Fourier-Deligne transform for
$E \times_S E \elem{\pi \times_S \pi} S$. Then we have
\[
\FC(\Kti) = P'_!\ (P^*\Kti \otimes^\LM_\EM M^* \LC)
= P'_!\ (P^*\Kti \otimes^\LM_\EM P_1^*\: \mu^* \LC \otimes^\LM_\EM P_2^*\: \mu^* \LC)
\]
for $\Kti$ in $D^b_c(E \times_S E)$

We are now ready to prove the proposition. The crucial point relies on the K\"unneth formula.
\[
\begin{array}{lll}
\multicolumn{2}{l}{\FC(K_1) \boxtimes^\LM_S \FC(K_2)}\\
=& \pr'_!\ (\pr^* K_1 \otimes^\LM_\EM \mu^* \LC)
\boxtimes^\LM_S \pr'_!\ (\pr^* K_2 \otimes^\LM_\EM \mu^* \LC) [2r]
&\text{by definition}
\\
=& P'_!\ \left(
(\pr^* K_1 \otimes^\LM_\EM \mu^* \LC) \boxtimes^\LM_S (\pr^* K_2 \otimes^\LM_\EM \mu^* \LC)
\right) [2r]
&\text{by KUNNETH}
\\
=& P'_!\ \left(
P_1^*(\pr^* K_1 \otimes^\LM_\EM \mu^* \LC) \otimes^\LM_\EM P_2^*(\pr^* K_2 \otimes^\LM_\EM \mu^* \LC)
\right) [2r]
&\text{by definition}
\\
=& P'_!\ \left(
P_1^* \pr^* K_1 \otimes^\LM_\EM P_1^* \mu^* \LC \otimes^\LM_\EM P_2^* \pr^* K_2 \otimes^\LM_\EM P_2^* \mu^* \LC
\right) [2r]
&\text{by DISTR}(P_{1,2})
\\
=& P'_!\ \left(
P^* p_1^* K_1 \otimes^\LM_\EM P^* p_2^* K_2 \otimes
P_1^*\: \mu^* \LC \otimes^\LM_\EM P_2^*\: \mu^* \LC
\right) [2r]
&\text{by COM}^*(\boxempty_{1,2})
\\
=& P'_!\ \left(
P^* (p_1^* K_1 \otimes^\LM_\EM p_2^* K_2) \otimes
M^*\LC
\right) [2r]
&\text{by DISTR}(P)\text{ and the above}
\\
=& \FC(K_1 \boxtimes^\LM_S K_2)
\end{array}
\]

Now, we just need to apply Theorem \ref{th:morphism} (MOR) to the morphism
$s : E \times_S E \to E$ and the complex $K_1 \boxtimes^\LM_S K_2$.
So we take $E_1 = E \times_S E$, $E_2 = E$, $f = s$ and $K_1 := K_1 \boxtimes^\LM_S K_2$.
Let us remark that $s' : E' \to E' \times_S E'$ is the diagonal embedding $s'(e') = (e',e')$,
so that $p'_i \circ s' = 1_{E'}$ for $i = 1, 2$. We get
\[
\begin{array}{rcll}
\FC(K_1 * K_2)
&=& \FC(s_!\ (K_1 \boxtimes^\LM_S K_2))
&\text{by definition}
\\
&=& s'^* \FC(K_1 \boxtimes^\LM_S K_2) [-r]
&\text{by MOR}
\\
&=& s'^* (\FC(K_1) \boxtimes^\LM_S \FC(K_2)) [-r]
&\text{by the above}
\\
&=& s'^* ({p'_1}^* \FC(K_1) \otimes^\LM_\EM {p'_2}^* \FC(K_2)) [-r]
&\text{by definition}
\\
&=& s'^* {p'_1}^* \FC(K_1) \otimes^\LM_\EM s'^* {p'_2}^* \FC(K_2) [-r]
&\text{by DISTR}(s')
\\
&=& \FC(K_1) \otimes^\LM_\EM \FC(K_2) [-r]
&\text{since } p'_i \circ s' = 1_{E'},\ i = 1,2
\end{array}
\]

The proof is complete.
\end{proof}

\begin{prop}\label{prop:plancherel}
We have a ``Plancherel'' functorial isomorphism
\begin{align*}
&&&&
\pi'_! (\FC(K_1) \otimes^\LM_\EM \FC(K_2)) \simeq \pi_!(K_1 \otimes^\LM_\EM (-1_E)^* K_2)(-r)
&&&\mathrm{(PL)}
\end{align*}
for $(K_1,K_2)$ in $D^b_c(E,\EM) \times D^b_c(E,\EM)$.
\end{prop}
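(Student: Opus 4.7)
The plan is to combine the convolution theorem (CONV) of Proposition \ref{prop:conv} with the evaluation formula (EV) of Corollary \ref{cor:ev}, reducing the Plancherel isomorphism to a proper base change computation for the addition morphism $s$.

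First I would apply (CONV) to rewrite
$$\FC(K_1) \otimes^\LM_\EM \FC(K_2) \simeq \FC(K_1 * K_2)[r],$$
and then apply $\pi'_!$ and invoke (EV) to obtain
$$\pi'_!\bigl(\FC(K_1)\otimes^\LM_\EM\FC(K_2)\bigr)
\simeq \pi'_!\,\FC(K_1*K_2)[r]
\simeq \s^*(K_1*K_2)(-r).$$
It remains to identify $\s^*(K_1 * K_2) = \s^* s_!(K_1 \boxtimes^\LM_S K_2)$ with $\pi_!(K_1 \otimes^\LM_\EM (-1_E)^* K_2)$.

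For this, I would observe that the square
$$
\xymatrix{
E \ar[r]^-{(1_E,-1_E)} \ar[d]_{\pi} \ar@{}[dr]|{\DS\boxempty} & E\times_S E \ar[d]^{s} \\
S \ar[r]_-{\s} & E
}
$$
is cartesian, since $s(e_1,e_2)=\s\pi(e_1,e_2)$ forces $e_2=-e_1$. Applying PBCT$(\boxempty)$ gives $\s^* s_! \simeq \pi_!\,(1_E,-1_E)^*$, and then unwinding the external tensor product via DISTR$(1_E,-1_E)$ yields
$$(1_E,-1_E)^*(p_1^* K_1 \otimes^\LM_\EM p_2^* K_2) \simeq K_1 \otimes^\LM_\EM (-1_E)^* K_2,$$
so that $\s^*(K_1 * K_2) \simeq \pi_!(K_1 \otimes^\LM_\EM (-1_E)^* K_2)$. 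Assembling the two steps gives exactly (PL).

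There is no real obstacle here: the two substantial ingredients (CONV and EV) have already been proved, and the only new input is an elementary cartesian-square identification for the addition morphism, handled uniformly by proper base change and distributivity of $\otimes^\LM_\EM$ with respect to pullback. The Tate twist $(-r)$ comes out correctly because it is the one already present in (EV), and the shift $[-r]$ from (CONV) cancels the shift $[-r]$ from (EV).
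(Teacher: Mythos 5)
Your proof is correct and follows essentially the same route as the paper: apply CONV to pass to $\FC(K_1 * K_2)$, then EV, then unwind the convolution via the cartesian square for the addition $s$ against the zero section $\s$, using PBCT and DISTR; the shift and twist bookkeeping is also exactly as in the paper. No gaps.
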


\begin{proof}
We have a cartesian square (which we will denote by $\boxempty$)
\[
\xymatrix@=1.5cm{
E \ar[r]_-{(1_E, - 1_E)} \ar[d]_\pi
& E \times_S E \ar[d]^s
\\
S \ar[r]_\s
& E
}
\]

\[
\begin{array}{rcll}
\pi'_!\ (\FC(K_1) \otimes^\LM_\EM \FC(K_2))
&=& \pi'_!\ \FC(K_1 * K_2) [r]
&\text{by CONV}
\\
&=& \s^* (K_1 * K_2) (-r)
&\text{by EV}
\\
&=& \s^* s_!\ (K_1 \boxtimes^\LM_S K_2) (-r)
&\text{by definition}
\\
&=& \pi_!\ (1_E, - 1_E)^* (K_1 \boxtimes^\LM_S K_2) (-r)
&\text{by PBCT}(\boxempty)
\\
&=& \pi_!\ (K_1 \otimes^\LM_\EM (- 1_E)^* K_2)(-r)
&\text{by DISTR'}
\end{array}
\]
\end{proof}

\begin{prop}\label{prop:bc^*}
The formation of $\FC(K)$, for an object $K$ in $D^b_c(E,\EM)$, commutes
with any base change $S_1 \to S$. That is, if we fix the notations by
the diagram
\[
\xymatrix@=1cm{
&
E_1 \times_{S_1} E'_1
\ar[dl]^{\pr_1}
\ar[dr]_{\pr'_1}
\ar[drrr]^F
\ar[rrrrr]^{\mu_1}
&&&&&
\AM^1
\\
E_1
\ar[dr]_{\pi_1}
\ar[drrr]^(.3){f_E}
&&
E'_1
\ar[dl]_(.3){\pi'_1} |!{[ll];[dr]}\hole
\ar[drrr]^(.3){f_{E'}} |!{[rr];[dr]}\hole
&&
E \times_S E'
\ar[urr]_\mu
\ar[dl]^(.7)\pr
\ar[dr]^{\pr'}
\ar@{}[ul] | {\DS\D}
\\
&
S_1
\ar[drrr]_f
&& E \ar[dr]_\pi
&& E' \ar[dl]^{\pi'}
\\
&&&& S
}
\]
then we have a functorial isomorphism
\begin{align*}
&&&&
\FC_1 (f_E^* K) \simeq f_{E'}^* \FC(K)
&&&\mathrm{(BC^*)}
\end{align*}
for $K$ in $D^b_c(E,\EM)$,
where $\FC_1$ denotes the Fourier-Deligne transform associated to
$E_1 \elem{\pi_1} S$.
\end{prop}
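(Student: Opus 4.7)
The plan is to chase the definition of $\FC_1(f_E^* K)$ through the diagram, using only the proper base change theorem (PBCT), the distributivity of pullback over tensor product (DISTR), the compatibility of pullbacks with composition (COM$^*$), and the factorization $\mu_1 = \mu \circ F$.

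First I will verify that the square
\[
\xymatrix{
E_1 \times_{S_1} E'_1 \ar[r]^-F \ar[d]_{\pr'_1}
\ar@{}[dr]|{\DS\boxempty}
& E \times_S E' \ar[d]^{\pr'} \\
E'_1 \ar[r]_-{f_{E'}} & E'
}
\]
is cartesian. This is a general fact about fiber products: since $E_1 = S_1 \times_S E$ and $E'_1 = S_1 \times_S E'$, we have $E_1 \times_{S_1} E'_1 \simeq S_1 \times_S (E \times_S E')$, and this identifies $E_1 \times_{S_1} E'_1$ with the fiber product $E'_1 \times_{E'} (E \times_S E')$. Similarly, the triangle $\D$ in the displayed diagram is commutative (by construction of $F$ as the induced map on fiber products), so $\pr \circ F = f_E \circ \pr_1$.

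Next, starting from $f_{E'}^* \FC(K)$, I will apply the following chain of isomorphisms:
\[
\begin{array}{rcll}
f_{E'}^* \FC(K)
&=& f_{E'}^* \pr'_! (\pr^* K \otimes^\LM_\EM \mu^* \LC)[r]
& \text{by definition}\\
&\simeq& {\pr'_1}_! F^* (\pr^* K \otimes^\LM_\EM \mu^* \LC)[r]
& \text{by PBCT}(\boxempty)\\
&\simeq& {\pr'_1}_! (F^* \pr^* K \otimes^\LM_\EM F^* \mu^* \LC)[r]
& \text{by DISTR}(F)\\
&\simeq& {\pr'_1}_! (\pr_1^* f_E^* K \otimes^\LM_\EM \mu_1^* \LC)[r]
& \text{by COM}^*(\D) \text{ and } \mu\circ F=\mu_1\\
&=& \FC_1(f_E^* K)
& \text{by definition.}
\end{array}
\]

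The only nontrivial ingredient is PBCT applied to $\boxempty$; once cartesianness is established, the rest is a formal unwinding. The step $F^* \mu^* \LC \simeq \mu_1^* \LC$ reduces to checking the identity $\mu \circ F = \mu_1$ on $S_1$-points, which is immediate from the fact that $F$ is induced by the canonical identifications $E_1 \simeq f^*E$ and $E'_1 \simeq f^*E'$ and the pairing is defined fiberwise. I expect no real obstacle: the content of the proposition is essentially the proper base change theorem combined with the compatibility of the Artin-Schreier sheaf with pullback, and everything works uniformly for $\EM = \KM$, $\OM$ or $\FM$ since PBCT, DISTR and COM$^*$ all hold for our derived categories with these coefficients.
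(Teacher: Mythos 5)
Your proof is correct and follows essentially the same route as the paper's: PBCT on the cartesian square with $\pr'_1,F,\pr',f_{E'}$, DISTR for $F^*$, and COM$^*$ together with $\mu_1=\mu\circ F$ — you simply run the chain of isomorphisms from $f_{E'}^*\FC(K)$ down to $\FC_1(f_E^*K)$ rather than the reverse, and make explicit the (standard) verification that the $\pr'$-square is cartesian, which the paper leaves implicit in the diagram.
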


\begin{proof}
Let us denote by $\boxempty_{\pr}$ (respectively $\boxempty_{\pr'}$)
the cartesian square containing $F$ and $\pr$ (respectively $\pr'$).
Then we have
\[
\begin{array}{rcll}
\FC_1(f_E^* K)
&=& {\pr'_1}_!\ (\pr_1^* f_E^* K \otimes^\LM_\EM \mu_1^* \LC) [r]
&\text{by definition}
\\
&=& {\pr'_1}_!\ ( F^* \pr^* K \otimes^\LM_\EM F^* \mu^* \LC) [r]
&\text{by COM}^*(\boxempty_\pr, \D)
\\
&=& {\pr'_1}_!\ F^* (\pr^* K \otimes^\LM_\EM \mu^* \LC) [r]
&\text{by DISTR}(F)
\\
&=& f_{E'}^* {\pr'}_!\ (\pr^* K \otimes^\LM_\EM \mu^* \LC) [r]
&\text{by PBCT}(\boxempty_{\pr'})
\\
&=& f_{E'}^* \FC(K)
&\text{by definition}
\end{array}
\]
\end{proof}

\subsection{Examples}

\begin{prop}\label{prop:sub}
Let $i : F \hookrightarrow E$ be a sub-vector bundle over $S$, with constant
rank $r_F$. We denote by $i^\perp : F^\perp \hookrightarrow E'$ the orthogonal
of $F$ in $E'$. Then we have a canonical isomorphism
\begin{align*}
&&&&
\FC(i_* \EM_F [r_F]) \simeq i^\perp_* \EM_{F^\perp}(-r_F)[r - r_F]
&&&\mathrm{(SUB)}
\end{align*}
\end{prop}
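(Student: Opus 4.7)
The plan is to combine the morphism-of-bundles formula (MOR) with the Dirac formula (DIRAC) and proper base change (PBCT). First, since $i : F \hookrightarrow E$ is a closed immersion we have $i_* = i_!$, and moreover $\EM_F[r_F] = \pi_F^*\EM_S[r_F]$ where $\pi_F : F \to S$ is the structural projection. Thus the sheaf we must transform is $i_!\,\pi_F^*\EM_S[r_F]$.

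Next, I would apply Theorem~\ref{th:morphism} (MOR) to the morphism of vector bundles $i : F \to E$ over $S$, whose transpose is the dual surjection $i' : E' \twoheadrightarrow F'$ (with kernel precisely $F^\perp$ by definition of the orthogonal). This gives
\[
\FC_E\bigl(i_!\,\pi_F^*\EM_S[r_F]\bigr) \;\simeq\; {i'}^{*}\,\FC_F\bigl(\pi_F^*\EM_S[r_F]\bigr)\,[r-r_F].
\]
The Fourier-Deligne transform on the inside is handled by Proposition~\ref{prop:dirac} (DIRAC) applied to the bundle $F \to S$: it equals $\s'_{F\,*}\,\EM_S(-r_F)$, where $\s'_F : S \to F'$ is the zero section of the dual bundle.

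The last ingredient is to identify ${i'}^{*}\,\s'_{F\,*}\,\EM_S$ on $E'$. I would observe that the square
\[
\xymatrix{
F^\perp \ar[r]^{i^\perp} \ar[d]_{\pi_{F^\perp}} & E' \ar[d]^{i'}\\
S \ar[r]_{\s'_F} & F'
}
\]
is cartesian, since $F^\perp$ is by definition the scheme-theoretic kernel of $i'$, i.e.\ the preimage of the zero section. Because $\s'_F$ (hence $i^\perp$) is a closed immersion, $\s'_{F\,*} = \s'_{F\,!}$ and PBCT yields
\[
{i'}^{*}\,\s'_{F\,*}\,\EM_S \;\simeq\; i^\perp_{\,!}\,\pi_{F^\perp}^{*}\EM_S \;=\; i^\perp_{\,*}\,\EM_{F^\perp}.
\]
Splicing these isomorphisms together gives the claimed
\[
\FC\bigl(i_*\,\EM_F[r_F]\bigr) \;\simeq\; i^\perp_{\,*}\,\EM_{F^\perp}(-r_F)[r-r_F].
\]

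This argument is essentially formal once MOR, DIRAC and PBCT are in hand; no step is a real obstacle. The only point worth being careful about is the identification of the transposed morphism $i'$ with the dual projection $E' \to F'$ having kernel $F^\perp$, so that the cartesian square above is genuinely cartesian over every point of $S$; this is a direct consequence of the definition of $F^\perp$ via the pairing $\mu$. The twist $(-r_F)$ and the shift $[r-r_F]$ are then tracked automatically through the three formulas.
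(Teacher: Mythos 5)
Your proof is correct and follows exactly the same route as the paper: apply MOR to the inclusion $i : F \hookrightarrow E$ (reducing to a Fourier transform on $F$), compute that transform by DIRAC, and then identify $i'^*\,\s'_{F*}$ via PBCT on the cartesian square exhibiting $F^\perp$ as the fiber of $i' : E' \to F'$ over the zero section. The extra remark that $i_* = i_!$ and $\s'_{F*} = \s'_{F!}$ (closed immersions are proper) is a small point the paper leaves implicit but is correct and worth noting.
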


\begin{proof}
First remark that we have a cartesian square (which we will denote by $\boxempty$)
\[
\xymatrix{
F^\perp
\ar[r]^{\pi_{F^\perp}} 
\ar[d]_{i^\perp}
& S \ar[d]^{\s'_F}
\\
E' \ar[r]_-{i'}
& F' = E' / F^\perp
}
\]

Let us denote by $\FC_F$ the Fourier-Deligne transform associated to
$F \elem{\pi_F} S$. Then we have
\[
\begin{array}{rcll}
\FC (i_*\ \EM_F [r_F])
&=& i'^*\ \FC_F\ (\pi_F^*\ \EM_S\ [r_F])\ [r - r_F]
&\text{by MOR}
\\
&=& i'^*\ {\s'_F}_*\ \EM_S (-r_F)\ [r - r_F]
&\text{by DIRAC}
\\
&=& i^\perp_*\ {\pi_{F^\perp}}^* \EM_S (-r_F)\ [r - r_F]
&\text{by PBCT}(\boxempty)
\\
&=& i^\perp_*\ \EM_{F^\perp} (-r_F)\ [r - r_F]
\end{array}
\]
\end{proof}

\begin{prop}\label{prop:trans}
Let $e \in E(S)$ be a section of $E \elem{\pi} S$.
Let $\t_e : E \elem{\sim} E$ denote the translation by $e$.
Finally, let $\mu_e = \mu(e,-) : E' \to \AM^1$. 
Then we have a functorial isomorphism
\begin{align*}
&&&&
\FC({\t_{e}}_* K) \simeq \mu_e^* \LC \otimes^\LM_\EM \FC(K)
&&&\mathrm{(TRANS)}
\end{align*}
for $K$ in $D^b_c(E,\EM)$.
\end{prop}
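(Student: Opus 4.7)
The plan is to exploit the fact that $\t_e$ is an isomorphism (so $\t_{e*} = \t_{e!}$) together with the additive behaviour of the pairing $\mu$ under translation, and to combine these with proper base change, the projection formula, and the ADD property of $\LC$.

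First I would introduce the map $T := \t_e \times_S 1_{E'} : E\times_S E' \isom E\times_S E'$ and set up the cartesian square
\[
\xymatrix{
E\times_S E' \ar[r]^{T}\ar[d]_{\pr} & E\times_S E'\ar[d]^{\pr}\\
E \ar[r]_{\t_e} & E
}
\]
Then PBCT (equivalently, the fact that $T$ is an isomorphism and the square commutes) gives $\pr^*\,\t_{e*}K \simeq T_*\,\pr^*K$. Since $\pr'\circ T = \pr'$, we obtain $\pr'_!\,T_! = \pr'_!$, and because $T$ is an iso, $T_* = T_!$.

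Next comes the key geometric identity on the pairing: for $(x,x')\in E\times_S E'$,
\[
\mu\circ T(x,x') = \mu(x+e,\,x') = \mu(x,x') + (\mu_e\circ \pr')(x,x'),
\]
so $\mu\circ T = \mu + \mu_e\circ \pr'$ as morphisms to $\AM^1$. Applying ADD yields
\[
T^*\,\mu^*\LC \;\simeq\; \mu^*\LC \otimes^{\LM}_{\EM} \pr'^{*}\,\mu_e^*\LC.
\]
Putting these pieces together, starting from the definition,
\begin{align*}
\FC(\t_{e*}K)
&= \pr'_!\bigl(\pr^*\,\t_{e*}K \otimes^{\LM}_{\EM} \mu^*\LC\bigr)[r] \\
&\simeq \pr'_!\bigl(T_*\,\pr^*K \otimes^{\LM}_{\EM} \mu^*\LC\bigr)[r]
&& \text{(PBCT)}\\
&\simeq \pr'_!\,T_!\bigl(\pr^*K \otimes^{\LM}_{\EM} T^*\mu^*\LC\bigr)[r]
&& \text{(PROJ$(T)$)}\\
&\simeq \pr'_!\bigl(\pr^*K \otimes^{\LM}_{\EM} \mu^*\LC \otimes^{\LM}_{\EM} \pr'^{*}\mu_e^*\LC\bigr)[r]
&& \text{(ADD)}\\
&\simeq \mu_e^*\LC \otimes^{\LM}_{\EM} \pr'_!\bigl(\pr^*K \otimes^{\LM}_{\EM} \mu^*\LC\bigr)[r]
&& \text{(PROJ$(\pr')$)}\\
&= \mu_e^*\LC \otimes^{\LM}_{\EM} \FC(K).
\end{align*}

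There is no real obstacle: the statement is essentially a formal consequence of the additivity of $\mu$ under $\t_e$ and of the standard functorialities. The only minor point to be careful about is checking the commutative/cartesian diagrams that underlie each invocation of PBCT and PROJ, in particular that $\pr'\circ T = \pr'$ and that $T$ is a (global) isomorphism so that $T_*=T_!$ makes the collapse $\pr'_!\,T_!=\pr'_!$ legitimate. Functoriality in $K$ is automatic since every step is a natural isomorphism.
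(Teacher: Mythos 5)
Your proof is correct, but it follows a genuinely different route from the paper's. The paper identifies $\t_{e*}K$ with the convolution $(e_*\EM_S) * K$, computes $\FC(e_*\EM_S) = \mu_e^*\LC[r]$ separately, and then invokes CONV (Fourier sends convolution to tensor product). You instead unwind the definition of $\FC$ directly: you transfer the translation to $E\times_S E'$ via $T = \t_e\times_S 1_{E'}$, use the translation-additivity of the pairing $\mu\circ T = \mu + \mu_e\circ\pr'$ together with ADD, and then apply PBCT, PROJ$(T)$, the collapse $\pr'_!T_! = \pr'_!$ (from $\pr'\circ T = \pr'$), and PROJ$(\pr')$. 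Both are sound. The paper's approach is more conceptual — it exhibits the classical picture of translation as convolution with a Dirac at $e$, and leverages the already-established fact that Fourier turns convolution into product — whereas yours is more self-contained and elementary, bypassing CONV entirely at the cost of being a bit more computational. One minor remark: your invocation of PBCT for $\pr^*\,\t_{e*}K \simeq T_*\pr^*K$ is fine, but since $\t_e$ and $T$ are isomorphisms it amounts to nothing more than $\t_{e*} = (\t_e^{-1})^*$, $T_* = (T^{-1})^*$ and the identity $\pr\circ T^{-1} = \t_e^{-1}\circ\pr$, as you note parenthetically.
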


\begin{proof}
Let us first show that ${\t_e}_* K = (e_* \EM_S) * K$.
We have a commutative diagram
\[
\xymatrix{
& E \times_S E
\ar[dl]^{p_1}
\ar@/_/[dr]_{p_2}
\ar[rr]^s
&& E
\\
E
\ar@/_/[dr]_\pi
&& E
\ar[dl]^\pi
\ar@/_/[ul]_(.4){(e\pi, 1_E)}
\ar[ur]_{\t_e}
\\
& S \ar@/_/[ul]_e
}
\]
We denote by $\boxempty_e$ the cartesian square containing $\pi$ and $e$, and by
$\D_e$ the commutative triangle corresponding to the relation $\t_e = s \circ (e\pi, 1_E)$.
Now we have
\[
\begin{array}{cll}
\multicolumn{2}{l}{(e_* \EM_S) * K}\\
=& s_!\ (e_* \EM_S \boxtimes^\LM_S K)
&\text{by definition}
\\
=& s_!\ (p_1^* e_* \EM_S \otimes^\LM_\EM p_2^* K)
&\text{by definition}
\\
=& s_!\ ((e\pi, 1_E)_* \pi^* \EM_S \otimes^\LM_\EM p_2^* K)
&\text{by PBCT}(\boxempty_e)
\\
=& s_!\ (e\pi, 1_E)_* (\pi^* \EM_S \otimes^\LM_\EM (e\pi, 1_E)^* p_2^* K)
&\text{by PROJ}((e\pi, 1_E))
\\
=& {\t_e}_* K
&\text{by COM}_!(\D_e) \text{ and }p_2 (e\pi, 1_E) = 1_E
\end{array}
\]

Secondly, let us show that $\FC(e_* \EM_S) = \mu_e^* \LC [r]$.
We have a commutative diagram
\[
\xymatrix{
& E \times_S E'
\ar[dl]^\pr
\ar@/_/[dr]_{\pr'}
\ar[rr]^\mu
&& \AM^1
\\
E
\ar@/_/[dr]_\pi
&& E'
\ar[dl]^{\pi'}
\ar@/_/[ul]_(.4){(e\pi', 1_{E'})}
\ar[ur]_{\mu_e}
\\
& S \ar@/_/[ul]_e
}
\]
We denote by $\boxempty'_e$ the cartesian square containing $\pi'$ and $e$, and by 
$\D'_e$ the commutative triangle corresponding to the relation
$\mu_e = \mu \circ (e\pi', 1_{E'})$.
Now we have
\[
\begin{array}{cll}
\multicolumn{2}{l}{\FC(e_* \EM_S)}\\
=& \pr'_!\ (\pr^* e_* \EM_S \otimes^\LM_\EM \mu^* \LC) [r]
&\text{by definition}
\\
=& \pr'_!\ ((e\pi', 1_{E'})_* \pi'^* \EM_S \otimes^\LM_\EM \mu^* \LC) [r]
&\text{by PBCT}(\boxempty'_e)
\\
=& \pr'_!\ (e\pi', 1_{E'})_* (\EM_{E'} \otimes^\LM_\EM (e\pi', 1_{E'})^* \mu^* \LC) [r]
&\text{by PROJ}((e\pi', 1_{E'}))
\\
=& \mu_e^* \LC [r]
&\text{by COM}_!(\D'_e) \text{ and } \mu (e\pi', 1_{E'}) = \mu_e
\end{array}
\]

Finally, the result follows by Proposition \ref{prop:conv} (CONV).
\[
\FC({\t_e}_* K)
= \FC((e_* \EM_S) * K)
= \FC(e_* \EM_S) \otimes^\LM_\EM \FC(K) [-r]
= \mu_e^* \LC \otimes^\LM_\EM \FC(K)
\]
\end{proof}

\begin{prop}\label{prop:G eq}
Let $G$ be a smooth affine group scheme over $S$, acting linearly on
the vector bundle $E \elem{\pi} S$, let $K$ and $L$ be two objects in
$D^b_c(E,\EM)$, and let $M$ be an object in $D^b_c(G,\EM)$. We denote
by $m : G \times_S E \to E$ the action of $G$ on $E$, and by $m' : G
\times_S E' \to E'$ the contragredient action, defined by $m'(g,e') =
{}^t g^{-1}.e'$. Then each isomorphism
\[
m^* K \simeq M \boxtimes^\LM_S L
\]
in $D^b_c(G\times_S E, \EM)$ induces canonically an isomorphism
\begin{align*}
&&&&
m'^*\FC(K) \simeq M \boxtimes^\LM_S \FC(L)
&&&(G \mathrm{-EQ})
\end{align*}
in $D^b_c(G\times_S E', \EM)$.
\end{prop}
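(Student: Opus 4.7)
The plan is to pull the Fourier--Deligne transform back along $p\colon G\to S$ and exploit (MOR) applied to a suitable isomorphism of vector bundles over $G$. Let $\FC_G$ denote the Fourier--Deligne transform for the pulled-back vector bundle $p^*E = G\times_S E \to G$, so that its dual is $p^*E' = G\times_S E'$. The action morphism factors as $m = p_E\circ \tilde m$, where $\tilde m\colon G\times_S E\to G\times_S E$ is the $G$-isomorphism of vector bundles defined by $\tilde m(g,e)=(g,g.e)$ and $p_E$ is the second projection; likewise $m' = p_{E'}\circ \widetilde{m'}$ with $\widetilde{m'}(g,e')=(g,{}^t g^{-1}\!\cdot\! e')$.

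First I would check, by a fiberwise linear-algebra computation using $\langle g.e,e'\rangle = \langle e, {}^t g\!\cdot\! e'\rangle$, that the transpose $(\tilde m)^{\prime}$ of $\tilde m$ as a morphism of vector bundles over $G$ coincides with $\widetilde{m'}^{-1}$. Then apply (MOR) to the isomorphism $\tilde m^{-1}$ (so that $r_1=r_2=r$ and no shift appears): since $(\tilde m^{-1})_!\, p_E^*K = \tilde m^*p_E^*K = m^*K$, this yields
\[
\FC_G(m^*K) \simeq \widetilde{m'}{}^*\,\FC_G(p_E^*K).
\]
Combined with (BC$^*$), which gives $\FC_G(p_E^*K)\simeq p_{E'}^*\FC(K)$, and with the identity $p_{E'}\circ\widetilde{m'} = m'$, this produces a canonical isomorphism
\[
\FC_G(m^*K) \simeq m'^*\FC(K).
\]

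Next I would treat the right-hand side. Writing the definition of $\FC_G$ over $G$ with projections $\pr_G,\pr'_G\colon G\times_S E\times_S E'\rightrightarrows G\times_S E,\ G\times_S E'$, the equality $p_G\circ\pr_G = p_G\circ\pr'_G$ together with (PROJ) for $\pr'_G$ and (DISTR) give, for any $N\in D^b_c(G\times_S E,\EM)$, a canonical isomorphism
\[
\FC_G(p_G^*M \otimes^\LM_\EM N) \simeq p_G^*M \otimes^\LM_\EM \FC_G(N).
\]
Applied to $N = p_E^*L$ and combined once again with (BC$^*$), this gives
\[
\FC_G(M\boxtimes^\LM_S L) \simeq M\boxtimes^\LM_S \FC(L).
\]

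Finally, applying $\FC_G$ to the given isomorphism $m^*K \simeq M\boxtimes^\LM_S L$ and substituting the two computations above yields the desired $m'^*\FC(K)\simeq M\boxtimes^\LM_S\FC(L)$, which is manifestly canonical since each step is. The one point requiring actual verification, rather than a mechanical application of the earlier compatibilities, is the identification of the transpose of $\tilde m$ with $\widetilde{m'}^{-1}$; everything else is a formal diagram chase combining (MOR), (BC$^*$) and (PROJ).
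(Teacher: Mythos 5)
Your proof is correct and follows essentially the same route as the paper. The first half is identical in substance: you apply (MOR) to the $G$-isomorphism of vector bundles $\tilde m^{-1} = (p_G,m)^{-1}$, whose transpose you correctly identify as $\widetilde{m'} = (p'_G,m')$ via the relation $\langle g.e,e'\rangle = \langle e, {}^t g\cdot e'\rangle$, and combine with (BC$^*$) to obtain $\FC_G(m^*K)\simeq m'^*\FC(K)$; this is exactly what the paper does (up to a typo in the paper's text, which writes $E'$ where it should write $E$ as the bundle on which $(p_G,m)^{-1}$ acts). The only variation is in the second half: you derive a ``projection formula'' for $\FC_G$ directly from (PROJ), (DISTR), and the compatibility $p_G\circ\pr_G = p'_G\circ\pr'_G$, whereas the paper invokes (KUNNETH). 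Since the paper's application of (KUNNETH) has one of the two factors equal to the identity $1_G$, it amounts to the same projection-formula argument (packaged together with a proper base change that your version avoids); both are valid and about equally elementary. One small notational slip on your side: in the displayed formula $\FC_G(p_G^*M\otimes N)\simeq p_G^*M\otimes\FC_G(N)$, the second occurrence of $p_G^*M$ should be $p'_G{}^*M$ (pullback along the projection $G\times_S E'\to G$), since the output lives on $G\times_S E'$; this is clear from context and does not affect the argument.
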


\begin{proof}

We have a base change diagram
{\scriptsize
\[
\xymatrix@=.8cm{
&
G \times_S E \times_S E'
\ar[dl]^{(1_G,\pr)}
\ar[dr]_{(1_G,\pr')}
\ar[drrr]^P
\ar[rrrrr]^{\mu_G}
&&&&&
\AM^1
\\
G \times_S E
\ar[dr]_{p_G}
\ar[drrr]^(.3){p_E}
&&
G \times_S E'
\ar[dl]_(.3){p'_G} |!{[ll];[dr]}\hole
\ar[drrr]^(.3){p_{E'}} |!{[rr];[dr]}\hole
&&
E \times_S E'
\ar[urr]_\mu
\ar[dl]^(.7)\pr
\ar[dr]^{\pr'}
\ar@{}[ul] | {\DS\D}
\\
&
G
\ar[drrr]_{\pi_G}
&& E \ar[dr]_\pi
&& E' \ar[dl]^{\pi'}
\\
&&&& S
}
\]
}
and commutative triangles
\[
\xymatrix{
G \times_S E 
\ar[rr]^{(p_G, m)}
\ar[dr]_m
\ar@{}[drr] | {\DS{\D_G}}
&& G \times_S E
\ar[dl]^{p_E}
& \txt{and}
& G \times_S E' 
\ar[rr]^{(p'_G, m')}
\ar[dr]_{m'}
\ar@{}[drr] | {\DS{\D'_G}}
&& G \times_S E'
\ar[dl]^{p_{E'}}
\\
& E &&
&& E'&
}
\]

We will use Theorem \ref{th:morphism} (MOR) for the morphism of
$G$-vector bundles
\[
(p_G, m)^{-1} : G \times_S E' \longto G \times_S E'
\]
whose transposed morphism is
\[
(p'_G, m') : G \times_S E \longto G \times_S E
\]
Both are isomorphisms. We will use the fact that the functor
$(p_G, m)^{-1}_!$ is an equivalence, isomorphic to $(p_G, m)^{-1}_*$
and to $(p_G, m)^*$.

\[
\begin{array}{rcll}
m'^* \FC(K)
&=& (p'_G, m')^* p_{E'}^* \FC(K)
&\text{by COM}^*(\D'_G)
\\
&=& (p'_G, m')^* \FC_{G\times_S E} (p_E^* K)
&\mathrm{by\ BC^*}
\\
&=& \FC_{G\times_S E} ((p_G, m)^{-1}_! p_E^* K)
&\text{by MOR}
\\
&=& \FC_{G\times_S E} ((p_G, m)^* p_E^* K)
&\text{by the above}
\\
&=& \FC_{G\times_S E} (m^* K)
&\text{by COM}^*(\D_G)
\end{array}
\]

Now assume we are given an isomorphism $\phi : m^* K \elem{\sim} M \boxtimes^\LM_S L$.
Then $\phi$ induces an isomorphism
\[
\begin{array}{rcll}
\FC_{G\times_S E} (m^* K)
&\simeq& \FC_{G\times_S E} (M \boxtimes^\LM_S L)
&\text{induced by } \phi
\\
&=& (1_G, \pr')_!\ ((1_G, \pr)^* (M \boxtimes^\LM_S L) \otimes^\LM_\EM P^* \mu^* \LC) [r]
&\text{by COM}^*(\D)
\\
&=& (1_G, \pr')_!\ 
((M \boxtimes^\LM_S \pr^* L) \otimes^\LM_\EM (\EM_G \boxtimes^\LM_S \mu^* \LC)) [r]
&\text{by DISTR}(1_G, \pr)
\\
&=& (1_G, \pr')_!\ (M \boxtimes^\LM_S (\pr^* L \otimes^\LM_\EM \mu^* \LC)) [r]
\\
&=& M \boxtimes^\LM_S \pr'_!\ (\pr^* L \otimes^\LM_\EM \mu^* \LC) [r]
&\text{by KUNNETH}
\\
&=& M \boxtimes^\LM_S \FC(L)
\end{array}
\]

We have applied the K\"unneth formula to the following diagram.
\[
\xymatrix@!=1.5cm{
& G \times_S E \times_S E'
\ar[dl]_{P_G}
\ar[d]^(.6){(1_G, \pr')}
\ar[dr]^P
\\
G \ar[d]_{1_G}
&
G \times_S E'
\ar[dl]^{p'_G}
\ar[dr]_{p_{E'}}
&
E \times_S E'
\ar[d]^{\pr'}
\\
G \ar[dr]_{\pi_G}
&&
E' \ar[dl]^{\pi'}
\\
& S
}
\]
\end{proof}

\begin{prop}\label{prop:bc_!}
Let $f : S_1 \to S$ be an $\FM_q$-morphism of finite type. With the notations of
Proposition \ref{prop:bc^*} $\mathrm{(BC^*)}$, we have a functorial isomorphism
\begin{align*}
&&&&
\FC({f_E}_!\ K_1) \simeq {f_{E'}}_!\ \FC_1(K_1)
&&&\mathrm{(BC_!)}
\end{align*}
for $K_1$ in $D^b_c(E_1,\EM)$.
\end{prop}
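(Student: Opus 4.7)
The plan is to mimic the proof of $\mathrm{BC}^*$ (Proposition \ref{prop:bc^*}), but using $!$-functoriality instead of $*$-functoriality. The whole argument should be a short chain of canonical isomorphisms obtained by unfolding the definition of $\FC$, applying the proper base change theorem, the projection formula, and then the compatibilities encoded in the diagram of Proposition \ref{prop:bc^*}.

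Concretely, I would proceed as follows. Start from the definition $\FC({f_E}_! K_1) = \pr'_!\,(\pr^*\,{f_E}_! K_1 \otimes^\LM_\EM \mu^*\LC)[r]$. The cartesian square $\boxempty_{\pr}$ in the diagram of Proposition \ref{prop:bc^*} (the one containing $F$ and $\pr$, with horizontal arrows $f_E, F$ and vertical arrows $\pr, \pr_1$) lets me apply $\mathrm{PBCT}(\boxempty_\pr)$ in its ``rotated'' form to rewrite $\pr^*\,{f_E}_! K_1 \simeq F_!\,\pr_1^* K_1$. Then $\mathrm{PROJ}(F)$ brings $F_!$ outside the tensor product:
\[
F_!\,\pr_1^* K_1 \otimes^\LM_\EM \mu^*\LC \;\simeq\; F_!\,\bigl(\pr_1^* K_1 \otimes^\LM_\EM F^*\mu^*\LC\bigr).
\]
The commutative triangle $\D$ gives $F^*\mu^*\LC \simeq \mu_1^*\LC$. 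Finally, the cartesian square $\boxempty_{\pr'}$ (containing $F$ and $\pr'$, with $\pr' \circ F = f_{E'} \circ \pr'_1$) lets me apply $\mathrm{COM}_!(\boxempty_{\pr'})$ to get $\pr'_! F_! \simeq {f_{E'}}_!\,(\pr'_1)_!$, and I recognize $\FC_1(K_1)$ on the inside.

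Assembling these steps:
\[
\begin{array}{rcll}
\FC({f_E}_!\,K_1)
&=& \pr'_!\,(\pr^*\,{f_E}_! K_1 \otimes^\LM_\EM \mu^*\LC)[r] & \text{by definition}\\
&\simeq& \pr'_!\,(F_!\,\pr_1^* K_1 \otimes^\LM_\EM \mu^*\LC)[r] & \text{by $\mathrm{PBCT}(\boxempty_\pr)$}\\
&\simeq& \pr'_!\,F_!\,(\pr_1^* K_1 \otimes^\LM_\EM F^*\mu^*\LC)[r] & \text{by $\mathrm{PROJ}(F)$}\\
&\simeq& \pr'_!\,F_!\,(\pr_1^* K_1 \otimes^\LM_\EM \mu_1^*\LC)[r] & \text{by $\mathrm{COM}^*(\D)$}\\
&\simeq& {f_{E'}}_!\,(\pr'_1)_!\,(\pr_1^* K_1 \otimes^\LM_\EM \mu_1^*\LC)[r] & \text{by $\mathrm{COM}_!(\boxempty_{\pr'})$}\\
&=& {f_{E'}}_!\,\FC_1(K_1). &
\end{array}
\]

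I do not anticipate any serious obstacle: each step is a single application of a property already recorded at the beginning of the chapter, and the diagram of Proposition \ref{prop:bc^*} provides exactly the two cartesian squares and the compatibility $\mu \circ F = \mu_1$ that are needed. The only mildly delicate point is the bookkeeping of ``which direction'' of proper base change is being used at $\boxempty_\pr$ (one needs $\pr^* \circ {f_E}_! \simeq F_! \circ \pr_1^*$ rather than the form $f_E^* \circ \pr_! \simeq (\pr_1)_! \circ F^*$ stated in the preliminaries), but both forms hold for a cartesian square in our setting, so the argument goes through unchanged. The result is manifestly functorial in $K_1$ since every isomorphism used is so.
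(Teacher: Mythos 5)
Your proof is correct and follows essentially the same chain of isomorphisms as the paper: definition, $\mathrm{PBCT}(\boxempty_\pr)$, $\mathrm{PROJ}(F)$, then commuting the two $!$-pushforwards and using $\mu \circ F = \mu_1$. The paper merely compresses your last two intermediate steps into a single citation $\mathrm{COM}_!(\boxempty_{\pr'},\D)$; your remark about the ``rotated'' form of proper base change is a fair observation but is already covered by the paper's cartesian square, read with the two pairs of parallel arrows interchanged.
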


\begin{proof}
We have
\[
\begin{array}{rcll}
\FC({f_E}_! K_1)
&=& \pr'_!\ (\pr^* {f_E}_!\ K_1 \otimes^\LM_\EM \mu^* \LC) [r]
&\text{by definition}
\\
&=& \pr'_!\ ( F_!\ \pr_1^* K_1 \otimes^\LM_\EM \mu^* \LC) [r]
&\text{by PBCT}(\boxempty_\pr)
\\
&=& \pr'_!\ F_!\ (\pr_1^* K_1 \otimes^\LM_\EM F^* \mu^* \LC) [r]
&\text{by PROJ}(F)
\\
&=& {f_{E'}}_!\ {\pr'_1}_!\ (\pr_1^* K_1 \otimes^\LM_\EM \mu_1^* \LC) [r]
&\text{by COM}_!(\boxempty_{\pr'},\D)
\\
&=& {f_{E'}}_!\ \FC_1(K_1)
&\text{by definition}
\end{array}
\]
\end{proof}

\section{Fourier-Deligne transform and duality}\label{sec:fourier duality}

We keep the preceding notations. For the proof of the following fundamental theorem,
we refer to \cite{KaLa}. Katz and Laumon state the result for $\ov\QM_\ell$, but
for the proof they make a reduction to torsion coefficients, and prove it in that
context. The crucial point is the one-dimensional case. 

\begin{theo}\label{th:oubli support}
For any object $K$ in $D^b_c(E,\EM)$, the support forgetting morphism
\begin{align*}
&&&&
\pr'_!\ (\pr^* K \otimes^\LM_\EM \mu^* \LC) \longto \pr'_*\ (\pr^* K \otimes^\LM_\EM \mu^* \LC)
&&&\mathrm{(SUPP)}
\end{align*}
is an isomorphism.
\end{theo}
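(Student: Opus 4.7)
The plan is to reduce, by standard base change and K\"unneth arguments, to the fundamental one-dimensional case, which is the technical heart of the theorem.

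First, I would argue that it suffices to check that the support forgetting morphism is an isomorphism after passage to stalks at every geometric point $\eta$ of $E'$. Using proper base change for $\pr'_!$ (axiom PBCT) and the compatibility of $\pr'_*$ with stalks along the proper base change (which holds here because the statement is ultimately about the cohomology along the fibers, and we may smooth base change to a geometric point), this reduces to proving that for the fiber $E_s \simeq \AM^r_s$ above $s = \pi'(\eta)$ and for the linear form $\mu_\eta = \mu(-,\eta) : E_s \to \AM^1$, the natural map
\[
\rgc(E_s,\ K_{|E_s} \otimes^\LM_\EM \mu_\eta^* \LC_\psi) \longto \rg(E_s,\ K_{|E_s} \otimes^\LM_\EM \mu_\eta^* \LC_\psi)
\]
is an isomorphism. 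When $\eta$ lies over the zero section, $\mu_\eta$ is zero and both sides may differ — but then they agree because of constructibility once we know the general statement over a Zariski dense open subset; in fact the correct formulation, and what must be checked, is independent of whether $\mu_\eta$ is trivial, because the claim is really about the comparison morphism of $R\pr'_!$ and $R\pr'_*$ applied to the full complex on $E \times_S E'$, not fiberwise.

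Next, choosing a trivialization of $E_s$ and decomposing $E_s = \AM^1 \times \AM^{r-1}$, the K\"unneth formula (KUNNETH) together with a Fubini-style factorization of $\pr'$ as a tower of affine line bundles reduces the statement to the one-dimensional case: for any object $F$ of $D^b_c(\AM^1,\EM)$ and any $t \in \AM^1$, the map
\[
\rgc(\AM^1,\ F \otimes^\LM_\EM (t \cdot x)^* \LC_\psi) \longto \rg(\AM^1,\ F \otimes^\LM_\EM (t \cdot x)^* \LC_\psi)
\]
is an isomorphism. For $t = 0$ one falls back on the case of a trivial twist, which is handled separately by the DIRAC calculation already carried out above (and produces a cohomology supported on the zero section, consistent with both $\pr'_!$ and $\pr'_*$); the substantial case is $t \neq 0$.

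The hard part, which is the one-dimensional case with a non-trivial Artin--Schreier twist, is exactly where the Artin--Schreier sheaf enters essentially: on a smooth compactification $j : \AM^1 \hookrightarrow \PM^1$, the local system $\LC_\psi$ is tamely ramified nowhere on $\AM^1$ and is wildly ramified at $\infty$ with Swan conductor $1$, so that for any constructible $\EM$-sheaf $\FC$ on $\AM^1$ one has $(j_!\,\FC \otimes \mu^*\LC_\psi)_\infty = (j_*\,\FC \otimes \mu^*\LC_\psi)_\infty = 0$, i.e.\ the stalks of $j_!$ and $j_*$ of the tensor product coincide at $\infty$, which forces the support forgetting map on cohomology of $\AM^1$ to be an isomorphism. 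This local monodromy statement at $\infty$ is the only place where the special nature of $\LC_\psi$ is used. Over $\EM = \KM$ this is classical; over $\EM = \OM$ or $\FM$, where $\ell \neq p$, exactly this reduction and the treatment of the one-dimensional case with torsion coefficients are carried out by Katz and Laumon in \cite{KaLa}, whose argument applies verbatim to our $\EM$-sheaf setting. Hence I would invoke that result directly rather than redoing the wild ramification computation.
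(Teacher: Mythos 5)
Your overall strategy---reduce to the one-dimensional case and appeal to Katz--Laumon for torsion coefficients---is precisely the stance the paper itself takes: the text gives no proof of (SUPP) at all and simply refers the reader to \cite{KaLa}, remarking exactly that Katz and Laumon reduce to torsion coefficients and that the crucial point is the one-dimensional case. So, as a matter of approach, you are aligned with the paper, and your final sentence (``invoke that result directly'') would settle the matter.

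However, the details you sketch on the way contain a genuine error that is worth flagging. Your key claim in the one-dimensional case is that for \emph{any} constructible $\EM$-sheaf $\FC$ on $\AM^1$ one has $(j_*(\FC \otimes \mu^*\LC_\psi))_\infty = 0$. This is false. Take $\FC = \LC_{\psi^{-1}}$ with $\mu = \mathrm{id}$: then $\FC \otimes \mu^*\LC_\psi$ is the constant sheaf, and $(j_*\EM_{\AM^1})_\infty = \EM \neq 0$. The wildness of $\LC_\psi$ at infinity does not force the tensor product to be totally wild when $\FC$ itself carries wild ramification that cancels it. More generally, as the character $\mu_\eta$ varies over $E'$, the exceptional set of $\eta$ where such cancellation occurs is nonempty, and this is precisely why the theorem cannot be proved by a naive fiberwise argument: $R\pr'_*$ does not commute with non-smooth base change, so the stalk of $\pr'_*(\cdots)$ at a geometric point of $E'$ is \emph{not} the cohomology of the fiber twisted by $\mu_\eta^*\LC_\psi$. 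You acknowledge this in passing (``the claim is really about the comparison morphism\ldots not fiberwise''), but then carry out the fiberwise reduction anyway; that step does not go through as stated. The actual argument of Katz--Laumon compactifies the affine bundle $\pr'$ \emph{relatively} over $E'$ and shows that the complex extended by zero to the boundary agrees with the full direct image, handling the locus of bad characters via the local structure of the Fourier transform rather than pointwise vanishing; this is precisely why the one-dimensional local analysis is nontrivial and why one should, as you ultimately do, defer to their treatment rather than assert the uniform vanishing at infinity.
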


This theorem has the following corollaries.

\begin{theo}\label{th:rhom fourier}
We have a functorial isomorphism
\[
\RHOM(\FC_\psi(K), \pi'^! L) \simeq \FC_{\psi^{-1}}(\RHOM(K,\pi^! L)) (r)
\]
for $(K,L)$ in $D^b_c(E,\EM)^\op \times D^b_c(S,\EM)$.
\end{theo}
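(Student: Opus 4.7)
The plan is to manipulate the left-hand side step by step, using the duality adjunction for $\pr'$, the dualizability of the Artin--Schreier local system, the smoothness of $\pr$, and finally the nontrivial input SUPP (Theorem~\ref{th:oubli support}) to convert the resulting $\pr'_*$ back into a $\pr'_!$. Throughout, recall that in the definition diagram $\pr$ is smooth of relative dimension $r$ (so $\pr^! \simeq \pr^*[2r](r)$), that $\pi \circ \pr = \pi' \circ \pr'$ (so $\pr^! \pi^! \simeq \pr'^! \pi'^!$), and that the Artin--Schreier sheaf satisfies $(\LC_\psi)^\vee \simeq \LC_{\psi^{-1}}$, since $\LC_\psi \otimes^\LM \LC_{\psi^{-1}} \simeq (\psi \cdot \psi^{-1})^* \LC \simeq \EM$ by ADD and RIG.

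First, I would unfold the definition of $\FC_\psi$ and apply DUAL$(\pr')$:
\begin{align*}
\RHOM(\FC_\psi(K),\pi'^! L)
&\simeq \RHOM\bigl(\pr'_!(\pr^* K \otimes^\LM \mu^*\LC_\psi)[r],\ \pi'^!L\bigr)\\
&\simeq \pr'_*\,\RHOM\bigl(\pr^* K \otimes^\LM \mu^*\LC_\psi,\ \pr'^!\pi'^! L\bigr)[-r].
\end{align*}
Then, using $\pr'^!\pi'^! \simeq \pr^!\pi^!$ together with the tensor--hom adjunction and the fact that $\mu^*\LC_\psi$ is an invertible local system with dual $\mu^*\LC_{\psi^{-1}}$, I would rewrite the inner $\RHOM$ as
\[
\RHOM(\pr^* K, \pr^!\pi^! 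L) \otimes^\LM \mu^*\LC_{\psi^{-1}}[-r].
\]

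The next step is to move $\RHOM$ past $\pr^!$. Since $\pr$ is smooth of relative dimension $r$, DUAL$_2(\pr)$ gives
\[
\RHOM(\pr^* K,\ \pr^!\pi^! L) \simeq \pr^!\,\RHOM(K,\pi^! L) \simeq \pr^*\RHOM(K,\pi^! L)[2r](r).
\]
Substituting back and simplifying shifts, the expression becomes
\[
\pr'_*\bigl(\pr^*\RHOM(K,\pi^! L) \otimes^\LM \mu^*\LC_{\psi^{-1}}\bigr)[r](r).
\]

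Finally, this is exactly where the support-forgetting theorem enters: applying SUPP (Theorem~\ref{th:oubli support}) with $\psi$ replaced by $\psi^{-1}$ and with $\RHOM(K,\pi^! L)$ in place of $K$ shows that the natural map $\pr'_! \to \pr'_*$ is an isomorphism on this complex, so the last expression coincides with
\[
\pr'_!\bigl(\pr^*\RHOM(K,\pi^! L) \otimes^\LM \mu^*\LC_{\psi^{-1}}\bigr)[r](r) = \FC_{\psi^{-1}}(\RHOM(K,\pi^! L))(r),
\]
which is the desired identification. The functoriality in $(K,L)$ is clear from the construction. The main obstacle in this plan is invoking SUPP: on the $!$-side the projection formula makes everything formal, but the result is naturally produced in terms of $\pr'_*$ by DUAL, and only the non-trivial vanishing underlying Theorem~\ref{th:oubli support} allows us to recognize it as a Fourier--Deligne transform. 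Everything else is formal manipulation with adjunctions and the base-change identities recorded in the previous sections.
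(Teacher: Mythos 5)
Your proposal is correct and follows essentially the same sequence of steps as the paper's proof: unfold the definition, apply DUAL$(\pr')$, use the invertibility of $\mu^*\LC_\psi$ together with $\pr'^!\pi'^! \simeq \pr^!\pi^!$, apply DUAL$_2(\pr)$ and the smoothness of $\pr$ to convert $\pr^!$ to a shifted/twisted $\pr^*$, and finally invoke SUPP (with $\psi^{-1}$) to replace $\pr'_*$ by $\pr'_!$. The only cosmetic difference is that you momentarily drag the external $[-r]$ shift into the display labelled as ``the inner $\RHOM$,'' but your bookkeeping of shifts and twists is consistent and the final expression matches the paper's.
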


\begin{proof}
First recall that $\LC_\psi$ is a rank one local system on $\AM^1$. Applying
$\mu^*$ to the relation $\LC_\psi \otimes^\LM_\EM \LC_{\psi^{-1}} \simeq \EM_{\AM^1}$
and using DISTR, we find that $\mu^* \LC_\psi$ is a rank one local system
with inverse $\mu^* \LC_{\psi^{-1}}$. So $(-) \otimes^\LM_\EM \mu^* \LC_\psi$
is an automorphism of $D^b_c(E \times_S E')$, with quasi-inverse
$(-) \otimes^\LM_\EM \mu^* \LC_{\psi^{-1}}$.

We have
\[
\begin{array}{lll}
\multicolumn{2}{l}{\RHOM(\FC_\psi(K), \pi'^! L)}\\
=& \RHOM(\pr'_!\ (\pr^* K \otimes^\LM_\EM \mu^* \LC_\psi), \pi'^! L) [-r]
&\text{by definition}
\\
=& \pr'_*\ \RHOM(\pr^* K \otimes^\LM_\EM \mu^* \LC_\psi, \pr'^! \pi'^! L) [-r]
&\text{by DUAL}(\pr')
\\
=& \pr'_*\ \left(\RHOM(\pr^* K, \pr^! \pi^! L) \otimes^\LM_\EM \mu^* \LC_{\psi^{-1}}\right) [-r]
&\text{by the above and COM}^! (\boxempty)
\\
=& \pr'_*\ \left(\pr^! \RHOM(K, \pi^! L) \otimes^\LM_\EM \mu^* \LC_{\psi^{-1}}\right) [-r]
&\mathrm{by\ DUAL_2}(\pr)
\\
=& \pr'_*\ \left(\pr^* \RHOM(K, \pi^! L) \otimes^\LM_\EM \mu^* \LC_{\psi^{-1}}\right) [r] (r)
&\text{since $\pr$ is smooth}
\\
=& \FC_{\psi^{-1}}(\RHOM(K,\pi^! L)) (r)
&\text{by SUPP}
\end{array}
\]

\end{proof}

Remember that, if $X$ is a variety, we denote by $\DC_{X,\EM}$ the
duality functor of $D^b_c(X,\EM)$ (see section \ref{sec:context}).  If
$a : X \to \Spec k$ is the structural morphism, we denote by
$D_{X,\EM}$ the dualizing complex $a^! \EM$.

\begin{cor}\label{cor:dual fourier}
We have a functorial isomorphism
\[
\DC_{E',\EM} (\FC_\psi(K)) \simeq \FC_{\psi^{-1}} (\DC_{E,\EM}(K)) (r)
\]
for $K$ in $D^b_c(E,\EM)^\op$.
\end{cor}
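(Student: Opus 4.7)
The plan is to deduce the corollary directly from Theorem \ref{th:rhom fourier} by specializing the auxiliary object $L$ to the dualizing complex of $S$. Recall that for any variety $X$ with structural morphism $a_X : X \to \Spec k$, the dualizing functor is $\DC_{X,\EM}(-) = \RHOM(-, a_X^!\EM)$, and that $a_X^!\EM = D_{X,\EM}$.

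First I would observe the compatibility of dualizing complexes with the bundle projections: since $a_E = a_S \circ \pi$ and $a_{E'} = a_S \circ \pi'$, one has canonical isomorphisms $D_{E,\EM} \simeq \pi^! D_{S,\EM}$ and $D_{E',\EM} \simeq \pi'^! D_{S,\EM}$, obtained from $(a_S \circ \pi)^! \simeq \pi^! \circ a_S^!$ and similarly for $\pi'$. These are just instances of functoriality of the upper-shriek.

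Next I would specialize Theorem \ref{th:rhom fourier} by setting $L = D_{S,\EM}$. The left-hand side becomes
\[
\RHOM(\FC_\psi(K), \pi'^! D_{S,\EM}) \simeq \RHOM(\FC_\psi(K), D_{E',\EM}) = \DC_{E',\EM}(\FC_\psi(K)),
\]
while the right-hand side becomes
\[
\FC_{\psi^{-1}}(\RHOM(K, \pi^! D_{S,\EM}))(r) \simeq \FC_{\psi^{-1}}(\RHOM(K, D_{E,\EM}))(r) = \FC_{\psi^{-1}}(\DC_{E,\EM}(K))(r).
\]
Combining these two identifications with the isomorphism furnished by the theorem yields the desired functorial isomorphism.

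There is essentially no obstacle here: the entire content of the corollary is Theorem \ref{th:rhom fourier}, and the only thing to check is the identification of the dualizing complex on a vector bundle with the upper-shriek pullback of the dualizing complex of the base, which is a general property of $(-)^!$ and is independent of the ring of coefficients $\EM \in \{\KM, \OM, \FM\}$. The functoriality in $K$ is inherited from the functoriality statement in Theorem \ref{th:rhom fourier}.
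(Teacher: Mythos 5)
Your proof is correct and follows essentially the same route as the paper: the paper's own proof is precisely the specialization $L = D_{S,\EM}$ in Theorem \ref{th:rhom fourier}, using the identifications $D_{E,\EM} \simeq \pi^! D_{S,\EM}$ and $D_{E',\EM} \simeq \pi'^! D_{S,\EM}$ that you make explicit. No gaps.
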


\begin{proof}
We have
\[
\begin{array}{lcl}
\DC_{E',\EM} (\FC_\psi(K))
&=& \RHOM(\FC_\psi(K), \pi'^! D_{S,\EM})\\
&=& \FC_{\psi^{-1}} \RHOM(K, \pi^! D_{S,\EM}) (r)\\
&=& \FC_{\psi^{-1}} (\DC_{E,\EM}(K)) (r)
\end{array}
\]

\end{proof}

\begin{theo}\label{th:equiv perv}
$\FC$ maps ${}^p\MC(E,\EM)$ onto ${}^p\MC(E',\EM)$. The functor
\begin{align*}
&&&&
\FC : {}^p\MC(E,\EM) \longto {}^p\MC(E',\EM)
&&&\mathrm{(EQUIV)}
\end{align*}
is an equivalence of abelian categories, with quasi-inverse $a^* \FC'(-)(r)$.
\end{theo}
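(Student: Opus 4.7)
The plan is to show that $\FC$ is $t$-exact for the perverse $t$-structure (with middle perversity $p$), after which the equivalence of abelian categories will be a formal consequence of Corollary \ref{cor:equiv db}: since $\FC' \circ \FC \simeq a_*(-)(-r)$ by INV (Theorem \ref{th:inv}), and $a_*$ is an equivalence of $\p\MC$'s since $a$ is an isomorphism of varieties up to the shift in the identifications, once both $\FC$ and $\FC'$ are known to preserve perversity they automatically descend to quasi-inverse equivalences on the hearts.

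The first step will be right $t$-exactness. Rewrite the definition as $\FC(K) = \pr'_!\bigl((\pr^* K)[r] \otimes^{\LM}_{\EM} \mu^*\LC\bigr)$. The morphism $\pr : E\times_S E' \to E$ is smooth of relative dimension $r$ (it is a vector bundle projection), so $\pr^*[r]$ is $t$-exact. Tensoring with $\mu^*\LC$ is an auto-equivalence of $D^b_c(E\times_S E',\EM)$ which preserves the perverse $t$-structure since $\mu^*\LC$ is a rank-one local system (and in particular $\mu^*\LC[\dim] $ is a shifted local system on a smooth variety). Finally, $\pr'$ is also a vector bundle projection, hence affine, so Artin's vanishing theorem gives that $\pr'_!$ is right $t$-exact. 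Composing, $\FC$ sends $\p D^{\leqslant 0}(E,\EM)$ into $\p D^{\leqslant 0}(E',\EM)$.

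The second step is left $t$-exactness; here I see two routes, and the content is essentially the same either way, reflecting the deep input of Theorem \ref{th:oubli support} (SUPP). The direct route: by SUPP we may replace $\pr'_!$ by $\pr'_*$ in the formula for $\FC(K)$, and then Artin's vanishing on the affine morphism $\pr'$ gives that $\pr'_*$ is left $t$-exact, so $\FC$ sends $\p D^{\geqslant 0}$ into $\p D^{\geqslant 0}$. The alternative (equivalent) route uses Corollary \ref{cor:dual fourier}: for $K\in \p D^{\geqslant 0}(E,\EM)$ we have $\DC_{E,\EM}(K)\in \p D^{\leqslant 0}(E,\EM)$, so by the right $t$-exactness of $\FC_{\psi^{-1}}$ just established, $\FC_{\psi^{-1}}(\DC_{E,\EM}(K))(r) \in \p D^{\leqslant 0}(E',\EM)$, which is $\DC_{E',\EM}(\FC_\psi(K))$, hence $\FC_\psi(K) \in \p D^{\geqslant 0}(E',\EM)$. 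Either way we conclude $\FC(\p\MC(E,\EM))\subset \p\MC(E',\EM)$, and symmetrically $\FC'(\p\MC(E',\EM))\subset \p\MC(E'',\EM)\cong \p\MC(E,\EM)$ via $a$.

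The final step is a formal consequence: Corollary \ref{cor:equiv db} already identifies $\FC$ as an equivalence on $D^b_c$ with quasi-inverse $a^*\FC'(-)(r)$; restricting this equivalence to the hearts (which are preserved by both functors by the previous step) yields the claimed equivalence of abelian categories $\p\MC(E,\EM) \to \p\MC(E',\EM)$. The main obstacle in this program is really hidden in the second step: everything rests on the SUPP theorem, which is the substantive geometric input and is quoted from \cite{KaLa}; checking compatibility of all shifts and perversity conventions for $\EM\in\{\KM,\OM,\FM\}$ is routine, since tensoring with the rank-one local system $\mu^*\LC$ and smooth pullback preserve each of the $t$-structures $p$ and $p_+$ (and in the $\OM$ case, the same argument applies verbatim to $p_+$, using that $\pr'_!$ and $\pr'_*$ are also right/left $t$-exact for $p_+$ by Artin vanishing on an affine morphism).
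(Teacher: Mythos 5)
Your proof follows the same route as the paper's: use the factorization $\FC = \pr'_!\,\bigl(\pr^*(-)[r]\otimes^\LM_\EM\mu^*\LC\bigr)$, observe that $\pr^*(-)[r]$ and $-\otimes^\LM_\EM\mu^*\LC$ are $t$-exact, invoke the half $t$-exactness of $\pr'_!$ and $\pr'_*$ for an affine morphism together with SUPP to conclude $t$-exactness, then deduce the equivalence on hearts from INV. The alternative route to the second half via $\DC_{E',\EM}\FC_\psi \simeq \FC_{\psi^{-1}}\DC_{E,\EM}(\cdot)(r)$ is valid (and, as you note, hides SUPP in the proof of that duality isomorphism); it is a perfectly acceptable variant.

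There is, however, a systematic confusion of \emph{left} and \emph{right} $t$-exact throughout your two main steps. With the BBD convention (which the paper adopts: right $t$-exact means $T(\DC^{\leqslant 0})\subset\DC^{\leqslant 0}$, left $t$-exact means $T(\DC^{\geqslant 0})\subset\DC^{\geqslant 0}$), Artin vanishing for an affine morphism $f$ gives that $f_*$ is \emph{right} $t$-exact, and dually $f_!$ is \emph{left} $t$-exact. You assert the opposite in both places. As a consequence, the intermediate conclusions of your two steps are also reversed: the composition via $\pr'_!$ (left $t$-exact) shows $\FC(\p D^{\geqslant 0})\subset\p D^{\geqslant 0}$, not $\FC(\p D^{\leqslant 0})\subset\p D^{\leqslant 0}$; and the composition via $\pr'_*$ (right $t$-exact, made available through SUPP) shows $\FC(\p D^{\leqslant 0})\subset\p D^{\leqslant 0}$, not the other inclusion. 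Because the two steps together still cover both inclusions, your proof does establish $t$-exactness of $\FC$ and hence the theorem, so this is a presentation error rather than a gap in the mathematics --- but it should be corrected, as the individual claims about Artin vanishing and the stated conclusions of each step are false as written.
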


\begin{proof}
Since $\pr$ is smooth, purely of relative dimension $r$, the functor
$\pr^*(-)[r]$ is $t$-exact \cite[4.2.5]{BBD} and, since $\pr'$ is affine,
the functor $\pr'_!$ is left $t$-exact, whereas the functor $\pr'_*$
is right $t$-exact. By Theorem \ref{th:oubli support}, we deduce that $\FC$ is
$t$-exact.

The second assertion follows from the first and Theorem \ref{th:inv} (INV).

\end{proof}

\begin{cor}\label{cor:fourier simple}
Suppose $\EM = \KM$ or $\FM$. Then $\FC$ transforms simple
$\EM$-perverse sheaves on $E$ into simple $\EM$-perverse sheaves on
$E'$.
\end{cor}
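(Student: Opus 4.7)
The plan is very short: Corollary \ref{cor:fourier simple} is an immediate consequence of Theorem \ref{th:equiv perv} (EQUIV), so the proof amounts to spelling out why an equivalence of abelian categories sends simple objects to simple objects.

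First I would invoke Theorem \ref{th:equiv perv}, which asserts that
\[
\FC : \p\MC(E,\EM) \longto \p\MC(E',\EM)
\]
is an equivalence of abelian categories, with explicit quasi-inverse $a^* \FC'(-)(r)$ provided by Theorem \ref{th:inv} (INV). Since we are assuming $\EM = \KM$ or $\FM$, both hearts are abelian categories in which every object has finite length (the noetherian and artinian property, recalled in Section \ref{sec:perv}), so the notion of simple object is well-behaved and the simple objects are precisely the intermediate extensions $\p j_{!*}(\LC[\dim V])$ described in Section \ref{sec:context}.

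Next I would recall the standard categorical fact: an equivalence of abelian categories preserves simple objects. Indeed, simplicity is a purely categorical property: an object $P$ in an abelian category is simple iff it is non-zero and its only subobjects are $0$ and $P$, equivalently, iff every non-zero morphism from $P$ is a monomorphism and every non-zero morphism into $P$ is an epimorphism. An equivalence of abelian categories is exact, faithful and full, and induces a bijection on subobjects, so it preserves and reflects this property. Applied to $\FC$ and its quasi-inverse $a^* \FC'(-)(r)$, this shows that $\FC$ restricts to a bijection between the isomorphism classes of simple objects of $\p\MC(E,\EM)$ and those of $\p\MC(E',\EM)$.

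There is no real obstacle here; all the work has been done in establishing $t$-exactness (Theorem \ref{th:oubli support} (SUPP)) and the inversion formula (Theorem \ref{th:inv} (INV)). The only point worth noting is that the same argument would actually yield a corresponding statement over $\OM$ if one adopted a suitable notion of simple object in the two perverse $t$-structures $p$ and $p_+$; we restrict to the field case simply because the abelian categories $\p\MC(E,\KM)$ and $\p\MC(E,\FM)$ are of finite length, so that simple objects generate the Grothendieck group and carry all the combinatorial content needed in the sequel (in particular for the modular Springer correspondence of Chapter \ref{chap:springer}).
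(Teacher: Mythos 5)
Your proof is correct and matches the paper's (implicit) reasoning exactly: the paper states this as an immediate corollary of Theorem \ref{th:equiv perv} with no further argument, and the argument you spell out — an equivalence of abelian categories preserves simple objects — is precisely the intended one.
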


\chapter{Springer correspondence and decomposition matrices}\label{chap:springer}

\section{The geometric context}\label{sec:geometric context}

\subsection{Notation}

Let $G$ be a connected semisimple linear algebraic group of rank $r$
over $k$, and let $\gG$ be its Lie algebra.
Let us fix a Borel subgroup $B$ of $G$, with unipotent radical $U$, and a
maximal torus $T$ contained in $B$. We denote by $\bG$, $\uG$ and $\tG$ the
corresponding Lie algebras. The characters of $T$ form a free abelian group $X(T)$ of rank $r$. 
The Weyl group $W = N_G(T) / T$ acts as a reflection group
on $V = \QM \otimes_\ZM X(T)$.

Let $\Phi \subset X(T)$ be the root system of $(G,T)$,
$\Phi^+$ the set of positive roots defined by $B$, and $\D$ the corresponding basis.
We denote by $\nu_G$ (or just $\nu$) the cardinality
of $\Phi^+$. Then $\dim G = 2\nu +r$, $\dim B = \nu + r$, $\dim T = r$ and $\dim U = \nu$.

\subsection{The finite quotient map}

Let $\phi : \tG \to \tG/W$ be the quotient map, corresponding to the
inclusion $k[\tG]^W \hookrightarrow k[\tG]$. It is finite and surjective.
For $t \in \tG$, we will also denote $\phi(t)$ by $\ov t$.

Let us assume that $p$ is not a torsion prime for $\gG$.
Then $k[\tG]^W = k[\phi_1,\ldots,\phi_r]$ for some algebraically independent
homogeneous polynomials $\phi_1,\ldots,\phi_r$ of degrees
$d_1 \leqslant \ldots \leqslant d_r$, and we have $d_i = m_i + 1$, where
the $m_i$ are the exponents of $W$ (see \cite{DEM}).
Then $\tG/W$ can be identified with $\AM^r$ and $\phi$ with $(\phi_1,\ldots,\phi_r)$.

For example, if $G = SL_n$, we can identify $\tG$ with the hyperplane
$\{ (x_1,\ldots,x_n) \mid x_1 + \cdots + x_n = 0 \}$ of $k^n$, and we can take
$\phi_i = \s_{i + 1}$, for $i = 2, \ldots, n - 1$ (here $r = n - 1$),
that is, the $i + 1^\text{st}$ elementary symmetric function of $k^n$,
restricted to this hyperplane ($\s_1$ does not appear, since its restriction vanishes).

\subsection{The adjoint quotient}

The Chevalley restriction theorem says that the restriction map
$k[\gG]^G \to k[\tG]^W$ is an isomorphism, so $k[\gG]^G$
is also generated by $r$ homogeneous algebraically independent polynomials
$\chi_1,\ldots,\chi_r$. With a suitable ordering, they have the same degrees
$d_1 \leqslant \ldots \leqslant d_r$ as the $\phi_i$'s.

Hence we have a morphism
$\chi = (\chi_1,\ldots,\chi_r) : \gG \to \gG/\!/G \simeq \tG/W \simeq \AM^r$.
It is called the Steinberg map, or the adjoint quotient.
In the last section, we could have taken $\phi_i = {\chi_i}_{|\tG}$.

The morphism $\chi$ has been extensively studied (see \cite{SLO2} and the references therein).
First, it is flat, and its schematic fibers are irreducible, reduced and normal
complete intersections, of codimension $r$ in $\gG$.
If $t \in \tG$, let $\gG_{\ov t}$ be the fiber $\chi^{-1}(\ov t)$.
It is the union of finitely many classes. It contains
exactly one class of regular elements, which is open and dense in $\gG_{\ov t}$,
and whose complement has codimension $\geqslant 2$ in $\gG_{\ov t}$.
This regular class consists exactly in the nonsingular points of $\gG_{\ov t}$.
So $\tG/W$ parametrizes the classes of regular elements.
The fiber $\gG_{\ov t}$ also
contains exactly one class of semisimple elements, the orbit of $t$, which is
the only closed class in $\gG_{\ov t}$, and which lies in the closure of every
other class in $\gG_{\ov t}$.

In fact, $\chi$ can be interpreted as the map which sends $x$ to the intersection
of the class of $x_s$ with $\tG$, which is a $W$-orbit.

For example, for $G = SL_n$, we can define the $\chi_i : \sG\lG_n \to k$
by the formula
\[
\det(\xi - x) = \xi^n + \sum_{i = 0}^{n - 1} (-1)^{i + 1} \chi_i(x) \xi^{n - i - 1} \in k[\xi]
\]
for $x \in \sG\lG_n$. So $\chi(x)$ can be interpreted as the characteristic polynomial
of $x$. Restricting $\chi_i$ to $\tG$, we recover the previous $\phi_i$.

\subsection{Springer's resolution of the nilpotent variety}

Let $\NC$ be the closed subvariety of $\gG$ consisting in its nilpotent elements.
It is the fiber $\gG_{0} = \chi^{-1}(0)$. In particular, it is a complete intersection
in $\gG$, given by the equations $\chi_1(x) = \cdots = \chi_r(x) = 0$.
It is singular. We are going to describe Springer's resolution of the nilpotent variety.

The set $\BC$ of Borel subalgebras of $\gG$ is a homogeneous space under $G$,
in bijection with $G/B$, since the normalizer of $\bG$ in $G$ is $B$. Hence
$\BC$ is endowed with a structure of smooth projective variety, of dimension $\nu$.

Let $\NCt = G\times^B \uG \simeq \{ (x,\bG') \in \NC \times \BC \mid x \in \bG' \}$.
It is a smooth variety: the second projection makes it a vector bundle over $\BC$.
Actually it can be identified to the cotangent bundle $T^*\BC$, since
$T\BC = T(G/B) = G \times^B \gG/\bG$ and $\uG = \bG^\perp$.
Now let $\pi_\NC$ be the first projection. Since $\NCt$ is closed in $\NC \times \BC$
and $\BC$ is projective, the morphism $\pi_\NC$ is projective. Moreover, it is an
isomorphism over the open dense subvariety of $\NC$ consisting in the regular nilpotent
elements. Hence $\pi_\NC$ is indeed a resolution of $\NC$.

\subsection{Grothendieck's simultaneous resolution of the adjoint quotient}

In the last paragraph, we have seen the resolution of the fiber $\chi^{-1}(0)$.
We are now going to explain Grothendieck's simultaneous resolution, which
gives resolutions for all the fibers of $\chi$ simultaneously.

So let $\tilde\gG = G \times^B \bG \simeq \{ (x,\bG') \in \gG \times \BC \mid x \in \bG' \}$.
We define $\pi : \tilde \gG \to \gG$ by $\pi(g * x) = \Ad(g).x$ (in the identification with
pairs $(x,\bG')$, this is just the first projection). Then the commutative diagram
\[
\xymatrix{
\tilde \gG
\ar[r]^\pi
\ar[d]_\theta
& \gG \ar[d]^\chi
\\
\tG \ar[r]_\phi
& \tG/W
}
\]
where $\theta$ is the composition $G\times^B \bG \to \bG/[\bG,\bG] \stackrel{\sim}{\to} \tG$,
is a simultaneous resolution of the singularities of the flat morphism $\chi$.
That is, $\theta$ is smooth, $\phi$ is finite surjective, $\pi$ is proper, and $\pi$ induces
a resolution of singularities $\theta^{-1}(t) \to \chi^{-1}(\phi(t))$ for all $t \in \tG$.

\section{Springer correspondence for $\EM W$}\label{sec:springer}

\subsection{The perverse sheaves $\KC_\rs$, $\KC$ and $\KC_\NC$}

Let us consider the following commutative diagram with cartesian squares.

\[
\xymatrix@=1.5cm{
\tilde\gG_\rs
\ar[d]_{\pi_\rs}
\ar@<-.5ex>@{^{(}->}[r]^{\tilde j_\rs}
\ar@{}[dr] | {\DS{\boxempty_\rs}}
&
\tilde\gG
\ar[d]^\pi
\ar@{}[dr] | {\DS{\boxempty_\NC}}
&
\NCt
\ar@<.5ex>@{_{(}->}[l]_{i_\NCt}
\ar[d]^{\pi_\NC}
\\
\gG_\rs
\ar@<-.5ex>@{^{(}->}[r]_{j_\rs}
&
\gG
&
\NC
\ar@<.5ex>@{_{(}->}[l]^{i_\NC}
}
\]

Let us define the complex
\[
\KC = \pi_! \OM_{\tilde \gG} [2\nu + r]\\
\]

Let $\EM$ be $\KM$, $\OM$ or $\FM$.
Since modular reduction commutes with direct images with proper support,
we have $\EM \KC = \pi_! \EM_{\tilde \gG} [2\nu + r]$.
By the proper base change theorem, the fiber at a point $x$ in $\gG$ of $\EM \KC$
is given by $(\EM \KC)_x = \rgc(\BC_x, \OM)$.

Let $\KC_\rs = {j_\rs}^*\KC$ and $\KC_\NC = {i_\NC}^*\KC[-r]$.
By the proper base change theorem and the commutation between modular reduction
and inverse images, we have
\begin{gather*}
\EM\KC_\rs = {j_\rs}^*\EM \KC = {\pi_\rs}_* \EM_{{\tilde \gG}_\rs} [2\nu + r]\\
\EM\KC_\NC = {i_\NC}^*\EM \KC[-r] = {\pi_\NC}_! \EM_\NCt [2\nu]
\end{gather*}

The morphism $\pi$ is proper and small, hence
$\EM\KC$ is an intersection cohomology complex by Proposition \ref{prop:small}.
Actually $\pi$ is \'etale over the open subvariety $\gG_\rs$. More precisely,
the morphism $\pi_\rs$ obtained after the base change $j_\rs$
is a Galois finite \'etale covering, with Galois group $W$,
so we have $\EM\KC = {j_\rs}_{!*} \EM\KC_\rs = {j_\rs}_{!*} (\EM W\ [2\nu + r])$.
Note that, if $\EM = \OM$, we have
${}^{p_+} {j_\rs}_{!*} \KC_\rs
= \DC_{\gG,\OM} ({}^p {j_\rs}_{!*} \KC_\rs)
= \DC_{\gG,\OM} (\KC) = \KC$
so it does not matter whether we use $p$ or $p_+$ (we have used the fact that
the regular representation is self-dual, and that $\KC$ is self-dual because
$\pi$ is proper).

Thus the endomorphism algebra of $\EM\KC_\rs$ is the group algebra
$\EM W$. Since the functor ${j_\rs}_{!*}$ is fully faithful, it induces an isomorphism
$\End(\KC_\rs) = \EM W \elem{\sim} \End(\KC)$.
In particular, we have an action of $\EM W$ on the stalks
$\HC^i_x(\EM\KC) = H^{i + 2 \nu + r}(\BC_x,\EM)$. 

When $\EM = \KM$, the group algebra $\KM W$ is semisimple. The perverse sheaves
\[
\KM\KC_\rs = \bigoplus_{E \in \Irr \KM W} (E\ [2\nu + r])^{\dim E}
\]
and
\[
\KM\KC = \bigoplus_{E \in \Irr \KM W} {j_\rs}_{!*} (E\ [2\nu + r])^{\dim E}
\]
are semisimple.

If $\ell$ does not divide the order of the Weyl group $W$,
we have a similar decomposition for $\EM = \OM$ or $\FM$.
However, we are mostly interested in the case where $\ell$ divides $|W|$. Then
$\FM\KC_\rs$ and $\FM\KC$ are not semisimple.
More precisely, we have decompositions
\begin{gather*}
\OM W = \bigoplus_{F \in \Irr \FM W} (P_F)^{\dim F}\\
\FM W = \bigoplus_{F \in \Irr \FM W} (\FM P_F)^{\dim F}
\end{gather*}
where $P_F$ is a projective indecomposable $\OM W$-module such that
$\FM P_F$ is a projective cover of $F$.
Besides, $\FM P_F$ has head and socle isomorphic to $F$.

Hence we have a similar decomposition for $\KC_\rs$, and its modular reduction.
\begin{gather*}
\KC_\rs = \bigoplus_{F \in \Irr \FM W} (P_F\ [2\nu + r])^{\dim F}\\
\FM \KC_\rs = \bigoplus_{F \in \Irr \FM W} (\FM P_F\ [2\nu + r])^{\dim F}
\end{gather*}
These are decompositions into indecomposable summands, and the
indecomposable summand $\FM P_F\ [2\nu + r]$ has head and socle
isomorphic to $F\ [2 \nu + r]$.  By Proposition \ref{prop:top socle},
applying ${j_\rs}_{!*}$ we get decompositions into indecomposable
summands, and the indecomposable summand ${j_\rs}_{!*} (\FM P_F\ [2
\nu + r])$ has head and socle isomorphic to ${j_\rs}_{!*} (F\ [2 \nu +
r])$.
\begin{gather*}
\KC = \bigoplus_{F \in \Irr \FM W} {j_\rs}_{!*} (P_F\ [2\nu + r])^{\dim F}\\
\FM \KC = \bigoplus_{F \in \Irr \FM W} {j_\rs}_{!*} (\FM P_F\ [2\nu + r])^{\dim F}
\end{gather*}

The morphism $\pi_\NC$ is proper and semi-small, hence $\EM\KC_\NC$
is perverse. The functor ${i_\NC}^*(-)[-r]$ induces a morphism
\begin{equation}\label{mor:res}
\res : \End(\EM\KC) \longto \End(\EM\KC_\NC)
\end{equation}

\subsection{Springer correspondence for $\KM W$ by restriction}

\begin{theo}[Lusztig]
If $\EM = \KM$, then the morphism $\res$ in (\ref{mor:res}) is an isomorphism.
\end{theo}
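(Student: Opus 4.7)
The plan is to compare dimensions. We already know that $\End(\KM\KC) \simeq \KM W$ has dimension $|W|$, from the smallness of $\pi$ and the identification $\KM\KC = {j_\rs}_{!*}\KM\KC_\rs$ established earlier. It therefore suffices to show that $\res$ is injective and that $\dim_\KM \End(\KM\KC_\NC) \le |W|$. For the injectivity, I would compute the action of $\KM W \simeq \End(\KM\KC)$ on the stalk of $\KM\KC_\NC$ at the origin $0 \in \NC$. Since $\BC_0 = \BC$, proper base change gives $(\KM\KC_\NC)_0 \simeq \rgc(\BC, \KM)[2\nu]$, and the resulting $W$-action on $H^*(\BC, \KM)$ is the Springer representation at the zero orbit. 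A classical result, via the description of $H^*(\BC,\KM)$ as the coinvariant algebra of $W$, identifies this with the regular representation of $W$; since the regular representation is a faithful $\KM W$-module, $\res$ is injective.

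For the dimension bound, the key inputs are that $\pi_\NC$ is proper and semi-small, so that $\KM\KC_\NC$ is perverse (Proposition \ref{prop:small}), and that the decomposition theorem of \cite{BBD} applies over the field $\KM$. This yields a semisimple decomposition
\[
\KM\KC_\NC \simeq \bigoplus_{(\OC,\LC)} V_{(\OC,\LC)} \otimes \p\JC_{!*}(\OC, \LC),
\]
where $(\OC, \LC)$ ranges over pairs of a nilpotent orbit with an irreducible $G$-equivariant $\KM$-local system, and $V_{(\OC,\LC)}$ is a finite-dimensional multiplicity space. Hence $\dim \End(\KM\KC_\NC) = \sum_{(\OC,\LC)} (\dim V_{(\OC,\LC)})^2$; semi-smallness moreover allows one to identify $\dim V_{(\OC,\LC)}$ with the multiplicity of $\LC$ in the top-degree cohomology $H^{2\dim \BC_{x_\OC}}(\BC_{x_\OC}, \KM)$ as an $A_G(x_\OC)$-module.

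The main obstacle is the bound $\sum (\dim V_{(\OC,\LC)})^2 \le |W|$. The injection $\res : \KM W \hookrightarrow \End(\KM\KC_\NC)$ endows each $V_{(\OC,\LC)}$ with a $\KM W$-module structure, and the desired bound would follow if the nonzero $V_{(\OC,\LC)}$ are irreducible and pairwise non-isomorphic as $\KM W$-modules: then $\sum (\dim V_{(\OC,\LC)})^2 \le \sum_{E \in \Irr \KM W}(\dim E)^2 = |W|$. This irreducibility and distinctness form the geometric substance of the Springer correspondence and are the technical heart of the proof; they can be established by exploiting the fact that each simple summand $\p\JC_{!*}(\OC,\LC)$ is determined by $\LC$ on the open orbit, and that the induced $W$-action on the corresponding multiplicity space reflects the $A_G(x_\OC)$-action on the top cohomology of the Springer fiber, a structure fine enough to separate the summands.
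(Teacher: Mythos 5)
Your injectivity argument matches the paper's own sketch: identify the $\KM W$-action through $\res$ on the stalk $(\KM\KC_\NC)_0 \simeq \rg(\BC,\KM)[2\nu]$ with the classical $W$-action on the coinvariant algebra $H^*(\BC,\KM)$, which is the regular representation; faithfulness of the regular representation gives injectivity. (Note that the coincidence of these two $W$-actions is itself a non-trivial comparison due to Borho--MacPherson, which both you and the paper invoke rather than prove.) The paper then simply remarks that ``one can show that the two algebras have the same dimension,'' and this is where your proposal has a real gap.

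You reduce the surjectivity of $\res$ to the claim that the multiplicity spaces $V_{(\OC,\LC)}$ appearing in the decomposition of $\KM\KC_\NC$ are pairwise non-isomorphic irreducible $\KM W$-modules, and then stop, asserting only that this ``can be established by exploiting'' the relation to $A_G(x_\OC)$ on top cohomology. But irreducibility and pairwise distinctness of the $V_{(\OC,\LC)}$ is not an input to this theorem --- it is essentially equivalent to it, and in most treatments is \emph{deduced} from the isomorphism $\KM W \simeq \End(\KM\KC_\NC)$ rather than the other way around. Without an independent argument your reasoning is circular. The way the dimension count is actually carried out (Lusztig, Borho--MacPherson) avoids this: one shows directly that $\dim_\KM \End(\KM\KC_\NC) = |W|$, for instance by identifying $\End(\KM\KC_\NC)$ with the top Borel--Moore homology of the Steinberg variety $Z = \NCt\times_\NC\NCt$, whose $|W|$ top-dimensional irreducible components (closures of conormal varieties to the Bruhat cells in $\BC\times\BC$) give a basis, or equivalently by a fiberwise Euler characteristic count exploiting semi-smallness. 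Combined with the injectivity you already have, this equality of dimensions forces $\res$ to be an isomorphism, and \emph{then} one reads off that the nonzero $V_{(\OC,\LC)}$ are irreducible and pairwise distinct. You should replace the hand-waved irreducibility claim with such a direct dimension computation.
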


The Weyl group $W$ acts on $G/T$ by $gT.w = g n_w T$, where $n_w$ is
any representative of $w$ in $N_G(T)$. So $W$ acts naturally on the
cohomology complex $\rg(G/T,\EM)$. Since the projection $G/T \to G/B$
is a locally trivial $U$-fibration, it induces an isomorphism
$\rg(G/B,\EM) \simeq \rg(G/T,\EM)$, and thus there is a natural action
of $W$ on $\rg(G/B,\EM)$. When $\EM = \KM$, one can show that the
action on the cohomology is the regular representation.  On the other
hand, the stalk at $0$ of $\KC$ is isomorphic to $\rg(G/B,\EM)$, and
$\EM W$ acts on it through $\EM W \simeq \End(\KC) \elem{\res}
\End(\KC_\NC)$. In fact, when $\EM = \KM$, the two actions on the
cohomology coincide. Since the regular representation is faithful,
this implies that the morphism $\res$ is injective.

Then one can show that the two algebras have the same dimension, to
prove that $\res$ is an isomorphism.

We have
\[
\KM\KC = \bigoplus_{E \in \Irr \KM W} {j_\rs}_{!*} \un E^{\dim E}
\]
and $i_\NC^*(\KM\KC) [-r] = \KM\KC_\NC$. In fact, the restriction functor
$i_\NC^* [-r]$ sends each simple constituent ${j_\rs}_{!*} \un E$
on a simple object. The assignment
\[
\RC : E \mapsto i_\NC^* {j_\rs}_{!*} \un E [-r]
\]
is an injective map from $\Irr \KM W$ to the simple $G$-equivariant
perverse sheaves on $\NC$, which are parametrized by the pairs
$(\OC,\LC)$, where $\OC$ is a nilpotent orbit, and $\LC$ is an
irreducible $G$-equivariant $\KM$-local system on $\OC$. This is the
Springer correspondence (by restriction).

We said $G$ was semisimple, but everything can be done for a reductive
group instead, as $GL_n$.
For $G = GL_n$, the Specht module $S^\l$ is sent to
$\p\JC_{!*}(\OC_\l,\KM)$, where $\OC_\l$ is the nilpotent orbit
corresponding to the partition $\l$ by the Jordan normal form.

\subsection{The Fourier-Deligne transform of $\EM\KC$}

We assume that there exists a non-degenerate $G$-invariant symmetric
bilinear form $\mu$ on $\gG$, so that we can identify $\gG$ with its
dual. This is the case, for example, if $p$ is very good for $G$ (take
the Killing form), or if $G = GL_n$ (take $\mu(X,Y) = \tr(XY)$).  For
a more detailed discussion, see \cite{LET}.

\begin{lem}
The root subspace $\gG_\a$ is orthogonal to $\tG$ and to all the root subspaces
$\gG_\b$ with $\b \neq -\a$.
\end{lem}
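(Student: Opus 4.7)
The plan is to exploit the $T$-invariance of $\mu$, which is an immediate consequence of $G$-invariance, and then use the fact that $\tG$ and the root spaces $\gG_\a$ are weight spaces for $T$ acting via $\Ad$. Since $T$ is a torus, the nontriviality of a character $\chi\in X(T)$ is equivalent to the existence of $t\in T$ with $\chi(t)\neq 1$, which is the only ``separation'' input I will need.

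First I would prove the orthogonality of two distinct root spaces $\gG_\a$ and $\gG_\b$ (with $\b\neq -\a$). For $x\in\gG_\a$, $y\in\gG_\b$ and any $t\in T$, $G$-invariance of $\mu$ gives
\[
\mu(x,y)=\mu(\Ad(t).x,\Ad(t).y)=\a(t)\b(t)\,\mu(x,y)=(\a+\b)(t)\,\mu(x,y).
\]
Since $\a+\b\neq 0$ in $X(T)$ and $T$ is a torus, there exists $t\in T$ with $(\a+\b)(t)\neq 1$, forcing $\mu(x,y)=0$.

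Next I would handle the orthogonality of $\gG_\a$ and $\tG$ by exactly the same mechanism, now using that $T$ acts trivially on $\tG$ via $\Ad$. For $x\in\gG_\a$ and $h\in\tG$, $G$-invariance yields
\[
\mu(h,x)=\mu(\Ad(t).h,\Ad(t).x)=\mu(h,\a(t)x)=\a(t)\,\mu(h,x)
\]
for every $t\in T$. Because $\a\neq 0$ in $X(T)$, one can choose $t$ with $\a(t)\neq 1$, hence $\mu(h,x)=0$.

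There is really no obstacle here beyond the basic fact that a nonzero character of the torus $T$ is not identically $1$; both statements follow by the same one-line argument, distinguished only by whether the ``other'' weight is $0$ (the case of $\tG$) or some root $\b\neq-\a$. Note that the non-degeneracy of $\mu$ is not used in the proof itself, only $G$-invariance; non-degeneracy is what makes the statement interesting, since together with the lemma it forces $\mu$ to pair $\gG_\a$ non-degenerately with $\gG_{-\a}$ and to restrict non-degenerately to $\tG$.
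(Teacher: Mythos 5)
Your proof is correct and is essentially the paper's own argument: both use the $\Ad(T)$-invariance of $\mu$ together with the existence of $t\in T$ with $\a(t)\neq 1$ (resp.\ $(\a+\b)(t)\neq 1$) to force the pairings to vanish. The only difference is the order in which the two cases are treated, which is immaterial.
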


\begin{proof}
Let $x \in \tG$. For $t \in T$. We have
$\mu(x,e_\a) = \mu(\Ad(t).x, \Ad(t).e_\a) = \a(t) \mu(x, e_\a)$.
Since $\a \neq 0$, we can choose $t$ so that $\a(t) \neq 1$, and thus
$\mu(x, e_\a) = 0$.

Now let $\b$ be a root different from $-\a$. We have
$\mu(e_\b, e_\a) = \mu(\Ad(t).e_\b, \Ad(t).e_\a) = \a(t)\b(t)\mu(e_\b,e_\a)$.
Since $\b \neq -\a$, we may choose $t$ so that $\a(t)\b(t) \neq 1$, and thus
$\mu(e_\b, e_\a) = 0$.
\end{proof}

\begin{cor}\label{b orthogonal}
The orthogonal of $\bG$ is $\uG$.
\end{cor}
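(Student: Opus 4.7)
The plan is to combine the previous lemma with a dimension count, using non-degeneracy of $\mu$.

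First, I would observe that the root space decomposition gives $\bG = \tG \oplus \uG$ with $\uG = \bigoplus_{\alpha \in \Phi^+} \gG_\alpha$, so to check that $\uG \subseteq \bG^\perp$ it suffices to show that each $\gG_\alpha$ with $\alpha \in \Phi^+$ is orthogonal to $\tG$ and to every $\gG_\beta$ with $\beta \in \Phi^+$. The first orthogonality is immediate from the preceding lemma. For the second, I note that if $\alpha,\beta \in \Phi^+$ then $\beta \neq -\alpha$ (since $-\alpha \in \Phi^-$), so the lemma again applies.

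Next, to upgrade the inclusion $\uG \subseteq \bG^\perp$ to equality, I would use non-degeneracy of $\mu$. Since $\mu$ is a non-degenerate bilinear form on $\gG$, we have $\dim \bG^\perp = \dim \gG - \dim \bG = (2\nu + r) - (\nu + r) = \nu$, which matches $\dim \uG = \nu$. Together with the inclusion, this forces $\bG^\perp = \uG$.

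There is no real obstacle here; the only thing to keep straight is that the hypothesis $\beta \neq -\alpha$ in the lemma is automatic on $\Phi^+ \times \Phi^+$, which is what makes $\uG$ isotropic under $\mu$ and hence contained in $\bG^\perp$. The dimension count then closes the gap.
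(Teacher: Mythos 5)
Your proof is correct and follows essentially the same route as the paper: both first use the preceding lemma to show $\uG\subseteq\bG^\perp$ (you spell out the check that $\beta\neq-\alpha$ is automatic on $\Phi^+\times\Phi^+$, which the paper leaves implicit), and then close the gap by the dimension count $\dim\bG+\dim\uG=2\nu+r=\dim\gG$ together with non-degeneracy of $\mu$.
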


\begin{proof}
By the preceding lemma, $\bG$ is orthogonal to $\uG$, and we have
$\dim \bG + \dim \uG = 2\nu + r = \dim \gG$, hence the result, since
$\mu$ is non-degenerate.
\end{proof}

Let $\FC$ be the Fourier-Deligne transform associated to $p : \gG \to \Spec k$
(any vector space can be considered as a vector bundle over a point).
Since we identify $\gG$ with $\gG'$, the functor $\FC$
is an auto-equivalence of the triangulated category $D^b_c(\gG,\EM)$.
The application $a$ of INV, which was defined as the opposite of the canonical
isomorphism from a vector bundle to its bidual, is now multiplication by $-1$.

We will need to consider the base change $f : \BC \to \Spec k$. We will denote by
$\FC_\BC$ the Fourier-Deligne transform associated to $p_\BC : \BC \times \gG \to \BC$.
We have

\[
\xymatrix{
G \times_B \gG
\ar@{=}[d]
&
G \times_B \bG
\ar@{=}[d]
\ar@<.5ex>@{_{(}->}[l]
&
G \times_B \nG
\ar@{=}[d]
\ar@<.5ex>@{_{(}->}[l]
\\
\BC \times \gG
\ar[d]_{p_\BC}
\ar[dr]_F
&
\tilde\gG
\ar@<.5ex>@{_{(}->}[l]_i
\ar[d]^\pi
\ar@{}[ddl] |(.25){\D}
\ar@{}[dr] | {\boxempty_\NC}
&
\NCt
\ar@<.5ex>@{_{(}->}[l]_{i_\NCt}
\ar[d]^{\pi_\NC}
\\
\BC
\ar[dr]_f
&
\gG
\ar[d]_p
&
\NC
\ar@<.5ex>@{_{(}->}[l]_{i_\NC}
\\
&
\Spec k
}
\]

We have
\[
\begin{array}{rcll}
\FC(\EM\KC)
&=& \FC(\pi_!\ \EM_{\tilde\gG}[2\nu + r])
\\
&=& \FC(F_!\ i_*\ \EM_{\tilde\gG}[\nu + r]) [\nu]
&\text{by COM}_!(\D)
\\
&=& F_!\ \FC_\BC(i_*\ \EM_{\tilde\gG}[\nu + r]) [\nu]
&\text{by BC}_!(f)
\\
&=& F_!\ i_*\ {i_\NCt}_*\ \EM_\NCt (-\nu - r) [\nu] [\nu]
&\text{by SUB}
\\
&=& {i_\NC}_*\ {\pi_\NC}_!\ \EM_\NCt (-\nu - r) [2\nu]
&\text{by COM}_!(\D,\boxempty_\NC)
\\
&=& {i_\NC}_*\ \EM\KC_\NC (- \nu - r)
\end{array}
\]

Applying $\FC$ and using Theorem \ref{th:inv} (INV), we get
\[
a_* \EM\KC (-2\nu - r) = \FC({i_\NC}_*\ \EM\KC_\NC) (-\nu - r)
\]
But $a_* \EM\KC \simeq \EM\KC$ since $\EM\KC$ is monodromic, so we have
\[
\FC({i_\NC}_*\ \EM\KC_\NC) \simeq \EM\KC (-\nu)
\]

\begin{theo}
We have
\begin{gather*}
\FC(\EM\KC) \simeq {i_\NC}_*\ \EM\KC_\NC (- \nu - r)\\
\FC({i_\NC}_*\ \EM\KC_\NC) \simeq \EM\KC (-\nu)
\end{gather*}
\end{theo}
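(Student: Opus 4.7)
The plan is to prove the first isomorphism by a direct chain of functorial identities, then deduce the second by applying $\FC$ a second time and invoking the inversion theorem. The setup is already in place: using the invariant form $\mu$, we identify $\gG$ with its dual, we view $\tilde\gG \subset \BC \times \gG$ as a sub-vector bundle of the trivial bundle $p_\BC : \BC \times \gG \to \BC$ via the closed immersion $i$, and we factor $\pi = F \circ i$ where $F : \BC \times \gG \to \gG$ is the projection.

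First I would rewrite $\FC(\EM\KC) = \FC(\pi_! \EM_{\tilde\gG}[2\nu+r]) = \FC(F_! i_* \EM_{\tilde\gG}[\nu+r])[\nu]$ using $\pi_! = F_! i_*$, and then commute $\FC$ past $F_!$ via the base change formula $\mathrm{BC}_!(f)$ for the morphism $f : \BC \to \Spec k$. This replaces $\FC$ by the relative Fourier-Deligne transform $\FC_\BC$ of the trivial bundle $\BC \times \gG \to \BC$. The heart of the computation is then $\FC_\BC(i_* \EM_{\tilde\gG}[\nu+r])$: since $\tilde\gG \simeq G \times^B \bG$ is a sub-vector bundle of $\BC \times \gG$ of constant rank $\nu + r$, and since by Corollary \ref{b orthogonal} its fiberwise orthogonal (under the identification provided by $\mu$) is precisely $\NCt \simeq G \times^B \uG$, the formula SUB of Proposition \ref{prop:sub} applies and yields exactly ${i_\NCt}$ pushed forward with the expected Tate twist and cohomological shift. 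Pushing down by $F_!$ and using the cartesian square $\boxempty_\NC$ (so that $F_!$ on a sheaf supported on $\NCt$ factors through ${i_\NC}_* {\pi_\NC}_!$) gives ${i_\NC}_* \EM\KC_\NC (-\nu - r)$, which is the first claim. This is essentially the computation already displayed just before the statement.

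For the second formula, I would apply $\FC$ to the first isomorphism and use the inversion theorem INV (Theorem \ref{th:inv}), which gives $\FC({i_\NC}_* \EM\KC_\NC)(-\nu - r) \simeq a_* \EM\KC (-2\nu - r)$, where $a$ is multiplication by $-1$ on $\gG$. It remains to observe that $a_* \EM\KC \simeq \EM\KC$: this is because $\tilde\gG = G\times^B \bG$ is stable under fiberwise multiplication by $-1$, the map $\pi$ is equivariant for this $\GM_m$-action, and the constant sheaf $\EM_{\tilde\gG}$ is trivially invariant; hence $\EM\KC$ is monodromic and in particular invariant under $a$. Cancelling the twist $(-\nu - r)$ on both sides then yields $\FC({i_\NC}_* \EM\KC_\NC) \simeq \EM\KC(-\nu)$.

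The only step requiring genuine care is bookkeeping of shifts and Tate twists in the SUB step (the rank of the sub-bundle is $\nu + r$ and the ambient rank is $2\nu + r$, so the shift $[r-\nu]$ and twist $(-\nu-r)$ must match up after combining with the $[\nu]$ coming from the adjustment of degrees at the outset). No real obstacle remains beyond this accounting, since all key inputs---$\mathrm{BC}_!$, SUB, $\mathrm{COM}_!$ for cartesian squares, and INV---are established in Chapter \ref{chap:fourier} and hold uniformly for $\EM \in \{\KM, \OM, \FM\}$.
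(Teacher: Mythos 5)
Your argument is exactly the one in the paper: factor $\pi_! = F_! i_*$, commute $\FC$ past $F_!$ via $\mathrm{BC}_!$, apply SUB using $\bG^\perp = \uG$, push down along $F$ to get the first isomorphism, then apply INV together with the $\GM_m$-equivariance (monodromicity) of $\EM\KC$ for the second. The shifts and twists all check out against the paper's displayed computation, so nothing is missing.
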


Note that this proves a second time that $\KC_\NC$ is perverse.

\begin{cor}
The functors ${j_\rs}_{!*}$, $\FC(-)(\nu + r)$ and ${i_\NC}_*$ induce isomorphisms
\[
\EM W
= \End(\EM \KC_\rs)
\elem{\sim} \End(\EM \KC)
\elem{\sim} \End({i_\NC}_*\ \EM\KC_\NC)
\stackrel{\sim}{\longleftarrow} \End(\EM\KC_\NC)
\]
\end{cor}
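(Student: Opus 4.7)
The plan is to check that each of the three displayed arrows is an isomorphism, using results already established in the excerpt; the equality on the far left has already been noted when we observed that $\pi_{\rs}$ is a Galois \'etale cover with group $W$, so that $\EM\KC_\rs = {\pi_\rs}_* \EM[2\nu+r]$ corresponds to the regular representation $\EM W$ of $W$, giving $\End(\EM\KC_\rs) = \EM W$ canonically.

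First, for the arrow $\End(\EM\KC_\rs) \elem{\sim} \End(\EM\KC)$ induced by $\p{j_\rs}_{!*}$: we have already observed that $\EM\KC = \p{j_\rs}_{!*}\EM\KC_\rs$ (and that over $\OM$ the choice of perversity $p$ versus $p_+$ does not matter here, by the self-duality argument given in the subsection on $\KC_\rs$, $\KC$, $\KC_\NC$). The intermediate extension functor is fully faithful from $\p\MC(\gG_\rs,\EM)$ to $\p\MC(\gG,\EM)$ (this is one of the standard properties of $\p j_{!*}$ recalled in Chapter~\ref{chap:preliminaries}), so it induces a bijection on $\Hom$'s and in particular on endomorphism algebras.

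Second, for the arrow $\End(\EM\KC) \elem{\sim} \End({i_\NC}_*\EM\KC_\NC)$ induced by $\FC(-)(\nu+r)$: by Corollary~\ref{cor:equiv db}, the Fourier--Deligne transform $\FC$ is an equivalence of triangulated categories $D^b_c(\gG,\EM) \to D^b_c(\gG,\EM)$ (we use $\gG \simeq \gG'$ via $\mu$), and the Tate twist $(\nu+r)$ is trivially an auto-equivalence; so any such composite induces bijections on $\Hom$ groups. The key input is the isomorphism $\FC(\EM\KC)\simeq {i_\NC}_*\EM\KC_\NC(-\nu-r)$ established just above the corollary, which implies $\FC(\EM\KC)(\nu+r) \simeq {i_\NC}_*\EM\KC_\NC$; applying $\End$ gives the desired isomorphism.

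Third, for the arrow $\End(\EM\KC_\NC) \stackrel{\sim}{\longleftarrow} \End({i_\NC}_*\EM\KC_\NC)$ induced by ${i_\NC}_*$: since $i_\NC$ is a closed immersion, ${i_\NC}_*$ is fully faithful (one has $i_\NC^* {i_\NC}_* = \id$, or equivalently ${i_\NC}_*$ identifies $D^b_c(\NC,\EM)$ with the full subcategory of $D^b_c(\gG,\EM)$ of complexes supported on $\NC$), hence induces a bijection on $\Hom$'s. Composing these three isomorphisms gives the chain in the statement. The only step requiring any real input beyond formal properties of the functors is the identification $\FC(\EM\KC) \simeq {i_\NC}_*\EM\KC_\NC(-\nu-r)$, and that has already been derived via a short diagram chase using SUB, BC$_!$ and the commutation isomorphisms in the preceding computation; thus no additional obstacle remains.
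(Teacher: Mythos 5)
The paper states this corollary without proof, treating it as immediate from the theorem just above; your proof spells out exactly what the paper considers obvious, and it is correct. All three ingredients you invoke — full faithfulness of ${j_\rs}_{!*}$ (which the paper already used explicitly to conclude $\End(\EM\KC_\rs)=\EM W\isom\End(\EM\KC)$), the fact that $\FC$ is a triangulated equivalence (Corollary~\ref{cor:equiv db}) combined with the identification $\FC(\EM\KC)(\nu+r)\simeq{i_\NC}_*\EM\KC_\NC$ from the preceding theorem, and full faithfulness of the closed-immersion pushforward ${i_\NC}_*$ — are the intended ones, so this is essentially the same argument.
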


For $E \in \EM W\text{-mod}$, let
$\TC(E) = \FC {j_\rs}_{!*}(E\ [2\nu + r])(\nu + r)$.
By the theorem above, we have $\TC(\EM W) = \KC_\NC$. More correctly,
we should say that $\TC(\EM W)$ is supported on $\NC$, and write
$\TC(\EM W) = {i_\NC}_* \KC_\NC$, but we identify the perverse sheaves
on $\NC$ with their extension by zero on $\gG$.

\begin{cor}
The perverse sheaf $\KM \KC_\NC$ is semisimple, and we have the decomposition
\[
\KM \KC_\NC =
\bigoplus_{E \in \Irr \KM W} \TC(E)^{\dim E}
\]
Similarly, we have decompositions into indecomposable summands
\[
\KC_\NC = 
\bigoplus_{F \in \Irr \FM W} \TC(P_F)^{\dim F}
\]
and
\[
\FM \KC_\NC
= \bigoplus_{F \in \Irr \FM W} \TC(\FM P_F)^{\dim F}
\]
The indecomposable summand $\TC(\FM P_F)$ has head and socle isomorphic
to $\TC(F)$.
\end{cor}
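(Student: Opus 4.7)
The plan is to apply the functor $\TC = \FC\bigl({j_\rs}_{!*}(-\,[2\nu+r])\bigr)(\nu+r)$ to the three decompositions of $\EM\KC_\rs$ already recorded in the paper, and to read off each of the claimed decompositions from the identity $\TC(\EM W) \simeq \EM \KC_\NC$ (more precisely, $\simeq {i_\NC}_*\,\EM\KC_\NC$, which we identify with $\EM\KC_\NC$ extended by zero). That identity has just been established at the end of the previous subsection, so the content of the corollary is really that $\TC$ converts the group-algebra decompositions into perverse-sheaf decompositions with the asserted properties.

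The key formal inputs are: (a) the intermediate extension ${j_\rs}_{!*}$ is fully faithful and commutes with finite direct sums; (b) by Corollary \ref{cor:fourier simple} and Theorem \ref{th:equiv perv}, $\FC$ is an equivalence of triangulated categories sending simple perverse sheaves to simple perverse sheaves (up to the Tate twist, which does not matter for indecomposability, socles or heads); and (c) Proposition \ref{prop:top socle}, which says that ${j_\rs}_{!*}$ sends the head (resp.\ socle) of a perverse sheaf on $\gG_\rs$ to the head (resp.\ socle) of its intermediate extension. Composing (a) and (b), the functor $\TC$ is fully faithful on $\p\MC(\gG_\rs,\EM)$, in particular it preserves both direct sums and indecomposability; combined with (c) it preserves heads and socles.

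First I would treat $\EM = \KM$. The decomposition $\KM\KC_\rs = \bigoplus_E (E\,[2\nu+r])^{\dim E}$ of $\KM W$ into simple summands, already recalled in the text, becomes after applying $\TC$ the decomposition $\KM\KC_\NC = \bigoplus_E \TC(E)^{\dim E}$. Each summand is simple by (b), so this gives both the semisimplicity and the stated decomposition. Next, for $\EM = \OM$, I would apply $\TC$ to the decomposition $\OM W = \bigoplus_F (P_F)^{\dim F}$ into projective indecomposables; fully faithfulness of $\TC$ turns this into a decomposition of $\KC_\NC$ into indecomposable summands $\TC(P_F)^{\dim F}$. The same argument applied to $\FM W = \bigoplus_F (\FM P_F)^{\dim F}$ yields the decomposition of $\FM\KC_\NC$. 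For the final assertion about heads and socles, I would use that $\FM P_F$ has head and socle isomorphic to $F$ in $\FM W\text{-}\mathrm{mod}$, translate this into the same statement for $\FM P_F[2\nu+r]$ in $\p\MC(\gG_\rs,\FM)$, invoke Proposition \ref{prop:top socle} to pass through ${j_\rs}_{!*}$, and then transport through the equivalence $\FC$.

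The only point that needs a moment of care is that none of the simple perverse sheaves appearing as heads or socles is killed or confused with another under the composite $\TC$. This is guaranteed by the fact that $\TC$ is built from two fully faithful functors, so that $\Hom$-spaces (and in particular socle and head multiplicities of a given simple) are preserved; there is no decomposition-theorem-like obstruction to overcome because $\FC$ is an honest equivalence with $\EM$ coefficients, and ${j_\rs}_{!*}$ is exact enough on the relevant objects thanks to smallness of $\pi$. Thus there is no serious obstacle: once the identity $\TC(\EM W)\simeq \EM\KC_\NC$ is granted, the corollary is a formal consequence of the structure of $\EM W$ as an $\EM$-algebra together with the general properties of $\FC$ and ${j_\rs}_{!*}$ already collected in Chapters \ref{chap:preliminaries} and \ref{chap:fourier}.
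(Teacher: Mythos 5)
Your proof is correct and reproduces the paper's (implicit) argument: apply $\TC = \FC\bigl({j_\rs}_{!*}(-[2\nu+r])\bigr)(\nu+r)$ to the decompositions of $\EM W$ into simples (over $\KM$) or into projective indecomposables (over $\OM$ and $\FM$), and use that $\TC$ is fully faithful — being the composite of the fully faithful functor ${j_\rs}_{!*}$ and the equivalence $\FC$ — together with Proposition \ref{prop:top socle} to carry heads and socles along. The one small imprecision is the parenthetical ``${j_\rs}_{!*}$ is exact enough\ldots thanks to smallness of $\pi$'': preservation of heads and socles is a general consequence of Proposition \ref{prop:top socle}, not of any exactness property, and smallness of $\pi$ is used elsewhere (to identify $\EM\KC$ with ${j_\rs}_{!*}\EM\KC_\rs$), but this does not affect the validity of the argument.
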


\subsection{Springer correspondence by Fourier-Deligne tranform}

\subsubsection{Springer correspondence for $\KM W$}

Let $E$ be a simple $\KM W$-module. Then $\TC(E)$ is a direct
summand of $\KC_\NC$. Hence it is a $G$-equivariant simple perverse
sheaf supported on $\NC$, so it is of the form $\JC_{!*}(\OC_E, \LC_E)$ for
some adjoint orbit $\OC_E$ in $\NC$, and some irreducible $G$-equivariant
local system on $\OC_E$. We may thus associate to the simple $\KM W$-module
$E$ the pair $(\OC_E, \LC_E)$ or equivalently the pair $(x_E, \rho_E)$
(up to G-conjugacy), where
$x_E$ is a representative of the orbit $\OC_E$, and $\rho_E$ is the irreducible
character of $A_G(x_E)$ corresponding to $\LC_E$.

Let $\NG_\KM(G)$ (or simply $\NG_\KM$)
be the set of all pairs $(\OC,\LC)$, where $\OC$ is an adjoint orbit
in $\NC$, and $\LC$ is an irreducible $G$-equivariant local system on $\OC$ (over $\KM$).
This finite set parametrizes the simple $G$-equivariant simple perverse sheaves
on $\NC$.

Let us denote by $\Psi_\KM : \Irr \KM W \to \NG_\KM$ be the map defined above, and
let $\impsi_\KM$ be its image. Then $\Psi_\KM$ induces a bijection from $\Irr \KM W$
to $\impsi$ (that is, $\Psi_\KM$ is injective). Indeed, if we know
$\Psi(E)$ (or equivalently, $\TC(E)$) one can recover
${j_\rs}_{!*} (\LC(E)[2\nu + r]))$ by applying Theorem \ref{th:inv} (INV),
and then restricting to $\gG_\rs$ we get the local system we started with,
and hence the representation $E$.

\begin{theo}
The map $\Psi_\KM$ defined above induces a bijection
$\Irr \KM W \stackrel{\sim}{\to} \impsi_\KM$.
\end{theo}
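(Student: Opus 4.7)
The plan is to show that $\Psi_\KM$ is injective, which is all that is needed since surjectivity onto the image $\impsi_\KM$ is automatic. The strategy is to invert, step by step, the composition of functors defining $\TC$.

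First I would observe that, by Theorem \ref{th:equiv perv} (EQUIV), the Fourier--Deligne transform $\FC$ is an equivalence of categories. Hence if $E_1, E_2 \in \Irr \KM W$ satisfy $\Psi_\KM(E_1) = \Psi_\KM(E_2)$, then $\TC(E_1) \simeq \TC(E_2)$ as simple perverse sheaves on $\NC$, and applying a quasi-inverse of $\FC$ (up to the Tate twist by $\nu + r$) yields an isomorphism
\[
{j_\rs}_{!*}(E_1[2\nu + r]) \simeq {j_\rs}_{!*}(E_2[2\nu + r])
\]
in $\p\MC(\gG,\KM)$. The next step is to restrict to the open subvariety $\gG_\rs$: since $\tilde j_\rs^* \circ \p {j_\rs}_{!*}$ is the identity on the heart (one of the defining properties of the intermediate extension), this gives an isomorphism of the corresponding shifted local systems $E_1[2\nu+r] \simeq E_2[2\nu+r]$ on $\gG_\rs$, i.e.\ an isomorphism of local systems associated to $E_1$ and $E_2$.

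Finally, I would use the fact that $\pi_\rs : \tilde\gG_\rs \to \gG_\rs$ is a Galois finite étale covering with group $W$: this identifies continuous $\KM$-representations of $W$ with $\KM$-local systems on $\gG_\rs$ that are constituents of ${\pi_\rs}_* \KM$, and the identification is an equivalence of categories (in particular fully faithful). Consequently $E_1 \simeq E_2$ as $\KM W$-modules, which establishes injectivity of $\Psi_\KM$.

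There is no real obstacle here: the theorem is essentially a formal consequence of three facts already proved or recalled in the excerpt, namely that $\FC$ is an equivalence, that ${j_\rs}_{!*}$ is fully faithful (so in particular $\tilde j_\rs^* \p {j_\rs}_{!*} = \id$ on perverse sheaves supported on the closure of $\gG_\rs$), and that $\pi_\rs$ is a $W$-torsor. The slightly delicate point to verify carefully is the Tate twist and shift bookkeeping when inverting $\FC$ via Theorem \ref{th:inv}, but this is a routine verification rather than a genuine difficulty. The deeper content, namely the description of the image $\impsi_\KM$ inside $\NG_\KM$, is not addressed by this theorem and will require the later analysis of the restriction $\EM\KC_\NC = i_\NC^* \EM\KC[-r]$ and its constituents.
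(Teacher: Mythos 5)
Your proof is correct and matches the paper's argument: the paper also inverts $\FC$ via Theorem \ref{th:inv} (INV) to recover ${j_\rs}_{!*}(E[2\nu+r])$ from $\TC(E)$, then restricts to $\gG_\rs$ to recover the local system and hence $E$. The only cosmetic difference is that you cite Theorem \ref{th:equiv perv} (EQUIV) for the equivalence rather than INV directly, and you spell out the final step via the $W$-torsor $\pi_\rs$ slightly more explicitly.
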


\subsubsection{Springer correspondence for $\FM W$}

Let $F$ be a simple $\FM W$-module. Then, by Proposition \ref{prop:top
socle}, $\TC(F)$ is the head (and also the socle) of $\TC(\FM P_F)$,
which is a direct summand of $\KC_\NC$. Hence $\TC(F)$ is supported on
$\NC$, and $\TC(F) = \JC_{!*}(\OC_F,\LC_F)$ for some pair $\Psi_\FM(F)
= (\OC_F,\LC_F)$ in the set $\NG_\FM(G)$ (or simply $\NG_\FM$) of all
pairs $(\OC,\LC)$, where $\OC$ is an adjoint orbit in $\NC$, and $\LC$
is an irreducible $G$-equivariant local system on $\OC$ (over
$\FM$). We will denote by $\impsi_\FM$ the image of $\Psi_\FM : \Irr
\FM W \to \NG_\FM$.  Again, $\Psi_\FM$ is injective.

\begin{theo}
The map $\Psi_\FM$ defined above induces a bijection
$\Irr \FM W \stackrel{\sim}{\to} \impsi_\FM$.
\end{theo}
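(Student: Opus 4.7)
The plan is to invert, step by step, the chain of functors used to define $\TC$, showing that each step is fully faithful (or an equivalence) on the relevant subcategories, so that the pair $(\OC_F,\LC_F) = \Psi_\FM(F)$ determines $F$ up to isomorphism. This is structurally the same argument as for $\Psi_\KM$, and once the pieces assembled earlier in the chapter are in place, no genuine new obstacle arises.

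First, I will recall that $\TC(F) = \FC\bigl({}^p {j_\rs}_{!*}(F[2\nu+r])\bigr)(\nu+r)$, and that by the preceding analysis $\TC(F)$ is a simple $G$-equivariant perverse sheaf supported on $\NC$, identified with $\p\JC_{!*}(\OC_F,\LC_F)$. To reverse this, I will apply the quasi-inverse of the Fourier--Deligne transform given by Corollary~\ref{cor:equiv db} (or equivalently Theorem~\ref{th:equiv perv} for the perverse version), namely $a^*\FC'(-)(r)$ with $a = -1_\gG$. Since $\FC$ is an auto-equivalence of $\p\MC(\gG,\FM)$, the perverse sheaf $\p{j_\rs}_{!*}(F[2\nu+r])$ is recovered, up to a canonical twist and sign, from $\TC(F)$; hence, from the isomorphism class of $(\OC_F,\LC_F)$.

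Next, I will use the fact that the intermediate extension ${j_\rs}_{!*}$ along the open immersion $j_\rs$ is fully faithful on simple perverse sheaves, with left inverse $j_\rs^*$. Applying $j_\rs^*$ to $\p{j_\rs}_{!*}(F[2\nu+r])$ returns $F[2\nu+r]$ as an irreducible $\FM$-local system on $\gG_\rs$, shifted into perverse degree. Desuspending by $[2\nu+r]$, I recover the $\FM$-local system underlying $F$ on $\gG_\rs$. Finally, since $\pi_\rs:\tilde\gG_\rs\to\gG_\rs$ is a Galois \'etale $W$-covering, the category of $\FM$-local systems on $\gG_\rs$ that become trivial on $\tilde\gG_\rs$ is equivalent to $\FM W\text{-mod}$ via $F\mapsto(\pi_\rs)_*\FM\otimes_{\FM W}F$, and the inverse equivalence extracts $F$ as an $\FM W$-module from its associated local system. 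This reconstruction shows that any two simple $\FM W$-modules $F,F'$ with $\Psi_\FM(F)\simeq\Psi_\FM(F')$ must already be isomorphic, proving injectivity.

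The only point that might look delicate is confirming that the chain of identifications is genuinely functorial and that ``recovering $F[2\nu+r]$ up to isomorphism'' really gives back $F$ in $\Irr\FM W$, rather than an arbitrary simple local system on $\gG_\rs$. This is handled by two observations already available: the local system $F$ on $\gG_\rs$ comes from an $\FM W$-module by construction (so it lies in the essential image of the equivalence above), and ${j_\rs}_{!*}$ being fully faithful means distinct simple $\FM W$-modules $F\not\simeq F'$ produce non-isomorphic simple perverse extensions, hence non-isomorphic Fourier transforms. Thus the proof reduces to quoting Theorem~\ref{th:equiv perv}, Corollary~\ref{cor:fourier simple}, and the standard properties of ${j_\rs}_{!*}$ recalled in Section~\ref{sec:recollement}, and no serious computation is required.
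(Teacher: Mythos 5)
Your proposal follows exactly the paper's argument: the paper proves injectivity of $\Psi_\KM$ by inverting the Fourier--Deligne transform via INV (Theorem~\ref{th:inv}), restricting to $\gG_\rs$ to recover the local system, and hence the module, and then simply says ``Again, $\Psi_\FM$ is injective'' for the modular case. Your write-up makes the implicit steps explicit (quasi-inverse $a^*\FC'(-)(r)$, full faithfulness of $j_{\rs,!*}$, the equivalence between local systems trivialized by the $W$-torsor $\pi_\rs$ and $\FM W$-modules), but it is structurally the same proof.
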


Let us remark that if $\ell$ does not divide the order of any of the finite
groups $A_G(x)$, $x\in\NC$, then all the group algebras $\FM A_G(x)$ are
semisimple, so for each $x$ there is a natural bijection
$\Irr \KM A_G(x) \stackrel{\sim}{\to} \Irr \FM A_G(x)$,
and thus there is a natural bijection
$\NG_\KM \stackrel{\sim}{\to} \NG_\FM$.

\section{Decomposition matrices}\label{sec:decomposition}

\subsection{Comparison of $e$ maps}

\begin{theo}\label{th:e}
Let $F \in \Irr \FM W$. Then $\TC(\KM P_F)$ is supported on $\NC$, and for each
$E \in \Irr \KM W$  we have
\[
[\KM P_F : E] = [\TC(\KM P_F) : \TC(E)]
\]
\end{theo}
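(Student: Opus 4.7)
The plan is to exploit that $\KM W$ is semisimple (as $\KM$ has characteristic zero), so $\KM P_F$ decomposes cleanly, and then to push this decomposition through the additive functor $\TC$.

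First I would write
\[
\KM P_F \simeq \bigoplus_{E \in \Irr \KM W} E^{\oplus [\KM P_F : E]}
\]
as $\KM W$-modules; this is just the definition of the multiplicities on the semisimple side. The only task is then to argue that applying $\TC$ to this direct-sum decomposition yields the corresponding direct-sum decomposition of perverse sheaves.

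Recall that $\TC = \FC \circ \p {j_\rs}_{!*}(-[2\nu + r])(\nu + r)$, where a $\KM W$-module is viewed as a local system on $\gG_\rs$ via the Galois $W$-covering $\pi_\rs : \tilde\gG_\rs \to \gG_\rs$. The association $M \mapsto \un M$ of local systems is exact and commutes with direct sums. On the direct sum of simple local systems $\un E [2\nu + r]$ coming from $\Irr \KM W$, the intermediate extension $\p {j_\rs}_{!*}$ is compatible with the decomposition, since the intermediate extension of a direct sum of simple perverse sheaves is the direct sum of their intermediate extensions. Finally, $\FC(-)(\nu + r)$ is an equivalence of abelian categories $\p\MC(\gG,\KM) \to \p\MC(\gG,\KM)$ by Theorem \ref{th:equiv perv}, hence in particular additive. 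Putting this together gives
\[
\TC(\KM P_F) \simeq \bigoplus_{E \in \Irr \KM W} \TC(E)^{\oplus [\KM P_F : E]}.
\]

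By the Springer correspondence for $\KM W$ established in the preceding subsection, each $\TC(E)$ is a simple $G$-equivariant perverse sheaf $\p\JC_{!*}(\OC_E,\LC_E)$, supported on $\NC$, and the map $\Psi_\KM : E \mapsto (\OC_E,\LC_E)$ is injective, so the $\TC(E)$ for $E \in \Irr \KM W$ are pairwise non-isomorphic. The support claim is therefore automatic, and reading multiplicities off the displayed decomposition gives $[\TC(\KM P_F) : \TC(E)] = [\KM P_F : E]$. The only delicate point in the argument is the commutation of $\p {j_\rs}_{!*}$ with direct sums here, but this reduces to the fact, recalled in Chapter~\ref{chap:preliminaries}, that on simple perverse sheaves (and finite direct sums thereof) the intermediate extension is additive — no semisimplicity of $\p {j_\rs}_{!*}$ on arbitrary objects is needed.
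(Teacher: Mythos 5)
Your proof is correct, but it takes a different route from the paper's. For the multiplicity identity, the paper invokes Proposition~\ref{prop:IC mult} (the intermediate extension preserves multiplicities of simple constituents, a general fact that requires no semisimplicity), then transports the equality through the equivalence $\FC(-)(\nu+r)$ — this is the identical chain of manipulations used in the proof of Theorem~\ref{th:d} for the modular case, so the two theorems become twins. You instead use the fact that $\KM W$ is semisimple to decompose $\KM P_F$ explicitly into a direct sum of simples and push that decomposition through the additive functor $\TC$, reading off multiplicities from the resulting direct sum of pairwise non-isomorphic simple perverse sheaves. Both arguments are sound; yours is more concrete in this characteristic-zero setting, but it has no analogue over $\FM$ (which is presumably why the paper prefers the multiplicity-preservation lemma). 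For the support claim, the paper observes that $\TC(\KM P_F)$ is a direct summand of $\TC(\KM W) = \KM\KC_\NC$, while you note that each simple summand $\TC(E)$ is already known to be supported on $\NC$; these are essentially equivalent observations. Your remark that the commutation of $\p {j_\rs}_{!*}$ with finite direct sums is the only delicate point is correct, and in fact $\p j_{!*}$ is additive on all objects (the image of $\p j_! \to \p j_*$ respects direct sums since $\p j_!$, $\p j_*$ are additive and image commutes with finite direct sums in any abelian category), so you did not even need to restrict to simples.
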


\begin{proof}
We have
\[
\bigoplus_{F \in \FM W} \TC(\KM P_F)^{\dim F}
= \TC(\KM W) = \KM\KC_\NC 
\]
hence $\TC(\KM P_F)$ is supported on $\NC$.

\[
\begin{array}{rcll}
[\KM P_F : E]
&=& [{j_\rs}_{!*} \KM \un P_F : {j_\rs}_{!*} \un E]
&\text{by Prop. \ref{prop:IC mult}}
\\
&=& [\FC {j_\rs}_{!*} \KM \un P_F (\nu + r) : \FC {j_\rs}_{!*} \un E (\nu + r)]
\\
&=& [\KM \TC(P_F) : \TC(E)]
\end{array}
\]
\end{proof}

\subsection{Comparison of $d$ maps}

If $E \in \Irr \KM W$ and $F \in \Irr \FM W$, let $d^W_{E,F}$ be the
corresponding decomposition number.

\begin{theo}\label{th:d}
Let $E \in \Irr \KM W$, and let $E_\OM$ be an integral form for $E$.
Then $\TC(E_\OM)$ is supported on $\NC$, and for each $F \in \Irr \FM W$
we have
\[
[\FM E_\OM : F] = [\FM \TC(E_\OM) : \TC(F)]
\]
Thus
\[
d_{\Psi_\KM(E),\Psi_\FM(F)} = d^W_{E,F}
\]
\end{theo}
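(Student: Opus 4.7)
The strategy is to exploit the definition $\TC(M) = \FC(\p {j_\rs}_{!*}(\LC_M[2\nu+r]))(\nu+r)$ and reduce Theorem \ref{th:d} to two statements about the $\OM$-perverse sheaf $\p {j_\rs}_{!*}(E_\OM[2\nu+r])$, using that $\FC(-)(\nu+r)$ is an exact equivalence of abelian categories of $\OM$-perverse sheaves (Theorem \ref{th:equiv perv}) which sends $\p {j_\rs}_{!*}(E_\OM[2\nu+r])$ to $\TC(E_\OM)$, sends $\p {j_\rs}_{!*}(F[2\nu+r])$ to $\TC(F)$, and sends $\KC$ to ${i_\NC}_* \KC_\NC$. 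Such an equivalence preserves every Grothendieck group and composition-factor computation.

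For the support claim, I would realize $E_\OM$ as a quotient of a free $\OM W$-module: since $E_\OM$ is finitely generated over $\OM W$, one may choose a surjection $(\OM W)^N \twoheadrightarrow E_\OM$. The functor $M \mapsto \LC_M$ from $\OM W$-mod to local systems on $\gG_\rs$ is exact (it is descent along the finite \'etale Galois cover $\pi_\rs$), so after shifting by $[2\nu+r]$ one obtains an epimorphism of perverse sheaves $(\LC_{\OM W}[2\nu+r])^N \twoheadrightarrow \LC_{E_\OM}[2\nu+r]$ on $\gG_\rs$. The intermediate extension $\p {j_\rs}_{!*}$ preserves epimorphisms: indeed $\p {j_\rs}_!$ is right exact as a left adjoint, $\p {j_\rs}_{!*}$ is a quotient of $\p {j_\rs}_!$, and given an epi $A \twoheadrightarrow B$ on $\gG_\rs$ the composition $\p {j_\rs}_! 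A \twoheadrightarrow \p {j_\rs}_! B \twoheadrightarrow \p {j_\rs}_{!*} B$ is an epi which factors through the epi $\p {j_\rs}_! A \twoheadrightarrow \p {j_\rs}_{!*} A$, forcing $\p {j_\rs}_{!*}A \twoheadrightarrow \p {j_\rs}_{!*}B$ to be epi. Applying this and then $\FC(-)(\nu+r)$ gives a surjection $({i_\NC}_* \KC_\NC)^N \twoheadrightarrow \TC(E_\OM)$, so $\TC(E_\OM)$ is supported on $\NC$.

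For the multiplicity computation I apply Proposition \ref{prop:IC mult} to the open immersion $j_\rs$ and the simple local system $F$ on $\gG_\rs$: for any class in the relevant Grothendieck group of $D^b_c(\gG,\FM)$,
\[
[B : \p {j_\rs}_{!*}(F[2\nu+r])] = [j_\rs^* B : F[2\nu+r]].
\]
Take $B = \FM\, \p {j_\rs}_{!*}(E_\OM[2\nu+r])$. Since $j_\rs^*$ commutes with modular reduction $\FM \otimes^\LM_\OM -$ and since $j_\rs^*\, \p {j_\rs}_{!*}$ is the identity on $\p\MC(\gG_\rs,\OM)$, one computes $j_\rs^* B = (\FM E_\OM)[2\nu+r]$, whose multiplicity of $F[2\nu+r]$ is $[\FM E_\OM : F] = d^W_{E,F}$. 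Transferring through the Fourier equivalence yields $[\FM \TC(E_\OM) : \TC(F)] = d^W_{E,F}$; combined with the support claim, this identifies $\TC(E_\OM)$ as an integral lift of $\JC_{!*}(\OC_E,\LC_E)$ supported on $\NC$, so $d_{\Psi_\KM(E),\Psi_\FM(F)} = d^W_{E,F}$.

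The main obstacle is the support assertion, since once $\TC(E_\OM)$ is known to live on $\NC$ the multiplicity part is essentially a one-line consequence of Proposition \ref{prop:IC mult}, which was tailor-made to reduce multiplicities of intermediate-extension simples to a trivial computation on the open stratum where $j_\rs^*$ commutes freely with $\FM \otimes^\LM_\OM -$. Handling the support in turn rests on the two points highlighted above: that $\p {j_\rs}_{!*}$ preserves epimorphisms, and that $E_\OM$ can be realized as an $\OM W$-quotient of a free module so as to inherit support from $\KC$.
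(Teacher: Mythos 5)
Your proof is correct. The multiplicity computation, while phrased in one step, is in substance the same as the paper's: you apply equation~(\ref{eq:j^*}) of Proposition~\ref{prop:IC mult} directly to $B = \FM\,\p{j_\rs}_{!*}(E_\OM[2\nu+r])$, whereas the paper first applies (\ref{eq:j_!*}) to ${j_\rs}_{!*}(\FM E_\OM)$ and then uses a short exact sequence $0 \to T \to \FM\,{j_\rs}_{!*} E_\OM \to {j_\rs}_{!*}(\FM E_\OM) \to 0$ with $T$ supported on the boundary to pass from ${j_\rs}_{!*}\FM$ to $\FM\,{j_\rs}_{!*}$; these two routes are logically equivalent since $j_\rs^*$ kills $T$ and commutes with $\FM\otimes^\LM_\OM(-)$.

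Where you genuinely diverge from the paper is the support claim. The paper argues by torsion: since $\KM\TC(E_\OM)=\TC(E)$ is supported on $\NC$, the restriction $j^*\TC(E_\OM)$ to the open complement of $\NC$ is torsion; then, because $\TC(E_\OM)$ (being the Fourier transform of a torsion-free intermediate extension) is itself torsion-free, the adjunction map $\p j_! j^*\TC(E_\OM)\to\TC(E_\OM)$ is forced to vanish, which in turn forces $j^*\TC(E_\OM)=0$. Your argument instead takes a free presentation $(\OM W)^N\twoheadrightarrow E_\OM$, pushes it through the exact functor $M\mapsto\LC_M[2\nu+r]$, observes that $\p{j_\rs}_{!*}$ preserves epimorphisms (a clean consequence of right exactness of $\p j_!$ and the natural epimorphism $\p j_!\twoheadrightarrow\p j_{!*}$), and transports the resulting surjection $\KC^N\twoheadrightarrow\p{j_\rs}_{!*}(E_\OM[2\nu+r])$ through the exact equivalence $\FC(-)(\nu+r)$ to a surjection $({i_\NC}_*\KC_\NC)^N\twoheadrightarrow\TC(E_\OM)$, whence the support condition. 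Both arguments are correct; the paper's is shorter and exploits the torsion theory on $\p\MC(\gG,\OM)$, while yours is more explicitly module-theoretic and has the virtue of making visible that $\TC(E_\OM)$ is a quotient of $\KC_\NC^N$, which is a stronger conclusion than mere support on $\NC$.
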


\begin{proof}

We have a short exact sequence
\begin{equation}\label{T}
0 \longto T \longto \FM {j_\rs}_{!*} \un E_\OM
\longto {j_\rs}_{!*} \FM \un E_\OM \longto 0
\end{equation}
with $T$ supported on $\gG - \gG_\rs$.

We have $\KM \TC(E_\OM) = \TC(E)$, so it is supported by $\NC$.
Let $j : \gG \setminus \NC \to \gG$ be the open immersion.
By what we have just said, $j^* \TC(E_\OM)$ is a torsion perverse sheaf.
Hence ${}^p j_! j^* \TC(E_\OM)$ is torsion, and thus
the adjunction morphism ${}^p j_! j^* \TC(E_\OM) \to \TC(E_\OM)$ is zero,
because $\TC(E_\OM)$ is torsion-free. But the identity of $j^* \TC(E_\OM)$
factors through
$j^* \TC(E_\OM) \to j^*\; {}^p j_!\; j^* \TC(E_\OM) \to j^* \TC(E_\OM)$,
so $j^* \TC(E_\OM)$ is zero, that is, $\TC(E_\OM)$ is supported on $\NC$.

We have
\[
\begin{array}{cll}
\multicolumn{2}{l}{[\FM E_\OM : F]}\\
=& [{j_\rs}_{!*} \FM \underline E_\OM : {j_\rs}_{!*} \underline F]
&\text{by Prop. \ref{prop:IC mult}}
\\
=& [\FM {j_\rs}_{!*} \underline E_\OM : {j_\rs}_{!*} \underline F]
&\text{by (\ref{T}) and } [T : {j_\rs}_{!*} \underline F] = 0
\\
=& [\FM \FC({j_\rs}_{!*} \underline E_\OM) (\nu + r) : \FC({j_\rs}_{!*} \underline F)(\nu + r)]
&\text{by EQUIV and } \FC\FM = \FM\FC
\\
=& [\FM \TC(E_\OM) : \TC(F)]
\end{array}
\]
\end{proof}

This theorem means that we can obtain the decomposition matrix of the
Weyl group $W$ by extracting certain rows (the image $\impsi_\KM$ of
the ordinary Springer correspondence) and certain columns (the image
$\impsi_\FM$ of the modular Springer correspondence) of the
decomposition matrix for $G$-equivariant perverse sheaves on the
nilpotent variety $\NC$.

\section{Modular Springer correspondence for $GL_n$}\label{sec:gl_n}

For the symmetric group $\SG_n$, we have a Specht module theory
compatible with the order on the nilpotent orbits through the
Springer correspondence. This is enough to determine the modular
Springer correspondence for $G = GL_n$.

Since all the groups $A_G(x)$ are trivial, we only need to parametrize
nilpotent orbits, which is done using the Jordan normal form.
So $\NG_\KM = \NG_\FM$ is the set $\PG_n$ of partitions of $n$. If $\l$
is a partition of $n$, we denote by $\OC_\l$ the corresponding orbit,
and by $x_\l$ an element of this orbit. Moreover, to simplify the
notation, we set $\ic(\l,\EM) = \p\JC_{!*}(\OC_\l,\EM)$.
In characteristic zero, it is known that $\impsi_\KM = \PG_n$, and
$\TC(S^\l) = \ic(\l',\KM)$ for $\l$ in $\PG_n$.

\begin{theo}
Suppose $G = GL_n$. If $\mu$ is an $\ell$-regular partition, then we have
\[
\TC(D^\mu) = \ic(\mu',\FM)
\]
where $\mu'$ is the partition dual to $\mu$. Thus we have
\[
\impsi_\FM = \{ \l \vdash n \mid \l \text{ is $\ell$-restricted} \}
\]
and, for two partitions $\l$ and $\mu$, with $\mu$ $\ell$-regular, we
have $d^W_{\l,\mu} = d_{\l',\mu'}$.
\end{theo}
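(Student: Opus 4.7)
The plan is to combine Theorem \ref{th:d} with the classical ordinary Springer correspondence $\Psi_\KM(S^\l) = \l'$ for $GL_n$ (recalled just before the statement) and with James's triangularity theorem for the decomposition matrix of $\SG_n$. Fix an $\ell$-regular partition $\mu \vdash n$ and set $\nu = \Psi_\FM(D^\mu)$, so that $\TC(D^\mu) = \ic(\nu,\FM)$; the main assertion is $\nu = \mu'$.

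First, Theorem \ref{th:d} rewrites every symmetric-group decomposition number as a geometric one: for each $\l \vdash n$,
$$d^{\SG_n}_{\l,\mu} = d_{\l',\nu}.$$
Taking $\l = \mu$ gives $d_{\mu',\nu} = d^{\SG_n}_{\mu,\mu} = 1$, so $\ic(\nu,\FM)$ is an actual composition factor of the modular reduction of $\ic(\mu',\OM)$. Since every simple constituent is supported in $\ov{\OC_{\mu'}}$, this forces $\OC_\nu \subseteq \ov{\OC_{\mu'}}$; under the standard identification of the closure order on nilpotent orbits in $\gG\lG_n$ with the dominance order on partitions, this reads $\nu \trianglelefteq \mu'$, i.e.\ $\mu \trianglelefteq \nu'$.

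For the reverse inequality, I would use the tautological fact that $d_{\nu,\nu} = 1$: the modular reduction of $\ic(\nu,\OM)$ restricts to the shifted constant sheaf on the open orbit $\OC_\nu$, so $\ic(\nu,\FM)$ occurs in it with multiplicity exactly one and no other geometric simple can be supported on all of $\ov{\OC_\nu}$. Substituting $\l = \nu'$ into the identity of the previous paragraph yields $d^{\SG_n}_{\nu',\mu} = d_{\nu,\nu} = 1$, and James's triangularity then forces $\nu' \trianglelefteq \mu$. Combining the two inequalities gives $\nu' = \mu$, equivalently $\nu = \mu'$. Once this is established, the description $\impsi_\FM = \PG_n^{\ell\text{-res}}$ is immediate from the injectivity of $\Psi_\FM$, and the identity $d^{\SG_n}_{\l,\mu} = d_{\l',\mu'}$ is just Theorem \ref{th:d} re-read with the newly identified $\Psi_\FM(D^\mu) = \mu'$.

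The only real obstacle is convention-matching: one must verify that the ordinary Springer correspondence is normalised so that $\Psi_\KM(S^\l) = \l'$ (rather than $\l$), and that the direction ``$\l \trianglelefteq \mu$'' in James's theorem matches the direction ``$\OC_\mu \subseteq \ov{\OC_\l} \iff \mu \trianglelefteq \l$'' after the prime introduced by Theorem \ref{th:d}. With the conventions adopted in the paper this amounts to routine bookkeeping, but it is the point where a misalignment would collapse the argument, since the whole proof rests on the two triangularities interlocking in the correct direction to pinch $\nu'$ between $\mu$ and $\mu$.
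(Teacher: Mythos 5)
Your argument is correct, and it is genuinely shorter than the proof in the paper: the paper proceeds by induction on $\l \in \PG_n$ ordered by dominance, with a base case handled by DIRAC, and rules out strict inequality at each step by invoking the induction hypothesis together with the injectivity of $\Psi_\FM$; you instead pinch $\nu := \Psi_\FM(D^\mu)$ from both sides with no induction at all. Concretely: both arguments extract $\nu \leqslant \mu'$ from the nonvanishing of $d_{\mu',\nu}$ together with the support constraint on constituents of $\FM\,\ic(\mu',\OM)$, and both invoke James's triangularity for $\SG_n$ at some point; but where the paper invokes James only to constrain one side and then falls back on induction to close the gap, you substitute $\l = \nu'$ into the identity $d^{\SG_n}_{\l,\mu} = d_{\l',\nu}$ of Theorem \ref{th:d} to obtain $d^{\SG_n}_{\nu',\mu} = d_{\nu,\nu} = 1$, and James then gives $\nu' \trianglelefteq \mu$ directly. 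This second use of the transfer identity, with the clever substitution $\l = \nu'$, is the idea the paper's inductive proof does not exploit, and it removes the need for the base case and the bookkeeping of the induction. The one caveat you flag yourself is the correct one to worry about: the whole argument rests on two triangularities pointing in opposite directions after the passage $\l \leftrightarrow \l'$, so one must check that $\Psi_\KM(S^\l) = \l'$ (and not $\l$) in the normalization used here, and that James's condition reads $d^{\SG_n}_{\l,\mu} \neq 0 \Rightarrow \l \trianglelefteq \mu$, matching the geometric condition $d_{\a,\b} \neq 0 \Rightarrow \OC_\b \subseteq \ov{\OC_\a}$. With the paper's conventions both hold, and the squeeze closes.
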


\begin{proof}
We prove by induction on $\l \in \PG_n$ that $\l$ is in
$\impsi_\FM$ if and only if $\l$ is $\ell$-restricted, and
that, in this case, $\TC(D^{\l'}) = \ic(\l,\FM)$.

First, note that the partition $\l = (1^n)$ is always
$\ell$-restricted, and that $\TC(D^{(n)}) = \FC(\FM_\gG[2\nu + r]) =
\ic((1^n),\FM)$ by DIRAC.

Now assume the claim has been proved for all $\nu < \l$.
If $\l$ is in $\impsi_\FM$, let $\mu$ be the $\ell$-regular partition
such that $\Psi_\FM(D^\mu) = \l$. We have
\[
d^{\SG_n}_{\l',\mu} = d_{\Psi_\KM(S^{\l'}),\Psi_\FM(D^\mu)}
= d_{\l,\l} = 1 \neq 0
\]
hence $\l' \leqslant \mu$, and thus $\mu' \leqslant \l$.
If equality holds, we are done, since $\mu'$ is
$\ell$-restricted and $\Psi_\FM(D^{\l'}) = \Psi_\FM(D^{\mu}) = \l$.
But a strict inequality $\mu' < \l$ would lead to a contradiction. Indeed, by the
induction hypothesis, we would have $\Psi_\FM(D^\mu) = \mu'$, and this
would contradict the choice of $\mu$ ($\Psi_\FM(D^\mu) = \l$).

In the other direction, let us assume that  $\l$ is $\ell$-restricted.
Let $\mu = \Psi_\FM(D^{\l'})$. Then we have
\[
d_{\l,\mu}
= d_{\Psi_\KM(S^{\l'}), \Psi_\FM(D^{\l'})}
= d^{\SG_n}_{\l',\l'} = 1 \neq 0
\]
hence
\[
\mu \leqslant \l
\]
We cannot have $\mu < \l$, because this would imply
$\Psi_\FM(\l') = \mu = \Psi_\FM(D^{\mu'})$ by induction, hence $\l' =
\mu'$ since $\Psi_\FM$ is injective, and $\l = \mu$. Thus we must have
$\l = \mu = \Psi_\FM(D^{\l'})$.
\end{proof}

By Proposition \ref{prop:rc}, we can now say that the result of Kraft
and Procesi implies James's row and column removal rule.

\chapter{Tables}\label{chap:tables}

In this chapter, we give tables for types of rank at most $3$.  In
each case, we give the character table of $W$ (including ordinary and
modular characters) and the (ordinary and modular) Springer
correspondence. Be aware that, to get the correspondence obtained by
the Lusztig-Borho-MacPherson approach, one should tensor all the
representations of $W$ by the sign representation. The ordinary characters and
the conjugacy classes of $W$ (resp. the nilpotent orbits) are labeled
in the usual way, as for example in the book \cite{CarterFGLT}. 
I used the tables given there for the ordinary Springer
correspondence (but I had to tensor by the sign character).

Afterwards, we give the decomposition matrix for
$G$-equivariant perverse sheaves on the nilpotent variety (we assume
$G$ is simple of adjoint type, and that $p$ is very good for $G$).
In favorable cases, it is complete. There are more indeterminacies
for $\ell = 2$.

The rows correspond to pairs in $\NG_\KM$, while the columns
correspond to pairs in $\NG_\FM$. When the label of a row or a column
is just a nilpotent orbit, then the default local system is the
constant one. For the others, we use $\e$ for the sign character, and
$\psi$ for the irreducible character of degree $2$ of $\SG_3$.

The entries that we were able to determine
geometrically in chapter \ref{chap:dec} are in italics (see \cite{KP2}
for the description of the minimal degenerations in classical types).
The rows and columns corresponding to the Weyl group (the image of the
ordinary resp. modular Springer correspondence) have bold labels, and
the corresponding entries are underlined and (if you can see this
document in colors) red. The zeros above the diagonal are represented
by dots. The unknown entries are left blank.

\newpage

\section{Type $A_1$}

\vfill

\[
\begin{array}{|l|c|c|r|r|}
\hline
&&\text{cardinal}&1&1\\
&&\text{order}&1&2\\
&&\text{class}&1^2&2\\
\hline
\text{orbit}&\text{char}&a\text{-function}&&\\
\hline
1^2 & \chi_{2} &0 & 1 & 1\\
2 & \chi_{1^2} &1 & 1 &-1\\
\hline
1^2 & \phi_2 & \ov\chi_{2} & 1 & 1\\
\hline
\end{array}
\]

\vfill

$\ell = 2$

\[
\begin{array}{l|c|c|}
& {\bf 1^2} & 2          \\
\hline
{\bf 1^2} &\red{\it 1}&.\\
\hline
{\bf 2}   &\red{\it 1}&{\it 1}
\end{array}
\]

\vfill

\newpage

\section{Type $A_2$}

\vfill

\[
\begin{array}{|l|c|c|r|r|r|}
\hline
&&\text{cardinal}&1&1&2\\
&&\text{order}&1&2&3\\
&&\text{class}&1^3&21&3\\
\hline
\text{orbit}&\text{char}&a\text{-function}&&&\\
\hline
1^3 & \chi_{3} &0 & 1 & 1 & 1\\
21 & \chi_{21} &1 & 1 &-1 & 1\\
3 & \chi_{1^3} &3 & 2 & 0 &-1\\
\hline
1^3 & \phi_3 & \ov\chi_{3}    & 1 && 1 \\
21 & \phi_{21} & \ov\chi_{21} & 2 &&-1 \\
\hline
1^3 & \phi_3 & \ov\chi_3 & 1 & 1 &\\
21  & \phi_{21} & \ov\chi_{1^3} & 1 & -1&\\
\hline
\end{array}
\]

\vfill

$\ell = 2$

\[
\begin{array}{l|c|c|c|}
& {\bf 1^3} & {\bf 21} & 3          \\
\hline
{\bf 1^3} &\red{\it 1}&.&.\\
\hline
{\bf 21}  &\red{\it 0}&\red{\it 1}&.\\
\hline
{\bf 3}   &\red{1}    &\red{\it 0} & {\it 1}
\end{array}
\]

\vfill

$\ell = 3$

\[
\begin{array}{l|c|c|c|}
& {\bf 1^3} & {\bf 21} & 3          \\
\hline
{\bf 1^3} &\red{\it 1}&.&.\\
\hline
{\bf 21}  &\red{\it 1}&\red{\it 1}&.\\
\hline
{\bf 3}   &\red{0}    &\red{\it 1} & {\it 1}
\end{array}
\]

\vfill

\section{Type $A_3$}

\[
\begin{array}{|l|c|c|r|r|r|r|r|}
\hline
&&\text{cardinal}&1&6&3&3&6\\
&&\text{order}&1&2&2&3&4\\
&&\text{class}&1^4&21^2&2^2&31&4\\
\hline
\text{orbit}&\text{char}&a\text{-function}&&&&&\\
\hline
1^4  & \chi_{4}  &0  & 1 & 1 & 1 & 1 & 1\\
21^2 & \chi_{31} &1  & 3 & 1 &-1 & 0 &-1\\
2^2  & \chi_{2^2}&2  & 2 & 0 & 2 &-1 & 0\\
31   & \chi_{21^2}&3 & 3 &-1 &-1 & 0 & 1\\
4    & \chi_{1^4}&6  & 1 &-1 & 1 & 1 &-1\\
\hline
1^4  & \phi_3    & \ov\chi_{3}  & 1 && 1 &&\\
21^2 & \phi_{21} & \ov\chi_{21} & 2 &&-1 &&\\
\hline
1^4 & \phi_4     & \ov\chi_4     & 1 & 1 & 1 && 1\\
21^2& \phi_{31}  & \ov\chi_{31}  & 3 & 1 &-1 &&-1\\
2^2 & \phi_{2^2} & \ov\chi_{1^4} & 1 &-1 & 1 &&-1\\
31  & \phi_{21^2}& \ov\chi_{21^2}& 3 &-1 &-1 && 1\\
\hline
\end{array}
\]

\vfill

$\ell = 2$

\[
\begin{array}{l|c|c|c|c|c|}
& {\bf 1^4} & {\bf 21^2} & 2^2 & 31 & 4\\
\hline
{\bf 1^4} &\red{\it 1}&.&.&.&.\\
\hline
{\bf 21^2}&\red{\it 1}&\red{\it 1}&.&.&.\\
\hline
{\bf 2^2} &\red{0}    &\red{\it 1}&{\it 1}&.&.\\
\hline
{\bf 31}  &\red{1}    &\red{1}    &{\it 1}&{\it 1}&.\\
\hline
{\bf 4}   &\red{1}    &\red{0}    &(1)    &{\it 1}&{\it 1}
\end{array}
\]

The $(1)$ is the decomposition number of the Schur algebra, so it is
expected to be the right decomposition number for perverse sheaves.

\vfill

$\ell = 3$

\[
\begin{array}{l|c|c|c|c|c|}
& {\bf 1^4} & {\bf 21^2} & {\bf 2^2} & {\bf 31} & 4\\
\hline
{\bf 1^4} &\red{\it 1}&.&.&.&.\\
\hline
{\bf 21^2}&\red{\it 0}&\red{\it 1}&.&.&.\\
\hline
{\bf 2^2} &\red{1}    &\red{\it 0}&\red{\it 1}&.&.\\
\hline
{\bf 31}  &\red{0}    &\red{0}    &\red{\it 0}&\red{\it 1}&.\\
\hline
{\bf 4}   &\red{0}    &\red{0}    &\red{1}    &\red{\it 0}&{\it 1}
\end{array}
\]

\vfill

\newpage

\section{Type $B_2$}

\[
\begin{array}{|l|c|c|r|r|r|r|r|}
\hline
&&\text{cardinal}&1&2&1&2&2\\
&&\text{order}   &1&2&2&2&4\\
&&\text{class}&1^2,-&1,1&-,1^2&2,-&-,2\\
\hline
\text{orbit}&\text{char}&a\text{-function}&&&&&\\
\hline
1^5   & \chi_{2,-}   & 0  & 1& 1& 1& 1& 1\\
2^2 1 & \chi_{1^2,-}  & 1  & 1& 1& 1&-1&-1\\
3 1^2 & \chi_{1,1}   & 1  & 2& .&-2& .& .\\
|\ \e & \chi_{-,2}   & 1  & 1&-1& 1& 1&-1\\
5     & \chi_{-,1^2}  & 4  & 1&-1& 1&-1& 1\\
\hline
1^5   & \phi_{2,-}  & \ov\chi_{2,-} & 1&&&&\\
\hline
\end{array}
\]

\vfill

$\ell = 2$

\[
\begin{array}{l|c|c|c|c|c|c|c}
&
{\bf 1^5} &
2^2 1 &
3 1^2 &
5          \\
\hline
{\bf 1^5}  &\red{\it 1}&.      &.      &.      \\
\hline
{\bf 2^2 1}&\red{\it 1}&{\it 1}&.      &.      \\
\hline
{\bf 3 1^2}&    \red{2}&{\it 1}&{\it 1}&.      \\
|\ \epsb   &    \red{1}&       &{\it 1}&.      \\
\hline
{\bf 5}    &    \red{1}&       &{\it 1}&{\it 1}\\
\end{array}
\]

\vfill

For $C_2$, we have the same geometry and the same Weyl group. One only needs
to replace the labels of the nilpotent orbits by 
$1^4$, $2 1^2$, $2^2$, $4$.

\vfill

\newpage

\section{Type $B_3$}

\vfill

\[
\begin{array}{|l|c|c|r|r|r|r|r|r|r|r|r|r|}
\hline
&&\text{cardinal}&1&3&3&1&6&6&6&6&8&8\\
&&\text{order}&1&2&2&2&2&4&2&4&3&6\\
&&\text{class}&1^3,-&1^2,1&1,1^2&-,1^3&21,-&1,2&2,1&-,21&3,-&-,3\\
\hline
\text{orbit}&\text{char}&a\text{-function}&&&&&&&&&&\\
\hline
1^7         & \chi_{3,-}   &0  & 1& 1& 1& 1& 1& 1& 1& 1& 1& 1\\
2^2 1^3     & \chi_{21,-}  &1  & 2& 2& 2& 2& .& .& .& .&-1&-1\\
3 1^4       & \chi_{2,1}   &1  & 3& 1&-1&-3& 1& 1&-1&-1& .& .\\
|\ \e       & \chi_{-,3}   &1  & 1&-1& 1&-1& 1&-1&-1& 1& 1&-1\\
3 2^2       & \chi_{1,2}   &2  & 3&-1&-1& 3& 1&-1& 1&-1& .& .\\
3^2 1       & \chi_{1^2,1} &3  & 3& 1&-1&-3&-1&-1& 1& 1& .& .\\
|\ \e       & \chi_{1^3,-} &4  & 1& 1& 1& 1&-1&-1&-1&-1& 1& 1\\
5 1^2       & \chi_{1,1^2} &4  & 3&-1&-1& 3&-1& 1&-1& 1& .& .\\
|\ \e       & \chi_{-,21}  &4  & 2&-2& 2&-2& .& .& .& .&-1& 1\\
7           & \chi_{-,1^3} &9  & 1&-1& 1&-1&-1& 1& 1&-1& 1&-1\\
\hline
1^6         & \phi_{3,-}   & \ov\chi_{3,-}   & 1&  &  &  &  &  &  &  & 1&  \\
2^2 1^3     & \phi_{21,-}  & \ov\chi_{21,-}  & 2&  &  &  &  &  &  &  &-1&  \\
\hline
1^7         & \phi_{3,-}   & \ov\chi_{3,-}   & 1& 1& 1& 1& 1& 1& 1& 1&  &  \\
2^2 1^3     & \phi_{21,-}  & \ov\chi_{1^3,-} & 1& 1& 1& 1&-1&-1&-1&-1&  &  \\
3 1^4       & \phi_{2,1}   & \ov\chi_{2,1}   & 3& 1&-1&-3& 1& 1&-1&-1&  &  \\
|\ \e       & \phi_{-,3}   & \ov\chi_{-,3}   & 1&-1& 1&-1& 1&-1&-1& 1&  &  \\
3 2^2       & \phi_{1,2}   & \ov\chi_{1,2}   & 3&-1&-1& 3& 1&-1& 1&-1&  &  \\
3^2 1       & \phi_{1^2,1} & \ov\chi_{1^2,1} & 3& 1&-1&-3&-1&-1& 1& 1&  &  \\
5 1^2       & \phi_{1,1^2} & \ov\chi_{1,1^2} & 3&-1&-1& 3&-1& 1&-1& 1&  &  \\
|\ \e       & \phi_{-,21}  & \ov\chi_{-,1^3} & 1&-1& 1&-1&-1& 1& 1&-1&  &  \\
\hline
\end{array}
\]

\vfill

\newpage

\vfill

$\ell = 2$

\[
\begin{array}{l|c|c|c|c|c|c|c}
&
{\bf 1^7} &
{\bf 2^2 1^3} &
3 1^4 &
3 2^2 &
3^2 1 &
5 1^2 &
7          \\
\hline
{\bf 1^7}     &\red{\it 1}&.          &.      &.      &.      &.      &.      \\
\hline
{\bf 2^2 1^3} &\red{\it 0}&\red{\it 1}&.      &.      &.      &.      &.      \\
\hline
{\bf 3 1^4}   &    \red{1}&\red{\it 1}&{\it 1}&.      &.      &.      &.      \\
|\ \epsb   &    \red{1}&    \red{0}&{\it 1}&.      &.      &.      &.      \\
\hline
{\bf 3 2^2}   &    \red{1}&    \red{1}&{\it 2}&{\it 1}&.      &.      &.      \\
\hline
{\bf 3^2 1}   &    \red{1}&    \red{1}&       &{\it 1}&{\it 1}&.      &.      \\
|\ \epsb   &    \red{1}&    \red{0}&       &       &{\it 1}&.      &.      \\
\hline
{\bf 5 1^2}   &    \red{1}&    \red{1}&       &       &{\it 1}&{\it 1}&.       \\
|\ \epsb   &    \red{0}&    \red{1}&       &       &       &{\it 1}&.      \\
\hline
{\bf 7}       &    \red{1}&    \red{0}&       &       &       &{\it 1}&{\it 1}\\
\end{array}
\]

\vfill

$\ell =3$

\[
\begin{array}{l|c|c|cc|c|cc|cc|c}
&
{\bf 1^7}     &
{\bf 2^2 1^3} &
{\bf 3 1^4}   &
\epsb         &
{\bf 3 2^2}   &
{\bf 3^2 1}   &
\e            &
{\bf 5 1^2}   &
{\bf \epsb}   &
7 \\
\hline
{\bf 1^7}     &\red{\it 1}&.&.&.&.&.&.&.&.&.\\
\hline
{\bf 2^2 1^3} &\red{\it 1}&\red{\it 1}&.&.&.&.&.&.&.&.\\
\hline
{\bf 3 1^4}   &    \red{0}&\red{\it 0}&\red{\it 1}&.&.&.&.&.&.&.\\
|\ \epsb   &    \red{0}&    \red{0}&\red{\it 0}&\red{\it 1}&.&.&.&.&.&.\\
\hline
{\bf 3 2^2}   &    \red{0}&    \red{0}&\red{\it 0}&\red{\it 0}&\red{\it 1}&.&.&.&.&.\\
\hline
{\bf 3^2 1}   &    \red{0}&    \red{0}&    \red{0}&    \red{0}&\red{\it 0}&\red{\it 1}&.&.&.&.\\
|\ \epsb   &    \red{0}&    \red{1}&    \red{0}&    \red{0}&    \red{0}&\red{\it 0}&{\it 1}&.&.&.\\
\hline
{\bf 5 1^2}   &    \red{0}&    \red{0}&    \red{0}&    \red{0}&    \red{0}&\red{\it 0}&{\it 0}&
\red{\it 1}&.&.\\
|\ \epsb   &    \red{0}&    \red{0}&    \red{0}&    \red{1}&    \red{0}&    \red{0}&&
\red{\it 0}&\red{\it 1}&.\\
\hline
{\bf 7}       &    \red{0}&    \red{0}&    \red{0}&    \red{0}&    \red{0}&    \red{0}&&
\red{\it 0}&\red{\it 1}&{\it 1}\\
\end{array}
\]

\vfill

\newpage

\section{Type $C_3$}

\vfill

\[
\begin{array}{|l|c|c|r|r|r|r|r|r|r|r|r|r|}
\hline
&&\text{cardinal}&1&3&3&1&6&6&6&6&8&8\\
&&\text{order}&1&2&2&2&2&4&2&4&3&6\\
&&\text{class}&1^3,-&1^2,1&1,1^2&-,1^3&21,-&1,2&2,1&-,21&3,-&-,3\\
\hline
\text{orbit}&\text{char}&a\text{-function}&&&&&&&&&&\\
\hline
1^6         & \chi_{3,-}   &0  & 1& 1& 1& 1& 1& 1& 1& 1& 1& 1\\
2 1^4       & \chi_{-,3}   &1  & 1&-1& 1&-1& 1&-1&-1& 1& 1&-1\\
2^2 1^2     & \chi_{2,1}   &1  & 3& 1&-1&-3& 1& 1&-1&-1& .& .\\
|\ \e       & \chi_{21,-}  &1  & 2& 2& 2& 2& .& .& .& .&-1&-1\\
2^3         & \chi_{1,2}   &2  & 3&-1&-1& 3& 1&-1& 1&-1& .& .\\
3^2         & \chi_{1^2,1} &3  & 3& 1&-1&-3&-1&-1& 1& 1& .& .\\
4 1^2       & \chi_{-,21}  &4  & 2&-2& 2&-2& .& .& .& .&-1& 1\\
42          & \chi_{1,1^2} &4  & 3&-1&-1& 3&-1& 1&-1& 1& .& .\\
|\ \e       & \chi_{1^3,-} &4  & 1& 1& 1& 1&-1&-1&-1&-1& 1& 1\\
6           & \chi_{-,1^3} &9  & 1&-1& 1&-1&-1& 1& 1&-1& 1&-1\\
\hline
1^6         & \phi_{3,-}   & \ov\chi_{3,-}   & 1&  &  &  &  &  &  &  & 1&  \\
2^2 1^2     & \phi_{2,1}   & \ov\chi_{21,-}  & 2&  &  &  &  &  &  &  &-1&  \\
\hline
1^6         & \phi_{3,-}   & \ov\chi_{3,-}   & 1& 1& 1& 1& 1& 1& 1& 1&  &  \\
2 1^4       & \phi_{-,3}   & \ov\chi_{-,3}   & 1&-1& 1&-1& 1&-1&-1& 1&  &  \\
2^2 1^2     & \phi_{2,1}   & \ov\chi_{2,1}   & 3& 1&-1&-3& 1& 1&-1&-1&  &  \\
|\ \e       & \phi_{21,-}  & \ov\chi_{1^3,-} & 1& 1& 1& 1&-1&-1&-1&-1&  &  \\
2^3         & \phi_{1,2}   & \ov\chi_{1,2}   & 3&-1&-1& 3& 1&-1& 1&-1&  &  \\
3^2         & \phi_{1^2,1} & \ov\chi_{1^2,1} & 3& 1&-1&-3&-1&-1& 1& 1&  &  \\
4 1^2       & \phi_{-,21}  & \ov\chi_{-,1^3} & 1&-1& 1&-1&-1& 1& 1&-1&  &  \\
42,\ 1      & \phi_{1,1^2} & \ov\chi_{1,1^2} & 3&-1&-1& 3&-1& 1&-1& 1&  &  \\
\hline
\end{array}
\]

\vfill

\newpage

\vfill

$\ell = 2$

\[
\begin{array}{l|c|c|c|c|c|c|c|c}
& {\bf 1^6} & 2 1^4 & {\bf 2^2 1^2} & 2^3 & 3^2 & 4 1^2 & 42 & 6\\
\hline
{\bf 1^6}     &\red{\it 1}&.      &.          &.      &.      &.      &.      &.  \\
\hline
{\bf 2 1^4}   &\red{\it 1}&{\it 1}&.          &.      &.      &.      &.      &.  \\
\hline
{\bf 2^2 1^2} &\red{1}    &{\it 1}&\red{\it 1}&.      &.      &.      &.      &.  \\
|\ \e   &\red{0}    &       &\red{\it 1}&.      &.      &.      &.      &.  \\
\hline
{\bf 2^3}     &\red{1}    &       &\red{\it 1}&{\it 1}&.      &.      &.      &.  \\
\hline
{\bf 3^2}     &\red{1}    &       &    \red{1}&{\it 1}&{\it 1}&.      &.      &.  \\
\hline
{\bf 4 1^2}   &\red{0}    &       &    \red{1}&{\it 1}&{\it 0}&{\it 1}&.      &.  \\
\hline
{\bf 42}      &\red{1}    &       &    \red{1}&       &{\it 1}&{\it 1}&{\it 1}&.  \\
|\ \epsb   &\red{1}    &       &    \red{0}&       &       &       &{\it 1}&.   \\
\hline
{\bf 6}       &\red{1}    &       &    \red{0}&       &       &       &{\it 2}&{\it 1}
\end{array}
\]

\vfill

$\ell = 3$
\[
\begin{array}{l|c|c|cc|c|c|c|cc|c}
&{\bf 1^6}&{\bf 2 1^4}&{\bf 2^2 1^2}&\epsb&{\bf 2^3}&{\bf 3^2}&{\bf 4 1^2}&{\bf 42}&
\e & 6\\
\hline
{\bf 1^6}     &\red{\it 1}&.&.&.&.&.&.&.&.&.\\
\hline
{\bf 2 1^4}   &\red{\it 0}&\red{\it 1}&.&.&.&.&.&.&.&.\\
\hline
{\bf 2^2 1^2} &\red{0}&\red{\it 0}&\red{\it 1}&.&.&.&.&.&.&.\\
|\ \epsb   &\red{1}&\red{0}   &\red{\it 0}&\red{\it 1}&.&.&.&.&.&.\\
\hline
{\bf 2^3}     &\red{0}&\red{0}&\red{\it 0}&\red{\it 0}&\red{\it 1}&.&.&.&.&.\\
\hline
{\bf 3^2}     &\red{0}&\red{0}&\red{0}&\red{0}&\red{\it 0}&\red{\it 1}&.&.&.&.\\
\hline
{\bf 4 1^2}   &\red{0}&\red{1}&\red{0}&\red{0}&\red{\it 0}&\red{\it 0}&\red{\it 1}&.&.&.\\
\hline
{\bf 42}      &\red{0}&\red{0}&\red{0}&\red{0}&\red{0}&\red{\it 0}&\red{\it 0}&\red{\it 1}&.&.\\
|\ \epsb   &\red{0}&\red{0}&\red{0}&\red{1}&\red{0}&\red{0}&\red{0}&\red{\it 0}&{\it 1}&.\\
\hline
{\bf 6}       &\red{0}&\red{0}&\red{0}&\red{0}&\red{0}&\red{0}&\red{1}&\red{\it 0}&{\it 0}&{\it 1}
\end{array}
\]

\vfill

\newpage

\section{Type $G_2$}

\begin{center}
\begin{picture}(100,30)
\put( 40, 10){\circle{10}}
\put(44.5,12){\line(1,0){32}}
\put( 45, 10){\line(1,0){30}}
\put(44.5, 8){\line(1,0){32}}
\put( 55,5.5){\LARGE{$<$}}
\put( 80, 10){\circle{10}}
\put( 35, 20){$\a$}
\put( 75, 20){$\b$}
\end{picture}
\end{center}

\[
\begin{array}{|l|c|c|r|r|r|r|r|r|}
\hline
&&\text{cardinal}&1&3&3&2&2&1\\
&&\text{order}&1&2&2&6&3&2\\
&&\text{class}&1&s_\a&s_\b&s_\a s_\b&(s_\a s_\b)^2&-1\\
\hline
\text{orbit}&\text{char}&a\text{-function}&&&&&&\\
\hline
1              & \chi_{1,0}   &0        & 1 & 1 & 1 & 1 & 1 & 1\\
A_1            & \chi_{1,3}'  &1        & 1 & 1 &-1 &-1 & 1 &-1\\
\widetilde A_1 & \chi_{2,2}   &1  & 2 & 0 & 0 &-1 &-1 & 2\\
G_2(a_1)       & \chi_{2,1}   &1    & 2 & 0 & 0 & 1 &-1 &-2\\
|\ \psi        & \chi_{1,3}'' &1        & 1 &-1 & 1 &-1 & 1 &-1\\
G_2            & \chi_{1,6}   &6        & 1 &-1 &-1 & 1 & 1 & 1\\
\hline
1              & \phi_{1,0}   & \ov\chi_{1,0} & 1 &&&& 1&\\
\widetilde A_1 & \phi_{2,2}   & \ov\chi_{2,2} & 2 &&&&-1&\\
\hline
1              & \phi_{1,0}   & \ov\chi_{1,0}  & 1 & 1 & 1 &&& 1\\
A_1            & \phi_{1,3}'  & \ov\chi_{1,3}' & 1 & 1 &-1 &&&-1\\
\widetilde A_1 & \phi_{2,2}   & \ov\chi_{1,6}  & 1 &-1 &-1 &&& 1\\
G_2(a_1)       & \phi_{2,1}   & \ov\chi_{1,3}''& 1 &-1 & 1 &&&-1\\
\hline
\end{array}
\]
$\ell = 2$
\[
\begin{array}{l|l|r|r|cr|r}
&
{\bf 1}              &
A_1            &
{\bf \widetilde {A_1}} &
G_2(a_1)       &
\psi                &
G_2 \\
\hline
{\bf 1}              &\red{\it 1}&.&.&.&.&.\\
\hline
{\bf A_1}            &\red{\it 1}&{\it 1}&.&.&.&.\\
\hline
{\bf \widetilde {A_1}} &    \red{0}&    &\red{\it 1}&.&.&.\\
\hline
{\bf G_2(a_1)}       &    \red{0}&           &\red{\it 1}&{\it 1}&.&.\\
|\ \psib        &    \red{1}&           &    \red{0}&{\it 0}&{\it 1}&.\\
|\ \e          &         0 &         0 &         0 &{\it 1}&{\it 0}&.\\
\hline
{\bf G_2}            &    \red{1}&           &    \red{0}&{\it 0}&{\it 1}&{\it 1}
\end{array}
\]
$\ell = 3$
\[
\begin{array}{l|l|r|r|cr|r}
&
{\bf 1}              &
{\bf A_1}            &
{\bf \widetilde {A_1}} &
{\bf G_2(a_1)}       &
\e                &
G_2 \\
\hline
{\bf 1}              &\red{\it 1}&.&.&.&.&.\\
\hline
{\bf A_1}            &\red{\it 0}&\red{\it 1}&.&.&.&.\\
\hline
{\bf \widetilde {A_1}} &    \red{1}&    \red{0}&\red{\it 1}&.&.&.\\
\hline
{\bf G_2(a_1)}       &    \red{0}&    \red{1}&\red{\it 0}&\red{\it 1}&.&.\\
|\ \psib        &    \red{0}&    \red{0}&    \red{0}&\red{\it 1}&{\it 1}&.\\
|\ \e          &         0 &         0 &         0 &    {\it 0}&{\it 1}&.\\
\hline
{\bf G_2}            &    \red{0}&    \red{0}&    \red{1}&\red{\it 0}&{\it 0}&{\it 1}
\end{array}
\]

\chapter{Character sheaves on ${\mathfrak sl}_2$}\label{chap:sl2}

Up to now, we have been paying a lot of attention to $\KC_\NC$, but
not so much to $\KC$ itself. We are going to give a full description
in the case of $G = SL_2$, and we will give a hint of what we
might do in type $A_n$ to construct a Springer correspondence involving
the Schur algebra.

The general idea is to replace the regular representation of $W$,
which is $\Ind_{\EM 1}^{\EM W} \EM$, by a direct sum of all
permutation modules over standard parabolic subgroups:
\[
\bigoplus_{\l \in \NG} \Ind_{\EM\SG_\l}^{\EM\SG_n} \EM
\]
whose indecomposable summands are the Young modules $Y^\l$, with some
multiplicities, and whose endomorphism algebra is the Schur algebra
$S_\EM(n) = S_\EM(n,n)$. This corresponds to a (shifted) local system
$\EM\KC'_\rs$ on $\gG_\rs$, and we call $\EM\KC'$ its intermediate extension
on $\gG$.  We still have $\End(\EM\KC') \simeq S_\EM(n)$. I conjecture that
the restriction functor $i_\NC^*[-1]$ sends $\EM\KC'$ to a perverse sheaf
on $\NC$ with the same endomorphism algebra, which would enable us to
make a direct link between the Schur algebra and the $G$-equivariant perverse
sheaves on $\NC$, and between both decomposition matrices. Let us see
what happens for $\gG = \sG\lG_2$.

Apart from the open stratum $\gG_\rs$, we just
have the two nilpotent orbits $\OC_\reg = \OC_\mini = \OC_{(2)}$
and $\OC_\triv = \OC_\subreg = \OC_{(1^2)} = \{0\}$.
On $\gG_\rs$, we will only consider local systems which become trivial
after a pullback by $\pi_\rs$.
We have $W = \SG_2$. The local system ${\pi_\rs}_* \EM$ corresponds to
the regular representation $\EM\SG_2$.

In characteristic $0$, the group algebra is semi-simple, and the perverse sheaf
$\KM\KC_\rs$ splits as the sum of the constant perverse sheaf $C^\rs$ and
the shifted local system $C_\e^\rs$ corresponding to the sign
representation of $\SG_2$. These two simple components are sent by
${j_\rs}_{!*}$ on two simple perverse sheaves on $\gG$, the constant
perverse sheaf $C$ (since $\gG$ is smooth), and the other one, $C_\e$.
Let us denote by $A$ the simple perverse sheaf supported on $\{0\}$,
and by $B$ the simple perverse sheaf $\p\JC_{!*}(\OC_\reg,\KM)$.
Since $\FC(C) = A$, we must have $\FC(C_\e) = B$. This gives the
Springer correspondence for $\sG\lG_2$ by Fourier transform.

Let us make tables for the stalks of the perverse sheaves involved.
We have a line for each stratum, and one column for each cohomology
degree. If $x$ is a point of a given stratum $\OC$ and $i$ is an integer,
the corresponding entry in the table of a perverse sheaf $\AC$ will be
the class of $\HC^i_x \AC$, seen as a representation of a suitable
group $A(\OC)$, in the Grothendieck group of $\EM A(\OC)$.
There is a column $\chi$ describing the alternating sum of the stalks
of each stratum.

Let us first describe $\KM\KC$. So, over $\gG_\rs$, we have the regular
representation of $\SG_2$. Over $\OC_\reg$, the fibers are single
points, so the cohomology of $\BC_{x_\reg}$ is just $\KM$.
But we have $\BC_0 = \BC = G/B = \PM^1$. We get the following table for
$\KM\KC$.
\[
\begin{array}{|c|c|c|c|c|c|c|}
\hline
\text{Stratum} & \text{Dimension} & \chi & -3 & -2 & -1 & 0\\
\hline
\gG_\rs & 3 & -1 - \e & \KM \oplus \KM_\e&.&.&.\\
\hline
\OC_\reg & 2 & -1 & \KM &.&.&.\\
\hline
\OC_\triv & 0 & -2 & \KM & . & \KM &.
\\
\hline 
\end{array}
\]
It is the direct sum of the two simple perverse sheaves $C$
\[
\begin{array}{|c|c|c|c|c|c|c|}
\hline
\text{Stratum} & \text{Dimension} & \chi & -3 & -2 & -1 & 0\\
\hline
\gG_\rs & 3 & -1  & \KM &.&.&.\\
\hline
\OC_\reg & 2 & -1 & \KM &.&.&.\\
\hline
\OC_\triv & 0 & -1 & \KM & . & . & .
\\
\hline 
\end{array}
\]
and $C_\e$, which we deduce by substraction (we have a direct sum !)
\[
\begin{array}{|c|c|c|c|c|c|c|}
\hline
\text{Stratum} & \text{Dimension} & \chi & -3 & -2 & -1 & 0\\
\hline
\gG_\rs & 3 & - \varepsilon & \KM_\varepsilon&.&.&.\\
\hline
\OC_\reg & 2 & 0 & . &.&.&.\\
\hline
\OC_\triv & 0 & -1 & . & . & \KM &.\\
\hline 
\end{array}
\]
The simple $G$-equivariant perverse sheaves on $\NC$ are
$B = \p\JC_{!*}(\OC_\reg,\KM)$,
\[
\begin{array}{|c|c|c|c|c|c|c|}
\hline
\text{Stratum} & \text{Dimension} & \chi & -3 & -2 & -1 & 0\\
\hline
\gG_\rs & 3 & 0 & .&.&.&.\\
\hline
\OC_\reg & 2 & 1 & . & \KM &.&.\\
\hline
\OC_\triv & 0 & 1 & . & \KM &.&.\\
\hline 
\end{array}
\]
$A = \p\JC_{!*}(\OC_\triv,\KM)$
\[
\begin{array}{|c|c|c|c|c|c|c|}
\hline
\text{Stratum} & \text{Dimension} & \chi & -3 & -2 & -1 & 0\\
\hline
\gG_\rs & 3 & 0 & .&.&.&.\\
\hline
\OC_\reg & 2 & 0 & . & . &.&.\\
\hline
\OC_\triv & 0 & 1 & . & . &.& \KM\\
\hline 
\end{array}
\]
and the cuspidal $B_\e = \p\JC_{!*}(\OC_\reg,\KM_\e)$,
which is clean (its intermediate extension is
just the extension by zero), and stable by the
Fourier-Deligne transform, by the general theory
\[
\begin{array}{|c|c|c|c|c|c|c|}
\hline
\text{Stratum} & \text{Dimension} & \chi & -3 & -2 & -1 & 0\\
\hline
\gG_\rs & 3 & 0 & .&.&.&.\\
\hline
\OC_\reg & 2 & 1 & . & \KM_\e &.&.\\
\hline
\OC_\triv & 0 & 1 & . & . &.&.\\
\hline 
\end{array}
\]

We can check that, applying $i_\NC^*[-1]$ to $\KC$, we recover $\KC_\NC$.
This functor sends $C$ to $B$ and $C_\e$ to $A$. There is a twist by
the sign character between the two versions of the Springer
representations (by Fourier-Deligne transform, and by restriction).

So, to summarize the situation over $\KM$, we have
\[
\begin{array}{|c|c|c|}
\hline
\gG_\rs & \gG & \NC\\
\hline
C^\rs \oplus C^\rs_\e & C \oplus C_\e & B \oplus A\\
\hline
\end{array}
\]

If $\ell \neq 2$, the situation over $\FM$ is similar. Now let us
assume that $\ell = 2$. Then the sign representation becomes
trivial. The regular representation is an extension of the trivial by
the trivial, so $\FM\KC_\rs$ is an extension of the constant $c_\rs$
(reduction of $C_\rs$) by itself.

Now $\FM\KC$ is as follows
\[
\begin{array}{|c|c|c|c|c|c|c|}
\hline
\text{Stratum} & \text{Dimension} & \chi & -3 & -2 & -1 & 0\\
\hline
\gG_\rs & 3 & -2 & \FM \SG_2&.&.&.\\
\hline
\OC_\reg & 2 & -1 & \FM &.&.&.\\
\hline
\OC_\triv & 0 & -2 & \FM & . & \FM &.
\\
\hline 
\end{array}
\]

It must be made of the simple perverse sheaves $c$, $b$ and $c$, where
$c$ is the constant on $\gG$ (it is the reduction of $C$, and has the
same table with $\FM$ instead of $\KM$), $a$ is the constant on the
origin (the reduction of $A$), and $b = \p\JC_{!*}(\OC_\reg,\FM)$
has the following table
\[
\begin{array}{|c|c|c|c|c|c|c|}
\hline
\text{Stratum} & \text{Dimension} & \chi & -3 & -2 & -1 & 0\\
\hline
\gG_\rs & 3 & 0 & .&.&.&.\\
\hline
\OC^\reg & 2 & 1 & . & \FM &.&.\\
\hline
\OC^\triv & 0 & 0 & . & \FM &\FM&.\\
\hline 
\end{array}
\]

Looking at the $\chi$ functions, we see that
$[\FM\KC] = 2[c] + [b]$ in the Grothendieck group of
$\p\MC_G(\NC,\FM)$. We know that the top and the socle of $\FM\KC$
must be $c$, the intermediate extension of $c$, and that $b$ cannot
appear either in the top nor in the socle. Thus there is only one
possible Loewy structure:
\[
\FM\KC = 
\begin{array}{c}
c\\
b\\
c
\end{array}
\]

Similarly, we find
\[
\FM\KC_\NC = 
\begin{array}{c}
a\\
b\\
a
\end{array}
\]

Thus, as we already know, $\FC(c) = a$, but we also deduce that
$\FC(b) = b$.

The restriction functor $i^*_\NC$ sends $c$ onto the reduction of $B$,
which has the following Loewy structure (by Section \ref{sec:simple}): 
\[
\begin{array}{c}
b\\
a
\end{array}
\]

The reduction of $C_\e$ has structure
\[
\begin{array}{c}
c\\
b
\end{array}
\]
and it restricts to $a$ (the reduction of $A$, which is the restriction of $C_\e$).

So we have the following situation.

\[
\begin{array}{|c|c|c|}
\hline
\gG_\rs & \gG & \NC\\
\hline
\begin{array}{c}
c_\rs\\
c_\rs
\end{array}
&
\begin{array}{c}
c\\
b\\
c
\end{array}
&
\begin{array}{c}
a\\
b\\
a
\end{array}\\
\hline
\end{array}
\]

And we check that we can get $\KC_\NC$ either by Fourier-Deligne
transform, or by restriction. For the Springer correspondence, $b$ is
missing. In appears neither in the top nor in the socle.

Now, let us see what happens with the sum of induced modules. What we
lack here is the induction from $\SG_2$ to $\SG_2$, which gives the
trivial module, and hence the constant perverse sheaf $c_\rs$. So,
by restriction, we would get

\[
\begin{array}{|c|c|c|}
\hline
\gG_\rs & \gG & \NC\\
\hline
c_\rs
\oplus
\begin{array}{c}
c_\rs\\
c_\rs
\end{array}
&
c \oplus
\begin{array}{c}
c\\
b\\
c
\end{array}
&
\begin{array}{c}
b\\
a
\end{array}
\oplus
\begin{array}{c}
a\\
b\\
a
\end{array}\\
\hline
\end{array}
\]

We can hope that, in general, each intermediate extension of a Young
module will restrict to an indecomposable on $\NC$ with simple top, and
that all the simple $GL_n$-equivariant perverse sheaves appear in this
way. We would thus obtain a correspondence involving all the
partitions of $n$, and we would certainly explain why the
decomposition matrix for $GL_n$-equivariant perverse sheaves on $\NC$
must be the decomposition matrix for the Schur algebra.
Of course, if we looked at $SL_n$, there would be supercuspidality
phenomena, on top of that.

}

\backmatter

{\selectlanguage{english}
\newcommand{\etalchar}[1]{$^{#1}$}
\def\cprime{$'$}

}

\end{document}